\newcommand{\e}{\mathrm{e}}
\newcommand{\RR}{\mathbf{R}}
\newcommand{\CC}{\mathbf{C}}
\newcommand{\NN}{\mathbf{N}}
\newcommand{\ZZ}{\mathbf{Z}}
\newcommand{\PP}{{\mathbb{P}}}
\newcommand{\calG}{{\mathcal{G}}}
\newcommand{\pgcd}{{\rm pgcd}}
\newcommand{\ord}{\operatorname{ord}\nolimits}
\renewcommand{\tilde}{\widetilde}
\newcommand{\rot}{{\rm rot}} 
\newcommand{\ind}{{\rm Ind}}
\newcommand{\wh}[1]{{\widehat{#1}}}
\newcommand\Res{\operatorname{Res}}
\newcommand{\Resk}[1][k]{\Res^{#1}}
\newcommand{\appres}[1][g]{\mathfrak{R}_{#1}}
\DeclareDocumentCommand{\appresk}{ O{g} O{k}}{\mathfrak{R}_{#1}^{#2}}
\newcommand{\espres}[1][g]{\mathcal{R}_{#1}}
\DeclareDocumentCommand{\espresk}{O{g} O{k}}{\mathcal{R}_{#1}^{#2}}
\newcommand{\barmoduli}[1][g]{{\overline{\mathcal M}}_{#1}}
\newcommand{\moduli}[1][g]{{\mathcal M}_{#1}}
\newcommand{\komoduli}[1][g]{{\Omega^k\mathcal M}_{#1}}
\newcommand{\omoduli}[1][g]{{\Omega\mathcal M}_{#1}}
\newcommand{\whX}{\widehat{X}}
\newcommand{\whomega}{\widehat{\omega}}
\newcommand{\Weierstrass}{Weierstra\ss{}}
\newcommand{\coeur}{c\oe{}ur\xspace}
\def\={\;=\;}
\newcommand{\Quentin}[1]{{\color{purple}{Quentin: #1}}}
\newtheorem{thm}{Théorème}[section]
\newtheorem{cor}[thm]{Corollaire}
\newtheorem{prop}[thm]{Proposition}
\newtheorem{lem}[thm]{Lemme}
\theoremstyle{definition}
\newtheorem{defn}[thm]{Définition}
\theoremstyle{remark}
\newtheorem{rem}[thm]{Remarque}
\theoremstyle{definition}
\theoremstyle{definition}
\newtheorem{ex}[thm]{Exemple}
\theoremstyle{definition}
\theoremstyle{definition}
\numberwithin{equation}{section} 
\title{différentielles à singularités prescrites}
\author{Quentin Gendron}
\address[Quentin Gendron]{Institut f\"ur algebraische Geometrie, Leibniz Universit\"at Hannover, Welfengarten 1,
30167 Hannover, Germany}
\curraddr{Centro de Ciencias Matem\`aticas-UNAM, Antigua Car. a P\`atzcuaro 8701,
Col. Ex Hacienda San Jos\'e de la Huerta,
Morelia, Mich., M\'exico}
\email{gendron@matmor.unam.mx}
\author{Guillaume Tahar}
\address[Guillaume Tahar]{Institut de Math{\'e}matiques de Jussieu - UMR CNRS 7586}
\curraddr{Faculty of Mathematics and Computer Science, Weizmann Institute of Science,
Rehovot, 7610001, Israel}
\email{tahar.guillaume@weizmann.ac.il}
\date{\today}
\keywords{Abelian differential, Flat surface, Residue, $k$-differentials}
\begin{document}

\selectlanguage{english}

\begin{abstract}
We study the local invariants that a meromorphic $k$-differential on a Riemann surface of genus $g\geq0$ can have. These local invariants are the orders of zeros and poles, and the $k$-residues at the poles. We show that for a given pattern of orders of zeroes, there exists, up to a few exceptions, a primitive $k$-differential having these orders of zeros (see Theorem~\ref{thm:strateshol}). The same is true for meromorphic $k$-differentials and in this case, we describe the tuples of complex numbers that can appear as $k$-residues at their poles. For genus $g\geq2$, and even $g=1$ if $k\neq2$, it turns out that every expected tuple appears as $k$-residues (see Theorem~\ref{thm:ggeq2}). On the other hand, some expected tuples are not the $k$-residus of a $k$-differential in some remaining strata. This happens for $k=1$ or $k\geq3$ in genus zero for a finite number (up to simultaneous scaling) of them. The abelian case is particularly interesting since the missing possibilities are either the zero tuple or special collinear numbers (see Theorem~\ref{thm:geq0keq1}). In the quadratic differentials cases, we show that one tuple can not be obtained in four families of strata in genus one (see Theorem~\ref{thm:geq1}). In genus zero, up to a $2$-dimensional locus can be impossible to realise as residue of quadratic differentials in some strata (see in particular Theorem~\ref{thm:geq0quad2}). We also give consequences of these results in algebraic and flat geometry.
\end{abstract}

\selectlanguage{french}

\maketitle
\setcounter{tocdepth}{1}
\tableofcontents

\section{Introduction}

Soient $X$ une surface de Riemann de genre $g$ et $K_{X}$ son fibré en droites canonique. Les sections méromorphes de $K_{X}$ sont les {\em différentielles abéliennes} de $X$ et les sections du produit tensoriel $K_{X}^{\otimes k}$ sont les {\em pluridifférentielles} ou {\em $k$-différentielles} de $X$. Localement, une $k$-différentielle s'écrit $f(z)(dz)^{k}$, où $f$ est une fonction méromorphe.

Il est bien connu (voir par exemple \cite[Encadré~III.2]{dSG}) que les invariants en un point~$P$ d'une différentielle abélienne $\omega$ sont l'{\em ordre} de la différentielle en~$P$ et le {\em résidu $\Res_{P}(\omega)$}  de celle-ci dans le cas où $P$ est un pôle de $\omega$. Ce résultat a été étendu au cas des différentielles quadratiques dans \cite{strebel} et à toutes les pluridifférentielles dans \cite{BCGGM3}.
Plus précisément, les invariants en $P$ d'une pluridifférentielle $\xi$ sont  l'{\em ordre} de la différentielle en $P$ et le {\em $k$-résidu}  $\Resk_{P}(\xi)$, si $P$ est un pôle de $\xi$.

Ces invariants ne peuvent pas être fixés arbitrairement mais vérifient certaines relations. Tout d'abord le $k$-résidu d'un pôle d'ordre $-k$ est toujours non nul alors que le $k$-résidu d'un pôle dont l'ordre n'est pas divisible par $k$ est toujours nul. Ensuite, la somme des ordres des zéros et des pôles d'une $k$-différentielle est égale à $k(2g-2)$. Enfin, dans le cas des différentielles abéliennes, la somme des résidus s'annule.

Il existe deux théorèmes célèbres donnant l'existence de pluridifférentielles méromorphes avec des conditions locales prescrites. 
\par
Le premier est le théorème de Riemann-Roch (voir le théorème 9.3 de \cite{reyssat}) qui permet d'obtenir certaines pluridifférentielles avec des ordres fixés. Toutefois, ce théorème ne donne aucune information sur les résidus de ces pluridifférentielles. De plus, il est souvent délicat d'en déduire des résultats pour tous les ordres que nous pouvons considérer. Par exemple, les résultats d'existence les plus fins que nous ayons (voir \cite{masm} et \cite{Diaz-Marin} dans le cas abélien et quadratique respectivement dans le cas holomorphe et méromorphe et \cite{troyanov} pour les $k$-différentielles holomorphes) ne font que peu appel à des résultats algébriques.
\par
 Le second théorème est le théorème de Mittag-Leffler (voir la proposition 9.3 de \cite{reyssat}) qui démontre l'existence de différentielles avec l'ordre des pôles et les résidus correspondants imposés. Toutefois, ce théorème ne donne aucune information sur les zéros de ces différentielles.

Dans cet article, nous nous proposons de répondre à la question suivante.

\begin{center}
{\em \'Etant donnés les ordres des zéros et pôles ainsi que les résidus aux pôles, existe-il une pluridifférentielle (primitive) ayant ces invariants locaux?}
\end{center}

\smallskip
\par
\subsection{Définitions.}
Afin de préciser la question centrale, nous introduisons un certain nombre de notions. Nous dénoterons par 
$$\mu:=(a_{1},\dots,a_{n};-b_{1},\dots,-b_{p};(-1^{s}))$$ une partition de $2g-2$  qui contient~$s$ fois $-1$, où les $a_{i}$ sont des nombres strictement positifs et les $b_{i}$ sont supérieurs ou égaux à $2$. 
De même, les partitions de $k(2g-2)$ seront notées
$$\mu:=(a_{1},\dots,a_{n};-b_{1},\dots,-b_{p};-c_{1},\dots,-c_{r};(-k^{s})),$$
où les $a_{i}$ sont supérieurs ou égaux à $-k+1$, les $b_{i}:= k\ell_{i}$ sont supérieurs ou égaux à $2k$ et divisibles par~$k$, les~$c_{i}$ sont supérieurs ou égaux à $k$ et non divisibles par $k$  et qui contient~$s$ fois~$-k$.
 La {\em strate primitive} $\komoduli(\mu)$ paramètre les $k$-différentielles {\em primitive} de type~$\mu$. On appelle {\em primitives} les $k$-différentielles qui ne sont pas des puissances de $k'$-différentielles pour $1\leq k' < k$. Les strates (non vides) de différentielles sont des orbifoldes de dimension $2g-1+n$ dans le cas holomorphe et $2g-2+n+p+s$ dans le cas méromorphe.  Les strates primitives de $k$-différentielles sont de dimension $2g-2+n+p+r+s$.
 Dans le cas abélien, le théorème des résidus implique que les strates $\omoduli(a_{1},\dots,a_{n};-1)$ sont vides. Dans la suite nous ne considérerons que des partitions de $2g-2$ distinctes de celles-ci.
 
 Il est bien connu que le résidu d'une différentielle $\omega$ à un pôle simple est non nul et que la somme des résidus de $\omega$ est nulle. Donc si $\omoduli(\mu)$ est une strate abélienne, on définit l'{\em espace résiduel de type $\mu$} par 
 \begin{equation}
\espres(\mu) := \left\{ (r_{1},\dots,r_{p+s})\in \CC^{p}\times(\CC^{\ast})^{s}:\ \sum_{i=1}^{p+s} r_{i}=0 \right\}.
\end{equation}
Cet espace paramètre l'ensemble des configurations de résidus que peut prendre une différentielle de $\omoduli(\mu)$.

Dans le cas des $k$-différentielles pour $k\geq2$, la description change quelque peu. Nous rappelons tout d'abord la notion de $k$-résidu. Plus de détails sont donnés dans \cite[Section~3]{BCGGM3}. Pour une $k$-différentielle $\xi$, au voisinage de chaque point $P$ de $X$, il existe une coordonnée $z$ telle que $\xi$ est de la forme
\begin{equation}\label{eq:standard_coordinates}
    \begin{cases}
      z^m\, (dz)^{k} &\text{si $m> -k$ ou  $k\nmid m$,}\\
      \left(\frac{r}{z}\right)^{k}(dz)^{k} &\text{si $m = -k$,}\\
        \left(z^{m/k} + \frac{t}{z}\right)^{k}(dz)^{k} &\text{si $m < -k$ et $k\mid m$,}
    \end{cases}
\end{equation}
où $t \in \CC$ et $r\in \CC^{\ast}$. Les nombres $r$ et $t$ sont définis à une racine de l’unité près. Le {\em $k$-résidu} de $\xi$ en $P$ est la puissance $k$-ième de $r$ dans le second cas, la puissance $k$-ième de $t$ dans le troisième cas et zéro sinon. Ainsi, le $k$-résidu est non nul dans le cas des pôles d'ordre $-k$ et  peut ne pas être nul uniquement dans le cas des pôles d'ordre divisible par $k$. Toutefois il n'existe pas de théorème des résidus pour les pluridifférentielles. Ainsi étant donné une strate $\komoduli(\mu)$, nous définissons  l'{\em espace $k$-résiduel de type $\mu$}  par\begin{equation}
\espresk(\mu) := \CC^{p}\times(\CC^{\ast})^{s}.
\end{equation}
Cet espace paramètre les configurations de résidus que peut prendre une pluridifférentielle de $\komoduli(\mu)$.

Dans tous les cas, l'{\em application résiduelle} est donnée par
\begin{equation}
\appresk(\mu) : \komoduli(\mu) \to \espresk(\mu):\ (X,\xi) \mapsto (\Resk_{P_{i}}(\xi)),
\end{equation}
où les $P_{i}$ sont les pôles de $\xi$ d'ordre divisible par $k$. Insistons sur le fait que par définition, les pluridifférentielles de $\komoduli(\mu)$ sont {\em primitives}.
Le but de cet article est de déterminer l'image de cette application pour chaque strate.
\par
\smallskip
\subsection{Genre supérieur ou égal à un.}

Nous sommes maintenant en mesure d'énoncer les résultats centraux de cet article. Rappelons que $\komoduli(\mu)$ paramètre les $k$-différentielles {\em primitive} de type $\mu$. Nous donnons d'abord le cas du genre supérieur ou égal à deux.

\begin{thm}\label{thm:ggeq2}
Si $g\geq2$ et $\mu$ contient un élément inférieur ou égal à $-k$, alors l'application résiduelle $\appresk(\mu) : \komoduli(\mu) \to \espresk(\mu)$ est surjective.
\end{thm}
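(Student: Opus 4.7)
The goal is to realise every tuple $(r_1,\dots,r_{p+s}) \in \C^p \times (\C^*)^s$ as the $k$-residue tuple of a primitive $k$-differential in $\komoduli(\mu)$. The paper \cite{BCGGM3} already gives that $\appresk(\mu)$ is dominant with fibers of dimension $2g-2+n$, so the image contains a Zariski-dense open subset of $\espresk(\mu)$; the real work is to close the remaining gap and reach \emph{every} target tuple, including those in the complement of that open set.

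My plan is to argue by induction on the combinatorial complexity of $\mu$, reducing the general case to a manageable base case. The inductive steps are local surgeries — classical on flat surfaces and corresponding to plumbing of multi-scale differentials as in \cite{BCGGM3} — among them \emph{breaking a zero} of order $a \geq 2$ into two zeros of orders summing to $a$, which does not affect any residue, its inverse \emph{merging}, and a \emph{residue transfer} operation in which one perturbs the differential so as to shift one $r_i$ while absorbing the compensating error at the thick pole of order $-m<-k$. Each such surgery respects the stratum type and lifts a solution for a simpler $\mu'$ to a solution for $\mu$ with prescribed residues.

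The base case leverages the hypothesis that $\mu$ contains some $-m<-k$. When $k\mid m$, the local normal form $(z^{m/k}+t/z)^k(dz)^k$ from \eqref{eq:standard_coordinates} carries a free parameter $t$ with $\Resk = t^k$, so varying $t$ shifts that residue independently of all other local data. On an explicit model — for instance, $k$-differentials on a hyperelliptic Riemann surface $y^2 = f(x)$ of genus $g$, built from rational functions in $(x,y)$ times $(dx)^k$ — one can reduce residue prescription to a linear problem in the coefficients and verify surjectivity by a direct rank computation. When $k\nmid m$, the thick pole has no residue parameter of its own, but its wide cone angle provides enough flat-surface flexibility, through slit constructions, to rebalance residues among the remaining poles and reduce back to the previous subcase.

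The main obstacle I anticipate is ensuring \emph{primitivity}: the explicit differential produced might factor as $\eta^{k/d}$ for some $d\mid k$, placing it outside $\komoduli(\mu)$. This I would handle by observing that the locus of imprimitive $k$-differentials has strictly smaller dimension than $\komoduli(\mu)$, so the fiber $\appresk(\mu)^{-1}(r_1,\dots,r_{p+s})$, of dimension $2g-2+n\geq 2$ since $g\geq 2$, contains primitive points whenever it is nonempty. A secondary, more technical difficulty is the bookkeeping at each plumbing step: one must ensure that orders at the nodes, local residues, and the multiplicative compatibility conditions of \cite{BCGGM3} all align so that smoothing yields a differential of exact type $\mu$ with the intended residues — routine in principle, but delicate to execute uniformly across the inductive cases.
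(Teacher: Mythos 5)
Your proposal has a genuine gap at exactly the point where the theorem is hard. Dominance of $\appresk(\mu)$ (from \cite{BCGGM3}) only yields a dense open subset of $\espresk(\mu)$ in the image; the entire content of the theorem is to reach the remaining special tuples, above all the all-zero tuple and, for certain quadratic strata, tuples proportional to $(1,\dots,1)$. Your only mechanism for moving between residue tuples is the ``residue transfer'' surgery, which is not defined and cannot work as described: for $k\geq 2$ there is no residue theorem, so there is no ``compensating error'' to absorb at the thick pole, and no perturbative argument will land you exactly on the zero tuple. This is not a technicality: the statement is \emph{false} in genus $1$ for four families of quadratic strata (Théorème~\ref{thm:geq1}), precisely for these special tuples, so any correct proof must use $g\geq2$ at this step. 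Your sketch uses $g\geq2$ nowhere except in the fiber-dimension count, which cannot be where the genus-1 obstructions die. The paper instead builds the special tuples by hand: explicit polar-part constructions in genus $0$ and $1$ (sections~\ref{sec:MDPR}--\ref{sec:ggeq1}), then couture d'anse and éclatement de zéros to propagate to $g\geq2$, with the delicate quadratic cases settled by gluing two simple poles of a genus-$1$ differential and smoothing (lemme~\ref{lem:geq1ter}, lemme~\ref{lem:ggentousres}).

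Two further steps do not hold up. First, the primitivity argument: you claim that since the fiber has dimension $2g-2+n\geq 2$ and the imprimitive locus is smaller, the fiber contains primitive points whenever nonempty. But special fibers of $\appresk$ need not have the generic dimension, and nothing prevents a special fiber from lying entirely in the imprimitive locus --- this actually happens: the fiber of the origin in the space of all type-$(4a;(-4^{a}))$ quadratic differentials in genus $1$ is nonempty yet consists exclusively of squares of abelian differentials (compare lemme~\ref{lem:geq1cin} with the surjectivity of $\appres(2a;(-2^{a}))$ in genus $1$). So the soft dimension count cannot separate genus $1$ from genus $2$, which is the whole issue. Second, the proposed base case via hyperelliptic models $y^{2}=f(x)$ reducing to ``a linear problem in the coefficients'': prescribing the orders of the zeros is a highly nonlinear condition on the coefficients, and the $k$-residue is the $k$-th power of an algebraic function of them, so there is no linear system whose rank one could compute. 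The parameter $t$ in~\eqref{eq:standard_coordinates} does let you vary the residue at one pole of order $-m$ with $k\mid m$, but only within the deformation space of the germ; globalizing this while fixing all other local invariants is exactly the problem to be solved, not a tool for solving it.
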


Le cas des strates de genre un est légèrement plus complexe. En effet, il existe quatre familles de strates quadratiques exceptionnelles.

\begin{thm}\label{thm:geq1}
Soit $\komoduli[1](\mu)$ une strate de genre un et $\mu$ une partition de $k(2g-2)$ contenant  un élément inférieur ou égal à $-k$.
\begin{itemize}
\item[i)] Si $k=2$ et $\mu=(4a;(-4^{a}))$ ou $\mu=(2a-1,2a+1;(-4^{a}))$ pour $a\in\NN^{\ast}$, alors l'image de $\appresk[1][2](\mu)$ est égale à $\espresk[1][2](\mu)\setminus\left\{(0,\dots,0)\right\}$.
\item[ii)] Si $k=2$ et $\mu=(2s;(-2^{s}))$ ou $\mu=(s-1,s+1;(-2^{s}))$ avec $s$ un entier pair non nul, alors l'image de $\appresk[1][2](\mu)$ est égale à $\espresk[1][2](\mu)\setminus \CC^{\ast}\cdot(1,\dots,1)$.
\item[iii)]Dans tout autre cas, l'application résiduelle $\appresk[1](\mu)$ est surjective. 
\end{itemize} 
\end{thm}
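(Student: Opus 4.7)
La démonstration se sépare en deux parties : la nécessité des exclusions (cas~i et~ii), puis la surjectivité sur les cibles admissibles. L'outil principal est le \emph{revêtement double canonique} $\pi \colon \tilde{X} \to X$ associé à toute différentielle quadratique primitive $\xi \in \komoduli[1](\mu)$ : on a $\pi^{\ast}\xi = \omega^{2}$ pour une différentielle abélienne méromorphe $\omega$ sur $\tilde X$ vérifiant $\sigma^{\ast}\omega = -\omega$, où $\sigma$ est l'involution du revêtement, ramifié exactement aux singularités d'ordre impair de $\xi$. Les conditions sur les $2$-résidus de $\xi$ se traduisent alors en contraintes sur $\omega$, soit sur une courbe elliptique (revêtement non ramifié), soit sur une surface bi-elliptique de genre $2$ (revêtement ramifié aux zéros).

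\textbf{Obstructions dans les cas exceptionnels.} Pour $\mu=(4a;(-4^{a}))$, tous les ordres étant pairs, le revêtement est non ramifié et $\tilde X = \CC/\tilde\Lambda$ est elliptique avec $\sigma$ la translation par une $2$-torsion non nulle $\tilde\tau$. L'annulation de tous les $2$-résidus force $\omega$ à être sans résidu, à pôles doubles, et l'anti-invariance impose $\omega/dz = \sum_{i=1}^{a} c_{i}\bigl(\wp(z-Q_{i}^{+}) - \wp(z-Q_{i}^{+}-\tilde\tau)\bigr)$. Exiger l'annulation d'ordre $2a$ aux préimages des zéros de $\xi$ produit un système surdéterminé ; l'obstruction se matérialise déjà pour $a=1$ par les égalités simultanées $\wp(Q_{1}^{+}) = \wp(Q_{1}^{+}+\tilde\tau)$ et $\wp'(Q_{1}^{+}) = \wp'(Q_{1}^{+}+\tilde\tau)$, qui forcent $\tilde\tau \in \tilde\Lambda$, en contradiction avec la non-trivialité du revêtement. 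La variante $\mu=(2a-1,2a+1;(-4^{a}))$ se traite analoguement sur un revêtement ramifié de genre $2$, en travaillant avec une base de différentielles anti-invariantes sans résidu sur la surface bi-elliptique. Pour le cas~ii avec $s$ pair, $\omega$ possède des pôles simples de résidus $\pm 1$ appariés par $\sigma$ ; un calcul utilisant la relation $\zeta(z - Q_{i}^{+}+\tilde\tau) = \zeta(z - Q_{i}^{-}) + \eta_{2\tilde\tau}$ montre que l'anti-invariance fixe la partie constante de $\omega/dz$ à $c_{0} = -\tfrac{s}{2}\,\eta_{2\tilde\tau}$, où $\eta_{2\tilde\tau}$ est la quasi-période de $\zeta$ le long de $2\tilde\tau\in\tilde\Lambda$. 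Le système linéaire exprimant l'annulation d'ordre $s$ de $\omega$ aux préimages des zéros dégénère alors précisément lorsque $s/2$ est entier, ce qui exclut le tuple $(1,\dots,1)$.

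\textbf{Surjectivité.} Pour les tuples non exclus, on procède par construction explicite ou par dégénérescence. D'une part, chaque pôle d'ordre divisible par $k$ est modélisé par un domaine polaire (demi-cylindre ou domaine à secteurs polaires, comme dans \cite{BCGGM3}) dont la géométrie est ajustée pour que son $k$-résidu prenne la valeur voulue ; on recolle ces domaines à une surface plate de genre~$1$ du type voulu. D'autre part, partant d'une différentielle de genre~$2$ fournie par le théorème~\ref{thm:ggeq2}, on pince un cycle non séparant pour produire deux pôles simples supplémentaires de résidus opposés, choisis de manière à ne pas perturber les autres résidus, puis l'on ajuste par homothétie pour atteindre toute valeur cible. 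Dans chaque cas il faut en outre vérifier que la construction produit bien une différentielle \emph{primitive} (et non un carré global), ce qui résulte du choix des résidus hors du lieu exceptionnel. La difficulté principale est d'établir, au cas~ii à $s$ pair, que le lieu exceptionnel du morphisme de résidus est \emph{exactement} $(1,\dots,1)$ et rien de plus, ce qui requiert une analyse fine de la chute de rang du déterminant évoqué ci-dessus sur le revêtement canonique.
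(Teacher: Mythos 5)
Your double-cover strategy for the obstructions is genuinely different from the paper's, which stays entirely on the flat-geometric side (differentials with degenerate core, combinatorics of the ``domanial graphs''; see Lemmas \ref{lem:geq1cin}, \ref{lem:geq1six}, \ref{lem:nonsurjunzero}, \ref{lem:nonsurjdeuxzero}). For the smallest strata your computation is sound: the $\wp$-argument for $\Omega^{2}\moduli[1](4;-4)$ correctly replaces the paper's base case (which instead glues an elliptic curve at the pole and smooths into $\Omega^{2}\moduli[2](4)$, empty by \cite{masm}). But for general $a$ there is a genuine gap. The claim that the vanishing conditions form a ``syst\`eme surd\'etermin\'e'' is false: after fixing the zero by translation you have $2a+1$ parameters (the modulus of $\tilde{X}$, the $a$ points $Q_{i}^{+}$, the $a$ coefficients $c_{i}$) against $2a$ linear conditions, consistent with the fact that the fiber of $\appresk[2](\mu)$ over $0$ has expected dimension $\dim\komoduli[1](\mu)-a=1\geq0$. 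Emptiness is therefore not a dimension count, and verifying the obstruction for $a=1$ does not propagate to general $a$ without an actual reduction --- this is exactly the content of the paper's surgery removing pairs of valence-two polar domains followed by a case analysis of the residual graphs. Your case ii) is worse: the assertion that the system ``d\'eg\'en\`ere pr\'ecis\'ement lorsque $s/2$ est entier'' cannot characterize the excluded locus, since $s$ is even by hypothesis throughout case ii); what must be proved is that the single tuple $(1,\dots,1)$, and no other, is missed, and you acknowledge this difficulty without resolving it.

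The surjectivity half is also incomplete. Pinching a non-separating cycle of a genus-$2$ differential produces a genus-$1$ differential with \emph{two additional} simple poles, so it lands in a different stratum than $\komoduli[1](\mu)$; that degeneration goes the wrong way (and in the paper the logical order is reversed: Theorem \ref{thm:ggeq2} is deduced from the genus-one case by handle sewing, so invoking it here would be circular). Moreover the delicate part of surjectivity is to hit the boundary of the exceptional locus: one must show that $(0,\dots,0)$ \emph{is} attained in $\Omega^{2}\moduli[1](a_{1},a_{2};(-4^{p}))$ whenever $(a_{1},a_{2})\neq(2p-1,2p+1)$, and that $(1,\dots,1)$ \emph{is} attained in $\Omega^{2}\moduli[1](a_{1},a_{2};(-2^{s}))$ whenever $(a_{1},a_{2})\neq(s-1,s+1)$ with $s$ even (Lemmas \ref{lem:geq1ter} and \ref{lem:g=1quadspe}); these require explicit inductive constructions that your sketch does not supply. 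Finally, the cases $k=1$ and $k\geq3$ of item iii) are not addressed beyond one sentence.
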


En général, les strates de $k$-différentielles ne sont pas connexes. Les composantes connexes dans le cas abélien ont été classifiées par \cite{Bo}. Nous étendons les résultats précédents à chaque composante connexe des strates abéliennes.
\begin{prop}\label{prop:CC}
 \'Etant donné une composante connexe $S$ d'une strate de différentielles abéliennes $\omoduli(\mu)$ avec $g\geq1$. La restriction à $S$ de l'application résiduelle de $\omoduli(\mu)$ est surjective.
\end{prop}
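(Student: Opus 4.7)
Le plan est de combiner la surjectivité de l'application résiduelle sur la strate entière (établie dans le Théorème~\ref{thm:ggeq2} pour $g\geq 2$, et dans le Théorème~\ref{thm:geq1} iii) avec $k=1$ pour $g=1$) avec des chirurgies locales, préservant les résidus, permettant de passer d'une composante connexe à une autre selon la classification de \cite{Bo}. Fixons une cible $\mathbf{r}\in\espres(\mu)$ : par les théorèmes cités, il existe $(X_0,\omega_0)\in\omoduli(\mu)$ réalisant $\mathbf{r}$ dans une certaine composante $S_0$, et il s'agit de le transporter dans la composante prescrite $S$ sans modifier ses résidus.

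D'après \cite{Bo}, les composantes connexes d'une strate abélienne méromorphe sont distinguées par au plus trois invariants discrets : l'hyperellipticité, la parité du spin (lorsqu'elle est définie), et les nombres de rotation aux pôles d'ordre supérieur. Les deux derniers peuvent être modifiés par des opérations locales. Un recollement par twist d'un cylindre au voisinage d'un pôle d'ordre $b\geq 2$ change le nombre de rotation en ce pôle sans altérer le paramètre local~$r$ apparaissant dans la forme normale \eqref{eq:standard_coordinates}, donc sans changer son résidu. Une chirurgie appropriée de type scindage-recollage effectuée dans de petits disques disjoints autour de deux zéros change la parité du spin d'une composante en préservant à la fois la partition $\mu$ et l'ensemble des résidus aux pôles. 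En itérant ces opérations, on transporte $(X_0,\omega_0)$ dans toute composante non hyperelliptique de $\omoduli(\mu)$ en maintenant $\mathbf{r}$.

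Pour les composantes hyperelliptiques, la liste de Boissy indique qu'elles n'apparaissent que dans les strates $\omoduli(2g-2)$ et $\omoduli(g-1,g-1)$ (cas holomorphe), et dans les familles méromorphes $\omoduli(2n;-2l)$, $\omoduli(n,n;-2l)$, $\omoduli(2n;-l,-l)$ et $\omoduli(n,n;-l,-l)$. Dans chacun de ces cas, $\espres(\mu)$ est trivial ou de dimension un, et la contrainte hyperelliptique imposant aux pôles échangés par l'involution d'avoir des résidus opposés coïncide avec le théorème des résidus ; aucune condition supplémentaire n'est imposée à $\mathbf{r}$. Une construction directe, consistant à tirer en arrière par le revêtement double hyperelliptique $X\to\PP^1$ une différentielle rationnelle sur $\PP^1$ ayant les pôles et résidus prescrits, fournit un élément de la composante hyperelliptique avec vecteur de résidus $\mathbf{r}$.

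L'obstacle principal est de s'assurer que les chirurgies précédentes peuvent s'effectuer à l'intérieur de $\omoduli(\mu)$ --- c'est-à-dire sans modifier la partition non ordonnée $\mu$ --- et qu'une composition convenable de ces opérations atteint toute valeur admissible des invariants discrets. Pour la parité du spin, on doit apparier un scindage en un zéro avec un recollage à un autre zéro d'ordre compatible ; lorsque $\mu$ n'admet pas un tel appariement, on vérifie séparément que la strate concernée est déjà connexe, et la surjectivité se réduit alors directement aux Théorèmes~\ref{thm:ggeq2} et~\ref{thm:geq1}.
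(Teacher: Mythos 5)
Votre stratégie --- réaliser $\mathbf{r}$ dans une composante quelconque puis transporter la différentielle dans la composante prescrite par des chirurgies préservant les résidus --- est l'inverse de celle du texte, qui réduit d'abord le problème aux strates minimales par éclatement de zéros, puis au genre un par couture d'anse : ces deux opérations sont locales, préservent les résidus et atteignent \emph{toutes} les composantes connexes d'après les Propositions~6.1 et~7.1 de \cite{Bo}. Il ne reste alors au texte qu'à construire explicitement, dans chaque composante des strates minimales de genre un (indexées par le nombre de rotation), une différentielle dont tous les résidus sont nuls, ce qui se fait par recollement cyclique de parties polaires en faisant varier le type $\tau$.

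Le défaut de votre rédaction est que les chirurgies sur lesquelles elle repose ne sont ni définies ni justifiées, alors qu'elles constituent tout le contenu de la preuve. D'une part, \og le nombre de rotation en ce pôle \fg{} n'est pas un invariant de la classification de \cite{Bo} : en genre un, le nombre de rotation est le pgcd global $\pgcd(a_{i};b_{j},\ind(\gamma),\ind(\delta))$ faisant intervenir les indices d'une base symplectique de lacets, et non une quantité attachée à chaque pôle ; en genre supérieur, les invariants sont le spin et l'hyperellipticité. Un \og twist de cylindre au voisinage d'un pôle \fg{} ne précise donc ni quel invariant il modifie, ni pourquoi la différentielle obtenue reste dans la même strate. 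D'autre part, la chirurgie de \og scindage-recollage \fg{} censée changer la parité du spin en préservant à la fois $\mu$ et tous les résidus est précisément le point difficile : vous l'affirmez sans la construire, et votre dernier paragraphe reconnaît l'obstacle sans le lever. Même en admettant l'existence de ces opérations, il resterait à vérifier qu'elles agissent transitivement sur l'ensemble des composantes pour chaque $\mu$, discussion au cas par cas que vous reportez. La partie hyperelliptique (tiré en arrière depuis $\PP^{1}$) est raisonnable, mais elle repose sur le fait que ces composantes n'apparaissent qu'avec au plus deux pôles --- un pôle fixé par l'involution force un résidu nul, deux pôles échangés forcent des résidus opposés, conditions qui coïncident avec le théorème des résidus --- point qu'il faudrait confronter explicitement à la liste de \cite{Bo}.
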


Enfin nous traitons le cas des strates de différentielles n'ayant que des  singularités d'ordre strictement supérieur à $-k$. Ces différentielles correspondent à des surfaces plates à singularités coniques d'aire finie. Notre théorème~\ref{thm:strateshol} généralise le résultat de \cite{masm} dont les auteurs n'avaient considéré que les cas abélien et quadratique. Il permet également de préciser \cite{troyanov} qui ne se souciait pas de la primitivité des $k$-différentielles. 

\begin{thm}\label{thm:strateshol}
Soit $\mu=(a_{1},\dots,a_{n})$ une partition de $k(2g-2)$ telle que les $a_{i}$ soient strictement supérieurs à $-k$. La strate $\komoduli(\mu)$ paramétrant les diff\'erentielles primitives de profil $\mu$ est vide si et seulement si 
\begin{itemize}
\item[i)]$g=1$ et $\mu=(1,-1)$,
\item[ii)] $g=1$ et $\mu=\emptyset$ et $k\geq2$,
\item[iii)]$g=2$, $k=2$ et $\mu=(4)$ ou $\mu=(3,1)$.
\end{itemize}
\end{thm}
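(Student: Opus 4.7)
L'approche est de démontrer la vacuité des trois familles exceptionnelles par des arguments élémentaires au cas par cas, puis d'établir la non-vacuité pour tout autre $\mu$ admissible en combinant les résultats d'existence classiques (Masur--Smillie dans le cas abélien, Troyanov pour les $k$-différentielles holomorphes) avec un argument de primitivité.

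\textbf{Vacuité des strates exceptionnelles.} Le cas $\mu=(1,-1)$ (nécessairement avec $k\geq 2$ sous l'hypothèse $a_{i}>-k$) se ramène à la non-existence d'une fonction méromorphe ayant un unique zéro simple et un unique pôle simple en deux points distincts sur une courbe elliptique: la condition $\sum a_{i}=k(2g-2)=0$ impose $g=1$, $K_{X}^{\otimes k}\cong\mathcal{O}_{X}$, et $P\sim Q$ sur une courbe elliptique force $P=Q$. Le cas $\mu=\emptyset$ avec $k\geq 2$ impose également $g=1$, et l'espace unidimensionnel des $k$-différentielles holomorphes sur une courbe elliptique est engendré par $(dz)^{\otimes k}$, une puissance tensorielle $k$-ième, donc non primitive. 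Pour $k=2$ et $\mu=(4)$ en genre $2$, un calcul de dimension montre que l'application carré depuis la strate abélienne de profil $(2)$ se surjecte sur la strate quadratique complète de profil $(4)$, donc toute différentielle quadratique de ce profil est un carré. Pour $k=2$ et $\mu=(3,1)$ en genre $2$, le revêtement double canonique est ramifié aux deux zéros d'ordre impair, et un calcul de Riemann--Hurwitz montre que la différentielle abélienne relevée (anti-invariante sous l'involution) aurait pour profil $(4,2)$ sur un revêtement de genre $4$; une analyse cohomologique de la partie anti-invariante montre qu'aucune telle différentielle ne peut exister.

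\textbf{Construction pour les autres profils.} Fixons un $\mu$ qui n'est pas parmi les exceptions. Dans le cas abélien $k=1$ (avec tous les $a_{i}\geq 1$), Masur--Smillie fournit directement une construction. Pour $k\geq 2$, je produirais d'abord une $k$-différentielle de profil $\mu$ par des moyens de géométrie plate: lorsque tous les $a_{i}\geq 0$, c'est le théorème de Troyanov, qui produit une métrique plate avec les angles coniques prescrits sur toute surface topologique de genre $g$; lorsque certains $a_{i}\in\{-k+1,\dots,-1\}$, on étend la construction pour permettre des cônes d'angle strictement inférieur à $2\pi$, par exemple en partant d'un profil holomorphe $\mu'$ obtenu en fusionnant chaque entrée négative avec un zéro voisin, puis en insérant un modèle local de pôle léger par chirurgie plate. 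Le genre zéro se traite par des constructions polygonales explicites, par exemple comme le double d'un polygone euclidien ayant les angles coniques voulus à ses sommets. La primitivité est automatique dès que $\pgcd(a_{1},\dots,a_{n},k)=1$, puisqu'une $k$-différentielle ne peut être une puissance tensorielle $d$-ième non triviale que si $d$ divise tous les ordres. Sinon, posons $d:=\pgcd(a_{1},\dots,a_{n},k)>1$: la sortie naïve de la construction pourrait être une puissance $d$-ième, et je perturberais alors la structure plate par une petite chirurgie coupée-recollée avec une rotation d'angle hors de $(2\pi/d)\mathbb{Z}$, brisant ainsi la $\mathbb{Z}/d$-symétrie du $k$-résidu de la représentation de monodromie sans altérer le profil des singularités.

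\textbf{Principal obstacle.} Le point délicat est l'argument de primitivité dans les cas de faible complexité: les trois familles exceptionnelles sont précisément celles où aucune perturbation brisant la primitivité ne peut exister, et pour tout autre $\mu$ admissible la chirurgie doit détruire de manière vérifiable la symétrie en question. Contrôler cet effet sur tous les paramètres admissibles, et gérer soigneusement le genre zéro (où les outils algébro-géométriques sont les moins puissants) par des constructions plates explicites, constitue le principal défi technique.
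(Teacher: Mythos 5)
Votre découpage (vacuité au cas par cas, puis existence par Masur--Smillie/Troyanov, puis primitivité par perturbation) diverge de celui du papier, et c'est dans les deux derniers points que se trouvent des lacunes réelles. Pour la vacuité de $\Omega^{2}\moduli[2](4)$ et $\Omega^{2}\moduli[2](3,1)$, le papier se contente de citer \cite{masm} ; vos esquisses ne constituent pas des preuves. L'égalité des dimensions entre le lieu des carrés de $\omoduli[2](2)$ et le lieu total des différentielles quadratiques de profil $(4)$ ne donne la surjectivité de l'application carré qu'en admettant l'irréductibilité du lieu total, que vous ne justifiez pas ; et « une analyse cohomologique de la partie anti-invariante » pour $(3,1)$ n'est pas donnée. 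Ce sont précisément les points durs du résultat de Masur--Smillie, qu'il vaut mieux citer que ré-esquisser.

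La lacune principale est l'argument de primitivité lorsque $d:=\pgcd(\mu,k)>1$. Votre « chirurgie coupée-recollée avec une rotation d'angle hors de $(2\pi/d)\ZZ$ » n'est pas définie : il faudrait vérifier qu'elle préserve le genre et le profil des singularités tout en produisant encore une $k$-différentielle, et les cas $\mu=(4)$ et $\mu=(3,1)$ montrent qu'aucune perturbation de ce type n'existe en général — vous le reconnaissez, mais c'est exactement là que réside tout le contenu du théorème, et le différer au rang de « défi technique » laisse la preuve incomplète. Le papier procède tout autrement : en genre $1$, on choisit les points $z_{i}$ de sorte que $\sum a_{i}z_{i}$ soit principal mais qu'aucun $\sum\tfrac{a_{i}}{d}z_{i}$ ne le soit (argument d'Abel--Jacobi sur le tore) ; en genre $g\geq2$, on dégénère vers une différentielle primitive de $\komoduli[1](a_{1},\dots,a_{n};(-2k)^{g-1})$ dont tous les $k$-résidus sont nuls — existence qui est fournie par le théorème~\ref{thm:geq1}, c'est-à-dire par le gros du travail du papier — puis on attache des tores munis de puissances $k$ièmes de différentielles holomorphes et on lisse via le lemme~\ref{lem:lissdeuxcomp}, avec un traitement séparé de $\Omega^{2}\moduli[g](4g-4)$ et $\Omega^{2}\moduli[g](2g-1,2g-3)$ par dégénérescence vers le genre $2$. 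Notez enfin que votre réduction des ordres négatifs par « fusion puis scindage local » se heurte à l'obstruction de la proposition~\ref{prop:eclatintro} : le scindage local en deux singularités échoue précisément quand la différentielle de départ est une puissance $k$ième, c'est-à-dire dans le cas problématique pour la primitivité.
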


 \smallskip
\par
\subsection{Différentielles abéliennes en genre zéro.}
 
Pour de nombreuses strates de genre zéro, l'application résiduelle n'est pas surjective. Nous discutons d'abord le cas des différentielles abéliennes.

\begin{thm}\label{thm:geq0keq1}
Soient $g=0$, $k=1$ et $\mu=(a_{1},\dots,a_{n};-b_{1},\dots,-b_{p};(-1^{s}))$. L'un des cas suivants est vérifié.
\begin{itemize}
\item[i)]  Si $s=0$ et qu'il existe un indice $i$ tel que
\begin{equation}\label{eq:genrezeroresiduzerofr}
 a_{i}>\sum_{j=1}^{p}b_{j}-(p+1),
\end{equation}
alors l'image de $\appres[0](\mu)$ est $\espres[0](\mu)\setminus\left\{0\right\}$.
\item[ii)] S'il n'y a que des pôles simples (i.e. $p=0$), alors l'image de $\appres[0](\mu)$ est décrite dans les propositions~\ref{prop:gzeropolesimples} et \ref{prop:g0p-1plusieurszero}. 
\item[iii)] Dans les autres cas, l'application résiduelle $\appres[0](\mu)$ est surjective.
\end{itemize}
\end{thm}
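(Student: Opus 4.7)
Je découpe la démonstration selon les trois cas. Le cas (ii) renvoie directement aux propositions~\ref{prop:gzeropolesimples} et~\ref{prop:g0p-1plusieurszero}. Les cas (i) et (iii) se divisent chacun en une partie obstruction et une partie constructive.

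Obstruction du cas (i). Supposons qu'une différentielle $\omega \in \omoduli(\mu)$ soit envoyée par $\appres(\mu)$ sur le vecteur nul. L'hypothèse $s = 0$ entraîne que $\omega$ n'a aucun résidu non nul (les seuls pôles susceptibles d'en porter sont ceux d'ordre divisible par un, soit ici tous les pôles, et nous les supposons tous nuls). En genre zéro, ceci force $\omega = dF$ pour une fonction rationnelle $F$ sur $\mathbb{P}^1$. Cette fonction admet un pôle d'ordre $b_j - 1$ en chaque $Q_j$, d'où $\deg F = \sum_j b_j - p$, et un point critique d'indice de ramification $a_i + 1$ en chaque $P_i$. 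Comme $a_i + 1 \leq \deg F$, on obtient $a_i \leq \sum_j b_j - (p+1)$ pour tout $i$, contredisant \eqref{eq:genrezeroresiduzerofr}. Ainsi $0$ n'appartient pas à l'image de $\appres(\mu)$ sous cette hypothèse. En cas (iii), cet argument ne s'applique plus : soit $s \geq 1$ et les résidus aux pôles simples peuvent s'ajuster pour sommer à la valeur requise, soit $s=0$ mais tous les $a_i$ respectent la borne $a_i \leq \sum_j b_j - (p+1)$ et la condition de Riemann--Hurwitz n'exclut plus la différentielle exacte.

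Partie constructive. Il faut réaliser tout vecteur $(r_1, \dots, r_p) \in \espres(\mu)$ admissible (non nul en cas (i), arbitraire en cas (iii)) comme résidus d'une différentielle primitive du profil $\mu$. L'approche envisagée consiste à écrire
$$\omega \= \frac{P(z)\, dz}{\prod_j (z - Q_j)^{b_j}},$$
avec éventuels facteurs supplémentaires pour les pôles simples, puis à procéder par récurrence sur un paramètre de complexité (par exemple $p$, ou bien $\sum_i a_i$). Les pas de récurrence s'appuient sur des constructions de dégénérescence et de plombage inspirées de~\cite{BCGGM3} : on combine une différentielle d'une strate plus simple portant les résidus voulus avec une différentielle auxiliaire permettant de reconstituer le profil complet des zéros.

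La difficulté principale réside dans la partie constructive. L'argument de Riemann--Hurwitz pour l'obstruction est direct, mais démontrer qu'un vecteur non obstrué est effectivement atteint exige de produire des familles explicites compatibles simultanément avec la prescription des résidus, le profil complet des ordres $a_1, \dots, a_n$ et la primitivité de la différentielle. Le contrôle fin de ces contraintes, en particulier lorsque plusieurs multiplicités $a_i$ sont proches de la borne critique $\sum_j b_j - (p+1)$, sera le point délicat ; il nécessitera vraisemblablement une analyse cas par cas selon que $s = 0$ ou $s \geq 1$, et selon la répartition des zéros grands.
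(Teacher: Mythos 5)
Your obstruction argument for case (i) is correct and follows a genuinely different route from the paper. The paper works flat-geometrically: it passes to a differential with degenerate core (Proposition~\ref{prop:coeurdege}), cuts along the horizontal saddle connections and the horizontal rays issued from the singularities to obtain a union of half-planes, and bounds the cone angle of a zero by $2\pi\sum_{j}(b_{j}-1)$ using the vanishing of all periods. You instead observe that a genus-zero differential with vanishing residues is exact, $\omega=dF$ with $F$ rational of degree $\sum_{j}b_{j}-p$, and read off $a_{i}+1\le\deg F$ from the ramification index of $F$ at the zero. This is shorter and more elementary, and it gives the necessity of the bound~\eqref{eq:genrezeroresiduzerofr} in one stroke.

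The constructive half of the theorem, however, is where essentially all of the work lies, and your proposal does not carry it out: it is a plan (\emph{récurrence sur un paramètre de complexité}, \emph{plombage inspiré de}~\cite{BCGGM3}) with no base case and no inductive step actually verified. Concretely, three statements remain unproved. First, every nonzero tuple of $\espres(\mu)$ must be realized when $s=0$, and every tuple when $p$ and $s$ are both nonzero; the paper does this with the residual-polygon construction and its degenerate (collinear) variant in Section~\ref{sec:casgen}. Second --- and this is the hardest point --- when $s=0$ and every $a_{i}$ satisfies $a_{i}\le\sum_{j}b_{j}-(p+1)$, the zero tuple must be shown to lie in the image (this is what makes case (iii) surjective rather than merely hitting $\espres(\mu)\setminus\{0\}$). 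In your algebraic language this asks for a rational map $F$ of degree $\sum_{j}b_{j}-p$ with polar profile $(b_{1}-1,\dots,b_{p}-1)$ over one branch point and ramification profile $(a_{1}+1,\dots,a_{n}+1)$ distributed over pairwise distinct finite branch points; this is a Hurwitz-type existence problem whose solution is not automatic, especially when several $a_{i}$ sit at or near the critical bound. The paper resolves it by an explicit gluing: for two zeros, trivial polar parts of order $b_{i}$ and type $\tau_{i}$ glued cyclically with $\sum_{i}\tau_{i}=a_{1}+1$; for three zeros, a combinatorial distribution of the angle contributions of the poles among the three zeros, encoded by a bipartite graph and adjusted edge by edge; then zero splitting (Proposition~\ref{prop:eclatZero}) for $n\ge4$. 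Any induction you set up would have to reproduce precisely this combinatorial step. Third, even granting the inductive framework, zero splitting only reduces $n\ge4$ to $n\le3$, so the $n=2$ and $n=3$ cases must still be constructed by hand; your proposal supplies neither.
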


Nous décrivons maintenant l'application résiduelle des strates $\omoduli[0](s-2;(-1^{s}))$. Pour cela nous définissons le graphe suivant.
 Un {\em graphe de connexions} est un arbre biparti connexe $\Gamma$ possédant $A$ arêtes, dont les sommets sont partitionnés en $\Gamma_{-}\cup\Gamma_{+}$ et auxquels sont attribués des poids réels strictement positifs, tels que:
\begin{enumerate}[i)]
 \item la somme des poids des sommets de $\Gamma_{+}$ est égale à celle des poids des sommets de~$\Gamma_{-}$;
 \item considérant l'opération qui consiste à retirer une feuille et soustraire le poids de ce sommet à celui qui lui est relié, appliquant cette opération entre une et $A-1$ fois à $\Gamma$, on obtient alors des graphes dont les poids sont strictement positifs.
\end{enumerate}
Si les nombres $r_{1},\dots,r_{s}$ sont $\RR$-colinéaires de somme nulle, alors un {\em graphe associé aux~$r_{i}$} est un arbre biparti connexe vérifiant les propriétés suivantes.  \'Etant donné $\alpha\in\CC^{\ast}$ tel que $r_{i}':=\alpha r_{i}\in\RR^{\ast}$ pour tout $i\leq s$. Les sommets de $\Gamma_{+}$ (resp. $\Gamma_{-}$) sont en bijection avec les $r_{i}'$ positifs (resp. négatifs). Le poids du sommet correspondant à $r_{i}$ est $|r_{i}'|$.

\begin{prop}\label{prop:gzeropolesimples}
 Soit $\omoduli[0](s-2;(-1^{s}))$ une strate de genre zéro avec $s$ pôles simples et un unique zéro d'ordre $s-2$. Les résidus $(r_{1},\ldots,r_{s})$ sont dans l'image de l'application résiduelle si et seulement si 
 l'une des propriétés est satisfaite.
 \begin{enumerate}
  \item Les résidus ne sont pas colinéaires.
  \item Il existe un graphe associé aux $r_{i}$ qui est un graphe de connexion.
 \end{enumerate}
\end{prop}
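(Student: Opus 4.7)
The plan is to interpret a differential $\omega \in \omoduli[0](s-2;(-1^{s}))$ as a flat sphere carrying a single cone point of angle $2\pi(s-1)$ at the unique zero, together with $s$ half-infinite cylinders attached at the simple poles whose circumferences and twists are encoded by the residues $r_i$. The two alternatives in the statement will correspond to the possible topologies of the horizontal foliation: when the residues are not collinear the horizontal direction is generic and the flat sphere can be freely assembled, while in the collinear case the horizontal foliation acquires a nontrivial graph of saddle connections which imposes the combinatorial condition.

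For the necessity of the connection-graph condition in the collinear case, I would suppose $\omega$ realises collinear residues and choose $\alpha \in \CC^{\ast}$ so that each $r_i' := \alpha r_i$ is real. The graph $\Gamma$ formed by the horizontal saddle connections issuing from the unique zero of $\alpha \omega$ is a bipartite tree: no cycle can occur because the surface has genus zero, and the bipartite structure is provided by the sign of $r_i'$, positive-residue poles going into $\Gamma_{+}$ and negative-residue poles into $\Gamma_{-}$. Each vertex is weighted by $\lvert r_i' \rvert$. The pruning axiom is then checked by induction on $s$: contracting a leaf amounts to the flat surgery that closes the corresponding half-infinite cylinder along a horizontal slit, producing a differential in $\omoduli[0](s-3;(-1^{s-1}))$ whose residue at the neighbour of the pruned vertex is the sum of the two merged residues. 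The smaller tree inherits the connection-graph property, hence so does $\Gamma$.

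For the sufficiency, the non-collinear case is handled by a direct construction: since the vectors $r_1,\dots,r_s$ sum to zero and do not all lie on a real line, they form the edge vectors of a non-degenerate closed polygonal chain in $\CC$, and a standard gluing (identifying all vertices to a single cone point of total angle $2\pi(s-1)$ and attaching a half-infinite cylinder along each edge with period $r_i$) produces a flat sphere in $\omoduli[0](s-2;(-1^{s}))$ with the prescribed residues. In the collinear case I would induct on $s$: the base case $s=3$ is immediate since any triple summing to zero is realised in $\omoduli[0](1;-1,-1,-1)$. For larger $s$, the connection-graph axiom supplies a leaf of $\Gamma$ whose removal yields a valid connection graph for the residues obtained by merging the pruned pole into its neighbour; by induction this smaller pattern is realised by some $\omega_0$, and we invert the surgery from the necessity argument by cutting along a horizontal slit of length $\lvert r_j' \rvert$ and inserting the missing half-infinite cylinder.

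The main obstacle will be controlling the pruning operation and its inverse. On the necessity side, one must verify that each successive pruning preserves strict positivity of weights, which requires a careful description of which poles lie on each side of every horizontal saddle connection and the fact that every bounded complementary region contains no residue, characteristic of genus zero. On the sufficiency side, the inverse surgery must be performed on the previously constructed flat sphere in such a way that the horizontal slit lies on the appropriate side of the zero and the resulting object stays in the correct stratum with the correct prescribed residue; a secondary subtlety is checking that the polygon-gluing in the non-collinear case really lands in $\omoduli[0](s-2;(-1^{s}))$ and not in a degeneration thereof, which should follow from the non-collinearity hypothesis ensuring that the gluing has no hidden identifications creating extra zeros.
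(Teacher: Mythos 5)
Your proposal follows essentially the same route as the paper: the non-collinear case is handled by gluing half-infinite cylinders to the edges of the (convex) residue polygon, the collinear sufficiency by assembling the half-cylinders according to the tree structure via successive leaf-prunings (the paper performs the same construction iteratively rather than as an explicit induction with an inverse surgery), and the necessity by taking the bipartite graph of polar domains joined by horizontal saddle connections, which is a tree by genus zero and uniqueness of the zero, with condition (i) from the residue theorem and condition (ii) from cutting along saddle connections. The only point to make explicit is that in the non-collinear case the edge vectors should be concatenated in order of decreasing argument so that the closed polygonal chain is embedded (convex), which is exactly how the paper guarantees non-degeneracy.
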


Nous montrerons  dans la proposition~\ref{prop:finietcommens} que les $s$-uplets $(r_{1},\dots,r_{s})$ qui ne sont pas dans l'image de $\appres[0](s-2;(-1^{s}))$ sont en nombre fini (modulo multiplication par une constante) pour tout $s\geq2$. De plus, dans ce cas, les $r_{i}$ sont commensurables entre eux.

Le cas des strates ayant plus de zéros est un peu moins explicite. Afin de décrire l'image de l'application résiduelle, nous faisons appel à la notion de différentielle stable rappelée dans la section~\ref{sec:bao}.

\begin{prop}\label{prop:g0p-1plusieurszero}
 Soit $\omoduli[0](a_{1},\dots,a_{n};(-1^{s}))$ une strate de genre zéro avec $s$ pôles simples et $n$ zéros. Le $s$-uplet $(r_{1},\ldots,r_{s})$ est dans l'image de l'application résiduelle si et seulement s'il existe une différentielle stable de genre zéro $(X,\omega)$ avec un pôle simple à chaque nœud, dont les autres singularités sont d'ordres  $(a_{1},\dots,a_{n};(-1^{s}))$, les résidus à ces pôles sont $(r_{1},\dots,r_{s})$ et qui possède un unique zéro sur chaque composante irréductible de $X$.
\end{prop}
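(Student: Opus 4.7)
Je procède en deux directions. Pour l'implication directe, soit $(X, \omega)$ une différentielle stable comme décrite dans l'énoncé. À chaque nœud, les deux branches portent un pôle simple de résidus opposés, si bien que la condition globale des résidus de~\cite{BCGGM3} est trivialement satisfaite. Le théorème de lissage correspondant produit alors, à partir de $(X, \omega)$, une différentielle lisse dans $\omoduli[0](a_{1},\dots,a_{n};(-1^{s}))$ en lissant simultanément tous les nœuds. Les ordres des singularités extérieures et leurs résidus étant préservés par cette opération, on obtient une différentielle dont les résidus sont exactement $(r_{1},\dots,r_{s})$, d'où l'appartenance à l'image de l'application résiduelle.

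Pour l'implication réciproque, je procéderais par récurrence sur $n$. Pour $n = 1$, la différentielle $(X_{0}, \omega_{0}) \in \omoduli[0](a_{1};(-1^{s}))$ elle-même convient, vue comme une différentielle stable sur un unique $\PP^{1}$. Pour $n \geq 2$, partant d'une différentielle $(X_{0}, \omega_{0}) \in \omoduli[0](a_{1},\dots,a_{n};(-1^{s}))$ réalisant les résidus $(r_{1},\dots,r_{s})$, on cherche à la dégénérer en une différentielle stable à deux composantes, chacune portant au moins un zéro, reliées par un nœud à pôles simples. Géométriquement, cela revient à pincer une courbe simple fermée $\gamma \subset X_{0}$ (évitant les singularités) satisfaisant: (i) chaque côté de $\gamma$ contient au moins un zéro; (ii) la somme $\rho := \sum_{p \in T} r_{p}$ des résidus des pôles enclos par $\gamma$ est non nulle, assurant un pôle simple effectif au nœud émergent de résidus $\pm \rho$. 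La fibre isorésiduelle $\appres^{-1}(r_{1},\dots,r_{s})$ étant de dimension $n - 1 \geq 1$, la théorie de compactification de~\cite{BCGGM3} fournit alors un point du bord de cette fibre de la forme voulue: une différentielle stable à deux composantes. On applique ensuite l'hypothèse de récurrence à chacune des deux composantes, les raffinant jusqu'à ce que chaque composante irréductible porte un unique zéro.

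L'obstacle principal est de garantir l'existence d'une courbe $\gamma$ satisfaisant simultanément (i) et (ii). Puisque tous les résidus $r_{i}$ appartiennent à $\CC^{\ast}$, la condition (ii) est largement flexible (un singleton $T = \{p_{i}\}$ suffit déjà à la satisfaire). L'articulation avec (i) demande néanmoins une analyse fine de la structure plate de $(X_{0}, \omega_{0})$: il faut choisir $\gamma$ compatible avec les positions relatives des zéros, tout en contrôlant quels pôles sont enclos. Les configurations délicates apparaissent lorsque la géométrie plate impose des restrictions topologiques fortes sur la disposition des cycles séparateurs; on les traitera en exploitant la liberté de déformation dans la fibre isorésiduelle (de dimension $n - 1$) pour rejoindre un point du bord où la courbe $\gamma$ pincée satisfait les deux conditions, en repoussant si nécessaire des zéros d'une composante à l'autre avant pincement.
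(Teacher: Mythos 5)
Votre première implication (existence de la différentielle stable $\Rightarrow$ résidus réalisables) est correcte et coïncide avec l'argument de l'article : les résidus étant opposés de part et d'autre de chaque nœud, le lissage du lemme~\ref{lem:lisspolessimples} s'applique directement. C'est l'implication réciproque qui comporte une lacune réelle, que vous signalez d'ailleurs vous-même sans la résoudre. L'affirmation selon laquelle la compactification de \cite{BCGGM3} \enquote{fournit un point du bord de cette fibre de la forme voulue} n'est pas justifiée : le bord de la fibre isorésiduelle pourrait a priori n'être constitué que de dégénérescences d'un autre type (nœud portant un zéro d'un côté et un pôle d'ordre $\geq 2$ de l'autre, structure à plusieurs niveaux, composante ne portant aucun zéro). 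L'existence d'une dégénérescence à deux composantes dont le nœud est un pôle simple des deux côtés et dont chaque côté contient un zéro est précisément ce qu'il faut démontrer ; vos conditions (i) et (ii) sur la courbe $\gamma$ restent donc à établir, et la \enquote{liberté de déformation dans la fibre isorésiduelle} ne le fait pas à elle seule.

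L'article évite cette difficulté par une construction de géométrie plate explicite, qui produit la différentielle stable en une seule étape et sans récurrence sur $n$. On perturbe d'abord $\omega_{0}$ dans le lieu isorésiduel (qui admet des coordonnées périodes) de sorte qu'aucune connexion de selle horizontale ne relie deux singularités coniques distinctes ; toute séparatrice horizontale issue d'un zéro revient alors à ce même zéro en temps fini. En coupant le long de ces connexions de selle, la surface se décompose en demi-cylindres infinis (les pôles simples) et en cylindres finis reliant les zéros. On fait ensuite tendre simultanément les hauteurs de tous les cylindres finis vers l'infini : chaque cylindre fini dégénère en une paire de pôles simples de résidus opposés, et la limite est une différentielle stable ayant exactement un zéro sur chaque composante irréductible. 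C'est cette décomposition en cylindres qui manque à votre argument ; pour sauver votre schéma par pincement, il faudrait prendre pour $\gamma$ l'âme d'un tel cylindre, ce qui ramène de toute façon à la construction de l'article.
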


Cette proposition permet donc d'étudier l'image de l'application résiduelle en se ramenant au cas d'un zéro. De plus, cette condition peut se reformuler en termes de graphes munis de poids. Nous le ferons dans la section~\ref{sec:MDPR} et donnerons des exemples.

\smallskip
\par
\subsection{ Pluridifférentielles en genre zéro.}

Le cas des pluridifférentielles en genre zéro présente de nombreuses difficultés. En particulier, il existe des strates pour lesquelles le complémentaire de l'image de l'application résiduelle est de dimension $1$ ou $2$ (modulo dilatations). 

Rappelons qu'étant donné une partition
$$\mu:=(a_{1},\dots,a_{n};-b_{1},\dots,-b_{p};-c_{1},\dots,-c_{r};(-k^{s}))$$
de $-2k$, l'espace $\komoduli[0](\mu)$ paramètre les $k$-différentielles primitives de type $\mu$. 
On commence par remarquer (cf lemme~\ref{lem:puissk}) que ces strates sont non vides si et seulement si $\pgcd(\mu,k)=1$. Dans la suite, cette condition sera toujours implicitement satisfaite.

Dans le cas où $r\neq0$, on a le résultat suivant.
\begin{thm}\label{thm:g=0gen1}
 Soit $\komoduli[0](a_{1},\dots,a_{n};-b_{1},\dots,-b_{p};-c_{1},\dots,-c_{r};(-k^{s}))$ une strate de genre zéro telle que $r\neq0$. L'image de l'application résiduelle est
 \begin{itemize}
 \item[i)]  $\espresk[0](\mu)\setminus\left\{0\right\}$ si $r=1$ et $s=0$ et il existe au plus un zéro d'ordre non divisible par $k$ et la somme des ordres des zéros d'ordres divisibles par $k$ est strictement inférieur à~$kp$,
 \item[ii)] $\espresk[0](\mu)$ sinon.
\end{itemize}  
\end{thm}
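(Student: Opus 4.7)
J'exclurais d'abord le tuple nul. Supposons que $\xi \in \komoduli[0](a_1; -b_1, \ldots, -b_p; -c_1)$ soit primitive avec tous les $k$-résidus aux $b$-pôles nuls et dérivons une contradiction. Soit $\pi: Y \to \PP^1$ le revêtement $k$-cyclique canonique et $\omega$ la différentielle abélienne sur $Y$ satisfaisant $\omega^k = \pi^*\xi$; l'hypothèse force l'annulation du résidu de $\omega$ à chaque préimage d'un $b$-pôle. La formule de Riemann--Hurwitz donne
\[
2g_Y - 2 = -\pgcd(k,a_1) - \pgcd(k,c_1),
\]
de sorte que la non-vacuité de la strate impose $\pgcd(k,a_1) = \pgcd(k,c_1) = 1$ et $g_Y = 0$. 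Le théorème des résidus sur $Y \simeq \PP^1$ force alors l'annulation du résidu de $\omega$ à l'unique préimage du $c$-pôle, et $\omega$ devient exacte : $\omega = dh$. L'équivariance galoisienne $\sigma^* \omega = \zeta \omega$, avec $\zeta$ racine primitive $k$-ième de l'unité, donne $\sigma^* h = \zeta h$ après un décalage par une constante, de sorte que $F := h^k$ descend en une fonction rationnelle sur $\PP^1$ et l'on obtient $\xi = k^{-k} F^{1-k} (dF)^k$. La relation $a_1 + 2k = \sum b_i + c_1$ combinée à $k \mid \sum b_i + 2k$ et $k \nmid c_1$ force $k \nmid a_1$; le zéro de $\xi$ en $w_1$ ne peut donc provenir d'un point critique de $F$ et doit venir d'un zéro de $F$ d'ordre $a_1 + k$. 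Le décompte côté pôles donne $\deg F = \sum (b_i - k) + (c_1 - k) = a_1 + k - pk$ tandis que $\deg F \geq a_1 + k$ côté zéros, d'où $pk \leq 0$, ce qui contredit $p \geq 1$. Pour $p = 0$, $\espresk(\mu)$ est un singleton et l'énoncé est vide. La surjectivité sur $\espresk(\mu) \setminus \{0\}$ résulte ensuite de la $\CC^*$-homogénéité de l'application résiduelle en la constante d'échelle combinée à la dominance prouvée dans~\cite{BCGGM3}.

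\textbf{Cas (ii).} L'une au moins des conditions $n \geq 2$, $r \geq 2$, $s \geq 1$ est satisfaite. Pour réaliser le tuple nul : lorsque $s = 0$, l'identité de degré de la construction $F$ admet désormais des solutions grâce aux degrés de liberté supplémentaires fournis par $n \geq 2$ ou $r \geq 2$ (zéros additionnels de $F$ d'ordre $k$ ou points critiques aux $w_i$ avec $k \mid a_i$), et l'on obtient un $\xi$ explicite avec $b$-résidus nuls. Lorsque $s \geq 1$, la forme $k^{-k}F^{1-k}(dF)^k$ ne produit pas de pôles simples, et je construirais alors $\xi$ par lissage d'une $k$-différentielle stable sur un diviseur nodal $\PP^1 \cup \PP^1$ : une composante porte les $s$ pôles simples (réalisant tout résidu prescrit) et un pôle simple au nœud, l'autre porte les $b$- et $c$-pôles ainsi que les zéros via une construction $F$ avec $b$-résidus nuls, les résidus au nœud étant appariés. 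Le lissage dans le cadre de la section~\ref{sec:bao} fournit $\xi$ dans la strate désirée. Pour un tuple de résidus non nul arbitraire, des constructions analogues par différentielles stables permettent de prescrire les résidus, et la surjectivité complète en découle.

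\textbf{Difficulté principale.} Le point délicat est le cas $s \geq 1$ dans (ii) : l'absence d'une représentation directe par la forme $F$ impose de combiner cette construction avec un argument de lissage sur une courbe nodale. L'appariement des résidus au nœud et la préservation des ordres pendant le lissage constituent le cœur technique de l'argument.
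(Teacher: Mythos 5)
Your exclusion of the zero tuple in case (i) is correct, and it is a genuinely different argument from the paper's: you pass to the canonical $k$-cyclic cover, use Riemann--Hurwitz and the non-emptiness condition $\pgcd(\mu,k)=1$ to force the cover to have genus $0$, deduce exactness of the $k$-th root $\omega=dh$, descend $F=h^{k}$, and get a contradiction from the degree count of $F$ ($\deg F=a_{1}+k-pk$ from the poles versus $\deg F\geq a_{1}+k$ from the zero of order $a_{1}+k$). The paper instead attaches $k$-th powers of abelian differentials to the $b$-poles and derives a contradiction by showing the resulting boundary locus of $k$-differentielles entrelacées would have the same dimension as the smoothed stratum. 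Your global-normal-form argument is more elementary and self-contained; the paper's argument generalizes more readily (it is reused verbatim for the strata with two zeros and no $c$-pole).

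However, the realizability half of the theorem --- which is the bulk of its content --- is not proved. The assertion that surjectivity onto $\espresk(\mu)\setminus\lbrace 0\rbrace$ ``résulte de la $\CC^{\ast}$-homogénéité combinée à la dominance'' is a genuine gap: dominance plus $\CC^{\ast}$-invariance only shows the image is a dense, scaling-invariant constructible set, and says nothing about whether it misses entire $\CC^{\ast}$-orbits or hypersurfaces. The paper's own results show this inference fails in immediately adjacent strata: for instance the image of $\appresk[2](4s'-1,4s'+1;-4;(-2^{2s'}))$ misses $\CC^{\ast}\cdot(0;1,\dots,1)$, and several strata in Théorème~\ref{thm:geq0kspe} miss $\CC^{\ast}\cdot(1,\dots,1)$, all of which are dominant residual maps. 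Determining that no such loci are missed when $r\neq 0$ is exactly what requires the explicit constructions of the paper (recollement de parties polaires, figures~\ref{ex:8,8,12res} et \ref{ex:8,8,12}), and nothing in your proposal replaces them.

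Case (ii) has the same problem in amplified form. For the zero tuple with $s=0$ and $r\geq 2$, observing that ``l'identité de degré admet des solutions'' does not produce the rational function $F$ (prescribing zeros, poles and critical points of a rational map is a Hurwitz-type existence problem), and moreover when $r\geq 2$ the canonical cover generally has positive genus, so exactness of $\omega$ is no longer automatic and the representation $\xi=k^{-k}F^{1-k}(dF)^{k}$ is not available. For $s\geq 1$, your nodal degeneration with one $\PP^{1}$ carrying all the simple poles reduces the problem to realizing arbitrary residues in a stratum $\komoduli[0](a;(-k^{s+1}))$ --- precisely the hardest case of the paper (section~\ref{sec:juste-k}), which has many exceptional strata --- or, if the node has orders $(m,-2k-m)$ with $m>-k$, to a stratum of the same type as the one you started with; either way the induction is not set up and the matching/smoothing conditions are not checked. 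As written, only the negative statement in (i) is established.
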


Nous donnons maintenant la description des strates pour $r=0$, $s\neq0$ et $p\neq0$. S'il y a des pôles d'ordre $-k$, on a la description suivante.
\begin{thm}\label{thm:r=0sneq0}
L'application résiduelle des strates $\komoduli[0](a_{1},\dots,a_{n};-b_{1},\dots,-b_{p};(-k^{s}))$ avec $s\neq0$ est surjective sauf dans les cas exceptionnels suivants.
\begin{enumerate}[i)]
\item L'image de $\appresk[0][2](2s'-1;2s'+1;-4;(-2^{2s'}))$ avec $s'\geq1$  est $\espresk[0][2](\mu)\setminus\CC^{\ast}\cdot(0;1,\dots,1)$.

\item L'image de $\appresk[0][2](2a-1;2a+1;(-4^{a});(-2^{2}))$ avec $a\geq 0$ est $\espresk[0][2](\mu)\setminus\CC^{\ast}\cdot(0,\dots,0;1,1)$.

\item L'image de $\appresk[0][2](2s'+1;2s'+1;-4;(-2^{2s'+1}))$ avec $s'\geq0$  est $\espresk[0][2](\mu)\setminus\CC^{\ast}\cdot(1;1,\dots,1)$.

\item L'image de $\appresk[0][2](2a-1,2a-1;(-4^a);-2)$ avec   $s\geq0$ est $\espresk[0][2](\mu)\setminus\CC^{\ast}\cdot(1,0,\dots,0;1) $.
\end{enumerate}
\end{thm}
Dans le cas où $r=s=0$ et $p\neq0$,  l'image de l'application $k$-résiduelle est donné par le résultat suivant.
%
%
%
%

\begin{thm}\label{thm:r=0s=0}
Soit $\komoduli[0](a_{1},\ldots,a_{n};-b_{1},\dots,-b_{p})$ une strate non vide de genre zéro.

L'image de l'application $k$-résiduelle de cette strate contient $\espres[0](\mu)\setminus\lbrace(0,\dots,0)\rbrace$ à l'exception des  deux familles de strates quadratiques $\Omega^{2}\moduli[0](2p+b-5,2p+b-5;-b,-b-2,(-4^{p-2}))$ et $\Omega^{2}\moduli[0](2p+b-7,2p+b-5;-b,-b,(-4^{p-2}))$ avec $p\geq2$ et $b\geq4$ pair.
Dans le cas de ces strates, l'image de l'application $2$-résiduelle est le complémentaire de la droite engendrée par le vecteur $(1,1,(0^{p-2}))$.

De plus, l'origine est dans l'image de l'application $k$-résiduelle si et seulement si 
\begin{enumerate}[i)]
 \item $p=1$ et $n\geq3$, ou
 \item $p\geq 2$, $n\geq 3$ et il existe au moins trois zéros d'ordres non divisibles par $k$, ou
 \item  $p\geq 2$, $n\geq 3$, il existe deux zéros d'ordres non divisibles par $k$ et la somme des ordres des zéros divisibles par $k$ est  supérieure ou égale à~$kp$.
 
\end{enumerate}
En particulier, si $n=2$, alors $(0,\dots,0)$ n'est pas dans l'image de l'application $k$-résiduelle. 
\end{thm}

Enfin nous donnons la description de l'image par l'application résiduelle des strates telles que $p=r=0$. Le cas des différentielles quadratiques est très différent du cas des $k$-différentielles avec $k\geq3$. Obtenir une caractérisation précise du cas quadratique nécessiterait un autre travail. Nous donnons toutefois un certain nombre de résultats partiels intéressants.
Dans les cas où il y a plus de trois zéros, nous pouvons énoncer le résultat suivant.
\begin{prop}\label{prop:quadsurjbcpimp}
L'application résiduelle des strates $\Omega^{2}\moduli[0](a_{1},\dots,a_{n};(-2^{s}))$ est surjective dans les deux cas suivants.
\begin{enumerate}[1)]
\item $n\geq4$ et au moins quatre $a_{i}$ sont impairs. 
\item $n=3$ et $a_{1}+a_{2}< a_{3}$ avec $a_{3}$ pair.
\end{enumerate}
\end{prop}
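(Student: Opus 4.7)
La stratégie commune aux deux parties est le relèvement au revêtement double canonique. Étant donnée une différentielle quadratique primitive $\xi \in \Omega^{2}\moduli[0](\mu)$, le revêtement double $\pi : \widetilde X \to \mathbb{P}^{1}$ ramifié au-dessus des zéros d'ordre impair porte une différentielle abélienne $\omega$ anti-invariante sous l'involution $\sigma$ du revêtement, avec $\pi^{*}\xi = \omega^{2}$. Les $2$-résidus $(r_{1},\dots,r_{s})$ correspondent aux paires de résidus opposés $(\pm\sqrt{r_{j}})$ aux $2s$ pôles simples de $\omega$. Puisque tous les pôles de $\mu$ sont d'ordre $-2$, seuls les zéros impairs provoquent la ramification du revêtement.

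Pour la partie~1), notons $2m \geq 4$ le nombre de zéros d'ordre impair. Par Riemann--Hurwitz, $\widetilde g = m - 1 \geq 1$, et $\omega$ vit dans la strate abélienne sur $\widetilde X$ comprenant des zéros d'ordre $a_{i}+1$ au-dessus des zéros impairs, des paires de zéros d'ordre $a_{j}/2$ au-dessus des zéros pairs, et $2s$ pôles simples répartis en $s$ paires de résidus opposés. Le plan est d'appliquer les Théorèmes~\ref{thm:ggeq2} (pour $\widetilde g \geq 2$) ou~\ref{thm:geq1} (pour $\widetilde g = 1$) à cette strate relevée afin de réaliser n'importe quels résidus prescrits, puis de produire un représentant $\sigma$-anti-invariant. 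L'étape cruciale est la version équivariante de la surjectivité : on vérifie que la fibre générique de l'application résiduelle rencontre non trivialement le lieu anti-invariant, via un calcul de dimension compatible avec l'action de $\sigma$ sur l'espace des périodes.

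Pour la partie~2), la parité force $a_{1}$ et $a_{2}$ à être tous deux impairs (car $a_{3}$ est pair et $\mu$ n'est pas un carré), donc le revêtement double n'a que deux points de branchement et $\widetilde X = \mathbb{P}^{1}$. La strate relevée est alors $\omoduli[0](a_{1}+1, a_{2}+1, a_{3}/2, a_{3}/2; (-1^{2s}))$, n'ayant que des pôles simples. Son image par l'application résiduelle est caractérisée par la Proposition~\ref{prop:g0p-1plusieurszero} via l'existence de différentielles stables adéquates. Le plan consiste à construire explicitement, pour tout $s$-uplet $(r_{1},\dots,r_{s}) \in (\CC^{*})^{s}$ et tout choix de signes $\epsilon_{j} \in \{\pm 1\}$, une différentielle stable de genre zéro réalisant les résidus $(\epsilon_{j}\sqrt{r_{j}})_{j}$ et compatible avec l'involution $\sigma$ qui échange les deux copies du zéro d'ordre $a_{3}/2$. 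La condition $a_{1}+a_{2} < a_{3}$ fournit la marge combinatoire nécessaire : elle empêche le blocage où l'ordre combiné des zéros $a_{1}+1$ et $a_{2}+1$ dépasserait l'ordre disponible sur la composante opposée portant les $a_{3}/2$.

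La difficulté principale, commune aux deux parties, est le passage de la surjectivité abélienne sur le revêtement à la surjectivité quadratique équivariante sur la base : les théorèmes précédents fournissent la surjectivité non-équivariante sur $\widetilde X$, et il faut raffiner les arguments pour produire des différentielles anti-invariantes qui descendent en pluridifférentielles primitives sur $\mathbb{P}^{1}$. Pour la partie~1) ceci requiert un argument de dimension global, tandis que pour la partie~2) on procède par une construction combinatoire explicite de graphes de dégénérescence symétriques sous $\sigma$.
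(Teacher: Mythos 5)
Votre stratégie de relèvement au revêtement double canonique présente une lacune essentielle au moment du retour à la base. Les théorèmes~\ref{thm:ggeq2} et \ref{thm:geq1} (ou la proposition~\ref{prop:g0p-1plusieurszero} pour votre partie~2) fournissent une différentielle abélienne sur le revêtement ayant les résidus voulus, mais rien ne garantit qu'on puisse la choisir anti-invariante sous l'involution $\sigma$, et l'argument de dimension que vous invoquez est circulaire : à revêtement fixé, le lieu anti-invariant de la strate abélienne relevée s'identifie à la strate quadratique de départ, et la restriction de l'application résiduelle abélienne à ce lieu n'est autre que l'application résiduelle quadratique suivie de l'élévation au carré ; affirmer que la fibre générique rencontre ce lieu, c'est exactement affirmer la surjectivité que l'on cherche à établir. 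Qu'aucun transfert formel de ce type ne puisse fonctionner est d'ailleurs attesté par le théorème~\ref{thm:geq0quad2} et le lemme~\ref{lem:quadexeptionnelsgzero} : pour les strates $\Omega^{2}\moduli[0](2s'-1,2s'+1;(-2^{2s'+2}))$, les résidus quadratiques $(1,\dots,1,R,R)$ avec $R\notin\RR_{+}$ se relèvent en des résidus abéliens non colinéaires, donc atteints par la strate abélienne sur le revêtement (section~\ref{sec:casgen}), alors qu'ils ne sont pas atteints par la strate quadratique. Toute preuve doit donc faire intervenir concrètement les hypothèses (au moins quatre $a_{i}$ impairs, ou bien $a_{1}+a_{2}<a_{3}$ avec $a_{3}$ pair), ce que votre esquisse ne fait à aucun endroit de façon effective.

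La preuve du texte procède tout autrement, par construction plate directe. On se ramène d'abord au cas $n=4$ avec quatre ordres impairs (les cas $n>4$ s'obtenant par éclatement de zéros) et au cas où tous les résidus quadratiques sont sur un même rayon, les autres étant déjà couverts. On répartit alors les racines $\sqrt{R_{i}}$ en quatre (resp. trois) paquets $\mathcal{R}_{j}$ de cardinaux $l_{j}$ et on forme un polygone explicite dont le bord alterne ces racines et des paires de segments auxiliaires $s_{j}$ identifiés deux à deux par rotation ; les demi-cylindres infinis sont collés aux côtés $\sqrt{R_{i}}$. L'hypothèse $a_{1}+a_{2}<a_{3}$ sert précisément à garantir l'inégalité $\sum_{R_{i}\in\mathcal{R}_{3}}\sqrt{R_{i}}>\sum_{R_{i}\in\mathcal{R}_{1}\cup\mathcal{R}_{2}}\sqrt{R_{i}}$ qui rend le polygone non dégénéré. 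Si vous tenez à l'angle du revêtement double, il faudrait remplacer votre argument de dimension par une construction explicite de différentielles (stables) $\sigma$-symétriques, ce qui ramène en pratique à la même combinatoire de polygones ou de graphes que celle du texte.
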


Dans le cas des strates avec deux zéros, la description est très complexe. Si les résidus quadratiques sont sur le même rayon réel issu de l'origine, on peut utiliser une généralisation des graphes de connexion. Toutefois, la combinatoire devient très complexe et il semble difficile de tirer des informations d'une telle description. On pourra toutefois consulter la proposition~\ref{prop:quadmoinsun} dans le cas où $a_{1}=-1$. Dans ces strates, le concept suivant est important pour comprendre l'application résiduelle.
\begin{defn}\label{def:triangulaire}
 Des nombres $R_{1},R_{2},R_{3}$ sont {\em triangulaires} s'il existe des racines carrées $r_{1},r_{2},r_{3}$ de ces nombres telles que $r_{1}+r_{2}+r_{3}=0$.
\end{defn} 

On a alors le résultat surprenant suivant.
\begin{thm}\label{thm:geq0quad2}
L'application résiduelle de $\Omega^{2}\moduli[0](a_{1},\dots,a_{n};(-2^{s}))$ contient tous les $s$-uplets qui n'appartiennent pas à un même rayon issu de l'origine sauf dans les deux cas suivants.
\begin{enumerate}[i)]
\item L'image par l'application résiduelle des strates 
$\Omega^{2}\moduli[0](2s-1,2s+1;(-2^{2s'+2}))$  contient les $2$-résidus n'appartenant pas à un même rayon issu de l'origine sauf ceux proportionnels à  $(1,\dots,1,R,R)$ pour tout $R\in\CC^{\ast}$.

\item L'image par l'application résiduelle des strates $\Omega^{2}\moduli[0](2s'-1,2s'-1;(-2^{2s'+1}))$   contient les $2$-résidus n'appartenant pas à un même rayon issu de l'origine sauf ceux proportionnels à
$(R_{1},R_{2},R_{3},\dots,R_{3})$ où les $R_{i}$  sont triangulaires.
\end{enumerate}
\end{thm}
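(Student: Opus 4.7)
Le plan est de séparer l'énoncé en une partie de réalisation (presque tous les $s$-uplets non alignés sont atteints) et une partie d'obstruction (dans les familles (i) et (ii), certains uplets sont manquants). Les strates avec $n\geq3$ zéros non couvertes par la proposition~\ref{prop:quadsurjbcpimp} se ramèneraient au cas $n=2$ par fusion de deux zéros via une dégénérescence le long d'une composante stable. Le cœur de la preuve est donc le cas $n=2$.

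Pour la partie positive dans ce cas, je construirais, pour chaque $s$-uplet non aligné en dehors des familles exceptionnelles, une différentielle stable de genre zéro sur une courbe réductible dont chaque composante réalise une sous-partie des $2$-résidus (les composantes individuelles étant traitées par les cas à un zéro des théorèmes~\ref{thm:r=0sneq0} et~\ref{thm:r=0s=0}), puis je procéderais au lissage selon la méthode de BCGGM. La non-colinéarité des $R_i$ fournit la flexibilité nécessaire pour choisir une partition compatible et éviter les configurations rigides propres aux familles exceptionnelles.

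Pour l'obstruction, je passerais au revêtement double canonique $\pi\colon\widehat X\to X$ satisfaisant $\pi^*q=\omega^2$. Dans les deux familles exceptionnelles, les zéros sont d'ordre impair et les pôles d'ordre pair, donc la ramification est exactement aux deux zéros et Riemann--Hurwitz donne $\widehat X\cong\mathbb P^1$. Les résidus de $\omega$ aux préimages d'un pôle double de $2$-résidu $R_i$ sont $\pm\sqrt{R_i}$, et l'involution galoisienne $\sigma$ vérifie $\sigma^*\omega=-\omega$. La réalisation du $s$-uplet $(R_i)$ équivaut donc à celle d'un $(2s)$-uplet $\sigma$-antisymétrique par une différentielle abélienne sur $\mathbb P^1$ appartenant à $\omoduli[0](2s,2s+2;(-1^{4s+4}))$ pour la famille (i), et à $\omoduli[0](2s',2s';(-1^{4s'+2}))$ pour la famille (ii).

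Dans la famille (i) les résidus de $\omega$ sont $\pm1$ de multiplicité $2s$ et $\pm\sqrt R$ de multiplicité $2$; une analyse des graphes de connexion selon les propositions~\ref{prop:gzeropolesimples} et~\ref{prop:g0p-1plusieurszero} montre qu'aucune différentielle stable $\sigma$-équivariante ne peut porter un tel profil. Dans la famille (ii) les résidus sont $\pm\sqrt{R_1}$, $\pm\sqrt{R_2}$ et $\pm\sqrt{R_3}$ (le dernier de multiplicité $2s'-1$), et la seule configuration stable $\sigma$-équivariante envisageable force la relation $\sqrt{R_1}+\sqrt{R_2}+\sqrt{R_3}=0$ pour un certain choix de racines carrées, c'est-à-dire la condition triangulaire. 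Le principal obstacle sera de vérifier qu'en dehors du lieu triangulaire, toute configuration $\sigma$-équivariante est effectivement réalisable, ce qui requiert un traitement soigné des modèles stables et une énumération des graphes de connexion compatibles avec l'involution $\sigma$.
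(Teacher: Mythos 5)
Votre schéma général (réduction à $n=2$ par scindage de zéros, puis séparation entre réalisation et obstruction) est bien celui du papier, mais la moitié « obstruction » de votre argument contient une lacune réelle. Pour la famille (i), le passage au revêtement canonique ne produit aucune obstruction exploitable : les résidus abéliens sur $\whX\cong\PP^{1}$ sont $2s'$ paires $\pm1$ et deux paires $\pm\sqrt{R}$, qui ne sont \emph{pas} colinéaires dès que $R\notin\RR_{+}$ ; la proposition~\ref{prop:gzeropolesimples} montre alors que ce profil est réalisé par une différentielle abélienne de la strate correspondante, et les graphes de connexion — qui ne contraignent que le cas colinéaire — ne donnent rien. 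Toute l'obstruction doit donc venir de l'équivariance sous l'involution, et vous n'en proposez aucun mécanisme. Le papier procède tout autrement : il colle entre eux les deux pôles de $2$-résidu $R$ pour former une différentielle entrelacée, la lisse par le lemme~\ref{lem:lisspolessimples}, et aboutit à une différentielle de $\Omega^{2}\moduli[1](2s'-1,2s'+1;(-2^{2s'}))$ de résidus quadratiques $(1,\dots,1)$, dont la non-existence est l'objet du lemme~\ref{lem:nonsurjdeuxzero} (un long argument de cœur dégénéré, indépendant). Pour la famille (ii), votre logique est inversée : vous déduisez que la réalisabilité \emph{force} la condition triangulaire, alors que l'énoncé affirme que la condition triangulaire \emph{exclut} la réalisabilité ; telle quelle, votre conclusion placerait le lieu réalisable à l'intérieur du lieu triangulaire, ce qui contredit la partie positive du théorème. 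L'argument du papier pour le cas de base $\Omega^{2}\moduli[0](1,1;(-2^{3}))$ est certes de la même saveur que le vôtre (coupure le long d'un chemin entre les zéros, racine carrée, théorème des résidus donnant $2r_{12}+r_{1}+r_{2}+r_{3}=0$), mais la contradiction ne sort pas de cette seule relation : il faut ensuite exclure explicitement les deux pentagones possibles (l'un donne le carré d'une différentielle abélienne, l'autre de mauvais ordres de singularités), puis faire une récurrence sur $s'$ en retirant des paires de pôles appariés.

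La partie positive par dégénérescence en différentielle stable à pôles doubles aux nœuds puis lissage est plausible dans son principe, mais elle est entièrement non vérifiée : tout le travail consiste précisément à exhiber, pour chaque $s$-uplet non aligné, une partition des résidus en profils réalisables composante par composante, et c'est exactement là que surgissent les familles exceptionnelles. Le papier contourne cette difficulté par des constructions polygonales explicites (constructions (C1)/(C2)) avec une analyse au cas par cas selon le nombre de rayons portés par les résidus ; votre proposition ne dit pas comment la non-colinéarité fournit la partition voulue ni pourquoi elle échoue précisément dans les cas (i) et (ii).
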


Maintenant, nous considérons les strates de $k$-différentielles pour $k\geq3$. Mis à part les strates de la forme $\komoduli[0](-1,1;-k,-k)$, toutes les exceptions proviennent du fait que les racines $k$-ième de l'unité engendrent un réseau de $\CC$ si et seulement si $k\in\left\{3,4,6\right\}$.
\begin{thm}\label{thm:geq0kspe}
L'application résiduelle des strates $\komoduli(a_{1},\dots,a_{n};(-k^{s}))$ est surjective pour $k\geq3$ sauf dans les cas suivants.
\begin{enumerate}
\item L'image de  $\appresk[0](-1,1;-k,-k)$ est $(\CC^{\ast})^{2}\setminus \CC^{\ast}\cdot(1,(-1)^{k})$.
\item L'image de   $\appresk[0][3](-1,4;(-3^{3}))$ est $(\CC^{\ast})^{3}\setminus \CC^{\ast}\cdot(1^{3})$. 
 \item L'image de  $\appresk[0][3](1,2;(-3^{3}))$ est $(\CC^{\ast})^{3}\setminus \CC^{\ast}\cdot(1^{3})$.
 \item  L'image de  $\appresk[0][3](2,4;(-3^{4}))$ est $(\CC^{\ast})^{4}\setminus \CC^{\ast}\cdot(1,1,-1,-1)$.
\item L'image de  $\appresk[0][3](2,7;(-3^{5}))$ est $(\CC^{\ast})^{5}\setminus \CC^{\ast}\cdot(1^{5}) \cup\CC^{\ast}\cdot((1^{4}),-1)$.
\item L'image de  $\appresk[0][3](2,10;(-3^{6}))$ est $(\CC^{\ast})^{6}\setminus \CC^{\ast}\cdot((1^{3}),(-1^{3}))\cup \CC^{\ast}\cdot(1^{6})$.
\item  L'image de $\appresk[0][3](7,5;(-3^{6}))$ est $(\CC^{\ast})^{6}\setminus \CC^{\ast}\cdot(1^{6})$.
\item L'image de  $\appresk[0][4](-1,5;(-4^{3}))$ est $(\CC^{\ast})^{3}\setminus \CC^{\ast}\cdot(1,1,-4)$.
\item L'image de $\appresk[0][4](3,5;(-4^{4}))$ est $(\CC^{\ast})^{4}\setminus \CC^{\ast}\cdot(1^{4})$.
\item L'image de $\appresk[0][4](-1,9;(-4^{4}))$ est $(\CC^{\ast})^{4}\setminus \CC^{\ast}\cdot(1^{4})$.
\item L'image de $\appresk[0][4](3,13;(-4^{6}))$ est $(\CC^{\ast})^{6}\setminus \CC^{\ast}\cdot(1^{6})$.
\item L'image de  $\appresk[0][6](-1,7;(-6^{3}))$ est $(\CC^{\ast})^{3}\setminus \CC^{\ast}\cdot(1^{3})$.
\end{enumerate}
\end{thm}

\smallskip
\par
\subsection{Applications.}
\label{sec:apliintro}

Nous donnons maintenant quelques applications de nos résultats. Ils fournissent des informations intéressantes  couplés avec les théorèmes de \cite{BCGGM} et \cite{BCGGM3}.
Ces travaux décrivent un bord des strates de $k$-différentielles via le concept de {\em différentielles entrelacées} (voir section~\ref{sec:bao}). Afin de savoir si une différentielles entrelacée est au bord d'une strate donnée, on a besoin de connaître l'existence de différentielles où l'on connaît les ordres des zéros et des pôles ainsi que des résidus de différentielles. Notre article peut donc être vu comme la dernière pierre dans cette description ensembliste du bord des strates.

\smallskip
\par
\paragraph{\bf Lieu de Weierstra\ss.}
La première conséquence est un résultat sur la géométrie de l'espace des modules des courbes pointées. Notons que la première des deux assertion a été prouvée par Eisenbud et Harris dans le théorème 3.1 de \cite{eiha} à l'aide des séries linéaires limites. 
\begin{prop}\label{prop:limWei}
Soit $\overline{\mathfrak{W}_{g}}$ l'adhérence dans $\barmoduli[g,1]$ du lieu paramétrant les points de Weierstra\ss{}~des courbes algébriques de genre $g$. 
\begin{itemize}
\item[i)] $\overline{\mathfrak{W}_{g}}$ ne rencontre pas le lieu des courbes stables où $g$ courbes elliptiques sont attachées à un $\PP^{1}$ contenant le point marqué.
\item[ii)] $\overline{\mathfrak{W}_{g}}$ intersecte le lieu où $g-1$ courbes elliptiques sont attachées à un $\PP^{1}$ contenant le point marqué et l'une de ces courbes elliptiques est attachée par deux points au $\PP^{1}$. 
\end{itemize}
\end{prop}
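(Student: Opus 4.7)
Le plan est de ramener les deux assertions à des questions d'existence de différentielles entrelacées sur la courbe stable $X$, via la description du bord des strates due à \cite{BCGGM, BCGGM3}: un couple pointé stable $(X, P)$ appartient à $\overline{\mathfrak{W}_g}$ si et seulement si $X$ admet une différentielle entrelacée dont l'ordre en $P$ est au moins $g$.

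Pour (i), bien que le résultat soit déjà établi par Eisenbud--Harris, on peut le redémontrer dans notre cadre. Écrivons $X = R \cup E_1 \cup \cdots \cup E_g$, avec $R \cong \PP^1$ contenant $P$ et chaque $E_i$ attachée à $R$ en un unique nœud $q_i$. Dans la configuration générique où $R$ est au niveau supérieur et chaque $E_i$ à un niveau strictement inférieur, la restriction $\omega|_{E_i}$ est une différentielle holomorphe non nulle, d'ordre $0$ en $q_i$; par conséquent $\omega|_R$ a un pôle double de résidu nul en chaque $q_i$ par la condition des résidus globale. Il suffit donc d'établir que la strate $\omoduli[0](g, 1^{g-2}; (-2^g))$ n'admet pas de différentielle de résidus tous nuls. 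Ceci se démontre par un calcul direct: on écrit $\omega|_R = \sum_{i=1}^{g} c_i (z-q_i)^{-2}\, dz$ (les termes simples étant exclus par l'annulation des résidus), on développe au voisinage de $P = \infty$ dans la coordonnée $w = 1/z$, et l'annulation des $g$ premiers coefficients de Taylor donne le système $\sum_{i=1}^g c_i q_i^j = 0$ pour $j = 0, \dots, g-1$, de matrice de Vandermonde non-singulière. Donc tous les $c_i$ s'annulent, d'où $\omega|_R \equiv 0$, contradiction. Les configurations de niveaux non-génériques se traitent de façon analogue.

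Pour (ii), on pose $X = R \cup E_1 \cup \cdots \cup E_{g-2} \cup E^*$, où $E^*$ est attachée à $R$ par deux nœuds $q, q'$ et chaque $E_i$ par un nœud unique $q_i$. Il s'agit de construire une différentielle entrelacée explicite. Sur chaque composante elliptique (y compris $E^*$), on choisit une différentielle holomorphe non nulle: tous les ordres aux nœuds sont alors nuls. La condition des résidus globale se réduit aux relations $\Res_{q_i}(\omega|_R) = 0$ pour $i = 1, \dots, g-2$ et $\Res_q(\omega|_R) + \Res_{q'}(\omega|_R) = 0$. Il reste à exhiber $\omega|_R$ dans $\omoduli[0](g, 1^{g-2}; (-2^g))$ de résidus $(0, \dots, 0, r, -r)$ pour un certain $r \in \CC^*$; ce vecteur étant non nul et la strate étant abélienne (donc hors des exceptions quadratiques), il appartient à l'image de l'application résiduelle par le théorème~\ref{thm:r=0s=0}. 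Le recollement produit une différentielle entrelacée sur $X$, d'où $(X, P) \in \overline{\mathfrak{W}_g}$.

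La principale difficulté réside dans l'exhaustivité de l'analyse des structures de niveaux dans (i): il faut vérifier que les configurations où certaines $E_i$ portent des différentielles méromorphes (plutôt qu'holomorphes) ne contournent pas l'obstruction. Ceci résulte de ce que la contribution de chaque feuille elliptique à la condition des résidus globale en $q_i$ sur le côté $R$ reste nulle dans toutes les configurations admissibles, et l'obstruction de type Vandermonde persiste donc dans chaque cas.
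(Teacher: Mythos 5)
Votre démonstration suit pour l'essentiel la même stratégie que l'article : ramener chaque assertion à l'existence (ou non) d'une différentielle entrelacée lissable, puis traduire la condition des résidus globale en une contrainte sur l'image de l'application résiduelle d'une strate de genre zéro. Pour le point (ii), la construction est identique à celle de l'article (résidus $(0,\dots,0,r,-r)$ sur le $\PP^{1}$, courbe elliptique bi-nodale collée aux deux pôles de résidus non nuls, les autres aux pôles de résidu nul) ; notez seulement que la référence pertinente est le théorème~\ref{thm:geq0keq1} (cas abélien) plutôt que le théorème~\ref{thm:r=0s=0}, qui concerne les $k$-différentielles avec $k\geq2$. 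Pour le point (i), vous remplacez l'appel au théorème~\ref{thm:geq0keq1} --- dont l'hypothèse~\eqref{eq:genrezeroresiduzerofr} se vérifie ici par $g>2g-(g+1)=g-1$ --- par un calcul direct en fractions partielles et un déterminant de Vandermonde : c'est un argument plus élémentaire et autonome, indépendant de la géométrie plate de la section~\ref{sec:MDPR}, et qui s'étend sans peine aux configurations où seuls $p\leq g$ nœuds portent des pôles doubles (le système de Vandermonde extrait de taille $p\times p$ reste inversible) ; l'article, lui, réutilise son théorème général, ce qui évite tout calcul mais masque la simplicité du cas présent. Deux réserves mineures : la structure de niveaux que vous décrivez est inversée --- avec les conventions de \cite{BCGGM}, la composante portant l'ordre $-2$ au nœud est \emph{en dessous} de celle portant l'ordre $0$, donc les courbes elliptiques sont au-dessus du $\PP^{1}$ ; la conclusion que vous en tirez (annulation des résidus de $\omega|_{R}$ par la condition des résidus globale) est néanmoins la bonne. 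Enfin, comme dans l'article, le traitement des courbes semi-stablement équivalentes et l'exhaustivité des structures de niveaux restent allusifs ; votre dernière phrase affirme la persistance de l'obstruction sans la détailler, mais elle est effectivement correcte.
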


\smallskip
\par
\paragraph{\bf Éclatement de zéros}
Nous donnons une application dans l'esprit de la géométrie plate. Cette proposition est connue dans le cas abélien \cite{EMZ} et quadratique \cite{lanneauquad}.
\begin{prop}\label{prop:eclatintro}
Soient $\xi$  une $k$-différentielle et $z_{0}$ une singularité d'ordre $a_{0}>-k$ de~$\xi$.
Il est pas possible de scinder {\em localement} $z_{0}$ en $t$ singularités d'ordres $(\alpha_{1},\dots,\alpha_{t})$ avec $a_{0}=\sum \alpha_{i}$ et $\alpha_{i}>-k$ si et seulement si $k\geq2$, $t=2$, $k\mid a_{0}$ et  $k\nmid\pgcd(\alpha_{i})$.
\end{prop}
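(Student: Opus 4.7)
Le plan est de reformuler le scindement local comme une question d'existence d'une $k$-différentielle sur $\PP^{1}$ avec structure prescrite, puis d'appliquer les théorèmes d'existence déjà démontrés dans cet article.

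\textbf{\'Etape~1 (Reformulation).} En une coordonnée locale convenable, $\xi=z^{a_{0}}(dz)^{k}$ près de $z_{0}$. Par une chirurgie en géométrie plate (l'analogue $k$-différentiel des constructions de \cite{EMZ} et \cite{lanneauquad}), un scindement de $z_{0}$ en $t$ singularités d'ordres $(\alpha_{1},\dots,\alpha_{t})$ existe si et seulement si l'on peut trouver une $k$-différentielle sur $\PP^{1}$ de signature $(\alpha_{1},\dots,\alpha_{t};-(2k+a_{0}))$, primitive ou non, avec $k$-résidu nul en l'unique pôle lorsque ce pôle est d'ordre divisible par $k$, c'est-à-dire lorsque $k\mid a_{0}$. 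La justification rigoureuse de cette équivalence --- vérifier que la condition $\Resk=0$ encode exactement la compatibilité asymptotique avec le modèle $z^{a_{0}}(dz)^{k}$ sur le bord d'un petit disque --- constitue la principale difficulté de la preuve.

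\textbf{\'Etape~2 (Direction positive).} Les cas $t=1$ (trivial) et $k=1$ (abélien classique) sont immédiats. Si $k\nmid a_{0}$, l'unique pôle n'a pas de $k$-résidu, et le théorème~\ref{thm:g=0gen1} appliqué à la strate primitive obtenue après réduction par $e=\pgcd(\alpha_{i},k)$ fournit le scindement. Si $k\mid a_{0}$ et $k\mid\pgcd(\alpha_{i})$, alors $\xi=\omega^{k}$ localement pour une différentielle abélienne $\omega$, et l'on applique à $\omega$ le scindement abélien classique (toujours possible) avant d'élever à la puissance $k$-ième. Enfin, si $k\mid a_{0}$, $k\nmid\pgcd(\alpha_{i})$ et $t\geq3$, le théorème~\ref{thm:r=0s=0} (avec $p=1$ et $n=t\geq3$) assure que $(0)$ appartient à l'image de l'application résiduelle de la strate primitive obtenue après réduction.

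\textbf{\'Etape~3 (Direction négative).} Supposons maintenant $k\geq2$, $t=2$, $k\mid a_{0}$ et $k\nmid\pgcd(\alpha_{i})$. Toute $k$-différentielle sphérique de signature $(\alpha_{1},\alpha_{2};-(2k+a_{0}))$ s'écrit comme puissance $d$-ième d'une $(k/d)$-différentielle primitive $\eta$ pour un certain $d$ divisant $\pgcd(\alpha_{1},\alpha_{2},k)$, qui est strictement inférieur à $k$ par hypothèse. En posant $k'=k/d\geq2$, l'unique pôle de $\eta$ reste d'ordre divisible par $k'$ puisque $k\mid a_{0}$, et la strate primitive sphérique correspondante a $p=1$ pôle et $n=2$ zéros. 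Par le théorème~\ref{thm:r=0s=0}, $(0)$ n'appartient pas à l'image de l'application résiduelle, et ce quelle que soit la valeur de $d$. Aucune $k$-différentielle sphérique avec la condition de résidu requise n'existe, donc le scindement est impossible.
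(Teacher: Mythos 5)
Your proof is correct and follows essentially the same route as the paper: both reduce the local splitting to the existence of a genus-zero $k$-différentielle of type $(\alpha_{1},\dots,\alpha_{t};-a_{0}-2k)$ with vanishing $k$-residue at the unique pole, and then conclude from Théorème~\ref{thm:r=0s=0} combined with Lemme~\ref{lem:puissk} and the multiplicativity of $k$-residues~\eqref{eq:multiplires}. You merely make explicit the case analysis ($k\nmid a_{0}$, the non-primitive/abelian reduction, $t\geq 3$ versus $t=2$) that the paper's two-line argument leaves implicit.
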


\smallskip
\par
\paragraph{\bf Cylindres sur une surface plate.}
Nous donnons une application à la géométrie des surfaces plates d'aire finie.  Naveh a montré dans \cite{Na} que dans une strate de différentielles abéliennes, le nombre maximal de cylindres disjoints pour une surface donnée est $g+n-1$ et que cette borne est toujours atteinte. Nous décrivons les périodes possibles des circonférences de ces cylindres.
\begin{prop}\label{prop:cylindres}
Soient $S:=\omoduli(a_{1},\dots,a_{n})$ une strate de différentielles abéliennes et $\lambda:=(\lambda_{1},\dots,\lambda_{t})\in (\mathbb{C}^{\ast})^{t}$.
Il existe  une différentielle dans $S$ avec $t$ cylindres disjoints dont les circonférences sont $\lambda_{1},\dots,\lambda_{t}$ si et seulement s'il existe une
différentielle stable $(X,\omega)$ avec un pôle simple aux $t$ nœuds, telle que les résidus à ces nœuds sont $\pm \lambda_{i}$ et dont les ordres des autres singularités sont donnés par  $(a_{1},\dots,a_{n})$
\end{prop}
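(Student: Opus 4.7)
L'idée directrice est d'identifier les cylindres disjoints d'une différentielle abélienne avec les nœuds à pôles simples d'une différentielle stable au bord de la strate, via la correspondance plumbing/contraction de cylindres rendue rigoureuse par la théorie des différentielles entrelacées de \cite{BCGGM}. Je traiterais donc les deux implications en miroir, l'une par dégénérescence et l'autre par lissage.

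Pour le sens direct, je partirais d'une différentielle $(X,\omega)\in S$ munie de $t$ cylindres disjoints $C_1,\dots,C_t$ de circonférences $\lambda_1,\dots,\lambda_t$. En faisant tendre simultanément les hauteurs des cylindres vers zéro, on obtient une famille convergeant dans $\barmoduli[g]$ vers une différentielle stable $(X_0,\omega_0)$ possédant exactement $t$ nœuds, un pour chaque cylindre contracté. L'analyse locale d'un voisinage standard de cylindre d'holonomie $\lambda_i$ montre qu'au nœud correspondant la différentielle $\omega_0$ admet deux pôles simples sur les deux branches, de résidus $+\lambda_i$ et $-\lambda_i$. Comme la dégénérescence n'affecte que les cylindres, les autres singularités sont préservées avec leurs ordres $(a_1,\dots,a_n)$.

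Pour la réciproque, je procéderais par plumbing. Étant donnée une différentielle stable $(X_0,\omega_0)$ de genre $g$ avec pôles simples de résidus $\pm\lambda_i$ aux $t$ nœuds et singularités d'ordres $(a_1,\dots,a_n)$ ailleurs, je recollerais chaque voisinage nodal par un cylindre plat d'holonomie $\lambda_i$. Tous les pôles aux nœuds étant d'ordre un, la condition de résidu global de \cite{BCGGM} est trivialement satisfaite et chaque nœud peut être lissé indépendamment. Le plumbing fournit ainsi une différentielle lisse dans une strate dont le profil des singularités est $(a_1,\dots,a_n)$, donc dans $S$, portant par construction $t$ cylindres disjoints de circonférences prescrites autour des courbes obtenues en lissant les nœuds.

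La principale difficulté sera de rendre rigoureuse cette correspondance en contrôlant précisément l'appartenance à la strate $S$~: dans le sens direct, il faut vérifier que la différentielle stable obtenue vit bien dans le bord de $S$ au sens de \cite{BCGGM}, et dans le sens réciproque que le plumbing produit une différentielle primitive sans créer de cylindres supplémentaires ni identifier des cylindres distincts. Ce contrôle repose sur l'analyse du comportement local des périodes lors du plumbing et sur la description ensembliste du bord des strates fournie par \cite{BCGGM}, qui garantit l'équivalence des deux points de vue.
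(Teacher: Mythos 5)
Votre stratégie globale est la même que celle de l'article~: dans le sens direct, on dégénère les cylindres pour faire apparaître des nœuds à pôles simples de résidus $\pm\lambda_{i}$, et dans le sens réciproque on lisse la différentielle stable à l'aide du lemme~\ref{lem:lisspolessimples}, la condition de résidus étant automatiquement satisfaite aux nœuds d'ordre $-1$. La réciproque est correcte et identique à celle de l'article (vos scrupules finaux sont d'ailleurs sans objet~: la primitivité ne concerne que les $k$-différentielles avec $k\geq2$, et l'énoncé ne demande que l'existence de $t$ cylindres disjoints, pas leur exhaustivité).

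En revanche, le sens direct contient une erreur concrète~: vous faites tendre les hauteurs des cylindres vers \emph{zéro}, alors qu'il faut les faire tendre vers l'\emph{infini}. Le module conforme d'un cylindre plat de hauteur $h$ et de circonférence $|\lambda_{i}|$ est $h/|\lambda_{i}|$, et le pincement de la courbe centrale dans $\barmoduli$ exige que ce module tende vers l'infini~; le modèle local est $xy=t$ avec $\omega=\lambda_{i}\,dx/x$, pour lequel la hauteur du cylindre croît comme $\log(1/|t|)$. En écrasant la hauteur vers zéro, aucun nœud n'apparaît~: la structure complexe converge vers une surface lisse de même genre où les deux bords du cylindre sont identifiés directement, ce qui au pire fait entrer en collision des singularités coniques mais ne produit ni pôle simple ni composante nouvelle. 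L'article réalise la bonne limite de façon équivalente et plus directe~: on coupe chaque cylindre le long d'une géodésique périodique et on remplace les deux demi-cylindres obtenus par des demi-cylindres infinis, ce qui fournit immédiatement la différentielle entrelacée (stable) voulue, avec résidus $\pm\lambda_{i}$ donnés par l'holonomie des géodésiques. Une fois cette substitution faite, votre argument est celui de l'article.
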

Nous énoncerons ce résultat dans le langage des graphes dans la section~\ref{sec:appli}.
\smallskip
\par
\subsection{Organisation de cet article.}

Le schéma de la preuve de ces théorèmes est le suivant. Dans un premier temps nous utilisons la correspondance entre les (pluri)différentielles méromorphes et certaines classes de surfaces plates introduites par \cite{Bo} et étudiées dans \cite{tahar} dans le cas $k=1$ et par \cite{BCGGM3} dans le cas $k\geq2$. Cette correspondance nous permet de construire explicitement des pluridifférentielles ayant les propriétés souhaitées lors que le genre et le nombre de singularités sont petits. 

Dans un second temps, ce résultat nous déduisons les autres cas grâce à deux opérations introduites par \cite{kozo} pour $k=1$, \cite{lanneauquad} pour $k=2$ et généralisées par \cite{BCGGM3} pour $k\geq3$. Ces deux opérations sont l'{\em éclatement d'une singularité} et la {\em couture d'anse}. La première de ces opérations permet d'augmenter le nombre de singularités sans changer le genre d'une pluridifférentielle. La seconde préserve le nombre de singularités mais augmente le genre de la surface sous-jacente.

Enfin, dans les cas où l'application résiduelle n'est pas surjective, nous développons des méthodes ad hoc afin de montrer la non-existence de pluridifférentielles ayant certains invariants locaux. 

L'article s'organise comme suit. Pour terminer cette introduction, nous posons quelques conventions. Dans la section~\ref{sec:bao} nous faisons les rappels nécessaires sur les représentations plates des pluridifférentielles méromorphes et sur les deux opérations précédemment citées. De plus, nous introduisons dans cette section les briques élémentaires qui nous permettrons de construire les pluridifférentielles avec les propriétés souhaitées. La section~\ref{sec:MDPR} est dédiée aux différentielles abéliennes.   La section~\ref{sec:k-diff} est dédiée au cas des pluridifférentielles de genre~$0$. La section~\ref{sec:ggeq1} est dédiée aux cas des pluridifférentielles de genre supérieur ou égal à~$1$. Enfin les preuves des applications de la section~\ref{sec:apliintro} sont données dans la section~\ref{sec:appli}.

Nous conseillons au lecteur pressé de restreindre sa lecture aux sections \ref{sec:bao}, \ref{sec:MDPR} et \ref{sec:appli}.

\smallskip
\par
\subsection{Conventions.}
Dans cet article nous définirons le {\em résidu} d'une forme différentielle comme étant le résidu usuel multiplié par la constante $2i\pi$. En particulier, le théorème des résidus s'énonce 
\[\int_{\gamma}\omega=\sum_{i} \Res_{P_{i}}\omega,\]
où les $P_{i}$ sont les pôles de $\omega$ encerclés par $\gamma$.  De plus, pour une $k$-différentielle, le $k$-résidu sera $(2i\pi)^{k}$ fois le $k$-résidu défini par l'équation~\eqref{eq:standard_coordinates}. Remarquons que ces conventions n'ont aucune incidence sur l'énoncé des résultats, mais rend les preuves plus agréables.

Pour une $k$-différentielle $\xi$, nous appelons {\em zéro} une singularité de $\xi$ d'ordre strictement supérieur à $-k$ et {\em pôle} une singularité d'ordre inférieur ou égal à $-k$. Le $k$-résidu d'une $k$-différentielle sera noté avec une lettre majuscule $R$ tandis qu'une racine $k$-ième de $R$ sera notée par une lettre minuscule $r$. Cette convention de langage peut paraître curieuse, mais sera extrêmement commode.

Si une strate paramètre des différentielles avec $m$ singularités égales à $a$, alors nous noterons cela $(a^{m})$. Par exemple $\omoduli[3](3,3;-1,-1)$ pourra être notée $\omoduli[3]((3^{2});(-1^{2}))$. Plus généralement, si nous considérons une suite $(a,\dots,a)$ de $m$ nombres complexes tous identiques, nous noterons cette suite $(a^{m})$. Nous espérons que ces notations seront claires dans le contexte.

\smallskip
\par
\subsection{Remerciements.} Nous remercions Corentin Boissy pour des discussions enrichissantes liées à cet article. Une question de David Aulicino est à l'origine de la proposition~\ref{prop:cylindres}. Le logiciel {\em GeoGebra} a procuré une aide substantielle au premier auteur. De plus, celui-ci à été supporté par une bourse postdoctoral de la DGAPA, UNAM et par le projet CONACyT A1-S-9029 "Moduli de curvas y Curvatura en $A_{g}$" d'Abel Castorena lors de  la révision de cet article. Le deuxième auteur est financé par l’Israel Science Foundation (grant No. 1167/17) et le European Research Council (ERC) dans le cadre du European Union Horizon 2020 research and innovation programme (grant agreement No. 802107).
 Enfin, nous remercions chaleureusement le rapporteur anonyme pour sa relecture attentive et ses remarques précieuses qui ont grandement amélioré la qualité de cet article. Il nous a en particulier permis de corriger certains énoncés relatifs aux pluridifférentielles.

\section{Boîte à outils}
\label{sec:bao}
Dans cette section, nous introduisons les objets et les opérations de base pour nos constructions. Nous commençons par quelques rappels sur les pluridifférentielles dans la section~\ref{sec:pluridiffbao}. Ensuite nous introduisons dans la section~\ref{sec:briques} les briques élémentaires de nos surfaces plates. Nous poursuivons par un rappel sur les différentielles entrelacées et les opérations de scindage de zéro et de couture d'anse dans la section~\ref{sec:pluridiffentre}. Enfin, nous discuterons un cas spécial de surfaces plates dans la section~\ref{sec:coeur}.

\subsection{Pluridifférentielles méromorphes}
\label{sec:pluridiffbao}

Dans ce paragraphe, nous rappelons des résultats élémentaires sur les pluridifférentielles méromorphes et leur relation avec les surfaces plates. Plus de détails peuvent être trouvés dans \cite{Bo} pour les différentielles abéliennes et \cite{BCGGM3} pour les $k$-différentielles avec $k\geq1$.
\par
Soit $X$ une surface de Riemann de genre $g$ et $\omega$ une section méromorphe du fibré canonique $K_{X}$. On notera $Z$ les zéros  et $P$ les pôles de $\omega$. L'intégration de $\omega$ sur $X\setminus P$ induit une structure plate sur $X\setminus P$. Chaque zéro de $\omega$ d'ordre $a$ correspond à une singularité conique d'angle $2(a+1)\pi$ de la structure plate. Les pôles simples de $\omega$ correspondent à des demi-cylindres infini. Les pôles d'ordre $-b\leq -2$ correspondent à un revêtement de degré $b-1$ du plan dans lequel on a éventuellement fait une entaille correspondant au résidu. 
Inversement, une surface plate obtenue en attachant par translation un nombre fini de demi-cylindres infinis, des revêtements d'ordre $b-1$ du plan entaillé et des polygones correspond à une différentielle abélienne méromorphe sur une surface de Riemann.

Une théorie similaire a été développée dans le cas des sections méromorphes $\xi$ de la puissance tensorielle $k$-ième $K_{X}^{k}$ du fibré canonique de $X$. En effet, on peut passer au revêtement canonique $\pi:\whX\to X$ et choisir une racine $k$-ième $\whomega$ de $\pi^{\ast}\xi$ sur $\whX$. L'intégration de $\whomega$ le long d'un chemin de $\whX$ nous fournit une structure plate sur $\whX$. La surface plate ainsi obtenue possède une symétrie cyclique d'ordre $k$ provenant de la structure de revêtement. Le quotient de cette surface par ce groupe est une surface plate où l'on autorise les identifications par des translations et par des rotations d'angle multiple de  $\frac{2\pi}{k}$. Les pôles d'ordre~$-k$ correspondent à des demi-cylindres infinis et les pôles d'ordre $-b<-k$ à un revêtement d'ordre $b-k$ d'un domaine angulaire d'angle $\frac{2\pi}{k}$. Les pôles d'ordre $0>a>-k$ et les zéros d'ordre  $a\geq 0$ correspondent aux singularités coniques d'angle $(a+k)\frac{2\pi}{k}$.   Cette similitude explique la convention de langage que les singularités d'ordre $a\geq-k+1$ sont des {\em zéros} d'ordre $a$ de $\xi$.

Enfin, pour les $k$-différentielles non primitives, on a la propriété suivante. Si la $k$-différentielle $\xi$ est la puissance $d$-ième d'une $(k/d)$-différentielle $\eta$, alors pour tout pôle~$P$ nous avons
\begin{equation}\label{eq:multiplires}
 \Resk_{P}(\xi)=\left( \Resk[k/d]_{P}(\eta) \right)^{d}.
\end{equation}

\subsection{Briques élémentaires}
\label{sec:briques}

Dans ce paragraphe, nous introduisons des surfaces plates à bord qui nous serviront de briques pour construire les pluridifférentielles ayant les propriétés locales souhaitées.

Nous décrivons dans un premier temps les briques pour les différentielles abéliennes. 
Les éléments de base de nos constructions seront les {\em domaines basiques} introduit par \cite{Bo}.\footnote{Nous mettons en garde que notre définition des domaines basiques est plus générale que celle de \cite{Bo}.} Étant donnés des vecteurs $(v_{1},\dots,v_{l})$ dans $(\CC^{\ast})^{l}$. Nous considérons la ligne brisée $L$ dans $\CC$ donnée par la concaténation d'une demi-droite correspondant à $\RR_{-}$, des $v_{i}$ pour $i$ croissant et d'une demi-droite correspondant à $\RR_{+}$.
Nous supposerons que les $v_{i}$ sont tels que $L$ ne possède pas de points d'auto-intersection. 
 
Le {\em domaine basique positif (resp. négatif)} $D^{+}(v_{i})$ (resp. $D^{-}(v_{i})$) est l'adhérence de la composante connexe de $\CC\setminus L$ contenant les nombres complexes au dessus (resp. en dessous) de~$L$.
Étant donné un domaine positif $D^{+}(v_{i})$ et un négatif $D^{-}(w_{j})$, on construit le {\em domaine basique ouvert à gauche (resp. droite)} $D_{g}(v_{i};w_{j})$ (resp. $D_{d}(v_{i};w_{j})$) en collant par translation les deux demi-droites correspondant à $\RR_{+}$ (resp. $\RR_{-}$).

On se donne maintenant des vecteurs $(v_{1},\dots,v_{l})$ avec $l\geq1$ tels que la concaténation $V$ de ces vecteurs dans cet ordre n'a pas de points d'auto-intersection. De plus, on suppose qu'il existe deux demi-droites parallèles  $L_{D}$ et $L_{F}$ de vecteur directeur $\overrightarrow{l}$, issues respectivement du point de départ $D$ et final $F$ de $V$, ne rencontrant pas $V$ et telles que $(\overrightarrow{DF},\overrightarrow{l})$ est une base positive de $\RR^{2}$. On définit la {\em partie polaire $C(v_{i})$ d'ordre $1$ associé aux $v_{i}$}  comme le quotient du sous-ensemble de $\CC$ entre $V$ et les demi-droites $L_{D}$ et~$L_{F}$ par l'identification de $L_{D}$ à~$L_{F}$ par translation. Le résidu du pôle simple correspondant est donné par la somme $F-D$ des~$v_{i}$.

On se donne $b\geq2$ et $\tau\in\left\{1,\dots,b-1\right\}$.
Soient $(v_{1},\dots,v_{l};w_{1},\dots,w_{\ell})$ des vecteurs de~$\CC^{\ast}$ tels que la partie réelle de leurs sommes est positive et que l'argument (pris dans $\left]-\pi,\pi\right]$) des $v_{i}$ est décroissant, des $w_{j}$ est croissant.\footnote{Le lecteur prendra garde à la différence typographique entre $l$ et $\ell$ dans l'ensemble de l'article.}
La  {\em partie polaire d'ordre $b$ et de type $\tau$ associée à $(v_{1},\dots,v_{l};w_{1},\dots,w_{\ell})$} est la surface plate à bord obtenue de la façon suivante. Prenons l'union disjointe de  $\tau-1$ domaines basiques ouverts à gauche associé à la suite vide, $b-\tau-1$   domaines basiques ouverts à droite associé à la suite vide. Enfin prenons le domaine positif associé aux $v_{i}$ et le domaine négatif associé aux $w_{j}$. On colle alors par translation la demi-droite  inférieure du $i$-ième domaine polaire ouvert à gauche à la demi-droite supérieure du $(i+1)$-ième. La demi-droite inférieure du domaine $\tau-1$ est identifiée à la demi droite de gauche du domaine positif. La demi-droite de gauche du domaine négatif est identifiée à la positive du premier domaine ouvert à gauche. On procède de même à droite. La  figure~\ref{fig:ordreplusmoins} illustre cette construction.

\begin{figure}[htb]
\center
\begin{tikzpicture}

\begin{scope}[xshift=-5cm]
\fill[fill=black!10] (0,0)  circle (1.5cm);

\draw[] (0,0) coordinate (Q) -- (-1.5,0) coordinate[pos=.5](a);

\node[above] at (a) {$\tau$};
\node[below] at (a) {$1$};

\fill (Q)  circle (2pt);

\node at (2.5,0) {$\dots$};
\end{scope}

\begin{scope}[xshift=0cm]
\fill[fill=black!10] (0,0)  circle (1.5cm);
      \fill[color=white]
      (-.4,0.02) -- (.4,0.02) -- (.4,-0.02) -- (-.4,-0.02) --cycle;

\draw[] (-.4,0.02)  -- (.4,0.02) coordinate[pos=.5](a);
\draw[] (-.4,-0.02)  -- (.4,-0.02) coordinate[pos=.5](b);

\node[above] at (a) {$v$};
\node[below] at (b) {$v$};

\draw[] (-.4,0) coordinate (q1) -- (-1.5,0) coordinate[pos=.5](d);
\draw[] (.4,0) coordinate (q2) -- (1.5,0) coordinate[pos=.5](e);

\node[above] at (d) {$1$};
\node[below] at (d) {$2$};
\node[above] at (e) {$\tau+1$};
\node[below] at (e) {$\tau+2$};

\fill (q1) circle (2pt);
\fill[white] (q2) circle (2pt);
\draw (q2) circle (2pt);

\node at (2.5,0) {$\dots$};
\end{scope}

\begin{scope}[xshift=5cm]
\fill[fill=black!10] (0,0) coordinate (Q) circle (1.5cm);

\draw[] (0,0) coordinate (Q) -- (1.5,0) coordinate[pos=.5](a);

\node[above] at (a) {$b$};
\node[below] at (a) {$\tau+1$};

\fill[white] (Q)  circle (2pt);
\draw (Q)  circle (2pt);
\end{scope}

\end{tikzpicture}
\caption{Une partie polaire d'ordre $b$ de type $\tau$ associée à $(v;v)$.} \label{fig:ordreplusmoins}
\end{figure}

Si $\sum v_{i} =\sum w_{j}$ nous dirons que cette partie polaire est {\em triviale}. Dans le cas contraire, nous dirons que la partie polaire est {\em non triviale}. Sur la figure~\ref{fig:partiesnontrivial}, le dessin de gauche illustre une partie polaire non triviale et celui de droite une partie polaire d'ordre $1$.

\begin{figure}[htb]
\center
\begin{tikzpicture}

\begin{scope}[xshift=-7cm]
\fill[fill=black!10] (0,0) coordinate (Q) circle (1.5cm);

\coordinate (a) at (-.5,0);
\coordinate (b) at (.5,0);
\coordinate (c) at (0,.2);

\fill (a)  circle (2pt);
\fill[] (b) circle (2pt);
    \fill[white] (a) -- (c)coordinate[pos=.5](f) -- (b)coordinate[pos=.5](g) -- ++(0,-2) --++(-1,0) -- cycle;
 \draw  (a) -- (c) coordinate () -- (b);
 \draw (a) -- ++(0,-1.1) coordinate (d)coordinate[pos=.5] (h);
 \draw (b) -- ++(0,-1.1) coordinate (e)coordinate[pos=.5] (i);
 \draw[dotted] (d) -- ++(0,-.3);
 \draw[dotted] (e) -- ++(0,-.3);
\node[below] at (f) {$v_{1}$};
\node[below] at (g) {$v_{2}$};
\node[left] at (h) {$1$};
\node[right] at (i) {$1$};

\draw (b)-- ++ (1,0)coordinate[pos=.6] (j);
\node[below] at (j) {$2$};
\node[above] at (j) {$3$};
    \end{scope}

\begin{scope}[xshift=-3.5cm]
\fill[fill=black!10] (0,0) coordinate (Q) circle (1.5cm);

\draw[] (0,0) -- (1.5,0) coordinate[pos=.5](a);

\node[above] at (a) {$2$};
\node[below] at (a) {$3$};
\fill[] (Q) circle (2pt);
\end{scope}

\begin{scope}[xshift=3cm,yshift=-.5cm]
\coordinate (a) at (-.5,0);
\coordinate (b) at (.5,0);
\coordinate (c) at (0,.2);

    \fill[fill=black!10] (a) -- (c)coordinate[pos=.5](f) -- (b)coordinate[pos=.5](g) -- ++(0,1.5) --++(-1,0) -- cycle;
    \fill (a)  circle (2pt);
\fill[] (b) circle (2pt);
 \draw  (a) -- (c) coordinate () -- (b);
 \draw (a) -- ++(0,1.3) coordinate (d)coordinate[pos=.5](h);
 \draw (b) -- ++(0,1.3) coordinate (e)coordinate[pos=.5](i);
 \draw[dotted] (d) -- ++(0,.3);
 \draw[dotted] (e) -- ++(0,.3);
\node[below] at (f) {$v_{1}$};
\node[below] at (g) {$v_{2}$};
\node[left] at (h) {$3$};
\node[right] at (i) {$3$};

    \end{scope}
\end{tikzpicture}
\caption{Une partie polaire non triviale associée à $(v_{1},v_{2};\emptyset)$ d'ordre $3$ (de type $1$) à gauche et d'ordre $1$ à droite.} \label{fig:partiesnontrivial}
\end{figure}

\par
D'après le théorème des résidus, l'intégration de $\omega$ le long d'un lacet fermé $\gamma$ est égale à la somme des résidus aux pôles encerclés par $\gamma$. En effet, rappelons notre convention que notre résidu est égal à $2i\pi$ fois le résidu usuel.
Cela a la conséquence suivante qui, bien qu'élémentaire, est primordiale pour notre étude.
\begin{lem}\label{lm:residu}
Soient $(v_{i};w_{j})$ des nombres complexes, le pôle associé à la partie polaire d'ordre~$b$ et de type $\tau$ associée à $(v_{i};w_{j})$ est d'ordre $-b$ et de résidu $\sum v_{i}-\sum w_{j}$. 

Soit $(v_{1},\dots,v_{l})$ avec $l\geq1$, le pôle associé au domaine polaire d'ordre~$1$ associé à $v_{i}$ est d'ordre~$-1$ et possède un résidu égal à $\sum v_{i}$.
\end{lem}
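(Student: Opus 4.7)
L'approche consiste à appliquer le théorème des résidus (sous la convention $\Res_{P}(\omega) = \oint_{\gamma}\omega$ sans le facteur $2i\pi$) à un petit lacet $\gamma$ positivement orienté autour du pôle, et à calculer l'intégrale correspondante dans la représentation plate, où $\omega = dz$ sur chaque domaine basique et où tous les recollements sont des translations (donc préservent $dz$).

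Premièrement, pour le domaine basique simple d'ordre~$1$ associé à $(v_{1},\dots,v_{l})$, je remarquerais que la construction (quotient par identification de $L_{D}$ à $L_{F}$) produit un demi-cylindre plat infini, ce qui est le modèle plat standard d'un pôle simple; cela justifie l'ordre~$1$. La circonférence de ce demi-cylindre correspond à la ligne brisée formée par la concaténation des $v_{i}$, et un lacet autour du pôle se déforme en cette ligne brisée. L'intégrale de $\omega = dz$ le long de celle-ci vaut directement $F-D=\sum_{i} v_{i}$, ce qui donne le résidu.

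Deuxièmement, pour la partie polaire d'ordre $b$ et de type $\tau$, je commencerais par observer que l'assemblage des $\tau-1+b-\tau=b-1$ domaines basiques ouverts avec le domaine positif et le négatif produit un revêtement ramifié de degré $b-1$ d'un voisinage épointé de l'infini dans $\CC$, ce qui est précisément le modèle plat d'un pôle d'ordre~$b$ (comparable au modèle local $\omega = z^{-b}\,dz$). Pour le résidu, je déformerais un petit lacet positivement orienté autour du pôle en une courbe qui traverse successivement les demi-droites $\RR_{\pm}$ identifiées (lesquelles n'apportent aucune contribution nette puisque les recollements sont des translations), puis parcourt la ligne brisée des $v_{i}$ dans le sens direct sur le domaine positif et la ligne brisée des $w_{j}$ en sens inverse sur le domaine négatif; l'intégrale vaudra alors $\sum_{i} v_{i} - \sum_{j} w_{j}$.

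La principale subtilité réside dans l'identification précise de la classe d'homotopie du lacet entourant le pôle, et en particulier dans la vérification soigneuse de l'orientation: il faut s'assurer que les contributions des $v_{i}$ et des $w_{j}$ apparaissent respectivement avec un signe positif et négatif. Cela résulte de la position respective des deux lignes brisées (les $v_{i}$ au-dessus du pôle sur le domaine positif, les $w_{j}$ en dessous sur le domaine négatif) combinée à l'orientation trigonométrique usuelle du lacet, ce qui inverse le sens de parcours sur le domaine négatif.
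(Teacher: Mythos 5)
Votre démonstration est correcte et suit essentiellement l'argument que le papier a en tête : celui-ci ne donne d'ailleurs aucune preuve explicite du lemme, se contentant de le présenter comme une conséquence élémentaire du théorème des résidus (avec la convention que le résidu vaut $2i\pi$ fois le résidu usuel, donc coïncide avec la période du lacet autour du pôle) appliqué à la description plate des parties polaires. Votre calcul de l'holonomie du bord, l'identification du revêtement de degré $b-1$ pour l'ordre du pôle et la discussion des signes sur les domaines positif et négatif correspondent exactement à ce que les auteurs sous-entendent.
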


\smallskip
\par
Nous introduisons maintenant les briques analogues pour les $k$-différentielles. Les constructions sont très similaires au cas abélien et nous ne donnerons pas les détails.

Étant donnés $b:=k\ell$ avec $\ell\geq 1$ et des vecteurs $(v_{i};w_{j})$ comme ci-dessus la {\em $k$-partie polaire d'ordre $k\ell$ associée à $(v_{i};w_{j})$} coïncide avec la partie polaire d'ordre $\ell$ associée à $(v_{i};w_{j})$. Cette $k$-partie polaire permettra de construire des pôles d'ordre $-k\ell$ ayant pour $k$-résidu $\left(\sum v_{i} -\sum w_{j}\right)^{k}$. 

Nous traitons maintenant le cas des pôles d'ordre non divisible par $k$. Soient $c=\ell k+r$ avec $0<r<k$ et $\ell\geq 1$. On se donne des vecteurs $v_{i}$ de $\CC^{\ast}$ de partie réelle positive. La {\em $k$-partie polaire d'ordre $c$ associée aux $(v_{1},\dots,v_{l};\emptyset)$} est donnée par la construction suivante. Nous concaténons les $v_{i}$ dans le plan et traçons deux demi-droites $L_{1}$ et $L_{2}$ issues respectivement des points finaux et  initiaux de la concaténation telles que l'angle entre $L_{1}$ et $L_{2}$ est $-r\frac{2\pi}{k}$.
Comme dans le cas des parties polaires d'ordre $b$, la ligne brisée ainsi formée doit être sans points d'intersection. Nous donnerons une condition suffisante pour que cela soit possible dans le lemme~\ref{lem:noninter}.
Nous considérons la surface au dessus de cette courbe brisée. Ensuite nous prenons $\ell-1$ domaines basiques ouvert dans la direction de $L_{1}$ associés à la suite vide. Puis nous identifions les demi-droites cycliquement par translation, à l'exception du dernier qui est identifié par translation et rotation d'angle $-r\frac{2\pi}{k}$ à la demi droite $L_{2}$. Cette construction est illustrée à gauche de la figure~\ref{fig:partiepolairekdiff}. 
\par
Pour simplifier certaines constructions, il est utile de considérer la $k$-partie polaire d'ordre~$c$ associée à $(\emptyset;v_{1},\dots,v_{l})$. Elle est définie de manière similaire à la partie polaire précédente en considérant la surface sous la courbe brisée suivante. Nous concaténons les $v_{i}$ dans le plan et traçons deux demi-droites $L_{1}$ et $L_{2}$ issues respectivement des points finaux et  initiaux de la concaténation telles que l'angle entre $L_{1}$ et $L_{2}$ est $r\frac{2\pi}{k}$. La fin de cette construction est similaire à la précédente. 
\par
Nous donnons maintenant une condition suffisante pour que la ligne brisée décrite aux paragraphes précédents soit sans points d'intersection.
\begin{lem}\label{lem:noninter}
Dans la construction des $k$-parties polaires d'ordre $c$, si les $v_{i}$ sont soit de partie réel strictement positive, soit de partie réelle nulle et de partie imaginaire strictement positive, alors, quitte à permuter l'ordre des $v_{i}$, il existe des demi-droites $L_{1}$ et $L_{2}$ telles que la ligne brisée formée des $v_{i}$ et des $L_{j}$ soit sans point d'intersection.
\end{lem}
\begin{proof}
 Quitte à permuter les $v_{i}$, on peut supposer que les arguments des vecteurs~$v_{i}$, appartenant à l'ensemble $\left] -\tfrac{\pi}{2};\tfrac{\pi}{2}\right]$ sont décroissants. Notons que pour tout $\theta\in \left] \tfrac{\pi}{2}; \tfrac{3\pi}{2}\right]$ la demi-droite de pente $\theta$ partant du point initial de la concaténation n'a pas d'autres points d'intersection avec celle-ci. On a le même résultat pour les demi-droites d'angle $\phi$ partant du point final pour tout  $\phi\in \left] \tfrac{-\pi}{2}; \tfrac{\pi}{2}\right]$. Cela implique que l'on peut trouver des droites $L_{1}$ et~$L_{2}$ sans point d'intersection avec le reste de la construction qui forment n'importe quel angle strictement  compris entre $0$ et $2\pi$.
\end{proof}

\par
Nous aurons besoin dans un cas d'une construction un peu plus générale que dans le cas précédent. On note comme précédemment $c=k\ell+r$ avec $-k<r<0$.  \'Etant donné $(v_{1},v_{2})\in\CC^{\ast}$ la {\em partie polaire d'ordre $c$ associée à $(\emptyset;v_{1},v_{2})$ de type $t$} avec $1\leq t \leq \ell$ est donnée par la construction suivante. Nous concaténons $v_{1}$ avec $v_{2}$ dans le plan et traçons deux demi-droites $L_{1}$ et $L_{2}$ issues respectivement des points finaux et  initiaux de la concaténation telles que l'angle entre $L_{1}$ et $L_{2}$ est $r\frac{2\pi}{k}$. Nous considérons la surface au dessus de cette courbe brisée. Ensuite nous prenons $t-1$ domaines basiques ouvert à droite associés à la suite vide. Puis nous identifions les demi-droites cycliquement par translation, à l'exception du dernier qui est identifié par translation et rotation d'angle $-r\frac{2\pi}{k}$ à la demi droite $L_{2}$. Puis nous coupons la surface en partant du point d'intersection entre $v_{1}$ et $v_{2}$ le long d'une demi-droite $L_{3}$. Puis nous collons $\ell-t$ domaines basiques de manière cyclique à cette demi droite. Cette construction est illustrée à droite de la figure~\ref{fig:partiepolairekdiff}.

\begin{figure}[htb]
\center
 \begin{tikzpicture}

\begin{scope}[xshift=-2cm]
 \fill[black!10] (-1,0)coordinate (a) -- (1.5,0)-- (a)+(1.5,0) arc (0:120:1.5)--(a)+(120:1.5) -- cycle;

   \draw (a)  -- node [below] {$v_{1}$} (0,0) coordinate (b);
 \draw (0,0) -- (1,0) coordinate[pos=.5] (c);
 \draw[dotted] (1,0) --coordinate (p1) (1.5,0);
 \fill (a)  circle (2pt);
\fill[] (b) circle (2pt);
\node[below] at (c) {$1$};

 \draw (a) -- node [above,rotate=120] {$2$} +(120:1) coordinate (d);
 \draw[dotted] (d) -- coordinate (p2) +(120:.5);
 
    \end{scope}

\begin{scope}[xshift=1cm,yshift=.5cm]
\fill[fill=black!10] (0,0) coordinate (Q) circle (1.2cm);

\draw[] (0,00) -- (1.2,0) coordinate[pos=.5](a);

\node[below] at (a) {$2$};
\node[above] at (a) {$1$};
\fill[] (Q) circle (2pt);
\end{scope}


\begin{scope}[xshift=6.5cm,yshift=1cm]
 \fill[black!10] (-1,0)coordinate (a) -- (-.7,.4)coordinate (b) -- (0,0) coordinate (c)-- (a)+(1.8,0) arc (0:-60:1.8)--(a)+(-60:1.8) -- cycle;

   \draw (a)  -- node [above left] {$v_{2}$} (b);
      \draw (b)  -- node [above] {$v_{3}$} (c);
 \draw (0,0) -- (1,0) coordinate[pos=.5] (d);
 \draw[dotted] (1,0) --coordinate (p1) (1.5,0);
 \fill (a)  circle (2pt);

\fill[] (c) circle (2pt);
\node[below] at (d) {$1$};
 \draw (a) -- node [below,rotate=-60] {$1$} +(-60:1.3) coordinate (d);
 \draw[dotted] (d) -- coordinate (p2) +(-60:.5);
 
 \draw (b)   -- ++(-60:1.9) coordinate[pos=.5](f);
\node[left] at (f) {$4$};
\node[right] at (f) {$3$};
 
 \fill[white] (b) circle (2pt);
  \draw[] (b) circle (2pt);

    \end{scope}
\begin{scope}[xshift=9cm,yshift=.5cm]
\fill[fill=black!10] (0,0) coordinate (Q) circle (1.2cm);
\draw[] (0,00) --++ (-60:1.2) coordinate[pos=.5](a);

\node[left] at (a) {$3$};
\node[right] at (a) {$4$};
 \fill[white] (0,0) circle (2pt);
  \draw[] (0,0) circle (2pt);
\end{scope}

\end{tikzpicture}
\caption{La $3$-partie polaire associée à $(v_{1};\emptyset)$ d'ordre~$7$ à gauche et la $6$-partie polaire associée à $(\emptyset;v_{2},v_{3})$ d'ordre~$13$ et de type~$1$ à droite.} \label{fig:partiepolairekdiff}
\end{figure}

Nous résumons maintenant les propriétés des constructions du paragraphe précédent.
\begin{lem}\label{lm:kresidu}
Soient $(v_{i};w_{j})$ des nombres complexes, le pôle obtenu à partir de la $k$-partie polaire d'ordre $b=k\ell$ et de type $\tau$ associée à $(v_{i};w_{j})$ est d'ordre $-b$ et possède un $k$-résidu égal à $\left(\sum v_{i}-\sum w_{j}\right)^{k}$. 

Soit $(v_{1},\dots,v_{l})$ avec $l\geq1$, le pôle associé au domaine basique d'ordre $k$ associé à $v_{i}$ est d'ordre $-k$ et possède un $k$-résidu égal à $\left(\sum v_{i}\right)^{k}$.
\end{lem}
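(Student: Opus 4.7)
The plan is to deduce Lemma~\ref{lm:kresidu} from its abelian counterpart (Lemma~\ref{lm:residu}) via the multiplicativity formula~\eqref{eq:multiplires}. The key observation is that in both cases the flat surface with boundary is defined to be identical to the corresponding abelian building block, and this local flat surface simultaneously supports an abelian differential $\omega$ and its $k$-th power $\xi=\omega^{k}$.

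First, consider the $k$-partie polaire d'ordre $k\ell$ associée à $(v_{i};w_{j})$: by definition it coïncide, en tant que surface plate à bord, avec la partie polaire abélienne d'ordre $\ell$ pour les mêmes vecteurs. La différentielle abélienne $\omega$ sous-jacente possède alors un pôle d'ordre $\ell$, et le Lemme~\ref{lm:residu} donne $\Res_{P}(\omega)=\sum v_{i}-\sum w_{j}$. Comme l'angle conique total $2\pi(\ell-1)$ est égal à $(k\ell-k)\cdot\frac{2\pi}{k}$, interpréter la même pièce comme un morceau d'une $k$-différentielle fournit un pôle d'ordre $k\ell$, et ce pôle est précisément la puissance $k$ième locale $\xi=\omega^{k}$. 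En appliquant~\eqref{eq:multiplires} avec $d=k$ (de sorte que $k/d=1$), on obtient
\[\Resk_{P}(\xi)=\left(\Res_{P}(\omega)\right)^{k}=\left(\sum v_{i}-\sum w_{j}\right)^{k}.\]

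Pour la seconde assertion, le domaine basique d'ordre $k$ associé à $(v_{1},\dots,v_{l})$ s'interprète comme la puissance $k$ième du domaine polaire abélien d'ordre $1$ associé aux mêmes données. La forme abélienne a un pôle simple de résidu $\sum v_{i}$ par le Lemme~\ref{lm:residu}, de sorte que sa puissance $k$ième a un pôle d'ordre $-k$ de $k$-résidu $\left(\sum v_{i}\right)^{k}$, à nouveau grâce à~\eqref{eq:multiplires}.

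Le seul point délicat est de justifier que la pièce de surface plate est véritablement localement une puissance $k$ième d'une différentielle abélienne, c'est-à-dire que la racine $k$ième locale de $\xi$ existe (sans ambiguïté à une racine $k$ième de l'unité près). Ceci est garanti par les formes standard~\eqref{eq:standard_coordinates}: dans les deux cas, l'ordre du pôle est divisible par $k$, donc le revêtement canonique $\widehat{X}\to X$ est non ramifié au-dessus du pôle et une racine $k$ième locale univaluée est disponible. Le reste n'est qu'une application directe du lemme abélien.
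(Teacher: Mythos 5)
Votre preuve est correcte et suit essentiellement la même idée que le texte, qui se contente d'affirmer que le $k$-résidu « se lit sur la surface plate » : la partie polaire d'ordre $k\ell$ étant par définition la même surface plate à bord que la partie polaire abélienne d'ordre $\ell$, le $k$-résidu est la puissance $k$ième du résidu abélien donné par le lemme~\ref{lm:residu}, via l'équation~\eqref{eq:multiplires} appliquée localement. Votre vérification que la racine $k$ième locale existe bien (ordre du pôle divisible par $k$, recollements internes par translations) est le seul point que le texte laisse implicite, et elle est juste.
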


\subsection{Pluridifférentielles entrelacées, éclatement de zéros et couture d'anses.}
\label{sec:pluridiffentre}

Dans ce paragraphe, nous rappelons certains cas particuliers des résultats obtenus dans \cite{BCGGM} et \cite{BCGGM3} au sujet des différentielles entrelacées. Cela nous permet de rappeler les constructions de l'{\em éclatement des zéros} et de la {\em couture d'anse}. 

Tout d'abord, nous rappelons la définition d'une différentielle entrelacée.
\'Etant donnée une partition~$\mu:=(m_{1},\dots,m_{t})$ telle que $\sum_{i=1}^t m_i = k(2g-2)$, une {\em $k$-différentielle entrelacée $\eta$ de type~$\mu$}
sur une courbe stable $n$-marquée $(X,z_1,\ldots,z_t)$
est une collection de
$k$-différentielles non nulles~$\eta_v$ sur les composantes irréductibles~$X_v$ de~$X$ satisfaisant aux conditions suivantes.
\begin{itemize}
\item[(0)] {\bf (Annulation comme prescrit)} Chaque $k$-différentielle $\eta_v$ est méromorphe et le support de son diviseur est inclus dans l'ensemble des nœuds et des points marqués de $X_v$. De plus, si un point marqué $z_i$ se trouve sur~$X_v$, alors $\ord_{z_i} \eta_v=m_i$.
\item[(1)] {\bf (Ordres assortis)} Pour chaque nœud de $X$ qui identifie $q_1 \in X_{v_1}$ à $q_2 \in X_{v_2}$,
$$\ord_{q_1} \eta_{v_1}+\ord_{q_2} \eta_{v_2} = -2k . $$
\item[(2)] {\bf (Résidus assortis aux pôles d'ordre $-k$)} Si a un nœud de $X$
qui identifie $q_1 \in X_{v_1}$ avec $q_2 \in X_{v_2}$ on a $\ord_{q_1}\eta_{v_1}=
\ord_{q_2} \eta_{v_2}=-k$, alors
$$\Resk_{q_1}\eta_{v_1} = (-1)^k\Resk_{q_2}\eta_{v_2}.$$
\end{itemize}

Ce n'est que pour des cas très particuliers que nous aurons besoin de savoir quand une $k$-différentielle entrelacée est lissable. Nous rappelons ici uniquement les cas qui nous intéressent.  Le premier cas est celui où l'ordre des $k$-différentielles à tous les nœuds est égal à $-k$.
\begin{lem}\label{lem:lisspolessimples}
Soit $\eta=\left\{\eta_{v}\right\}$ une $k$-différentielle entrelacée. Si l'ordre des $k$-différentielles~$\eta_v$ aux nœuds est $-k$, alors $\eta$ est lissable localement.
\end{lem}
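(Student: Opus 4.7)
L'idée est de procéder par une construction explicite de lissage, en s'appuyant sur la description plate des pôles d'ordre $-k$ rappelée au paragraphe~\ref{sec:pluridiffbao} : un pôle d'ordre $-k$ de $k$-résidu $R$ correspond, dans la coordonnée standard de l'équation~\eqref{eq:standard_coordinates}, à un demi-cylindre infini dont la « circonférence » est déterminée par une racine $k$-ième $r$ de $R$, ambiguïté qui se traduit par la symétrie de rotation d'angle $2\pi/k$. Puisque l'énoncé est local autour de chaque nœud, on peut considérer un unique nœud qui identifie $q_1\in X_{v_1}$ à $q_2\in X_{v_2}$ avec $\ord_{q_i}\eta_{v_i}=-k$.

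D'abord, j'écris les $k$-différentielles en coordonnées standard au voisinage de $q_1$ et $q_2$ : il existe des coordonnées $z_1$ sur $X_{v_1}$ et $z_2$ sur $X_{v_2}$, centrées en $q_i$, telles que $\eta_{v_1}=(r_1/z_1)^k (dz_1)^k$ et $\eta_{v_2}=(r_2/z_2)^k (dz_2)^k$. La condition de résidus assortis~(2) donne $r_1^k=(-1)^k r_2^k$, donc, quitte à changer $r_2$ par une racine $k$-ième de l'unité (ce qui est loisible), on peut supposer $r_2=-r_1=:r$.

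Ensuite, je construis la famille lissante. Sur la famille $X_t$ obtenue par plombage $z_1 z_2 = t$ avec $t\in\Delta^{\ast}$ petit, je pose
\[
\eta_t \= \left(\frac{r}{z_1}\right)^{k}(dz_1)^{k} \= \left(\frac{-r}{z_2}\right)^{k}(dz_2)^{k},
\]
les deux expressions coïncidant sur le col via $z_1 z_2=t$ car $\frac{r}{z_1}dz_1=\frac{r\,z_2}{t}\cdot\frac{-t}{z_2^{2}}dz_2=\frac{-r}{z_2}dz_2$. Autrement dit, la $1$-forme méromorphe $\frac{r\,dz_1}{z_1}=-\frac{r\,dz_2}{z_2}$ se recolle bien en une $1$-forme holomorphe nulle part nulle sur le col épaissi, donc sa puissance $k$-ième fournit une $k$-différentielle holomorphe nulle part nulle sur le cylindre. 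En dehors du voisinage du nœud, on prolonge cette construction en laissant $\eta_{v_1}$ et $\eta_{v_2}$ inchangées, ce qui fournit la famille $(X_t,\xi_t)$ cherchée.

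Il reste à vérifier que $\xi_t$ a le bon profil de singularités en dehors du cylindre et se spécialise bien en $\eta$ quand $t\to 0$. La vérification des ordres est immédiate puisque la construction n'affecte $\eta_{v_1}$ et $\eta_{v_2}$ que sur des voisinages arbitrairement petits de $q_i$, qui ne contiennent aucune autre singularité, et que la primitivité est préservée par le plombage (la puissance étant bien définie globalement sur le col). Géométriquement, cette construction correspond à tronquer les deux demi-cylindres infinis à une hauteur adéquate dépendant de $t$ et à les recoller par translation : c'est précisément l'opération inverse du dégénérescence vers un pôle d'ordre $-k$. L'argument étant purement local au nœud et indépendant des autres composantes, itérer nœud par nœud achève la preuve. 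La seule subtilité à surveiller est la compatibilité des racines $k$-ièmes aux différents nœuds si l'on veut globaliser, mais ceci est automatique dans l'énoncé purement local demandé.
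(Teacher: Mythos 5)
Votre démonstration est correcte dans son principe, mais il faut souligner que le papier ne démontre pas ce lemme~: il le rappelle tel quel de \cite{BCGGM} et \cite{BCGGM3}, où l'énoncé est établi dans le cadre général des différentielles entrelacées (et correspond ici, comme le note le texte, au lissage classique des différentielles stables à pôles simples aux nœuds). Votre argument de plombage est précisément la construction qui sous-tend ce résultat~: la condition de résidus assortis $R_{1}=(-1)^{k}R_{2}$ permet de choisir $r_{2}=-r_{1}$, et l'identité $\frac{r\,dz_{1}}{z_{1}}=-\frac{r\,dz_{2}}{z_{2}}$ sur le col $z_{1}z_{2}=t$ est exacte~; l'interprétation plate (troncature et recollement des deux demi-cylindres infinis) est la bonne. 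Deux points mériteraient d'être précisés. D'une part, le fait que la famille $(X_{t},\xi_{t})$ dépend holomorphiquement de $t$ et converge vers $\eta$ au sens requis par la définition de lissabilité de \cite{BCGGM3} est affirmé mais non vérifié~; c'est standard mais c'est là que réside le contenu technique du lemme cité. D'autre part, votre remarque sur la primitivité est trop rapide~: le recollement de $k$-différentielles (et non de leurs racines $k$-ièmes) ne pose aucun problème de monodromie, mais la primitivité de $\xi_{t}$ n'est pas une conséquence formelle du plombage et demande un argument (ou doit être exclue de l'énoncé, comme c'est le cas ici puisque le lemme ne la mentionne pas). Pour l'usage qui en est fait dans les propositions~\ref{prop:g0p-1plusieurszero} et \ref{prop:cylindres}, le point essentiel — que le lissage ne modifie pas les résidus aux autres pôles — est bien garanti par votre construction, qui est locale aux nœuds.
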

 Notons que dans ce cas, la notion de différentielle entrelacée correspond à la notion classique de différentielle stable. Ainsi nous pourrons nous ramener à ce résultat pour prouver les propositions~\ref{prop:g0p-1plusieurszero} et \ref{prop:cylindres}.

Maintenant nous regardons le cas des pluridifférentielles entrelacées à deux composantes.
\begin{lem}\label{lem:lissdeuxcomp}
 Supposons que $X$ possède exactement deux composantes $X_{1}$ et $X_{2}$ reliées par un unique nœud  qui identifie $q_1 \in X_{1}$ à $q_2 \in X_{2}$. Si $\ord_{q_1} \eta_{1}>-k>\ord_{q_2} \eta_{2}$, alors la $k$-différentielle entrelacée est lissable si et seulement si l'une des deux conditions suivantes est vérifiée.
 \begin{enumerate}[i)]
  \item $\Resk_{q_2}\eta_{2}=0$
  \item $\eta_{1}$ n'est pas la puissance $k$-ième d'une différentielle abélienne holomorphe.
 \end{enumerate}
 De plus, le lissage peut se faire sans modifier les $k$-résidus de $\eta_{1}$ si et seulement si l'une des  deux conditions suivantes est vérifiée.
  \begin{enumerate}[i)]
  \item $\Resk_{q_2}\eta_{2}=0$
  \item $\eta_{1}$ n'est pas la puissance $k$-ième d'une différentielle abélienne méromorphe.
 \end{enumerate}
\end{lem}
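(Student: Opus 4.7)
Le plan consiste à appliquer les théorèmes de lissage de \cite{BCGGM3} à cette configuration à deux composantes. Le graphe de niveaux est élémentaire : $X_{1}$ est au niveau supérieur (car $\ord_{q_{1}}\eta_{1}>-k$), $X_{2}$ au niveau inférieur (car $\ord_{q_{2}}\eta_{2}<-k$), et ils sont reliés par l'unique nœud $\{q_{1},q_{2}\}$. La condition globale des résidus (CGR) de \cite{BCGGM3} se spécialise alors en une unique condition portant sur ce nœud, et toute la preuve se ramène à expliciter cette condition dans notre cadre.

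Pour la première assertion, nous observons que la CGR est vide lorsque $\eta_{1}$ n'est pas globalement la puissance $k$-ième d'une différentielle abélienne holomorphe sur $X_{1}$ : en effet, au voisinage de $q_{1}$, la racine $k$-ième locale de $\eta_{1}$ existe toujours via le revêtement canonique $\whX\to X$, mais elle ne se prolonge pas en une forme abélienne globale au niveau supérieur, et il n'y a aucune obstruction à lisser. Si en revanche $\eta_{1}=\omega_{1}^{k}$ avec $\omega_{1}$ holomorphe sur $X_{1}$, alors $\omega_{1}$ est automatiquement holomorphe en $q_{1}$ (son ordre en $q_{1}$ vaut $\ord_{q_{1}}\eta_{1}/k\geq 0$) ; le théorème des résidus abélien appliqué à la forme lissée au niveau supérieur impose alors que la contribution au nœud provenant de $X_{2}$ s'annule, c'est-à-dire $\Resk_{q_{2}}\eta_{2}=0$, en vertu de l'équation \eqref{eq:multiplires}.

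Pour la seconde assertion, l'idée est qu'un lissage général peut nécessiter de déformer $\eta_{1}$ dans sa strate, modifiant ainsi ses $k$-résidus. Lorsque $\eta_{1}=\omega_{1}^{k}$ avec $\omega_{1}$ méromorphe ayant au moins un pôle effectif sur $X_{1}$, on peut ajuster les résidus de $\omega_{1}$ à ces pôles pour absorber une contribution non nulle au nœud tout en conservant le théorème des résidus sur $X_{1}$, ce qui produit un lissage au prix d'une modification des $k$-résidus de $\eta_{1}$ (toujours via \eqref{eq:multiplires}). Si l'on exige la préservation de ces $k$-résidus, cette redistribution est interdite, et la seule manière de contourner l'obstruction est que $\eta_{1}$ ne soit puissance $k$-ième d'\emph{aucune} forme abélienne méromorphe, condition strictement plus forte que la non-puissance $k$-ième d'une forme holomorphe. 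Réciproquement, dans ce dernier cas, le lissage direct sans déformation préalable de $\eta_{1}$ s'effectue sans contrainte.

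Le principal obstacle sera d'extraire précisément de \cite{BCGGM3} la dichotomie holomorphe/méromorphe qui apparaît dans l'énoncé, et notamment de construire dans le cas méromorphe non holomorphe une déformation explicite de $\omega_{1}$ absorbant le résidu au nœud sans quitter la strate sous-jacente. Il conviendra également de vérifier que la modification des résidus de $\omega_{1}$ peut toujours être réalisée lorsqu'un pôle est disponible, ce qui est un simple argument de dimension sur l'application résiduelle abélienne dans la strate de $\omega_{1}$.
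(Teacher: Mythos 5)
Your overall strategy is the same as the paper's: the lemma is not proved from scratch there either, but read off from the smoothing criterion of \cite{BCGGM3} (the global residue condition, combined with \cite[Lemma~4.4]{BCGGM3} for the second assertion), specialised to a two-level graph with a single node. Your identification of which branch of that condition produces the holomorphic/meromorphic dichotomy is correct.

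Two of your justifications are not the actual mechanism, however, and as written they leave gaps. First, the vacuity of the residue condition when $\eta_{1}$ is not the $k$-th power of a holomorphic abelian differential is not explained by the fact that the local $k$-th root \enquote{ne se prolonge pas en une forme abélienne globale}: the condition is checked on the canonical cover, where each connected component of the preimage of $X_{1}$ either carries a pole of the root of $\eta_{1}$ (the meromorphic subcase, which your first paragraph silently skips and only recovers later), or contains at least $k/d\geq 2$ of the $k$ preimages of the node, whose residues differ by $k$-th roots of unity and therefore sum to zero. It is this cancellation of a sum of roots of unity, not the non-extension of the root, that makes the condition automatic; without it, \enquote{il n'y a aucune obstruction à lisser} is an assertion rather than an argument. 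Second, in the only-if direction of the second assertion you apply the abelian residue theorem to \enquote{la forme lissée au niveau supérieur}; this presupposes that the $k$-th root of the smoothed differential still exists on the $X_{1}$-side of the nearby smooth surfaces, which is not automatic since being a $k$-th power is a closed but non-open condition. It does hold here (the monodromy of the root is valued in a discrete group, hence locally constant in the family, and it is trivial around the vanishing cycle because $\Resk_{q_2}\eta_{2}\neq 0$ forces $k\mid\ord_{q_2}\eta_{2}$), but this is exactly the point the paper delegates to \cite[Lemma~4.4]{BCGGM3} and that you must make explicit. With these two points supplied, your outline closes and coincides with the intended proof.
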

Remarquons que la deuxième partie du lemme n'est pas explicitement prouvée dans \cite{BCGGM3}. Toutefois, cela peut se montrer sans problèmes en combinant  la preuve du théorème~1.5 et le  lemme~4.4 de \cite{BCGGM3}.

Maintenant, nous donnons deux applications cruciales du lemme~\ref{lem:lissdeuxcomp}.

\begin{prop}[éclatement d'un zéro]\label{prop:eclatZero}
Soient $(X,\xi)$ une $k$-différentielle de type $\mu$ et $z_{0}\in X$ un zéro d'ordre $a_{0}>-k$ de $\xi$. Soit $(\alpha_{1},\dots,\alpha_{t})$ un $t$-uplet d'entiers strictement supérieurs à $-k$ tel que $\sum_{i}\alpha_{i}=a_{0}$. 

Il existe une opération sur $(X,\xi)$ en $z_{0}$ qui fournit une  $k$-différentielle $(X',\xi')$ de type $(\alpha_{0},\dots,\alpha_{t},\mu\setminus\lbrace a_{0}\rbrace)$ qui ne modifie pas les $k$-résidus de $\xi$ si et seulement si  l'une des  deux conditions suivantes est vérifiée.
  \begin{enumerate}[i)]
  \item $\xi$ n'est pas la puissance $k$-ième d'une différentielle abélienne méromorphe.
  \item Il existe une $k$-différentielle de genre zéro et de type $(\alpha_{1},\dots,\alpha_{t};-a_{0}-2k)$ dont le $k$-résidu au pôle d'ordre $-a_{0}-2k$ est nul. 
 \end{enumerate} 
De plus, si $\xi=\omega^{k}$ avec $\omega$  une différentielle abélienne méromorphe, alors la $k$-différentielle~$\xi'$ est primitive si et seulement si  $\pgcd(\alpha_{i},d)=1$. 
\end{prop}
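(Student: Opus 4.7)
Ma stratégie est de ramener l'énoncé au lemme~\ref{lem:lissdeuxcomp} en produisant une $k$-différentielle entrelacée à deux composantes dont un lissage générique réalise l'éclatement voulu. J'attache à $(X,\xi)$ au point $z_{0}$ une bulle rationnelle $(\mathbb{P}^{1},\eta)$ de type $(\alpha_{1},\dots,\alpha_{t};-a_{0}-2k)$, qui est un profil admissible en genre zéro puisque $\sum \alpha_{i}+(-a_{0}-2k)=-2k$. J'identifie le pôle $q$ d'ordre $-a_{0}-2k$ à $z_{0}$ : la condition d'ordres assortis est vérifiée car $a_{0}+(-a_{0}-2k)=-2k$, et la condition de résidus assortis aux pôles d'ordre $-k$ est vacue puisque $-a_{0}-2k<-k$.

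Pour la suffisance, j'applique la seconde partie du lemme~\ref{lem:lissdeuxcomp} à cette différentielle entrelacée. Les deux critères y figurant --- à savoir $\Resk_{q}\eta=0$, ou bien $\xi$ n'est pas la puissance $k$ième d'une différentielle abélienne méromorphe --- correspondent exactement à la dichotomie énoncée : (ii) est précisément l'existence d'une bulle de $k$-résidu nul au pôle principal, et (i) affirme que $\xi$ n'est pas une puissance $k$ième. Avant d'appliquer ce critère, il faut toutefois construire effectivement une bulle ; dans le cas (ii) son existence est postulée, tandis que dans le cas (i) j'invoquerai le théorème~\ref{thm:strateshol} et les théorèmes décrivant les strates de genre zéro pour produire \emph{une} pluridifférentielle du profil voulu, en vérifiant au passage que les quelques profils exceptionnels de strate vide ne surviennent pas ou peuvent être traités à la main.

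Pour la nécessité, supposons que ni (i) ni (ii) ne soit vérifiée : alors $\xi=\omega^{k}$ et toute $k$-différentielle de genre zéro de type $(\alpha_{1},\dots,\alpha_{t};-a_{0}-2k)$ a un $k$-résidu non nul au pôle principal. Tout éclatement local de $z_{0}$ en $t$ zéros d'ordres $\alpha_{i}$ peut être ramené, par collision des $t$ nouveaux zéros, à une dégénérescence nodale de la forme ci-dessus. Le lemme~\ref{lem:lissdeuxcomp} interdit alors le lissage sans modification des $k$-résidus de $\xi$, ce qui prouve la direction « seulement si ».

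Pour l'assertion sur la primitivité, si $\xi=\omega^{d}$ avec $\omega$ abélienne et si $e>1$ divise à la fois $d$ et $\pgcd(\alpha_{1},\dots,\alpha_{t})$, alors les ordres de toutes les singularités de la différentielle éclatée $\xi'$ sont divisibles par $e$, et $\xi'$ descend en une $(k/e)$-différentielle, violant la primitivité. Inversement, si $\pgcd(\alpha_{i},d)=1$, aucune telle descente n'est possible et $\xi'$ est primitive. L'obstacle principal que j'anticipe est l'étape 1 dans le cas (i) : garantir l'existence de la bulle de genre zéro dans les rares profils exceptionnels listés par le théorème~\ref{thm:strateshol} et les énoncés de genre zéro ; le reste est une application essentiellement mécanique du cadre des différentielles entrelacées.
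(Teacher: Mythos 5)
Votre preuve suit exactement la même route que celle du papier (qui tient en deux lignes) : former une différentielle entrelacée en attachant en $z_{0}$ une droite projective portant une $k$-différentielle de type $(\alpha_{1},\dots,\alpha_{t};-a_{0}-2k)$, puis invoquer le lemme~\ref{lem:lissdeuxcomp}; vos compléments sur la nécessité et la primitivité sont corrects dans l'esprit. Seule petite imprécision : pour produire la bulle dans le cas (i), le théorème~\ref{thm:strateshol} (qui ne concerne que les strates sans pôle) n'est pas le bon outil, mais comme la primitivité de la bulle n'est pas requise à cette étape, son existence est immédiate via $\prod_{i}(z-z_{i})^{m_{i}}(dz)^{k}$ sur $\PP^{1}$ et aucun profil exceptionnel n'est à écarter.
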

\begin{proof}
 Partons de $(X,\xi)$. On forme une différentielle entrelacée en attachant au point~$z_{0}$ une droite projective avec une différentielle ayant les ordres souhaités. Le lemme~\ref{lem:lissdeuxcomp} implique facilement la proposition~\ref{prop:eclatZero}.
\end{proof}

La seconde construction nous permettra en particulier de faire une récurrence sur le genre des surfaces de Riemann.

\begin{prop}[Couture d'anse]\label{prop:attachanse}
 Soient $(X,\xi)$ une $k$-différentielle (primitive) dans la strate $\komoduli(\mu)$ et $z_{0}\in X$ un zéro d'ordre $a_{0}$ de $\xi$. 
 Il existe une opération locale à $z_{0}$ qui produit une $k$-différentielle $(X',\xi')$ dans la strate $\komoduli[g+1](a_{0}+2k,\mu\setminus\left\{a_{0}\right\})$. 
\end{prop}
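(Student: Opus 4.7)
La stratégie que je suggérerais est d'appliquer le lemme~\ref{lem:lissdeuxcomp} en construisant une $k$-différentielle entrelacée sur une courbe nodale de genre arithmétique $g+1$. Plus précisément, on attache au point $z_{0}$ de $X$ une courbe elliptique $E$ portant une $k$-différentielle méromorphe $\eta_{E}$ dont l'unique pôle est au point d'attachement $q \in E$ et est d'ordre $-(a_{0}+2k)$, et dont l'unique zéro, placé en un autre point de $E$, est d'ordre $a_{0}+2k$. Une telle différentielle existe car $\deg K_{E}^{\otimes k} = 0$ sur une courbe de genre un et tout diviseur de degré nul est principal à une $k$-différentielle méromorphe près.

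La condition d'assortiment des ordres au nœud donne $\ord_{z_{0}}\xi + \ord_{q}\eta_{E} = a_{0} - (a_{0}+2k) = -2k$, de sorte que $(\xi,\eta_{E})$ définit bien une $k$-différentielle entrelacée sur $X\cup_{z_{0}\sim q} E$. La condition sur les résidus aux pôles d'ordre $-k$ est vide car aucun des deux ordres au nœud n'est égal à $-k$. Comme $\ord_{z_{0}}\xi = a_{0} > -k > -(a_{0}+2k) = \ord_{q}\eta_{E}$, on est dans la configuration d'application du lemme~\ref{lem:lissdeuxcomp}.

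Le lissage produit alors une $k$-différentielle $(X',\xi')$ sur une courbe lisse de genre $g+1$, dont les singularités sont celles héritées des deux composantes en dehors du nœud: un zéro d'ordre $a_{0}+2k$ provenant de $E$ et les singularités de $\mu\setminus\left\{a_{0}\right\}$ provenant de $X$. Le décompte des ordres est cohérent puisque $k(2g-2)-a_{0}+(a_{0}+2k) = k(2(g+1)-2)$. De plus, cette opération est bien locale au voisinage de $z_{0}$ et ne modifie ni la structure ni les résidus de $\xi$ ailleurs.

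Le point délicat consiste à garantir l'hypothèse de lissabilité du lemme~\ref{lem:lissdeuxcomp}. Si $\xi$ n'est pas la puissance $k$-ième d'une différentielle abélienne méromorphe, la condition (ii) est vérifiée automatiquement. Sinon, il faut choisir $\eta_{E}$ avec $\Resk_{q}\eta_{E} = 0$. Ce $k$-résidu est nul sans contrainte lorsque $k\nmid a_{0}$, et lorsque $k\mid a_{0}$ il s'agit d'une condition de codimension $1$ sur l'espace des modules des $k$-différentielles de type $(a_{0}+2k;-(a_{0}+2k))$ sur les courbes elliptiques, qui est de dimension au moins $2$ (paramètres de $E$ et position relative zéro/pôle), et donc réalisable. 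Une preuve alternative plus directe consisterait à décrire explicitement la chirurgie de couture d'anse au niveau des surfaces plates, en découpant une fente à $z_{0}$ et en la recollant avec une fente taillée dans un tore plat convenable, ce qui contourne la discussion ci-dessus mais nécessite une description géométrique plus détaillée.
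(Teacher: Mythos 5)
Votre démonstration suit essentiellement la même approche que celle de l'article : on attache en $z_{0}$ une courbe elliptique munie d'une $k$-différentielle de type $(a_{0}+2k;-a_{0}-2k)$ et on lisse la $k$-différentielle entrelacée ainsi obtenue grâce au lemme~\ref{lem:lissdeuxcomp} ; l'article se contente d'ailleurs de ces deux phrases. Votre discussion de la condition de lissabilité est correcte dans sa conclusion mais l'argument de codimension est superflu et un peu fragile tel quel : pour $k\geq2$ la différentielle $\xi$ est primitive, donc n'est jamais la puissance $k$ième d'une différentielle abélienne et la condition (ii) du lemme est automatique, tandis que pour $k=1$ (ou en prenant $\eta_{E}=\omega^{k}$ avec $\omega$ abélienne à unique pôle) le théorème des résidus force l'annulation du résidu à l'unique pôle, ce qui règle la condition (i) sans décompte de dimension.
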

\begin{proof}
 Partons de $(X,\xi)$. On forme une différentielle entrelacée en attachant au point $z_{0}$ une courbe elliptique avec une différentielle de type $(a_{0}+2k;-a_{0}-2k)$. Le lemme~\ref{lem:lissdeuxcomp} permet de conclure.
\end{proof}

\subsection{Différentielle à cœur dégénéré.}
\label{sec:coeur}

Les différentielles à cœur dégénéré constitue une famille particulièrement simple d'exemples. En particulier, beaucoup de problèmes géométriques se simplifient en des problèmes combinatoires. Cela nous permettra de montrer  des résultats de non existence, en particulier dans la section~\ref{sec:ggeq1}.

Rappelons que le {\em cœur} d'une $k$-différentielle est l'enveloppe convexe des singularités coniques pour la métrique définie par la différentielle. On dit que le cœur est \textit{dégénéré} s'il est d'intérieur vide, c'est-à-dire s'il est réduit à l'union d'un nombre fini de liens selles.

Le complémentaire du cœur d'une surface plate admet autant de composantes connexes que de pôles. On appelle \textit{domaine polaire} la composante à laquelle un pôle appartient. Le bord d'un domaine polaire est toujours formé par un nombre fini de liens selles (voir le lemme~2.1 de \cite{tahar}).

L'intérêt de cette notion est donné par la proposition suivante que l'on montre aisément en utilisant le {\em flot contractant}.
\begin{prop}\label{prop:coeurdege}
Dans une strate donnée, le lieu des différentielles abéliennes ou quadratique  dont tous les résidus sont nuls est soit vide soit contient une différentielle à cœur dégénéré.
\end{prop}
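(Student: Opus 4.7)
Le plan est de partir d'une différentielle $(X,\omega)$ de la strate considérée dont tous les résidus sont nuls, et d'utiliser le flot contractant pour la déformer continûment en une différentielle à cœur dégénéré dans la même strate.

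Je rappellerai d'abord que le flot contractant est une déformation géométrique des surfaces plates qui contracte continûment les liens-selles composant le cœur, tout en préservant la structure combinatoire des domaines polaires. L'hypothèse d'annulation de tous les résidus joue ici un rôle essentiel: elle garantit que les domaines polaires ne possèdent ni entaille ni coupure à préserver, de sorte que le flot est bien défini pour tout temps $t\geq 0$, reste dans la strate $\komoduli(\mu)$, et conserve la condition d'annulation des résidus le long de la trajectoire.

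J'étudierai ensuite l'évolution de l'aire du cœur sous l'action de ce flot. Cette aire décroît strictement tant que le cœur reste d'intérieur non vide. Comme elle est positive et décroissante, il existe un premier temps $t_{0}\geq 0$ auquel le cœur devient d'intérieur vide, c'est-à-dire se réduit à une union finie de liens-selles. À ce moment-là, la différentielle correspondante est à cœur dégénéré et appartient à la strate initiale, ce qui fournit l'exemple recherché et prouve la proposition.

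L'étape la plus délicate sera de justifier rigoureusement que le cœur devient effectivement d'intérieur vide en temps fini, plutôt que d'avoir une aire tendant vers zéro asymptotiquement tout en restant d'intérieur non vide; en général on s'appuiera sur le fait que la contraction rapproche inévitablement certaines singularités coniques le long de liens-selles et force une configuration alignée en temps fini. Il faudra également s'assurer que la différentielle obtenue au temps $t_{0}$ ne s'échappe pas à l'infini ni ne dégénère au bord de la strate, ce qui repose précisément sur la nullité des résidus, propriété préservée par le flot et qui empêche toute dégénérescence au niveau des pôles pendant l'écoulement.
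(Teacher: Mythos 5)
Votre stratégie --- dégénérer le cœur par le flot contractant --- est bien celle que le texte annonce juste avant l'énoncé (\enquote{que l'on montre aisément en utilisant le flot contractant}). Mais la preuve effectivement donnée dans le papier tient en deux lignes : le lieu d'une strate où tous les résidus s'annulent est ${\rm GL}^{+}(2,\RR)$-invariant, et d'après le lemme~2.2 de \cite{tahar} toute ${\rm GL}^{+}(2,\RR)$-orbite contient une surface plate à cœur dégénéré. Tout le contenu non trivial est donc délégué à ce lemme déjà établi.

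Deux points posent problème dans votre proposition. D'abord, le rôle que vous attribuez à l'hypothèse de nullité des résidus est inexact : le flot contractant (un sous-groupe à un paramètre de ${\rm GL}^{+}(2,\RR)$) est défini sur n'importe quelle surface plate de la strate, avec ou sans entailles aux pôles ; l'hypothèse sert uniquement à garantir que la trajectoire reste dans le lieu considéré, puisque l'action linéaire transforme les résidus par la matrice correspondante et fixe donc le uplet nul. Ensuite --- et c'est l'essentiel --- vous identifiez vous-même \enquote{l'étape la plus délicate} (le cœur devient d'intérieur vide en temps fini, sans que la surface ne dégénère ni ne quitte la strate), mais vous ne la démontrez pas : vous vous contentez d'affirmer que la contraction \enquote{force une configuration alignée en temps fini}. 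Or c'est précisément là tout le contenu de la proposition, et ce n'est pas immédiat (l'aire du cœur décroît, mais il faut exclure qu'elle tende vers zéro sans jamais s'annuler, ou que des liens-selles dégénèrent). Telle quelle, votre rédaction n'est donc pas une preuve : il faut soit invoquer le lemme~2.2 de \cite{tahar} comme le fait le papier, soit en reproduire l'argument complet.
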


\begin{proof}
Le lieu d'une strate où les résidus sont tous nuls est ${\rm GL}^{+}(2,\mathbb{R})$-invariant. Or, d'après le lemme 2.2 de \cite{tahar}, chaque ${\rm GL}^{+}(2,\mathbb{R})$-orbite contient une surface plate dont le cœur est dégénéré.
\end{proof}

On peut dans ce cas supposer que toutes les connexions de selles sont horizontales. De plus, il y a exactement $2g+n+\tilde{p}-2$ connexions de selles dans la surface $S$, où $n$ est le nombre de zéros et~$\tilde{p}$ le nombre de pôles dans la strate. Coupant le long de ces liens selles, nous obtenons~$\tilde{p}$ ($2$-)parties polaires. Le {\em graphe d'incidence d'une surface à cœur dégénéré} est le graphe dont les sommets sont les domaines polaires et deux sommets sont reliés par autant d'arêtes  qu'il y a de liens selles entre les deux domaines polaires.
De plus, le {\em graphe d'incidence simplifié} est  obtenu en enlevant tous les sommets de valence $2$ au graphe d'incidence. Les sommets du graphe d'incidence qui sont de valence supérieure ou égale à trois sont dits {\em spéciaux}.

\section{Différentielles abéliennes méromorphes}
\label{sec:MDPR}

Dans cette section, nous étudions le cas des différentielles abéliennes.  Dans la section~\ref{sec:casgen}, nous traitons le cas général. Puis nous considérons dans la section~\ref{sec:casreszero} le cas où tous les résidus sont nuls dans les strates de genre zéro. Ensuite, nous traitons le cas des résidus colinéaires dans les strates de genre zéro n'ayant que des pôles simples dans la section~\ref{sec:caslier}. Enfin, nous étudions l'application résiduelle sur chaque composante connexe des strates dans la section~\ref{sec:casCC}.

\subsection{Cas général}
\label{sec:casgen}

Soit $\mu:=(a_{1},\dots,a_{n};-b_{1},\dots,-b_{p};(-1^{s}))$ une partition  de $2g-2$. Dans ce paragraphe, nous prouvons la surjectivité de l'application résiduelle des strates $\omoduli(\mu)$ dans le cas $g\geq1$ et dans le cas où $g=0$ et $p,s\neq 0$. Le premier cas correspond aux théorèmes \ref{thm:ggeq2} et \ref{thm:geq1} dans le cas abélien. Le second cas est couvert par le début de la démonstration du théorème~\ref{thm:geq0keq1}. De plus, nous montrons que si $g=0$ et $s=0$, alors l'image de l'application résiduelle contient $\espres[0](\mu)\setminus\left\{(0,\dots,0)\right\}$. Nous montrons aussi que l'image de l'application résiduelle contient les résidus non colinéaires dans le cas $g=0$ et $p=0$.

La preuve suit le schéma suivant. Nous traitons tout d'abord dans le lemme~\ref{lem:gzerogen1} le cas des strates de genre zéro avec un unique zéro. Puis nous considérons le cas des strates de genre~$1$ avec un unique zéro et $p=0$ dans le lemme~\ref{lem:gunspe1} ou $s=0$  dans le lemme~\ref{lem:gunspe2}. Enfin nous concluons dans presque tout les cas grâce aux techniques d'éclatement du zéro et de couture d'anse dans le lemme~\ref{lem:abelgen}.

\begin{lem}\label{lem:gzerogen1}
 Soit $\mu:=(a;-b_{1},\dots,-b_{p};(-1^{s}))$ une partition  de $-2$ et $r=(r_{1} , . . . , r_{p+s})\in\espres[0](\mu)$
un élément non nul de l’espace résiduel. Si les $r_{ i }$ ne sont pas colinéaires
ou si $p\neq0$, alors $r$ est dans l’image de $\appres[0](\mu)$
\end{lem}

Avant de procéder à la preuve du lemme~\ref{lem:gzerogen1} nous introduisons un objet géométrique associé à un uplet $v:=(v_1,\ldots, v_{t})$ avec $v_{i}\neq 0$. Nous supposerons, quitte à permuter les indices, que l'argument des vecteurs $v_{1},\ldots,v_{t}$ est décroissant dans $\left]-\pi,\pi\right]$.
Le {\em polygone résiduel } $\mathfrak{P}(v)$ est le polygone obtenu en concaténant les vecteurs $v_{1}, \ldots,v_{t}$ dans cet ordre. Remarquons que~$\mathfrak{P}(v)$ est un polygone convexe, éventuellement dégénéré (i.e. d'intérieur vide).

\begin{proof}[Preuve du lemme~\ref{lem:gzerogen1}]
 Soient  $\omoduli[0](a;-b_{1},\dots,-b_{p};(-1^{s}))$ une strate de genre zéro et $r:=(r_1,\ldots, r_{p+s})$ dans $\espres[0](\mu)$. Nous notons par $r^{\ast}$ l'ensemble des résidus non nuls.  
 \smallskip
 \par
 Supposons que le polygone résiduel  $\mathfrak{P}(r^{\ast})$ est non dégénéré (i.e. que les $r_{i}$ ne sont pas colinéaires). Pour tous les pôles~$P_{i}$ d'ordre~$-b_{i}$ ayant un résidu non nul~$r_{i}$, on prend une partie polaire d'ordre~$b_{i}$ associée à $(r_{i};\emptyset)$ (et de type arbitraire). Pour tous les pôles $P_{j}$ d'ordre $-b_{j}$ ayant un résidu nul, on prend une partie triviale d'ordre $b_{j}$ associée à $(r_{i_{j}};r_{i_{j}})$, où $r_{i_{j}}\neq0$ est le résidu (non nul) au pôle $P_{i_{j}}$. Les collages de ces parties polaires avec le polygone résiduel se font de la façon suivante.

\'Etant donnée une partie polaire non triviale associée à un pôle $P_{i}$. S'il existe un pôle~$P_{j}$ qui a une partie polaire associée à $(r_{i};r_{i})$, alors nous collons le segment~$r_{i}$ de $P_{i}$ au segment~$r_{i}$ du domaine basique négatif de $P_{j}$. Nous continuons ces collages jusqu'à ce qu'il n'y ait plus de pôles sans résidu avec une partie polaire associée à $(r_{i};r_{i})$. Puis nous collons le dernier segment $r_{i}$ au segment $r_{i}$ du polygone résiduel. Nous faisons de même pour tous les pôles de résidus non nuls. Cette construction est illustrée par la figure~\ref{fig:casgeneralgenrezero}.

 \begin{figure}[htb]
 \center
\begin{tikzpicture}

\begin{scope}[yshift=-.2cm]
\coordinate (a) at (0,0);
\coordinate (b) at (-1,0);
\coordinate (c) at (-1,-1);

\filldraw[fill=black!10] (a) -- (b)coordinate[pos=.5](d) -- (c)coordinate[pos=.5](e) -- (a) coordinate[pos=.5](f);

\fill (a)  circle (2pt);
\fill (b) circle (2pt);
\fill (c) circle (2pt);

\node[above] at (d) {$1$};
\node[left] at (e) {$2$};
\node[below right] at (f) {$3$};
\end{scope}

\begin{scope}[yshift=-.5cm]
\coordinate (a) at (-1,1);
\coordinate (b) at (0,1);

    \fill[fill=black!10] (a)  -- (b)coordinate[pos=.5](f) -- ++(0,1) --++(-1,0) -- cycle;
    \fill (a)  circle (2pt);
\fill[] (b) circle (2pt);
 \draw  (a) -- (b);
 \draw (a) -- ++(0,.9) coordinate (d)coordinate[pos=.5](h);
 \draw (b) -- ++(0,.9) coordinate (e)coordinate[pos=.5](i);
 \draw[dotted] (d) -- ++(0,.2);
 \draw[dotted] (e) -- ++(0,.2);
\node[above] at (f) {$1$};
\node[left] at (h) {$4$};
\node[right] at (i) {$4$};
\end{scope}

\fill[fill=black!10] (-2.8,-.5) coordinate (Q) circle (1.1cm);
\coordinate (a) at (-2.8,0);
\coordinate (b) at (-2.8,-1);
\fill (a)  circle (2pt);
\fill (b) circle (2pt);
\draw (a) -- (b)coordinate[pos=.5](d);

\node[left] at (d) {$2$};
\node[right] at (d) {$5$};

\fill[fill=black!10] (-5.3,-.5) coordinate (Q) circle (1.1cm);
\coordinate (a) at (-5.4,0);
\coordinate (b) at (-5.4,-1);
\fill (a)  circle (2pt);
\fill (b) circle (2pt);
    \fill[white] (a) --  (b)coordinate[pos=.5](d) -- ++(1.3,0) --++(0,1) -- cycle;
 \draw  (a) -- (b)coordinate[pos=.5](g);
 \draw (a) -- ++(1,0) coordinate (e)coordinate[pos=.5] (h);
 \draw (b) -- ++(1,0) coordinate (f)coordinate[pos=.5] (i);
 \draw[dotted] (f) -- ++(.2,0);
 \draw[dotted] (e) -- ++(.2,0);
 
 \node[right] at (g) {$5$};
\node[above] at (h) {$6$};
\node[below] at (i) {$6$};

 \begin{scope}[yshift=.1cm]
\fill[fill=black!10] (2,-.5) coordinate (Q) circle (1.3cm);
\coordinate (a) at (2.4,.1);
\coordinate (b) at (1.4,-.9);
\fill (a)  circle (2pt);
\fill (b) circle (2pt);
    \fill[white] (a) --  (b)coordinate[pos=.5](d) -- ++(0,1.8) --++(1,0) -- cycle;
 \draw  (a) -- (b)coordinate[pos=.5](g);
 \draw (a) -- ++(0,.5) coordinate (e)coordinate[pos=.6] (h);
 \draw (b) -- ++(0,1.4) coordinate (f)coordinate[pos=.6] (i);
 \draw[dotted] (f) -- ++(0,.2);
 \draw[dotted] (e) -- ++(0,.1);
 
 \node[below right] at (g) {$3$};
\node[left] at (h) {$9$};
\node[right] at (i) {$9$};
 
 \draw (a) -- ++(.7,0) coordinate[pos=.5] (j);
\node[above] at (j) {$8$};
\node[below] at (j) {$7$};
\end{scope}

\fill[fill=black!10] (4.6,-.5) coordinate (Q) circle (1.1cm);

\draw[] (Q) -- ++(1.1,0) coordinate[pos=.5](a);

\node[above] at (a) {$7$};
\node[below] at (a) {$8$};
\fill[] (Q) circle (2pt);

\end{tikzpicture}
\caption{Une différentielle dans $\omoduli[0](4;-2,-2,-3;-1)$ avec résidus $(0,i,-1-i,1)$.} \label{fig:casgeneralgenrezero}
\end{figure}

La différentielle associée à cette surface plate possède clairement les invariants désirés aux pôles. Vérifions maintenant qu'elle est de genre zéro et possède un unique zéro. Remarquons que si l'on coupe cette surface le long d'un lien selle, on obtient deux surfaces connexes disjointes. En effet, les liens selles correspondent soit au bord des parties polaires, soit aux diagonales du polygone résiduel. Une telle propriété implique qu'il existe un unique zéro, car s'il y en avait deux, un lien selle entre les deux ne déconnecterait pas la surface. De manière similaire, on en déduit que le genre de la surface est nul. 

\smallskip
\par
Nous traitons maintenant le cas où le polygone résiduel est dégénéré mais pas réduit à un point dans les strates $\omoduli[0](a;-b_{1},\dots,-b_{p};(-1^{s}))$, avec $p\neq0$. Sans perte de généralité, nous supposerons que les résidus sont réels. De plus, nous ordonnons les indices de telle sorte que l'ensemble des résidus non nuls $r^{\ast}=\left\{ r_{j_{1}},\dots,r_{j_{t}}\right\}$ satisfait $r_{j_{i}}<0$ pour $i\leq u$ et $r_{j_{i}}$ pour $u<i\leq t$. Nous notons $J=\left\{ j_{1},\dots,j_{t}\right\}$.
Prenons le pôle $P_{1}$ d'ordre $-b_{1}$. Si $r_{1}=0$, nous associons à ce pôle une partie polaire d'ordre $b_{i}$ associée aux vecteurs $(-r_{j_{1}},\dots,-r_{j_{u}};r_{j_{u+1}},\dots,r_{j_{t}})$. Si $r_{1}\neq 0$, on suppose que $r_{1}=r_{j_{1}}$ et on associe à ce pôle une partie polaire d'ordre $b_{i}$ associée aux vecteurs $(-r_{j_{2}},\dots,-r_{j_{u}};r_{j_{u+1}},\dots,r_{j_{t}})$.
Nous associons aux pôles $P_{j_{i}}$, pour $1\leq i\leq u$ si $1\notin J$ et $2\leq i\leq u$ si $1\in J$, une partie polaire d'ordre $b_{j_{i}}$ associée à $(\emptyset;-r_{j_{i}})$. Pour les $u< i\leq t$ nous prenons une partie polaire d'ordre $b_{i}$ associée à $(r_{i};\emptyset)$. Pour tous les pôles $P_{k}$ d'ordre $-b_{k}$ ayant un résidu nul, on prend une partie triviale d'ordre $b_{k}$ associée à $(\pm r_{i_{k}};\pm r_{i_{k}})$ avec $i_{k}\in J \setminus 1$. On colle les parties polaires triviales comme expliqué au paragraphe précédent. On obtient une union de surfaces plates à bord.  Il reste à coller les bords restants aux segments de la partie polaire de $P_{1}$.
On vérifie comme au paragraphe précédent que la différentielle ainsi construite vérifie les propriétés désirées.
\end{proof}
\par
Nous traitons maintenant le cas des strates $\omoduli[1](a;-b_{1},\dots,-b_{p};(-1^{s}))$ de genre $1$ avec $p=0$ ou $s=0$.
\begin{lem}\label{lem:gunspe1}
 L'application résiduelle de la strate $\omoduli[1](s;(-1^{s}))$ est surjective pour $s>1$.
\end{lem}
\begin{proof}
Considérons la strate $\omoduli[1](s;(-1^{s}))$, avec $s>1$  et $r:=(r_{1},\dots,r_{s})$ dans $\espres[1](s;(-1^{s}))$. Prenons un tore plat $S_{1}$ tel que le lien selle le plus petit est strictement supérieur à $\sum|r_{i}|$. Nous enlevons de $S_{1}$ le polygone résiduel $\mathfrak{P}(-r)$. Cette opération est réalisable par notre hypothèse sur la longueur des liens selles de $S_{1}$.
Pour chacun des pôles~$P_{i}$, nous prenons une partie polaire d'ordre~$1$ associée à~$r_{i}$. Nous collons le bord de ces parties polaires au bord de $S_{1}\setminus \mathfrak{P}(r)$ par translation. La construction est représentée par le dessin de gauche de la figure~\ref{fig:casgeneralgenreun}. On vérifie sans problème que la surface ainsi obtenue est de genre $1$ et possède une unique singularité conique.

 \begin{figure}[htb]
 \center
\begin{tikzpicture}

\begin{scope}[xshift=-7.5cm,yshift=-.8cm]
\coordinate (a) at (-1,1);
\coordinate (b) at (0,1);

    \fill[fill=black!10] (a)  -- (b)coordinate[pos=.5](f) -- ++(0,1) --++(-1,0) -- cycle;
    \fill (a)  circle (2pt);
\fill[] (b) circle (2pt);
 \draw  (a) -- (b);
 \draw (a) -- ++(0,.9) coordinate (d)coordinate[pos=.5](h);
 \draw (b) -- ++(0,.9) coordinate (e)coordinate[pos=.5](i);
 \draw[dotted] (d) -- ++(0,.2);
 \draw[dotted] (e) -- ++(0,.2);
\node[above] at (f) {$1$};
\end{scope}

\begin{scope}[xshift=-2.5cm,yshift=-.8cm]
\coordinate (a) at (-1,1);
\coordinate (b) at (0,1);

    \fill[fill=black!10] (a)  -- (b)coordinate[pos=.5](f) -- ++(0,1) --++(-1,0) -- cycle;
    \fill (a)  circle (2pt);
\fill[] (b) circle (2pt);
 \draw  (a) -- (b);
 \draw (a) -- ++(0,.9) coordinate (d)coordinate[pos=.5](h);
 \draw (b) -- ++(0,.9) coordinate (e)coordinate[pos=.5](i);
 \draw[dotted] (d) -- ++(0,.2);
 \draw[dotted] (e) -- ++(0,.2);
\node[above] at (f) {$2$};
\end{scope}

\begin{scope}[xshift=-7.5cm,yshift=-1.2cm]
\coordinate (a) at (-1,1);
\coordinate (b) at (0,1);

    \fill[fill=black!10] (a)  -- (b)coordinate[pos=.5](f) -- ++(0,-1) --++(-1,0) -- cycle;
    \fill (a)  circle (2pt);
\fill[] (b) circle (2pt);
 \draw  (a) -- (b);
 \draw (a) -- ++(0,-.9) coordinate (d)coordinate[pos=.5](h);
 \draw (b) -- ++(0,-.9) coordinate (e)coordinate[pos=.5](i);
 \draw[dotted] (d) -- ++(0,-.2);
 \draw[dotted] (e) -- ++(0,-.2);
\node[below] at (f) {$3$};
\end{scope}

\begin{scope}[xshift=-2.5cm,yshift=-1.2cm]
\coordinate (a) at (-1,1);
\coordinate (b) at (0,1);

    \fill[fill=black!10] (a)  -- (b)coordinate[pos=.5](f) -- ++(0,-1) --++(-1,0) -- cycle;
    \fill (a)  circle (2pt);
\fill[] (b) circle (2pt);
 \draw  (a) -- (b);
 \draw (a) -- ++(0,-.9) coordinate (d)coordinate[pos=.5](h);
 \draw (b) -- ++(0,-.9) coordinate (e)coordinate[pos=.5](i);
 \draw[dotted] (d) -- ++(0,-.2);
 \draw[dotted] (e) -- ++(0,-.2);
\node[below] at (f) {$4$};
\end{scope}

\begin{scope}[xshift=-7cm,yshift=-.5cm]
\coordinate (a) at (0,0);
\coordinate (b) at (3,0);
\coordinate (c) at (3,1);
\coordinate (d) at (0,1);

    \fill[fill=black!10] (a)  -- (b) -- (c) -- (d) -- cycle;
 \draw  (a) -- (b) --(c)--(d)--(a);

\draw (.5,.5) coordinate (e) --(1.5,.5) coordinate (f)coordinate[pos=.5](h)  --(2.5,.5) coordinate (g)coordinate[pos=.5](i);
\fill (e)  circle (2pt);
\fill (f) circle (2pt);
\fill (g) circle (2pt);
\node[below] at (h) {$1$};
\node[below] at (i) {$2$};
\node[above] at (h) {$3$};
\node[above] at (i) {$4$};
\end{scope}


\begin{scope}[xshift=1cm,yshift=0cm]
\fill[fill=black!10] (0,0) coordinate (Q) circle (1.3cm);
\coordinate (a) at (-.5,-.5);
\coordinate (b) at (.5,-.5);
\coordinate (c) at (.5,.5);
\coordinate (d) at (-.5,.5);
\fill (a)  circle (2pt);
\fill (b) circle (2pt);
\fill (c)  circle (2pt);
\fill (d) circle (2pt);
\fill[fill=white] (a)  -- (b) -- (c) -- (d) -- cycle;
\draw (a)  -- (b)coordinate[pos=.5](e) -- (c)coordinate[pos=.5](f) -- (d)coordinate[pos=.5](g) -- (a)coordinate[pos=.5](h);

\node[below] at (e) {$2$};
\node[above] at (g) {$3$};
\node[right] at (f) {$1$};
\node[left] at (h) {$1$};
\end{scope}

\begin{scope}[xshift=3.5cm,yshift=0cm]
\fill[fill=black!10] (0,0) coordinate (Q) circle (1.1cm);
\coordinate (a) at (-.5,0);
\coordinate (b) at (.5,0);
\fill (a)  circle (2pt);
\fill (b) circle (2pt);
\draw (a) -- (b)coordinate[pos=.5](d);

\node[above] at (d) {$2$};
\node[below] at (d) {$3$};
\end{scope}

\end{tikzpicture}
\caption{Une différentielle de $\omoduli[1](4;(-1^{4}))$ avec résidus $(1,1,-1,-1)$ à gauche et de $\omoduli[1](4;(-2^{2}))$ avec résidus $(0,0)$ à droite.} \label{fig:casgeneralgenreun}
\end{figure}
\end{proof}

\begin{lem}\label{lem:gunspe2}
L'application résiduelle de $\omoduli[1](a;-b_{1},\dots,-b_{p})$ est surjective.
\end{lem}

\begin{proof}
Supposons que $r=(0,\dots,0)$.  Pour tout $P_{i}$ avec $i\geq2$, nous prenons une partie polaire triviale d'ordre $b_{i}$ associée à $(1;1)$.  Pour le pôle~$P_{1}$, nous prenons une partie polaire d'ordre~$b_{1}$ associée à $(i,1;1,i)$. Nous collons les deux segments de période $i$ par translation. Les autres segments sont collés cycliquement les uns aux autres. Plus précisément, nous collons le segment supérieur de $P_{i}$  au segment inférieur de $P_{i+1}$ (où ces indices sont pris modulo $p$). La construction est représentée par le dessin de droite de la figure~\ref{fig:casgeneralgenreun}. La surface plate obtenue a clairement une unique singularité conique. Le genre est~$1$ car nous pouvons couper les liens selles correspondant aux segments de période $1$ et $i$ de $P_{1}$ sans déconnecter la surface. En revanche couper un autre lien selle déconnecte la surface. Donc nous avons construit une différentielle dans $\omoduli[1](a;-b_{1},\dots,-b_{p})$ dont tous les résidus aux pôles sont nuls.

Supposons maintenant que $r\neq(0,\dots,0)$. D'après le lemme~\ref{lem:gzerogen1}, il existe une différentielle~$\omega$ dans $\omoduli[0](a-2;-b_{1},\dots,-b_{p})$ dont les résidus sont~$r$. La différentielle souhaitée est alors obtenue en cousant une anse au zéro de la différentielle~$\omega$. 
\end{proof}

Nous concluons maintenant à la surjectivité de l'application résiduelle pour la majorité des strates. Plus précisément, nous montrons le résultat suivant.
\begin{lem}\label{lem:abelgen}
 Soit $\omoduli(a_{1},\ldots,a_{n};-b_{1},\dots,-b_{p};(-1^{s}))$ une strate telle que, soit le genre~$g$ est supérieur ou égale à~$1$, soit $p\neq 0$ et $s\neq 0$. L'application résiduelle de cette strate est surjective.

 De plus, si $g=0$ et $s=0$, alors l'image de l'application résiduelle contient $\espres[0](\mu)\setminus \left\{0\right\}$.
\end{lem}

\begin{proof}
Commençons par les cas où $g=0$. On se donne une partition $\mu:=(a_{1},\ldots,a_{n};-b_{1},\dots,-b_{p};(-1^{s}))$ de $-2$ telle que,  soit $p\neq0$ et $s\neq0$, soit $s=0$. D'après le lemme~\ref{lem:gzerogen1}, il existe une différentielle $(X,\omega)$ dans  $\omoduli[0](\sum a_{i};-b_{1},\dots,-b_{p};(-1^{s})))$ dont les résidus aux pôles sont~$r$, avec $r\in\espres[0](\mu)$ dans le premier cas et $r\in\espres[0](\mu)\setminus\left\{ 0 \right\}$ dans le second. L'éclatement du zéro d'ordre $\sum a_{i}$ (voir la proposition~\ref{prop:eclatZero}) donne une différentielle dans $\omoduli[0](\mu)$ dont les résidus sont $r$. 

Prenons maintenant une strate $\omoduli[1](\mu)$ avec $\mu:=(a;-b_{1},\dots,-b_{p};(-1^{s}))$ et $r\in\espres[1](\mu)$. Si~$p$ et~$s$ sont non nuls, alors il existe une différentielle dans $\omoduli[0](a-2;-b_{1},\dots,-b_{p};(-1^{s}))$ dont les résidus sont $r$. Avec la couture d'anse (proposition~\ref{prop:attachanse}), on obtient une différentielle dans $\omoduli[1](\mu)$ ayant pour résidus $r$. La surjectivité de l'application résiduelle dans les cas où $p=0$ ou $s=0$ a été démontrée dans les lemmes~\ref{lem:gunspe1} et~\ref{lem:gunspe2}.

Considérons maintenant les partitions $\mu:=(a;-b_{1},\dots,-b_{p};(-1^{s}))$ de $2g-2$ avec $g\geq2$. On se donne un uplet $r$ dans $\espres(\mu)$. La strate $\omoduli[1](a-2g;-b_{1},\dots,-b_{p};(-1^{s}))$ contient une différentielle dont les résidus sont $r$. Par coutures d'anse successives, on obtient une différentielle dans $\omoduli[g](\mu)$ dont les résidus sont $r$.

Enfin, considérons une partition quelconque $\mu:=(a_{1},\ldots,a_{n};-b_{1},\dots,-b_{p};(-1^{s}))$ de $2g-2$ avec $g\geq1$. L'application résiduelle de la strate $\omoduli[g](\sum a_{i};-b_{1},\dots,-b_{p};(-1^{s}))$ est surjective. La surjectivité de l'application résiduelle $\appres(\mu)$ est obtenue par éclatement du zéro  des différentielles de cette strate.
\end{proof}

\subsection{Genre zéro et les résidus sont nuls}
\label{sec:casreszero}
Nous complétons la preuve des cas
i) et iii) du théorème~\ref{thm:geq0keq1}.  Au vu du lemme~\ref{lem:abelgen}, il reste à montrer le résultat suivant.
\begin{lem}\label{lem:condgzero}
L'application résiduelle des strates $\omoduli[0](a_{1},\dots,a_{n};-b_{1},\dots,-b_{p})$ ne contient pas l'origine $(0,\dots, 0)$ si et seulement si la condition~\eqref{eq:genrezeroresiduzerofr} est satisfaite.
\end{lem}

 Rappelons que cette condition dit que si l'ordre d'un zéro d'une différentielle de genre zéro  est strictement supérieur  à $\sum_{j=1}^p b_{j}-(p+1)$, alors l'origine n'est pas contenue dans l'image de l'application résiduelle.

\begin{proof}
Nous commençons par montrer que si l'origine est dans l'image de l'application résiduelle alors tous les zéros sont d'ordres inférieurs ou égaux à $\sum_{j=1}^p b_{j}-(p+1)$.
Soit $(\PP^{1},\omega)$ une différentielle de $\omoduli[0](a_{1},\dots,a_{n};-b_{1},\dots,-b_{p})$ dont les résidus aux pôles sont nuls et $S$ la surface plate associée. Nous pouvons supposer que la surface $S$ possède un cœur dégénéré (cf proposition~\ref{prop:coeurdege}) et des liens selles horizontaux. Coupons $S$ le long de ces liens selles et de toutes les demi-droites horizontales issues des singularités. On obtient une alors une union disjointe de demi-plan positifs et négatifs. De plus, remarquons que chaque pôle d'ordre $-b_{j}$ est associé à $b_{j}-1$ demi-plan positifs et  $b_{j}-1$ demi-plan négatifs.

Considérons une singularité conique, disons $z_{1}$ d'angle $2\pi(a_{1}+1)$. Comme les résidus aux pôles sont nuls, on obtient directement du théorème des résidus que tout chemin fermé  de $S$ possède une période nulle.
Cela implique que les points correspondant à $z_{1}$ peuvent apparaître au plus une fois par demi-plan. 
Comme l'angle à chaque sommet dans chaque demi plan est $\pi$ l'angle maximal de la singularité conique $z_{1}$ est $2\pi\sum_{j=1}^p (b_{j}-1)$. Cela est équivalent au fait que l'ordre $a_{1}$ de $z_{1}$ est inférieur ou égal à $\sum_{j=1}^p b_{j}-(p+1)$.
\smallskip
\par
Nous montrons maintenant que si tous les zéros d'une différentielle sont d'ordres inférieurs ou égaux à $\sum b_{j}-(p+1)$ alors l'origine est dans l'image de l'application résiduelle.
Considérons tout d'abord les strates  $\omoduli[0]\left(a_{1},a_{2};-b_{1},\dots,-b_{p}\right)$ ayant deux zéros avec $p-1\leq a_{1},a_{2}\leq \sum b_{j} -(p+1)$. Pour tous les pôles nous prenons une partie polaire triviale $S_{i}$ d'ordre $b_{i}$ et de type~$\tau_{i}$ associée à $(1;1)$. Nous choisissons les~$\tau_{i}$ tels que $\sum_{i}\tau_{i}=a_{1}+1$. Ce choix est possible car pour chaque $i$ l'inégalité $1\leq \tau_{i} \leq b_{i}-1$ implique en sommant sur les pôles que $p\leq\sum_{i}\tau_{i}\leq \sum b_{j}-p$. 

Ensuite, nous collons les bords des parties polaires de manière cyclique. Plus précisément, nous collons le segment supérieur de $S_{i}$ au segment inférieur de $S_{i+1}$ modulo $p$. Une telle construction est représentée sur la figure~\ref{fig:ordremax}. La surface plate ainsi obtenue possède deux singularités coniques et est de genre nul. De plus, l'angle de la singularité conique à gauche des liens selles est d'angle $2\pi\sum \tau_{i}$. La différentielle ainsi construite appartient à la strate  $\omoduli[0]\left(a_{1},a_{2};-b_{1},\dots,-b_{p}\right)$ et n'a pas de résidus aux pôles.

 \begin{figure}[htb]
\center
\begin{tikzpicture}

\begin{scope}[xshift=-6cm]
\fill[fill=black!10] (0,0) coordinate (Q) circle (1.1cm);

\draw[] (0,0) coordinate (Q) -- (1.1,0) coordinate[pos=.5](a);

\node[above] at (a) {$1$};
\node[below] at (a) {$3$};

\draw[] (Q) -- (-.5,0) coordinate (P) coordinate[pos=.5](c);

\fill (Q)  circle (2pt);

\fill[color=white!50!] (P) circle (2pt);
\draw[] (P) circle (2pt);
\node[above] at (c) {$a$};
\node[below] at (c) {$b$};
\end{scope}

\begin{scope}[xshift=-3.7cm]
\fill[fill=black!10] (0,0) coordinate (Q) circle (1.1cm);

\draw[] (0,0) -- (1.1,0) coordinate[pos=.5](a);

\node[above] at (a) {$2$};
\node[below] at (a) {$1$};
\fill (Q)  circle (2pt);
\end{scope}
\begin{scope}[xshift=-1.4cm]
\fill[fill=black!10] (0,0) coordinate (Q) circle (1.1cm);

\draw[] (0,0) -- (1.1,0) coordinate[pos=.5](a);

\node[above] at (a) {$3$};
\node[below] at (a) {$2$};
\fill (Q)  circle (2pt);
\end{scope}

\begin{scope}[xshift=2cm]
\fill[fill=black!10] (0,0)  circle (1.1cm);

\draw[] (0,0) coordinate (Q) -- (1.1,0) coordinate[pos=.5](a);

\node[above] at (a) {$4$};
\node[below] at (a) {$5$};

\draw[] (0,0) -- (-.5,0) coordinate (P) coordinate[pos=.5](c);

\fill (Q)  circle (2pt);

\fill[color=white!50!] (P) circle (2pt);
\draw[] (P) circle (2pt);
\node[above] at (c) {$b$};
\node[below] at (c) {$a$};
\end{scope}

\begin{scope}[xshift=4.3cm]
\fill[fill=black!10] (0,0) coordinate (Q) circle (1.1cm);
    
\draw[] (0,0) -- (1.1,0) coordinate[pos=.5](a);

\node[above] at (a) {$5$};
\node[below] at (a) {$4$};
\fill (Q)  circle (2pt);
\end{scope}
\end{tikzpicture}
\caption{Une différentielle dans $\omoduli[0](4,1;-3,-4)$ avec résidus nuls} \label{fig:ordremax}
\end{figure}
\smallskip
\par
Nous traitons maintenant les strates $\omoduli[0](a_{1},a_{2},a_{3};-b_{1},\dots,-b_{p})$, avec $a_{1}, a_{2}\leq a_{3}$. Il y a deux cas à considérer suivant que  $a_{1},a_{2}>\sum b_{i}-p-1$ ou non.
\par
Si $a_{1}+a_{2}\leq \sum b_{i}-p-1$ (et donc $p-1\leq a_{3} \leq \sum b_{i}-p-1$), alors il existe une différentielle à résidus nuls dans la strate $\omoduli[0](a_{1}+a_{2},a_{3};-b_{1},\dots,-b_{p})$. Donc en éclatant le zéro d'ordre $a_{1}+a_{2}$ en deux zéros d'ordres $a_{1}$ et $a_{2}$ par la proposition~\ref{prop:eclatZero}, nous obtenons la différentielle souhaitée.
\par
Supposons maintenant que $a_{1}+a_{2} > \sum b_{i}-p-1$ (ou de manière équivalente $a_{3}<p-1$). Dans le reste de la preuve nous noterons $b:=\sum b_{i}$. Remarquons que  $a_{1},a_{2}\leq a_{3}$ et $a_{1}+a_{2}+a_{3}=b-2$ impliquent que  $ 3a_{3} \geq b-2$. Nous obtenons donc que $3p>b+1$, ce qui implique que l'un des pôles est d'ordre~$-2$.
\par
Nous donnons maintenant la description d'une différentielle ayant les propriétés attendues. Cette construction est illustrée dans la figure~\ref{fig:troiszerossansres} dans le cas de  $\omoduli[0](3^3,-2^4,-3)$. Dans un premier temps, nous décrivons le procédé, puis nous ajusterons les constantes pour obtenir les singularités coniques souhaitées. 

Pour le pôle d'ordre $-2$, prenons une partie polaire triviale d'ordre $2$ associée à $(v_{1},v_{2};v_{3})$ avec $v_{3}=v_{1}+v_{2}$.  Pour chaque pôle $P_{i}$ nous prenons une partie polaire triviale d'ordre~$b_{i}$ de type $\tau_{i}$ associée  à  $(v_{j_{i}};v_{j_{i}})$ pour un $j_{i}\in\lbrace 1,2,3 \rbrace$. Puis nous collons le segment $v_{j_{i}}$ du triangle au segment $v_{j_{i}}$ correspondant au domaine basique négatif de cette partie polaire.  Cette opération ajoute une contribution angulaire de $2\pi\tau_{i}$ et $2\pi(b_{i}-\tau_{i})$ aux singularités coniques correspondant aux sommets du segment $v_{j_{i}}$. Nous faisons de même pour tous les pôles jusqu'à obtenir une surface plate $S_{1}$ dont le bord est composé des trois segments $v_{j}$. Nous prenons maintenant le triangle $v_{1}v_{2}v_{3}$ et collons par translation ses trois arêtes au bord de $S_{1}$.
La surface plate ainsi obtenue est de genre zéro, possède trois zéros distincts et n'a pas de résidus aux pôles. 
\par
Il reste à ajuster le choix  des segments $v_{j_{i}}$ et des types $\tau_{i}$ pour chaque pôle afin d'obtenir les  angles souhaités. La remarque clé est que chaque pôle $P_{i}$ contribue à exactement deux singularités coniques et que la contribution à chacune d'elles est d'angle compris entre $2\pi$ et $2\pi(b_{i}-1)$. Réciproquement, si cette condition est satisfaite, alors la construction précédente permet de construire la différentielle souhaitée.
\par
La situation peut donc être modélisée par un graphe biparti $\Gamma$ dont trois sommets~$A_{i}$ représentent les trois sommets du triangle $v_{1}v_{2}v_{3}$ et $p-1$ autres sommets $B_{i}$ représentent les pôles $P_{i}$ distincts du pôle avec la partie polaire associée au triangle. Il y a une arête entre les sommets $B_{i}$ et~$A_{j}$ pour chaque multiple de $2\pi$ de la contribution de $P_{i}$ à la singularité conique~$A_{j}$. Un exemple est schématisé dans la  figure~\ref{fig:troiszerossansres}. 
\begin{figure}[htb]
\center
\begin{tikzpicture}

\begin{scope}[xshift=-1.3cm]
\fill[fill=black!10] (0,0)  circle (1.3cm);
   
 \coordinate (A) at (1/1.71,0);
  \coordinate (B) at (120:1/1.71);
    \coordinate (C)  at (240:1/1.71);
    
    \fill (A)  circle (2pt);
\filldraw[fill=white] (B) circle (2pt);
\filldraw[fill=red] (C)  circle (2pt);

   \fill[color=white]     (A) -- (B) -- (C) --cycle;
 \draw[]     (A) -- (B) coordinate[pos=.4](a);
 \draw (B) -- (C) coordinate[pos=.5](b);
 \draw (C) -- (A)  coordinate[pos=.6](c);

\node[above] at (a) {$v_{1}^{1}$};
\node[left] at (b) {$v_{2}^{1}$};
\node[below ] at (c) {$v_{3}^{1}$};

\end{scope}

\begin{scope}[xshift=2cm]
 \coordinate (A) at (1/1.71,0);
  \coordinate (B) at (120:1/1.71);
    \coordinate (C)  at (240:1/1.71);
    
    \fill[fill=black!10]     (A) -- (B) -- (C) --cycle;

 \draw[]     (A) -- (B) coordinate[pos=.4](a);
 \draw (B) -- (C) coordinate[pos=.5](b);
 \draw (C) -- (A)  coordinate[pos=.6](c);

\node[above] at (a) {$v_{1}^{2}$};
\node[left] at (b) {$v_{2}^{2}$};
\node[below ] at (c) {$v_{3}^{3}$};


    \fill (A)  circle (2pt);

\filldraw[fill=white] (B) circle (2pt);
\filldraw[fill=red] (C)  circle (2pt);

\end{scope}

 \begin{scope}[xshift=-5cm]
\fill[fill=black!10] (0,0)  circle (1.1cm);
 
\draw[](0,-.7)coordinate(P) --(0,.3) coordinate[pos=.5](b)coordinate (Q) -- (0,1.1) coordinate[pos=.5](a);

\node[left] at (a) {$1$};
\node[right] at (a) {$2$};

\filldraw[fill=red] (P)  circle (2pt);

\filldraw[fill=white] (Q) circle (2pt);
\node[left] at (b) {$v_{2}^{2}$};
\node[right] at (b) {$v_{2}^{1}$};
\end{scope}
\begin{scope}[yshift=-2.4cm]

\begin{scope}[xshift=-5cm]
\fill[fill=black!10] (0,0)  circle (1.1cm);

\draw[] (0,0) coordinate (Q1) -- (0,1.1) coordinate[pos=.5](a);

\node[left] at (a) {$2$};
\node[right] at (a) {$1$};


\filldraw[fill=white] (Q1) circle (2pt);

\end{scope}


\begin{scope}[xshift=-2.5cm]
\fill[fill=black!10] (0,0) coordinate (Q) circle (1.1cm);
    
    \begin{scope}[rotate=120]
\draw[]     (0,-1/2) -- (0,1/2)coordinate[pos=.6](a);
\node[above] at (a) {$v_{3}^{1}$};
\node[xshift=5pt,yshift=-5pt] at (a) {$v_{3}^{2}$};

    \fill (0,-1/2)  circle (2pt);
    \filldraw[fill=red] (0,1/2)  circle (2pt);

    \end{scope}

\end{scope}

\begin{scope}[xshift=0cm]
\fill[fill=black!10] (0,0) coordinate (Q) circle (1.1cm);
    
    \begin{scope}[rotate=120]
\draw[]     (0,-1/2) -- (0,1/2)coordinate[pos=.6](a);
\node[above] at (a) {$v_{3}^{2}$};
\node[xshift=5pt,yshift=-5pt] at (a) {$v_{3}^{3}$};

    \fill (0,-1/2)  circle (2pt);
    \filldraw[fill=red] (0,1/2)  circle (2pt);

    \end{scope}

\end{scope}

\begin{scope}[xshift=2.5cm]
\fill[fill=black!10] (0,0) coordinate (Q) circle (1.1cm);
      
    \begin{scope}[rotate=240]
\draw[]     (0,-1/2) -- (0,1/2)coordinate[pos=.4](a);
\node[below] at (a) {$v_{1}^{1}$};
\node[xshift=5pt,yshift=5pt] at (a) {$v_{1}^{2}$};

\fill (0,1/2)  circle (2pt);
\filldraw[fill=white] (0,-1/2) circle (2pt);
    \end{scope}
\end{scope}
\end{scope}

\begin{scope}[xshift=6cm,yshift=-.5cm]
\filldraw[fill=red] (-1,0)coordinate (A1) circle (2pt);
\fill (0,0)coordinate (A2)  circle (2pt);
\filldraw[fill=white] (1,0)coordinate (A3) circle (2pt);

\fill (-1.5,-2)coordinate (B1)  circle (2pt);
\fill (-.5,-2)coordinate (B2)  circle (2pt);
\fill (.5,-2)coordinate (B3)  circle (2pt);
\fill (1.5,-2)coordinate (B4)  circle (2pt);

\draw (A1) -- (B1);
\draw (A1) -- (B2);
\draw (A1) -- (B3);
\draw (A2) -- (B2);
\draw (A2) -- (B3);
\draw (A2) -- (B4);
\draw (A3) -- (B4);
\draw (A3) .. controls ++(-90:.5) and ++(0:.5) .. (B1);
\draw (A3) .. controls ++(180:.5) and ++(90:.5) .. (B1);

\filldraw[fill=red] (A1) circle (2pt);
\filldraw[fill=white] (A3) circle (2pt);
    \end{scope}

\end{tikzpicture}
\caption{Différentielle dans $\omoduli[0]((3)^3;(-2)^4,-3)$ avec résidus nuls et son graphe $\Gamma$.} \label{fig:troiszerossansres}
\end{figure}

Il suffit de montrer qu'il est possible de distribuer les arêtes de telle façon que les sommets~$B_{i}$ soient de valence $b_{i}$, connectés à précisément deux sommets $A_{j}$, et que la valence de $A_{j}$ soit $a_{j}$.
Une telle distribution peut être obtenue de la façon suivante. Rappelons que le plus grand zéro (que nous supposerons être $z_{3}$) est d'ordre $a_{3}\leq p-1$. Nous partons du graphe où tous les sommets $B_{i}$ sont connectés à $A_{1}$ par exactement une arête et toutes les autres arêtes connectent $A_{2}$. Prenons un sommet $B_{i}$ quelconque. Il y a $b_{i}-1$ arêtes entre $B_{i}$ et $A_{2}$. Si la valence de $A_{2}$ moins $b_{i}-1$ est supérieure ou égale à~$a_{2}$, alors nous remplaçons les $b_{i}-1$ arêtes entre $B_{i}$ et  $A_{2}$ par $b_{i}-1$ arêtes entre $B_{i}$ et~$A_{3}$. Nous recommençons alors cette opération jusqu'à l'indice $i_{0}$ tel que la valence de $A_{2}$ moins $b_{i_{0}}-1$ soit strictement inférieure à~$a_{2}$. Dans ce cas, nous remplaçons des arêtes entre $B_{i_{0}}$ et $A_{2}$ par des arêtes entre $B_{i_{0}}$ et~$A_{3}$, de telle façon que la valence de $A_{2}$  soit égale à~$a_{2}$. Le sommet $B_{i_{0}}$ est alors connecté aux trois sommets $A_{i}$. Donc nous enlevons l'arête entre $A_{1}$ et $B_{i_{0}}$ pour la mettre entre $A_{3}$ et $B_{i_{0}}$. Comme $a_{1}$ est strictement plus petit que $p-1$, cette opération est toujours possible.  Pour terminer, nous remplaçons autant d'arêtes que nécessaire entre $A_{1}$ et les $B_{j}$ pour $j > i_{0}$ pour les connecter à $A_{3}$ afin que la valence de $A_{1}$ soit~$a_{1}$. Notons qu'il existe toujours assez d'arêtes entre $A_{1}$ et les $B_{j}$ pour obtenir la valence $a_{3}$ à $A_{3}$ car il y a un total de $p-1$ sommets~$B_{i}$ et $a_{3}\leq p-1$.

Pour conclure le cas des différentielles de genre zéro avec des résidus nuls, nous considérons les strates ayant $n\geq4$ zéros.  Soient~$a_{1}$ et~$a_{2}$ les zéros de plus petits ordres. En notant  $b=\sum_{i} b_{i}$ on~a  
\begin{equation*}
 a_{1}+a_{2} \leq \frac{2}{n}\left(b-2\right)\leq \sum_{i}(b_{i}-1) -\frac{4}{n} < b -p,
\end{equation*}
 où la deuxième égalité s'obtient en remarquant que  $\tfrac{2}{n} \leq \tfrac {1}{2}$ et $b_{i}\geq2$ impliquent que $\tfrac{2b_{i}}{n}\leq b_{i}-1$. On en déduit que $ a_{1}+a_{2} \leq b -p - 1$ et donc que le cas $n\geq 4$ s'obtient en éclatant un zéro d'une différentielle ayant $n-1$ zéros d'ordres $a_{1}+a_{2}$, $a_{i}$ pour $i\geq 3$ et $p$ pôles d'ordres~$-b_{j}$.
\end{proof}

\subsection{Genre zéro avec pôles simples}
\label{sec:caslier}
Dans cette section nous nous intéressons au cas des différentielles en genre zéro n'ayant que des pôles simples. Nous prouvons les propositions \ref{prop:gzeropolesimples} et \ref{prop:g0p-1plusieurszero}. De plus nous montrons que les uplets qui ne sont pas dans l'image de l'application résiduelle sont en nombre fini pour chaque strate (à multiplication par un facteur complexe près).

Nous considérons d'abord le cas des strates $\omoduli[0](s-2;(-1^{s}))$. Afin de se familiariser avec les éléments de cette preuve, le lecteur peut consulter l'exemple~\ref{ex:graphedeconnexion}.
\begin{proof}[Preuve de la proposition~\ref{prop:gzeropolesimples}]
Supposons que les éléments du $s$-uplet $r:=(r_{1},\dots,r_{s})$ ne soient pas colinéaires. Dans ce cas, le polygone résiduel $\mathfrak{P}(r)$ (introduit au début du paragraphe~\ref{sec:casgen}) est un polygone convexe non dégénéré. On obtient alors la différentielle souhaitée en collant des demi-cylindres infinis aux arêtes de ce polygone.

Supposons maintenant que les $r_{i}$ soient colinéaires et qu'il existe un graphe $\Gamma$ associé à $r$ qui soit un graphe de connexion. On construit une différentielle de la façon suivante. Pour chaque résidu $r_{i}$ on prend une partie polaire d'ordre $1$ associée à $r_{i}$. Considérons une feuille de $\Gamma$. On peut coller le segment au bord de la partie polaire correspondante au segment au bord de la partie polaire correspondant à l'autre sommet de l'arête. Puis on enlève la feuille du graphe et le poids de cette feuille à l'autre sommet. On recommence cette procédure pour une feuille du nouveau graphe.  Cette opération est faite de manière inductive jusqu'à ce que le graphe soit réduit à un sommet.

Nous justifions maintenant que cette opération est toujours possible. Supposons tout d'abord que le graphe possède strictement plus de deux sommets. Comme le poids d'une feuille est toujours strictement plus petit que le poids du sommet auquel elle est reliée (point~${\rm (ii)}$), le collage est toujours possible. Enfin le fait que la différence des poids est nulle (point~${\rm (i)}$), implique que la surface obtenue est sans bord. On vérifiera facilement que cette surface plate est de genre zéro et possède un unique zéro. La surface est de genre zéro car s'il existait un lacet fermé homotopiquement non trivial, cela impliquerait un chemin fermé sur le graphe de connexion. Or celui-ci est un arbre. L'arbre étant connexe, la surface l'est également. Il reste à montrer que les extrémités des liens selles constituent une unique singularité conique. En tant que graphe plongé dans la surface, le graphe des liens selles est dual de celui défini par le graphe de connexions avec des arcs reliant les pôles. Le graphe de connexions étant un arbre, il ne définit qu'une seule face. Par conséquent, le graphe des liens selles n'a qu'un seul sommet.
\smallskip
\par
Supposons maintenant qu'il existe une différentielle $\omega$ dans $\omoduli[0](s-2;(-1^{s}))$ dont les résidus sont $(r_{1},\dots,r_{s})$. Supposons que les résidus soient colinéaires, nous les supposerons réels sans perte de généralité. Nous construisons un graphe associé aux $r_{i}$ qui est un graphe de connexions. Les sommets de $\Gamma_{+}$ (resp. $\Gamma_{-}$) sont associés aux pôles de $\omega$ dont le résidu est positif (resp. négatif). Les poids sont les valeurs absolues des résidus. Enfin deux sommets sont connectés si et seulement si le bord de leurs domaines polaires respectifs contiennent un même lien selle.

Le fait que $\omega$ soit de genre zéro et ne possède qu'un zéro implique clairement que ce graphe biparti est un arbre. Le théorème des résidus implique directement que la différence des poids est nulle (point (i)). Regardons maintenant l'effet de l'opération qui enlève une feuille au graphe de connexions (dans le cas où il possède au moins deux arêtes). Cette opération revient à couper la différentielle $\omega$ le long d'un lien selle $v$ dont l'un des côté est un unique domaine polaire. De plus, le domaine polaire adjacent à ce lien selle est remplacé par le domaine polaire associé aux vecteurs précédents privés de~$v$ (qui est un ensemble non vide car $\omega$ est non singulière). Les autres domaines polaires et les identifications restent les mêmes. 
\`A chaque étape, cette opération produit une différentielle de genre zéro avec un unique zéro et des pôles simples dont tous les résidus sont non nuls. Cela implique clairement le point (ii) des graphes de connexions.
\end{proof}

Avant d'étudier le cas des strates avec plus de zéros, nous montrons que les $s$-uplets qui ne sont pas dans l'image de $\appres[0](s-2;(-1^{s}))$ sont en nombre fini et que ceux-ci sont commensurables entre eux.
\begin{prop}\label{prop:finietcommens}
Soient $r:=(x_{1},\dots,x_{s_{1}},-y_{1},\dots,-y_{s_{2}})$ avec $x_{i}$ et $y_{j}$ réels strictement positifs. Si le $s$-uplet $r$ n'appartiennent pas à l'image de $\appres[0](s-2;(-1^{s}))$, alors les $x_{i}$ et $y_{j}$ sont commensurables entre eux. De plus, si les $x_{i}$ et $y_{j}$ sont entiers et premiers entre eux, alors:
\begin{equation}\label{eq:borneliee}
\sum_{i=1}^{s_{1}} x_{i} = \sum_{j=1}^{s_{2}} y_{j} \leq \frac{s_{1}s_{2}}{2}.
\end{equation}
En particulier, il n'y a, à homothétie près, qu'un nombre fini de $s$-uplets qui ne sont pas dans l'image de $\appres[0](s-2;(-1^{s}))$.
\end{prop}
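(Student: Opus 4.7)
The plan is to invoke Proposition~\ref{prop:gzeropolesimples}, which identifies the image of $\appres(s-2;(-1^{s}))$ on collinear tuples with those admitting a connection graph. After scaling $\omega$ by a suitable $\alpha\in\CC^{\ast}$, we may assume the residues are real, written as $x_{1},\dots,x_{s_{1}}>0$ and $-y_{1},\dots,-y_{s_{2}}<0$. Non-realizability then means that no bipartite tree on $\{x_{i}\}\sqcup\{y_{j}\}$ satisfies the leaf-removal positivity conditions of the definition of a connection graph.

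\emph{Commensurability.} I would prove the contrapositive: if the $\QQ$-linear span of $\{x_{i},y_{j}\}$ has dimension at least $2$, a connection graph exists. The argument proceeds by induction on $s_{1}+s_{2}$. Select indices $(i_{0},j_{0})$ so that $x_{i_{0}}$ and $y_{j_{0}}$ are $\QQ$-linearly independent and, after swapping the roles of positive and negative residues if necessary, so that $x_{i_{0}}<y_{j_{0}}$. Form the reduced tuple by deleting $x_{i_{0}}$ and replacing $y_{j_{0}}$ by $v:=y_{j_{0}}-x_{i_{0}}>0$; since $y_{j_{0}}=v+x_{i_{0}}$, the $\QQ$-rank of the reduced tuple drops by at most one, and a short combinatorial argument shows the pair can be chosen so the reduced rank remains $\geq 2$ (or else the residual commensurable case is handled directly). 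The inductive hypothesis supplies a connection graph for the reduced tuple, to which one attaches $x_{i_{0}}$ as a new leaf at the vertex labelled $v$ (with weight restored to $y_{j_{0}}$). The inequality $x_{i_{0}}<y_{j_{0}}$ ensures condition (ii) at the new leaf, while the subsequent leaf-removal steps are inherited from the induction.

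\emph{Bound.} Assume now the $x_{i},y_{j}$ are coprime positive integers and the tuple is non-realizable. The obstruction to the inductive construction above yields proper non-empty $A\subseteq[s_{1}]$ and $B\subseteq[s_{2}]$ with $\sum_{i\in A}x_{i}=\sum_{j\in B}y_{j}=:T$; moreover, at least one of the sub-tuples $(x|_{A};y|_{B})$ or $(x|_{A^{c}};y|_{B^{c}})$ must itself be non-realizable, else combining their connection graphs (with a local modification to eliminate the zero-flow gluing edge) would produce a valid graph for the whole. Apply strong induction on $s_{1}+s_{2}$: after dividing each sub-tuple by its own $\pgcd$, the inductive hypothesis yields $T\leq|A||B|/2$ and $S-T\leq|A^{c}||B^{c}|/2$ (with careful bookkeeping of the $\pgcd$'s, using that the global $\pgcd$ equals $1$). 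The elementary identity
\[
|A||B|+|A^{c}||B^{c}|=s_{1}s_{2}-|A||B^{c}|-|A^{c}||B|\leq s_{1}s_{2}
\]
then yields $S\leq s_{1}s_{2}/2$, completing the induction.

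The finiteness statement follows at once: every non-realizable tuple is a $\CC^{\ast}$-multiple of a coprime positive integer tuple satisfying $S\leq s_{1}s_{2}/2$, and there are only finitely many such tuples (the bounded sum gives bounded coordinates). The main technical obstacle in my view is the combinatorial gluing step showing that whenever both sub-tuples are realizable the whole tuple is also realizable, as the naive assembly of connection graphs introduces a degenerate edge and the repair requires a non-trivial local modification.
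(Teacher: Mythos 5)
Your reduction step for commensurability (delete $x_{i_{0}}$, replace $y_{j_{0}}$ by $y_{j_{0}}-x_{i_{0}}$, and extend a connection graph of the reduced tuple by a leaf) is the same mechanism the paper uses, read contrapositively; but the assertion that the pair can always be chosen so that the reduced tuple keeps $\QQ$-rank $\geq 2$ is left as ``a short combinatorial argument'' and is not automatic. For instance, for $(\sqrt2,3,1;\sqrt2+1,2,1)$ the pair $(\sqrt2,\sqrt2+1)$ is $\QQ$-independent with $\sqrt2<\sqrt2+1$, yet the reduced tuple $(3,1;1,2,1)$ has rank $1$ and is not a base case, so ``handling the residual commensurable case directly'' is precisely the statement you are trying to prove. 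This part is repairable (the paper sidesteps it by running the induction on non-realizability itself rather than on the $\QQ$-rank), but as written it is circular.

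The bound argument contains a genuine error. You claim that non-realizability produces a balanced sub-partition $A,B$ for which at least one of the two sub-tuples is non-realizable, and you then apply the inductive inequality to both halves. First, the dichotomy is false: for $(1,1;-1,-1)$ with $A=B=\{1\}$ both halves equal $(1;-1)$ and are realizable, yet the whole tuple is not --- so the ``local modification eliminating the zero-flow gluing edge'' that you defer cannot exist in general. Second, the inequality \eqref{eq:borneliee} holds only for non-realizable tuples, so it cannot be invoked on a realizable half (in the same example it would read $1\leq 1/2$, and your scheme would output $S\leq 1$ where the truth is $S=2=s_{1}s_{2}/2$); and even when both halves are non-realizable, normalising each by its own $\pgcd$ does not recombine, since global coprimality does not bound either local $\pgcd$. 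The paper's proof of the bound is structurally different and avoids all of this: it removes the \emph{smallest} $y_{s_{2}}$ (chosen with $y_{s_{2}}<x_{s_{1}}$), merges it into $x_{s_{1}}$, observes that the resulting tuple with $s_{2}-1$ negative entries is still non-realizable, applies the inductive bound to it, and concludes with the averaging inequality $\sum_{j}y_{j}\leq\tfrac{s_{2}}{s_{2}-1}\sum_{j<s_{2}}y_{j}$. You would need to replace your splitting argument by something of this kind.
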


\begin{proof}
Nous procédons à une démonstration par récurrence.
Si on a $s_{1}=1$ ou $s_{2}=1$, toutes les configurations de résidus sont réalisables. Si $s_{1}=s_{2}=2$, les seuls uplets qui ne sont pas dans l'image de l'application résiduelle sont proportionnels à $(1,1,-1,-1)$.

\`A présent, on suppose que la proposition est démontrée pour tous les couples $(s_{1},s_{2})$ tels que $s_{1}\leq a$, $s_{2}\leq b$ et au moins l'une des inégalités est stricte. Nous considérons un uplet qui n'est pas dans l'image  avec $s_{1}$ nombres positifs et $s_{2}$ négatifs. Il y a deux possibilités. La première est que tous les $x_{i}$ et $y_{j}$ soient égaux. Auquel cas on a $s_{1}=s_{2}$ et le uplet est proportionnel à $(1,\dots,1,-1,\dots,-1)$. Les résidus sont commensurables et la somme des résidus d'une même série respecte la borne de l'équation~\eqref{eq:borneliee}.\newline
La seconde possibilité est qu'il existe deux résidus, disons $x_{s_{1}}$ et $-y_{s_{2}}$, tels que $y_{s_{2}}<x_{s_{1}}$. Le $(s-1)$-uplet obtenu en retirant le résidu $-y_{s_{2}}$ et en remplaçant $x_{s_{1}}$ par $x_{s_{1}}-y_{s_{2}}$ n'est pas réalisable. En effet, si ce nouveau uplet était réalisable, il existerait un graphe de connexions qui lui serait associé. Il suffirait d'ajouter à ce graphe une branche avec comme poids $y_{s_{2}}$ au sommet de poids  $x_{s_{1}}-y_{s_{2}}$ et de remplacer le poids $x_{s_{1}}-y_{s_{2}}$ par $x_{s_{1}}$. Ce graphe serait un graphe de connexions pour la configuration initiale, ce qui est absurde. 

Ainsi, quitte à changer les signes, tout $s$-uplet non réalisable avec $a$ résidus positifs et~$b$ négatifs s'obtient à partir d'un $s-1$-uplet non réalisable $(x_{1},\dots,x_{a},-y_{1},\dots,-y_{b-1})$ auquel on ajoute un résidu $y_{b}$ et on remplace un résidu $x_{i}$ par $x_{i}+y_{b}$. On peut supposer, quitte à changer l'ordre que $i=a$ et on note $x'_{a}:=x_{a}+y_{b}$. 
On cherche donc pour quelles valeurs de $y_{b}$ le $s$-uplet $(x_{1},\dots,x'_{a},-y_{1},\dots,-y_{b})$ est non réalisable. Par hypothèse de récurrence, on normalise ces nombres de telle sorte que les $x_{i}$ et les $y_{j}$ avec $j\neq b$ soient des entiers premiers entre eux. 
Si $y_{b}$ n'était pas un entier, alors ces résidus seraient dans l'image. En effet, un graphe de connexions serait obtenu de la façon suivante. On permute~$y_{1}$ et~$y_{b}$. On prend $s_{1}$ sommets en haut et $s_{2}$ sommets en bas. Le sommet $i_{0}$ en haut est relié au sommet $j_{0}$ en bas si et seulement si pour $J=j_{0}$ ou $J=j_{0}+1$, on a
 $$ \sum_{i\leq i_{0}-1}x_{i}\leq \sum_{j\leq J} y_{j} \leq  \sum_{i\leq i_{0}}x_{i}\, .$$
Les poids des sommets sont évidemment donnés par les $y_{j}$, les $x_{i}$ pour $i<a$ et $x'_{a}$ pour $i=s_{1}$.
Par conséquent, pour obtenir une configuration non réalisable, il est nécessaire que~$y_{b}$ soit un entier. En effet, dans le cas contraire, les sommes partielles ne peuvent pas coïncider car l'une des familles de sommes partielles n'est pas constituée d'entiers et donc l'opération consistant à retirer une feuille au graphe peut toujours s'effectuer. Nous avons donc démontré par récurrence que les éléments des $s$-uplets non réalisables sont commensurables entre eux.

On montre maintenant la borne donnée par l'équation~\eqref{eq:borneliee}. On travaille à nouveau avec le $s$-uplet défini par $(x_{1},\dots,x_{s_{1}},-y_{1},\dots,-y_{s_{2}})$. Si les résidus sont tous égaux, la borne est clairement satisfaite. Sinon, il y a un élément $x_{i}$ strictement plus grand qu'un élément $y_{j}$. On supposera que $i=s_{1}$ et $j=s_{2}$ et que $y_{s_{2}}\leq y_{j}$ pour tout $j\leq s_{2}$. En enlevant le résidu $-y_{s_{2}}$ et en l'ajoutant à $x_{s_{1}}$, on obtient une nouvelle configuration non réalisable. D'après l'hypothèse de récurrence, elle vérifie la borne donnée par l'équation~\eqref{eq:borneliee} avec $s_{2}-1$ termes. Comme $y_{s_{2}}$ est inférieur ou égal aux autres $y_{j}$, on a
$$\sum_{j=1}^{s_{2}} y_{j} \leq \dfrac{s_{2}}{s_{2}-1}\cdot\sum_{j=1}^{s_{2}-1} y_{j} \leq  \dfrac{s_{2}}{s_{2}-1}\cdot\dfrac{s_{1}\cdot (s_{2}-1)}{2} \leq \dfrac{s_{1}\cdot s_{2}}{2}.$$
Ceci démontre par récurrence la borne de l'équation~\eqref{eq:borneliee}.

Enfin, tous les résidus non réalisables sont proportionnels à un $s$-uplet formé de nombres entiers inférieurs ou égaux à la borne de l'équation~\eqref{eq:borneliee}. Comme celle-ci ne dépend que du nombre de pôles, les droites complexes n'appartenant pas à l'image de l'application résiduelle d'une strate donnée sont en nombre fini.
\end{proof}

Nous illustrons maintenant les concepts introduits dans un exemple.
\begin{ex} \label{ex:graphedeconnexion}
 Nous illustrons tout d'abord la correspondance entre une différentielle de $\omoduli[0](5;(-1^7))$ et le graphe de connexion associé. La figure~\ref{fig:totliermaisdansimage} montre que l'image de l'application résiduelle de  cette strate contient les résidus $(3,(1^{3}),(-2^{3}))$.
 
\begin{figure}[htb]
\begin{tikzpicture}

\begin{scope}[xshift=-6cm]
\coordinate (a) at (-1,1);
\coordinate (b) at (2,1);
\coordinate (c) at (0,1);
\coordinate (d) at (1,1);

    \fill[fill=black!10] (a)  -- (c)coordinate[pos=.5](f)-- (d)coordinate[pos=.5](g)-- (b)coordinate[pos=.5](j) -- ++(0,1.2) --++(-3,0) -- cycle;
    \fill (a)  circle (2pt);
\fill[] (b) circle (2pt);
    \fill (c)  circle (2pt);
\fill[] (d) circle (2pt);
 \draw  (a) -- (b);
 \draw (a) -- ++(0,1.1) coordinate (d)coordinate[pos=.5](h);
 \draw (b) -- ++(0,1.1) coordinate (e)coordinate[pos=.5](i);
 \draw[dotted] (d) -- ++(0,.2);
 \draw[dotted] (e) -- ++(0,.2);
\node[below] at (f) {$1$};
\node[below] at (g) {$2$};
\node[below] at (j) {$3$};
\end{scope}

\begin{scope}[xshift=-2.5cm]
\coordinate (a) at (-1,1);
\coordinate (b) at (0,1);

    \fill[fill=black!10] (a)  -- (b)coordinate[pos=.5](f) -- ++(0,1.2) --++(-1,0) -- cycle;
    \fill (a)  circle (2pt);
\fill[] (b) circle (2pt);
 \draw  (a) -- (b);
 \draw (a) -- ++(0,1.1) coordinate (d)coordinate[pos=.5](h);
 \draw (b) -- ++(0,1.1) coordinate (e)coordinate[pos=.5](i);
 \draw[dotted] (d) -- ++(0,.2);
 \draw[dotted] (e) -- ++(0,.2);
\node[below] at (f) {$4$};
\end{scope}

\begin{scope}[xshift=-1cm]
\coordinate (a) at (-1,1);
\coordinate (b) at (0,1);

    \fill[fill=black!10] (a)  -- (b)coordinate[pos=.5](f) -- ++(0,1.2) --++(-1,0) -- cycle;
    \fill (a)  circle (2pt);
\fill[] (b) circle (2pt);
 \draw  (a) -- (b);
 \draw (a) -- ++(0,1.1) coordinate (d)coordinate[pos=.5](h);
 \draw (b) -- ++(0,1.1) coordinate (e)coordinate[pos=.5](i);
 \draw[dotted] (d) -- ++(0,.2);
 \draw[dotted] (e) -- ++(0,.2);
\node[below] at (f) {$5$};
\end{scope}

\begin{scope}[xshift=.5cm]
\coordinate (a) at (-1,1);
\coordinate (b) at (0,1);

    \fill[fill=black!10] (a)  -- (b)coordinate[pos=.5](f) -- ++(0,1.2) --++(-1,0) -- cycle;
    \fill (a)  circle (2pt);
\fill[] (b) circle (2pt);
 \draw  (a) -- (b);
 \draw (a) -- ++(0,1.1) coordinate (d)coordinate[pos=.5](h);
 \draw (b) -- ++(0,1.1) coordinate (e)coordinate[pos=.5](i);
 \draw[dotted] (d) -- ++(0,.2);
 \draw[dotted] (e) -- ++(0,.2);
\node[below] at (f) {$6$};
\end{scope}
\begin{scope}[xshift=-5.75cm,yshift=-1cm]
\coordinate (a) at (-1,1);
\coordinate (b) at (1,1);
\coordinate (c) at (0,1);

    \fill[fill=black!10] (a)  -- (b)coordinate[pos=.25](f)coordinate[pos=.75](g) -- ++(0,-1.2) --++(-2,0) -- cycle;
    \fill (a)  circle (2pt);
\fill[] (b) circle (2pt);
    \fill (c)  circle (2pt);
 \draw  (a) -- (b);
 \draw (a) -- ++(0,-1.1) coordinate (d)coordinate[pos=.5](h);
 \draw (b) -- ++(0,-1.1) coordinate (e)coordinate[pos=.5](i);
 \draw[dotted] (d) -- ++(0,-.2);
 \draw[dotted] (e) -- ++(0,-.2);
\node[above] at (f) {$1$};
\node[above] at (g) {$4$};
\end{scope}

\begin{scope}[xshift=-3.25cm,yshift=-1cm]
\coordinate (a) at (-1,1);
\coordinate (b) at (1,1);
\coordinate (c) at (0,1);

    \fill[fill=black!10] (a)  -- (b)coordinate[pos=.25](f)coordinate[pos=.75](g) -- ++(0,-1.2) --++(-2,0) -- cycle;
    \fill (a)  circle (2pt);
\fill[] (b) circle (2pt);
    \fill (c)  circle (2pt);
 \draw  (a) -- (b);
 \draw (a) -- ++(0,-1.1) coordinate (d)coordinate[pos=.5](h);
 \draw (b) -- ++(0,-1.1) coordinate (e)coordinate[pos=.5](i);
 \draw[dotted] (d) -- ++(0,-.2);
 \draw[dotted] (e) -- ++(0,-.2);
\node[above] at (f) {$2$};
\node[above] at (g) {$5$};
\end{scope}

\begin{scope}[xshift=-.75cm,yshift=-1cm]
\coordinate (a) at (-1,1);
\coordinate (b) at (1,1);
\coordinate (c) at (0,1);

    \fill[fill=black!10] (a)  -- (b)coordinate[pos=.25](f)coordinate[pos=.75](g) -- ++(0,-1.2) --++(-2,0) -- cycle;
    \fill (a)  circle (2pt);
\fill[] (b) circle (2pt);
    \fill (c)  circle (2pt);
 \draw  (a) -- (b);
 \draw (a) -- ++(0,-1.1) coordinate (d)coordinate[pos=.5](h);
 \draw (b) -- ++(0,-1.1) coordinate (e)coordinate[pos=.5](i);
 \draw[dotted] (d) -- ++(0,-.2);
 \draw[dotted] (e) -- ++(0,-.2);
\node[above] at (f) {$3$};
\node[above] at (g) {$6$};
\end{scope}

\begin{scope}[xshift=3cm,yshift=1.5cm]
\filldraw[fill=white] (-1,-2)coordinate (A1) circle (2pt);\node[below] at (A1) {$2$};
\filldraw[fill=white] (0,-2)coordinate (A2)  circle (2pt);\node[below] at (A2) {$2$};
\filldraw[fill=white] (1,-2)coordinate (A3) circle (2pt);\node[below] at (A3) {$2$};

\fill (-1.5,0)coordinate (B1)  circle (2pt);\node[above] at (B1) {$3$};
\fill (-.5,0)coordinate (B2)  circle (2pt);\node[above] at (B2) {$1$};
\fill (.5,0)coordinate (B3)  circle (2pt);\node[above] at (B3) {$1$};
\fill (1.5,0)coordinate (B4)  circle (2pt);\node[above] at (B4) {$1$};

\draw (A1) -- (B1);
\draw (A2) -- (B1);
\draw (A3) -- (B1);
\draw (A1) -- (B2);
\draw (A2) -- (B3);
\draw (A3) -- (B4);

\filldraw[fill=white] (-1,-2)coordinate (A1) circle (2pt);
\filldraw[fill=white] (0,-2)coordinate (A2)  circle (2pt);
\filldraw[fill=white] (1,-2)coordinate (A3) circle (2pt);
    \end{scope}

\end{tikzpicture}
\caption{Différentielle dans $\omoduli[0](5;(-1^7))$ avec résidus $\left(3,(1^{3}),(-2^{3})\right)$ et son graphe de connexions.} \label{fig:totliermaisdansimage}
\end{figure}
\par
Nous montrons maintenant que l'application résiduelle de $\omoduli[0](s-2;(-1^{s}))$ n'est pas surjective dès que $s\geq4$. En effet, soit $r:=(r_{1},\dots,r_{s})$ des nombres entiers dans $\espres[0](s-2;(-1^{s}))$. Notons  $r_{+}$ la somme des nombres positifs. Si les nombres $\ell_{+}$ et $\ell_{-}$ de $r_{i}$ égaux respectivement à $1$ et $-1$ sont supérieurs ou égaux à  $r_{+}/2$, alors $r$ n'est pas dans l'image de l'application résiduelle de cette strate. En effet, considérons un graphe~$\Gamma$ associé à~$r$. Comme $\ell_{\pm}\geq r_{+}/2$ il existe une arête de~$\Gamma$ connectant un sommet de poids $1$ à un sommet de poids~$-1$. Donc le point (ii) de la définition des graphes de connexions n'est jamais satisfait.
\par
Enfin nous montrons que si $\ell_{-}+\ell_{+}\geq r_{+}+1$ et $s\geq4$, alors le uplet $r$ n'est pas dans l'image de l'application résiduelle $\espres[0](s-2;(-1^{s}))$. Le cas $s=4$ correspond au cas où $r=(1,1,-1,-1)$ dans la strate $\omoduli[0](2;(-1^{4}))$ qui n'est pas réalisable par le paragraphe précédent. Supposons maintenant que $s>4$. Alors l'opération décrite dans le point (ii) de la définition des graphes de connexions fait diminuer de $1$ le nombre $s$ ainsi que $r_{+}$. En revanche la somme $\ell_{-}+\ell_{+}$ soit diminue de $1$ soit reste constante. Donc par récurrence sur $s$ on obtient le résultat.
\end{ex}

Nous passons maintenant au cas des strates ayant plus de deux zéros.

\begin{proof}[Preuve de la proposition~\ref{prop:g0p-1plusieurszero}]
Soient $\mu:=(a_{1},\dots,a_{n};(-1^{s}))$ une partition de $-2$ avec $n\geq2$ et $r:=(r_{1},\dots,r_{s})\in\espres[0](\mu)$. 
Supposons que $r$ soit dans l'image de l'application résiduelle de la strate $\omoduli[0](\mu)$. Montrons l'existence d'une différentielle stable vérifiant les conditions de la proposition~\ref{prop:g0p-1plusieurszero}. Soit $\omega$ une différentielle de $\omoduli[0](\mu)$ ayant pour résidus~$r$. Quitte à perturber $\omega$ sans changer les résidus, on peut supposer qu'il n'existe pas de liens selles horizontaux entre deux singularités coniques distinctes. En effet, le lieu de la strate $\omoduli[0](\mu)$ où les résidus sont $r$ est une variété orbifold de dimension~$n-1$ munie d'un atlas où les coordonnées sont les périodes des cycles de l'homologie relative (voir \cite{BCGGM3}). Comme tous les résidus sont réels, toute demi-droite horizontale issue d'une singularité conique heurte cette même singularité en temps fini. Coupons la surface plate associée à $\omega$ le long de ces liens selles. On obtient une union disjointe de cylindres et de demi-cylindres infinis.

La hauteur des cylindres d'aire finie peut être choisie arbitrairement sans changer la strate et les résidus. En faisant tendre toutes les hauteurs de ces cylindres vers l'infini, on obtient une différentielle stable. De plus, comme chaque zéro est relié à un autre par un cylindre, il y a précisément un zéro sur chaque composante irréductible de cette différentielle.

L'autre implication est claire: une différentielle dans la strate $\omoduli[0](\mu)$ ayant les résidus~$r$ est  donnée par le lissage de la différentielle stable comme expliqué au lemme~\ref{lem:lisspolessimples}.
\end{proof}

On donne une conséquence intéressante de ce résultat.
\begin{cor}\label{cor:gzerores1}
Les résidus $((1^{s+1}),(-1^{s+1}))$ n'appartiennent pas à l'image de l'application résiduelle des strates $\omoduli[0](a_{1},\dots,a_{n};(-1^{2s+2}))$ où $s\geq 1$ et $a_{1} > \frac{1}{2}(3s-1)$.
\end{cor}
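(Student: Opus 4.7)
L'idée est de raisonner par l'absurde à l'aide de la proposition~\ref{prop:g0p-1plusieurszero}. Supposons que $((1^{s+1}),(-1^{s+1}))$ appartienne à l'image de l'application résiduelle; il existerait alors une différentielle stable de genre zéro dont la courbe sous-jacente est un arbre de composantes $\PP^{1}$, chacune portant exactement un zéro et raccordée aux autres par des pôles simples aux nœuds, les $2s+2$ pôles externes de résidus $\pm 1$ étant répartis entre les composantes.

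Je me restreindrais alors à la composante $C_{1}$ portant le zéro d'ordre $a_{1}$. Notons $P$ (resp.\ $M$) le nombre de pôles simples de $C_{1}$ de résidu positif (resp.\ négatif), et $N$ la masse positive totale (égale à la masse négative par le théorème des résidus). Comme les ordres sur $C_{1}$ somment à $-2$, on a $P+M=a_{1}+2$. Par ailleurs, chaque sous-arbre $T_{j}$ attaché à $C_{1}$ par un nœud contient $p_{j}$ pôles externes de résidu $+1$ et $q_{j}$ de résidu $-1$; le résidu du nœud vu depuis $C_{1}$ vaut alors $n_{j}=p_{j}-q_{j}$. Comme les $s+1$ résidus $+1$ se répartissent entre $C_{1}$ et les sous-arbres, un comptage direct donne la borne $N\leq s+1$.

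Les résidus sur $C_{1}$ étant réels, la proposition~\ref{prop:gzeropolesimples} impose l'existence d'un graphe de connexion les réalisant. Le point crucial, et constitutif de la principale difficulté, consiste à en extraire l'inégalité combinatoire $P+M\leq N+1$. Pour y parvenir, j'enracinerais l'arbre de pelage au sommet $v_{0}$ subsistant en fin de procédure, orientant ainsi toutes les arêtes vers $v_{0}$ et associant à chaque arête un flot entier $f(e)\geq 1$ (le poids effectif du sommet au moment de son pelage, qui doit rester strictement positif jusqu'à l'avant-dernière étape). La relation $w(u)=f(e_{u})+\sum_{c\in\mathrm{enfants}(u)}f(e_{c})$ pour tout $u\neq v_{0}$, combinée à $w(v_{0})=\sum_{c}f(e_{c})$ à la racine et à $f\geq 1$, fournit les bornes $c(u)\leq w(u)-1$ pour $u\neq v_{0}$ et $c(v_{0})\leq w(v_{0})$. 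En sommant sur les sommets et en utilisant $\sum_{u}c(u)=E=P+M-1$ ainsi que $\sum_{u}w(u)=2N$, on aboutit à $P+M-1\leq 2N-(P+M-1)$, soit $P+M\leq N+1$.

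La conclusion découle alors immédiatement: $a_{1}+2=P+M\leq N+1\leq s+2$ donne $a_{1}\leq s$. Or l'hypothèse $a_{1}>(3s-1)/2$ et l'entiérité de $a_{1}$ imposent $a_{1}\geq s+1$ (puisque $(3s-1)/2\geq s$ pour tout $s\geq 1$), ce qui contredit $a_{1}\leq s$ et établit le corollaire. L'obstacle principal de cette stratégie est la dérivation de l'inégalité $P+M\leq N+1$ à partir de la condition (ii) de la définition des graphes de connexion; tout le reste repose sur un décompte combinatoire immédiat des résidus dans l'arbre de composantes.
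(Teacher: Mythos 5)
Votre preuve est correcte et, au point décisif, elle suit une route différente de celle du texte. La réduction initiale est la même : la proposition~\ref{prop:g0p-1plusieurszero} ramène tout à la composante portant le zéro d'ordre $a_{1}$, dont les résidus sont des entiers non nuls (les $\pm1$ externes et les $n_{j}=p_{j}-q_{j}$ aux nœuds, non nuls parce qu'un pôle simple a un résidu non nul). Le texte invoque ensuite le critère de l'exemple~\ref{ex:graphedeconnexion} (plus de la moitié de la masse positive portée par des résidus égaux à $1$, et de même pour les $-1$), dont la vérification, laissée \enquote{sans problème}, est en réalité le point délicat et consomme l'hypothèse $a_{1}>\tfrac{1}{2}(3s-1)$. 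Vous le remplacez par une inégalité générale sur les graphes de connexion à poids entiers : chaque arête porte un flot entier strictement positif, donc $\geq 1$, et la somme des flots vaut $N$, d'où $E=P+M-1\leq N$ ; votre décompte par enfants est équivalent, mais cette reformulation par la somme des flots est encore plus directe. Combinée à $P+M=a_{1}+2$ et $N\leq s+1$, elle donne $a_{1}\leq s$ pour toute configuration réalisable, si bien que votre argument établit le corollaire sous l'hypothèse plus faible $a_{1}>s$ (impliquée par $a_{1}>\tfrac{1}{2}(3s-1)$ dès que $s\geq1$) ; cette borne est d'ailleurs optimale, puisque $((1^{s+1}),(-1^{s+1}))$ est réalisable dans $\omoduli[0](s,s;(-1^{2s+2}))$ par deux composantes en étoile. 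Les seuls points à expliciter dans une rédaction définitive sont l'intégralité des flots (récurrence depuis les feuilles, tous les poids étant entiers) et la stricte positivité du flot de la dernière arête pelée, qui découle de la positivité des poids après l'étape $E-1$ garantie par la condition (ii).
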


\begin{proof}
Supposons par l'absurde qu'il existe une différentielle ayant ces invariants locaux. Notons $(X,\omega)$ la différentielle stable donnée par la proposition~\ref{prop:g0p-1plusieurszero}. Tout d'abord remarquons que tous les résidus  de $\omega$ aux nœuds de $X$ sont des entiers. Cela est clair pour les composantes qui n'ont qu'un nœud par le théorème des résidus. On en déduit le cas de toutes les composantes de proche en proche en utilisant le fait que la somme des résidus  de $\omega$ à un nœud est nulle.
\par
Considérons la composante~$X_1$ de $X$ qui contient le zéro d'ordre~$a_{1}$. 
Soient $\ell_-$ et $\ell_+$ les nombres de résidus égaux  respectivement à $1$ et à $-1$ aux pôles de~$\omega_1$.  Nous allons montrer que l'inégalité $a_{1} > \frac{1}{2}(3s-1)$ implique  que  $\ell_{+}+\ell_{-}\geq r_{+}+1$, où $r_{+}$ la somme des résidus positifs  de $\omega_1$. Or, ces résidus ne sont pas dans l'image de l'application résiduelle comme montré à la fin de l'exemple~\ref{ex:graphedeconnexion}. 
\par
Montrons que $\ell_{+}+\ell_{-}\geq r_{+}+1$.  Tout d'abord le nombre $\mathcal{N}_{1}$ de nœuds adjacents à la composante~$X_{1}$ est inférieur ou égal à $n-1$. Comme $a_{1} > \frac{1}{2}(3s-1)$ et $a_{1}+\dots+a_{n}=2s$ on a
\[\mathcal{N}_{1} \leq n-1 < 2s - \frac{1}{2}(3s-1) = \frac{1}{2} (s+1)\,.\]
Comme $\omega_{1}$ possède $a_{1}+2=\ell_{+} +\ell_{-}+\mathcal{N}_{1}$ pôles simples, on en déduit que $\ell_{+} +\ell_{-} > s+1$. De manière analogue au fait que les résidus aux nœuds sont des entiers, on montre que $r_{+}\leq s+1$. On obtient alors 'inégalité souhaitée.
\end{proof}

L'existence de telles différentielles entrelacées peut se reformuler dans le langage des graphes. Nous donnons maintenant une telle description puis illustrons ces constructions dans quelques exemples.

\begin{defn}\label{def:graphedeliant}
Soient $\mu:=(a_{1},\dots,a_{n};(-1^{s}))$ une partition de $-2$ et $\lambda:=(\lambda_{1},\dots,\lambda_{s})$ dans $\espres[0](\mu)$. Un {\em graphe déliant $G$} de type $\mu$ et de poids $\lambda$ est un arbre muni de marquages aux sommets et de pondérations aux demi-arêtes de la façon suivante.
\begin{enumerate}[1)]
\item $G$ a $n$ sommets $T_{1},\dots,T_{n}$.
\item Il y a $s$ marquages aux sommets de poids respectifs $\lambda_{1},\dots,\lambda_{s}$.
\item Les demi-arêtes $A_{i,1},A_{i,2}$ formant une arête ont pour poids $\Lambda_{i}$ et $-\Lambda_{i}$ avec $\Lambda_{i}\neq0$.
\end{enumerate}
De plus, les relations suivantes sont vérifiées. 
\begin{enumerate}[i)]
\item La somme du nombre de marquages et de la valence est $a_{i}+2$.
\item La somme des poids des marquages et du poids des demi-arêtes contenant $T_{i}$ est nul.
\item Le uplet formé des poids des marquages et du poids des demi-arêtes appartient à l'image de $\appres[0](a_{i};(-1^{a_{i}+2}))$.
\end{enumerate}
\end{defn}

\begin{lem}
 Soit $\omoduli[0](a_{1},\dots,a_{n};(-1^{s}))$ une strate de genre zéro avec $s$ pôles simples et $n$ zéros, alors les résidus $(r_{1},\ldots,r_{s})$ sont dans l'image de l'application résiduelle si et seulement si il existe un graphe déliant  de type $(a_{1},\dots,a_{n};(-1^{s}))$ et de poids $(r_{1},\ldots,r_{s})$.
\end{lem}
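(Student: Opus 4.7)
L'idée est de traduire la proposition~\ref{prop:g0p-1plusieurszero} en termes combinatoires en prenant le graphe dual de la différentielle stable. Le plan est d'établir une correspondance entre les différentielles stables $(X,\omega)$ du type décrit dans la proposition~\ref{prop:g0p-1plusieurszero} et les graphes déliants de type $\mu$ et de poids $(r_1,\dots,r_s)$.

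D'abord, étant donnée une différentielle stable $(X,\omega)$ de genre zéro avec un pôle simple à chaque nœud, un unique zéro d'ordre $a_i$ sur chaque composante irréductible $X_i\cong\PP^{1}$, et des pôles simples marqués correspondant aux $r_j$, je construirais son graphe dual $G$ en prenant pour sommets $T_1,\dots,T_n$ les composantes irréductibles et pour arêtes les nœuds. Le marquage correspondant au pôle simple de résidu $r_j$ est placé sur le sommet de la composante contenant ce pôle. Pour chaque arête associée à un nœud identifiant $q_1\in X_i$ à $q_2\in X_j$, je pondère les demi-arêtes par $\Lambda=\Res_{q_1}\omega$ et $-\Lambda=\Res_{q_2}\omega$ : la condition (2) sur les différentielles entrelacées avec $k=1$ impose précisément cette opposition des résidus. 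Le fait que la courbe stable soit de genre arithmétique zéro et que chaque composante soit rationnelle assure que $G$ est un arbre. La condition (i) découle du fait que sur chaque $\PP^{1}$ la somme des ordres des singularités d'une différentielle vaut $-2$, et comme tous les pôles (marqués ou aux nœuds) sont simples, on a bien valence $+$ nombre de marquages $=a_i+2$. La condition (ii) est le théorème des résidus sur chaque $\PP^{1}$. La condition (iii) traduit le fait que la restriction $\omega|_{X_i}$ existe dans $\omoduli[0](a_i;(-1^{a_i+2}))$ avec les résidus prescrits, ce qui est exactement l'appartenance du uplet à l'image de $\appres(a_i;(-1^{a_i+2}))$.

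Réciproquement, à partir d'un graphe déliant $G$, je construirais une différentielle stable comme suit. Pour chaque sommet $T_i$, la condition (iii) garantit l'existence d'une différentielle $\omega_i$ sur $\PP^{1}$ de type $(a_i;(-1^{a_i+2}))$ réalisant les résidus prescrits par les marquages et les demi-arêtes en $T_i$. On construit alors une courbe stable $X$ en prenant l'union disjointe de ces $\PP^{1}$ et en identifiant les pôles simples aux extrémités de chaque arête ; la condition (ii) sur les signes opposés $\Lambda_i,-\Lambda_i$ assure la compatibilité avec la condition (2) de la définition des différentielles entrelacées. Le fait que $G$ soit un arbre entraîne que la courbe stable obtenue est de genre arithmétique zéro. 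On dispose donc d'une différentielle stable entrant dans le cadre de la proposition~\ref{prop:g0p-1plusieurszero}, et le lemme~\ref{lem:lisspolessimples} (ou directement la proposition~\ref{prop:g0p-1plusieurszero}) permet de lisser celle-ci en une différentielle de $\omoduli[0](a_1,\dots,a_n;(-1^s))$ de résidus $(r_1,\dots,r_s)$.

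La principale difficulté ne réside pas dans la construction elle-même mais dans la vérification soigneuse que la notion d'\emph{image de $\appres(a_i;(-1^{a_i+2}))$} apparaissant dans la condition (iii) est bien la bonne traduction de l'existence d'une différentielle sur chaque composante, et que la contrainte de colinéarité éventuelle des résidus locaux (décrite dans la proposition~\ref{prop:gzeropolesimples}) est correctement prise en compte dans chaque étape. Moyennant cette vérification, la proposition~\ref{prop:g0p-1plusieurszero} entraîne immédiatement l'équivalence annoncée.
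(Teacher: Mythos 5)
Votre démonstration suit essentiellement la même démarche que celle du papier : dans un sens on prend le graphe dual de la différentielle stable fournie par la proposition~\ref{prop:g0p-1plusieurszero} et on vérifie les conditions (i)--(iii) via le théorème des résidus et la somme des ordres sur chaque $\PP^{1}$ ; dans l'autre sens on recolle des différentielles sur des droites projectives selon le graphe et on lisse grâce au lemme~\ref{lem:lisspolessimples}. L'argument est correct et la réserve finale sur la condition (iii) est superflue, puisque cette condition est par définition exactement l'appartenance à l'image de $\appres(a_{i};(-1^{a_{i}+2}))$.
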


\begin{proof}
On part d'une différentielle entrelacée $(X,\omega)$ donnée par la proposition~\ref{prop:g0p-1plusieurszero}. Le graphe $G$ est donn\'e par le graphe dual de $X$. Les marquages correspondent aux pôles et les poids sont les résidus associés. Ce graphe vérifie les trois conditions d'un graphe déliant. Par exemple, la formule de Gauss-Bonnet appliquée à une composante irréductible de $X$ s'écrit $a_{i}-v_{i}-\lambda_{i}=-2$ (où $v_{i}$ est la valence du sommet correspondant) et nous donne la condition (i) de la définition~\ref{def:graphedeliant}.

Réciproquement, étant donn\'e un graphe déliant, on peut former une différentielle entrelacée en associant \`a chaque sommet une différentielle de $\omoduli[0](\mu_{i})$ o\`u le résidu aux pôles est donn\'e par le poids au marquage ou \`a la demi-arête. Le fait que cette différentielle entrelacée est lissable est une conséquence directe du lemme~\ref{lem:lisspolessimples}.
\end{proof}

\begin{ex}\label{ex:graphedeliant}
Dans cet exemple, nous donnons des graphes déliant de types $(1,3;(-1^{6}))$ et $(2,2;(-1^{6}))$ pour les r\'esidus $(2,1,1,-1,-1,-2)$. Tout d'abord on peut remarquer que ces r\'esidus ne sont pas dans l'image de $\appres[0](4;(-1^{6}))$. Ces graphes et les surfaces plates correspondantes sont repr\'esent\'es dans la figure~\ref{fig:graphdeliant}.

\begin{figure}[htb]
\center
\begin{tikzpicture}[scale=1.2]
    
    \begin{scope}[xshift=-4cm]
\fill[fill=black!10] (-4,-1.9)  -- (-2,-1.9)-- (-2,.9)-- (-4,.9) -- cycle;

   \foreach \i in {1,2,...,5}
  \coordinate (a\i) at (-1.5-\i/2,-1); 
       \foreach \i in {1,2,3,5}
   \fill (a\i)  circle (2pt);

     \foreach \i in {1,2,3,5}
     \draw (a\i) -- ++(0,-1);
     
   \foreach \i in {1,2,...,5}
  \coordinate (b\i) at (-1.5-\i/2,0); 
      
        \foreach \i in {1,2,3,5}
     \draw (b\i) -- ++(0,1);
     
     \draw (a1) -- (b1);
     \draw (a5) -- (b5);

      \foreach \i in {1,2,3,5}
   \fill[white] (b\i)  circle (2pt);
          \foreach \i in {1,2,3,5}
     \draw (b\i)  circle (2pt);
     
     \node[yshift=-.6cm, xshift=.1cm] at (a5) {$1$};
     \node[yshift=-.6cm, xshift=-.1cm] at (a3) {$1$};
      \node[yshift=-.6cm, xshift=.1cm] at (a3) {$2$};
     \node[yshift=-.6cm, xshift=-.1cm] at (a2) {$2$};
      \node[yshift=-.6cm, xshift=.1cm] at (a2) {$3$};
     \node[yshift=-.6cm, xshift=-.1cm] at (a1) {$3$};

     \node[yshift=.6cm, xshift=.1cm] at (b5) {$4$};
     \node[yshift=.6cm, xshift=-.1cm] at (b3) {$4$};
      \node[yshift=.6cm, xshift=.1cm] at (b3) {$5$};
     \node[yshift=.6cm, xshift=-.1cm] at (b2) {$5$};
      \node[yshift=.6cm, xshift=.1cm] at (b2) {$6$};
     \node[yshift=.6cm, xshift=-.1cm] at (b1) {$6$};
      
   \node[yshift=.5cm, xshift=.1cm] at (a5) {$0$};    
   \node[yshift=.5cm, xshift=-.1cm] at (a1) {$0$};    

\coordinate  (x1)  at  (0,0);
\fill (0,-1) coordinate (x2) circle (2pt); 
\draw (x1) -- ++(.5,0) coordinate (r1);
\draw (x1) -- ++(0,.5) coordinate (r2);
\draw (x1) -- ++(-.5,0) coordinate (r3);
\node[right] at (r1) {$1$};
\node[above] at (r2) {$1$};
\node[left] at (r3) {$2$};

\draw[] (x1) -- (x2) coordinate[pos=.3](R1)coordinate[pos=.7](R2);
\node[left] at (R2) {$4$};\node[left] at (R1) {$-4$};

\draw (x2) -- ++(.5,0) coordinate (r4);
\draw (x2) -- ++(0,-.5) coordinate (r5);
\draw (x2) -- ++(-.5,0) coordinate (r6);
\node[right] at (r4) {$-1$};
\node[below] at (r5) {$-1$};
\node[left] at (r6) {$-2$};

 \fill[white] (x1)  circle (2pt);
     \draw (x1)  circle (2pt);
\end{scope}

   \begin{scope}[xshift=3cm]
\fill[fill=black!10] (-4,-1.9)  -- (-2,-1.9)-- (-2,-1)-- (-1.8,0)--(-2.8,0) -- (-3,-1) -- (-3,.9) -- (-4,.9) -- cycle;

   \foreach \i in {1,2,...,5}
  \coordinate (a\i) at (-1.5-\i/2,-1); 
       \foreach \i in {1,2,...,5}
   \fill (a\i)  circle (2pt);

     \foreach \i in {1,2,4,5}
     \draw (a\i) -- ++(0,-1);
     \draw (a1) -- ++(80:1);
     \draw (a3) -- ++(80:1);
     
   \foreach \i in {3,4,5}
  \coordinate (b\i) at (-1.5-\i/2,0); 

        \foreach \i in {3,4,5}
     \draw (b\i) -- ++(0,1);
     
     \draw (a3) -- (b3);
     \draw (a5) -- (b5);

          \foreach \i in {3,4,5}
   \fill[white] (b\i)  circle (2pt);
          \foreach \i in {3,4,5}
     \draw (b\i)  circle (2pt);  
     
        \node[yshift=-.6cm, xshift=.1cm] at (a5) {$1$};
     \node[yshift=-.6cm, xshift=-.1cm] at (a4) {$1$};
      \node[yshift=-.6cm, xshift=.1cm] at (a4) {$2$};
     \node[yshift=-.6cm, xshift=-.1cm] at (a2) {$2$};
      \node[yshift=-.6cm, xshift=.1cm] at (a2) {$3$};
     \node[yshift=-.6cm, xshift=-.1cm] at (a1) {$3$};

     \node[yshift=.6cm, xshift=.1cm] at (b5) {$4$};
     \node[yshift=.6cm, xshift=-.1cm] at (b4) {$4$};
      \node[yshift=.6cm, xshift=.1cm] at (b4) {$5$};
     \node[yshift=.6cm, xshift=-.1cm] at (b3) {$5$};
      \node[yshift=.6cm, xshift=.25cm] at (a3) {$6$};
     \node[yshift=.6cm, xshift=0cm] at (a1) {$6$};
      
   \node[yshift=.5cm, xshift=.1cm] at (a5) {$0$};    
   \node[yshift=.5cm, xshift=-.1cm] at (a3) {$0$};     
     
\coordinate  (x1)  at  (0,0);
\fill (0,-1) coordinate (x2) circle (2pt); 
\draw (x1) -- ++(45:.5) coordinate (r1);
\draw (x1) -- ++(135:.5) coordinate (r2);
\node[above] at (r1) {$1$};
\node[above] at (r2) {$1$};
 
\draw[] (x1) -- (x2)  coordinate[pos=.3](R1)coordinate[pos=.7](R2);
\node[left] at (R2) {$2$};\node[left] at (R1) {$-2$};

\draw (x2) -- ++(0:.5) coordinate (r3);
\draw (x2) -- ++(-60:.5) coordinate (r4);
\draw (x2) -- ++(-120:.5) coordinate (r5);
\draw (x2) -- ++(-180:.5) coordinate (r6);
\node[right] at (r3) {$2$};
\node[below] at (r4) {$-1$};
\node[below left] at (r5) {$-2$};
\node[left] at (r6) {$-1$};

 \fill[white] (x1)  circle (2pt);
     \draw (x1)  circle (2pt);
     
\end{scope}
\end{tikzpicture}
\caption{Différentielles dans $\omoduli[0](2,2;(-1^6))$ et $\omoduli[0](1,3;(-1^6))$ avec résidus $(2,1,1,-1,-1,-2)$ et leurs graphes déliant.} \label{fig:graphdeliant}
\end{figure}

\end{ex}

\subsection{Composantes connexes}
\label{sec:casCC}
Les strates sont en général non connexes et leur composantes connexes ont été classifiées par Boissy dans les théorèmes 1.1 et 2.2 de \cite{Bo}.
Il est naturel de se demander si l'application résiduelle est surjective pour chaque composante connexe. 
Comme  les strates sont connexes en genre zéro, la question se pose pour $g\geq1$. Dans cette section nous prouvons la surjectivité de l'application résiduelle restreinte à chaque composante connexe.

La clé de la preuve est le fait que chaque composante connexe de la strate minimale peut s'obtenir à partir des strates minimales de genre zéro en cousant des anses (voir proposition 6.1 de \cite{Bo}). Cela nous permettra de restreindre notre étude au cas où tous les résidus sont nuls en genre un. C'est pourquoi nous nous contentons de rappeler la classification de Boissy dans ce cas.

En genre un, les composantes connexes des strates sont caractérisées par le nombre de rotation $\rot(S)$ des surfaces plates. Pour une surface plate $S$ définie par une différentielle méromorphe de $\omoduli[1](a_{1},\dots,a_{n};-b_{1},\dots,-b_{p})$ avec une base symplectique de lacets lisses de l'homologie $(\alpha,\beta)$ le {\em nombre de rotation} est $$\rot(S):=\pgcd(a_{1},\dots,a_{n};b_{1},\dots,b_{p},\ind(\gamma),\ind(\delta)).$$ On a le résultat suivant dû à Boissy.
\begin{itemize}
 \item[(i)] Si $n=p=1$, la strate est $\omoduli[1](a;-a)$ avec $a\geq2$ et chaque composante connexe correspond à un nombre de rotation qui est un diviseur strict de $a$.
 \item[(ii)] Sinon, il existe une composante connexe correspondant à chaque nombre de rotation qui est un diviseur de $\pgcd(a_{1},\dots,a_{n};b_{1},\dots,b_{p})$.
\end{itemize}

Nous montrons maintenant la surjectivité de l'application résiduelle restreinte à chaque composante connexe de genre supérieur ou égal à un. 
\begin{proof}[Preuve de la proposition~\ref{prop:CC}]
Il suffit de traiter le cas des strates minimales (avec un seul zéro). En effet, l'éclatement des zéros ne modifie pas les résidus aux pôles. De plus, cette opération permet d'atteindre toutes les composantes connexes à partir des strates minimales (voir la proposition 7.1 de \cite{Bo}). \`A partir de maintenant, nous considérons les strates minimales.
La proposition~6.1 de \cite{Bo} montre que chaque composante connexe d'une strate de genre $g \geq 1$ peut être obtenue par l'ajout d'une anse à une surface de genre $g-1$. 
Comme la couture d'anse est une opération locale, si l'application résiduelle est surjective en genre~$g$, elle l'est aussi  pour tous les genres supérieurs ou égaux à~$g$. Ainsi, il suffit de prouver la proposition pour les strates minimales de genre un.

En genre un, les strates ayant des pôles simples sont connexes. Nous supposerons donc que les strates ne paramètrent que des différentielles avec des pôles d'ordres inférieurs ou égaux à~$-2$. La proposition est trivialement vraie lorsqu'il y a un unique pôle, donc on considérera $p\geq2$ dans tout ce qui suit. Dans ce cas, le théorème~\ref{thm:geq0keq1} implique que la couture d'anse à partir des strates de genre zéro permet d'obtenir une différentielle dont les résidus sont $(r_{1},\dots,r_{p})\in \espres[0](\mu)\setminus\left\{(0,\dots,0)\right\}$. Dans la suite, nous construisons dans chaque composante connexe des strates $\omoduli[1](a;-b_{1},\dots,-b_{p})$ une différentielle dont tous les résidus sont nuls.

Nous considérons la construction suivante. Pour tous les pôles, nous prenons une partie polaire de type $b_{i}$ associée aux vecteurs $(1;1)$. On colle le bord supérieur de $P_{i}$ au bord inférieur de $P_{i+1}$.
Il reste deux liens selles homologues que l'on relie par un cylindre. La surface obtenue possède les invariants locaux souhaités.

Une base de l'homologie est donnée par une géodésique périodique $\alpha$ du cylindre (donc d'indice zéro) et le lacet $\beta$ suivant. Il coupe $\alpha$ dans le cylindre puis le lien selle au bord du domaine polaire $P_{p}$, puis tourne à gauche avant de ressortir de ce domaine polaire en coupant l'autre lien selle et ainsi de suite. 
Remarquons que changer le type $\tau$ d'une partie polaire change d'autant l'indice de $\beta$. On peut donc obtenir pour $\beta$ tous les indices  dans $J=\left[ p,-p+\sum_{i=1}^{p} b_{i}\right]$. \`A moins que la totalité des pôles ne soient d'ordre~$-2$, on obtient ainsi toutes les composantes connexes de la strate. En effet, si $\mu=(a;-3,-2\dots,-2)$, alors la strate est connexe. Il suffit donc de montrer que la longueur de l'intervalle $J$ est supérieur ou égale à  $\min_{i} b_{i}$, si $(b_{1},\dots,b_{p})\neq (2,\dots,2)$ et $(b_{1},\dots,b_{p})\neq (3,2,\dots,2)$. Cette inégalité est clairement satisfaite dès que $p\geq2$  et qu'il existe un $b_{i}\geq4$ ou deux $b_{j}\geq3$.
\smallskip
\par
Dans une strate minimale avec uniquement des pôles d'ordre~$-2$, il y a exactement deux composantes connexes. La construction qui précède ne permet d'obtenir que la composante dont le nombre de rotation a la même parité que $p$.
On propose alors une deuxième construction. On prend $p-1$ parties polaires et on les colle  comme précédemment. La dernière partie polaire est associée aux vecteurs $(i,1;1,i)$.
On identifie les vecteurs $i$ entre eux et les deux autres bords comme précédemment.
Le lacet $\beta$ est défini comme précédemment et  a pour indice $p$. Le lacet $\alpha$ connecte le milieu de des segments $v$ sans sortir du domaine polaire de~$P_{p}$. Son indice est donc~$1$. On construit une différentielle avec n'importe quels résidus dans la composante connexe dont le nombre de rotation est $1$.

Il reste le cas des composantes connexes de $\omoduli[1](2p;(-2^{p}))$ avec $p$ impair pour lesquelles le nombre de rotation est $2$. On reprend la construction précédente pour les $p-2$ premiers pôles. On associe au pôle $P_{p-1}$ partie polaire est associée aux vecteurs $(i,1;1,i)$. On associe à~$P_{p}$ la partie polaire associée à $(i;i)$. On identifie les bords comme précédemment.  Ainsi, les lacets analogues à ceux de la construction précédente auront  pour indices respectifs~$p-1$ et~$2$. Le nombre de rotation de la surface est donc~$2$.
\end{proof}

\section{Pluridifférentielles en genre zéro}
\label{sec:k-diff}

Dans cette section, nous considérons les $k$-différentielles de genre zéro pour $k\geq2$. Cette section est organisée de la façon suivante.
Le cas des strates de $k$-différentielles ayant un pôle d'ordre non divisible par $k$ est traité dans la section~\ref{sec:avecnondiv}. La section~\ref{sec:pasdenondiv} traite des strates ayant uniquement des pôles d'ordres divisibles par $k$ strictement inférieurs à~$-k$. La section section~\ref{sec:pasdenondivaveck} traite du cas des strates dont tous les pôles sont d'ordres divisibles par $k$, certains strictement inférieur à $-k$ et d'autres égaux à~$-k$. Enfin, la section~\ref{sec:juste-k} traite des strates qui n'ont que des pôles d'ordre $-k$.

\subsection{Différentielles ayant un pôle d'ordre inférieur non divisible par~$k$}
\label{sec:avecnondiv}

Rappelons que la strate $\komoduli[0](\mu)$ paramètre les $k$-différentielles primitives de type~$\mu$. En genre zéro, beaucoup de ces strates sont vides comme le montre le résultat suivant.
\begin{lem}\label{lem:puissk}
Soient $\mu=(m_{1},\dots,m_{t})$ un $t$-uplet tel que $\sum m_{i}=-2k$ et $d=\pgcd(\mu,k)$. Toutes les $k$-différentielles de type $\mu$ sont la puissance $d$-ième d'une $k/d$-différentielle primitive de $\Omega^{k/d}\moduli[0](\mu/d)$.
\end{lem}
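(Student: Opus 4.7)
La stratégie repose sur la représentation explicite des $k$-différentielles en genre zéro sous forme de fractions rationnelles, ce qui rend la factorisation élémentaire. Étant donnée une $k$-différentielle $\xi$ de type $\mu$ sur $\PP^{1}$, je commence par choisir une coordonnée affine $z$ telle que $\infty$ ne soit pas une singularité de $\xi$, ce qui est toujours possible puisque les singularités sont en nombre fini et qu'on dispose du groupe des homographies. Dans cette coordonnée, $\xi$ admet l'expression
\[
\xi \= c \cdot \prod_{i=1}^{t} (z - a_{i})^{m_{i}} (dz)^{k},
\]
avec $a_{i} \in \CC$ distincts et $c \in \CC^{\ast}$, et la relation $\sum m_{i} = -2k$ assure que $\xi$ est régulière (ni zéro ni pôle) au voisinage de $\infty$.

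L'étape de construction est directe : puisque $d := \pgcd(\mu, k)$ divise chaque $m_{i}$ par définition, je pose
\[
\eta \= c' \cdot \prod_{i=1}^{t} (z - a_{i})^{m_{i}/d} (dz)^{k/d},
\]
où $c' \in \CC^{\ast}$ est une racine $d$-ième de $c$ choisie arbitrairement (il y en a exactement $d$). Cette $(k/d)$-différentielle est par construction de type $\mu/d := (m_{1}/d, \dots, m_{t}/d)$ et vérifie tautologiquement $\eta^{d} = \xi$. La relation $\sum m_{i}/d = -2k/d = (k/d)(2\cdot 0 -2)$ confirme la cohérence globale, et cette construction établit au passage la non-vacuité de $\Omega^{k/d}\moduli[0](\mu/d)$.

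L'obstacle principal, quoique de nature arithmétique élémentaire, est la vérification de la primitivité de $\eta$ dans $\Omega^{k/d}\moduli[0](\mu/d)$. Supposons par l'absurde l'existence d'un entier $e \geq 2$ divisant $k/d$ et d'une $(k/(de))$-différentielle $\zeta$ telle que $\eta = \zeta^{e}$. Comme les ordres se multiplient sous l'élévation à la puissance, on aurait $e \mid m_{i}/d$ pour tout $i$ et $e \mid k/d$, d'où $de \mid m_{i}$ pour tout $i$ et $de \mid k$. Ceci contraindrait $de$ à diviser $\pgcd(\mu, k) = d$, ce qui est absurde puisque $e \geq 2$. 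La subtilité réside précisément dans le fait que la maximalité du diviseur commun $d$ est ce qui garantit la primitivité : toute strate $\Omega^{k/e'}\moduli[0](\mu/e')$ avec $e'$ diviseur commun strictement plus petit que $d$ contiendrait encore $\eta$ sous forme de puissance non triviale.
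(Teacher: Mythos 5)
Votre preuve est correcte et suit essentiellement la même démarche que celle du texte : écrire explicitement la $k$-différentielle sur $\PP^{1}$ sous la forme $c\prod_{i}(z-a_{i})^{m_{i}}(dz)^{k}$ et en extraire une racine $d$-ième terme à terme. Vous explicitez en outre la vérification de la primitivité de $\eta$ (par maximalité du $\pgcd$), que le texte laisse implicite ; c'est un complément bienvenu mais qui ne change pas la nature de l'argument.
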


\begin{proof}
Une $k$-différentielle $\xi$ sur $\PP^{1}$ de type $\mu$ est donnée par la formule 
$$\xi=\prod_{i=1}^{t}(z-z_{i})^{m_{i}}(dz)^{k}=\left( \prod_{i=1}^{t}(z-z_{i})^{m_{i}/d}(dz)^{k/d}\right)^{d}. $$
\end{proof}

A partir de maintenant nous ne considérons que des strates non vides.
Dans le cas où il existe des pôles d'ordres non divisibles par $k$,  nous commençons par caractériser l'image de l'application $k$-résiduelle pour les strates ayant un unique zéro.
\begin{lem}\label{lem:g=0gen1}
 Soit $\komoduli[0](a;-b_{1},\dots,-b_{p};-c_{1},\dots,-c_{r};(-k^{s}))$ une strate de genre zéro telle que $r\neq0$. L'image de l'application résiduelle est
 \begin{itemize}
 \item[i)] $\espresk[0](\mu)$ si $r\geq2$ ou $s\geq1$,
 \item[ii)]  $\espresk[0](\mu)\setminus\left\{(0,\dots,0)\right\}$ si $r=1$ et $s=0$.
\end{itemize}  
\end{lem}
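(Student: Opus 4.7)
La preuve se divise en deux volets: la réalisation des uplets (non nuls dans le cas (ii)) par construction géométrique explicite, et la non-réalisation du $p$-uplet nul dans le cas (ii) par un argument analytique utilisant le revêtement canonique.

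Pour la partie constructive, on utilise les briques élémentaires de la section~\ref{sec:briques}. Étant donnés $(R_1,\dots,R_p)\in\CC^p$ et $(R'_1,\dots,R'_s)\in(\CC^\ast)^s$, choisissons des racines $k$-ièmes $r_i$ et $r'_j$ des $R_i$ et $R'_j$, avec $r_i=0$ lorsque $R_i=0$. À chaque pôle d'ordre $k\ell_i$, on associe une partie polaire de type $(r_i;\emptyset)$, ou triviale $(v_i;v_i)$ si $R_i=0$; à chaque pôle d'ordre $-k$, une partie polaire d'ordre $k$ de $k$-résidu $R'_j$; et à chaque pôle d'ordre $c_j$ non divisible par $k$, la partie polaire en secteur angulaire de la section~\ref{sec:briques}. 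On recolle alors cycliquement les bords de ces briques pour former une sphère à singularité conique unique d'angle $(a+k)\cdot 2\pi/k$. L'hypothèse $r\geq 2$ fournit deux déficits angulaires complémentaires de $2\pi r'_j/k$ ($r'_j$ le reste de $c_j$ modulo $k$) pour absorber une éventuelle discordance; l'hypothèse $s\geq 1$ fournit un demi-cylindre ajustable en translation; dans le cas (ii), l'existence d'au moins un résidu non nul joue un rôle analogue en offrant un segment de bord librement orientable. Comme au paragraphe~\ref{sec:casgen}, couper une connexion de selle quelconque déconnecte la surface construite, garantissant l'unicité du zéro et le genre zéro.

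Pour la non-réalisation du $p$-uplet nul dans le cas (ii) avec $p\geq 1$, on raisonne par l'absurde: supposons l'existence d'une $k$-différentielle primitive $\xi\in\komoduli[0](a;-b_1,\dots,-b_p;-c_1)$ dont les $p$ $k$-résidus sont tous nuls. Considérons le revêtement canonique $\pi\colon\widehat X\to\PP^1$ et la $1$-forme $\omega$ sur $\widehat X$ satisfaisant $\pi^\ast\xi=\omega^k$. Par hypothèse, les résidus de $\omega$ aux préimages non ramifiées des $z_i$ (pour $i\leq p$) s'annulent. Aux $d:=\pgcd(c_1,k)<k$ préimages de $z_{p+1}$, le groupe d'inertie est d'ordre $k/d>1$ dans $\mu_k$ et agit non trivialement sur $\omega$ par un caractère d'ordre exact $k$ (conséquence de la primitivité de $\xi$); l'équivariance du coefficient de $dw/w$ force alors l'annulation du résidu. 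Ainsi $\omega$ est sans résidu sur $\widehat X$, donc $\omega=df$ pour une fonction méromorphe $f$ sur $\widehat X$. En calculant le degré de $f$ via ses pôles,
\begin{equation*}
\deg f=\sum_{i=1}^p k(\ell_i-1)+(c_1-k)=\sum b_i-kp+c_1-k=a+k-kp,
\end{equation*}
où l'on a utilisé $\sum b_i+c_1=a+2k$. Or $f$ admet au moins $a+k$ zéros aux préimages de $z_0$ (de multiplicité totale $\pgcd(a,k)\cdot(a+k)/\pgcd(a,k)=a+k$), d'où $a+k\leq a+k-kp$, donc $p\leq 0$, contradiction.

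L'obstacle principal est la vérification combinatoire du recollement dans le cas (i): il faut distinguer les trois sources de flexibilité ($r\geq 2$, $s\geq 1$, ou résidu non nul) et s'assurer que chacune produit effectivement une sphère à un unique zéro d'angle correct. L'obstruction analytique du cas (ii), en revanche, devient assez directe une fois le calcul galoisien d'annulation des résidus effectué, et le comptage de degré fournit la contradiction cherchée.
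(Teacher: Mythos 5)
Votre argument de non-réalisation de l'origine dans le cas (ii) emprunte une route réellement différente de celle de l'article : celui-ci forme une $k$-différentielle entrelacée en attachant à chaque pôle $P_{i}$ une $k$-différentielle de genre $g_{i}$ qui est une puissance $k$ième d'une différentielle abélienne, la lisse grâce au théorème principal de \cite{BCGGM3}, puis obtient une contradiction en constatant que le bord ainsi construit aurait la même dimension que la strate $\PP\omoduli(a,(k^{d});c)$. Votre voie par le revêtement canonique (annulation de tous les résidus de $\omega$, exactitude $\omega=df$, comptage du degré de $f$) est plus élémentaire et aboutit bien à $p\leq 0$. Elle comporte toutefois une lacune à combler : l'implication « $\omega$ sans résidu $\Rightarrow\omega=df$ » n'est valable que si $\whX$ est de genre zéro ; sur une courbe de genre positif, une différentielle de seconde espèce n'est exacte que si \emph{toutes} ses périodes s'annulent, pas seulement ses résidus. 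Ici cela se justifie : la non-vacuité de la strate primitive impose $\pgcd(\mu,k)=\pgcd(c_{1},k)=1$ par le lemme~\ref{lem:puissk} (donc $\pgcd(a,k)=1$ puisque $a\equiv c_{1}\bmod k$), le revêtement canonique n'est ramifié, totalement, qu'en deux points, et Riemann--Hurwitz donne $\whX\cong\PP^{1}$. Sans cette observation l'étape est injustifiée — et elle doit échouer quelque part, puisque pour $r\geq2$ le revêtement peut être de genre strictement positif et l'origine est alors réalisée. Notez aussi que votre décompte « multiplicité totale $a+k$ aux préimages de $z_{0}$ » suppose une unique préimage : avec $d>1$ préimages, les zéros seraient ceux de $f-c_{j}$ pour des constantes a priori distinctes ; c'est sans conséquence puisque $d=1$, mais votre rédaction, qui garde $d$ général, masque ce point.

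Pour la partie constructive, vous suivez la même stratégie que l'article (recollement de parties polaires), mais la description est trop imprécise pour être vérifiée. Les points où réside le travail effectif ne sont qu'évoqués : l'ancrage des parties polaires triviales des pôles de résidu nul sur un segment porté par un résidu non nul ; la construction du uplet nul lorsque $s=0$ et $r\geq2$, qui dans l'article repose sur un collage en chaîne aboutissant aux deux pôles d'ordres non divisibles par $k$ et non sur une « absorption de discordance angulaire » ; et la vérification que le recollement cyclique produit une sphère avec une unique singularité conique d'angle $(a+k)\tfrac{2\pi}{k}$. L'argument « toute connexion de selle déconnecte la surface » du paragraphe~\ref{sec:casgen} ne peut être invoqué qu'une fois le schéma de recollement explicité.
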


\begin{proof}
Nous commençons par donner la construction d'une $k$-différentielle dans la strate  $\komoduli[0](\mu)$ avec $\mu=(a;-b_{1},\dots,-b_{p};-c_{1},\dots,-c_{r};(-k^{s}))$ dont les $k$-résidus sont donnés par $(R_{1},\dots,R_{p+s})$ dans $\espresk[0](\mu)\setminus\left\{(0,\dots,0)\right\}$.
\par 
Pour les pôles $P_{p+i}$ d'ordres $-k$, nous prenons une $k$-partie polaire d'ordre~$k$ associée à une racine $k$-ième $r_{p+i}$ de $R_{p+i}$. Pour chaque pôle $P_{i}$ d'ordre $-b_{i}=-k\ell_{i}$ tel que $R_{i}\neq0$, nous prenons une $k$-partie polaire non triviale d'ordre $b_{i}$ associée à $(r_{i};\emptyset)$. Si $R_{i}$ n'est pas un imaginaire pur ou $k\neq2$  nous choisissons une racine $r_{i}$ avec une partie réelle positive. Dans le cas où $k=2$ et $R_{i}$ est un réel négatif, nous choisissons la racine $r_{i}$ de partie imaginaire positive. Pour les pôles d'ordre $-b_{i}$ tels que $r_{i}=0$ nous prenons une $k$-partie polaire triviale d'ordre $b_{i}$ associée à $(r_{j_{i}};r_{j_{i}})$ où $r_{j_{i}}$ est l'une des racines choisie précédemment.
\par
Maintenant, pour tous les pôles d'ordre $-c_{i}$ sauf un, disons $P_{1}$ d'ordre $-c_{1}$, nous prenons une $k$-partie polaire de type $c_{i}$ associée à $(1;\emptyset)$. Pour le dernier pôle $P_{1}$ d'ordre $-c_{1}$, nous prenons la $k$-partie polaire de type $c_{1}$, sans points d'intersection (voir le lemme~\ref{lem:noninter}), associée à $(\emptyset;(1^{r-1}),r_{1},\dots,r_{l})$ où $l$ est le nombre de résidus non nuls. 
\par
La surface est obtenue par les recollements suivant. Nous collons le segment inférieur~$r_{j_{i}}$ de chaque $k$-partie polaire triviale au bord de la partie non triviale du pôle $P_{j_{i}}$. Nous faisons les collages similaires, pour chacun des pôles d'ordre divisible par $k$ dont le résidu est nul. Ensuite nous collons par translation les bords des $k$-parties polaires différentes de $P_{1}$ aux segments correspondants du bord de la $k$-partie polaire de $P_{1}$. Cette construction est illustrée par la figure~\ref{ex:8,8,12res}.
La $k$-différentielle $\xi$ associée à cette surface plate possède les ordres de pôles et les $k$-résidus souhaités.

\begin{figure}[htb]
\begin{tikzpicture}


\begin{scope}[xshift=-1.5cm]
      
 \fill[black!10] (-1,0)coordinate (a) -- (1.5,0)-- (a)+(1.5,0) arc (0:120:1.5)--(a)+(120:1.5) -- cycle;

   \draw (a)  -- node [below] {$2$} (0,0) coordinate (b);
 \draw (0,0) -- (1,0) coordinate[pos=.5] (c);
 \draw[dotted] (1,0) --coordinate (p1) (1.5,0);
 \fill (a)  circle (2pt);
\fill[] (b) circle (2pt);
\node[below] at (c) {$a$};

 \draw (a) -- node [above,rotate=120] {$a$} +(120:1) coordinate (d);
 \draw[dotted] (d) -- coordinate (p2) +(120:.5);

     \end{scope}
     
\begin{scope}[xshift=1.5cm,yshift=.5cm]
\fill[fill=black!10] (0.5,0)coordinate (Q)  circle (1.1cm);
    \coordinate (a) at (0,0);
    \coordinate (b) at (1,0);

     \fill (a)  circle (2pt);
\fill[] (b) circle (2pt);
    \fill[white] (a) -- (b) -- ++(0,-1.1) --++(-1,0) -- cycle;
 \draw  (a) -- (b);
 \draw (a) -- ++(0,-1);
 \draw (b) -- ++(0,-1);

\node[above] at (Q) {$1$};
    \end{scope}

\begin{scope}[xshift=6.5cm,yshift=1.5cm,rotate=180]

 \fill[black!10] (-1,0)coordinate (a) -- (1,0)-- (1,0) arc (0:120:2) -- cycle;
\draw (-1,0) coordinate (a) -- node [below,xshift=-1] {$2$} (0,0) coordinate (b);
\draw (b) -- node [below] {$1$} +(1,0) coordinate (c);
\draw (c) -- node [below] {$b$} +(1,0) coordinate (d);
\draw[dotted] (d) -- +(.5,0);
\fill (a)  circle (2pt);
\fill[] (b) circle (2pt);
\fill[] (c) circle (2pt);

\draw (a) -- node [above,rotate=120] {$b$} +(120:1) coordinate (e);
\draw[dotted] (e) -- +(120:.5);

\end{scope}

\end{tikzpicture}
\caption{Une $3$-différentielle de $\Omega^{3}\moduli[0](8;-4,-4;-6)$ avec un résidu non nul au pôle d'ordre $-6$.} \label{ex:8,8,12res}
\end{figure}

Il reste donc à montrer que le genre de la surface est zéro et que la différentielle $\xi$ possède un unique zéro;
Pour cela, il suffit de vérifier que si l'on coupe la surface le long d'un lien selle, alors on sépare cette surface en deux parties. C'est une conséquence du fait que les liens selles correspondent aux bords des domaines polaires. 
\smallskip
\par
Pour terminer la preuve du lemme~\ref{lem:g=0gen1}, il reste à considérer le cas où les $k$-résidus sont égaux à $(0,\dots,0)$. Remarquons que s'il existe un pôle d'ordre $-k$, alors $(0,\dots,0)$ n'est pas dans l'espace résiduel de la strate. Nous supposerons donc que $s=0$ dans la suite de la preuve. Nous montrons maintenant que l'origine appartient à l'image de l'application $k$-résiduelle d'une strate avec $r\geq1$ si et seulement si $r\geq2$.
\par
Montrons que l'origine appartient à l'image de l'application résiduelle s'il existe au moins deux pôles $P_{1}$ et $P_{2}$ d'ordres respectifs $-c_{1}$ et $-c_{2}$ non divisible par $k$. Pour tous les autres pôles nous associons la même $k$-partie polaire que précédemment. Plus précisément, pour chaque pôle d'ordre $-b_{i}$ divisible par $k$ nous prenons une $k$-partie polaire d'ordre $b_{i}$ associée à $(1;1)$. Pour les pôles d'ordres non divisibles par $k$ distincts de $P_{1}$, nous prenons la $k$-partie polaire associée à $(1;\emptyset)$. Pour $P_{1}$ nous prenons la $k$-partie polaire d'ordre $c_{1}$ associée à $(\emptyset;(1^{r}))$.
\par
Les collages sont les suivants. Nous collons le bord inférieur de la $k$-partie polaire du $i$-ième pôle d'ordre divisible par $k$ au bord d'en haut du $(i+1)$-ième pôle d'ordre divisible par $k$. Le bord inférieur de la $k$-partie polaire associée au pôle $P_{p}$ est collé au bord du segment du pôle~$P_{2}$. Enfin, tous les segments restant sont collés au bord de la $k$-partie polaire associée à~$P_{1}$. Cette construction est illustrée par la figure~\ref{ex:8,8,12}. On vérifie facilement que cette surface possède les propriétés souhaitées.
\begin{figure}[htb]
\begin{tikzpicture}

\begin{scope}[xshift=-1.5cm,yshift=.15cm]
      
 \fill[black!10] (-1,0)coordinate (a) -- (1.5,0)-- (a)+(1.5,0) arc (0:120:1.5)--(a)+(120:1.5) -- cycle;

   \draw (a)  -- node [below] {$1$} (0,0) coordinate (b);
 \draw (0,0) -- (1,0) coordinate[pos=.5] (c);
 \draw[dotted] (1,0) --coordinate (p1) (1.5,0);
 \fill (a)  circle (2pt);
\fill[] (b) circle (2pt);
\node[below] at (c) {$a$};

 \draw (a) -- node [above,rotate=120] {$a$} +(120:1) coordinate (d);
 \draw[dotted] (d) -- coordinate (p2) +(120:.5);

     \end{scope}

\begin{scope}[xshift=1.5cm,yshift=.8cm]
\fill[fill=black!10] (0.5,0)  circle (1.1cm);

 \draw (0,0) coordinate (a) -- coordinate (c) (1,0) coordinate (b);

 \fill (a)  circle (2pt);
\fill[] (b) circle (2pt);
\node[below] at (c) {$1$};
\node[above] at (c) {$2$};

    \end{scope}

\begin{scope}[xshift=6cm,rotate=180,yshift=-1.3cm]
      
 \fill[black!10] (-1.5,0)coordinate (a) -- (0,0)-- (60:1.5) arc (60:180:1.5) -- cycle;

   \draw (-1,0) coordinate (a) -- node [above] {$2$} (0,0) coordinate (b);
 \draw (a) -- +(-1,0) coordinate[pos=.5] (c) coordinate (e);
 \draw[dotted] (e) -- +(-.5,0);
 \fill (a)  circle (2pt);
\fill[] (b) circle (2pt);
\node[below] at (c) {$b$};

 \draw (b) -- node [above,rotate=-120] {$b$} +(60:1) coordinate (d);
 \draw[dotted] (d) -- +(60:.5);
\end{scope}

\end{tikzpicture}
\caption{Une $3$-différentielle de $\Omega^{3}\moduli[0](8;-4,-4;-6)$ avec un résidu nul.} \label{ex:8,8,12}
\end{figure}

Nous montrons enfin que s'il n'existe qu'un seul pôle d'ordre $-c$ non divisible par $k$, alors l'origine n'appartient pas à l'image de l'application résiduelle. Soit  $\xi_{0}$ une $k$-différentielle de $\komoduli[0](a;-b_{1},\dots,-b_{p};-c)$. Nous considérons le revêtement canonique $(\whX_{0},\wh\omega_{0},\pi)$ de~$\xi_{0}$. On déduit facilement du fait que $\pi$ soit ramifié au dessus d'exactement deux points (le zéro d'ordre $a$ et le pôle d'ordre $-c$) que $\whX_{0}$ est isomorphe à~$\PP^{1}$. Plus précisément, la proposition 2.4 de \cite{BCGGM3} implique que $\wh\omega_{0}$ appartient à la strate
$$\omoduli[0]\left( a+k-1;k-c-1,\left(\left(\frac{-b_{1}}{k}\right)^{k}\right),\dots,\left(\left(\frac{-b_{p}}{k}\right)^{k}\right) \right).$$
Le théorème~\ref{thm:geq0keq1} implique que les résidus de $\wh\omega_{0}$ ne sont pas tous nuls. Par conséquent, les $k$-résidus de $\xi_{0}$ ne sont pas tous nuls.
\end{proof}

Nous traitons maintenant le cas des strates ayant au moins~$2$ zéros~$a_{i}$.
\begin{lem}\label{lem:g=0gen2}
Considérons la partition $\mu=(a_{1},\dots,a_{n};-b_{1},\dots,-b_{p};-c_{1},\dots,-c_{r},(-k^{s}))$ avec $n\geq2$. Si $r\geq2$ ou $s\geq1$, alors l'application $k$-résiduelle $\appresk[0](\mu)$ est surjective. Dans le cas où $s=0$ et $r=1$ alors l'application contient le complémentaire de l'origine. De plus si au moins deux $a_{i}$ ne sont pas divisibles par $k$ ou si la somme des $a_{i}$ divisibles par $k$ est supérieure ou égale à~$kp$ alors l'application $k$-résiduelle contient l'origine.
\end{lem}

\begin{proof}
Si $r\geq2$ ou $r=1$ et $s\geq1$, on peut simplement éclater le zéro des différentielles données par le lemme~\ref{lem:g=0gen1}. On suppose maintenant $r=1$ et $s=0$. Par éclatement de zéros, il suffit de montrer que l'origine appartient à l'image de l'application résiduelle des strates $\komoduli[0](a_{1},a_{2};-b_{1},\dots,-b_{p};-c)$ dans le cas où $a_{i}$ n'est pas de la forme $kl_{i}$ avec $l_{i}<p$. Nous écrivons $a_{i}=kl_{i}+\bar{a_{i}}$ avec $-k<\bar{a_{1}},\bar{a_{2}}\leq 0$. 
Nous considérons deux cas selon que $l_{1},l_{2}\geq p$ ou il existe $l_{i}<p$.
\smallskip
\par
Dans le cas où $l_{1},l_{2}\geq p$, on associe aux pôles d'ordres $-b_{i}$ les $k$-parties polaires d'ordres~$b_{i}$ et de types~$\tau_{i}$ associées à $(1;1)$. De plus, on choisit les types $\tau_{i}$ de telle sorte que la somme $\sum_{i}\tau_{i}$ soit inférieure ou égale à $ l_{1}$ et maximale pour cette propriété. On note $\bar{l_{1}}=l_{1}-\sum_{i}\tau_{i}$. Pour le pôle d'ordre $-c$ on procède de la façon suivante.  On prend la $k$-partie polaire d'ordre~$c$ associée à $\left(\emptyset;1,\exp\left(i\pi+\bar{a_{2}}\tfrac{2i\pi}{k}\right)\right)$ et de type $\bar{l_{1}}+1$. Notons que cette construction est possible même si $\bar{a_{2}} = 0$.
On obtient la différentielle souhaitée en identifiant le bord inférieur de la $k$-partie polaire associée à $P_{i}$ au bord supérieur de celle de $P_{i+1}$. Le bord supérieur de $P_{1}$ est identifié au segment $1$ de la $k$-partie polaire d'ordre~$c$. Le bord inférieur de la $k$-partie polaire d'ordre $b_{p}$ est identifié par rotation au segment $\exp\left(\bar{a_{2}}\tfrac{2i\pi}{k}\right)$. Cette construction est illustrée à gauche de la figure~\ref{fig:rgeq1ngeq2}.
\par
Considérons le cas où il existe $l_{i}<p$ et $a_{i}\neq kl_{i}$. Nous supposerons sans perte de généralité qu'il s'agit de $l_{1}$. On associe au pôle d'ordre $-c$ la $k$-partie polaire d'ordre $c$ associée à $(\emptyset;1)$. Pour l'un des pôles d'ordre $-b_{i}$, disons $-b_{1}$, on associe la $k$-partie polaire d'ordre $b_{1}$ associée à $(1;v_{1},v_{2})$, avec $v_{i}$ de même longueur, $v_{1}+v_{2}=1$ et l'angle (dans la partie polaire) entre ces deux étant $2\pi + \tfrac{2\bar{a_{1}}\pi}{k}$. Comme $\bar{a_{1}} \neq 0$, cette partie polaire est non dégénérée. 
Pour $l_{1}$ pôles d'ordre $-b_{i}$, on associe des $k$-parties triviales associées à $(v_{1};v_{1})$ et de type $\tau_{i}=b_{i}-1$. On colle ces $k$-parties polaires de manière cyclique à $v_{1}$ et~$v_{2}$. Le point correspondant à l'intersection entre $v_{1}$ et $v_{2}$ est la singularité d'ordre~$a_{1}$.  On associe aux autres pôles d'ordre $-b_{i}$ la $k$-partie polaire triviale associée à $(1;1)$. On colle ces parties polaires de manière cyclique aux parties polaires d'ordres $b_{1}$ et $c$  pour obtenir la surface plate souhaitée.  Cette construction est illustrée à droite de la figure~\ref{fig:rgeq1ngeq2}.
\begin{figure}[htb]
\center
 \begin{tikzpicture}

\begin{scope}[xshift=-6cm]
    \fill[fill=black!10] (0,0) ellipse (1.5cm and .7cm);
\draw[] (0,0) coordinate (Q) -- (-1.5,0) coordinate[pos=.5](a);

\node[above] at (a) {$1$};
\node[below] at (a) {$2$};

\draw[] (Q) -- (.7,0) coordinate (P) coordinate[pos=.5](c);
\draw[] (Q) -- (P);

\fill (Q)  circle (2pt);

\fill[color=white!50!] (P) circle (2pt);
\draw[] (P) circle (2pt);
\node[above] at (c) {$v_{2}$};
\node[below] at (c) {$v_{1}$};

    \fill[fill=black!10] (0,-1.5) ellipse (1.1cm and .5cm);
\draw[] (.5,-1.5) coordinate (Q) --++ (-1.5,0) coordinate[pos=.5](b);

\fill (Q)  circle (2pt);

\node[above] at (b) {$2$};
\node[below] at (b) {$1$};

\end{scope}
\begin{scope}[xshift=-3cm]

 \fill[black!10] (-.5,0)coordinate (a) -- (a) + (80:1.5) coordinate (b)-- (b) arc (80:-40:1.5)--(a)+(-40:1.5) coordinate (c) -- cycle;
\draw[] (a) -- node [left,rotate=-10] {$5$}  (b);
\draw[] (a) -- node [right,rotate=-130] {$5$} (c);
\draw (a) -- ++(.7,0)coordinate[pos=.5](e) coordinate (d);
\draw (d) -- ++(.8,0)coordinate[pos=.5](f);
\fill (a) circle (2pt);
\fill[color=white] (d) circle (2pt);
\draw[] (d) circle (2pt);
\node[above] at (f) {$4$};
\node[below] at (f) {$3$};
\node[above] at (e) {$v_{1}$};
\node[below] at (e) {$v_{2}$};

\fill[fill=black!10] (0,-1.5) ellipse (1.1cm and .5cm);
\draw[] (-.5,-1.5) coordinate (Q) --++ (1.5,0) coordinate[pos=.5](b);
\filldraw[fill=white] (Q)  circle (2pt);

\node[above] at (b) {$4$};
\node[below] at (b) {$3$};
\end{scope}

\begin{scope}[xshift=2cm]
\begin{scope}[yshift=1cm,xshift=-.5cm]
    \fill[fill=black!10] (.5,-.3) ellipse (1.1cm and .9cm);
   
 \coordinate (A) at (0,0);
  \coordinate (B) at (1,0);
    \coordinate (C)  at (-30:.6);
 
 \filldraw[fill=white] (A)  circle (2pt);
\filldraw[fill=white] (B) circle (2pt);
    \fill (C)  circle (2pt);

   \fill[color=white]     (A) -- (B) -- (C) --cycle;
 \draw[]     (A) -- (B) coordinate[pos=.5](a);
 \draw (B) -- (C) coordinate[pos=.3](b);
 \draw (C) -- (A)  coordinate[pos=.6](c);

\node[above] at (a) {$1$};
\node[below] at (c) {$v_{1}$};
\node[below] at (b) {$v_{2}$};
\end{scope}

    \fill[fill=black!10] (0,-1.5) ellipse (1.1cm and .5cm);
\draw[] (-.5,-1.5) coordinate (Q) -- (.5,-1.5) coordinate[pos=.5](b) coordinate (P);

\filldraw[fill=white] (Q) circle (2pt);
\filldraw[fill=white] (P) circle (2pt);

\node[above] at (b) {$2$};
\node[below] at (b) {$1$};

\end{scope}
\begin{scope}[xshift=5cm]
\begin{scope}[rotate=180,yshift=-1.3cm]
      
 \fill[black!10] (-1.5,0)coordinate (a) -- (0,0)-- (60:1.5) arc (60:180:1.5) -- cycle;

   \draw (-1,0) coordinate (a) -- node [above] {$2$} (0,0) coordinate (b);
 \draw (a) -- +(-1,0) coordinate[pos=.5] (c) coordinate (e);
 \draw[dotted] (e) -- +(-.5,0);
  \draw (b) -- node [above,rotate=-120] {$b$} +(60:1) coordinate (d);
 \draw[dotted] (d) -- +(60:.5);
\filldraw[fill=white] (a) circle (2pt);
\filldraw[fill=white] (b) circle (2pt);
\node[below] at (c) {$b$};
\end{scope}

\begin{scope}[yshift=-1.3cm]
\fill[fill=black!10] (0,0) coordinate (O) ellipse (1.1cm and .6cm);
\coordinate (P) at (-30:.3);
\coordinate (Q) at (150:.3);
\draw[] (Q) -- (P)coordinate[pos=.5](b);
\filldraw[fill=white] (Q) circle (2pt);
\fill (P)  circle (2pt);

\node[below] at (b) {$v_{2}$};
\node[above] at (b) {$v_{1}$};
\end{scope}
\end{scope}
\end{tikzpicture}
\caption{Une $3$-différentielle de $\Omega^{3}\moduli[0](4,6;-9;-7)$ (à gauche) et de $\Omega^{3}\moduli[0](2,8;(-6^{3});-4)$ (à droite) dont tous les $3$-résidus sont nuls.} \label{fig:rgeq1ngeq2}
\end{figure}
\end{proof}

Pour terminer la preuve du théorème~\ref{thm:g=0gen1} il suffit de montrer que l'origine n'est pas dans l'image de l'application résiduelle dans les derniers cas.
\begin{lem}
Considérons la partition $\mu=(a_{1},kl_{2},\dots,kl_{n};-b_{1},\dots,-b_{p};-c)$ où $\sum_{i=1}^{n} l_{i}<p$. L'application résiduelle $\appresk[0](\mu)$ ne contient pas $\lbrace 0 \rbrace$.
\end{lem}

\begin{proof}
\'Etant donné une $k$-différentielle de la strate $\komoduli[0](\mu)$ dont tous les $k$-résidus sont nuls, le revêtement canonique définit une différentielle méromorphe $(\widehat X, \widehat\omega)$.  Par les résultats de la section~2.1 de \cite{BCGGM3}, le revêtement est uniquement ramifié au zéro d'ordre $a_{1}$ et au pôle d'ordre~$c$. Donc la surface $\widehat X$ est une sphère de Riemann. De plus la différentielle $\widehat \omega$ possède  $k$ zéros d'ordres $l_{i}$ pour $i\geq 2$, un zéro d'ordre $a_{1}+k-1$, de plus $k$ pôles d'ordres $-b_{j}/k$ pour tout $j\geq1$ et un pôle d'ordre $c-k+1$. Notons de plus que le résidu de chaque pôle est nul. D'après le Théorème~\ref{thm:geq0keq1}, cela est possible si et seulement si l'ordre de tous les zéros est inférieur ou égal à $$(c+1-k)+k\sum_{j=1}^{p} \frac{b_{j}}{k} - (kp+2)\,.$$ 
Cette condition implique que 
$$ a_{1} \leq c + \sum b_{i} -2k -kp \,.$$
Le fait que la différence entre la somme des ordres des zéros et la somme des ordres des pôles est égal à~$-2k$ implique le résultat. 
\end{proof}

\subsection{Les strates avec $r=s=0$ et $p\neq0$}
\label{sec:pasdenondiv}

Cette section est dédiée aux strates  de la forme $\komoduli[0](a_{1},\dots,a_{n};-b_{1},\dots,-b_{p})$. Insistons sur le fait que dans toute cette section, nous ne considérons que les strates non vide. D'après le lemme~\ref{lem:puissk}, cela revient à supposer que $\pgcd((a_{i}),k)=1$.
Enfin nous dénotons tout au long de cette section $b_{i}:=k\ell_{i}$ et $a_{i}:=kl_{i}+\bar{a_{i}}$ avec $-k<\bar{a_{i}}\leq 0$. 

Nous commençons par le cas le plus simple et le plus intéressant de strates n'ayant qu'un pôle.

\begin{lem}\label{lem:unpolesdivk}
Soit $\komoduli[0](a_{1},\ldots,a_{n};-k\ell)$ une strate non vide de genre $0$. L'image de l'application résiduelle de cette strate est $\CC$ si $n\geq 3$ et $\CC^{\ast}$ si $n=2$. 
\end{lem}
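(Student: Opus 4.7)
The plan is to reduce the statement to the minimal cases $n=2$ and $n=3$, which are handled by explicit algebra on $\PP^{1}$, and then to propagate to all larger $n$ via the blow-up of Proposition~\ref{prop:eclatZero}. Any candidate can be parametrised as $\xi = c\prod_{i=1}^{n}(z-z_{i})^{a_{i}}(dz)^{k}$ with the unique pole placed at $z=\infty$; changing coordinate to $w = 1/z$ and extracting a formal $k$-th root gives $\xi^{1/k} = -c^{1/k} w^{-\ell} g(w)\,dw$ with $g(w) = \prod_{i=1}^{n}(1-z_{i}w)^{a_{i}/k}$. The $k$-residue at the pole is then, up to a universal nonzero constant, $c$ times the $k$-th power of the coefficient of $w^{\ell-1}$ in $g$. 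Varying $c\in\CC^{*}$ scales $R$ through all of $\CC^{*}$ whenever that coefficient does not vanish, so the real issue is to decide when it does.

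For $n=2$, write $\alpha_{i} = a_{i}/k$; the relation $\alpha_{1}+\alpha_{2}=\ell-2$ makes every derivative of the coefficient of $w^{\ell-1}$ along the diagonal $z_{1}=z_{2}$ vanish up to order $\ell-2$, and the degree bound then collapses this coefficient to the closed form $\binom{\alpha_{1}}{\ell-1}(z_{1}-z_{2})^{\ell-1}$. By primitivity $\gcd(a_{1},a_{2},k)=1$, the binomial cannot vanish: otherwise $\alpha_{1}\in\{0,1,\dots,\ell-2\}$ and both $a_{1}$ and $a_{2}=k(\ell-2)-a_{1}$ would be multiples of $k$. Together with $z_{1}\neq z_{2}$, the $k$-residue is therefore always nonzero, which simultaneously realises every value of $R\in\CC^{*}$ (by varying $c$) and shows that $R=0$ is unreachable for $n=2$.

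For $n\geq 3$, every $R\in\CC^{*}$ is obtained by starting from a primitive $n=2$ realisation in $\komoduli[0](a_{1}+\cdots+a_{n-1},a_{n};-k\ell)$ (the grouping is chosen with $a_{n}$ coprime to $k$, which exists by primitivity and forces the pair's $\gcd$ with $k$ to equal $1$) and iterating the blow-up of Proposition~\ref{prop:eclatZero}: primitivity of $\xi$ means it is not a $k$-th power of an abelian differential, so condition~(i) there holds and the blow-up preserves $R$. It remains to produce one example with $R=0$, and iterated blow-ups then yield all $n\geq 3$, so the effort is concentrated at $n=3$. Here the coefficient of $w^{\ell-1}$ in $g$ is a homogeneous polynomial $P(z_{1},z_{2},z_{3})$ of degree $\ell-1$; specialising $z_{1}=0$ and $z_{2}=1$ produces a univariate $Q(z_{3})$ whose constant term is $\binom{\alpha_{2}}{\ell-1}$, whose value at $z_{3}=1$ is $\binom{\ell-2-\alpha_{1}}{\ell-1}$ (by Vandermonde), and whose leading coefficient is $\binom{\alpha_{3}}{\ell-1}$. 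A case analysis using primitivity rules out $Q$ being identically zero or divisible by $z_{3}^{a}(z_{3}-1)^{b}$ to its full degree, so $Q$ admits a root $z_{3}\notin\{0,1\}$, giving the required differential with $R=0$.

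The main difficulty is precisely this last case analysis, which must rule out every degenerate vanishing pattern compatible with primitivity. An attractive alternative that sidesteps the algebra is a flat-geometric construction: glue the trivial $k$-polar part of order $k\ell$ associated to $(v;v)$, whose $k$-residue is automatically zero, along its two boundary segments to a genus-zero flat piece carrying the three cone points of the prescribed angles, in the spirit of the constructions of Section~\ref{sec:casreszero}.
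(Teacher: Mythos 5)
Your $n=2$ argument is correct and genuinely different from the paper's: you compute the $k$-residue in closed form as a nonzero multiple of $c\,\binom{a_{1}/k}{\ell-1}^{k}(z_{1}-z_{2})^{k(\ell-1)}$, and read off both the non-vanishing (since $k\nmid a_{1}$ is forced by primitivity) and the surjectivity onto $\CC^{\ast}$ (by scaling $c$), where the paper instead cuts along a saddle connection and argues with holonomies of the two boundary arcs. For $n=2$ your route is complete and arguably cleaner.

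For $n\geq 3$, however, there are two genuine gaps. First, your reduction of the nonzero-residue case rests on the claim that primitivity provides some $a_{n}$ coprime to $k$; this is false. For $k=6$ and $\mu=(2,2,2,3,3;-24)$ one has $\pgcd(\mu,6)=1$, yet no $a_{i}$ is coprime to $6$, and neither grouping of the form (sum of all but one, one) gives a nonempty primitive two-zero stratum: one gets $(9,3)$ and $(10,2)$, with $\pgcd$ equal to $3$ and $2$ with $k$. The paper circumvents this by allowing the intermediate two-zero differential to be the $d$-th power of a primitive $(k/d)$-differential and invoking the multiplicativity of $k$-residues, equation~\eqref{eq:multiplires}; your argument needs that device, or a multi-stage grouping such as $(5,7)=(2+3,2+2+3)$ here. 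Second, and more seriously, the realisation of $R=0$ for $n=3$ --- which is the actual content of the lemma and occupies most of the paper's proof --- is not established. You set up the polynomial $Q(z_{3})$ correctly, but the case analysis excluding $Q=c\,z_{3}^{a}(z_{3}-1)^{b}$ is exactly the hard point and is left undone; note for instance that when $k\mid a_{1}$ one has $Q(1)=0$ automatically, so one must genuinely rule out $Q=c(z_{3}-1)^{\ell-1}$, and nothing in your outline does so. The one-sentence flat-geometric alternative is a gesture at the paper's actual construction, but as stated it does not explain how a polar part of order $k\ell$ with vanishing $k$-residue can be closed up into a genus-zero surface with exactly three cone points of the prescribed orders; the paper needs an intermediate surface in $\komoduli[0](\bar{a_{1}},\bar{a_{2}},a_{3}';-2k)$ built from a trivial polar part with carefully chosen angles, followed by grafting $l_{1}+l_{2}+l_{3}'$ slit planes. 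As written, the proposal proves the lemma only for $n=2$.
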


\begin{proof}
Considérons tout d'abord les strates $\komoduli[0](\mu)$ où $\mu=(a_{1},a_{2};-k\ell)$ à deux zéros d'ordres premiers avec $k$. Il suffit de montrer que l'image de l'application $k$-résiduelle $\appresk[0](\mu)$ ne contient pas $0$. 

Soit $(X,\xi)$ une $k$-différentielle dans $\komoduli[0](a_{1},a_{2};-k\ell)$. Il existe un lien selle $\gamma$ entre les deux zéros de $\xi$. Sur $\PP^{1}\setminus\left\{\gamma\right\}$, la différentielle $\xi^{1/k}$ est une différentielle abélienne. Le bord de cette surface plate est constitué de deux segments $\gamma_{i}$ qui sont identifiés l'un à l'autre pour former le lien selle $\gamma$ de $\xi$. Comme $\xi$ est primitive, le collage fait intervenir une rotation d'angle $\frac{2l\pi}{k}$, avec $\pgcd(l,k)=1$. Le $k$-résidu au pôle est la puissance $k$-ième de la somme des périodes des chemins $\gamma_{1}$ et $\gamma_{2}$, qui est donc non nulle. 
\smallskip
\par
Nous considérons maintenant les strates  $\komoduli[0](a_{1},\ldots,a_{n};-k\ell)$ possédant $n\geq3$ zéros. Nous commençons par montrer le point crucial: l'image de l'application résiduelle $\appresk[0](a_{1},a_{2},a_{3};-k\ell)$ contient~$0$. Comme $\pgcd(a_{i},k)=1$, nous pouvons supposer que $a_{1}$ et~$a_{2}$ ne sont pas divisibles par~$k$.
\par
Nous commençons par construire une $k$-différentielle dans la strate $\komoduli[0](\bar{a_{1}},\bar{a_{2}},a_{3}';-2k)$, où $a_{3}'$ est donné par la condition $\bar{a_{1}}+\bar{a_{2}}+a_{3}'=0$. Notons qu'il existe un $l_{3}'$ tel que $a_{3}=l_{3}'k+a_{3}'$. On se donne la $k$-partie polaire triviale d'ordre $2k$ associée à $(v_{1},v_{2};w_{1},w_{2})$ où les $v_{i},w_{j}$ sont définis comme suit. On a $v_{1}+v_{2}=w_{1}+w_{2}=1$ et l'angle formé au point de concaténation de $v_{1}$ et $v_{2}$ (resp. $w_{1}$ et $w_{2}$) est $(k+\bar{a_{1}})\frac{2\pi}{k}$ (resp.  $(k+\bar{a_{2}})\frac{2\pi}{k}$). Dans la phrase précédente, il va sans dire que l'angle est calculé dans les $k$-parties polaires $D^{+}(v_{1},v_{2})$ et $D^{-}(w_{1},w_{2})$. Nous obtenons une surface plate $S_{0}$ en collant (par rotation) $v_{1}$ avec $v_{2}$ et~$w_{1}$ avec $w_{2}$. Cette construction (ainsi que la suivante) est illustrée par la figure~\ref{fig:kdiffgzerounpol}. On peut vérifier que les choix fait dans la construction impliquent que la différentielle associée possède les invariants locaux souhaités.

\begin{figure}[hbt]
\begin{tikzpicture}
\begin{scope}[xshift=-6cm,yshift=.5cm]

     \foreach \i in {1,2,...,4}
  \coordinate (a\i) at (\i,0); 
  \coordinate (b) at (2.5,1.71/2);
   \fill[black!10] (a1) -- (a4) -- ++(0,0) arc (0:180:1.5) -- cycle;
      \fill[white] (b)  circle (2pt);
      \draw (b) circle (2pt);
 \fill[white] (a2) -- (a3) -- (b) -- cycle;
       \foreach \i in {2,3}
   \fill (a\i)  circle (2pt);
   \draw (a1) -- (a2) coordinate[pos=.5](e1) -- (b) coordinate[pos=.5](e2) -- (a3) coordinate[pos=.5](e3) -- (a4) coordinate[pos=.5](e4);

     \foreach \i in {1,2,...,4}
  \coordinate (c\i) at (\i,-1); 
  \coordinate (d) at (2.5,-1/2);
   \fill[black!10] (c1) -- (c2) -- (d) -- (c3) -- (c4) -- ++(0,0) arc (0:-180:1.5) -- cycle;
     \foreach \i in {2,3}
   \fill (c\i)  circle (2pt);
   \draw (c1) -- (c2) coordinate[pos=.5](f1) -- (d) coordinate[pos=.5](f2) -- (c3) coordinate[pos=.5](f3) -- (c4) coordinate[pos=.5](f4);
     \fill[red] (d)  circle (2pt);
      \draw (d) circle (2pt);

\node[above] at (e1) {$1$};
\node[below] at (f1) {$1$};
\node[above] at (e4) {$2$};
\node[below] at (f4) {$3$};

\draw (d) -- ++(0,-2)coordinate[pos=.5](e);
\node[right] at (e) {$5$};
\node[left] at (e) {$4$};

\node[above,rotate=60] at (e2) {$6$};
\node[below,rotate=120] at (e3) {$6$};
\node[above,rotate=30] at (f2) {$7$};
\node[below,rotate=150] at (f3) {$7$};
\end{scope}

\begin{scope}[xshift=0cm]
\fill[fill=black!10] (0,0) coordinate (Q) circle (1cm);

\draw[] (Q) -- (0,-1) coordinate[pos=.5](a);
     \fill[red] (Q)  circle (2pt);
      \draw (Q) circle (2pt);
\node[right] at (a) {$4$};
\node[left] at (a) {$5$};

\end{scope}

\begin{scope}[xshift=3cm]
\fill[fill=black!10] (0,0) coordinate (Q) circle (1cm);

\draw[] (0,0) -- (1,0) coordinate[pos=.5](a);

\node[above] at (a) {$3$};
\node[below] at (a) {$2$};
\fill (Q) circle (2pt);

\end{scope}

\end{tikzpicture}
\caption{Pluridifférentielle de $\Omega^{6}\moduli[0](-1,2,5;-18)$ dont le $6$-résidu est nul.} \label{fig:kdiffgzerounpol}
\end{figure}
 
\`A présent, nous construisons une différentielle dans $\komoduli[0](a_{1},a_{2},a_{3};-k\ell)$ dont le $k$-résidu est nul. Nous partons de l'union disjointe de la surface plate $S_{0}$ de $\komoduli[0](\bar{a_{1}},\bar{a_{2}},a_{3}',-2k)$ construite au paragraphe précédent et de $l_{1}+l_{2}+l_{3}'$ plans. Dans chacun de ces plans, nous faisons une demi-fente respectivement vers le haut pour $l_{1}$ plans, vers le bas pour $l_{2}$ plans et vers la droite pour les $l'_{3}$ plans restants. À chaque singularité conique de $S_{0}$ nous faisons une demi-fente dans la direction correspondante. La surface plate $S$ est obtenue en collant cycliquement les bords des fentes par translation. Il n'est pas difficile de vérifier que cette surface plate possède les propriétés souhaitées.

Enfin, pour $n>3$ zéros, nous procédons par récurrence sur $n$ en utilisant l'éclatement des zéros (Propriété~\ref{prop:eclatZero}). Même si, dans certains cas, il n'est pas possible d'obtenir une $k$-différentielle en éclatant un zéro d'une $k$-différentielle primitive, nous montrerons que dans le cas que nous traitons, toutes les $k$-différentielles peuvent s'obtenir en éclatant un zéro d'une $k/d$-différentielle primitive avec $d<k$.
Nous venons  de montrer que l'application résiduelle est $\CC^{\ast}$ pour $n=2$ et contient $0$ pour $n=3$. Il suffit donc de montrer qu'il existe $a_{i}$ et $a_{j}$ tels que $a_{i}+a_{j}>-k$ et $d:=\pgcd(\mu\setminus\left\{a_{i},a_{j}\right\}\cup\left\{a_{i}+a_{j}\right\},k)\neq k$. En effet, dans ce cas, on peut éclater le zéro d'ordre $a_{i}+a_{j}$ des $k$-différentielles qui sont la puissance $d$-ième d'une $(k/d)$-différentielle de la strate $\Omega^{k/d}\moduli[0]\left(\frac{\mu}{d}\setminus\left\{\frac{a_{i}}{d},\frac{a_{j}}{d}\right\}\cup\left\{\frac{a_{i}+a_{j}}{d}\right\}\right).$ La multiplicativité du $k$-résidu (cf. équation~\eqref{eq:multiplires}) permet alors de conclure.

Montrons que l'on peut toujours trouver deux ordres $a_{i}$ et $a_{j}$ comme ci-dessus.
Supposons tout d'abord qu'il existe  $a_{i},a_{j}$ tels que $a_{i}+a_{j}<-k$. En particulier,  $-k<a_{i},a_{j}<0$ et donc l'addition de $a_{i}$ à n'importe lequel des ordres positifs vérifie les conditions ci-dessus. Maintenant nous supposons que la somme de toutes les paires d'ordres $a_{i},a_{j}$ sont strictement supérieurs à $-k$. Dans le cas où $d=k$ pour un choix $a_{i},a_{j}$, alors pour tout $l\neq i,j$ les~$a_{l}$ sont divisibles par $k$. Donc $a_{i}$ et $a_{j}$ sont premiers avec $k$. En particulier, on peut choisir~$a_{i}$ et~$a_{l}$ pour $l\neq j$ pour obtenir la condition ci-dessus.
\end{proof}

\smallskip
\par
Nous traitons maintenant le cas de $p\geq2$ pôles.  Nous commençons par traiter le cas du complémentaire de l'origine. Le résultat suivant donne le résultat pour $k\geq3$ et couvre la majeure partie du cas $k=2$.

\begin{lem}\label{lem:polesdivksanszero}
L'image de  $\appresk[0](a_{1},\ldots,a_{n};-k\ell_{1},\ldots,-k\ell_{p})$ avec $n\geq2$ et $p\geq1$ contient l'espace $\espresk[0](\mu)\setminus\lbrace(0,\dots,0)\rbrace$ si $k\geq3$. L'image de $\appresk[0][2](a_{1},\ldots,a_{n};-2\ell_{1},\ldots,-2\ell_{p})$  avec $n\geq2$ et $p\geq1$ contient le complémentaire des droites engendrées par les vecteurs qui ne contiennent que des~$0$ et des~$1$.
\end{lem}

Pour traiter les cas difficiles du lemme~\ref{lem:polesdivksanszero}, nous montrons deux résultats préliminaires. Le lecteur pourra les sauter et revenir les consulter lors de leur utilisation à la fin de la preuve du lemme~\ref{lem:polesdivksanszero}.
\begin{lem}\label{lem:nonintersecp}
 Soient $k\geq2$, $\zeta$ une racine $k$-ième primitive de l'unité,  $R_{1},R_{2}\in\CC$ non tous deux nuls et $R \in \CC^{\ast}$. Il existe des racines $k$-ième  $r_{i}$ et $r$ respectivement de $R_{i}$ et  $R$ et il existe un vecteur $v_{1} \in \CC^{\ast}$ satisfaisant $v_{1} +  r_{1}  - \zeta v_{1} - r_{2} =r$, tel que la $k$-partie polaire (généralisée) associée aux vecteurs $(v_{1},r_{1};r_{2},\zeta v_{1})$   existe.  


 
\end{lem}

\begin{proof}
  On choisit $r_{1}$ et $r_{2}$ avec une partie réelle positive et une racine $r$ de $R$ telle que $r_{1}-r_{2} \neq r$. Il existe alors un unique vecteur $v_{1}\in\CC^{\ast}$ tel que $v_{1} +  r_{1}  - \zeta v_{1} - r_{2} =r$. Considérons la concaténation de $v_{1}$ puis de~$r_{1}$. Nous considérons la droite polygonale en concaténant une demi-droite partant du point initial de~$v_{1}$ au vecteur~$v_{1}$, puis à $r_{1}$ et  enfin une autre demi-droite partant du point final. La première demi-droite va vers les réels négatifs, mais peut être non horizontale afin d'être en dessous de $v_{1}$. Nous  formons alors le demi-plan à bord polygonal au dessus de cette droite polygonale.   On fait la même construction pour le demi-plan inférieur à la concaténation de $r_{2}$ et $v_{2}$. En collant les deux demi-plans on obtient la $k$-partie polaire souhaitée.
\end{proof}
\begin{lem}\label{lem:nonintersecp3}
 Soit $k\geq2$, $\zeta$ une racine primitive $k$-ième de l'unité et $R\in \mathbb{S}^{1}$ différente de $1$ si $k=2$. Il existe $v_{1},v_{2}\in\CC^{\ast}$ et une racine $k$-ième $r$ de $R$ satisfaisant $v_{1}+v_{2}=1$ et $-v_{1} + \zeta v_{2} = -r$ tels que les $k$-parties polaires $(\emptyset;v_{1},v_{2})$ et $(-v_{1},\zeta v_{2};\emptyset)$ existent.
\end{lem}

\begin{proof}
 Les deux équations impliquent que $v_{2}=\tfrac{1-r}{1+\zeta}$ et $v_{1}=\tfrac{\zeta+r}{1+\zeta}$. Par hypothèse $\zeta$ est différente de $1$ et donc ces vecteurs sont bien définis. De plus, par hypothèse, on peut choisir~$r$ distinct de $1$ et de $\zeta$.  Il est alors clair que les $k$-parties polaires du lemme existent avec ces vecteurs.
\end{proof}

Nous voudrions faire une remarque importante sur les lemmes~\ref{lem:nonintersecp} et~\ref{lem:nonintersecp3}.
\begin{rem}\label{rem:importexist}
 Il est intéressant de noter que dans les notations du lemme~\ref{lem:nonintersecp}, si les résidus quadratiques $(R_{1},R_{2},R) $ sont proportionnels à $\left((x+1)^{2},x^{2},1 \right)$, alors il n'est pas possible que la somme des angles entre $r_{1}$ et respectivement $v_{1}$ et $\zeta v_{1}$ soit égal à~$2\pi$.  En effet, dans ce cas, la construction du lemme~\ref{lem:nonintersecp} ne peut fonctionner que pour $r$ égal à~$-1$. Cela implique que $v_{1}=-1$ et donc la somme des angles entre $r_{1}$ et respectivement $v_{1}$ et $-v_{1}$ est égal à~$4\pi$.
 \par
 Le lemme~\ref{lem:nonintersecp3} ne permettra pas de réaliser les invariants manquants dans le cas quadratique et les $2$-résidus spéciaux.  Cela est un reflet du fait qu'il existe des  cas quadratiques particuliers dans le théorème~\ref{thm:r=0s=0} et le lemme~\ref{lem:surjr=0sneq03}. 
\end{rem}

Nous passons maintenant à la preuve du lemme~\ref{lem:polesdivksanszero}.
 Nous voudrions remarquer que la construction donnée dans la preuve permet d'obtenir un résultat un peu meilleur dans le cas quadratique. Toutefois nous traiterons uniformément ces cas dans les lemmes suivants. 
\begin{proof}[Démonstration du lemme~\ref{lem:polesdivksanszero}]
Nous commençons par le cas des strates de la forme $\komoduli[0](\mu)$ avec $\mu=(a_{1},a_{2};-k\ell_{1},\dots,-k\ell_{p})$. 
 Nous supposerons que les pôles dont le $k$-résidu est nul sont $P_{1},\dots,P_{t}$ et que les $k$-résidus $R_{t+1},\dots,R_{p}$ aux pôles $P_{t+1},\dots,P_{p}$ sont non nuls. 
\par
Supposons tout d'abord que $-k<a_{1}<0$. Soit $R=(R_{1},\dots,R_{p})\neq (0,\dots,0)$, on construit une différentielle dont les $k$-résidus sont~$R$ de la manière suivante. Pour $i>t$, nous prenons une racine $k$-ième~$r_{i}$ de~$R_{i}$ telle que $\Re(r_{i})\geq0$ et si $\Re(r_{i})=0$, alors $\Im(r_{i})>0$. Nous définissons~$v_{1}$ et $v_{2}$ deux vecteurs de même longueur, tels que l'angle  entre $v_{1}$ et $v_{2}$ est  $2\pi +\frac{2a_{1}\pi}{k}$, et de somme égale à $S=\sum_{i\geq2}r_{i} -r_{1}$. Ces vecteurs sont non nuls si et seulement si $S\neq0$. Si $r_{1}=0$, cette condition est une conséquence de l'hypothèse sur les parties réelles et imaginaires des $r_{i}$. Si $r_{1}\neq0$, alors quitte à permuter les pôles ou à changer une des racines~$r_{i}$ par une autre, on montre que l'on peut satisfaire cette condition sauf dans le cas de la strate $\Omega^{2}\moduli[0](-1,b_{1}+b_{2}-3;-b_{1},-b_{2})$ avec les résidus quadratiques égaux à $(1,1)$ (qui n'est pas considéré dans ce lemme).
\par
Considérons le cas distinct de $\Omega^{2}\moduli[0](-1,b_{1}+b_{2}-3;-b_{1},-b_{2})$ avec les résidus quadratiques égaux à $(1,1)$. Prenons le pôle $P_{1}$ d'ordre~$-k\ell_{1}$ et de $k$-résidu~$R_{1}$. On associe alors à ce pôle la $k$-partie polaire triviale d'ordre $k\ell_{1}$ associée à $(v_{1},v_{2};r_{t+1},\dots,r_{p})$ si $R_{1}=0$ et la $k$-partie polaire non triviale associée à $(v_{1},v_{2};r_{2},\dots,r_{p})$ si $R_{1}\neq 0$. Notons que l'angle (calculé dans~$D^{+}$) au point d'intersection des $v_{i}$ est $2\pi +\frac{2a_{1}\pi}{k}$.  Pour les autres pôles $P_{i}$, on prend une $k$-partie polaire associée à $(r_{i};\emptyset)$ si $R_{i}\neq0$ et $(r_{j_{i}};r_{j_{i}})$ avec $j_{i}>t$ si $R_{i}=0$. Il reste à identifier tous les segments par translation à exception des $v_{i}$ que nous identifions par rotation. On vérifie de manière analogue à précédemment que cette surface plate possède les invariants locaux souhaités.
\smallskip
\par
Nous supposerons maintenant que $a_{1}$ et $a_{2}$ sont supérieurs ou égaux à~$0$. De plus, nous supposons que $l_{1}\leq l_{2}$ dans l'écriture $a_{i}=kl_{i}+\bar{a_{i}}$.
La construction dépend de l'existence, ou non, d'un entier $m\leq t$ tel que
\begin{equation}\label{eq:preuver=0s=0}
\ell_{m}^{-} :=\sum_{i=1}^{m-1}(\ell_{i}-1) < l_{1} \leq  \sum_{i=1}^{m}(\ell_{i}-1).
\end{equation}
\par
Supposons qu'il existe un $m$ satisfaisant l'équation~\eqref{eq:preuver=0s=0}.  On associe au pôle $P_{m}$ la $k$-partie polaire d'ordre $k(\ell_{m}-(l_{1}-\ell_{m}^{-})+1)$ et de type $\ell_{m}-(l_{1}-\ell_{m}^{-})$ associée à $(v_{1},v_{2};r_{t+1},\dots,r_{p})$ où les $v_{i}$ sont de même longueur, $v_{1}+v_{2}=\sum r_{i}$ et l'angle à leur intersection est $2\pi+\frac{2\bar{a_{1}}\pi}{k}$. Puis on coupe cette partie polaire par une demi-droite commençant au point d'intersection de $v_{1}$ et~$v_{2}$. Enfin on colle de manière cyclique $(l_{1}-\ell_{m}^{-}-1)$ domaines basiques triviaux à cette demi droite. En prenant les $r_{i}$ tels que $\Re(r_{i})\geq0$ et $\Im(r_{i})>0$ si $\Re(r_{i})=0$, la ligne brisée formée des $r_{i}$ ne possède pas de points d'auto-intersection. Ensuite, on associe aux $m-1$ premiers pôles la $k$-partie polaire d'ordre $b_{i}$ et de type $1$ associée à $(v_{1};v_{1})$. On colle les  bords de ces $k$-parties polaires entre elles par translation de manière cyclique. Les segments~$v_{1}$ et~$v_{2}$ de la $k$-partie polaire spéciale sont collés aux bords restant par rotation et translation. Pour les autres pôles $P_{i}$ d'indices $m<i\leq t$, on prend la $k$-partie polaire associée à $(r_{j_{i}};r_{j_{i}})$ avec $j_{i}>t$. Les identifications sont données comme précédemment. Cette construction est illustrée par la figure~\ref{fig:kdiffgzerounres}. Cela définit une $k$-différentielle avec les invariants locaux souhaités. 

\begin{figure}[hbt]
\center
\begin{tikzpicture}
\begin{scope}[xshift=-6cm]
    \fill[fill=black!10] (0,0) ellipse (1.5cm and .7cm);
\draw[] (0,0) coordinate (Q) -- (1.5,0) coordinate[pos=.5](a);

\node[above] at (a) {$1$};
\node[below] at (a) {$2$};

\draw[] (Q) -- (-1,0) coordinate (P) coordinate[pos=.5](c);
\draw[] (Q) -- (P);

\fill (Q)  circle (2pt);

\fill[color=white!50!] (P) circle (2pt);
\draw[] (P) circle (2pt);
\node[above] at (c) {$v_{2}$};
\node[below] at (c) {$v_{1}$};

    \fill[fill=black!10] (0,-1.5) ellipse (1.1cm and .5cm);
\draw[] (-.5,-1.5) coordinate (Q) --++ (1.5,0) coordinate[pos=.5](b);

\fill (Q)  circle (2pt);

\node[above] at (b) {$2$};
\node[below] at (b) {$1$};

\end{scope}

\begin{scope}[xshift=0cm]
\fill[fill=black!10] (-.5,0) ellipse (2cm and 1.2cm);
\draw[] (0,0) coordinate (Q) -- (1.5,0) coordinate[pos=.5](a);

\node[above] at (a) {$5$};
\node[below] at (a) {$6$};

\draw[] (Q) -- (-1,0) coordinate (P);
\draw[] (Q) -- (P);
\coordinate (R)  at (-120:1);
\draw[dashed] (R) -- ++(1.4,0);

\fill (Q)  circle (2pt);

\fill[color=white!50!] (P) circle (2pt);
\draw[] (P) circle (2pt);
\fill[color=white!50!] (R) circle (2pt);
\draw[] (R) circle (2pt);
\fill[color=white] (Q) --(P) --(R) --(Q);
 \draw[]     (P) -- (Q) coordinate[pos=.5](a);
 \draw (Q) -- (R) coordinate[pos=.5](b);
 \draw (R) -- (P)  coordinate[pos=.6](c);
 \node[above] at (a) {$v_{1}$};
\node[below,rotate=60] at (b) {$v_{2}$};
\node[above,rotate=120] at (c) {$r$};
 
\begin{scope}[xshift=1cm,yshift=.5cm]
    \fill[fill=black!10] (.6,-2) ellipse (.9cm and .5cm);

\draw[] (0,-2) coordinate (Q) --++ (1.5,0) coordinate[pos=.5](b);
\fill (Q)  circle (2pt);

\node[above] at (b) {$6$};
\node[below] at (b) {$5$};

\draw[] (P)  -- ++(-1.5,0) coordinate[pos=.5](a);
\fill[color=white!50!] (P) circle (2pt);
\draw[] (P) circle (2pt);

\node[above] at (a) {$3$};
\node[below] at (a) {$4$};
\end{scope}

\begin{scope}[xshift=-1cm,yshift=.5cm]
    \fill[fill=black!10] (-1.6,-2) ellipse (.9cm and .5cm);

\draw[] (-1,-2) coordinate (P) -- ++(-1.5,0) coordinate[pos=.5](b);

\node[above] at (b) {$4$};
\node[below] at (b) {$3$};

\fill[color=white!50!] (P) circle (2pt);
\draw[] (P) circle (2pt);
\end{scope}
\end{scope}

\begin{scope}[xshift=4cm,yshift=-.5cm]
\fill[fill=black!10] (0.5,0)coordinate (Q)  circle (1.2cm);
    \coordinate (a) at (0,0);
    \coordinate (b) at (1,0);

     \filldraw[fill=white] (a)  circle (2pt);
\filldraw[fill=white](b) circle (2pt);
    \fill[white] (a) -- (b) -- ++(0,1.2) --++(-1,0) -- cycle;
 \draw  (a) -- (b);
 \draw (a) -- ++(0,1.1);
 \draw (b) -- ++(0,1.1);

\node[below] at (Q) {$r$};
    \end{scope}

\end{tikzpicture}
\caption{Une $6$-différentielle dans $\Omega^{6}\moduli[0](17,25;-12,-18,-24)$ avec un unique $6$-résidu non nul au pôle d'ordre $-12$.} \label{fig:kdiffgzerounres}
\end{figure}
\par
Supposons qu'il n'existe pas d'entier $m$ satisfaisant à l'équation~\eqref{eq:preuver=0s=0}. Nous commençons par supposer qu'il existe deux pôles dont les $k$-résidus sont non nuls. Par simplifier, nous supposerons qu'il n'existe pas d'autres pôles et que ces deux pôles sont d'ordre~$-2k$. Il suffit de traiter les cas où $l_{i}\geq1$ pour $i\in\{1,2\}$. Si les deux $k$-résidus ne sont pas de même norme, alors le lemme~\ref{lem:nonintersecp} permet de construire les $k$-différentielles avec les invariants souhaités en considérant les $k$-parties polaires suivantes. Celle associée à $(v_{1},r_{2};\zeta v_{1})$ tels que la somme $v_{1}+r_{2}-\zeta v_{1}$ est égale à une racine du $k$-résidu $R_{1}$. Si les normes sont égale il suffit d'utiliser les $k$-parties polaires présentées dans le lemme~\ref{lem:nonintersecp3}. Comme cela, on obtient tout les invariants sauf dans le cas quadratique les zéros d'ordres $(1,3)$ si les résidus sont égaux à $(1,1)$. Si les deux pôles ne sont pas d'ordres $-2k$, on obtient tout les invariants en coupant ces $k$-différentielles le long de demi-droites partant des zéros et en ajoutant des plans de manière cyclique. Notons que cette méthode permet d'obtenir tout les invariants, même dans le cas quadratique. Enfin, s'il existe des pôles dont le $k$-résidu est nul, on coupe ces $k$-différentielles le long de liens selles et on ajoute les $k$-parties polaires correspondant à ces pôles.
\par
Maintenant, on considère le cas où au moins trois pôles ont un $k$-résidu non nul. On prend le pôle, disons $P_{p}$, d'ordre $-b_{p}$ minimal parmi les pôles ayant un $k$-résidu non nul. On partitionne les autres pôles avec un $k$-résidu non nul en deux parties $A_{i}$.  Ces deux parties sont telles que les deux sommes $S_{i}:=\sum_{P_{j}\in A_{i}}\ell_{j}$ soient respectivement inférieures ou égales à~$l_{i}$. Notons que cette condition peut toujours être satisfaite. En effet, l'égalité $l_{1}+l_{2}=1+\sum_{i}\ell_{i}$ et la minimalité de $-\ell_{p}$ impliquent que $\ell_{p-1}\leq (l_{1}+l_{2})/2$ et donc que $\ell_{p-1}\leq l_{2}$. On conclut par récurrence sur le nombre de pôles dont le résidu est non nul. On notera $A_{1}=\left\{P_{t+1},\dots,P_{p_{0}}\right\}$ et $A_{2}=\left\{P_{p_{0}+1},\dots,P_{p-1}\right\}$. Pour les pôles de $A_{i}$, prenons des racines $r_{j}$ des $R_{j}$ telles que $(-1)^{i}\Re(r_{j})\geq0$ et $(-1)^{i}\Im(r_{j})\geq0$ en cas d'égalité.
\par
Nous donnons maintenant la construction des $k$-différentielles, qui est illustrée par la figure~\ref{fig:kdiffgzerounres2} avec $l_{1}=3$, $l_{2}=4$ et $S_{1}=S_{2}=1$.
On associe au pôle $P_{p}$ la $k$-partie polaire d'ordre~$k\ell_{p}$ et de type $\tau$ associée à  associée à  $\left(v_{1},-\sum_{j=t+1}^{p_{0}} r_{j}; \sum_{j=p_{0}+1}^{p-1} r_{j},v_{2}\right)$  où les $v_{i}$ sont tels que $v_{1}-v_{2} = \sum_{j=t+1}^{p} r_{j}$ où $r_{p}$ est une racine de $R_{p}$   et $v_{2}=\exp(2\bar{a}_{2}i\pi/k)v_{1}$. Nous spécifierons par la suite les valeurs de $\tau$ et $r_{p}$.  De plus, sauf dans le cas où $k=2$ et tous les $2$-résidus sont positivement proportionnels, on peut choisir des racines des $R_{i}$ telles que la différence des deux sommes ne soient pas égale à la norme de~$r_{p}$.   Le lemme~\ref{lem:nonintersecp}  montre l'existence de cette  $k$-partie polaire sauf dans le cas $k=2$ et tous les  $2$-résidus sont positivement proportionnels.  Nous supposerons tout d'abord que nous ne sommes pas dans ce cas. 

%
 Le type $\tau$ est choisi tel que les inégalités $S_{1}+\tau\leq l_{1}$ et $S_{2}+(\ell_{p}-1-\tau)\leq l_{2}$ soient satisfaites.
Notons que s'il n'y a pas de pôle dont le résidu est nul, alors ces inégalités sont des égalités. Dans le cas général, ces inégalités peuvent être strictes.  On associe aux pôles de $A_{1}$ les parties polaires de type $b_{i}$ associées à $(\emptyset;-r_{i})$ et à ceux de $A_{2}$ celles de type $b_{j}$ associées à $(r_{j};\emptyset)$.
 Enfin on procède comme précédemment avec les pôles dont le $k$-résidu est nul. Notons que l'on peut coller ces pôles de façon a ajouter $2$ à l'ordre d'un des deux zéros ou $1$ à l'ordre de chaque zéro. On choisit de les coller de telle façon que l'on obtienne les zéros d'ordres désirés. Maintenant on considère des polygones donnés par la concaténation des vecteurs $\left(\sum_{j=t+1}^{p_{0}} r_{j},-r_{t+1},\dots,-r_{p_{0}} \right)$ et $\left( -\sum_{j=p_{0}+1}^{p-1} r_{j},r_{p_{0}+1},\dots,r_{p-1}\right)$. On les colle aux arrêtes correspondantes de la $k$-partie polaire associée à~$P_{p}$. Notons que la construction du lemme~\ref{lem:nonintersecp} est telle que la surface ainsi formée est non dégénérée (même si sa projection dans le plan peut contenir des intersections). On procède enfin aux collages des différentes $k$-parties polaires et des segments $v_{1}$ et~$v_{2}$.

 \begin{figure}[hbt]
\center
\begin{tikzpicture}

\begin{scope}[xshift=-7cm]
\fill[fill=black!10] (0,0) coordinate (O) ellipse (1.1cm and .6cm);
\coordinate (P) at (-.4,0);
\coordinate (Q) at (.4,0);
\draw[] (Q) -- (P)coordinate[pos=.5](b);
\filldraw[fill=white] (Q) circle (2pt);
\fill (P)  circle (2pt);

\node[below] at (b) {$v_{1}$};
\node[above] at (b) {$v_{2}$};
\end{scope}

\begin{scope}[xshift=4cm]
\fill[fill=black!10] (0,0)coordinate (Q)  ellipse (2cm and 1.2cm);

\draw[] (Q) -- (-.8,0) coordinate (P)coordinate[pos=.5](a);
\draw[] (Q) -- (60:1)  coordinate (R)coordinate[pos=.5](b);
\draw[] (P) -- ++(-70:1)  coordinate (S)coordinate[pos=.5](c);
\draw[] (S) -- ++(60:.8)  coordinate (T)coordinate[pos=.5](d);

\fill(P) circle (2pt);
\filldraw[fill=white] (Q) circle (2pt);
\filldraw[fill=white] (R) circle (2pt);
\fill (S) circle (2pt);
\filldraw[fill=white] (T) circle (2pt);
\fill[color=white] (T) --(S) --(P) --(Q) -- (R) -- ++(2,0) --++(0,-1.095) -- cycle;

\draw[] (Q) -- (P);
\draw[] (Q) -- (R);
\draw[] (P) -- (S);
\draw[] (S) -- (T);
 \node[above] at (a) {$v_{1}$};
\node[below,rotate=60] at (b) {$-r_{1}$};
\node[above,rotate=120] at (c) {$r_{2}$};
\node[above,rotate=-120] at (d) {$v_{2}$};

\draw(R) -- ++(1.4,0) coordinate[pos=.6](e);
\draw(T) -- ++(1.4,0) coordinate[pos=.6](f);
\draw[dashed](P) -- ++(-1.2,0);

\node[above] at (e) {$3$};
\node[below] at (f) {$3$};

\draw[dotted] (e) -- (f) coordinate[pos=.5] (g);
\node[right] at (g) {$r_{3}$};
\end{scope}

\begin{scope}[xshift=-4cm,yshift=-.5cm]
\fill[fill=black!10] (0.4,.6)coordinate (Q)  circle (1.2cm);
    \coordinate (a) at (0,0);
    \coordinate (b) at (60:1);
    \draw[dashed](b) -- ++(1,0);

     \filldraw[fill=white] (a)  circle (2pt);
\filldraw[fill=white](b) circle (2pt);
    \fill[white] (a) -- (b) -- ++(180:1.3)--++(-120:1)-- cycle;
 \draw  (a) -- (b);
 \draw (a) -- ++(180:.7) coordinate[pos=.5] (a);
 \draw (b) -- ++(180:1.2) coordinate[pos=.5] (b);

\node[below] at (Q) {$-r_{1}$};
\node[below] at (a) {$1$};
\node[above] at (b) {$1$};
    \end{scope}
    
\begin{scope}[xshift=0cm,yshift=.5cm]
\fill[fill=black!10] (0.1,-.5)coordinate (Q)  circle (1.2cm);
    \coordinate (a) at (0,0);
    \coordinate (b) at (-70:1);
    \draw[dashed](b) -- ++(1,0);
    
     \fill (a)  circle (2pt);
\fill (b) circle (2pt);
    \fill[white] (a) -- (b) -- ++(-180:1.4) --++(110:1) -- cycle;
 \draw  (a) -- (b);
 \draw (a) -- ++(-180:.9) coordinate[pos=.5] (a);
 \draw (b) -- ++(-180:1.1)coordinate[pos=.5] (b);

\node[right] at (Q) {$r_{2}$};
\node[above] at (a) {$2$};
\node[below] at (b) {$2$};

    \end{scope}

\end{tikzpicture}
\caption{Une $6$-différentielle dans $\Omega^{6}\moduli[0](17,19;(-12^{4}))$ dont les $6$-résidus sont $(0,r_{1}^{6},r_{2}^{6},r_{3}^{6})$.} \label{fig:kdiffgzerounres2}
\end{figure}
\par
 Nous traitons maintenant le cas où $k=2$ et les sommes $-\sum_{j=t+1}^{p_{0}} r_{j}$ et  $\sum_{j=p_{0}+1}^{p-1} r_{j}$ soient égalent à $x+1$ et $x$ avec $R_{p} = 1$. Si tous les $2$-résidus ne sont pas positivement proportionnels, nous pouvons changer le signe de l'un et utiliser la construction précédente. Nous supposerons donc que les $2$-résidus sont réels mais non identiques.  Supposons que les $R_{j}$ ne soient pas tous égaux pour $j \in \{ t+1 , \dots, p_{0}\}$. Dans ce cas, on suppose que $R_{t+1} > R_{t+2}$. Pour le pôle $p$, on utilise alors les vecteurs $- r_{t+1}+r_{t+2},-r_{t+3},\dots, r_{p_{0}}$. Pour les pôles $t+1$ et $t+2$ on considère les parties $2$-polaires respectivement associées à $(\emptyset;- r_{t+1}+r_{t+2},-r_{t+2} )$ et $(-r_{t+2};\emptyset )$. Cela permet d'utiliser le lemme~\ref{lem:nonintersecp} et de conclure en utilisant la construction du paragraphe précédent. Considérons le cas où tous les $r_{j}$ sont égaux entre eux dans chacune des deux sommes. Nous supposerons que ces résidus quadratiques sont tous égaux à~$1$. Nous présentons le cas de trois pôles, qui s'adapte directement à plus de pôles. Dans ce cas, les $2$-résidus sont de la forme $\left(n^{2},m^{2},(n+m)^{2}\right)$ avec $n,m\neq0$. Nous associons aux pôles les $2$-parties polaires associées respectivement à $(n;\emptyset)$, puis à $(n;n,m)$ et enfin à $(n,m;\emptyset)$. Nous collons le vecteur $n$ de la première partie $2$-polaire au $n$ inférieur de la seconde. Puis nous collons le $n$ supérieur et le $m$ de cette partie polaire aux $n$ et $m$ de la troisième respectivement par rotation et par translation. Cela permet d'utiliser le lemme~\ref{lem:nonintersecp} afin d'obtenir les invariants souhaités. 
\smallskip
\par
Enfin, le cas avec $n\geq3$ zéros se déduit du cas que nous venons de traiter avec deux zéros par éclatement de zéros. 
\end{proof}

Nous traitons les cas de des strates quadratiques qui ne sont pas couverts par le lemme~\ref{lem:polesdivksanszero}. Dans ce cas, il existe des strates dont l’application $2$-résiduelle ne contient pas le complémentaire de l'origine.

\begin{lem}\label{lm:realquadcolib}
L'application $2$-résiduelle de la strate $\Omega^{2}\moduli[0](a_{1},a_{2};-b_{1},\ldots,-b_{p})$ avec $a_{i}\geq-1$ impairs et $p\geq2$ nombres pairs $b_{i}\geq 4$ contient tous les uplets contenant un nombre impair d'éléments égaux à~$1$ et un nombre arbitraire égaux à~$0$.
\end{lem}

\begin{proof}
Dans un premier temps, on considère le cas où tous les résidus quadratique sont nuls à l'exception d'un seul que l'on supposera égal à~$4$. Nous allons donner une construction qui fournit une différentielle quadratique de la strate considérée avec les résidus quadratiques souhaités. 
\par
Nous recollons des domaines polaires standards (voir la section~\ref{sec:briques}) le long de liens selles selon un graphe d'incidence. Ce graphe est composé d'une boucle formée par $s$ sommets de valence $2$ et collée à un sommet de valence $3$ auquel est attaché une chaîne composé de $p-s-1$ sommets se terminant par un sommet de valence~$1$.  Le cas extrême $p=s$ correspond à un graphe cyclique, mais la construction reste la même.
Les $s$ sommets de la boucle correspondent à des domaines polaires associés aux vecteurs $(1;1)$, tandis que les $p-s-1$ sommets intermédiaires de la chaîne correspondent à des domaines polaires associés aux vecteurs $(2;2)$. Le sommet de valence $3$ correspond à un domaine polaire associé aux vecteurs $(1,1;2)$ et le dernier sommet de la chaîne correspond à un domaine polaire associé aux vecteurs $(2;\emptyset)$. Si $s=p$, un domaine polaire est associé aux vecteurs $(1,1;\emptyset)$ et les autres à $(1;1)$. La différentielle quadratique obtenue en recollant les parties polaires selon le graphe d'incidence possède un $2$-résidu égal à~$4$ et $p-1$ nuls.
\par
Nous montrons maintenant que la différentielle ainsi formée peut avoir les zéros d'ordres souhaités.
Cela se fait grâce au choix du type des domaines polaires (voir la section~\ref{sec:briques}). On peut remarquer que l'une des singularités coniques n'est adjacent qu'aux domaines polaires de la boucle tandis que l'autre est adjacent à tous les domaines polaires. On note $a_{1}$ l'ordre de la première singularité.
L'angle de cette singularité est $\pi+2s\pi$ auquel on ajoute des multiples de $2\pi$ correspondant aux types des domaines polaires qui contribuent à cette singularité. Sachant que l'on peut mettre n'importe quel sommet de résidu nul dans la boucle, l'angle total de cette singularité peut aller de $\pi$ (si $s=0$ et que le type du domaine correspondant au sommet de valence $3$ est choisi de façon adéquate) jusqu'à $-\pi+\pi\sum_{j=1}^{p}(b_{j} -2)$. L'ordre de $a_{1}$ peut donc aller de $-1$ jusqu'à $-3+\sum_{j=1}^{p}(b_{j} -2)$. Ceci couvre tous les ordres possibles et donc toutes les strates considérées.
\smallskip
\par
Nous traitons maintenant le cas général d'un nombre $t\geq 3$ impair de résidus quadratiques égaux entre eux.
Pour obtenir les configurations de résidus avec un nombre impair de résidus égaux, il suffit en partant de la différentielle construite au paragraphe précédent de changer des paires de pôles à résidus nul en paires de pôles à résidus égaux à~$4$. Pour cela, on considère deux domaines polaires à résidu nul séparés par un lien selle d'une certaine longueur. On accroît cette longueur jusqu'à ce que chacun des deux résidus résidus quadratiques soit égal à~$4$. Ce processus ne modifie pas l'ordre des zéros. On applique cette construction à $(t-1/2)$ paires de domaines polaires consécutives. Notons que la forme du graphe de connexion implique qu'il existe un choix pour ces paires. 
On obtient ainsi les différentielles quadratiques ayant les invariants locaux souhaités.
\end{proof}

Nous traitons maintenant les cas des strates quadratiques  $\Omega^{2}\moduli[0](a_{1},a_{2};-b_{1},\ldots,-b_{p})$ contenant un nombre pair de résidus quadratiques égaux entre eux.

\begin{lem}\label{lem:realquadzeroa}
L'application $2$-résiduelle de la strate $\Omega^{2}\moduli[0](a_{1},a_{2};-b_{1},\ldots,-b_{p})$ avec $a_{i}\geq-1$ impairs et $p\geq2$ nombres pairs $b_{i}\geq 4$ contient tous les uplets contenant un nombre pair supérieur ou égal à $4$ d'éléments égaux à~$1$ et un nombre arbitraire égaux à~$0$.
\end{lem}

\begin{proof}
Nous commençons par montrer que la configuration $(1,1,1,1)$ est réalisable dans les strates $\Omega^{2}\moduli[0](7,5,-4^{4})$, $\Omega^{2}\moduli[0](9,3,-4^{4})$, $\Omega^{2}\moduli[0](11,1,-4^{4})$ et $\Omega^{2}\moduli[0](13,-1,-4^{4})$.
\par
D'après la figure~\ref{fig:exeptpasexept}, il existe une différentielle quadratique dans $\Omega^{2}\moduli[0](7,5,-4^{4})$ dont les résidus quadratiques sont $(1,1,1,1)$.
\begin{figure}[htb]
\begin{tikzpicture}

\begin{scope}[xshift=-6cm]
\fill[fill=black!10] (0,0)coordinate (Q)  circle (1.1cm);
    \coordinate (a) at (-.25,0);
    \coordinate (b) at (.25,0);
    \coordinate (c) at (-.75,0);
     \fill (a)  circle (2pt);
\fill[] (b) circle (2pt);
  \fill[] (c) circle (2pt);

    \filldraw[fill=white] (-.19,0)  arc (0:180:2pt);

    \fill[white] (a) -- (b) -- ++(0,-1.2) --++(-.5,0) -- cycle;
 \draw  (b)-- (a);
 \draw (a)-- (c) coordinate[pos=.5](d);
 \draw (a) -- ++(0,-1.05);
 \draw (b) -- ++(0,-1.05);

\node[above] at (Q) {$v_{2}$};
\node[above] at (d) {$v_{1}$};
\node[below] at (d) {$v_{3}$};
    \end{scope}
    
\begin{scope}[xshift=-3cm]
\fill[fill=black!10] (0,0)coordinate (Q)  circle (1.1cm);
    \coordinate (a) at (-.25,0);
    \coordinate (b) at (.25,0);
    \coordinate (c) at (-.75,0);
    
     \fill (a)  circle (2pt);
\fill[] (b) circle (2pt);
    \filldraw[fill=white] (-.19,0)  arc (0:-180:2pt); 

    \fill[white] (a) -- (b) -- ++(0,1.2) --++(-.5,0) -- cycle;
 \draw  (a) -- (b);
 \draw (a) -- ++(0,1.05);
 \draw (b) -- ++(0,1.05);
 \draw (a)-- (c) coordinate[pos=.5](d);
  \filldraw[fill=white] (c) circle (2pt);

\node[below] at (Q) {$v_{2}$};
\node[below,rotate=180] at (d) {$v_{1}$};
\node[below] at (d) {$v_{4}$};
    \end{scope}

\begin{scope}[xshift=0cm]
\fill[fill=black!10] (0,0)coordinate (Q)  circle (1.1cm);
    \coordinate (a) at (-.25,0);
    \coordinate (b) at (.25,0);

     \fill (a)  circle (2pt);
\fill[] (b) circle (2pt);
    \fill[white] (a) -- (b) -- ++(0,-1.2) --++(-.5,0) -- cycle;
 \draw  (a) -- (b);
 \draw (a) -- ++(0,-1.05);
 \draw (b) -- ++(0,-1.05);

\node[above] at (Q) {$v_{3}$};
    \end{scope}

\begin{scope}[xshift=3cm]
\fill[fill=black!10] (0,0)coordinate (Q)  circle (1.1cm);
    \coordinate (a) at (-.25,0);
    \coordinate (b) at (.25,0);
  \filldraw[fill=white] (a) circle (2pt);
  \filldraw[fill=white] (b) circle (2pt);

    \fill[white] (a) -- (b) -- ++(0,-1.2) --++(-.5,0) -- cycle;
 \draw  (a) -- (b);
 \draw (a) -- ++(0,-1.05);
 \draw (b) -- ++(0,-1.05);

\node[above] at (Q) {$v_{4}$};
    \end{scope}

\end{tikzpicture}
\caption{Une $2$-différentielle de $\Omega^{2}\moduli[0](7,5;(-4^{4}))$ dont les $2$-résidus sont $(1,1,1,1)$.} \label{fig:exeptpasexept}
\end{figure}
\par
Pour les strates de la forme $\Omega^{2}\moduli[0](13-2k,-1+2k;(-4^{4}))$ avec $0 \leq k \leq 2$, nous allons recoller des parties polaires d'ordre $4$ le long de liens selles selon un graphe d'incidence. Ce graphe est composé d'une boucle formée par $k$ sommets de valence $2$ et collée à un sommet de valence $3$ auquel est attaché une chaîne composée de $3-k$ sommets comme montré dans la figure~\ref{fig:graphesincidence}.
\begin{figure}[htb]
\begin{tikzpicture}
\begin{scope}[xshift=-5cm]
\coordinate (z1) at (0,0);\node[right] at (z1) {$(1,1;3)$};
\coordinate (z2) at (0,-1);\node[right] at (z2) {$(3;2)$};
\coordinate (z3) at (0,-2);\node[right] at (z3) {$(2;1)$};
\coordinate (z4) at (0,-3);\node[right] at (z4) {$(1;\emptyset)$};

\draw (z1) -- (z2) -- (z3) -- (z4);
\foreach \i in {1,2,3,4}
\fill (z\i) circle (2pt); 
\draw (z1) .. controls ++(40:1) and ++(140:1) .. (z1);
\end{scope}

\begin{scope}[xshift=0cm]
\coordinate (z1) at (0,0);\node[right] at (z1) {$(1.5;0.5)$};
\coordinate (z2) at (0,-1);\node[right] at (z2) {$(0.5,1.5;2)$};
\coordinate (z3) at (0,-2);\node[right] at (z3) {$(2;1)$};
\coordinate (z4) at (0,-3);\node[right] at (z4) {$(1;\emptyset)$};

\draw (z2) -- (z3) -- (z4);
\foreach \i in {1,2,3,4}
\fill (z\i) circle (2pt); 
\draw (z1) .. controls ++(-40:.5) and ++(40:.5) .. (z2);
\draw (z1) .. controls ++(-140:.5) and ++(140:.5) .. (z2);
\end{scope}

\begin{scope}[xshift=5cm]
\coordinate (z1) at (1,-1);\node[right] at (z1) {$(2;1)$};
\coordinate (z2) at (-1,-1);\node[left] at (z2) {$(2;1)$};
\coordinate (z3) at (0,-2);\node[below right] at (z3) {$(1,1;1)$};
\coordinate (z4) at (0,-3);\node[right] at (z4) {$(1;\emptyset)$};

\draw (z3) -- (z4);
\foreach \i in {1,2,3,4}
\fill (z\i) circle (2pt); 
\draw (z1) .. controls ++(140:.9) and ++(40:.9) .. (z2);
\draw (z2) .. controls ++(-140:.6) and ++(180:.6) .. (z3);
\draw (z3) .. controls ++(10:.6) and ++(-40:.6) .. (z1);
\end{scope}
\end{tikzpicture}
\caption{Les graphes d'incidence pour les exemples dans les strates $\Omega^{2}\moduli[0](13-2k,-1+2k;(-4^{4}))$ avec $0 \leq k \leq 2$ de gauche à droite.} \label{fig:graphesincidence}
\end{figure}
La $2$-partie polaire correspondant au sommet de valence $1$ est associée au vecteur $(1;\emptyset)$.
Les parties $2$-polaires correspondant aux sommets de valence $2$ sont associées à des vecteurs de la forme $(v+1;v)$. Enfin, la partie $2$-polaire correspondant au sommet de valence $3$ est associée à des vecteurs de type $(v_{1},v_{2};v_{1}+v_{2}-1)$ ou $(v_{1},v_{2};v_{1}+v_{2}+1)$ tels que les vecteurs $v_{i}$ correspondent au arêtes de la boucle. Cela est montré dans chaque cas dans la figure~\ref{fig:graphesincidence}. On vérifie sans difficulté que le recollement de ces parties polaires donne des différentielles quadratiques aux singularités souhaitées.
\par
Dans le cas où tous les pôles sont d'ordres $-4$, on ajoute aux différentielles quadratiques construites aux paragraphes précédents des parties $2$-polaires comme suit. Pour obtenir des pôles dont les résidus quadratiques sont égaux à~$1$, resp. $0$, on 
peut ajouter une partie polaire associée à des vecteurs de la forme $(v;v+1)$, resp. $(v,v)$, à la boucle ou à la chaîne. La première opération ajoute $2$ à l’ordre de chaque zéro et la seconde $4$ à l'ordre du zéro le plus grand. Cela permet d'obtenir toutes les strates considérées.
\par
Dans le cas général, il suffit de coller des plans de manière cyclique à une demi-fente infinie partant de l'un des deux zéros. Un calcul simple montre que l'on peut obtenir tous les ordres des deux zéros.
\end{proof}

Il reste à présent le cas des strates quadratiques  $\Omega^{2}\moduli[0](a_{1},a_{2};-b_{1},\ldots,-b_{p})$ dont exactement deux résidus quadratiques sont non nuls et égaux entre eux. On traite d'abord les cas pour lesquels un pôle d'ordre $-6$ ou moins a un résidu nul.

\begin{lem}\label{lem:realquadzerob}
L'image de l'application $2$-résiduelle de la strate $\Omega^{2}\moduli[0](a_{1},a_{2};-b_{1},\ldots,-b_{p})$ avec $a_{i}\geq-1$ impairs et $p\geq2$ nombres pairs $b_{i}\geq 4$ contient $(1,1,0,\dots,0)$ dès lors qu'un pôle d'ordre inférieur ou égal à $-6$ possède un résidu nul.
\end{lem}

\begin{proof}
Nous montrons tout d'abord que $(1,1,0)$ est dans l'image de l'application $2$-résiduelle des strates $\Omega^{2}\moduli[0](5,5;-4,-4,-6)$, $\Omega^{2}\moduli[0](7,3;-4,-4,-6)$, $\Omega^{2}\moduli[0](9,1;-4,-4,-6)$ et $\Omega^{2}\moduli[0](11,-1;-4,-4,-6)$ si le pôle d'ordre $-6$ a un $2$-résidu nul.
\par
On commence par la strate $\Omega^{2}\moduli[0](5,5;-4,-4,-6)$. Pour le pôle d'ordre $-6$, on colle par translation les demi-fentes infinies de deux domaines polaires ouverts à droite associés au vecteur $(1;1)$. Pour les pôles d'ordre $-4$ on considère deux parties $2$-polaires d'ordre $4$ associés à $(1;\emptyset)$. La différentielle quadratique est obtenue en collant les deux segments des parties $2$-polaires d'ordre $-4$ aux segments inférieurs de celle d'ordre $6$ par translation et le deux segments supérieurs entre eux par rotation d'angle~$\pi$. 
\par
Pour la strate $\Omega^{2}\moduli[0](11,-1;-4,-4,-6)$, on considère une partie $2$-polaire d'ordre $6$ associée à $(1,1;1,1)$ et deux parties $2$-polaires d'ordre $4$ associées à $(1,\emptyset)$. On colle les deux segments supérieurs de la  $2$-polaire d'ordre $6$  par translation et les autres segments par translations. 
\par
Pour les strates $\Omega^{2}\moduli[0](7,3;-4,-4,-6)$ et $\Omega^{2}\moduli[0](9,1;-4,-4,-6)$, nous considérons des parties $2$-polaires d'ordre $4$ associées respectivement à $(v_{2},v_{3};v_{1})$ et $v_{3}$ où $v_{1}=v_{2}=1/2$ et $v_{3}=1$. Pour les pôles d'ordre $-6$, on considère la partie $2$-polaire d'ordre $6$ associée à $(v_{1},v_{2};\emptyset)$ dans le premier cas et à $(v_{2},v_{1};\emptyset)$ dans le second. Les recollements des sommets $v_{i}$ entre eux donnent les différentielles quadratiques voulues.
\par
Comme dans la preuve du lemme~\ref{lem:realquadzeroa}, l'ajout de parties $2$-polaires et l'augmentation de l'ordre de certains pôles en collant cycliquement des plans avec des  demi-fentes infinies, permet d'obtenir les $2$-résidus $(1,1,0,\dots,0)$ dans tout les cas considérés.
\end{proof}

Il reste donc à considérer les cas avec deux $2$-résidus non nuls égaux entre eux et avec des $2$-résidus nuls uniquement à des pôles d'ordre $-4$.

\begin{lem}\label{lm:realquadzerounb}
Soit $\Omega^{2}\moduli[0](a_{1},a_{2};-b_{1},\ldots,-b_{p})$ une strate avec $a_{1},a_{2}\geq-1$ impairs et $p\geq2$ nombres pairs $b_{i}\geq4$. L'application $2$-résiduelle de cette strate contient le complémentaire de l'origine de l'espace $2$-résiduel sauf pour  les deux familles exceptionnelles suivantes. Dans les deux cas, $b$ est un entier pair supérieur ou égal à~$4$.
\begin{enumerate}
 \item L'image de l'application $2$-résiduelle de $\Omega^{2}\moduli[0](2p+b-5,2p+b-5;-b,-b-2,(-4^{p-2}))$ est le complémentaire de la droite engendrée par $(1,1,(0^{p-2}))$ dans l'espace $2$-résiduel.
 \item L'image de l'application $2$-résiduelle de $\Omega^{2}\moduli[0](2p+b-7,2p+b-5;-b,-b,(-4^{p-2}))$ est le complémentaire de la droite engendrée par $(1,1,(0^{p-2}))$ dans l'espace $2$-résiduel.
\end{enumerate}
\end{lem}

\begin{proof}
D'après les lemmes~\ref{lem:polesdivksanszero} à~\ref{lem:realquadzerob} il reste à traiter les $2$-résidus de la forme $(1,1,0,\dots,0)$ dans les strates de la forme $\Omega^{2}\moduli[0](a_{1},a_{2};-b_{1},-b_{2},(-4^{p-2}))$, où tout les pôles d'ordre $-4$ ont un résidu nul. Nous allons étudier de façon exhaustive toutes les façons de réaliser ces singularités pour ces strates. Nous supposerons que $a_{1} \leq a_{2}$ et $b_{1} \leq b_{2}$.
\par
Si une telle $2$-différentielle existe, alors elle possède un cœur dégénéré. Elle est donc constituée de~$p$ domaines polaires séparés par $p$ liens selles horizontaux. Ces domaines polaires connectés selon un graphe d'incidence  avec $p$ sommets et de $p$ arêtes (le graphe est donc de genre $1$). Comme un sommet correspondant à un pôle de résidu nul ne peut pas être de valence $1$ (le lien selle correspondant serait de longueur nulle), il y a au plus $2$ sommets de valence~$1$. Nous allons étudier successivement  les cas avec $0$, $1$ et~$2$ sommets de valence~$1$.
\par
S'il n'y a aucun sommet de valence $1$, le graphe d'incidence est cyclique. Il faut alors considérer les longueurs des liens selles le long du cycle. De part et d'autre d'un domaine polaire de $2$-résidu nul, les liens selles conservent la même longueur. Les longueurs des liens selles au bord des domaines polaires de $2$-résidu égal à $1$ satisfont $\lambda+\mu=1$ ou $\mu-\lambda=1$. La $2$-différentielle étant primitive, chaque pôle satisfait une égalité différente. Cela donne une contradiction et une telle $2$-différentielle ne peut pas exister.
\par
S'il y a exactement un sommet de valence $1$, le graphe d'incidence est du même type que ceux de la figure~\ref{fig:graphesincidence}, avec un nombre arbitraire de sommets sur la boucle et sur la chaîne. La construction décrite dans la preuve du lemme~\ref{lem:realquadzeroa} s'applique de manière analogue. Nous donnons maintenant la description des strates qui  peuvent s'obtenir de cette façon. 
Il s'agit de répartir les domaines polaires de façon à réaliser les ordres des zéros souhaités. L'ordre de la singularité conique au bord des domaines polaires de la chaîne est supérieur ou égal à $2p-3+b_{1}$ (si $b_{1}$ est au bout de la chaîne). En effet, chaque domaine polaire contribue à cette singularité conique d'un angle  supérieur ou égal à $2\pi$, auxquels se rajoutent $2\pi$ pour le sommet de valence 3 et $(b_{1}-3)\pi$ pour celui de valence~$1$. On peut donc attribuer à l'une des deux singularités n'importe quelle valeur (impaire) à partir de $2p-3+b_{1}$. Comme $a_{1}+a_{2}=-4+b_{1}+b_{2}+4(p-2)$, cela signifie que l'autre singularité prendra au plus la valeur $-4+b_{1}+b_{2}+4(p-2) -(2p-3+b_{1})$, c'est-à-dire $2p-9+b_{2}$. Donc, si $b_{2} \geq b_{1}+6$, tous les ordres de singularités sont réalisables. Si $b_{2}=b_{1}+4$, alors on a $a_{2}\geq b_{1}+2p-1>2p-3+b_{1}$, et tout les ordres de zéros sont donc possible.
Dans les cas avec $b_{2}=b_{1}$ ou $b_{2}=b_{1}+2$ on vérifie de la même façon que l'on peut obtenir tout les ordres de zéros à l’exception des deux cas listés dans le lemme~\ref{lm:realquadzerounb}.
\par
Il reste donc a montrer pour ces deux cas il n'existe pas de différentielle quadratique avec pour $2$-résidus $(1,1,0,\dots,0)$. 
Il suffit de montrer qu'une construction avec un graphe d'incidence qui a deux sommets de valence $1$ ne fonctionne pas. 
Il faut distinguer deux cas selon s'il existe un sommet de valence $4$ ou deux sommets de valence $3$ parmi les pôles de résidu nul. 
\par
Le cas avec deux sommets de valence $3$ est équivalent au cas cyclique, les longueurs des liens selles le long du cycle contenu dans le graphe sont $\lambda$ et $\mu$ dans les tronçons découpés par les sommets de valence $3$. Les relations $\lambda+\mu=1$ et $\mu-\lambda=1$ sont contradictoires et la différentielle ne peut donc pas exister.
\par
Considérons enfin le cas où il y a un unique sommet de valence~$4$. Le bord du domaine polaire associé possède quatre liens selles consécutifs horizontaux. Deux liens selles correspondent au cycle contenu dans le graphe. Ils ont donc la même longueur et la même orientation, car la différentielle est primitive et que  tout les pôles du cycle ont un résidu nul. Les deux autres segments sont nécessairement de longueur $1$ et de l'autre orientation.
La géométrie de la surface est ainsi entièrement déterminée. Les deux branches contribuent à la même singularité conique. Chaque domaine polaire du cycle ajoute $2\pi$ à chaque singularité conique. Enfin, dans le sommet de valence $4$, sur l'angle total de $6\pi$, il y a $5\pi$ pour la singularité conique à laquelle contribuent les deux branches et $\pi$ pour l'autre singularité. L'écart d'angle total entre les deux singularités coniques est donc au minimum de $4\pi$. L'écart d'ordre est donc supérieur ou égal à~$4$. Ainsi, les résidus quadratiques $(1,1,0,\dots,0)$ ne sont pas réalisables dans les strates considérées.
\end{proof}
\par
Enfin nous traitons le cas des strates  $\Omega^{2}\moduli[0](a_{1},\dots,a_{n};-b_{1},\ldots,-b_{p})$ avec $n\geq3$.
\begin{lem}
 L'application $2$-résiduelle des strates  $\Omega^{2}\moduli[0](a_{1},\dots,a_{n};-b_{1},\ldots,-b_{p})$ avec $n\geq3$ contient $\espresk[2][0](a_{1},\dots,a_{n};-b_{1},\ldots,-b_{p})\setminus (0,\dots,0)$.
\end{lem}

\begin{proof}
La preuve de ce lemme se fait par éclatement de zéros. Il suffit donc de montrer que lorsque la strate contient trois zéros d'ordres $a_{i}$, l'application $2$-résiduelle de cette strate continent le complémentaire de l'origine. Cela est une conséquence immédiate du lemme~\ref{lm:realquadzerounb} pour toutes les strates, sauf pour $\Omega^{2}\moduli[0](2p+b-7,2p+b-7,2;-b,-b,(-4^{p-2}))$ avec $p\geq2$.
\par
La figure~\ref{fig:deuxdiffresunun} montre une différentielle quadratique de  $\Omega^{2}\moduli[0](1,1,2;-4,-4)$ dont les $2$-résidus sont $(1,1)$.
\begin{figure}[hbt]
\center
\begin{tikzpicture}
\begin{scope}[xshift=-2cm]
\fill[fill=black!10] (1,0)coordinate (Q)  circle (1.2cm);
    \coordinate (a) at (0,0);
    \coordinate (b) at (2,0);

     \filldraw[fill=white] (a)  circle (2pt);
\filldraw[fill=white](Q) circle (2pt);
\filldraw[fill=red] (1.07,0)  arc (0:180:2pt); 
\fill (b) circle (2pt);
    \fill[white] (a) -- (Q) -- ++(0,-1.2) --++(-1,0) -- cycle;
 \draw  (a) -- (b) coordinate[pos=.25](c) coordinate[pos=.75](d);
 \draw (a) -- ++(0,-.5);
 \draw (Q) -- ++(0,-1.1);

\node[above] at (c) {$1$};
\node[above] at (d) {$2$};
\node[below] at (d) {$3$};

    \end{scope}
\begin{scope}[xshift=2cm]
\fill[fill=black!10] (1,0)coordinate (Q)  circle (1.2cm);
    \coordinate (a) at (0,0);
    \coordinate (b) at (2,0);

     \filldraw[fill=red] (a)  circle (2pt);
\filldraw[fill=red](Q) circle (2pt);
\filldraw[fill=white] (1.07,0)  arc (0:180:2pt); 
\fill (b) circle (2pt);
    \fill[white] (a) -- (Q) -- ++(0,-1.2) --++(-1,0) -- cycle;
 \draw  (a) -- (b) coordinate[pos=.25](c) coordinate[pos=.75](d);
 \draw (a) -- ++(0,-.5);
 \draw (Q) -- ++(0,-1.1);

\node[below,rotate=180] at (c) {$1$};
\node[above] at (d) {$3$};
\node[below] at (d) {$2$};

    \end{scope}
    \end{tikzpicture}
\caption{Différentielle quadratique de $\Omega^{2}\moduli[0](1,1,2;-4,-4)$ dont les $2$-résidus sont $(1,1)$.} \label{fig:deuxdiffresunun}
\end{figure}
Le cas des strates de la forme $\Omega^{2}\moduli[0](b-3,b-3,2;-b,-b)$ s'obtiennent de manière similaire en utilisant des $2$-parties polaires d'ordre $b$ et de type  $b-1$ associés aux même vecteurs que dans la figure~\ref{fig:deuxdiffresunun}.
Enfin, le cas général se déduit de celui-ci en coupant le long du lien selle noté~$1$ et en collant de manière cyclique des $2$-parties polaires d'ordre~$4$ associées à $(1;1)$.  
\end{proof}

\smallskip
\par
Nous voudrions maintenant comprendre les cas où l'application résiduelle contient l'origine. Le cas d'un pôle a déjà été traité, ainsi nous supposerons que le nombre de pôles est supérieur ou égal à  deux. 
Dans le cas où $n=2$ on a le résultat suivant dont la preuve est analogue à celle de \cite[Lemma~3.6]{BCGGM}.
\begin{lem}
Pour tout $k\geq2$ et $p\geq1$ l'image de $\appresk[0](a_{1},a_{2};-b_{1},\dots,-b_{p})$  ne contient pas $(0,\dots,0)$.
\end{lem}

\begin{proof}
Supposons par l'absurde qu'il existe une $k$-différentielle~$\xi_{0}$ dans la strate $\komoduli[0](a_{1},a_{2};-b_{1},\dots,-b_{p})$ dont les $k$-résidus soient nuls à tous les pôles~$P_{i}$.
 Pour tous les $i\in\lbrace 1,\dots,p \rbrace$, nous créons une $k$-différentielle entrelacée de la façon suivante. Si  $b_{i}/k$ est pair, nous collons une $k$-différentielle $\xi_{i}$ de genre $g_{i}=\tfrac{b_{i}}{2k}$ avec un unique zéro d'ordre $b_i-2k$ au point d'attachement~$z_i$ identifié avec~$P_{i}$. Si $b_{i}/k$ est impair, nous collons une $k$-différentielle~$\xi_{i}$ de genre $g_{i}=\tfrac{b_{i}+k}{2k}$ avec un zéro d'ordre $b_i-2k$ au point d'attachement~$z_i$ identifié avec~$P_{i}$ et un autre zéro d'ordre~$k$.
De plus, pour tout $i\in \left\{1,\dots,p\right\}$ les $k$-différentielles $\xi_{i}$ sont les puissances $k$-ième de différentielles abéliennes holomorphes.
\par
La $k$-différentielle entrelacée ainsi obtenue est de type $(a_{1},a_{2},(k^{d}))$, où $d$ est le nombre de~$b_{i}$ tels que $b_{i}/k$ est impaire. D'après le théorème 1.5 de \cite{BCGGM3} (qui, dans ce cas, est une généralisation directe de la propriété~\ref{lem:lissdeuxcomp}), cette $k$-différentielle entrelacée est lissable dans la strate $\PP\omoduli(a_{1},a_{2},(k^{d}))$, avec $g=\sum_{i}g_{i}$.
\par
Dénotons par $S$ le lieu de toutes les $k$-différentielles entrelacées que nous venons de construire, où $\xi_{0}$ et le $p$-uplet $(\xi_{1},\dots,\xi_{p})$ sont chacun considérés à multiplication par~$\CC^{\ast}$ près. Le fait que les $k$-résidus des pôles de~$\xi_{0}$ soient nuls implique que~$\xi_{0}$ est uniquement déterminée modulo $\CC^{\ast}$. Donc la dimension de $S$ est donnée par
$$ -1 + \sum_{i>0\,,\, b_{i}/k\text{ pair}}2g_{i}+\sum_{i>0\,,\,b_{i}/k\text{ impaire}}
(2g_{i}+1)=2g+d-1,$$
car les $k$-différentielles $\xi_{i}$ sont les puissances de différentielles abéliennes holomorphes et le~$-1$ provient du fait que nous considérons le uplet $(\xi_{1},\dots,\xi_{p})$ modulo multiplication par~$\CC^{\ast}$. 
\par
Remarquons que
$\dim \PP\omoduli(a_{1},a_{2},(k^{d})) = 2g+d-1.$
Le lieu $S$ a la même dimension que la strate $\PP\omoduli(a_{1},a_{2},(k^{d}))$. Ainsi, il ne peut pas être au bord de celle-ci. Cela contredit le fait que toutes les $k$-différentielles de $S$ soient lissables dans $\PP\omoduli(a_{1},a_{2},(k^{d}))$.
\end{proof}

Dans le cas des différentielles avec $n\geq3$ zéros, nous commençons par considérer le cas où les ordres de deux zéros ne sont pas divisibles par $k$ et tous les autres ordres sont divisibles par~$k$. Dans ce cas, le lemme suivant donne une restriction afin que l'origine soit dans l'image de l'application $k$-résiduelle.

\begin{lem}\label{lem:condnecngeq3}
Soit  $\mu=(a_{1}\dots,a_{n};-b_{1},\dots,-b_{p})$ une partition telle que $a_{1}$ et $a_{2}$ ne sont pas divisibles par $k$ et  $a_{i}$ est divisible par $k$ pour $i \geq 3$. Si l'application résiduelle $\appresk[0](\mu)$ contient $\lbrace 0 \rbrace$, alors $\sum_{i=3}^{n} a_{i} \geq kp$.
\end{lem}

\begin{proof}
Une $k$-différentielle dans la strate $\komoduli[0](\mu)$ dont les $k$-résidus sont nuls ne possède pas de lien selle reliant les singularités coniques d'ordres $a_{1}$ et $a_{2}$. En effet, supposons qu'il existe une $k$-différentielle $\xi$ possédant un tel lien selle~$\gamma$.  Considérons un voisinage de~$\gamma$, alors sur le complémentaire $\xi$ est la puissance $k$-ième d'une différentielle abélienne sur un disque. Par le théorème des résidu, l'intégral de cette forme le long du bord est la somme des résidus, donc nulle. Cela implique en passant à la limite que $\xi$ est donne une surface de translation, ce qui est absurde.  
Il est également impossible qu'une $k$-différentielle à $k$-résidus nuls possède un lien selle fermé contenant $a_{1}$ ou $a_{2}$. En effet, une telle courbe découperait~$\PP^{1}$ en deux composantes bordées par un unique vecteur. La restriction de $\xi$ sur l'une des composantes serait alors la puissance $k$-ième d'une différentielle abélienne dont résidus sont nuls. Cela implique que ce lien selle est de période nulle, ce qui est absurde.
\par
Supposons par l'absurde qu'il existe une $k$-différentielle à résidus nuls dans une strate telle que $\sum_{i=3}^{n} a_{i} < kp$. Toute classe d'homotopie d'arcs topologiques reliant $a_{1}$ à $a_{2}$ admet un représentant formé d'une chaîne $\gamma=(\gamma_{1},\dots,\gamma_{r})$ de $r\geq2$ liens selles  reliant une succession de singularités coniques intermédiaires. Si l'on découpe la surface le long de $\gamma$, on obtient une surface de translation
dont le bord est formé de $2r$ segments. Prenons une racine $k$-ième de $\xi$ sur le complémentaire de $\gamma$ et supposons, quitte à déformer $\xi$ que ces segments ne possède pas de somme partielle de leurs périodes nulle. Rappelons que les deux segments correspondant à un lien selle diffère en général par une rotation d'angle~$\tfrac{2\pi}{k}$.

Si une singularité conique n'est pas sur $\gamma$, alors on peut déformer la différentielle sans modifier les périodes du bord de la surface ni les résidus aux pôles. En effet, on peut modifier les périodes relatives contenant au moins une singularité de $\PP^{1}\setminus \gamma$. On peut donc dégénéré la différentielle en fusionnant deux singularités d'ordres $a_{i}$ pour $i\geq3$ dont l'une n'est pas sur~$\gamma$.  Notons que pendant se processus, il peut arriver que d'autres liens selles dégénèrent. Dans ce cas,  leurs classes d'homologie ne diffèrent que par celles de lacets simples autour des pôles de période nulle ou de segments de bord dont la somme des périodes est nulle. Ces liens selles ayant la même période, ils sont dans la même direction et ne se coupent donc pas. Si l'ensemble des liens selles dégénérant ne partitionnent pas la surface en plusieurs composantes, alors cette dégénération est une fusion de singularités coniques. Comme l'ordre des singularités ainsi obtenue est la somme des ordres des singularités qui se fusionnent, on obtient ainsi une différentielle avec strictement moins de singularités coniques. De plus, la borne $\sum_{i=3}^{n} a_{i} < kp$ reste vérifiée.   
\par
Dans le cas où ces liens selles découpent la surface en plusieurs composantes, on peut couper ces liens selles. Il y a alors deux cas à considérer. Si l'une de ces composantes possède une singularité dans son intérieur, alors on peut recommencer ce processus avec cette composante. L'autre cas est lorsque aucune composante ne possède de singularité à l'intérieur d'une composante. Dans ce cas, la somme des périodes de chaque composante est nulle. Donc l'hypothèse qu'aucune somme partielle des périodes est nulle, implique que le bord de ces composantes est formée exactement de deux liens selles reliant les singularités~$A$ et~$B$. Comme il n'y a pas de singularité intérieure dans de telles composantes, les liens selles éventuels entre $A$ et $B$ découpent la composante en autant de domaines polaires  et possèdent deux angles chacun multiple de~$2\pi$. Nous supprimons alors ces domaines polaires et recollons les segments qui bordaient ces domaines ensemble. Comme pour chaque pôle ainsi supprimé, la somme  $\sum_{i=3}^{n} a_{i}$ diminue d'au moins~$k$, cette chirurgie donne un nouveau contre-exemple~$\xi$ dans lequel la chaîne $\gamma$ passe une unique fois par chacune des singularités coniques de la surface.
\par
Considérons cette $k$-différentielle dotée d'une chaîne formée de $n-1$ liens selles.  
Nous pouvons compléter ces liens selles pour former un système maximal d'arcs géodésiques ne se coupant pas. Par \cite[lemme~4.10]{tahar} un tel système possède au moins $n+p-2$ liens selles. Notons que si $p=1$, alors $\sum_{i=3}^{n} a_{i} = 0$ et donc $n=2$. Comme ce cas a été traité, nous supposerons $p\geq2$. Il existe donc des liens selles qui n'appartiennent pas à la chaîne~$\gamma$. S'il  existe un tel lien selle $\alpha$ qui relie deux singularités coniques qui ne sont pas reliées par un $\gamma_{i}\in\gamma$, nous pouvons remplacer au moins deux $\gamma_{i}$ de $\gamma$ par~$\sigma$. On peut alors itérer le processus décrit précédemment. Si un tel lien selle $\sigma$ relie deux singularités coniques reliées par un $\gamma_{i}$, alors $\sigma$ et $\gamma_{i}$ bordent des domaines polaires que nous pouvons alors éliminer. 
On en déduit que pour une  $k$-différentielle $\xi$ minimale pour le nombre et l'ordre des singularités, les seuls liens selles qui ne sont pas dans $\gamma$ sont fermés sur les singularités coniques intermédiaires de~$\gamma$. Enfin, s'il existe deux tels liens selles, ils découpent ensemble un cylindre. On peut éliminer ce cylindre en recollant  et recoller ses bords l'un sur l'autre. 
On en déduit que la $k$-différentielle $\xi$ minimale possède un système maximal d'arcs géodésiques formé des $n-1$ liens selles $\gamma_{i}$ et un lien selle fermé $\sigma$ sur une singularité d'ordre $a_{i}$ pour $i\geq3$. 
\par
La $k$-différentielle minimale que nous avons obtenue compte donc $n$ liens selles ne se coupant pas,  ce qui implique par \cite[Lemme 4.10]{tahar} que $p=2$ et que le \coeur est dégénéré. La surface est donc formée de deux domaines polaires collés le long d'un lien selle fermé. Les $n-1$ liens selles $\gamma_{i}$ ont chacun de leurs deux côtés donnant sur le même domaine polaire. On peut alors  fusionner les singularités coniques jusqu'à obtenir trois singularités coniques.  Les deux singularités coniques d'ordre non divisibles par $k$, chacune dans son domaine polaire, et une troisième d'ordre divisible par $k$ et bordant un lien selle fermé séparant les deux domaines polaires. La condition $a_{3}<2k$ implique que $a_{3}=k$ et donc l'angle de la singularité conique correspondante est~$4\pi$. Pour conclure la preuve il suffit donc de montrer que cela est impossible. En effet, un angle donnant sur un domaine polaire est supérieur ou égal à~$\pi$. Le cas d'égalité implique que le domaine de pôle d'ordre $-b_{1}$ a ses trois angles respectivement égaux à $\pi$, $\pi$ et $(2d-1)\pi$. Cela exclut que le pôle correspondant est de résidu nul. 
\end{proof}

Nous montrons maintenant que dans le cas où uniquement deux zéros sont d'ordres non divisibles par $k$, la condition du lemme~\ref{lem:condnecngeq3} est suffisante pour que l'origine soit dans l'image de l'application $k$-résiduelle.

\begin{lem}\label{lem:condsufngeq3}
Soit  $\mu=(a_{1}\dots,a_{n};-b_{1},\dots,-b_{p})$ une partition telle que $a_{1}$ et $a_{2}$ ne sont pas divisibles par $k$ et  $a_{i}$ est divisible par $k$ pour $i \geq 3$.  Si $\sum_{i=3}^{n} a_{i} \geq kp$, alors l'application résiduelle $\appresk[0](\mu)$ contient $\lbrace 0 \rbrace$.
\end{lem}

\begin{proof}
Nous allons démontrer le cas $n=3$ et en éclatant $a_{3}$ on en déduit  le cas général.
Nous notons $a_{i} = kl_{i} + \bar{a}_{i}$ avec $0\geq\bar{a}_{i}>-k$. Nous considérerons deux cas, selon qu'il existe $a_{i}$ avec $i\in \lbrace 1,2\rbrace$ tel que modulo permutation des indices $\ell_{i}>\sum_{i=1}^{p-1}(b_{i}-1)$ ou non.
\par
Dans le premier cas, nous supposerons que l'inégalité est satisfaite pour $i=1$. On prend une $k$-partie polaire d'ordre $2k$ associé à $(v_{1}, \alpha v_{1};v_{1},\alpha v_{1})$ avec $\alpha$ tel que l'angle au dessus du point d'intersection entre $v_{1}$ et $\alpha v_{1}$ est égal à $(k+\bar{a}_{1})\tfrac{2\pi}{k}$.
Pour tout les autres pôles on prend une  $k$-partie polaire d'ordre $b_{i}$ associée à $(v_{1};v_{1})$ et de type $1$. On colle ces parties polaires cycliquement aux vecteurs supérieurs $v_{1}$ et $\alpha v_{1}$ de la première $k$-partie polaire. Enfin nous coupons cette $k$-partie polaire des singularités par des demi-droites infinies et collons le nombre de plan nécessaire pour obtenir les ordres $a_{i}$. Cette construction est représentée dans la  figure~\ref{fig:polesdivkaveczero}.  
 \begin{figure}[hbt]
\begin{tikzpicture}
\begin{scope}[xshift=-6cm,yshift=.5cm]

     \foreach \i in {1,2,...,4}
  \coordinate (a\i) at (\i,0); 
  \coordinate (b) at (2.5,.25);
   \fill[black!10] (a1) -- (a4) -- ++(0,0) arc (0:180:1.5) -- cycle;
      \fill[white] (b)  circle (2pt);
      \draw (b) circle (2pt);
 \fill[white] (a2) -- (a3) -- (b) -- cycle;
       \foreach \i in {2,3}
   \fill (a\i)  circle (2pt);
   \draw (a1) -- (a2) coordinate[pos=.5](e1) -- (b) coordinate[pos=.5](e2) -- (a3) coordinate[pos=.5](e3) -- (a4) coordinate[pos=.5](e4);

     \foreach \i in {1,2,...,4}
  \coordinate (c\i) at (\i,-1); 
  \coordinate (d) at (2.5,-.75);
   \fill[black!10] (c1) -- (c2) -- (d) -- (c3) -- (c4) -- ++(0,0) arc (0:-180:1.5) -- cycle;
     \foreach \i in {2,3}
   \fill (c\i)  circle (2pt);
   \draw (c1) -- (c2) coordinate[pos=.5](f1) -- (d) coordinate[pos=.5](f2) -- (c3) coordinate[pos=.5](f3) -- (c4) coordinate[pos=.5](f4);
     \fill[red] (d)  circle (2pt);
      \draw (d) circle (2pt);

\node[above] at (e1) {$1$};
\node[below] at (f1) {$1$};
\node[above] at (e4) {$2$};
\node[below] at (f4) {$2$};

\node[above,rotate=30] at (e2) {$3$};
\node[below,rotate=150] at (e3) {$4$};
\node[above,rotate=30] at (f2) {$5$};
\node[below,rotate=150] at (f3) {$5$};
\end{scope}

\begin{scope}[xshift=1cm]
\fill[fill=black!10] (0,0) coordinate (Q) circle (1.5cm);

\coordinate (b1) at (-.25,-1/8);
\coordinate (b2) at (.25,1/8);
\draw (b1) -- (b2) coordinate[pos=.5](c);

\fill (b1)  circle (2pt);
\fill[white] (b2)  circle (2pt);
\draw (b2) circle (2pt);
\node[below,rotate=30] at (c) {$3$};
\node[above,rotate=30] at (c) {$4$};
\end{scope}

\end{tikzpicture}
\caption{Pluridifférentielle de $\Omega^{3}\moduli[0](6,2,-2;-6,-6)$ dont les $3$-résidus sont nuls.} \label{fig:polesdivkaveczero}
\end{figure}
\par
Dans le second cas, nous considérons une $k$-partie polaire d'ordre $2k$ associée à $(v_{1},\alpha v_{1}; v_{2}$ et une $k$-partie polaire d'ordre $2k$ associée à $(v_{2};v_{1},\alpha v_{1}$ telles que  l'angle au dessus du point d'intersection entre $v_{1}$ et $\alpha v_{1}$ est égal à $(k+\bar{a}_{1})\tfrac{2\pi}{k}$ et $v_{2}= v_{1}+\alpha v_{1}$. Le reste de la construction est similaire à précédemment, en considérant trois types de $k$-parties polaires, celles associées à $(v_{1};v_{1})$ que l'on colle cycliquement soit  à la première soit à la seconde $k$-partie polaire et celles  associées à $(v_{2};v_{2})$ que l'on colle cycliquement entre ces deux $k$-partie polaires. 
\end{proof}

Enfin nous montrons que s'il existe au moins trois zéros d'ordres non divisibles par $k$, alors l'origine est toujours dans l'image de l'application $k$-résiduelle.

\begin{lem}
Considérons la partition $\mu=(a_{1},\dots,a_{n};-b_{1},\dots,-b_{p})$. Si au moins trois des ordres $a_{1},\dots,a_{n}$ ne sont pas divisibles par $k$, alors l'application résiduelle $\appresk[0](\mu)$ contient $\lbrace 0 \rbrace$.
\end{lem}

\begin{proof}
Nous supposerons tout d'abord que $k\geq3$. Nous démontrons le lemme dans le cas $n=3$. L'éclatement des singularités coniques permet d'en déduire le cas général.
 Nous notons $a_{i} = kl_{i} + \bar{a}_{i}$ avec $0>\bar{a}_{i}>-k$ et $b_{i}=k\ell_{i}$. Si $l_{3}$ est strictement supérieur à~$p$, alors les constructions données dans la preuve du lemme~\ref{lem:condsufngeq3} permettent alors d'obtenir les $k$-différentielles avec les invariants souhaités.
 \par

Nous supposons maintenant que nous $l_{i}\leq p$ pour $i=1,2,3$. Dans ce cas, nous donnons deux constructions selon que $\bar{a}_{1}+\bar{a}_{2}+\bar{a}_{3}=-2k$ ou  $\bar{a}_{1}+\bar{a}_{2}+\bar{a}_{3}=-k$.
%
\par
Dans le cas où  $\bar{a}_{1}+\bar{a}_{2}+\bar{a}_{3}=-2k$, ces $k$-différentielles sont formées de deux triangles plats, dont les côtés sont reliés deux à deux par des chaînes de domaines polaires de valence~$2$. Dans l'une de ces chaîne les identifications sont des translations et dans les deux autres chaîne, ce sont des translations et une rotation.  Chaque zéro correspond à un sommet de chaque triangle et à l'un des deux sommets de chaque domaine polaire dans deux des trois chaînes.  Comme les sommets des triangles contribuent à chaque singularité d'un angle strictement inférieure à $2\pi$ cet angle est égal à $(k+\bar{a}_{i})\frac{2\pi}{k}$. Comme l'angle total des deux triangles est $2\pi$, nous en déduisons que $\bar{a}_{1}+\bar{a}_{2}+\bar{a}_{3}=-2k$. Chaque domaine polaire contribue aux angles de deux des trois singularités en ajoutant $\ell_{i}$ fois $2\pi$ à ceux-ci. Maintenant, on mets tout les domaines polaires sur les deux chaînes qui bordent~$z_{3}$, où l'on suppose $l_{1} \leq l_{2} \leq l_{3}$. Dans ce cas, chacun des $p$ domaines polaires contribuera d'un angle d'au moins $2\pi$ à chaque zéro. Donc la condition implique que $l_{1}+l_{2} \geq p$. Cette condition est vérifiée car comme $kl_{1}+kl_{2}+kl_{3} = \sum_{j=1}^{p} b_{j}$ implique que $l_{1}+l_{2}+l_{3} \geq 2p$ et donc si $l_{1}+l_{2} < p$ alors on aurait $l_{3}>p$.
\par
Pour finir, il reste à traiter le cas où $\bar{a}_{1}+\bar{a}_{2}+\bar{a}_{3}=-k$. Ce cas se traite de manière similaire au cas précédent en remplaçant chaque triangle par une partie polaire associées au complémentaire du triangle. 
\smallskip
\par
Dans le cas quadratique, il suffit de considérer le cas de $4$ zéros d'ordres impaires. Le cas avec plus de zéros d'ordres impaires se déduit par éclatement. Prenons une strate avec quatre zéros impairs.  On sait par le lemme~\ref{lem:condsufngeq3} que si l'origine peut s'obtenir en éclatant le zéro d'ordre pair d'une différentielle quadratique avec deux zéros impairs et un zéro d'ordre $a_{3}$  pair, alors  $a_{3} \geq 2p$. Comme les pôles sont d'ordres $b_{i}\geq 4$, on en déduit que $a_{1}+a_{2}+a_{3} \geq 4p - 4$.
Donc on peut obtenir tout les cas sauf ceux des strates de la forme $\Omega^{2}\moduli[0]((p-1)^4;(-4)^p)$ avec $p$ pair. 
\par
Dans le cas $\Omega^{2}\moduli[0](1,1,1,1;-4,-4)$ on considère deux $2$-parties polaires d'ordre $4$ associées à $(v_{1},v_{2};v_{3},v_{4})$ et  à $(v_{4},v_{2};v_{3},v_{1})$ respectivement, avec $v_{i}=1$. On colle les segments de même nom ensemble pour obtenir la différentielle quadratique souhaitée. Enfin pour obtenir les $2$-différentielles dans les strates   $\Omega^{2}\moduli[0]((p-1)^4;(-4)^p)$ avec $p$ pair, on considère les  $2$-parties polaires d'ordre $4$ associées à $(1;1)$. Puis on  colle cycliquement la moitié de ces $2$-parties polaires  aux segments $v_{1}$ et l'autre aux segments $v_{3}$.
\end{proof}

\subsection{Les strates avec $r=0$, $p\neq0$ et $s\neq0$.}
\label{sec:pasdenondivaveck}

Cette  section est dédiée à la preuve  du théorème~\ref{thm:r=0sneq0}, i.e.   aux strates  $\komoduli[0](a_{1},\dots,a_{n};-b_{1},\dots,-b_{p};(-k^{s}))$ avec $p\neq0$ et $s\neq0$. Rappelons que dans toute cette section, nous ne considérons que les strates non vide. D'après le lemme~\ref{lem:puissk}, cela revient à supposer que $\pgcd((a_{i}),k)=1$.
Enfin nous définissons tout au long de cette section $b_{i}:=k\ell_{i}$ et $a_{i}:=kl_{i}+\bar{a_{i}}$ avec $-k<\bar{a_{i}}\leq 0$.

On commence par traiter les cas des strates exceptionnelles suivantes.

\begin{lem} \label{lem:exceptquadpsneq0}
 L'image de l'application résiduelle de $\Omega^{2}\moduli[0](2s'-1,2s'+1;-4;(-2^{2s'}))$ avec $s'\geq1$ ne contient pas  $\CC^{\ast}\cdot(0;1,\dots,1)$.

L'image de l'application résiduelle des strates $\Omega^{2}\moduli[0](2a-1,2a+1;(-4^{a});(-2^{2}))$ avec $a\geq0$ ne contient pas $\CC^{\ast}\cdot(0,\dots,0;1,1)$.

  L'application $2$-résiduelle de la strate $\Omega^{2}\moduli[0](1+2s,1+2s;-4;(-2^{2s+1}))$ ne contient pas l'élément $(1;1\dots,1)$ pour tout $s\geq0$.
  
   L'application $2$-résiduelle de la strate $\Omega^{2}\moduli[0](2a-1,2a-1;(-4^a);-2)$ ne contient pas l'élément $(1,0,\dots,0;1)$ pour tout $s\geq0$.
\end{lem}

\begin{proof}
Supposons qu'une différentielle $\xi_{0}$ de $\Omega^{2}\moduli[0](2s'-1,2s'+1;-4;(-2^{2s'}))$ possède les résidus quadratiques $(0,1,\dots,1)$. Nous formons alors une différentielle entrelacée en attachant une courbe elliptique munie d'une différentielle quadratique holomorphe au pôle d'ordre $-4$ de $\xi_{0}$. Cette différentielle quadratique entrelacée peut être lissée sans modifier les résidus aux pôles doubles d'après le lemme~\ref{lem:lissdeuxcomp}. Cela aboutit à une contradiction avec le lemme~\ref{lem:nonsurjdeuxzero} où nous montrons (évidemment sans utiliser ce résultat) que les strates de la forme $\Omega^{2}\moduli[1](2s'-1,2s'+1;(-2^{2s'}))$ ne possèdent pas de différentielle avec résidu quadratique égal à un à tous les pôles.
\smallskip
\par
Le second cas se traite de manière similaire. Si $a\geq 1$ nous utilisons le lemme~\ref{lem:geq1six} disant que les strates $\Omega^{2}\moduli[1](2a-1,2a+1;(-4^{a}))$ ne contiennent pas de différentielle quadratique dont les résidus quadratique sont $(0,\dots,0)$.  Si $a=0$, nous utilisons le fait classique que la strate $\komoduli[1](1,-1)$ est vide. La contradiction s'obtient en considérant la différentielle entrelacée obtenue en collant ensemble les deux pôles d'ordre $-2$ d'une $2$-différentielle de la strate  $\Omega^{2}\moduli[0](2a-1,2a+1;(-4^{a});(-2^{2}))$.
\smallskip
\par
Nous considérons maintenant   les strates $\Omega^{2}\moduli[0](1+2s,1+2s;-4;(-2^{2s+1}))$ avec les $2$-résidus $(1;1,\dots,1)$. On suppose par l'absurde qu'il existe une différentielle quadratique $\eta$ possédant ces invariants. Alors par flot contractant (voir la section~\ref{sec:coeur}), on peut supposer que $\eta$ possède un \coeur dégénéré. En particulier, $\eta$ possède exactement $2s+2$ liens selles, connectant $2s+2$ domaines polaires. 
Le graphe d’incidence défini dans la section~\ref{sec:coeur} est donc formé d’un cycle contenant le pôle d'ordre $-4$ sur lequel sont colles des arbres. Ces arbres contribuent à une unique singularité conique et le bord de la surface correspondante est un unique lien selle d'holonomie entière. Comme les cylindres ont une circonférence égale à~$1$, ces arbres sont en fait formés d’un unique domaine polaire d’ordre~$2$ avec un lien selle de bord. Ceux-ci sont tous collés sur l’unique domaine de pole d’ordre $4$. 
Si $s\geq1$, alors nous pouvons simplifier ce graphe.  Dans la chaîne, les domaines polaires peuvent être éliminés par paire.  En effet, deux domaines polaires consécutifs possède un lien selle en commun et donc les liens selles aux bords respectifs de deux domaines possèdent la même holonomie. On peut donc supprimer les domaines et coller les deux segments au bord de la surface obtenue ensemble. On peut donc supposer que cette chaîne possède soit zéro soit un unique domaine polaire. 

S’il  y a un sommet sur la chaîne, alors pour obtenir les invariants souhaités le domaine polaire d’ordre $4$ doit être constitué d’une cicatrice où sont collés $2s$ cylindres et deux segments $\gamma,\gamma'$  de longueur $1/2$ de part et d’autre de la cicatrice qui relient ce domaine à un domaine d'ordre~$2$. Comme le résidu quadratique de ce pôle est égal à~$1$, la parité du nombre de cylindres de part et d’autre de la cicatrice est distincte. Cela est impossible car il y a un nombre impair de pôles d’ordre~$2$.
S’il  n'y a pas de sommet sur la chaîne, alors le domaine polaire d’ordre $4$ possède une cicatrice avec deux segments identifiés par rotation du même côté de la cicatrice. Entre ces deux segments il y a $k \leq s$ bords de cylindres et donc une  singularité conique est d’angle $(2k+1)\pi$. Cette singularité ne peut donc pas correspondre à un zéro d'ordre~$2s+1$.
\smallskip
\par
Nous considérons enfin les strates $\Omega^{2}\moduli[0](2a-1,2a-1;(-4^a);-2)$ avec les $2$-résidus $(1,0,\dots,0;1)$. On suppose par l'absurde qu'il existe différentielle quadratique $\eta$ possédant ces invariants. En utilisant le flot contractant, on peut supposer que $\eta$ possède un \coeur dégénéré. Donc $\eta$ possède $a+1$ liens selles, reliant $a+1$ domaines polaires. Le graphe d’incidence de $\eta$ admet au plus deux sommets de valence $1$ correspondant aux pôles  dont le résidu quadratique est non nul. Donc ce graphe est un cycle sur lequel sont colles des arbres. Les domaines polaires correspondant aux sommets de ces arbres sont bordés par une unique singularité conique. De plus, ces domaines polaires sont bordés par un unique lien selle de longueur entière. Le cylindre correspondant au pôle d’ordre $-2$ ayant une circonférence égale à $1$, les arbres sont collés sur des sommets correspondant à des pôles d'ordre~$-4$. Nous distinguons les cas selon s’il existe $0$, $1$ ou $2$ sommets de valence $1$ dans le graphe d'incidence.
\par
S’il n’en existe aucun, le graphe d’incidence est un cycle. Les domaines polaires d’ordre~$4$ correspondant a des sommets de valence $2$ peuvent être supprimés, puis on recolle les bords ensemble. On obtient ainsi une différentielle quadratique de $\Omega^{2}\moduli[0](1,1;-4;-2)$ dont les résidus quadratiques sont $(1,1)$. Cette différentielle  n’existe pas par le cas précédent de ce lemme.
S’il existe un sommet de valence $1$, alors le cycle du graphe d'incidence contient exactement un sommet de valence~$3$. Considérons le domaine polaire dont le $2$-résidu est non nul ne correspondant pas au sommet de valence~$1$ et le sommet correspondant~$v_{0}$. Si $v_{0}$ n’appartient pas au cycle ou si $v_{0}$ est le sommet de valence $3$, alors, après avoir supprimer les sommets du cycle comme précédemment, on peut supposer que le cycle possède uniquement le sommet~$v_{0}$. Le domaine polaire correspondant admet une cicatrice avec trois segments dont deux sont identifiés par rotation. Donc une singularité conique  possède un angle inférieur ou égal à~$2\pi$, ce qui est absurde. Si le sommet $v_{0}$ appartient au cycle mais n'est pas le sommet de valence~$3$, alors, après suppression des sommets du cycle, on obtient un cycle avec deux sommets. La somme et la différence de l'holonomie des liens selles au bord de ce domaine polaire sont égales à~$1$, ce qui est absurde.
Enfin, S’il  existe deux sommet de valence~$1$, alors le cycle est  constitué de domaines polaires d’ordre $4$ dont le résidu est nul. On élimine les sommets de valence $2$ et on obtient un cycle constitué soit de deux sommets de valence 3, soit d'un unique sommet de valence~$4$. Dans ce dernier cas, le domaine polaire correspondant possède deux segments identifiés par rotation du même cote de la cicatrice.  Les deux autres segments sont de longueur $1$, car le résidu du pôle correspondant est nul. Cela implique que les deux segments identifies sont adjacents et qu’une des singularités coniques est d’angle $\pi$. Si le cycle possède deux sommets de valence~$3$, alors les longueurs des deux segments qui les relient sont de somme et de différence égales à~$1$, ce qui est impossible comme précédemment. 
\end{proof}

%

Nous considérons maintenant le cas des strates ayant à la fois des pôles d'ordres~$-k$ et des pôles d'ordres inférieurs divisibles par~$k$.

\begin{lem}\label{lem:surjr=0sneq01}
 Les applications résiduelles des strates $\komoduli[0](a_{1},a_{2};-k\ell;-k)$ avec $\ell\geq2$ sont surjectives sauf dans le cas $\Omega^{2}\moduli[0](1,1;-4,-2)$ où l'application $2$-résiduelle contient le complémentaire de $(1;1)$.
\end{lem}

\begin{proof}
 Nous traitons tout d'abord le cas où $\ell=2$. Soit il existe $a_{i}$, disons $a_{1}$ tel que $-k<a_{1}<0$, soit on a $0<a_{1},a_{2}<k$. On dénote le $k$-résidu~$R$, resp.~$R_{1}$, au pôle d'ordre~$-2k$, resp.~$-k$. Quitte à multiplier les $k$-résidus par une constante, on peut supposer que $R_{1}=1$ et on considérera la racine $r_{1}=1$ de~$R_{1}$. Les trois cas ci-dessous sont représentés dans la figure~\ref{fig:surjr=0sneq01}.
 
 Dans le cas où $-k<a_{1}<0$, on associe au pôle d'ordre $-2k$ la $k$-partie polaire d'ordre~$2k$ associée à $(v_{1},v_{2};r_{1})$ telle que l'angle au point d'intersection de la concaténation des $v_{i}$ soit $2\pi + \tfrac{2a_{1}\pi}{k}$ et que la longueur des $v_{i}$ soit identique et $v_{1}+v_{2}-r_{1}$ est égale à une racine de $R$ dont la partie réelle est positive. On colle alors au segment $r_{1}$ une $k$-partie polaire d'ordre $k$ associée à~$r_{1}$ et les $v_{i}$ ensemble. La surface obtenue vérifie clairement les propriétés souhaitées.
 
 Dans le cas où $0<a_{1},a_{2}<k$ la construction est la suivante. Si $|R|>1$  on considère la $k$-partie polaire d'ordre $2k$ associée à $(\emptyset; v_{1},r_{1},v_{2})$ telle que les $v_{i}$ soient d'égale longueur, la somme des angles aux sommets au bord de $r_{1}$ égale à $\pi+ \tfrac{2a_{1}\pi}{k}$ et la somme est l'opposée d'une racine de $R$ de partie réelle négative (ou de partie imaginaire négative si $R$ est un réel négatif dans le cas quadratique).    On colle alors au segment $r_{1}$ une $k$-partie polaire d'ordre $k$ associée à~$r_{1}$ et les $v_{i}$ ensemble. La surface obtenue vérifie clairement les propriétés souhaitées.
 
 Si $|R|\leq1$, on considère la $k$-partie polaire d'ordre $k$ associée à $(v_{1},v_{2})$ avec $v_{1}+v_{2}=1$ et $-\exp \left( \tfrac{2a_{1}i\pi}{k} \right) v_{2} - v_{1}$ est une racine~$r$ de~$R$.  Comme la solution de ces équations est donnée par $v_{1}= (\exp \left( \tfrac{2a_{1}i\pi}{k} \right)+r) /(\exp \left( \tfrac{2a_{1}i\pi}{k} \right) -1)$, on peut choisir $r$ telle que $v_{1}$ est de partie imaginaire strictement négative ou dans le segment $\left] 0,1\right[$ sauf dans le cas exceptionnel du lemme~\ref{lem:surjr=0sneq01}. On considère maintenant la $k$-partie polaire d'ordre $2k$ associée à $\left(-\exp \left( \tfrac{2a_{1}i\pi}{k} \right) v_{2};v_{1}\right)$. En collant les $v_{i}$ entre eux par rotations et translations on obtient une surface plate ayant les invariants souhaités. En effet, considérons la singularité correspondant au point d'intersection entre $v_{1}$ et $v_{2}$ dans la partie polaire d'ordre~$k$ (le point noir sur la figure~\ref{fig:surjr=0sneq01} à droite).  Comme $v_{1}$ est identifié par translation et~$v_{2}$ par une rotation d'angle $2a_{1}\pi/k$ (de la partie polaire d'ordre $k$ à celle d'ordre $2k$) on en déduit que l'angle de cette singularité est $2\pi + 2a_{1}\pi/k$. C'est donc un zéro d'ordre $a_{1}$ comme souhaité.
 \begin{figure}[hbt]
\center
\begin{tikzpicture}[scale=.8]

\begin{scope}[xshift=-5cm]
 \fill[fill=black!10] (0,0) ellipse (1.5cm and 1.2cm);
   
\coordinate (A) at (-1,0);
\coordinate (B) at (1,0);
\coordinate (C)  at (0,1);
    
\draw [dotted] (A)-- +(-.5,0);
\draw [dotted] (B)-- +(.5,0);
 
\filldraw[fill=white] (A)  circle (2pt);
\filldraw[fill=white] (B) circle (2pt);
\fill (C)  circle (2pt);

\fill[color=white]     (A) -- (B) -- (C) --cycle;
\draw[]     (A) -- (B) coordinate[pos=.5](a);
\draw (B) -- (C) coordinate[pos=.3](b);
\draw (C) -- (A)  coordinate[pos=.6](c);

\node[below] at (a) {$r_{1}$};
\node[above,rotate=45] at (c) {$v_{1}$};
\node[above,rotate=-45] at (b) {$v_{2}$};

\begin{scope}[xshift=-3cm,yshift=-2.5cm]
\coordinate (a) at (2,-.5);
\coordinate (b) at (4,-.5);

    \fill[fill=black!10] (a)  -- (b)coordinate[pos=.5](f) -- ++(0,1.2) --++(-2,0) -- cycle;
 \draw  (a) -- (b);
 \draw (a) -- ++(0,1.1) coordinate (d)coordinate[pos=.5](h);
 \draw (b) -- ++(0,1.1) coordinate (e)coordinate[pos=.5](i);
 \draw[dotted] (d) -- ++(0,.2);
 \draw[dotted] (e) -- ++(0,.2);
     \filldraw[fill=white] (a)  circle (2pt);
\filldraw[fill=white] (b) circle (2pt);
\node[above] at (f) {$r_{1}$};
\end{scope}
\end{scope}

\begin{scope}[xshift=-1cm]
\clip (-4,1) rectangle (4,-3);
\foreach \i in {1,2,3}
\coordinate (a\i) at (\i,0); 
\fill[black!10] (a1) -- (a3) coordinate[pos=.25] (b1) coordinate[pos=.75] (b2)  -- ++(0,0) arc (0:180:1) -- cycle;

\draw (a1) -- (a3) ;
\fill (a2)  circle (2pt);
\node[above] at (b1) {$1$};
\node[above] at (b2) {$2$};

     \foreach \i in {0,1,...,4}
  \coordinate (c\i) at (\i,-2); 
    \coordinate (d1) at (1.5,-1.5);
     \coordinate (d2) at (2.5,-1.5); 
    \coordinate (e1) at (1.5,-.6);
     \coordinate (e2) at (2.5,-.6); 
     
   \fill[black!10] (c0) -- (c1) coordinate[pos=.5] (b1) -- (d1) coordinate[pos=.5] (b3) --(e1) --(e2) -- (d2) -- (c3)  coordinate[pos=.5] (b4) -- (c4) coordinate[pos=.5] (b2) -- ++(0,0) arc (0:-180:2) -- cycle;
     \foreach \i in {1,3}
   \fill (c\i)  circle (2pt);
   \draw (c0) -- (c1) coordinate[pos=.5](f1) -- (d1) -- (e1);
     \draw (e2) -- (d2) -- (c3) coordinate[pos=.5](f4) -- (c4);
     \draw[dotted] (d1) -- (d2) coordinate[pos=.5] (b5);
     \foreach \i in {1,2}
   \filldraw[fill=white] (d\i)  circle (2pt);     

\node[below] at (b1) {$1$};
\node[below] at (b2) {$2$};
\node[above,rotate=45] at (b3) {$v_{1}$};
\node[above,rotate=-45] at (b4) {$v_{2}$}; 
\node[below] at (b5) {$r_{1}$};
\end{scope}

\begin{scope}[xshift=6cm]
    \fill[fill=black!10] (0,0) ellipse (1.5cm and 1.2cm);
    
\coordinate (A) at (-.5,-.5);  
\coordinate (B) at (0,-1);
\coordinate (C) at (-.5,.5);

\fill[white] (A) -- (B) -- (C)  -- ++(-1.2,0) -- ++(0,-1) -- cycle;
     
    \draw [dotted] (A)-- +(-.9,0);
    \draw [dotted] (B)-- +(.8,0);
   \draw [dotted] (C)-- +(-.9,0);

 \draw[]     (A) -- (B) coordinate[pos=.5](a) -- (C) coordinate[pos=.5](b);
\node[below,rotate=-45] at (a) {$v_{1}$};\node[above,rotate=-45] at (b) {$v_{2}$};

 \filldraw[fill=white] (A)  circle (2pt);
\fill (B) circle (2pt);
 \filldraw[fill=white] (C)  circle (2pt);

\coordinate (a) at (2,-.5);
\coordinate (b) at (4,-.5);
\coordinate (c) at (2.5,-1);

    \fill[fill=black!10] (a)  -- (c)coordinate[pos=.5](f1)-- (b)coordinate[pos=.5](f2) -- ++(0,1.2) --++(-2,0) -- cycle;
 \draw  (a) -- (c) -- (b);
 \draw (a) -- ++(0,1.1) coordinate (d)coordinate[pos=.5](h);
 \draw (b) -- ++(0,1.1) coordinate (e)coordinate[pos=.5](i);
 \draw[dotted] (d) -- ++(0,.2);
 \draw[dotted] (e) -- ++(0,.2);
     \filldraw[fill=white] (a)  circle (2pt);
\filldraw[fill=white] (b) circle (2pt);
\fill (c) circle (2pt);

\node[above,rotate=-45] at (f1) {$v_{1}$};
\node[above,rotate=45] at (f2) {$v_{2}$};
\end{scope}
\end{tikzpicture}
\caption{Différentielles quartiques dans $\Omega^{4}\moduli[0](-1,5;-8;-4)$ à gauche et dans $\Omega^{4}\moduli[0](1,3;-8;-4)$ au centre et à droite. Les résidus quartiques sont respectivement $(0,1)$, $(16,1)$ et $(1/8;1)$.} \label{fig:surjr=0sneq01}
\end{figure}

 \smallskip
 \par
 Nous traitons maintenant le cas où $\ell\geq 3$. On part  d'une surface plate  ayant les $k$-résidus souhaités dans la strate $\komoduli[0](a_{1}',a_{2}';-2\ell;-k)$  telle que $a_{i}=a_{i}' +l_{i}'k$ avec $l_{i}'\geq0$. On coupe alors une demi-droite partant du zéro d'ordre $a_{i}'$ et on colle de manière cyclique $l_{i}'$ plans possédant la même fente. 
\end{proof}

Nous déterminons maintenant l'image de l'application résiduelle des strates de la forme $\omoduli[0](a_{1},a_{2};-k\ell;-k,-k)$.
\begin{lem}\label{lem:surjr=0sneq02}
L'image de l'application résiduelle de la strate $\Omega^{2}\moduli[0](3,1;-4;-2,-2)$ contient 
 $\espresk[0](3,1;-4;-2,-2)\setminus \CC^{\ast}\cdot (0;1,1)$.
 Les applications résiduelles des autres strates de la forme $\komoduli[0](a_{1},a_{2};-k\ell;-k,-k)$ 
 sont surjectives pour tout $\ell\geq2$.
\end{lem}
 
\begin{proof}
 Nous commençons par traiter le cas $\ell=2$. Dans le cas où l'un des zéros d'ordre $-k<a_{i}<0$, la construction est identique à celle du lemme~\ref{lem:surjr=0sneq01} et nous laissons les modifications triviales au lecteur. Nous nous concentrerons donc sur le cas où $0<a_{2}<k<a_{1}<2k$. Dans ce cas, nous séparons les cas où $R_{1} \neq (-1)^{k}R_{2}$ et $R_{1} = (-1)^{k}R_{2}$.
 \par
 Dans le premier cas  nous supposons que $|R_{1}| \geq |R_{2}|$. Nous commençons par choisir des racines $r_{i}$ telles que $r_{1}$, resp $r_{2}$, est de partie réelle négative, resp. positive (ou de partie imaginaire négative, resp. positive, si la partie réelle est nulle). Nous prenons une $k$-partie polaire d'ordre $-2k$ associée à $(-r_{1};v_{1},r_{2},\alpha v_{1})$ telle que $\alpha=\exp\left((2a_{1}-k)i\pi/k\right)$ et la somme $r_{1}+v_{1}+r_{2}+\alpha v_{1}$ est égale à l'opposé d'une racine $r$ de~$R$. La solution de cette équation est $v_{1}=-\tfrac{r+r_{1}+r_{2}}{1+\alpha}$. On peut clairement choisir $r$ telle que la concaténation de $v_{1}$ avec $r_{2}$ et $\alpha v_{1}$ soit sans point d’intersection. La surface avec les invariants souhaités s'obtient alors en collant les segments $v_{i}$ entre eux et des $k$-parties polaires d'ordre $k$ associées à $r_{i}$ aux segments correspondants.
 \par
 Dans le cas où $R_{1} = (-1)^{k}R_{2}$ et que $R\neq0$ la même construction que précédemment donne la $k$-différentielle souhaitée. Il reste donc à traiter le cas des $k$-résidus égaux à $(0;1,(-1)^{k})$ dans $\komoduli[0](a_{1},a_{2};-k\ell;-k,-k)$ avec $0<a_{2}<k<a_{1}<2k$ pour $k\geq3$.  Dans les cas $(a_{1},a_{2})\neq(k+1,k-1)$, la construction est similaire à la construction du paragraphe précédent. Nous prenons une $k$-partie polaire d'ordre $-2k$ associée à $(-r_{1};v_{1},1,\alpha v_{1})$ telle que $\alpha=\exp\left((2a_{1}-k)i\pi/k\right)$, la racine $r_{1}$ est égale respectivement à $\exp(2i\pi/k)$ si $k$ est pair et $\exp(i\pi/k)$ si $k$ est impair et tels que $1+v_{1}+r_{1}+\alpha v_{1}=0$. On a alors $v_{1}=-\tfrac{1+r_{1}}{1+\alpha}$ et on peut vérifier que la concaténation de $v_{1}$ avec $1$ et $ \alpha v_{1}$ est sans point d'intersection. On peut alors conclure la construction comme précédemment.
Enfin, dans le cas où $(a_{1},a_{2})=(k+1,k-1)$, on procède à la construction analogue avec la $k$-partie polaire d'ordre $-2k$ associée à $(\exp(2i\pi/k),1;\exp(2i\pi/k),1)$. On colle par rotation l'un des segments $1$ à l'un des segments $\exp(2i\pi/k)$ et des demi-cylindres infinis aux autres segments. On vérifiera alors facilement que la différentielle obtenue possède les invariants souhaités.
 \smallskip
 \par
 Pour montrer la surjectivité des applications résiduelles $\appresk[0](a_{1},a_{2};-k\ell;-k,-k)$, on peut essentiellement procéder comme dans la fin de la preuve du lemme~\ref{lem:surjr=0sneq01}. En effet, 
 on peut partir des surfaces obtenues aux paragraphes précédents et ajouter des multiples de $k$ aux zéros de ces strates. Pour cela on coupe ces surfaces le long de demi-droites infinies partant des singularités et on recolle $\ell-2$ plans de manière à obtenir les ordres souhaités. De cette façon on obtient toutes ces strates sauf  la strate $\Omega^{2}\moduli[0](3,3;-6;-2,-2)$.    De plus, cette technique donne que l'image de $\appresk[0][2](3,3;-6;-2,-2)$ contient $\espresk[0][2](3,3;-6,-2,-2)\setminus \CC^{\ast}\cdot (0,1,1)$. Une différentielle quadratique ayant ces ordres et dont les $k$-résidus sont $(0,1,1)$ est représentée dans la figure~\ref{fig:surjr=0sneq02}.
  \begin{figure}[hbt]
\center
\begin{tikzpicture}[scale=1]

\begin{scope}[xshift=-2cm]
    \fill[fill=black!10] (0,0) ellipse (1.5cm and 1cm);

\draw[] (-.7,0) coordinate (q1) -- (.3,0) coordinate (q2) coordinate[pos=.5](a);

\node[above] at (a) {$r_{1}$};
\node[below] at (a) {$v_{1}$};

\draw[] (q2) -- (1.5,0) coordinate[pos=.5](e);

\node[above] at (e) {$1$};
\node[below] at (e) {$2$};

\fill (q1) circle (2pt);
\fill (q2) circle (2pt);
\filldraw[fill=white] (.37,0)  arc (0:-180:2pt); 
\end{scope}

\begin{scope}[xshift=2cm]
    \fill[fill=black!10] (0,0) ellipse (1.5cm and 1cm);

\draw[] (-.7,0) coordinate (q1) -- (.3,0) coordinate (q2) coordinate[pos=.5](a);

\node[above] at (a) {$r_{2}$};
\node[above,rotate=180] at (a) {$v_{1}$};

\draw[] (q2) -- (1.5,0) coordinate[pos=.5](e);

\node[below] at (e) {$1$};
\node[above] at (e) {$2$};

\filldraw[fill=white] (q1) circle (2pt);
\fill (q2) circle (2pt);
\filldraw[fill=white] (.37,0)  arc (0:180:2pt); 
\end{scope}

\begin{scope}[xshift=-6cm,yshift=.5cm]
\coordinate (a) at (0,0);
\coordinate (b) at (1,0);

    \fill[fill=black!10] (a)  -- (b)coordinate[pos=.5](f) -- ++(0,-1.3) --++(-1,0) -- cycle;
 \draw  (a) -- (b);
 \draw (a) -- ++(0,-1.2) coordinate (d)coordinate[pos=.5](h);
 \draw (b) -- ++(0,-1.2) coordinate (e)coordinate[pos=.5](i);
 \draw[dotted] (d) -- ++(0,-.2);
 \draw[dotted] (e) -- ++(0,-.2);
     \fill (a)  circle (2pt);
\fill (b) circle (2pt);
\node[below] at (f) {$r_{1}$};
\end{scope}

\begin{scope}[xshift=5cm,yshift=.5cm]
\coordinate (a) at (0,0);
\coordinate (b) at (1,0);

    \fill[fill=black!10] (a)  -- (b)coordinate[pos=.5](f) -- ++(0,-1.3) --++(-1,0) -- cycle;
 \draw  (a) -- (b);
 \draw (a) -- ++(0,-1.2) coordinate (d)coordinate[pos=.5](h);
 \draw (b) -- ++(0,-1.2) coordinate (e)coordinate[pos=.5](i);
 \draw[dotted] (d) -- ++(0,-.2);
 \draw[dotted] (e) -- ++(0,-.2);
     \filldraw[fill=white] (a)  circle (2pt);
\filldraw[fill=white] (b) circle (2pt);
\node[below] at (f) {$r_{2}$};
\end{scope}
\end{tikzpicture}
\caption{Une différentielle quadratique dans $\Omega^{2}\moduli[0](3,3;-6;-2,-2)$ dont les résidus quadratiques sont $(0;1,1)$.} \label{fig:surjr=0sneq02}
\end{figure}
\end{proof}

Nous considérons maintenant le cas des strates ayant un unique pôle d'ordre $-k\ell$ et un nombre arbitraire de pôles d'ordre~$-k$.

 \begin{lem}\label{lem:surjr=0sneq03}
Les applications résiduelles des strates $\komoduli[0](a_{1},a_{2};-k\ell;(-k^{s}))$ avec $s\geq3$ distinctes des strates $\Omega^{2}\moduli[0](2s'-1;2s'+1;-4;(-2^{2s'}))$ avec $s=2s'$ et  $\Omega^{2}\moduli[0](2s'+1;2s'+1;-4;(-2^{2s'+1}))$ avec $s=2s'+1$ sont surjectives.
L'image de l'application résiduelle  de $\Omega^{2}\moduli[0](2s'-1;2s'+1;-4;(-2^{2s'}))$  contient le complémentaire de $\CC^{\ast}\cdot(0;1,\dots,1)$ dans l'espace résiduel et celle de $\Omega^{2}\moduli[0](2s'+1;2s'+1;-4;(-2^{2s'+1}))$  contient le complémentaire de $\CC^{\ast}\cdot(1;1,\dots,1)$
\end{lem}

Afin de prouver ce résultat, nous utiliserons un résultat  similaire à celui du lemme~\ref{lem:nonintersecp} que nous énonçons maintenant.
\begin{lem}\label{lem:nonintersecp2}
 Soient $k\geq2$, $\zeta$ une racine $k$-ième primitive de l'unité,  $R_{1},R_{2}\in\CC^{ \ast}$ non tous deux nuls. Il existe des racines $k$-ième  $r_{i}$ respectivement de $R_{i}$ et il existe $v_{1} \in \CC^{\ast}$ tels que $v_{1} +  r_{1}  - \zeta v_{1} - r_{2} =0$, tels que les $k$-parties polaires (généralisées) associées respectivement aux vecteurs $(v_{1},r_{1};r_{2},\zeta v_{1})$  et $(v_{1},r_{1};\zeta v_{1},r_{2})$  existent.  
\end{lem}

\begin{proof}
La preuve est en tout point similaire à celle du lemme~\ref{lem:nonintersecp}. Nous donnerons ici les grandes lignes de l'argument.
  On choisit $r_{1}$ et $r_{2}$ avec une partie réelle positive.  Il existe alors un unique $v_{1}\in\CC^{\ast}$ tel que $v_{1} +  r_{1}  - \zeta v_{1} - r_{2} =0$. Considérons la concaténation de~$v_{1}$ puis de $r_{1}$. Si $v_{1}$ et $r_{1}$ sont proportionnels mais de sens opposé, nous considérons que~$v_{1}$ est en dessous de $r_{1}$. Puis nous formons le demi-plan à bord polygonal au dessus de ces vecteurs et des demi-droites partant des points initial de $v_{1}$ et final de $r_{1}$. On fait la même construction pour le demi-plan inférieur à la concaténation de $r_{2}$ et $v_{2}$. En collant les deux demi-plans on obtient la $k$-partie polaire souhaitée. Cette construction est présentée sur la figure~\ref{fig:excosntrudiff}.
  \begin{figure}[hbt]
\center
\begin{tikzpicture}

\begin{scope}
  \clip (-1,-.5) rectangle (10,1.5);
\draw (0,0) coordinate (a1) --  (0:1.5) coordinate[pos=.5] (b1) coordinate (a2) --  ++(0:1) coordinate[pos=.5] (b2) coordinate (a3) -- ++(120:1)coordinate[pos=.5] (b3) coordinate (a4) -- ++(0:1.5) coordinate[pos=.5] (b4) coordinate (a5); 
 \fill[black!10] (a1) -- (a2) -- (a3) -- (a4) -- (a5) --++ (0:.7) arc (0:195:2.2) -- cycle;  
\draw (0,0) coordinate (a1) --  (a2) --   (a3) --  (a4) --  (a5); 
 
\filldraw[fill=white] (a2) circle (1pt);
\fill (a3) circle (1pt);
\fill (a4) circle (1pt);

\draw[dotted] (a1)-- ++(180:.4);
\draw[dotted] (a5)-- ++(0:.4);

\node[below] at (b1) {$1$};
\node[above] at (b4) {$2$};
 \node[below] at (b2) {$v_{1}$};
 \node[above,rotate=-60] at (b3) {$r_{1}$};
\end{scope}

 \begin{scope}[xshift=7cm,yshift=.5cm]
\clip (-1,1.5) rectangle (10,-.5);
\draw (0,0) coordinate (a1) --  (0:1.5) coordinate[pos=.5] (b1) coordinate (a2) --  ++(0:1) coordinate[pos=.5] (b2) coordinate (a3) -- ++ (120:1)coordinate[pos=.5] (b3) coordinate (a4) -- ++(0:1.5) coordinate[pos=.5] (b4) coordinate (a5); 
 \fill[black!10] (a1) -- (a2) -- (a3) -- (a4) -- (a5) --++ (0:.7) arc (10:-165:2.2) -- cycle;  
\draw (0,0) coordinate (a1) --  (a2) --   (a3) --  (a4) --  (a5); 
 
\filldraw[fill=white] (a2) circle (1pt);
\filldraw[fill=white]  (a3) circle (1pt);
\fill (a4) circle (1pt);

\draw[dotted] (a1)-- ++(180:.4);
\draw[dotted] (a5)-- ++(0:.4);

\node[above] at (b1) {$1$};
\node[above] at (b4) {$2$};
 \node[below,rotate=-60] at (b3) {$\zeta v_{1}$};
 \node[below] at (b2) {$r_{2}$};
 \end{scope}

\end{tikzpicture}
\caption{La  $3$-partie polaire associée aux vecteurs $(v_{1},r_{1};r_{2},\exp(2i\pi/3)v_{1}) $ égaux à $ (\tfrac{2\sqrt{3}}{3} \exp(i\pi/6),-1;1,\tfrac{2\sqrt{3}}{3} \exp(5i\pi/6))$} \label{fig:excosntrudiff}
\end{figure}
\end{proof}

Avant de passer à la preuve du lemme~\ref{lem:surjr=0sneq03}, nous faisons une remarque importante.
\begin{rem}\label{rem:importexist2}
 Notons que dans le cas où $k=2$ et  $R_{1}=R_{2}=1$, alors la première $2$-partie polaire donne le carré d'une différentielle quadratique et dans la seconde donne une différentielle quadratique avec un zéro d'ordre~$-1$. On ne peut pas obtenir de cette façon une différentielle quadratique dont la somme des angles entre $r_{1}$ et respectivement $v_{1}$ et $\zeta v_{1}$ soit égal à~$2\pi$. Cela est la raison pour laquelle le théorème~\ref{thm:r=0sneq0} possède des cas exceptionnels.
\end{rem}

Nous pouvons maintenant procéder à la preuve du lemme~\ref{lem:surjr=0sneq03}.
\begin{proof}
  Nous traiterons le cas $\ell=2$, les cas $\ell\geq3$ s'obtenant comme dans les lemmes~\ref{lem:surjr=0sneq01} et~\ref{lem:surjr=0sneq02}.
 Rappelons que nous noterons $a_{i}=kl_{i}+\bar{a_{i}}$ avec $-k<\bar{a_{i}}<0$ et $l_{1}+l_{2}=s+1$.
 \par
  Si $a_{2}<0$, alors on considère la partie polaire d'ordre $2k$ associée à $(v_{1},\alpha v_{1};r_{1},\dots,r_{s})$ où les $r_{i}$ sont des racines des $R_{i}$ de partie réelles positives, le coefficient $\alpha=\exp\left(-(2a_{2}+k)i\pi/k\right)$ et la somme $\sum r_{i}- (1+\alpha) v_{1}$ égale à une racine~$r$ du $k$-résidu $R$ au pôle d'ordre~$-2k$. Puis on fait les collages similaire à ceux des lemmes précédents.  Nous considérerons par la suite les cas $a_{i}>0$.
  \par
  On partitionne les $k$-résidus en deux ensembles $S_{1}\sqcup S_{2}$ de cardinaux $l_{1}$ et $l_{2}-1$ satisfaisant aux conditions suivantes.  Nous notons par $R_{i}$ le $k$-résidu d'un pôle d'ordre $-k$ appartenant à $S_{1}$ si $i\leq l_{1}$ et à $S_{2}$ si $i>l_{1}$. On choisit une racine $r_{i}$ des $k$-résidus que l'on supposera de partie réelle négative pour les pôles de~$S_{1}$ et positive pour ceux de~$S_{2}$. On choisit une racine $r$ du $k$-résidu $R$ du pôle d'ordre $-2k$, telles que les hypothèses du lemme~\ref{lem:nonintersecp} soit satisfaite. Notons $\tilde{r}_{1}= \sum_{i=1}^{l_{1}} r_{i}$ et $\tilde{r}_{2}=\sum_{i=l_{1}+1}^{s}$.   Nous considérons alors la $k$-partie polaire d'ordre $2k$ associée à $(v_{1},-\tilde{r}_{1},v_{2}; \tilde{r}_{2},\zeta v_{1})$, avec  $v_{1}$ comme dans le lemme~\ref{lem:nonintersecp} si $r=0$ et le lemme~\ref{lem:nonintersecp2} si $r\neq0$.  On ajoute deux polygones dont les arêtes sont respectivement  $(\tilde{r}_{1},-r_{1},\dots,-r_{l_{1}})$ et  $(-\sum_{i=l_{1}+1}^{s} r_{i},r_{l_{1}+1}\dots,r_{s})$ que l'on colle à la $k$-partie polaire. Par exemple, pour obtenir les $3$-résidus $(1,1,1)$ dans la strate $\Omega^{3}\moduli[0](5,4;-6;(-3^{3}))$, il suffit de coller à la $3$-partie polaire de la figure~\ref{fig:excosntrudiff} un triangle équilatéral à $r_{1}$. Notons que la projection de chaque surface peut avoir des points d'auto-intersection dans le plan. Ce n'est donc pas une $k$-partie polaire, mais la surface ainsi obtenue est bien définie. Enfin, on identifie le segment $v_{1}$ à $\zeta v_{1}$  par rotation (et translation) et on colle des demi-cylindres infinis aux autres segments. Cette construction permet d'obtenir les $k$-différentielles avec les invariants locaux souhaités sauf dans le cas quadratique si $(\tilde{r}_{1},\tilde{r}_{2},r)$ est soit de la forme $(x+1,x,1)$ (modulo multiplication par une constante), soit de la forme $(x,x,0)$ (voir les remarques~\ref{rem:importexist} et~\ref{rem:importexist2}).   Nous traitons maintenant ces cas.
  \par
 Il est facile d'adapter la preuve si tous les $2$-résidus ne sont pas positivement proportionnels. On supposera donc que les $2$-résidus non nuls sont réels mais non tous identiques. Dans le cas  $(x+1,x,1)$, supposons tout d'abord les $R_{j}$ ne soient pas tous égaux pour $j \leq l_{1}$. Dans ce cas, on suppose que $R_{1} > R_{2}$. Pour le pôle $p$, on utilise alors les vecteurs $- r_{1}+r_{2},-r_{3},\dots, r_{l_{1}}$. Pour les pôles $1$ et $2$ on considère les parties $2$-polaires d'ordre $2$ respectivement associées à $(\emptyset;- r_{1}+r_{2},-r_{2} )$ et $(-r_{2};\emptyset )$. Cela permet d'utiliser le lemme~\ref{lem:nonintersecp} et de conclure en utilisant la construction du paragraphe précédent. Considérons le cas où tous les $r_{j}$ sont égaux entre eux dans chacune des deux sommes. Dans ce cas, on considère la construction du lemme~\ref{lem:surjr=0sneq02} et on y colle les demi-cylindres.
 \par
 Enfin il reste à considérer les cas $(x,x,0)$. Si tous les $2$-résidus ne sont pas égaux entre eux, alors la première construction du paragraphe précédent permet de conclure.
\end{proof}

Enfin nous traitons le cas général avec deux zéros.

\begin{lem}\label{lem:surjr=0sneq04}
Les applications résiduelles des strates $\komoduli[0](a_{1},a_{2};-k\ell_{1},\dots,-k\ell_{p};(-k^{s}))$ avec $p\geq2$ et $s\neq0$ distinctes des strates  quadratiques $\Omega^{2}\moduli[0](2a-1;2a+1;(-4^{a});(-2^{2}))$ et $\Omega^{2}\moduli[0](2a-1,2a-1;(-4^a);-2)$  sont surjectives.
L'image de l'application résiduelle de la strate $\Omega^{2}\moduli[0](2a-1;2a+1;(-4^{a});(-2^{2}))$ contient le complémentaire de $\CC^{\ast}\cdot(0,\dots,0;1,1)$ dans l'espace résiduel et celle de  $\Omega^{2}\moduli[0](2a-1,2a-1;(-4^a);-2)$ contient le complémentaire de $\CC^{\ast}\cdot(1,0,\dots,0;1)$.
\end{lem}

\begin{proof}
 Nous commençons en supposant que tous les $k$-résidus aux pôles $P_{i}$ d'ordres $-k\ell_{i}$ sont distincts de zéro. Avec la notation usuelle $a_{i}=kl_{i}+\bar{a_{i}}$, on a $l_{1}+l_{2}+1=s + \sum_{i=1}^{p} \ell_{i}$. On déduit du fait que $s\geq1$ et $p\geq2$ qu'il existe un $\ell_{i}$ tel que $\ell_{i}\leq \max\left\{l_{1},l_{2}\right\}$. Par récurrence et quitte à permuter les indices, on peut partitionner les pôles en trois ensembles $E_{0}=\left\{P_{1}\right\}$, $E_{1}=\left\{P_{2},\dots,P_{t}\right\}$ et $E_{2}=\left\{P_{t+1},\dots,P_{p}\right\}$ tels que $\sum_{i=2}^{t}\ell_{i} \leq l_{1}$ et $\sum_{i=t+1}^{p}\ell_{i} \leq l_{2}$. On fait alors une construction similaire à celles des lemmes~\ref{lem:surjr=0sneq01},~\ref{lem:surjr=0sneq02} et~\ref{lem:surjr=0sneq03}. Plus précisément, dans le cas où $l_{1}=0$, la construction est la même. Dans le cas où $l_{1},l_{2}\neq 0$, on considère les constructions précédentes avec les racines des $k$-résidus des pôles de $E_{1}$ avec $l_{1}-\sum_{i=2}^{t}\ell_{i}-1$ racines des $k$-résidus des pôles d'ordre $-k$ d'une part et les racines des éléments de $E_{2}$ et des autres pôles d'ordre $-k$ pour l'inférieur d'autre part. Par le lemme~\ref{lem:nonintersecp}, cette construction donne un $k$-différentielle avec les invariants souhaités, sauf éventuellement dans le cas où $k=2$, tous les pôles sont d'ordres $-2$ ou~$-4$ avec un $2$-résidu égal à $1$ et les zéros sont du même ordre. Dans ce cas, il suffit de permuter le rôle d'un pôle d'ordre $-2$ avec un pôle d'ordre $-4$ pour pouvoir utiliser cette construction.
 \par
 Supposons maintenant qu'il existe des pôles  dont les $k$-résidus sont non nuls et des pôles dont les $k$-résidus sont nuls.  Il suffit alors d'utiliser la construction du Lemme~\ref{lem:surjr=0sneq03} grâce au lemme~\ref{lem:nonintersecp2} qui donne la $k$-partie polaire associée à un pôle dont le $k$-résidu est nul. Cela permet d'obtenir les invariants souhaités sauf dans le cas quadratique les strates $\Omega^{2}\moduli[0](2a+2s-1,2a+2s-1;(-4^a),(-2^{2s+1}))$ avec $a\geq2$, $s\geq1$ et les résidus quadratiques sont de la forme $(1,0,\dots,0;1,\dots,1)$. Dans ce cas, la construction est la suivante.
 On considère la $2$-partie polaire d'ordre $4$ associée aux vecteurs $((1^{s+2});(1^{s+1}))$. Pour tout les autres pôles d'ordre $-4$, on associe la $2$-partie polaire associée à $(1;1)$.  A l'une segment en bas de la cicatrice on colle un segment d'une autre $2$-partie polaire d'ordre~$4$.  Puis on colle les cylindres à tous les segments à l’exception du premier et du dernier du demi-plan supérieur. On colle alors de manière cyclique les $2$-parties polaires d'ordre $4$ à ces segments puis on identifie les deux segments non identifiés par rotation. On vérifie facilement que la différentielle quadratique ainsi formée possède les invariants souhaités.
 \par
  Enfin nous supposons que tous les $k$-résidus aux pôles d'ordres $-k\ell_{i}$ sont nuls. Comme au paragraphe précédent, on peut se servir du lemme~\ref{lem:nonintersecp2} afin de traiter tous les à l’exception des strates $\Omega^{2}\moduli[0](2a+2s-1,2a+2s-3;(-4^a);(-2^{2s}))$  avec $a,s\geq2$ et  les $k$-résidus de la forme $(0,\dots,0;1,\dots,1)$. Des différentielles quadratiques avec ces invariants sont obtenues de la façon suivante. On prend une $2$-partie polaire d'ordre~$4$ associée aux vecteurs $((1^{s+1});1^{s+1})$ et les autres $2$ parties polaires d'ordres $4$ sont associées à $(1;1)$.  On colle $s+1$ cylindres aux segments en haut de la $4$-partie polaire.  On colle le segment supérieur d'une $k$-partie polaire d'ordre $4$ à un segment inférieur qui n'est ni l’initial ni le final. Puis on colle les autres domaines d'ordre $4$ de manière cyclique au segment initial. On colle alors le dernier segment de ces domaines au segment final du domaine polaire d'ordre $4$ spécial. Enfin on colle des cylindres aux segments restant. On peut facilement vérifier que la différentielle quadratique ainsi formée possède les invariants souhaités. 
%
 \end{proof}

Enfin, nous traitons le cas des strates avec $n\geq3$ zéros.
\begin{lem}\label{lem:surjr=0sneq05}
L'application résiduelle des strates $\komoduli[0](a_{1},\dots,a_{n};-k\ell_{1},\dots,-k\ell_{p};(-k^{s}))$ avec $s\geq1$ est surjective pour $n\geq3$.
\end{lem}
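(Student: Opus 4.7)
The plan is to reduce the statement to the $n=2$ case (Lemma~\ref{lem:surjr=0sneq0}) via the zero-splitting operation of Proposition~\ref{prop:eclatZero}. I would proceed by induction on $n \geq 3$, starting by choosing a suitable pair of zeros to merge.

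First I would observe that under the hypotheses $p \geq 1$ and $s \geq 1$, the sum of zero orders equals $k(\sum \ell_{i} + s - 2) \geq k$, so at least one zero, say $a_{1}$, satisfies $a_{1} \geq 0$. Then for any $j \geq 2$ we have $a_{1}+a_{j} > -k$, so the merged order is a valid zero order. I would select a pair $(a_{1},a_{j})$ such that the merged stratum $\komoduli[0](a_{1}+a_{j}, a_{2}, \dots, \widehat{a_{j}}, \dots, a_{n}; -k\ell_{1},\dots,-k\ell_{p}; (-k^{s}))$ remains primitive, i.e.\ $\pgcd$ of the new orders with $k$ equals $1$; by varying $j$ this can always be arranged since primitivity of the original stratum guarantees that not all remaining orders share a common divisor with $k$ after a merging.

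Given a target residue $R = (R_{1},\dots,R_{p+s}) \in \espresk(\mu)$, the inductive step is immediate when $n \geq 4$: the merged stratum has $n-1 \geq 3$ zeros, and by induction the residual map is surjective, providing a primitive $k$-differential $(X,\xi)$ in the merged stratum with residues $R$. When $n = 3$, the merged stratum has two zeros, and Lemma~\ref{lem:surjr=0sneq0} produces the desired $(X,\xi)$ unless we land in one of the two exceptional quadratic families and $R$ is on the forbidden line $\CC^{\ast}\cdot(0;1,\dots,1)$ or $\CC^{\ast}\cdot(0,\dots,0;1,1)$. In that case I would re-pair, choosing $(a_{1},a_{3})$ or $(a_{2},a_{3})$: since both exceptional configurations force the two merged zero orders to be consecutive odd integers of the precise form $(4s'-1,4s'+1)$ or $(2a-1,2a+1)$, a short parity argument shows that at most one of the three possible pairings of $\{a_{1},a_{2},a_{3}\}$ can produce such a configuration, so some other pairing always works.

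To conclude, I would apply Proposition~\ref{prop:eclatZero} to the zero of order $a_{1}+a_{j}$ of $(X,\xi)$ to split it into two zeros of orders $a_{1}$ and $a_{j}$, yielding a $k$-differential in the original stratum with unchanged residues $R$. The splitting is valid via condition~(i) of Proposition~\ref{prop:eclatZero}: since the merged stratum is primitive, $\xi$ is a primitive $k$-differential and hence, for $k \geq 2$, not the $k$th power of an abelian differential. The main obstacle lies in the $n=3$ base case when $k=2$: one must check that a valid pairing always exists that simultaneously preserves primitivity of the merged stratum and avoids the two exceptional families; this is a finite combinatorial verification that proceeds by the parity argument sketched above.
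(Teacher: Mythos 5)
Your overall strategy --- merging two zeros, applying the two-zero Lemma~\ref{lem:surjr=0sneq0}, then splitting the merged zero back via Proposition~\ref{prop:eclatZero} --- is the same as the paper's, and your treatment of the two exceptional quadratic families by re-pairing matches the paper's remark that one can always start from a non-exceptional two-zero stratum.

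There is however a genuine gap in the reduction: the claim that the pair to be merged can always be chosen so that the merged stratum remains \emph{primitive} is false. Since all pole orders are divisible by $k$, one has $a_{1}+\dots+a_{n}\equiv 0\pmod{k}$; for $n=3$ the merged profile is $(a_{i}+a_{j},a_{l})$ with $a_{i}+a_{j}\equiv -a_{l}\pmod{k}$, so $\pgcd(a_{i}+a_{j},a_{l},k)=\pgcd(a_{l},k)$, and the merged stratum is primitive if and only if the \emph{remaining} zero order is coprime to $k$. Primitivity of the original stratum only gives $\pgcd(a_{1},a_{2},a_{3},k)=1$ and does not force any single $a_{l}$ to be coprime to $k$: for $k=30$ the stratum of type $(21,25,-16;-60;-30)$ is primitive and non-empty, yet the three merged profiles $(46,-16)$, $(5,25)$, $(9,21)$ have $\pgcd$ with $30$ equal to $2$, $5$, $3$ respectively, so Lemma~\ref{lem:surjr=0sneq0} applies to none of them. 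The paper circumvents this by requiring only that the merged orders be not \emph{all} divisible by $k$ (always achievable, since some $a_{l}\not\equiv 0\pmod{k}$), accepting that the merged stratum may consist of $d$th powers of primitive $(k/d)$-differentials for some $d<k$, and then using the multiplicativity of $k$-residues, equation~\eqref{eq:multiplires}, to transport surjectivity of the residual map from the primitive $(k/d)$-stratum (every target $R_{i}$ admits a $d$th root). Your argument needs this supplement; note that condition~(i) of Proposition~\ref{prop:eclatZero} still holds in that situation, since $d<k$ means the merged differential is not the $k$th power of an abelian differential, and the primitivity of the split differential is then guaranteed by the final assertion of that proposition together with the primitivity of the original profile.
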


\begin{proof}
La preuve se fait par éclatement de zéros. Dans certains cas, il n'est pas possible d'obtenir une $k$-différentielle dans une strate en éclatant un zéro d'une $k$-différentielle primitive. Toutefois, grâce à la multiplicativité des $k$-résidus (voir l'équation~\eqref{eq:multiplires}) il suffit de vérifier que l'on peut partir d'une strate dont les éléments ne sont pas la puissance $k$-ième de différentielles abéliennes. Cela revient à vérifier que l'on peut additionner deux~$a_{i}$ de telle façon que la somme ne soit pas divisible par~$k$. Nous laissons le lecteur vérifier cette propriété élémentaire. De plus, dans les strates de la forme $\Omega^{2}\moduli[0](a_{1},a_{2},a_{3};(-4^{4});(-2^{2s+1}))$, on peut toujours ajouter le zéro d'ordre pair à l'un des zéro d'ordre impair pour obtenir une strate où l'application $2$-résiduelle est surjective. On obtient donc la surjectivité de l'application résiduelle de toutes les strates distinctes de $\Omega^{2}\moduli[0](a_{1},\dots,a_{n};-4;(-2^{2s'}))$ et  $\Omega^{2}\moduli[0](a_{1},\dots,a_{n};(-4^{a});(-2^{2}))$, en combinant l'éclatement des zéros avec les résultats précédents de cette section.
\par
Considérons les strates $\Omega^{2}\moduli[0](a_{1},\dots,a_{n};-4;(-2^{2s'}))$ et $\Omega^{2}\moduli[0](a_{1},\dots,a_{n};(-4^{a});(-2^{2}))$. Il est clair qu'il suffit de montrer la surjectivité pour les strates telles que $n=3$ puis d'éclater les zéros. Un calcul élémentaire montre que si $(a_{1},a_{2},a_{3})$ est respectivement distinct de $(2,2s'-1,2s'-1)$ et $(2,2a-1,2a-1)$, alors on peut additionner deux~$a_{i}$ de telle façon que les ordres obtenus ne soit pas ceux d'une strate exceptionnelle et qu'un ordre soit impaire. Il suffit donc de montrer que l'application $2$-résiduelle des strates $\Omega^{2}\moduli[0](2,2s'-1,2s'-1;-4;(-2^{2s'}))$ et  $\Omega^{2}\moduli[0](2,2a-1,2a-1;(-4^{a});(-2^{2}))$ contient respectivement $(0;1,\dots,1)$ et $(0,\dots,0;1,1)$. La figure~\ref{fig:extroiszero} donne une telle différentielle quadratique avec ces invariants locaux pour $s'=a=1$ et $a=2$. 
\begin{figure}[hbt]
\center
\begin{tikzpicture}

\begin{scope}[xshift=-4cm]
    \fill[fill=black!10] (0,0) ellipse (1.2cm and .7cm);

\draw[] (-.75,0)coordinate (Q) -- (.75,0) coordinate (P) coordinate[pos=.5](c) coordinate[pos=.15](d) coordinate[pos=.85](e) coordinate[pos=.35](R) coordinate[pos=.65](S);
\draw[] (Q) -- (P);

\filldraw[fill=white] (R)  arc (0:180:2pt); 
\filldraw[fill=white] (S)  arc (180:0:2pt); 
\fill (R)  arc (0:-180:2pt); 
\fill (S)  arc (-180:0:2pt);

\filldraw[fill=red] (P) circle (2pt);
\filldraw[fill=red] (Q) circle (2pt);

\node[above] at (c) {$2$};
\node[below] at (c) {$1$};
\node[above] at (d) {$3$};
\node[below] at (d) {$4$};
\node[below, rotate=180] at (e) {$3$};
\node[above, rotate=180] at (e) {$4$};

\begin{scope}[xshift=2cm]
\coordinate (a) at (-.5,-.5);
\coordinate (b) at (0,-.5);

    \fill[fill=black!10] (a)  -- (b)coordinate[pos=.5](f) -- ++(0,1.2) --++(-.5,0) -- cycle;
    \fill (a)  circle (2pt);
\fill[] (b) circle (2pt);
 \draw  (a) -- (b);
 \draw (a) -- ++(0,1.1) coordinate (d)coordinate[pos=.5](h);
 \draw (b) -- ++(0,1.1) coordinate (e)coordinate[pos=.5](i);
 \draw[dotted] (d) -- ++(0,.2);
 \draw[dotted] (e) -- ++(0,.2);
\node[below] at (f) {$1$};
\end{scope}

\begin{scope}[xshift=3cm]
\coordinate (a) at (-.5,-.5);
\coordinate (b) at (0,-.5);

    \fill[fill=black!10] (a)  -- (b)coordinate[pos=.5](f) -- ++(0,1.2) --++(-.5,0) -- cycle;
 \draw  (a) -- (b);
 \draw (a) -- ++(0,1.1) coordinate (d)coordinate[pos=.5](h);
 \draw (b) -- ++(0,1.1) coordinate (e)coordinate[pos=.5](i);
 \draw[dotted] (d) -- ++(0,.2);
 \draw[dotted] (e) -- ++(0,.2);
 \filldraw[fill=white] (a)  circle (2pt); 
\filldraw[fill=white] (b)  circle (2pt); 
\node[above, rotate=180] at (f) {$2$};
\end{scope}
\end{scope}

\begin{scope}[xshift=5cm]
     \fill[fill=black!10] (0,0) ellipse (1.1cm and .7cm);

\draw[] (-.75,0)coordinate (Q) -- (.75,0) coordinate (P) coordinate[pos=.5](c) coordinate[pos=.15](d) coordinate[pos=.85](e) coordinate[pos=.35](R) coordinate[pos=.65](S);
\draw[] (Q) -- (P);

\filldraw[fill=red] (R)  arc (0:-180:2pt); 
\filldraw[fill=red] (S)  arc (-180:0:2pt);

\filldraw[fill=white] (P) circle (2pt);
\filldraw[fill=white] (Q) circle (2pt);

\node[above] at (c) {$2$};
\node[below] at (c) {$5$};
\node[below] at (d) {$4$};
\node[above, rotate=180] at (e) {$4$};

\begin{scope}[xshift=-2.4cm]
    \fill[fill=black!10] (0,0) ellipse (1.1cm and .7cm);

\draw[] (-.75,0)coordinate (Q) -- (.75,0) coordinate (P) coordinate[pos=.5](c) coordinate[pos=.15](d) coordinate[pos=.85](e) coordinate[pos=.35](R) coordinate[pos=.65](S);
\draw[] (Q) -- (P);

\filldraw[fill=red] (R)  arc (0:180:2pt); 
\filldraw[fill=red] (S)  arc (180:0:2pt);

\fill (P) circle (2pt);
\fill (Q) circle (2pt);

\node[above] at (c) {$5$};
\node[below] at (c) {$1$};
\node[above] at (d) {$3$};
\node[below, rotate=180] at (e) {$3$};
\end{scope}

\begin{scope}[xshift=2cm,yshift=-.7cm]
\coordinate (a) at (-.75,0);
\coordinate (b) at (.75,0);

    \fill[fill=black!10] (a)  -- (b)coordinate[pos=.5](f) -- ++(0,.6) --++(-1.5,0) -- cycle;
    \fill (a)  circle (2pt);
\fill[] (b) circle (2pt);
 \draw  (a) -- (b);
 \draw (a) -- ++(0,.5) coordinate (d)coordinate[pos=.5](h);
 \draw (b) -- ++(0,.5) coordinate (e)coordinate[pos=.5](i);
 \draw[dotted] (d) -- ++(0,.15);
 \draw[dotted] (e) -- ++(0,.15);
\node[above] at (f) {$1$};
\end{scope}

\begin{scope}[xshift=2cm,yshift=.7cm]
\coordinate (a) at (-.75,0);
\coordinate (b) at (.75,0);

    \fill[fill=black!10] (a)  -- (b)coordinate[pos=.5](f) -- ++(0,-.6) --++(-1.5,0) -- cycle;

 \draw  (a) -- (b);
 \draw (a) -- ++(0,-.5) coordinate (d)coordinate[pos=.5](h);
 \draw (b) -- ++(0,-.5) coordinate (e)coordinate[pos=.5](i);
 \draw[dotted] (d) -- ++(0,-.15);
 \draw[dotted] (e) -- ++(0,-.15);
     \filldraw[fill=white](a)  circle (2pt);
   \filldraw[fill=white](b) circle (2pt);
\node[below] at (f) {$2$};
\end{scope}
\end{scope}
\end{tikzpicture}
\caption{Deux différentielles quadratiques dans $\Omega^{2}\moduli[0](2,1,1;-4;-2,-2)$ et $\Omega^{2}\moduli[0](2,3,3;-4,-4;-2,-2)$ dont les $2$-résidus sont respectivement $(0;1,1)$ et $(0,0;1,1)$.} \label{fig:extroiszero}
\end{figure}
\par
Pour les strates $\Omega^{2}\moduli[0](2,2s'-1,2s'-1;-4;(-2^{2s'}))$ on procède à la construction suivante. Soit $v\in\CC^{\ast}$, on associe aux pôles d'ordres $-2$ les $2$-parties polaires d'ordre~$2$ associées à~$(v)$. Pour le pôle d'ordre~$-4$, on associe la $2$-partie polaire d'ordre $4$ associée à $((v^{s'+2});(v^{s'+2}))$. La différentielle quadratique est obtenue en collant par rotation d'angle $\pi$ les premiers vecteur~$v$ de la $2$-partie polaire d'ordre $4$ aux derniers et les autres aux bords des $2$-parties polaires d'ordre~$2$. Cette construction est représentée à gauche de la figure~\ref{fig:extroiszero}.
\par
Pour les strates $\Omega^{2}\moduli[0](2,2a-1,2a-1;(-4^{a});(-2^{2}))$ on considère deux cas selon la parité de~$a$. Si $a$ est impair, on note $v$ l'holonomie des liens selles $v_{1}$ et $v_{2}$ labellisés par $1$ et $2$ dans la différentielle quadratique $\eta$ représentée à gauche de la figure~\ref{fig:extroiszero}. On associe à $a-1$ pôles d'ordre $-4$ la $2$-partie polaire d'ordre $4$ associée à $(v;v)$. Puis, on coupe $\eta$ le long des liens selles $v_{1}$ et $v_{2}$. Puis on colle cycliquement par translation $(a-1)/2$ des $2$-parties polaires, respectivement aux segments obtenus en coupant les liens selles $v_{1}$ et~$v_{2}$. Si $a$ est pair, on procède de manière similaire en partant de la différentielle quadratique à droite de la  figure~\ref{fig:extroiszero}.
\end{proof}

\subsection{$k$-différentielles dont les pôles sont d'ordre $-k$.}
\label{sec:juste-k}

Ce cas est, comme dans le cas abélien, assez subtil. En particulier, le cas  quadratique possède de nombreuses différences avec le cas des $k$-différentielles pour $k\geq3$. C'est pourquoi nous présentons d'abord quelques généralités communes à tous les cas. Puis nous traitons le cas des différentielles quadratiques. Enfin nous étudierons le cas des $k$-différentielles pour $k\geq3$.

\smallskip
\par
\subsubsection{\bf Généralités.}
\label{sec:juste-kgen}

Dans les strates ayant un unique zéro, toutes les singularités ont un ordre divisible par $k$. Elles ne sont donc pas primitives (lemme~\ref{lem:puissk}).
Nous nous concentrerons sur le cas des strates avec deux zéros. Le cas des strates avec plus de zéros se traiteront par éclatement.

Nous notons $S$ la surface plate associée à une $k$-différentielle $\xi$ d'une strate $\komoduli[0](\mu)$ avec $\mu:=(a_{1},a_{2};(-k^{s}))$ et dont les $k$-résidus sont $(R_{1},\dots,R_{s})$. 
Nous supposerons que les $a_{i}=kl_{i}+\bar{a_{i}}$ vérifient $-k<\bar{a_{1}}\leq-k/2\leq\bar{a_{2}}<0$. Remarquons que la condition $\pgcd(a_{1},a_{2},k)=1$ implique que le cas $\bar{a_{1}}=\bar{a_{2}}=-k/2$ est possible si et seulement si $k=2$. De plus, on a l'égalité $l_{1}+l_{2}+1=s$.

Nous présentons un procédé de construction de $k$-différentielle dans $\komoduli[0](a_{1},a_{2};(-k^{s}))$ qui sera très utile.  
Soit $E=(r_1,\dots,r_s)$ un $s$-uplet formé de racines $k$-ième des $R_{i}$, on va construire $\calG$ un $s+2$-gone  qui a pour arêtes les $r_{i}$  et deux vecteurs non nuls $v_{1},v_{2}$ avec $v_{2}=\zeta v_{1}$ pour une racine $k$-ième primitive de l'unité~$\alpha$. Les arêtes~$v_{i}$ partitionnement les $r_{i}$ en deux uplets $E_{1}$ et $E_{2}$ de cardinaux respectifs $e_{i}\in\left\{0,\dots, s\right\}$. Par définition le polygone~$\calG$ est la concaténation de $v_{1},r_{1},\dots,r_{e_1},v_{2},r_{e_{1}+1},\dots,r_{s}$ dans cet ordre (si $e_{1}=0$, alors~$v_{2}$ suit directement $v_{1}$).
 Enfin nous supposons que le vecteur normal $n_{v_{1}}$ à $v_{1}$ pointant vers l’extérieur de $\calG$ est tel que  $(n_{v_{1}},v_{1})$ forme une base directe.
Nous supposerons que l'un des deux cas suivant est satisfait.
\begin{itemize}
\item[(C1)] On a $e_{1}=l_{1}$ et $e_{2}=l_{2}+1$ et $\zeta=\exp\left((k-2a_{1})i\pi/k\right)$. 
\item[(C2)] On a $e_{1}=l_{1}+1$ et $e_{2}=l_{2}$ et $\zeta=\exp\left((2a_{1}-k)i\pi/k\right)$.
\end{itemize}
\par
La {\em construction (Ci)} désigne le fait de prendre un polygone de type (Ci) et de former une $k$-différentielle en collant les deux segments $v_{i}$ par rotation et des demi-cylindres infinis aux segments $r_{i}$. On vérifiera facilement que la $k$-différentielle ainsi obtenue est de type $\mu$ et que les $k$-résidus aux pôles d'ordre $-k$ sont les $R_{i}$.  Les deux cas sont illustrés par la figure~\ref{fig:deuxconstructions}, où les segments $v_{i}$ sont tracés normalement, les $r_{i}$ sont en segments hachés et les demi-cylindres sont pointillés. 
\begin{figure}[hbt]

\begin{tikzpicture}[scale=1,decoration={
    markings,
    mark=at position 0.5 with {\arrow[very thick]{>}}}]
\begin{scope}[xshift=-3cm,yshift=2cm]

 \fill[fill=black!10] (0,0) coordinate (p0) --  ++(-1,-1) coordinate[pos=.5] (t1) coordinate (p1)-- ++(2,-.5)  coordinate[pos=.5] (r1) coordinate (p2) -- ++(2,0.5) coordinate[pos=.5] (r2) coordinate (p3)-- ++(-1,1) coordinate[pos=.5] (t2) coordinate (p4)-- ++(0,1)  coordinate[pos=.5] (r3)coordinate (p5) --  (p0) coordinate[pos=.5] (r4);

\draw[postaction={decorate}]  (p0) -- (p1);
\draw[dashed] (p1)  --  (p2);
\draw[dashed] (p2) --(p3);
\draw[postaction={decorate}] (p3) -- (p4); 
\draw[dashed] (p4) -- (p5);
\draw[dashed] (p5) --(p0);

\draw[dotted] (p1) --++ (-.2,-1.5);
\draw[dotted] (p2)-- ++ (-.2,-1.5);
\draw[dotted] (p2)-- ++ (.2,-1.5);
\draw[dotted] (p3) --++ (.2,-1.5);
\draw[dotted] (p4) --++ (1.5,0);
\draw[dotted] (p5) --++ (1.5,0);
\draw[dotted] (p5) --++ (-1,1);
\draw[dotted] (p0) --++ (-1,1);

\filldraw[fill=white] (p0) circle (2pt);
\filldraw[fill=white] (p4) circle (2pt);
\filldraw[fill=white] (p5) circle (2pt);
\fill[] (p1) circle (2pt);
\fill[] (p2) circle (2pt);
\fill[] (p3) circle (2pt);

\node[above,rotate=360/8] at (t1) {$v_{2}$};
\node[above,rotate=-360/8] at (t2) {$v_{1}$};
\node[below] at (r1) {$r_{3}$};
\node[below] at (r2) {$r_{4}$};
\node[right] at (r3) {$r_{1}$};
\node[left] at (r4) {$r_{2}$};
\node at ($ (p2) +(0,.8)$) {$\calG$};
\end{scope}

\begin{scope}[xshift=4cm]

 \fill[fill=black!10] (0,0) coordinate (p1) -- ++(1,1) coordinate[pos=.5] (t1) coordinate (p2)-- ++(1,0)  coordinate[pos=.5] (r1) coordinate (p3) -- ++(1,-1) coordinate[pos=.5] (t2) coordinate (p4)-- ++(0,3) coordinate[pos=.5] (r2) coordinate (p5)-- ++(-3,-1) coordinate[pos=.5] (r3)coordinate (p6) --  (p1) coordinate[pos=.5] (r4);

\draw[postaction={decorate}] (p1) --  (p2);
\draw[dashed] (p2) -- (p3);
\draw[postaction={decorate}] (p3) --  (p4); 
\draw[dashed] (p4) -- (p5);
\draw[dashed] (p5) -- (p6);
\draw[dashed] (p6) -- (p1);

\draw[dotted] (p2) --++ (0,-1.5);
\draw[dotted] (p3)-- ++ (0,-1.5);
\draw[dotted] (p4)-- ++ (1.5,0);
\draw[dotted] (p5) --++ (1.5,0);
\draw[dotted] (p5) --++ (-1,1);
\draw[dotted] (p6) --++ (-1,1);
\draw[dotted] (p6) --++ (-1.5,0);
\draw[dotted] (p1) --++ (-1.5,0);

\fill(p2) circle (2pt);
\fill (p3) circle (2pt);
\filldraw[fill=white] (p4) circle (2pt);
\filldraw[fill=white]  (p5) circle (2pt);
\filldraw[fill=white]  (p6) circle (2pt);
\filldraw[fill=white]  (p1) circle (2pt);

\node[above,rotate=360/8] at (t1) {$v_{2}$};
\node[above,rotate=-360/8] at (t2) {$v_{1}$};
\node[below] at (r1) {$r_{4}$};
\node[right] at (r2) {$r_{1}$};
\node[above] at (r3) {$r_{2}$};
\node[left] at (r4) {$r_{3}$};
\node at ($ (r1) +(0,.6)$) {$\calG$};
\end{scope}

\end{tikzpicture}

\caption{Les constructions (C1) à gauche et (C2) à droite sont illustrées dans le cas de la strate $\Omega^{4}\moduli[0](5,3;(-4^{4}))$.}\label{fig:deuxconstructions}
\end{figure}

En général, un tel polygone $\calG$ n'existe pas et les constructions (Ci) ne sont pas possibles. Nous donnons maintenant une condition suffisante (non nécessaire) sur l'existence de~$\calG$, ce qui implique l’existence de $k$-différentielles correspondantes. Nous commençons par  le cas où  $-k<a_{2}<0$ (en particulier on a $-k/2\leq a_{2}<0$).

\begin{lem}\label{lem:constravecneg}
Soit $(R_{1},\dots,R_{s})$ un $s$-uplet de nombres complexes non nuls et $r_{i}$ une racine $k$-ième de~$R_{i}$. Si la somme
$t:=\sum_{i\neq1}r_{i}$
satisfait les propriétés suivantes:
\begin{enumerate}[i)]
\item  $0<|t|\leq |r_{1}|$,
\item $t\neq -\zeta r_{1}$ pour toutes les racines $k$-ième de l'unité $\zeta$,
\item si $s\geq 3$, alors les $r_{i}$ ne sont pas tous colinéaires pour $i\geq 2$, 
\end{enumerate}
alors il existe une $k$-différentielle dans la strate $\komoduli[0](a_{1},a_{2};(-k^{s}))$ avec $-k/2\leq a_{2}<0$ dont les $k$-résidus sont $(R_{1},\dots,R_{s})$.
\end{lem}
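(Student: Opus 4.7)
The plan is to build a convex simple $(s+2)$-gone of type~(C1) introduced above, then glue the two edges $t_1$ and $t_2$ by the rotation $\zeta$ and cap each remaining boundary edge $r_i$ with a half-infinite cylinder. Since $-k<a_2<0$ forces $l_2=0$, the constraints of~(C1) become $e_1=l_1=s-1$ and $e_2=l_2+1=1$, which imposes $E_2=\{r_1\}$ and $E_1=\{r_2,\dots,r_s\}$ (the latter having total vector displacement exactly $t$).

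First I would select, among the admissible $k$-th roots of $R_2,\dots,R_s$, a family of vectors whose sum is the prescribed $t$ and that can be arranged in order of argument to form a convex polygonal arc from a vertex $Q_1$ to a vertex $P_1$. Condition~(iii) is precisely what guarantees this arc has genuine two-dimensional extent when $s\ge 4$; for $s=2,3$ the corresponding polygon is drawable directly and is handled separately. Place the single edge $r_1$ on the opposite side, joining two vertices $P_2$ and $Q_2$. The closure of the $(s+2)$-gone together with the relation $t_2=\zeta t_1$ then determines $t_1$ uniquely once $\zeta$ is fixed: $\zeta$ is chosen as the primitive $k$-th root of unity whose exponent is dictated by $\bar{a_1}$, so that the gluing rotation realises the correct angular defect at the special vertex. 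Condition~(ii) is exactly what ensures $t_1\neq 0$, and condition~(i) gives enough geometric room to place the two sides of the polygon without self-intersection.

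Next I would verify that the angle contributions at the two identified vertices match the prescribed orders. After the rotational gluing, $P_1\sim Q_1$ becomes a conical singularity whose total angle equals the sum of the interior polygon angles at $P_1$ and $Q_1$; the choice of $\zeta$ ensures this sum is $2\pi(a_1+k)/k$, which exceeds $2\pi$ since $a_1>0$, matching the angular requirement of~(C1). The complementary vertex $P_2\sim Q_2$ then carries angle $2\pi(a_2+k)/k<2\pi$, consistent with $-k<a_2<0$. Finally, attaching a half-infinite cylinder of circumference $|r_i|$ to each remaining boundary edge $r_i$ produces, by Lemma~\ref{lm:kresidu}, a pole of order $-k$ of $k$-residue exactly $R_i=r_i^k$. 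The resulting flat surface has genus zero (its ``core'' is a disk) and the two prescribed conical singularities, so the associated $k$-differential lies in $\komoduli[0](a_1,a_2;(-k^s))$ with the right $k$-residues.

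The main obstacle is verifying simplicity of the polygon in full generality: hypotheses~(i) and~(iii) should suffice, but this requires a careful case analysis depending on whether $\zeta=-1$ (possible only when $k$ is even) and on the relative argument of $r_1$ and $t$. A secondary issue is that the formula $t_1=(r_1-t)/(1-\zeta)$ (or its analogue) degenerates precisely when $t=\zeta r_1$, which is ruled out by~(ii); one must check that this exclusion is exactly what is needed rather than a stronger non-degeneracy. The small-$s$ cases $s=2,3$ (where~(iii) is vacuous) will be dealt with by exhibiting the polygon explicitly from conditions~(i) and~(ii) alone.
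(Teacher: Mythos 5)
Votre stratégie est bien celle du texte : un polygone de type (C1) avec $E_{2}=\{r_{1}\}$ et $E_{1}=\{r_{2},\dots,r_{s}\}$ ordonné par argument, recollement de $t_{1}$ sur $t_{2}$ par rotation, demi-cylindres infinis sur les $r_{i}$, et vous identifiez correctement le rôle de chacune des trois hypothèses (non-dégénérescence de $t_{1}$ via (ii), extension bidimensionnelle de la chaîne via (iii)). Mais le point que vous désignez vous-même comme « the main obstacle » — la simplicité du polygone — est précisément le cœur de la démonstration, et vous ne le résolvez pas. Ce n'est pas un simple « careful case analysis » laissé au lecteur : l'hypothèse (i) ne donne que $|t|\leq|r_{1}|$, de sorte que si $|t|$ est très petit devant $|r_{1}|$, la configuration unique que vous proposez (l'arête $r_{1}$ isolée « on the opposite side », entre $P_{2}$ et $Q_{2}$) peut échouer, et inversement la configuration où $v_{1}$ et $v_{2}$ joignent directement $t$ à l'extrémité de $r_{1}$ peut voir ces segments traverser $r_{1}$. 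Le texte résout cela par deux dispositifs que votre proposition ne contient pas : d'abord on normalise les racines pour que $\arg(t)\in\left[\arg(r_{1})-\tfrac{2\pi}{k},\arg(r_{1})\right[$ et on range les $r_{i}$, $i\geq2$, par argument croissant dans une fenêtre de longueur $\pi$ autour de $\arg(r_{1})$ (ce qui rend la chaîne convexe et l'empêche de rencontrer $r_{1}$) ; ensuite on construit un polygone intermédiaire où $v_{1}$ et $v_{2}$ sont adjacents et relient $t$ au point $r_{1}$, et \emph{seulement si} ces segments coupent $r_{1}$ on passe à la concaténation $r_{2},\dots,r_{s},v_{1},-r_{1},v_{2}$ — c'est cette dichotomie qui garantit qu'un des deux polygones est toujours simple. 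Sans elle, votre construction n'aboutit pas dans tous les cas couverts par les hypothèses.

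Un point secondaire : votre décompte angulaire est inexact. L'angle total du zéro d'ordre $a_{1}$, soit $\tfrac{2\pi(a_{1}+k)}{k}$, n'est pas concentré au sommet $P_{1}\sim Q_{1}$ ; il se répartit entre les angles du polygone à $P_{1}$, $Q_{1}$ et à tous les sommets intermédiaires de la chaîne, plus les contributions des $s-1$ demi-cylindres qui identifient ces sommets entre eux. La condition (C1) ne demande que la somme des angles du polygone en $P_{1}$ et $Q_{1}$ excède $2\pi$, ce qui est une contrainte beaucoup plus faible que celle que vous énoncez. De même, dans la construction du texte, l'angle $\tfrac{2\pi(a_{2}+k)}{k}$ est imposé au point d'incidence de $v_{1}$ et $v_{2}$ (segments de même longueur), et non déduit d'une formule du type $t_{1}=(r_{1}-t)/(1-\zeta)$ ; votre lecture de la condition (ii) comme excluant exactement la dégénérescence $t=\zeta r_{1}$ reste néanmoins correcte, puisque les racines ne sont définies qu'à une racine $k$ième de l'unité près.
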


\begin{proof}
Considérons des racines $r_{i}$ de $R_{i}$ satisfaisants aux conditions du lemme~\ref{lem:constravecneg}. Nous construisons un polygone de type (Ci) de la façon suivante.  Quitte à multiplier les $R_{i}$ par un complexe non nul et les $r_{i}$ par une racine $k$-ième de l'unité, nous pouvons supposer que $-r_{1}$ est réel positif et que l'argument de~$t$ appartient à $\left[-\frac{2\pi}{k},0\right[$. Pour $i\geq 2$, prenons les arguments des~$r_{i}$ dans  $\left[-\pi,\pi\right[$ et rangeons les $r_{i}$ par argument croissant.
Nous construisons un polygone intermédiaire en concaténant les $r_{i}$ depuis l'origine jusqu'au point final $t$ dans cet ordre. Puis nous relions les points~$t$ à $-r_{1}$ par deux segments $v_{1}$ et~$v_{2}$ d'égale longueur qui font un angle $\alpha=(a_{2}+k)\frac{2\pi}{k}$ (voir la figure~\ref{fig:constructionmoinsk} pour lequel des deux angles à leur intersection est~$\alpha$). On supposera que $v_{1}$ part de $t$ et $v_{2}$ termine en $-r_{1}$. Le segment $v_{1}$ peut couper le segment $-r_{1}$. Toutefois, c'est l'unique intersection qui peut se produire. En effet, si l'argument de~$t$ appartient à $\left]-\frac{2\pi}{k},0\right[$, comme $\alpha \geq \pi$ ni $v_{1}$ ni $v_{2}$ ne rencontrent d'autres arêtes ou un sommet du polygone. Si l'argument de~$t$ est  $-\frac{2\pi}{k}$, on obtient la même conclusion en tenant compte du fait que $|t|<|r_{1}|$. 

Si $v_{1}$ ne rencontrent pas $-r_{1}$, alors le polygone obtenue est de type (C2) (voir la figure~\ref{fig:constructionmoinsk} à gauche). Si  $v_{1}$ coupe $-r_{1}$,  le polygone de type (C1) est obtenu en concaténant $v_{1}$ puis $r_{1}$, $v_{2}$ et enfin les  $r_{i}$ pour $i\geq2$ (voir la figure~\ref{fig:constructionmoinsk} au centre ou à droite). Notons que le point (iii) implique que ce polygone n'est pas dégénéré.  Les construction (Ci) permettent d'obtenir la $k$-différentielle souhaitée. 
\begin{figure}[hbt]

\begin{tikzpicture}[scale=2.5,decoration={
    markings,
    mark=at position 0.5 with {\arrow[very thick]{>}}}]
\begin{scope}[xshift=-3cm,yshift=2cm]

 \fill[fill=black!10] (0,0) coordinate (p0) --  ++(1,0) coordinate[pos=.5] (t1) coordinate (p1)-- ++(-.3,-.1)  coordinate[pos=.5] (r1) coordinate (p2) -- ++(-.3,-.1) coordinate[pos=.5] (r2) coordinate (p3)-- ++(-.8,-.6) coordinate[pos=.5] (t2) coordinate (p4)-- ++(-.3,.5)  coordinate[pos=.5] (r3)coordinate (p5) --  (p0) coordinate[pos=.5] (r4);

\draw[dashed,postaction={decorate}]  (p0) -- (p1);
\draw[postaction={decorate}] (p2)  --  (p1);
\draw[postaction={decorate}] (p3) --(p2);
\draw[dashed,postaction={decorate}] (p4) -- (p3); 
\draw[dashed,postaction={decorate}] (p5) -- (p4);
\draw[dashed,postaction={decorate}] (p0) --(p5);
\draw[dotted] (p0) --(p3)coordinate[pos=.6](t);

\fill[] (p0) circle (.5pt);
\fill[] (p4) circle (.5pt);
\fill(p5) circle (.5pt);
\fill[] (p1) circle (.5pt);
\filldraw[fill=white] (p2) circle (.5pt);
\fill[] (p3) circle (.5pt);

\node[above] at (t1) {$-r_{1}$};
\node[below] at (t2) {$r_{4}$};
\node[below] at (r1) {$v_{2}$};
\node[below] at (r2) {$v_{1}$};
\node[right] at (r3) {$r_{3}$};
\node[left] at (r4) {$r_{2}$};
\node[] at (t) {$t$};

\draw[->] (.65,-.115) arc  (185:30:.07); \node at (.7,.05) {$\alpha$};
\end{scope}


\begin{scope}[xshift=-.5cm,yshift=2cm]

 \draw[dashed] (0,0) coordinate (p0) --  ++(1,0) coordinate[pos=.3] (t1) coordinate (p1)-- ++(-.4,.2)  coordinate[pos=.5] (r1) coordinate (p2) -- ++(-.2,-.4) coordinate[pos=.5] (r2) coordinate (p3)-- ++(-.8,-.6) coordinate[pos=.5] (t2) coordinate (p4)-- ++(-.3,.5)  coordinate[pos=.5] (r3)coordinate (p5) --  (p0) coordinate[pos=.5] (r4);

\draw[] (p2)  --  (p1);
\draw[] (p3) --(p2);
\draw[dotted] (p0) --(p3)coordinate[pos=.6](t);

\node[above] at (t1) {$-r_{1}$};
\node[below] at (t2) {$r_{4}$};
\node[above] at (r1) {$v_{2}$};
\node[below right] at (r2) {$v_{1}$};
\node[right] at (r3) {$r_{3}$};
\node[left] at (r4) {$r_{2}$};
\node[] at (t) {$t$};

\draw[->] (.585,.17) arc  (250:-60:.07); \node at (.6,.25) {$\alpha$};
\end{scope}

\begin{scope}[xshift=1cm,yshift=2.2cm]

 \fill[fill=black!10] (0,0) coordinate (p0) --  ++(1,0) coordinate[pos=.5] (t1) coordinate (p1) -- ++(-.2,-.4) coordinate[pos=.5] (r2) coordinate (p3)-- ++(-.8,-.6) coordinate[pos=.5] (t2) coordinate (p4)-- ++(-.3,.5)  coordinate[pos=.5] (r3)coordinate (p5) -- ++(.7,.3)  coordinate[pos=.5] (r1) coordinate (p2) --  (p0) coordinate[pos=.4] (r4);

\draw[dashed]  (p0) -- (p1);
\draw[] (p3)  --  (p1);
\draw[] (p0) --(p2);
\draw[dashed,] (p4) -- (p3); 
\draw[dashed,] (p5) -- (p4);
\draw[dashed,] (p2) --(p5);

\filldraw[fill=white] (p0) circle (.5pt);
\fill[] (p4) circle (.5pt);
\fill (p5) circle (.5pt);
\filldraw[fill=white] (p1) circle (.5pt);
\fill[] (p2) circle (.5pt);
\fill[] (p3) circle (.5pt);

\node[above] at (t1) {$-r_{1}$};
\node[below] at (t2) {$r_{4}$};
\node[left] at (r1) {$r_{2}$};
\node[below right] at (r2) {$v_{1}$};
\node[right] at (r3) {$r_{3}$};
\node[left] at (r4) {$v_{2}$};
\end{scope}

\end{tikzpicture}

\caption{La construction du lemme~\ref{lem:constravecneg} lorsque le segment $v_{1}$ ne rencontre pas $-r_{1}$ à gauche et lorsqu'ils s'intersectent au centre et à droite.}\label{fig:constructionmoinsk}
\end{figure}
\end{proof}
La preuve donne un peu plus que le lemme. En effet, on a la remarque suivante.
\begin{rem}\label{rem:quadmoinsun}
 Avec les mêmes notations que dans la preuve, si l'argument de $t$ est $-\frac{2\pi}{k}$ et si $|t|=|r_{1}|$, alors la construction que nous donnons dans la preuve fonctionne sauf si $a_{2}=-1$.
\end{rem}

Nous donnons l'analogue du lemme~\ref{lem:constravecneg} pour les strates dont les zéros sont d'ordres $a_{i}$ positifs. De plus, nous nous restreindrons au cas $k\geq3$.
\begin{lem}\label{lem:constravecnegbis}
Soient  $S:=\komoduli[0](a_{1},a_{2};(-k^{s}))$ une strate de genre zéro,  $(R_{1},\dots,R_{s})\in(\CC^{\ast})^{s}$ et des racines $k$-ième $r_{i}$ de~$R_{i}$. 
 S'il existe deux sommes $s_{1}:=\sum_{i\leq l_{1}}r_{i}$ et $s_{2}:=\sum_{i>l_{1}}r_{i}$ telles que 
\begin{enumerate}[i)]
\item $0<|s_{1}|<q |s_{2}|$, avec $q=\frac{1}{\sqrt{2}}$ pour $k\geq4$ et   $q=\frac{1}{2}$ pour $k=3$,
\item  les $l_{1}$ premiers $r_{i}$ ne sont pas tous colinéaires si $l_{1}\geq 3$ respectivement,
\end{enumerate}
alors il existe une différentielle dans la strate  $S$ telle que les $k$-résidus sont $(R_{1},\dots,R_{s})$.
\end{lem}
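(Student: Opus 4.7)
The plan is to mimic the construction in Lemma~\ref{lem:constravecneg}, but using polygons of type (C2) instead of (C1), since both zeros now have positive order. Starting from a choice of roots $r_i$ satisfying the hypotheses, I would try to build a simple $(s+2)$-gon whose sides are, in cyclic order, the $r_i$ for $i\leq l_1$ (summing to $s_1$), a segment $t_1$, the $r_i$ for $i>l_1$ (summing to $s_2$), and finally a segment $t_2=\zeta t_1$, for some primitive $k$-th root of unity $\zeta$. The closure condition of the polygon forces $t_1=-(s_1+s_2)/(1+\zeta)$, so the geometry of the construction is entirely governed by the choice of $\zeta$.

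First I would choose $\zeta$ so that $|1+\zeta|$ is as small as possible among primitive $k$-th roots of unity. For $k=3$ only $\zeta=e^{\pm 2i\pi/3}$ are available, giving $|1+\zeta|=1$; for $k\geq 4$ one can pick $\zeta$ with argument close to $\pi$ and reach $|1+\zeta|\leq \sqrt{2}$ (in particular $\zeta=\pm i$ for $k=4$). This explains the two values of $q$ appearing in the statement: with the corresponding choice of $\zeta$, the hypothesis $|s_1|<q|s_2|$ becomes the precise inequality needed to guarantee that the segments $t_1,t_2$ fit geometrically on the ``short'' side of the broken line carrying $s_2$, and that the $r_i$ for $i>l_1$ can be ordered by argument so as to bound a region disjoint from $t_1,t_2$. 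Condition (ii), excluding full collinearity of the $r_i$ when $s\geq 4$, will be invoked to slightly perturb the broken line carrying $s_2$ so that the resulting polygon is simple rather than degenerate.

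The main technical obstacle is twofold: first, verifying that the polygon is simple, so that construction (C2) genuinely produces a surface plate in $\komoduli[0](a_1,a_2;(-k^s))$; second, checking the angular condition of (C2)---that the sum of interior angles at the vertices $P_1,Q_1$ is strictly less than $2\pi$---so that after gluing $t_1$ to $t_2$ by rotation the two resulting conical singularities have orders $a_1$ and $a_2$ rather than a different distribution. Both points are controlled by the quantitative bound $|s_1|<q|s_2|$: the ratio constrains how much the segments $t_1,t_2$ protrude, and hence both the simplicity of the polygon and the angular balance at $P_1,Q_1$. Once the polygon is produced, gluing $t_1$ to $t_2$ by the rotation of angle $2\pi/k$ and attaching infinite half-cylinders to each $r_i$ yields the desired primitive $k$-differential with $k$-residues $R_i=r_i^k$, exactly as in the setup preceding Lemma~\ref{lem:constravecneg}.
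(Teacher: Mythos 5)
Your overall strategy is indeed the paper's: build an $(s+2)$-gon from the two groups of roots joined by two segments $t_{1}$ and $t_{2}=\zeta t_{1}$, glue half-infinite cylinders to the $r_{i}$ edges and glue $t_{1}$ to $t_{2}$ by a rotation of order $k$, exactly as in the setup preceding Lemma~\ref{lem:constravecneg}. The problem is that the two points you flag as ``the main technical obstacle'' --- simplicity of the polygon and the angular balance at the two future cone points --- are precisely the content of the lemma, and your proposal only asserts that both are ``controlled by'' the bound $|s_{1}|<q|s_{2}|$ without carrying out either verification. The paper supplies the specifics you omit: one orders the $l_{2}+1$ roots of $s_{2}$ by decreasing argument taken in $[\arg(s_{2})-\pi,\arg(s_{2})+\pi]$, orders the $l_{1}$ roots of $s_{1}$ by increasing argument (after multiplying them by a primitive $2k$-th root of unity when $k$ is odd), uses the freedom of replacing each group of roots by $\zeta^{j}r_{i}$ to normalize $\arg(s_{2})=0$ and $\arg(s_{1})\in[-\pi/k,\pi/k]$, and then shows that $|s_{1}|<q|s_{2}|$ forces the two connecting segments to have strictly positive imaginary part --- that is the concrete non-degeneracy criterion. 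None of these choices appears in your sketch, and ``slightly perturbing'' the broken line is not how hypothesis (ii) enters.

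Your explanation of the constant $q$ is also wrong as stated. You claim that for every $k\geq4$ one can choose a primitive $k$-th root of unity $\zeta$ with argument close to $\pi$ and $|1+\zeta|\leq\sqrt{2}$; for $k=6$ the only primitive roots are $e^{\pm i\pi/3}$, so $|1+\zeta|=\sqrt{3}>\sqrt{2}$, and your argument would not cover that case. Moreover, with your closure relation $|t_{1}|=|s_{1}+s_{2}|/|1+\zeta|$, making $|1+\zeta|$ small makes $t_{1}$ \emph{longer}, which works against the segments ``fitting on the short side''; the direction of your optimization is at best incoherent. The values $q=\tfrac12$ for $k=3$ and $q=\tfrac{1}{\sqrt2}$ for $k\geq4$ do reflect the geometry of the available rotations, but through the positivity-of-imaginary-part criterion after the normalizations above, not through a bound on $|1+\zeta|$ alone. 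As it stands the proposal is a correct identification of the construction to be used, not a proof.
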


\begin{proof}
La preuve de ce lemme est analogue à celle du lemme~\ref{lem:constravecneg}.  Toutefois, nous utiliserons une variante de la construction (C1) car en général le polygone que nous construirons ne peut pas se représenter sans intersection dans le plan. 

Prenons les racines $r_{i}$ comme dans le lemme. Nous supposerons que la somme $s_{2}$ des $l_{2}+1$ dernières racines  est réelle positive.  Quitte à multiplier tous les $r_{i}$ pour $i\leq l_{1}$ par une même racine $k$-ième de l'unité,  l'argument de~$s_{1}$ appartient au segment $\left[-\tfrac{\pi}{k},\tfrac{\pi}{k}\right]$. 
On fait  alors la construction (C1) avec les deux vecteurs $s_{1}$ et $s_{2}$. Le point (i) implique que polygone ainsi formé est non dégénéré. En effet, dans le cas $k=3$, le fait que  $|s_{1}| < \frac{1}{2} |s_{2|} $ et la condition sur l'argument de $s_{1}$ implique que la somme $s_{1} +s_{2}$ est d’argument de norme strictement inférieur à celui de $1+\exp(i\pi/3)$. Comme $v_{2} = (s_{1}+s_{2}) / (1+\exp(i\pi/3))$ on en déduit que la partie imaginaire de $v_{2}$ est strictement négative, et donc la non dégénérescence du polygone.  La non dégénérescence dans les cas $k\geq 4$ se montre similairement.

On procède alors à la construction suivante. On forme deux polygones constitués respectivement des vecteurs $-s_{i}$ et des racines $r_{i}$ qui forment les sommes. On concatène les $l_{1}$ premiers $r_{i}$, resp. $l_{2}+1$ derniers, par argument croissant en commençant par ceux d'arguments opposés à ceux de $s_{i}$ puis le vecteur $-s_{i}$. Le premier polygone est non dégénéré pour $l_{1}\geq3$ par la propriété~(ii). Si  $l_{1}=2$ et les racines sont proportionnelles, alors le polygone correspondant est dégénéré, mais la construction suivante s'étend à ce cas. Si les $l_{2}+1$ dernières racines sont proportionnelles, alors on peut supposer qu'elle sont toutes dans la même demi-droite partant de l'origine. En effet, sinon on pourrait changer les racines afin d'augmenter la somme $s_{2}$ et refaire la construction avec ces nouvelles racines. Si les $l_{2}+1$ dernières racines sont proportionnelles, le polygone est alors dégénéré, mais la construction suivante s'étend sans problèmes. La $k$-différentielle est alors obtenue en collant ces polygones au polygone initial, des demi-cylindres infinis aux vecteurs~$r_{i}$ et les deux vecteurs $v_{i}$ ensemble.
\end{proof}

Pour terminer ce paragraphe, nous voudrions insister sur le fait que les constructions ci-dessus ne sont pas les seules pour obtenir de telles $k$-différentielles. Toutefois, elles permettent de construire la majorité de celles-ci.
\smallskip
\par
\subsubsection{\bf Les strates quadratiques.}

Dans ce paragraphe, nous regardons l'application résiduelle des strates de la forme $\Omega^{2}\moduli[0](a_{1},\dots,a_{n};(-2^{s}))$ avec $a_{i}\geq -1$ et au moins deux $a_{i}$ sont impairs. Nous considérons pour commencer le cas de deux zéros. 

Dans ce cas, il existe deux familles de strates spéciales données dans le lemme suivant. Rappelons que trois nombres sont triangulaires s'ils sont le carré de nombres de somme nulle (voir définition~\ref{def:triangulaire}).
\begin{lem}\label{lem:quadexeptionnelsgzero}
L'image de l'application résiduelle des strates $\Omega^{2}\moduli[0](2s'-1,2s'+1;(-2^{2s'+2}))$ ne contient pas les résidus quadratiques de la forme $(1,\dots,1,R,R)$ avec $R\in\CC^{\ast}$.

L'image par l'application résiduelle des strates $\Omega^{2}\moduli[0](2s'-1,2s'-1;(-2^{2s'+1}))$ ne contient pas les résidus quadratiques de la forme $(R_{1},\dots,R_{1},R_{2},R_{3})$ où  $R_{1},R_{2},R_{3}$ sont triangulaires.
\end{lem}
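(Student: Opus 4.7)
La preuve de chaque assertion procède par l'absurde via le revêtement canonique. Supposons qu'il existe une différentielle quadratique $\xi$ dans la strate considérée avec les résidus interdits. Comme les deux zéros de $\xi$ sont d'ordres impairs, ce sont les seuls points de ramification du revêtement canonique $\pi:\whX\to\PP^{1}$, et Riemann-Hurwitz donne $\whX\cong\PP^{1}$. La différentielle abélienne $\omega$ sur $\whX$ vérifiant $\pi^{\ast}\xi=\omega^{2}$ est anti-invariante sous l'involution $\iota$ du revêtement, soit $\iota^{\ast}\omega=-\omega$. Chaque pôle double de $\xi$ de résidu quadratique $R$ se relève en une $\iota$-paire de pôles simples de résidus opposés $\pm r$ avec $r^{2}=R$, et un zéro d'ordre impair $m$ devient un zéro d'ordre $m+1$ de $\omega$ en un point fixe de $\iota$.

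Pour le premier cas, $\omega$ appartient à $\omoduli[0](2s',2s'+2;(-1^{4s'+4}))$ avec résidus $(1^{2s'},(-1)^{2s'},r,-r,r,-r)$; ses zéros étant d'ordres distincts sont fixés individuellement par $\iota$. Par la Proposition~\ref{prop:g0p-1plusieurszero}, l'existence de $\omega$ fournit une différentielle stable à deux composantes $X_{1},X_{2}$ (une pour chaque zéro) reliées par un unique nœud. L'anti-invariance de $\omega$ permet de choisir cette dégénérescence équivariante: $\iota$ préserve alors chaque composante (les zéros d'ordres distincts ne pouvant être échangés) et fixe le nœud sur chacune. En coordonnée locale compatible $z$ près du nœud $q_{1}\in X_{1}$ où $\iota:z\mapsto-z$, le résidu $\rho$ du pôle simple au nœud vérifie $\rho=-\rho$, donc $\rho=0$, contredisant la non-nullité requise au nœud d'une différentielle stable.

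Pour le second cas, les zéros de $\omega$ sont tous deux d'ordre $2s'$. Si $\iota$ préserve les deux composantes de la différentielle stable (sous-cas où les zéros sont individuellement fixés), le même argument s'applique. Sinon, $\iota$ échange les composantes, et $\omega_{2}=-\iota^{\ast}\omega_{1}$. Les résidus sur $X_{1}$ sont $\epsilon_{i}r_{1}$ pour $i=1,\ldots,2s'-1$, $\epsilon r_{2}$, $\delta r_{3}$, plus le résidu $\rho$ au nœud, avec $\epsilon_{i},\epsilon,\delta\in\lbrace\pm 1\rbrace$. L'annulation de la somme, combinée à la relation triangulaire $r_{1}+r_{2}+r_{3}=0$ (pour un choix approprié des racines carrées), donne $\rho=(\delta-m)r_{1}+(\delta-\epsilon)r_{2}$ avec $m=\sum_{i}\epsilon_{i}$ entier impair. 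L'existence de $\omega_{1}$ avec ces résidus dans $\omoduli[0](2s';(-1^{2s'+2}))$ est contrainte par la Proposition~\ref{prop:gzeropolesimples}, et une analyse cas par cas des choix de signes $(m,\epsilon,\delta)$, exploitant la structure des graphes de connexions dans le cas colinéaire, exclut toutes les configurations admissibles.

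La principale difficulté est la justification rigoureuse de la dégénérescence équivariante: il faut montrer qu'une perturbation de $\omega$ en coordonnées période peut être choisie dans le sous-espace anti-invariant tout en brisant les liens-selles entre singularités coniques distinctes, produisant une différentielle stable $\iota$-équivariante satisfaisant les conditions de la Proposition~\ref{prop:g0p-1plusieurszero}. Une approche alternative, dans l'esprit de la preuve du Lemme~\ref{lem:quadrazeroexep}, consisterait à analyser directement la structure plate de $\xi$ en considérant une géodésique entre les deux zéros dont le complémentaire est orientable, puis à exploiter la géométrie des domaines polaires bordant cette géodésique pour obtenir une expression algébrique du résidu quadratique au pôle spécial incompatible avec la forme prescrite; cette approche demanderait toutefois une analyse combinatoire distincte pour chacun des deux cas.
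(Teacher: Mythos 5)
Votre stratégie (revêtement canonique, anti-invariance, dégénérescence équivariante) est réellement différente de celle du papier. Pour la première assertion, le papier colle simplement les deux pôles de résidu quadratique $R$ en un nœud, lisse la différentielle entrelacée obtenue par le lemme~\ref{lem:lisspolessimples} sans toucher aux autres résidus, et aboutit à une différentielle de $\Omega^{2}\moduli[1](2s'-1,2s'+1;(-2^{2s'}))$ de résidus $(1,\dots,1)$, exclue par le lemme~\ref{lem:nonsurjdeuxzero}; pour la seconde, il traite le cas $s'=1$ par une analyse plate directe (holonomies $r_{i}$, $r_{12}$, relation $2r_{12}+r_{1}+r_{2}+r_{3}=0$, examen des deux pentagones possibles), puis procède par récurrence en retirant des paires de pôles de résidu $R_{1}$. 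Votre argument, lui, repose entièrement sur l'étape que vous signalez vous-même comme non justifiée, et celle-ci n'est pas un simple détail technique : c'est là que réside tout le contenu. Pire, un comptage de parité montre qu'aucune différentielle stable $\iota$-équivariante du type fourni par la proposition~\ref{prop:g0p-1plusieurszero} ne peut exister. Dans le premier cas, les seuls points fixes de $\iota$ sur une composante sont son zéro et le nœud; les pôles marqués d'une composante forment donc une réunion d'orbites libres, en nombre pair, alors que la composante du zéro d'ordre $2s'$ doit en porter $2s'+1$. Dans le second cas, le même argument interdit que $\iota$ préserve les composantes ($2s'+1$ pôles chacune), et l'échange des composantes est incompatible avec le fait que les deux zéros sont les points de ramification, donc des points fixes de $\iota$. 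La contradiction que vous obtenez ne prouve donc que l'inexistence d'une dégénérescence \emph{à la fois} équivariante \emph{et} du type « un zéro par composante », et non l'inexistence de $\xi$ : il faudrait démontrer que toute $\omega$ anti-invariante admet une dégénérescence équivariante de ce type combinatoire précis, ce qui est justement faux ou, au mieux, non établi.

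La seconde assertion souffre d'un problème supplémentaire : même en admettant le cadre, « l'analyse cas par cas des signes $(m,\epsilon,\delta)$ » ne peut pas aboutir avec les outils cités. Pour un triplet triangulaire générique, les racines $r_{1},r_{2},r_{3}$ ne sont pas colinéaires; les uplets de résidus induits sur chaque composante sont alors non colinéaires et donc toujours réalisables d'après la proposition~\ref{prop:gzeropolesimples}. Seules les configurations $m=\epsilon=\delta=\pm1$ donnent $\rho=0$; pour toutes les autres (par exemple $(\epsilon_{1},\epsilon_{2},\epsilon_{3})=(1,1,-1)$ pour $s'=1$, qui donne $\rho=2r_{3}\neq0$), rien dans votre argument ne produit de contradiction. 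Il faudrait réinjecter l'anti-invariance dans l'analyse du graphe déliant lui-même, ce qui ramène exactement à la difficulté précédente. En l'état, la preuve n'est donc pas réparable sans une idée supplémentaire; l'approche alternative que vous esquissez en fin de proposition (analyse directe des domaines polaires, dans l'esprit du lemme~\ref{lem:quadrazeroexep}) est en fait beaucoup plus proche de ce que fait le papier pour le cas triangulaire, mais elle n'est pas développée.
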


\begin{proof}
Supposons qu'il existe une $2$-différentielle dans  $\Omega^{2}\moduli[0](2s-1,2s+1;(-2^{2s'+2}))$ dont les $2$-résidus sont $(1,\dots,1,R,R)$. On obtient une $2$-différentielle entrelacée lissable en collant les deux pôles ayant pour résidu quadratique~$R$. De plus, cette différentielle est lissable sans modifier les autres résidus (voir lemme~\ref{lem:lisspolessimples}). On obtient donc une $2$-différentielle dans la strate $\Omega^{2}\moduli[1](2s'-1,2s'+1;(-2^{2s'}))$ dont les résidus quadratiques sont $(1,\dots,1)$. Or nous montrerons qu'un telle différentielle quadratique n'existe pas dans le lemme~\ref{lem:nonsurjdeuxzero}, ce qui donne une contradiction.

Nous traitons maintenant le cas où les résidus quadratiques sont triangulaires dans les strates $\Omega^{2}\moduli[0](2s'-1,2s'-1;(-2^{2s'+1}))$. Supposons par l'absurde qu'il existe une différentielle quadratique $\xi$ dans $\Omega^{2}\moduli[0](1,1;(-2^{3}))$ avec ces invariants locaux. 
On a alors 
\[\xi=\frac{z}{(z-1)^{2}(z-a)^{2}(z-b)^{2}} (dz)^{2} \, ,\]
avec $a\neq b\in \CC\setminus\{0,1\}$ et 
\[ R_{1}=\frac{1}{(1-a)^{2}(1-b)^{2}},\text{ }  R_{2}=\frac{a}{(1-a)^{2}(a-b)^{2}}, \text{ et }  R_{3}=\frac{b}{(1-b)^{2}(a-b)^{2}}\, . \]
En notant par $\alpha$ et $\beta$ une racine de $a$ et $b$ respectivement, la condition de triangularité donne
\[  \pm (\alpha^{2}-\beta^{2}) \pm \alpha (\beta-1) \pm \beta (\alpha -1) = 0   \, .\]
Un calcul élémentaire montre que cette équation est équivalente à
\[  \pm (\alpha\pm \beta)(\beta\pm 1) (1 \pm \alpha) = 0   \, .\]
On en déduit que $a=b$ ou  $a=1$ ou  $b=1$ ce qui n'est pas permis.

Supposons qu'il existe une différentielle quadratique de $\Omega^{2}\moduli[0](2s'-1,2s'-1;(-2^{2s'+1}))$ avec $s'\geq2$ dont les $2$-résidus sont $((R_{1}^{2s'-1}),R_{2},R_{3})$. Il existe deux cas de figure (modulo multiplication de toutes les racines par $-1$) selon que:
\begin{itemize}
 \item on a les racines $r_{2}$ et $r_{3}$, 
 \item on a les racines $r_{2}$ et $-r_{3}$,
\end{itemize}
avec la condition $r_{1}+r_{2}+r_{3}=0$.

Dans le premier cas, on choisit aussi $a^{+}$ racines $r_{1}$ et $a^{-}$ racines $-r_{1}$. On a aussi deux vecteurs $v$ tels que 
$$ r_{2} + r_{3} +a^{+}r_{1} - a^{-}r_{1} +2v =0\,.$$
La différentielle quadratique est obtenue en concaténant tous ces vecteurs et en identifiant par rotation les deux vecteurs $v$ et des demi-cylindres infinis aux autres vecteurs. On déduit de l'égalité précédente que $v = \tfrac{a^{-}-a^{+} + 1}{2}r_{1}$. En particulier, le vecteur $v$ est proportionnel à $r_{1}$. Donc la concaténation est de la forme $r_{2},-r_{1},\dots,-r_{1},r_{3},v,r_{1},\dots,r_{1},v$ ou $r_{2},v,-r_{1},\dots,-r_{1},v,r_{3},r_{1},\dots,r_{1}$. Pour que la différentielle soit dans la strate considérée, il faut que $a^{+}=s'$ dans le premier cas et $a^{-}=s'$ dans le second cas. Donc dans le premier cas, on a $v=0$, ce qui implique que la différentielle est dégénérée. Dans le second cas, on a $v=r_{1}$ et on voit que la différentielle quadratique ainsi obtenue est dégénérée.

Dans le second cas, les vecteurs $v$ satisfont à l'équation 
$$ r_{2} - r_{3} +a^{+}r_{1} - a^{-}r_{1} +2v =0\,.$$
De cette équation on déduit que $v= \tfrac{a^{-}-a^{+}-1}{2}r_{1}-r_{2}$.  La concaténation des vecteurs est de la forme $r_{2}-r_{3},-r_{1},\dots,-r_{1},v,-r_{1},\dots,-r_{1},v,r_{1},\dots,r_{1}$ car sinon la concaténation serait dégénérée. On peut vérifier que dans le  cas précédent on a $s'$ vecteurs $-r_{1}$ entre les deux vecteurs $v$. On en déduit (en allant dans le sens contraire de la concaténation donnée) que le point après le premier $v$ est $a^{+}-\tfrac{s-a^{+}-1}{2}r_{1}+r_{2} -s'$. Comme $a^{+}\leq s-1$ on en déduit que ce point est donné par $r_{2} - \alpha r_{1}$ avec $\alpha$ strictement positif. On en déduit immédiatement que le polygone est dégénéré et donc que la différentielle quadratique ne peut pas exister. 
\end{proof}

Nous montrons maintenant que ces cas sont très spéciaux.
\begin{lem}\label{lm:quadgzerogen}
 Soit $\Omega^{2}\moduli[0](a_{1},\dots,a_{n};(-2^{s}))$ une strate quadratique.  L'image par l'application résiduelle de cette strate contient tous les $s$-uplets dont les éléments n'appartiennent pas à un même rayon issu de l'origine, à l’exception de ceux donnés dans le lemme~\ref{lem:quadexeptionnelsgzero}. 
\end{lem}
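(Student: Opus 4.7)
La stratégie naturelle est de traiter d'abord le cas $n=2$ puis de déduire le cas $n\geq 3$ par éclatement de zéros (Proposition~\ref{prop:eclatZero}). Pour $n=2$, j'écrirai $a_i = 2l_i + \bar{a_i}$ avec $\bar{a_i}\in\{-1,0\}$ ; comme $\pgcd(\mu,2)=1$, au moins un des $a_i$ est impair, donc $\bar{a_i}=-1$ après relabel.

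Si $a_1=-1$ (et alors $a_2=2s-3$), j'utiliserai le lemme~\ref{lem:constravecneg} en version $k=2$. Il s'agit de choisir les racines carrées $r_i$ des $R_i$ de façon que $r_1$ ait une norme maximale, puis d'ajuster les signes des autres racines pour satisfaire $|t|\leq|r_1|$ et $t\neq\pm r_1$. L'hypothèse que les $R_i$ ne sont pas tous sur un même rayon se traduit par le fait que les racines carrées $r_i$ ne sont pas toutes confinées à une même droite passant par l'origine, ce qui donne exactement la flexibilité nécessaire pour vérifier la condition~(iii) de non-colinéarité lorsque $s\geq 4$.

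Si $a_1, a_2>0$, je recours aux constructions (C1) et (C2) du paragraphe précédent. L'idée clé est de partitionner les racines $(r_1,\ldots,r_s)$ en deux groupes de cardinaux $(l_1,l_2+1)$ ou $(l_1+1,l_2)$ et de refermer le polygone par deux segments $t_1,\zeta t_1$ avec $\zeta=-1$. La  non-colinéarité des $R_i$ permet de moduler le choix des signes des racines carrées afin d'obtenir deux sommes partielles $s_1,s_2$ faisant un angle strictement positif, ce qui garantit l'existence d'un polygone non dégénéré satisfaisant la condition angulaire requise (angle $(\bar{a_1}+2)\pi$ à l'un des sommets). Les strates exceptionnelles $\Omega^2\moduli[0](2s'-1,2s'+1;(-2^{2s'+2}))$ et $\Omega^2\moduli[0](2s'-1,2s'-1;(-2^{2s'+1}))$ sont précisément celles où, pour des tuples très symétriques (respectivement du type $(1,\ldots,1,R,R)$ ou les configurations triangulaires), toutes les partitions possibles produisent un polygone dégénéré ou une surface réductible : c'est le contenu du lemme~\ref{lem:quadexeptionnelsgzero}. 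Le point délicat sera de vérifier que ces obstructions sont bien les seules.

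Pour $n\geq3$, je me ramène à $n=2$ en fusionnant deux zéros en un zéro d'ordre $a_i+a_j$ (toujours strictement supérieur à $-2$ quitte à choisir $i,j$ convenablement), puis en éclatant. Il faudra vérifier deux choses : d'une part, que la strate à deux zéros obtenue par fusion n'est pas l'une des strates exceptionnelles (on peut toujours choisir la paire $(i,j)$ pour l'éviter grâce à l'équation~\eqref{eq:multiplires} et à la liberté que donne $n\geq3$), et d'autre part que la différentielle quadratique intermédiaire peut être prise non carrée d'une différentielle abélienne, ce qui est assuré dès que l'on peut choisir les $a'_1,a'_2$ non tous deux pairs. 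L'obstacle principal est la vérification fine du cas $n=2$ avec $a_1,a_2>0$, où le seuil entre configurations réalisables et exceptionnelles doit être analysé polygonalement, sous peine de manquer une des familles de tuples impossibles du lemme~\ref{lem:quadexeptionnelsgzero}.
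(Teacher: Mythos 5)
Votre plan suit pour l'essentiel la même architecture que la preuve du texte : réduction au cas $n=2$ (deux zéros, nécessairement d'ordres impairs) par éclatement, puis constructions polygonales (C1)/(C2) obtenues en partitionnant des racines carrées des $R_{i}$ en deux groupes de cardinaux $l_{1}$ et $l_{2}+1$ et en refermant par deux segments opposés. Deux points posent toutefois problème. D'abord, la réduction du cas $a_{1}=-1$ au lemme~\ref{lem:constravecneg} ne fonctionne pas telle quelle : les conditions (i) et (ii) de ce lemme ne peuvent pas toujours être réalisées simultanément par un choix de signes. Par exemple, dans $\Omega^{2}\moduli[0](5,-1;(-2^{4}))$ avec les résidus quadratiques $(1,1,-1,-1)$ --- qui sont portés par deux rayons et que l'énoncé affirme réalisables, la strate n'étant pas exceptionnelle --- les racines sont $\pm1,\pm1,\pm i,\pm i$ ; quelle que soit la racine choisie comme $r_{1}$, la somme $t$ des trois autres vaut soit $\pm r_{1}$ (ce qui viole (ii)), soit est de norme $\sqrt{5}>|r_{1}|$ (ce qui viole (i)). Le lemme~\ref{lem:constravecneg} n'étant qu'une condition suffisante, cela ne contredit pas l'énoncé, mais cela laisse des configurations non couvertes par votre argument ; le texte traite le cas $l_{1}=0$ par les mêmes concaténations ad hoc que le cas général, sans passer par ce lemme.

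Ensuite --- et c'est le point principal --- le cœur de la preuve est précisément la vérification fine que vous reportez : montrer que pour tout uplet non porté par un seul rayon, on peut ordonner et signer les racines de sorte que le polygone se referme sans dégénérer avec l'angle $(\bar{a_{1}}+2)\pi$ prescrit au bon sommet. Le texte y consacre une longue analyse de cas (uplets $(1,\dots,1,R,R)$ traités par translation d'un bloc de $l_{1}$ segments, uplets $((1^{s_{1}}),(R^{s_{2}}))$ selon les parités de $s_{1}$ et $s_{2}$, deux rayons avec résidus non tous égaux, trois rayons triangulaires ou non, quatre rayons et plus via le fait que $\lceil s/2\rceil$ racines ont leurs arguments dans un demi-plan, et enfin le cas sans somme nulle de racines en minimisant la distance entre les extrémités de la concaténation). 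C'est dans cette combinatoire que naissent exactement les deux familles exceptionnelles du lemme~\ref{lem:quadexeptionnelsgzero}, et l'affirmation selon laquelle la non-colinéarité permet de moduler les signes pour obtenir un polygone non dégénéré ne remplace pas cette analyse : telle quelle, elle est une reformulation de l'énoncé à démontrer. Votre réduction de $n\geq3$ à $n=2$ est en revanche correcte, et le point que vous signalez (choisir la paire de zéros à fusionner pour éviter de retomber sur une strate exceptionnelle) est bien la bonne précaution, que le texte passe d'ailleurs sous silence.
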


\begin{proof}
Nous commençons par traiter le cas où il y a deux zéros $a _{i}=2l_{i}-1$ et on suppose que $l_{1}\leq l_{2}$. L'idée de la preuve est de partitionner les pôles en deux sous-ensembles de cardinal~$l_{1}$ et $l_{2}+1$ respectivement affin d'obtenir les différentielles quadratiques souhaitées par la construction (C1) de la section~\ref{sec:juste-kgen}.
 
On commence par le cas des strates  $\Omega^{2}\moduli[0](a_{1},a_{2};(-2^{s}))$ avec $(a_{1},a_{2})\neq (2s'-1,2s'+1)$ si $s=2s'+2$ et des  résidus quadratiques $(1,\dots,1,R,R)$ où $R\notin\RR_{+}$. Soit $r$ une racine carrée de $R$, on note $v=2r + l_{2}-l_{1}-1$. On construit un polygone en concaténant les deux segments $r$ puis $l_{2}-1$ segments $1$ puis le vecteur $-v/2$ puis $l_{1}$ segments $-1$ en enfin le segment $-v/2$. Ce polygone est représenté à gauche de la figure~\ref{fig:quaddeuxresidus} pour la strate $\Omega^{2}\moduli[0](1,7;(-2^{6}))$. Notons que le point du polygone avant de concaténer le dernier vecteur $-v/2$ est de coordonnées $((l_{2}-l_{1}-1)/2,1)$ dans la base $(1,r)$ de $\RR^{2}$. Clairement le polygone est non dégénéré si et seulement si $l_{2}-l_{1}-1>0$, i.e. si $(a_{1},a_{2})\neq (2s'-1,2s'+1)$. Dans ce cas, la construction (C1) à partir de ce polygone donne la différentielle quadratique avec les invariants souhaités.

Prenons  un $s$-uplet de la forme $((1^{s_{1}}),(R^{s_{2}}))$ avec $R\notin\RR_{+}$, tel que soit $s_{2}=1$, soit $s_{i}\geq3$ pour $i=1,2$. On construit une $2$-différentielle dans  $\Omega^{2}\moduli[0](a_{1},a_{2};(-2^{s}))$ ayant ces $2$-résidus. Dans le cas où $s_{1}=1$, il suffit de faire la construction du paragraphe précédent avec un unique~$r$.  Notons que si $a_{1}=a_{2}=s/2-2$, alors le polygone est dégénéré. En effet, le sommet avant la concaténation du deuxième segment $-v/2$ est au milieu du segment~$r$. Donc le second segment $-v/2$ coïncide avec la première moitié du segment~$r$.   Toutefois, la construction~(C1) s'étend à ce cas pour donner une différentielle quadratique avec les invariants locaux souhaités en identifiant la première moitié du vecteur~$r$ avec le premier segment $-v/2$. 
Dans le cas où $s_{1},s_{2}\geq 3$, on a deux cas à traiter selon que les $s_{i}$ sont impairs ou sont pairs.
Si  les~$s_{i}$ sont impairs, on utilise la construction~(C1) avec un uplet~$E$ formé de $\left[\tfrac{s_{1}}{2}\right]$ vecteurs $1$, de $\left[\tfrac{s_{2}}{2}\right]$ vecteurs $r$, de $ \lceil\tfrac{s_{1}}{2}\rceil$ vecteurs $-1$, de $\lceil\tfrac{s_{2}}{2}\rceil$ vecteurs~$-r$ et enfin avec deux vecteurs $v_{1}=v_{2}=(1+r)/2$ égaux à l’opposé de la demi-somme des autres vecteurs. On forme alors un polygone en concaténant les  $l_{1}$ premiers éléments de~$E$ puis $v_{1}$ puis les $l_{2}-1$ suivants et enfin $v_{2}$. Ce polygone est représenté au milieu de la figure~\ref{fig:quaddeuxresidus} pour la strate $\Omega^{2}\moduli[0](3,5;(-2^{6}))$ et les $2$-résidus $((1^{3}),(R^{3}))$.  Ce polygone est non dégénéré car les $l_{1}$ premier vecteurs sont d'un côté des vecteurs $v_{i}$ et les autres vecteurs strictement de l'autre côté.  La différentielle quadratique est alors obtenue par  la construction (C1).
Dans le cas où les $s_{i}$ sont tous les deux d'ordres pairs, on  utilise la construction~(C1) avec $E$ constitué de  $\tfrac{s_{1}}{2}-1$ fois $-1$, de $\tfrac{s_{2}}{2}-1$ fois $r$, de $\tfrac{s_{1}}{2}$ fois $1$, de   $\tfrac{s_{2}}{2}+1$ fois~$-r$ et enfin une fois $-1$ et enfin avec deux vecteurs $v_{1}=v_{2}=r$ égaux à l’opposé de la demi-somme de ces vecteurs. L'ensemble $E_{1}$ de la construction (C1) est constitué des $l_{1}$ premiers éléments de~$E$. Comme dans le cas précédent, le polygone est non dégénéré. En effet, comme $\tfrac{s_{2}}{2}-1\geq1$ il n'y a pas de points d'intersection au début de la concaténation. De plus, la concaténation des premiers vecteurs est d'un côté des vecteurs $v_{i}$ et les derniers de l'autre côté. Cette construction est présentée à droite de la figure~\ref{fig:quaddeuxresidus} pour la strate $\Omega^{2}\moduli[0](7,9;(-2^{10}))$ et les $2$-résidus $((1^{6}),(R^{4}))$. 
\begin{figure}[hbt]
\centering
\begin{tikzpicture}
\begin{scope}[xshift=-6cm]
 \filldraw[fill=black!10](0,0) coordinate (p1) -- ++(1,1)  coordinate[pos=.5] (q1)  coordinate (p2)-- ++(1,1) coordinate[pos=.5] (q2) coordinate (p3)-- ++(1,0) coordinate[pos=.5] (q3) coordinate (p4)-- ++(1,0)coordinate[pos=.5] (q4)  coordinate (p5)-- ++(1,0) coordinate[pos=.5] (q5) coordinate (p6)--
++ (-2,-1) coordinate [pos=.5] (q6) coordinate (p7) -- ++(-1,0)coordinate[pos=.5] (q7)  coordinate (p8) -- ++ (-2,-1) coordinate [pos=.5] (q8);

  \foreach \i in {1,2,...,6}
  \fill (p\i)  circle (2pt); 
    \foreach \i in {7,8}
  \filldraw[fill=white] (p\i)  circle (2pt); 
    \foreach \i in {1,2}
  \node[above left] at (q\i) {$r$};
      \foreach \i in {3,...,5}
  \node[above] at (q\i) {$1$};
  \node[below right] at (q8) {$v_{1}$};
    \node[above] at (q7) {$1$};
  \node[below right] at (q6) {$v_{2}$};

\end{scope}

\begin{scope}[xshift=-1cm]

\filldraw[fill=black!10] (0,0) coordinate (p1) -- ++(1,.5)  coordinate[pos=.5] (q1) coordinate (p2)-- ++(1,0) coordinate (p3)-- ++(1,1)  coordinate (p4)-- ++(1,.5)  coordinate[pos=.5] (q4) coordinate (p5)-- ++(-1,0)  coordinate (p6) --++(-1,0)  coordinate (p7)-- ++(-1,-1)coordinate (p8)-- ++(-1,-1);

  \foreach \i in {2,3,4}
  \filldraw[fill=white] (p\i)  circle (2pt); 
    \foreach \i in {1,5,6,7,8}
  \fill (p\i)  circle (2pt); 
    \node[below right] at (q1) {$v_{1}$};
  \node[below right] at (q4) {$v_{2}$};
\end{scope}

\begin{scope}[xshift=5cm,yshift=0cm]

\filldraw[fill=black!10] ((0,0) coordinate (p0) -- ++(.8,.8)coordinate[pos=.5] (q1)  coordinate (p1) -- ++(-1,0) coordinate (p4)-- ++(-1,0) coordinate (p3)-- ++(.8,.8)  coordinate (p5)-- ++(1,0)  coordinate (p6)  -- ++(.8,.8)coordinate[pos=.5] (q2)  coordinate (p12)    --++(1,0)  coordinate (p7)-- ++(1,0)coordinate (p8)-- ++(-.8,-.8) coordinate (p9)-- ++(-.8,-.8) coordinate (p10)-- ++(-.8,-.8) coordinate (p2)-- ++(-1,0) coordinate (p11);

  \foreach \i in {0,2,7,8,...,12}
  \fill (p\i)  circle (2pt); 
    \node[left] at (q1) {$v_{1}$};
  \node[left] at (q2) {$v_{2}$};
    \foreach \i in {1,3,4,5,6}
  \filldraw[fill=white] (p\i)  circle (2pt); 
\end{scope}

\end{tikzpicture}
\caption{Les polygones utilisés pour obtenir des $2$-différentielles dans  $\Omega^{2}\moduli[0](a_{1},a_{2};(-2^{s}))$ avec des $2$-résidus de la forme $((1^{s_{1}}),(R^{s_{2}}))$ avec $R\notin\RR_{+}$.}\label{fig:quaddeuxresidus}
\end{figure} 
\par
On considère maintenant le cas où les $2$-résidus sont sur deux rayons distincts non nécessairement égaux entre eux. Nous noterons ces $2$-résidus $(\alpha_{1},\dots,\alpha_{s_{1}},\beta_{1}R,\dots,\beta_{s_{2}}R)$ avec $\alpha_{i}\in\RR_{+}^{\ast}$ et $\beta_{j}\in\RR_{+}^{\ast}$. Si $s_{1}=1$ ou $s_{2}=1$, alors on peut faire la construction (C1) comme au paragraphe précédent. On supposera donc que $s_{1},s_{2}\geq2$.
\par
On commence par le cas général où il existe deux résidus quadratiques non égaux le long de chaque rayon, i.e. il existe $\alpha_{i_{1}}\neq \alpha_{i_{2}}$ et $\beta_{j_{1}}\neq \beta_{j_{2}}$. On partitionne les résidus quadratiques $(\alpha_{1},\dots,\alpha_{s_{1}})$ en deux sous-ensembles $\mathcal{R}_{i,1}$ satisfaisant les conditions suivantes. Le cardinal de $\mathcal{R}_{1,1}$ est  supérieur ou égal à $\lfloor\frac{s_{1}}{2}\rfloor$ et la somme des racines des ces éléments est strictement inférieure à celles de $\mathcal{R}_{2,1}$. 
On partitionne les éléments $(\beta_{1}R,\dots,\beta_{s_{2}}R)$ en deux sous ensembles~$\mathcal{R}_{i,2}$ avec les mêmes propriétés. On construit un polygone de type (C1) avec $v_{1}=v_{2}$ définis comme l'opposé de la demi-somme des éléments définis dans la phrase suivante, $E_{1}$, resp.~$E_{2}$, constitué des  $l_{1}$ premier, resp. $l_{2}+1$ derniers, éléments suivants. On prend les racines avec parties réelles positives des éléments de $\mathcal{R}_{1,1}$ et de ceux de $\mathcal{R}_{1,2}$ (si la partie réelle de ces éléments est nulle, la partie imaginaire est positive), puis les racines de parties réelles négatives de ceux de $\mathcal{R}_{2,1}$ et $\mathcal{R}_{2,2}$.  Le polygone ainsi obtenu est clairement non dégénéré et la construction (C1) donne la différentielle quadratique désirée.
\par
Considérons le cas où tous les résidus quadratiques sont égaux entre eux sur un rayon, i.e. les résidus quadratiques sont de la forme $(\alpha_{1},\dots,\alpha_{s_{1}},R,\dots,R)$ avec les $\alpha_{i}\in\RR_{+}^{\ast}$ non tous égaux entre eux. Dans ce cas la seule situation où la  construction précédente ne fonctionne pas est lorsque $s_{2}$ est pair.
  Dans ce cas, on fait une construction similaire à celle illustrée à gauche de la figure~\ref{fig:quaddeuxresidus}. On considère $s_{2}/2+1$ vecteurs $r$ puis  $ \lceil\tfrac{s_{1}}{2}\rceil$ racines positives des $\alpha_{i}$ puis $s_{2}/2-1$ vecteurs $-r$ puis les racines négatives des autres $\alpha_{i}$. Si nous supposons que les $ \lceil\tfrac{s_{1}}{2}\rceil$ premiers $\alpha_{i}$ possèdent les normes les plus grandes, alors un calcul similaire au cas  illustré à gauche de la figure~\ref{fig:quaddeuxresidus} montre que le polygone ainsi formé est non dégénéré. On conclut en utilisant la construction (C1).
\smallskip
\par 
Considérons maintenant le cas où les $R_{i}$ sont sur trois rayons différents. Dans un premier temps nous supposerons que les $s=s_{1}+s_{2}+s_{3}$ résidus quadratiques sont de la forme  $((R_{1}^{s_{1}}),(R_{2}^{s_{2}}),(R_{3}^{s_{3}}))$ avec les $R_{i}$ triangulaires. De plus, nous choisissons  les racines $r_{i}$ de $R_{i}$ telles que $r_{1}+r_{2}+r_{3}=0$. Si $s_{1}=s_{2}=s_{3}$, on définit  $v_{1}=v_{2}= r_{1}$ et on construit le polygone de type (C1) suivant. Soit $E$ le $s$-uplet dont les éléments sont  $s_{1}-1$ fois~$r_{1}$, puis $s_{2}$ fois $r_{2}$, puis une fois $-r_{1}$ et enfin $s_{3}$ fois $r_{3}$.  On concatène $v_{1}$ puis les $l_{1}$ premiers éléments l'ensemble $E$ puis $v_{2}$ et enfin les $l_{2}+1$ derniers éléments de~$E$.  Le polygone obtenu pour $\Omega^{2}\moduli[0](3,5;(-2^{6}))$ est représenté à gauche de la figure~\ref{fig:quadtroisresidus}. On vérifiera facilement que le polygone ainsi obtenu est non dégénéré sauf dans le cas de la strate $\Omega^{2}\moduli[0](1,1;(-2^{3}))$. La construction (C1) donne donc les différentielles quadratiques souhaitées. 
\par
Si  $s_{1}\geq s_{2}\geq s_{3}$ avec au moins une inégalité stricte on fait la construction (C1). Nous donnons tout d'abord  une construction qui fonctionne sauf dans le cas des strates  de la forme $\Omega^{2}\moduli[0](2s'-1,2s'-1;(-2^{2s'+1}))$ lorsque $s_{1}>s_{2}=s_{3}$. 
\par
Le uplet~$E_{1}$ est constitué des~$l_{1}$ premiers éléments suivants: $\lfloor\frac{s_{1}}{2}\rfloor$ fois $r_{1}$, puis~$s_{3}$ fois $r_{3}$, puis~$\lceil\frac{s_{1}}{2}\rceil$ fois $-r_{1}$ et enfin $s_{2}$ fois~$r_{2}$. On définit $v_{1}=v_{2}$ égale à la moitié de l'opposé de la somme des éléments précédents. On construit alors le polygone de type (C1) avec ces vecteurs, le cas $\Omega^{2}\moduli[0](5,7;(-2^{8}))$ est représenté au centre de la figure~\ref{fig:quadtroisresidus} avec $s_{1}=5$, $s_{2}=2$ et $s_{3}=1$.  Il reste à justifier que le polygone ainsi construit est non dégénéré sauf si $s_{2}=s_{3}=1$ et $a_{1}=a_{2}=2s'-1$ avec $s=2s'+1$. Supposons que $s_{2}=s_{3}=1$ et que $s$ est pair. On a alors  $a_{1}\leq s-3$ et donc $l_{1}\leq \lfloor\frac{s_{1}}{2}\rfloor$. Comme $v_{1}=v_{2}=r_{1}/2$, on en déduit immédiatement que le polygone est non dégénéré. Le cas $s$ impair se traite de la même façon dès que  $a_{1}\neq a_{2}$. Les cas avec $s_{2}=s_{3}\geq2$ et soit $s$ pair, soit $s$ impair et $a_{1}\neq a_{2}$  se traitent comme précédemment. Enfin le cas $s_{2} > s_{3}$  se montre de manière similaire.
\par
Il reste à traiter le cas des strates  $\Omega^{2}\moduli[0](2s'-1,2s'-1;(-2^{2s'+1}))$ avec $s_{1}>s_{2}=s_{3}>1$. Dans ce cas, on définie $E_{1}$ par une fois $r_{2}$, puis $(s_{1}-1)/2$ fois $r_{1}$, et $s_{3}-1$ fois $r_{3}$. Le uplet $E_{2}$ est définie par $r_{3}$, puis $(s_{1}+1)/2$ fois $-r_{1}$ et enfin  $s_{2}-1$ fois $r_{2}$. Il suit facilement du fait que $v_{1}=v_{2}=\tfrac{1+s_{2}}{2}$ que le polygone de type (C1) ainsi obtenu est non dégénéré sauf dans le cas $(s_{1},s_{2},s_{3})=(3,2,2)$. Dans ce dernier cas, nous définissons $v_{1}=v_{2}=-r_{1}$ et nous considérons le polygone obtenu via la concaténation $r_{1},r_{2},r_{1},v_{1},-r_{3},-r_{1},r_{3},v_{2},-r_{2}$. Ce polygone est dégénéré mais comme lors du cas $((1^{s_{1}}),(R_{s_{2}})$ au début de cette preuve, la construction (C1) s'étend à ce cas et donne la différentielle quadratique souhaitée.  

\begin{figure}[hbt]
\centering
\begin{tikzpicture}
\begin{scope}[xshift=-4cm]
  \foreach \a in {1}
\filldraw[fill=black!10] ( (0,0) coordinate (p0) -- ++(\a,0)  coordinate[pos=.5] (q1)  coordinate (p1) -- ++(\a,0)   coordinate (p2)-- ++(-\a,\a)  coordinate (p3)-- ++(\a,0) coordinate[pos=.5] (q2)  coordinate (p4)-- ++(-\a,\a)  coordinate (p5)-- ++(-\a,0)  coordinate (p6)-- ++(0,-\a)  coordinate (p7) -- ++(0,-\a)  coordinate (p8);

   \foreach \i in {0,4,5,6,7,8}
   \fill (p\i)  circle (2pt); 
  \foreach \i in {1,2,3}
  \filldraw[fill=white] (p\i)  circle (2pt); 
    \node[below ] at (q1) {$v_{1}$};
  \node[below right] at (q2) {$v_{2}$};
\end{scope}

\begin{scope}[xshift=-.2cm]
  \foreach \a in {.9}
\filldraw[fill=black!10] (0,0) coordinate (p1) -- ++(\a,\a) coordinate (p2)-- ++(\a,\a) coordinate (p3)-- ++(\a,0)  coordinate (p4)-- ++(\a,0)  coordinate (p5)-- ++(\a,0)  coordinate (p6)  --++(-3*\a/2,-\a/2)  coordinate[pos=.5] (q2)  coordinate (p10) --++(0,-\a)  coordinate (p7)-- ++(-\a,0)coordinate (p8)-- ++(-\a,0) coordinate (p9) --++(-3*\a/2,-\a/2) coordinate  coordinate[pos=.5] (q1);

   \foreach \i in {1,...,6}
   \fill (p\i)  circle (2pt); 
  \foreach \i in {7,...,10}
  \filldraw[fill=white] (p\i)  circle (2pt); 
    \node[below ] at (q1) {$v_{1}$};
  \node[below] at (q2) {$v_{2}$};
\end{scope}

\begin{scope}[xshift=6cm,yshift=1cm]

\filldraw[fill=black!10]  (0,0) coordinate (p0) -- ++ (.25,.05)  coordinate[pos=.5] (q1) coordinate (p1)-- ++(1,-1) coordinate (p2)-- ++(1,0) coordinate (p3) -- ++ (.25,.05)  coordinate[pos=.5] (q2)  coordinate (p8) -- ++(1,1.1)  coordinate (p4)-- ++(-1.1,1.1)  coordinate (p5)-- ++(-1.2,0)  coordinate (p6) --++(-1.2,-1.31)  coordinate (p7);

  \foreach \i in {0,4,5,6,7,8}
   \fill (p\i)  circle (2pt); 
  \foreach \i in {1,2,3}
  \filldraw[fill=white] (p\i)  circle (2pt); 
    \node[below ] at (q1) {$v_{1}$};
  \node[below] at (q2) {$v_{2}$};

\end{scope}
\end{tikzpicture}
\caption{Les polygones utilisés pour construire les $2$-différentielles dont les $2$-résidus sont sur trois rayons distincts.}\label{fig:quadtroisresidus}
\end{figure} 
\smallskip
\par
Nous traitons maintenant le cas des résidus quadratiques sur trois rayons distincts mais ne sont pas triangulaires.
Nous commençons par le cas où les résidus quadratiques sont $((R_{1}^{s_{1}}),(R_{2}^{s_{2}}),(R_{3}^{s_{3}})$ avec les $R_{i}$ sur trois rayons distincts. Commençons par le cas où $s_{i}=1$ pour tout~$i$. Dans ce cas on choisit les racines $r_{i}$ telles que la somme est minimal et quitte à permuter les indexes, on peut supposer que  le point final de la concaténation des $r_{i}$ pour $i$ croissant est soit strictement au dessus de la droite engendrée par $r_{1}$, soit appartient au segment~$r_{1}$. On définit $v_{i}=-(r_{1}+r_{2}+r_{3})/2$  et on obtient les deux strates via la construction (C1) soit en concaténant les $r_{i}$ puis les $v_{i}$ soit en concaténant $v_{1}$ puis $r_{1}$ puis $v_{2}$ puis les autres $r_{i}$. Notons que le polygone est dégénéré si et seulement si le point final de la concaténation est sur $r_{1}$. Toutefois la construction (C1) s'étend à ce cas en identifiant les segments correspondants aux $v_{i}$ bord du demi-cylindre collé à $r_{1}$ par rotation et translation.  Dans le cas où il y a plus de pôle on modifie cette construction de la façon suivante. Dans les cas où certains $s_{i}=2$ et les autres $s_{j}=1$ on concatènent deux fois les résidus correspondant et on réalise la même construction. Dans le cas où tous les $s_{i}=2$, alors cette construction peut ne pas fonctionner car le point final de la concaténation peut être~$r_{1}$. Dans ce cas on considère $r_{1}$ puis $2r_{2}$ puis $-r_{1}$ puis $2r_{3}$ et la construction est identique. Lorsque un $s_{i}$ est supérieur ou égal à $3$ on concatène une fois $r_{i}$ et une fois $-r_{i}$ à la construction précédente. Comme cela on obtient les différentielles quadratiques souhaitées.
\par
Supposons maintenant que les $2$-résidus sont sur trois rayons distincts, mais pas tous égaux sur au moins un rayon. On les notera $(R_{1,1},\dots,R_{1,s_{1}},R_{2,1},\dots,R_{2,s_{2}},R_{3,1},\dots,R_{3,s_{3}})$, où les $R_{i,j}$ sont sur un même rayon à $i$ fixé et au moins un $s_{i}\geq2$.  On considère deux cas selon que $s_{i}\geq2$ pour $i=2,3$ ou que $s_{2}=s_{3}=1$.
Dans le cas où $s_{i}\geq2$ pour $i=2,3$, pour chaque $i$, on partitionne $\left\{R_{i,1},\dots,R_{i,s_{i}}\right\}$ en deux sous-ensembles $\mathcal{R}_{i,1}$ et $\mathcal{R}_{i,2}$ vérifiant les propriétés suivantes. La norme de la  somme des racines des éléments de  $\mathcal{R}_{i,1}$ est strictement inférieure à celle des éléments de~$\mathcal{R}_{i,2}$. De plus la cardinalité de~$\mathcal{R}_{i,1}$ est supérieure ou égale à~$\frac{s_{i}}{2}-1$. Il est clair que l'on peut toujours trouver de telles partitions (ce qui n'est en général pas le cas si les résidus quadratiques sont triangulaires).  Le uplet $E$ est constitué des racines carrées $r_{i}$ des éléments des $\mathcal{R}_{i,1}$ qui ont une partie réelle positive (ou une partie imaginaire si la partie réelle est nulle) dans l'ordre des arguments croissant puis des racines $-r_{i}$ de partie réelle négative des~$\mathcal{R}_{i,2}$. Notons que si $s_{1}=1$, alors $\mathcal{R}_{1,1}=\emptyset$ et donc on commence avec les éléments de $\mathcal{R}_{2,1}$. Les vecteurs~$v_{i}$ sont définis comme précédemment comme étant la moitié de l'opposée de la somme de ces racines.  
La construction que nous allons décrire est représentée à droite de la figure~\ref{fig:quadtroisresidus} dans le cas de la strate $\Omega^{2}\moduli[0](3,5;(-2^{6}))$.   Supposons que quitte à multiplier tous les résidus quadratiques par un même nombre complexe, on peut supposer que les racines des éléments de $\mathcal{R}_{1,1}$ sont réels positifs. Notons que la concaténation est sans points d'intersections et que le point final de celle-ci est d'ordonnée strictement négative. On en déduit que l'on peut prendre pour $E_{1}$ les $l_{1}$ premier éléments de~$E$ si ceux-ci sont dans les ensembles $\mathcal{R}_{i,1}$. Enfin dans le cas où $l_{1}$ est supérieur que le nombre de ces éléments, on peut ajouter soit les premières racines de $\mathcal{R}_{1,2}$ ou les dernières de  $\mathcal{R}_{3,2}$ selon que l'argument des $-v_{i}$ est supérieur ou inférieur à celui des racines des $\mathcal{R}_{2,2}$. On conclut alors avec la construction (C1).  

Dans le cas où $s_{2}=s_{3}=1$  la construction est la suivante. Nous commençons par concaténer les racines $r_{1,i}$ des $R_{1,i}$ par norme croissante puis $r_{2}$ puis $r_{3}$. Les vecteurs $v_{i}$ sont définis comme précédemment. Par hypothèse, la somme des $\left[ s_{1}/2 \right]$ premières racines $r_{1,i}$ est strictement à la somme des dernières. Donc le polygone obtenu en concaténant $v_{1}$ puis $l_{1}\leq \left[ s_{1}/2 \right]$  des premières racines $-r_{1,i}$ puis $v_{2}$ puis les autres racines $r_{1,i}$ et enfin $r_{2}$ et $r_{3}$ est non dégénéré.  Donc on peut obtenir toutes les strates sauf celle où $a_{1}=a_{2}$ via la construction (C1).
Dans le cas $a_{1}=a_{2}$ la construction est la suivante.  Dans ce l'ensemble $E_{1}$ contient la racine de partie réelle négative de~$r_{1,1}$ puis les racines de partie réelle positive des éléments des derniers~$r_{1,i}$. On suppose de plus que le premier élément de partie réelle positive est de norme strictement supérieur à~$r_{1,1}$ (ce qui est possible par hypothèse).  Le polygone de type (C1) ainsi formé est dégénéré. Toutefois, la construction (C1) qui lui est associée donne une différentielle quadratique avec les invariants souhaités en collant le demi cylindre correspondant à~$r_{1,1}$ à celui qui correspond au premier élément de partie réelle positive. 

\smallskip
\par 
Considérons enfin les cas où les $R_{i}$ appartiennent à au moins quatre rayons différents. 
Considérons le cas où les $s$ résidus quadratiques $R_{i}$ sont sur $s$ rayons distincts et vérifient $\sum r_{i}=0$ pour un choix de racines $r_{i}$ de $R_{i}$. Nous supposerons de plus que la concaténation des $r_{i}$ pour $i$ croissant est un polygone convexe. Un exercice élémentaire permet de vérifier qu'il existe au moins $s_{1}:=\lceil\tfrac{s}{2}\rceil$ racines $r_{i}$ dont les arguments sont contenus dans un segment de longueur $\pi$. Nous supposerons qu'il s'agit des résidus $r_{1},\dots,r_{s_{1}}$. Nous considérons le uplet~$E$ constitué de  $r_{2},\dots,r_{s_{1}-1}$, puis $-r_{1}$, puis $r_{s_{1}+1},\dots,r_{s}$ et enfin $-r_{s_{1}}$. Les vecteurs $v_{i}$ et les uplets $E_{i}$ sont définis comme précédemment. Cette construction est représentée dans la figure~\ref{fig:quatreplusresidus}. La condition sur les angles assure que le polygone de type (C1) ainsi obtenu est non dégénéré pour toute les strates. Les différentielles quadratiques s'obtiennent avec la construction~(C1).
\begin{figure}[hbt]
\centering
\begin{tikzpicture}
\begin{scope}[xshift=-4cm]
  \foreach \a in {.75}
\filldraw[fill=black!10]  (0,0) coordinate (p1) -- ++(\a,2*\a)   coordinate[pos=.5] (q4)  coordinate (p2)-- ++(\a,\a)   coordinate[pos=.5] (q3) coordinate (p3)-- ++(\a,0)   coordinate[pos=.5] (q2) coordinate (p4)-- ++(0,-\a)   coordinate[pos=.5] (q1) coordinate (p5)-- ++(-3*\a,-2*\a)   coordinate[pos=.5] (q5) coordinate (p6);

\node[right ] at (q1) {$r_{1}$};
\node[above] at (q2) {$r_{2}$};
\node[left ] at (q3) {$r_{3}$};
\node[left] at (q4) {$r_{4}$};
\node[right] at (q5) {$r_{5}$};

\end{scope}

\begin{scope}[xshift=5cm,yshift=2cm]
  \foreach \a in {.75}
\filldraw[fill=black!10]  (0,0) coordinate (p1)  -- ++(-\a,0)  coordinate[pos=.5] (q1)  coordinate (p7) -- ++(-\a,0) coordinate[pos=.5] (q3) coordinate (p5) -- ++(0,-\a) coordinate[pos=.5] (q4) coordinate (p4)  -- ++(-\a,0)  coordinate[pos=.5] (q2)  coordinate (p8) -- ++(-\a,-2*\a) coordinate[pos=.5] (q5) coordinate (p2)-- ++(3*\a,2*\a) coordinate[pos=.5] (q6)  coordinate (p6)-- ++(\a,\a)  coordinate[pos=.5] (q7)  coordinate (p3);

  \foreach \i in {1,2,3,6,8}
  \fill (p\i)  circle (2pt); 
  \foreach \i in {4,7,5}
  \filldraw[fill=white] (p\i)  circle (2pt); 
    \node[above] at (q1) {$v_{1}$};
  \node[below] at (q2) {$v_{2}$};
  
  \node[above ] at (q3) {$r_{2}$};
\node[left] at (q4) {$-r_{1}$};
\node[left ] at (q5) {$r_{4}$};
\node[right] at (q6) {$r_{5}$};
\node[right] at (q7) {$-r_{3}$};
\end{scope}
\end{tikzpicture}
\caption{Les polygones utilisés pour construire une $2$-différentielle de $\Omega^{2}\moduli[0](3,5;(-2^{5}))$ dont les $2$-résidus sont sur $5$ rayons distincts et il existe des racines de somme nulle.}\label{fig:quatreplusresidus}
\end{figure} 
\par
Les cas où il existe une somme de racines des résidus quadratiques nulle et où il y a plusieurs résidus quadratiques sont sur le même rayon se traite de manière analogue au cas précédent. 
\par
Le cas où les résidus quadratiques sont sur au moins $4$ rayons et il n'existe pas de somme des racines $r_{i}$ qui soit nulle se traite de manière similaire au cas de $3$ rayons. On sépare le cas où tout les résidus quadratiques sont égaux entre eux sur chaque rayon et le cas où il existe au moins deux résidus quadratiques distinctes sur un rayon. Les constructions sont les généralisations directes de celles présentées dans ces cas.
\smallskip
\par
Nous considérons maintenant le cas des strates avec $n\geq3$ zéros. Le résultat peut se déduire des strates avec deux zéros par éclatement de zéros, à l’exception du cas des strates $\Omega^{2}\moduli[0](2s'-1,2s'-1,2;(-2^{2s'+2}))$ avec les résidus quadratiques de la forme $((1^{2s'}),R,R)$ avec $R\notin\RR_{+}$. Dans ce cas la construction, représentée sur la figure~\ref{fig:quadtroiszeros}, est la suivante. On construit un triangle en concaténant le vecteur $r$ avec $s'$ fois le vecteur $1$ puis trois fois le vecteur $-\tfrac{r+s'}{3}$ dans le premier cas. On construit un second triangle en concaténant les mêmes vecteurs en commençant par les vecteurs $1$ puis $r$. Les trois derniers vecteurs du premier triangle sont notés $v_{1},v_{2},v_{3}$ dans cet ordre et ceux du second par $w_{i}$. La différentielle quadratique est obtenue en collant des demi-cylindres infinis aux vecteurs $1$ et $r$ et en identifiant $v_{1}$ à $v_{3}$, $w_{1}$ à $w_{3}$ et $v_{2}$ à $w_{2}$.
\begin{figure}[hbt]
\centering
\begin{tikzpicture}
 \filldraw[fill=black!10](0,0) coordinate (p1) -- ++(1,1)  coordinate[pos=.5] (q1)  coordinate (p2)-- ++(1,0) coordinate[pos=.5] (q2) coordinate (p3)-- ++(1,0)coordinate[pos=.5] (q3)  coordinate (p4) -- ++ (-1,-1/3) coordinate [pos=.5] (q4) coordinate (p5) -- ++(-1,-1/3)coordinate[pos=.5] (q5)  coordinate (p6) -- ++(-1,-1/3)coordinate [pos=.5] (q6);

  \foreach \i in {1,2,...,4}
  \fill (p\i)  circle (2pt); 
    \foreach \i in {5,6}
  \filldraw[fill=white] (p\i)  circle (2pt); 
  \node[above left] at (q1) {$r$};
      \foreach \i in {2,3}
  \node[above] at (q\i) {$1$};
  \node[below ] at (q4) {$v_{1}$};
    \node[below] at (q5) {$v_{2}$};
  \node[below ] at (q6) {$v_{3}$};

  \begin{scope}[xshift=1.5cm,yshift=-.5cm]
 \filldraw[fill=black!10](0,0) coordinate (p1) -- ++(1,0) coordinate[pos=.5] (q2) coordinate (p3)-- ++(1,0)coordinate[pos=.5] (q3)  coordinate (p4) -- ++(1,1)  coordinate[pos=.5] (q1)  coordinate (p2) -- ++ (-1,-1/3) coordinate [pos=.5] (q4) coordinate (p5) -- ++(-1,-1/3)coordinate[pos=.5] (q5)  coordinate (p6) -- ++(-1,-1/3)coordinate [pos=.5] (q6);

  \foreach \i in {1,2,...,4}
  \filldraw[fill=red] (p\i)  circle (2pt); 
    \foreach \i in {5,6}
  \filldraw[fill=white] (p\i)  circle (2pt); 
  \node[below right] at (q1) {$r$};
      \foreach \i in {2,3}
  \node[below] at (q\i) {$1$};
  \node[above ] at (q4) {$w_{1}$};
    \node[above] at (q5) {$w_{2}$};
  \node[above ] at (q6) {$w_{3}$};
\end{scope}

\end{tikzpicture}
\caption{Les polygones utilisés pour construire un élément de  $\Omega^{2}\moduli[0](2s'-1,2s'-1,2;(-2^{2s'+2}))$ dont les $2$-résidus sont de la forme $((1^{2s'}),R,R)$.}\label{fig:quadtroiszeros}
\end{figure} 
\end{proof}

Nous nous intéressons maintenant au cas des résidus quadratiques qui appartiennent tous à un même rayon issu de l'origine. Nous énonçons tout d'abord le cas de deux zéros, dont l'un d'ordre~$-1$. Rappelons que la notion de graphe associé à été introduite dans le paragraphe précédent la proposition~\ref{prop:gzeropolesimples}.
\begin{prop}\label{prop:quadmoinsun}
Soient $\Omega^{2}\moduli[0](-1,2s-3;(-2^{s}))$ et $R:=(R_{1},\dots,R_{s})\in(\RR_{+}^{\ast})^{s}$. Le $s$-uplet $R$ appartient à l'image de l'application résiduelle de cette strate si et seulement il existe un graphe associé aux $r:=(r_{1},\dots,r_{s};-r_{s},\dots,-r_{1})$, où  $r_{i}$ est la racine positive de $R_{i}$, qui est un graphe de connexion symétrique.
\end{prop}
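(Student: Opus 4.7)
Le plan de la preuve est de passer au revêtement canonique à deux feuilles, puis d'appliquer la Proposition~\ref{prop:gzeropolesimples} qui caractérise l'image de l'application résiduelle dans le cas abélien $\omoduli[0](s-2;(-1^{s}))$.

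Plus précisément, les seules singularités d'ordres impairs d'une différentielle de $\Omega^{2}\moduli[0](-1,2s-3;(-2^{s}))$ sont le pôle simple et le zéro d'ordre $2s-3$. Le revêtement canonique $\pi:\whX\to\PP^{1}$ est donc ramifié en exactement deux points, ce qui implique $\whX\cong\PP^{1}$. En écrivant localement $\xi=z^{a}(dz)^{2}$ et en posant $w^{2}=z$ aux points de branchement, on obtient $\whomega=2w^{a+1}\,dw$, d'où l'ordre $a+1$ aux points de ramification; aux points non ramifiés, un pôle d'ordre $-2$ et de résidu quadratique $R_{i}>0$ se relève en deux pôles simples de résidus $\pm r_{i}$ où $r_{i}=\sqrt{R_{i}}$. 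La différentielle abélienne $\whomega$ appartient ainsi à $\omoduli[0](2s-2;(-1^{2s}))$ avec pour résidus $(r_{1},\ldots,r_{s},-r_{1},\ldots,-r_{s})$. De plus, par construction, l'involution $\sigma$ du revêtement satisfait $\sigma^{\ast}\whomega=-\whomega$, et $\sigma$ échange les deux pôles de résidus opposés $r_{i}$ et $-r_{i}$.

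La direction directe est alors la suivante. Si $\xi$ existe, alors $\whomega$ existe dans la strate $\omoduli[0](2s-2;(-1^{2s}))$ avec les résidus donnés. Comme ces résidus sont tous réels donc colinéaires, la Proposition~\ref{prop:gzeropolesimples} assure l'existence d'un graphe de connexion associé à $(r_{1},\ldots,r_{s},-r_{1},\ldots,-r_{s})$. La construction de ce graphe dans la preuve de la Proposition~\ref{prop:gzeropolesimples} est donnée par les adjacences entre domaines polaires le long de connexions de selles. L'involution $\sigma$ induit alors une symétrie de ce graphe: elle envoie le sommet de poids $r_{i}$ sur celui de poids $-r_{i}$, et permute les arêtes. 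On obtient donc un graphe de connexion symétrique.

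Réciproquement, étant donné un graphe de connexion symétrique associé à $(r_{1},\ldots,r_{s},-r_{1},\ldots,-r_{s})$, on applique la construction de la Proposition~\ref{prop:gzeropolesimples} de manière équivariante: pour chaque orbite d'arêtes sous l'involution (qui est soit une paire d'arêtes échangées, soit une arête fixe reliant un sommet $r_{i}$ à $-r_{i}$), on effectue le collage symétriquement, ce qui produit une différentielle abélienne $\whomega$ sur $\PP^{1}$ munie d'une involution $\sigma$ vérifiant $\sigma^{\ast}\whomega=-\whomega$. Le quotient $\xi:=\whomega^{2}$ descend alors en une différentielle quadratique sur $\PP^{1}$ dont les singularités sont: un point de $\whX$ fixé par $\sigma$ et régulier pour $\whomega$ donne un pôle simple (d'ordre $-1$), l'unique zéro d'ordre $2s-2$ (nécessairement fixe par $\sigma$ puisqu'il est unique) devient un zéro d'ordre $2s-3$, et chaque paire $\{r_{i},-r_{i}\}$ de pôles descend en un pôle double de résidu quadratique $R_{i}=r_{i}^{2}$. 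La primitivité de $\xi$ résulte de la présence d'une singularité d'ordre impair.

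Le point le plus délicat sera de vérifier que la construction équivariante est bien possible dans le cadre de la Proposition~\ref{prop:gzeropolesimples} et qu'elle produit effectivement une différentielle anti-invariante sous $\sigma$. Pour les arêtes fixes sous l'involution (celles reliant un sommet $r_{i}$ à son image $-r_{i}$), il faudra faire attention: ces arêtes correspondent à un lien-selle sur la surface plate qui est envoyé sur lui-même par $\sigma$, donc à un lien-selle horizontal qui descend sur $\PP^{1}$ en un segment reliant le pôle $P_{i}$ à lui-même, ce qui est cohérent avec la structure de $\xi$. La symétrie du graphe garantit que les opérations inductives de retrait de feuilles dans la définition du graphe de connexion peuvent être faites par paires (ou sur une arête fixe), préservant ainsi l'équivariance à chaque étape.
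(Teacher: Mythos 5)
Votre démonstration suit essentiellement la même voie que celle de l'article : passage au revêtement canonique double, identification avec la strate abélienne $\omoduli[0](2s-2;(-1^{2s}))$ de résidus $(r_{1},\dots,r_{s},-r_{1},\dots,-r_{s})$, puis application de la proposition~\ref{prop:gzeropolesimples} en tenant compte de l'involution. Votre rédaction est même plus détaillée que celle du texte (calcul des ordres au revêtement, discussion de l'équivariance de la construction et des arêtes fixes), et ces vérifications supplémentaires sont correctes.
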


\begin{proof}
Supposons qu'il existe un graphe associé aux résidus $r$ qui soit un graphe de connexion symétrique. Dans ce cas il existe une différentielle symétrique d'ordre $2$ dans la strate $\omoduli[0](2s-2;(-1^{2s}))$ dont les résidus sont~$r$ (voir proposition~\ref{prop:gzeropolesimples}). Il est clair que la symétrie à comme point fixe le zéro d'ordre $2s-2$ et un point non polaire de $\PP^{1}$. Le quotient de cette différentielle par la symétrie fournit donc la différentielle quadratique souhaitée.
\par
Réciproquement, supposons que $(r_{1}^{2},\dots,r_{s}^{2})$ est dans l'image de l'application résiduelle de $\Omega^{2}\moduli[0](-1,2s-3;(-2^{s}))$. Alors le revêtement canonique de la différentielle quadratique correspondante est une différentielle abélienne de $\omoduli[0](2s-2;(-1^{2s}))$ dont les résidus sont~$r$. Le graphe de connexion associé est clairement symétrique.
\end{proof}

Le cas des strates où les deux zéros sont positifs et les résidus sont sur le même rayon est très technique. On peut l'aborder en généralisant les graphes de connexion.
Toutefois la combinatoire devient extrêmement complexe et une description de ces cas nécessiterait un autre article.
Nous nous contenterons dans la suite de résultats partiels pour $n\geq3$ zéros. Nous pouvons montrer la surjectivité de l'application résiduelle dans quelques strates, laissant le cas général ouvert.

Nous prouvons la proposition~\ref{prop:quadsurjbcpimp} qui donne la surjectivité de l'application résiduelle de certaines strates $\Omega^{2}\moduli[0](a_{1},\dots,a_{n};(-2^{s}))$.
Rappelons la notation $a_{i}=2l_{i}+\bar{a_{i}}$ avec $\bar{a_{i}}\in\left\{0,-1\right\}$.
\begin{proof}[Preuve de la proposition~\ref{prop:quadsurjbcpimp}.]
Nous commençons par montrer la surjectivité de l'application $2$-résiduelle dans le cas où $n\geq4$ et au moins quatre $a_{i}$ sont impairs. On commence pour supposer que $n=4$ et les $a_{i}$ sont impairs. Les cas avec $n>4$ s'obtiennent alors par éclatement de zéros. Le lemme~\ref{lm:quadgzerogen} implique qu’il suffit de traiter le cas où les résidus quadratiques sont réels positifs. On partitionne ces résidus en quatre sous-ensembles $\mathcal{R}_{i}$ de cardinaux respectifs~$l_{i}$ (on notera que $l_{1}+\dots+l_{4}=s$). On choisit un réel strictement positif $s_{1}$ tel que $$S:=2s_{1}+\sum_{R_{i}\in\mathcal{R}_{1}}\sqrt{R_{i}}-\sum_{j=2,3,4}\sum_{R_{i}\in\mathcal{R}_{j}}\sqrt{R_{i}}>0. $$ Enfin on choisit deux complexes $s_{2}$ et $s_{3}$ de partie réelle égale à  $-\frac{S}{4}$ et de partie imaginaire respectivement strictement positive et strictement négative, tels que $$2\sum_{j=1,2,3}s_{j}+\sum_{R_{i}\in\mathcal{R}_{1}}\sqrt{R_{i}}-\sum_{j=2,3,4}\sum_{R_{i}\in\mathcal{R}_{j}}\sqrt{R_{i}}=0.$$ On forme alors le polygone schématisé à gauche de la figure~\ref{fig:troisquatrezeros}. Les pointillés sont constitués de la concaténations des~$\pm\sqrt{R_{j}}$ dans les $\mathcal{R}_{i}$. On obtient la différentielle quadratique souhaitée en collant les $s_{i}$ deux à deux et des demi-cylindres infinis aux segments $r_{i}$.
\begin{figure}[hbt]
\centering
\begin{tikzpicture}

\begin{scope}[xshift=-3.5cm]
  \foreach \a in {1.2}
\draw[dotted] (0,0) coordinate (p1)-- ++(\a,0)  coordinate (p3) -- ++(2*\a,0) coordinate (p2)coordinate[pos=.5] (r1)  -- ++(\a,0)   coordinate (p4)-- ++(-\a/2,\a)  coordinate (p5)-- ++(-\a/2,0)  coordinate (p6)coordinate[pos=.5] (r2)-- ++(-\a/2,\a)  coordinate (p7)  -- ++(-\a/3,0)  coordinate (p10)coordinate[pos=.5] (r3)-- ++(-\a/2,-\a)  coordinate (p8)-- ++(-7*\a/6,0)  coordinate (p9)coordinate[pos=.5] (r4)-- ++(-\a/2,-\a);

  \draw (p1) -- (p3) coordinate[pos=.5] (q1);
  \draw (p2) -- (p4) coordinate[pos=.5] (q2);

  \draw (p4) -- (p5) coordinate[pos=.5] (q3);
  \draw (p6) -- (p7) coordinate[pos=.6] (q4);
 \draw (p8) -- (p10) coordinate[pos=.5] (q5);

  \draw (p9) -- (p1) coordinate[pos=.5] (q6);  

  \node[above] at (q1) {$s_{1}$}; \node[above] at (q2) {$s_{1}$};
   \node[right] at (q3) {$s_{2}$}; \node[right] at (q4) {$s_{2}$};
     \node[left] at (q5) {$s_{3}$}; \node[left] at (q6) {$s_{3}$};
     
       \node[above] at (r1) {$\mathcal{R}_{1}$}; \node[above] at (r2) {$\mathcal{R}_{2}$};
   \node[below] at (r3) {$\mathcal{R}_{3}$}; \node[above] at (r4) {$\mathcal{R}_{4}$};
  
    \fill (p3)  circle (2pt); \fill (p2)  circle (2pt); 
    \filldraw[fill=white] (p5)  circle (2pt); \filldraw[fill=white] (p6)  circle (2pt); 
  \filldraw[fill=red] (p8)  circle (2pt); \filldraw[fill=red] (p9)  circle (2pt); 
    \filldraw[fill=blue] (p1)  circle (2pt); \filldraw[fill=blue] (p10)  circle (2pt);
    \filldraw[fill=blue] (p4)  circle (2pt); \filldraw[fill=blue] (p7)  circle (2pt); 
  \end{scope}
  
  \begin{scope}[xshift=3.5cm]
  \foreach \a in {1.2}
\draw[dotted] (0,0) coordinate (p1) -- ++(4*\a,0) coordinate (p2)coordinate[pos=.5] (r1)  -- ++(-\a/2,\a)  coordinate (p3)-- ++(-\a,0)  coordinate (p4)coordinate[pos=.5] (r2)-- ++(-\a/2,\a)  coordinate (p5) -- ++(-\a/2,-\a)  coordinate (p6)-- ++(-\a,0)  coordinate (p7)coordinate[pos=.5] (r3)-- ++(-\a/2,-\a);

  \draw (p2) -- (p3) coordinate[pos=.5] (q1);
  \draw (p4) -- (p5) coordinate[pos=.5] (q2);

  \draw (p5) -- (p6) coordinate[pos=.5] (q3);
  \draw (p7) -- (p1) coordinate[pos=.5] (q4);

  \node[right] at (q1) {$s_{1}$}; \node[right] at (q2) {$s_{1}$};
   \node[left] at (q3) {$s_{2}$}; \node[left] at (q4) {$s_{2}$};

       \node[above] at (r1) {$\mathcal{R}_{3}$}; \node[above] at (r2) {$\mathcal{R}_{1}$};
   \node[above] at (r3) {$\mathcal{R}_{2}$}; 
  
    \fill (p1)  circle (2pt); \fill (p2)  circle (2pt); \fill (p5)  circle (2pt); 
    \filldraw[fill=white] (p3)  circle (2pt); \filldraw[fill=white] (p4)  circle (2pt); 
  \filldraw[fill=red] (p6)  circle (2pt); \filldraw[fill=red] (p7)  circle (2pt); 

  \end{scope}

\end{tikzpicture}
\caption{Le polygone pour obtenir une différentielle quadratique de $\Omega^{2}\moduli[0](a_{1},\dots,a_{4};(-2^{s}))$ à gauche et  de $\Omega^{2}\moduli[0](a_{1},\dots,a_{3};(-2^{s}))$ avec $a_{3}$ pair à droite. }\label{fig:troisquatrezeros}
\end{figure} 
\smallskip
\par
Nous prouvons maintenant la surjectivité de l'application $2$-résiduelle dans le cas où $n=3$ et $a_{1}+a_{2}< a_{3}$ avec $a_{3}$ pair. Il suffit de considérer les résidus quadratiques réels positifs.
On forme trois sous-ensembles $\mathcal{R}_{i}$ des résidus de cardinaux respectifs $l_{1}$, $l_{2}$ et $l_{3}+1$. Ces ensembles sont choisis de telle sorte que 
$$ \sum_{R_{i}\in\mathcal{R}_{3}}\sqrt{R_{i}}>\sum_{R_{i}\in\mathcal{R}_{1}\cup\mathcal{R}_{2}}\sqrt{R_{i}}.$$
Une telle partition est possible car  $a_{1}+a_{2}< a_{3}$ implique $l_{3}+1>l_{1}+l_{2}$. Puis on forme le polygone schématisé à droite de la figure~\ref{fig:troisquatrezeros}. Les pointillés sont constitués de la concaténations des $\pm\sqrt{R_{j}}$ dans les $\mathcal{R}_{i}$. On obtient la différentielle quadratique souhaitée en collant les $s_{i}$ deux à deux et des demis-cylindres infinis aux segments $r_{i}$.
\end{proof}
Notons que la preuve donne quelques renseignements supplémentaires.
\begin{rem}
 Avec les notations de la proposition~\ref{prop:quadsurjbcpimp}, on peut considérer le cas où $a_{1}+a_{2}=a_{3}$ et les $a_{i}$ ne sont pas tous pairs. Dans ce cas, si les $R_{i}$ ne sont pas tous égaux entre eux, alors ils sont dans l'image de l'application $2$-résiduelle.
\end{rem}


 \smallskip
\par
\subsubsection{\bf Les strates de $k$-différentielles avec $k\geq3$.}

Dans ce paragraphe, nous traitons le cas des $k$-différentielles avec $k\geq3$. Nous montrons que l'application $k$-résiduelle est surjective sauf dans quelques cas sporadiques liés à la structure de réseau des racines $k$-ième de l'unité pour $k=3,4,6$. Nous montrons tout d'abord l'existence des cas sporadiques puis nous traitons les cas avec deux zéros dont un d'ordre négatif, puis avec deux zéros positifs et enfin le cas des strates avec au moins trois zéros.

Nous prouvons la non-réalisabilité des $k$-résidus dans les cas sporadiques du théorème~\ref{thm:geq0kspe}.  Cette preuve étant très longue, nous avons décidé de traiter tout d'abord le cas (1) et les cas des différentielles quartiques puis le cas des différentielles cubiques et sextiques. Les cas quartiques sont similaires entre eux et reposent sur le fait que les liens selles relient des entiers de Gauss entre eux, i.e. des éléments du réseaux $\ZZ \oplus i \ZZ$.

\begin{proof}[Existence des cas sporadiques (1) et quartiques du  théorème~\ref{thm:geq0kspe}.]
Nous commençons par établir le cas (1), c'est à dire prouver que les $k$-résidus $(1,(-1)^{k})$ n'appartiennent pas à l'image de l'application $k$-résiduelle $\appresk[0](-1,1;-k,-k)$. Si une telle $k$-différentielle existait, la $k$-différentielle entrelacée obtenue en collant les deux pôles d'ordre $-k$ serait lissable (voir lemme~\ref{lem:lisspolessimples}). La $k$-différentielle obtenue par lissage serait dans la strate $\komoduli[1](1,-1)$, qui est vide. 
\smallskip
\par
Nous considérons maintenant le cas (10), c'est à dire que le $4$-résidu $(1,1,1,1)$ n'est pas dans l'image de  $\appresk[0][4](-1,9;(-4^{4}))$. 
\par
Supposons par l'absurde qu'il existe  une $4$-différentielle $\xi$ ayant ces invariants locaux. Elle aurait $4$ demi-cylindres infinis dont les circonférences seraient dans l'ensemble $\lbrace\pm1,\pm i\rbrace$. Il y a deux cas à considérer selon que le zéro d'ordre $-1$ borde un domaine polaire ou non.
\par
Supposons que le zéro d'ordre $-1$ est au bord d'un domaine polaire. Nous pouvons couper la surface le long du lien selle le plus court entre les deux zéros. On obtient une surface plate à bord avec deux segments $v_{1}$ et~$v_{2}$ au bord. Comme le zéro est d'ordre $-1$, l'angle de la singularité correspondante est de  $\frac{3\pi}{2}$. Comme le domaine polaire contribue d'un angle $\pi$ à cette singularité, la surface est obtenue localement en collant au lien selle $\gamma$ au bord du domaine polaire un quadrilatère de côtés $\gamma$, $v_{1}$, $\gamma'$ et $v_{2}$ dont la somme des angles entre $\gamma$ et les $v_{i}$ est égale à~$\tfrac{\pi}{2}$. Donc la longueur de $\gamma'$ est strictement inférieure à~$1$. La surface plate est obtenue en concaténant entre les extrémités de $\gamma'$ les segments correspondant aux liens selles des trois pôles d'ordre $-4$ restant. Comme ces segments appartiennent à $\lbrace\pm1,\pm i\rbrace$ la somme des segments est dans $\ZZ \oplus i\ZZ$, ce qui implique que $\gamma'$ est nul. Et donc la surface plate est singulière.
\par
Supposons maintenant que le zéro d'ordre $-1$ ne borde pas de domaine polaire. On peut alors couper la surface le long du lien selle de longueur minimale entre les deux zéros. La surface obtenue est une surface de translation à bord ayant un angle de $\frac{3\pi}{2}$. Le complément des domaines polaires est alors un hexagone. Toutefois il n'est pas possible de construire un hexagone dont quatre segments appartiennent à $\left\{\pm1,\pm i\right\}$ et les deux derniers sont adjacent, de même longueur et forment un angle interne de~$\frac{3\pi}{2}$. Cela montre que $\appresk[0][4](-1,9;(-4^{4}))$ ne contient pas  $(1,1,1,1)$.
\par 
Dans le cas (8) des $4$-résidus $(1,1,-2)$ dans la strate $\Omega^{4}\moduli[0](-1,5;(-4^{3}))$,  les circonférences des cylindres demi-infinis sont soit dans $\{\pm 1,\pm i\}$ soit une diagonale du carré dont les arêtes sont de longueur~$1$. Ce cas se montre donc de manière complètement similaire au précédent.  
\smallskip
\par
Nous traitons maintenant les cas (9) et (11) où les  $4$-résidus sont égaux à~$1$, respectivement dans les strates $\Omega^{4}\moduli[0](3,5;(-4^{4}))$ et $\Omega^{4}\moduli[0](3,13;(-4^{6}))$. Nous raisonnons comme dans le cas (10) et nous donnerons les principales différences avec celui-ci.

Supposons qu'il existe une $4$-differentielle $\xi$ possédant ces invariants. Découpons $\xi$ le long d’un lien selle reliant les deux singularités coniques pour obtenir une surface de translation de genre zero avec deux bords $\gamma$ et $\gamma'$ tels que $\gamma' = \pm i\gamma$. Cette surface possède un nombre pair de cylindres demi-infinis de circonférences dans $\{ \pm 1,\pm i\}$. Notons que cela implique que les parties réelles et imaginaires de $\gamma$ sont soit toutes les deux paires, soit toutes les deux impaires. De plus, on en déduit que $\gamma$ est un élément de $\ZZ \oplus i\ZZ$. Les vecteurs d'holonomie des liens selle appartiennent donc au réseau~$\ZZ[i]$.
Ceci implique en particulier que les cylindres sont bordés par un unique lien selle, de longueur~$1$. La somme des angles des deux singularités coniques est égale à $8\pi$ dans le cas (9) et à $12\pi$ dans le cas~(11).  On en déduit que le \coeur, qui est connexe, est respectivement un hexagone et un octogone. Deux arêtes de ces polygones sont $\gamma$ et $\gamma'$, tandis que les autres sont collées aux cylindres correspondant aux pôles d'ordre~$-4$.

Dans le cas de la strate $\Omega^{4}\moduli[0](3,5;(-4^{4}))$, les angles aux singularités coniques sont $7\pi/2$ et $9\pi/2$.  Si $\gamma$ et $\gamma'$ sont consécutifs, alors la singularité correspondante est d'angle au plus~$2\pi$, ce qui est exclu. 
S'il  existe exactement un pôle entre $\gamma$ et $\gamma'$, alors la singularité bordant les autres cylindres est composée d'un angle de $3\pi$ pour les cylindres et au moins $\pi$ pour les deux angles dans l'hexagone.  Elle correspond donc au  zéro d’ordre~$5$. Il reste alors un angle de $\pi/2$ pour les deux angles entre ces cylindres et respectivement $\gamma$ et $\gamma'$.  Cela est impossible pour un polygone dont les sommets sont des points entiers du plan.  Enfin supposons que~$\gamma$ et~$\gamma'$ séparent les pôles en deux sous-ensembles de cardinal~$2$. Chaque singularité est composée d'un angle de $2\pi$ pour les cylindres et respectivement $3\pi/2$ et $5\pi/2$ dans l’hexagone.  Pour les angles participant au zéro d’ordre $3$, l'un de ceux-ci se situe entre les deux cylindres. Cet angle ne peut être égal à $\pi$ car la somme des deux angles restant serait de $\pi/2$. L’angle entre les deux cylindres est donc égal à $\pi/2$ et la somme des deux autres angles est égal à~$\pi$. On vérifie facilement que cela est également impossible dans un polygone dont les sommets sont entiers.
\par
Maintenant, dans la strate $\Omega^{4}\moduli[0](3,13;(-4^{6}))$ les angles des singularités sont respectivement $7\pi/2$ et $17\pi/2$. Nous distinguons les cas selon la répartition des six cylindres entre $\gamma$ et~$\gamma'$. La singularité conique d'angle $7\pi/2$ ne peut être adjacente qu'au plus à deux cylindres et doit être adjacente à au moins un cylindre. Les deux répartitions possibles sont donc en sous-ensembles de cardinaux respectifs $(5,1)$ et  $(4,2)$. Avec la répartition $(5,1)$, deux angles de l’octogone contribuent au zéro d’ordre $3$, pour un angle total de $5\pi/2$.  Donc les angles entre l'arête $r$ correspondant au cylindre et les arêtes $\gamma$ et $\gamma'$ peuvent être de la forme $(2\pi,\pi/2)$ et $(3\pi/2,\pi)$. Donc modulo translation les arêtes $(\gamma,r,\gamma')$ sont de la forme $(-1,1,i)$ et $(i,1,1)$. Dans les deux cas, il est facile de vérifier qu'il faut au moins $7$ arêtes dans $\{\pm 1,\pm i\}$ pour connecter les points initial et final de la concaténation (tel que l'octogone est au dessus du vecteur~$1$).  Enfin, avec la répartition $(4,2)$, la singularité conique d'angle $\tfrac{7\pi}{2}$  de $2\pi$ provenant des cylindres et $3\pi/2$ venant de trois angles de l’octogone. Cela implique que le point initial de $\gamma$ coïncide avec le point final de $\gamma'$. La surface ainsi formée est donc singulière. 
\end{proof}

Nous passons maintenant aux cas cubiques et sextiques  du  théorème~\ref{thm:geq0kspe}. Ces cas sont similaires entre eux et la preuve repose sur le fait que les racines $3$-ièmes et $6$-ièmes de l'unité engendre le réseau $\ZZ \oplus \exp(2i\pi/3)\ZZ$.
%
%

\begin{proof}[Existence des cas sporadiques cubiques et sextiques du  théorème~\ref{thm:geq0kspe}.]
Nous commençons par les cas~(2) et~(3). Ce sont les cas des strates $\Omega^{3}\moduli[0](-1,4;(-3^{3}))$ et $\Omega^{3}\moduli[0](1,2;(-3^{3}))$ avec  les $3$-résidus égaux à  $(1,1,1)$. Supposons qu'il existe une telle $3$-différentielle~$\xi$. Les liens selles de bord des trois domaines polaires sont des racines $3$-ième de l'unité. Cela implique qu'ils sont collés sur le \coeur de~$\xi$. La somme des angles des singularités coniques de $\xi$ étant $6\pi$, et les angles au bord de chaque domaine polaire étant de $\pi$, on en déduit que la géométrie de la surface plate est de deux types possibles. Dans le premier cas, chaque domaine polaire est bordé par un unique lien selle de longueur $1$ tandis que le \coeur est un pentagone dont deux côtés $\gamma$ et $\gamma'$ sont identifiés. 
Dans le second cas, deux domaines polaires sont bordés par un lien selle de longueur $1$ tandis qu'un troisième est bordé par deux liens selles dont la somme des longueurs est~$1$. Le \coeur est alors un quadrilatère dont les arêtes appartiennent aux directions des racines $3$-ième de l'unité. Cela implique que le quadrilatère possède deux arêtes consécutives formant un angle de~$\pi$. La singularité associée à ce  sommet est alors d'angle  $2\pi$, ce qui est absurde.
\par
Dans le cas où le \coeur est un pentagone avec deux arêtes $\gamma,\gamma'$  identifiés et trois arêtes correspondant au bord des cylindres. Il y a deux cas à considérer selon que $\gamma$ et $\gamma'$ sont consécutifs ou il existe une arête $r$ entre les deux.

Dans le premier cas,  l'angle entre $\gamma$ et $\gamma'$ inférieur à $2\pi$, ce qui implique que $\xi$ est dans la strate $\Omega^{3}\moduli[0](-1,4;(-3^{3})$.
Donc l'angle de cette singularité est $\frac{4\pi}{3}$. Il est alors facile de voir qu'il faut au moins $4$ racines $3$-ième de l'unité pour relier le point initial de $\gamma$ au point final de $\gamma'$ sans créer de points d’intersection.

Nous considérons maintenant le cas où il existe une arête $r$ entre $\gamma$ et $\gamma'$. Dans le cas de la strate $\Omega^{3}\moduli[0](-1,4;(-3^{3}))$, la singularité conique d'ordre $-1$ est d'angle $\frac{4\pi}{3}$ et il s'agit nécessairement de celle située aux deux extrémités de~$r$. La somme des deux angles bordant ce côté dans le pentagone est donc $\frac{\pi}{3}$. Cela implique que l'angle entre $\gamma$ et $r$ est nul et donc la surface est singulière. Nous supposerons donc que $\xi$ est dans la strate $\Omega^{3}\moduli[0](1,2;(-3^{3}))$. Il est facile de vérifier que le zéro d'ordre $1$ doit correspondre aux bord de~$r$. Supposons que $r=1$, alors la somme des angles aux sommets au bord de $r$ est $5\pi/3$, ce qui implique que la somme des deux autres arêtes ne peut pas être égale à~$1$. Donc les  arêtes doivent être de la forme $(\gamma,1,\gamma',\exp(2i\pi/3),\exp(2i\pi/3))$. Cela implique que $\gamma= \exp(-2i\pi/3)$ et $\gamma'= \exp(-i\pi/3)$. Cela donne une surface dégénérée.
\par
Notons que le cas (12) de la strate $\Omega^{6}\moduli[0](-1,7;(-6^{3})$  et des $6$-résidus égaux à $(1,1,1)$ peut se traiter de manière similaire. En effet, en coupant  le complémentaire des domaines polaires le long d'un lien selle, on obtient un pentagone. En notant $\gamma$ et $\gamma'$ les arêtes de ce pentagone identifiées par rotation, il y a deux cas à considérer selon que $\gamma$ et $\gamma'$ sont consécutives ou qu'il existe une arête entre les deux. Dans le premier cas, il faut au moins quatre racines $6$-ième de l'unité pour relier le point initial de $\gamma$ au point final de $\gamma'$. Dans le second cas, la distance entre le point initial de $\gamma$ et final de $\gamma'$ serait strictement inférieure à~$1$. Cela est impossible et on pourra consulter le cas (10) pour plus de détails. 
\smallskip
\par
Nous justifions maintenant le cas (4), i.e. il n'existe pas de $3$-différentielle dans la strate $\Omega^{3}\moduli[0](4,2;(-3^{4}))$ dont les $3$-résidus sont  $(1,1,-1,-1)$.   On montre comme précédemment que le \coeur est un hexagone et que les arêtes identifiées par rotation $\gamma,\gamma'$ séparent les pôles en deux ensembles de cardinal~$2$.  Considérons les pôles qui contribuent au zéro d'ordre~$2$.  Supposons que les $3$-résidus de ces pôles sont égaux à~$1$. Si les deux racines des $3$-résidus sont égales à~$1$, alors l'hexagone est de la forme $(\gamma=-1,1,1,\gamma'=\exp(2i\pi/3),-1,\exp(-i\pi/3))$ et  la surface associée est singulière. Dans le cas où les racines sont égales à $1$ et $\exp(2i\pi/3)$, la seule façon d'obtenir une surface telle que le point inital de $\gamma$ et le point final de $\gamma'$ soient  distance supérieure ou égale à $1$ (sans auto-intersection) est que $\gamma=\exp(-2i\pi/3)$ et $\gamma'=-1$. Donc la surface associée est singulière. Enfin, il reste à considérer le cas où les deux $3$-résidus sont distincts. Dans ce cas, la somme des deux racines est soit nulle, soit égale à $\sqrt{3}$. On déduit facilement de cela qu'il n'est pas possible d'obtenir les invariants souhaités.
\smallskip
\par
Nous justifions maintenant le cas (5), i.e. il n'existe pas de $3$-différentielle dans la strate $\Omega^{3}\moduli[0](2,7;(-3^{5}))$ dont les $3$-résidus sont  $(1^{5})$ ou $((1^{4}),-1)$.  Dans les deux cas, le \coeur est un heptagone et il est facile de vérifier que  les arêtes identifiées par rotation $\gamma,\gamma'$ séparent les pôles en deux ensembles de cardinaux respectifs~$2$ et~$3$. De plus, le sous-ensemble de cardinal $2$ contribue au zéro d'ordre~$2$. Supposons que les deux $3$-résidus sont égaux. Supposons que les deux racines sont égales entre elles, disons à~$1$. On peut voir que la somme des autres racines ne possède pas une partie imaginaire nulle. En effet, la seule possibilité est que les racines sont $\exp(2i\pi/3)$, $-1$ et $\exp(-2i\pi/3)$. Comme leur somme est de norme strictement supérieure à~$2$, on ne peut pas former la $3$-différentielle souhaité. Donc cela implique que $\gamma=1$ ou $\gamma'=1$ et donc la surface est dégénérée.   Si les deux racines sont distinctes, alors l'argument du cas~(4) fonctionne. Si les deux $3$-résidus sont distincts, alors les racines sont de la forme $1$ et $\exp(i\pi/3)$. Donc $\gamma= \exp(-2i\pi/3)$ et $\gamma'=-1$, ce qui implique que la surface est singulière.
\smallskip
\par
Le cas (6) qui dit que les $3$-résidus $((1^{3}),(-1^{3}))$ et  $(1^{6})$ ne sont pas réalisé dans  la strate $\Omega^{3}\moduli[0](2,10;(-3^{6}))$ est  en tout point similaire aux cas (4) et (5). Nous ne donnerons donc pas les détails de ce cas.
\smallskip
\par
Nous traitons enfin le cas (7). Nous montrons donc que les $3$-résidus $(1^{6})$ ne sont pas réalisés dans $\Omega^{3}\moduli[0](7,5;(-3^{6}))$. Le \coeur est un octogone dont deux arêtes $\gamma,\gamma'$ sont identifiées par rotation. Cela donne une partition des pôles et il est facile de voir que les sous-ensembles sont de cardinaux $(2,4)$ ou $(3,3)$. Dans le cas où l'un des sous-ensembles est de cardinal $2$, les pôles correspondants contribuent au zéro d'ordre~$2$.  Dans ce cas, on peut montrer qu'il faut au moins $7$ autres pôles pour relier le point initial de $\gamma$ au point final de~$\gamma'$. On   peut donc se concentrer sur le cas où les deux sous-ensembles sont de cardinal~$3$. Dans ce cas soit les $3$ racines des pôles contribuant au zéro d'ordre~$2$ sont égales à~$1$, soit $2$ sont égales à $1$ et une est égale à $\exp(2i\pi/3)$. Dans le premier cas, des considérations similaires à précédemment montrent que la somme des racines des autres pôles est horizontale. On vérifie que cela est impossible (pour cela on notera que $-1$ n'est pas une racine cubique de l'unité).  Dans le dernier cas, la somme des angles que forment le point final de $\gamma$ et le point initial de $\gamma'$ avec ces racines doit être de~$\pi$. Comme le point initial de $\gamma$ et le point final de $\gamma'$ doivent être sur le réseau $\ZZ \oplus \exp(i\pi/3)$, on en déduit que les angles sont $(\pi,0)$, $(2\pi/3,\pi/3)$, $(\pi/3,2\pi/3)$ ou $(\pi,0)$. Dans tout les cas, la surface ainsi formée est singulière.
\end{proof}

Nous prouvons maintenant le  théorème~\ref{thm:geq0kspe} dans le cas des strates où un zéro est d'ordre négatif. Rappelons que ce théorème énonce la surjectivité de l'application résiduelle, sauf dans les cas exceptionnels.
La preuve, une récurrence sur le nombre de pôles de la strate, repose sur le lemme~\ref{lem:constravecneg} donnant une condition suffisante pour construire de telles $k$-différentielles.

\begin{proof}[Preuve du théorème~\ref{thm:geq0kspe}, un zéro est d'ordre négatif.]
Nous traitons tout d'abord le cas où $-k<a_{1}<-\left[\frac{k}{2}\right]$. Prenons des racines $k$-ième $r_{i}$ de $R_{i}$ satisfaisant aux conditions suivantes. La somme des $r_{i}$ est non nulle et le $\RR$-espace vectoriel engendré par les $r_{i}$ est $\CC$. Pour $k\geq3$ ces deux conditions peuvent être facilement satisfaites simultanément. Nous supposons que les~$r_{i}$ sont ordonnés par argument décroissant.
Nous formons un polygone en concaténant les résidus par argument croissant, puis en reliant le sommet final au sommet initial par deux segments satisfaisant les deux propriétés suivantes. Ils sont de même longueur et l'angle à leur intersection est égal à $(a_{1}+k)\frac{2\pi}{k}$. Remarquons que ce polygone est sans point d'auto-intersection. La différentielle est obtenue en collant des demi-cylindres aux arêtes $r_{i}$ et les deux arêtes spéciales entre elles par rotation.
\smallskip
\par
Nous traitons maintenant le cas plus subtil où $0>a_{2}\geq-\left[\frac{k}{2}\right]$. L'application $k$-résiduelle des strates avec un unique pôle d'ordre~$-k$ est trivialement surjective. 
\par
Dans le cas de deux pôles d'ordre~$-k$, le lemme~\ref{lem:constravecneg} implique sans difficultés que tous les $k$-résidus qui ne sont pas proportionnels à $(1,(-1)^{k})$ sont dans l'image de l'application résiduelle des strates $\komoduli[0](a_{1},a_{2};-k,-k)$. Supposons maintenant que les résidus soient $(1,(-1)^{k})$ et que $(a_{1},a_{2})\neq(1,-1)$. Alors par la remarque~\ref{rem:quadmoinsun}, on peut obtenir une $k$-différentielle avec ces invariants.
\smallskip
\par
Nous considérons maintenant les cas avec $s\geq3$ pôles d'ordre~$-k$. Nous montrons par récurrence sur $s$ que les hypothèses du lemme~\ref{lem:constravecneg} sont presque toujours vérifiées. Les quelques cas restant seront traités à la main. 
\par
Considérons tout d'abord le cas où tous les $k$-résidus $R_{i}$ sont de norme $1$. Afin d'utiliser le lemme~\ref{lem:constravecneg} il suffit de montrer  qu'il existe $s-1$ racines $r_{i}$ des $R_{i}$ telles que la somme appartient au disque ouvert épointé $\Delta^{\ast}$. Nous supposerons dans un premier temps que si $k=4,6$, alors les $R_{i}$ ne sont pas tous égaux entre eux et que si $k=3$, alors les $3$-résidus $R_{i}$ ne sont pas égaux ou opposés entre eux. S'il y a $s=3$ pôles, il suffit de prendre deux racines $r_{1}$ et $r_{2}$ telles que les arguments $r_{1}$  et $-r_{2}$sont de différence strictement inférieure à~$\tfrac{\pi}{3}$. Cela est clairement toujours possible sous nos hypothèses.  De plus, notons que ces deux racines ne sont pas proportionnelles. 
Par récurrence on suppose que la somme de $s-1$ racines est $t_{s-1}\in\Delta_{\ast}$ et qu'au moins deux racines ne sont pas proportionnelles. Pour $r_{s} $ tel que $\left<t_{s-1}, r_{s}\right> = \min_{r|r^{k}=R_{s}}\left\{\left<t_{s-1}, r\right>\right\}$ il est aisé de vérifier que $t_{s-1}+r_{s}\in\Delta^{\ast}$. On peut donc conclure par le lemme~\ref{lem:constravecneg} à l'existence de $k$-différentielles avec les invariants souhaités.
\par
Considérons les cas des $k$-résidus de la forme $(1^{s})$ pour $k=4,6$ et $((1^{s_{1}}),(-1^{s_{2}}))$ pour $k=3$. Nous construisons une $k$-différentielle dans $\komoduli[0]((s-2)k+1,-1;(-k^{s}))$ avec ces $k$-résidus sauf dans les cas sporadiques (2), (10) et (12) du théorème~\ref{thm:geq0kspe}. Notons que ces strates sont les seules qui ne soient pas vides lorsque $0>a_{2}\geq-\lfloor \tfrac{k}{2} \rfloor$ et $k\in\lbrace 3,4,6\rbrace$ (voir lemme~\ref{lem:puissk}).
\par
Si $k=3$ et les $3$-résidus sont $(1^{s})$, nous construisons le polygone de type (C1) en concaténant $\lfloor\tfrac{s}{2}\rfloor$ segments  $\e^{i\pi/3}$ puis $\lceil\tfrac{s}{2}\rceil$ fois $-1$ et en joignant le point initial avec le point final par deux segments d'égale longueur faisant un angle de $\frac{2\pi}{3}$ à leur intersection.  Pour $s>3$, cela nous donne un polygone non dégénéré et on conclut par la construction~(C1).
\par
Si les $3$-résidus sont $((1^{s_{1}}),(-1^{s_{2}}))$ on procède de la façon suivante. Dans le cas de la strate $\Omega^{3}\moduli[0](-1,4;(-3^{3}))$ et des $3$-résidus $(1,1,-1)$ on utilise la construction (C1) avec les racines $-1$, puis $\exp(2i\pi/3)$, puis $1$ puis les deux vecteurs~$v_{i}$. Pour la strate $\Omega^{3}\moduli[0](-1,7;(-3^{4}))$ et des $3$-résidus $(1,1,-1,-1)$ on utilise la construction (C1) avec les racines $1$, puis $\exp(i\pi/3)$, puis $\exp(2i\pi/3)$, puis $-1$ puis les deux vecteurs~$v_{i}$. Dans les autres cas, on part du polygone de type (C1) du paragraphe précédent construit avec $s_{1}+\lfloor \tfrac{s_{2}}{2} \rfloor$ segments. Remarquons qu'un triangle équilatéral permet de remplacer un $3$-résidu égal à $1$ par deux $3$-résidus égaux à~$-1$, voir la figure~\ref{fig:transfork3}. Cela implique le résultat pour $s_{2}$ pair. Si $s_{2}$ est impair, on construit le polygone de type (C1) en ajoutant un segment $\exp(-2i\pi/3)$ après les $s_{1}+\lfloor \tfrac{s_{2}}{2} \rfloor$ segments de la construction précédente.
\begin{figure}[hbt]
\centering
\begin{tikzpicture}
\begin{scope}[xshift=3cm]
\filldraw[fill=black!10]  (0,0) coordinate (p1)  -- ++(120:1)  coordinate[pos=.5] (q1)  coordinate (p2) -- ++(1,0) coordinate[pos=.5] (q2) coordinate (p3) -- ++(1,0) coordinate[pos=.5] (q3) coordinate (p4)  -- ++(-120:1)  coordinate[pos=.5] (q4)  coordinate (p5) -- ++(150:.577) coordinate[pos=.5] (q5) coordinate (p6)-- ++(-150:.577) coordinate[pos=.5] (q6) ;

  \foreach \i in {1,...,5}
  \fill (p\i)  circle (2pt); 
  \filldraw[fill=white] (p6)  circle (2pt); 

  \node[below] at (q5) {$v_{1}$};
  \node[below] at (q6) {$v_{2}$};
  
  \node[left] at (q1) {$1$};
\node[above] at (q2) {$1$};
\node[above] at (q3) {$1$};
\node[right] at (q4) {$1$};
\end{scope}

\begin{scope}[xshift=-3cm]
\filldraw[fill=black!10]  (0,0) coordinate (p1)  -- ++(120:1)  coordinate[pos=.5] (q1)  coordinate (p2) -- ++(1,0) coordinate[pos=.5] (q2) coordinate (p3) -- ++(60:1) coordinate[pos=.5] (q3) coordinate (p4) -- ++(-60:1) coordinate[pos=.5] (q4) coordinate (p5)  -- ++(-120:1)  coordinate[pos=.5] (q5)  coordinate (p6) -- ++(150:.577) coordinate[pos=.5] (q6) coordinate (p7)-- ++(-150:.577) coordinate[pos=.5] (q7) ;
\draw[dotted] (p3) -- (p5);

  \foreach \i in {1,...,6}
  \fill (p\i)  circle (2pt); 
  \filldraw[fill=white] (p7)  circle (2pt); 

  \node[below] at (q6) {$v_{1}$};
  \node[below] at (q7) {$v_{2}$};
  
  \node[left] at (q1) {$1$};
\node[above] at (q2) {$1$};
\node[left] at (q3) {$-1$};
\node[right] at (q4) {$-1$};
\node[right] at (q5) {$1$};
\end{scope}
\end{tikzpicture}
\caption{Transformation d'un polygone de type (C1) pour obtenir un élément de $\Omega^{3}\moduli[0](7,-1;(-3^{4}))$ dont les $3$-résidus sont $(1^{4})$ en un élément de $\Omega^{3}\moduli[0](10,-1;(-3^{5}))$ dont les $3$-résidus sont $((1^{3}),(-1^{2}))$.}\label{fig:transfork3}
\end{figure} 
\par
Dans le cas $k=4$ nous construisons un polygone de type (C1) comme suit. Si le nombre de pôles $s$ est impair, le uplet $E$ est constitué de $\lfloor s/2 \rfloor$ fois $-i$, du segment~$1$ et de  $\lfloor s/2 \rfloor$ fois~$i$. Comme la différence entre le point initial et final de la concaténation des éléments de $E$ est~$1$, nous pouvons joindre ces deux points par deux segments de même longueur faisant un angle de~$\frac{3\pi}{2}$. Si $s$ est paire, $E$ est constitué de $\tfrac{s}{2} -1$ fois $-i$, puis deux fois~$1$ et enfin  $ \tfrac{s}{2} -1$ fois~$i$.  La différence entre le point initial et final est donc le segment $2$ et nous joignons ces deux points par deux segments de même longueur faisant un angle de  $\frac{3\pi}{2}$. Ce polygone ne possède pas de points d’auto-intersection pour $s\geq5$ et $s=3$. Ainsi la construction (C1) donne la $4$-différentielle désirée.
\par
Enfin il reste à traiter le cas $k=6$. Si $s$ est pair, $E$ est formé de  $\frac{s}{2}-1$ fois $-1$, puis $e^{2i\pi/3}$, puis $e^{i\pi/3}$ et finalement $\frac{s}{2}-1$ fois~$1$. Si $s$ est impaire, $E$ est formé des vecteurs précédents et de $e^{-i\pi/3}$ à la fin. Nous joignons le point initial au point final de la concaténation des éléments de $E$ par deux segments de même longueur et d'angle $\frac{5\pi}{3}$. Ce polygone ne possède pas de points d’auto-intersection pour $s\geq4$ et on conclut par la construction (C1).
\smallskip
\par
Nous considérons maintenant les cas où les $k$-résidus n'ont pas tous la même norme. Commençons par le cas $s=3$. Soit $R_{1}$ un $k$-résidu de plus grande norme. Si $k\neq 3,4$ et si $(R_{2},R_{3})\neq (1,-1)$ dans le cas où $k=5$, alors comme précédemment il existe des racines de $R_{2}$ et $R_{3}$ dont la somme est de norme strictement comprise entre $O$ et~$|R_{1}|$. Dans le cas où $k=5$ et $(R_{2},R_{3}) = (1,-1)$, alors on considère deux cas. Si $|R_{1}| \geq 3$, alors il suffit de prendre $r_{2}=1$ et $r_{3}= \exp(3i\pi/5)$ car la norme de $r_{2}+r_{3}$ est strictement inférieure à celle de la racine $5$-ième de $3$. Si $1<R_{1}<3$, alors on considère la racine $r_{1}$ d'angle dans  $\left] -\tfrac{\pi}{5}, \tfrac{\pi}{5}\right]$. On vérifie facilement que la norme de $r_{1}-1$ est strictement inférieure à~$1$. Cela conclut ce cas spécial et le lemme~\ref{lem:constravecneg} permet alors de conclure dans tous les cas.
\par
Dans le cas où  $k=4$,  si $R_{2}\neq R_{3}$, alors la construction du paragraphe précédent fonctionne. Si $R_{2}=R_{3}=1$ et  $|R_{1}|>1$ il y a deux cas  à considérer. Si $|R_{1}|>4$, alors en prenant $r_{2}=1$ et $r_{3}=i$ on peut utiliser le lemme~\ref{lem:constravecneg}.
Supposons maintenant que  $|R_{1}|\leq4$.  Si les $4$-résidus  sont distincts de $(1,1,-4)$ alors nous permutons ces $4$-résidus de sorte que $R_{2}$ soit le plus grand, avec $R_{2} \neq 4$ et $R_{1}=R_{3}=1$. Dans ce cas, la somme de racine quatrième  de $R'_{3}$ et $R'_{2}$ se trouve dans~$\Delta^{\ast}$ et le lemme~\ref{lem:constravecneg} permet de conclure. Notons que  cette construction ne fonctionne pas dans le cas $(-4,1,1)$ car les racines quatrièmes de  $-4$ sont $\pm 1\pm i$ et celles de~$1$ sont $\pm1$ et~$\pm i$. De fait, on a montré que ce cas est un cas sporadique du théorème~\ref{thm:geq0kspe}.
\par
Dans le cas $k=3$, une analyse similaire fonctionne.  Si $R_{2}\neq - R_{3}$, la construction du polygone de type (C1) ne pose aucun problème. Si $R_{2}= - R_{3}=1$ et  $|R_{1}|>1$, alors nous considérons deux cas. Si $|R_{1}|>3\sqrt{3}$, alors il existe deux racines troisièmes des $3$-résidus $R_{2}$ et $R_{3}$ telles que la somme est inférieure ou égale à la racine cubique de $|R_{1}|$ et on peut utiliser le lemme~\ref{lem:constravecneg}.
Supposons maintenant que  $|R_{1}|\leq 3\sqrt{3}$.  Alors on prend la racine $r_{1}$ dont l'argument est compris dans le segment $\left] -\tfrac{\pi}{3};\tfrac{\pi}{3} \right]$. Maintenant on concatène la racine $-1$ de $-1$ si l'argument est compris entre $ -\tfrac{\pi}{6}$ et $\tfrac{\pi}{6}$ et $|R_{1}|<3\sqrt{3}$. Dans le cas où l'argument est de norme strictement supérieur à $\tfrac{\pi}{6}$, on concatène la racine $\pm \exp(2i\pi/3)$ de $1$. Dans tous les cas, la somme est dans $\Delta^{\ast}$, ce qui permet d'utiliser le lemme~\ref{lem:constravecneg} pour faire la construction de type~(C1). Dans le dernier cas où $R_{1}=3i\sqrt{3}$ on concatène $1$, puis $\exp(i\pi/3)$, puis la racine d'argument $5\pi/6$ de $R_{1}$. On utilise alors la construction (C1).
%
\par
Considérons maintenant par récurrence le cas où $s\geq 4$. Comme précédemment nous considérerons tout d'abord les cas $k\neq3,4$. Soient $(R_{1},\dots,R_{s+1})$ des résidus où l'on suppose que la norme de $R_{1}$ est maximale et celle de $R_{s+1}$ minimale. Par récurrence, il existe des racines $k$-ième (non toutes proportionnelles) aux 
 $(R_{2},\dots,R_{s})$ telles que leur somme $t_{s}$ est non nulle et de longueur strictement inférieure à  $|R_{1}|^{\frac{1}{k}}$. Nous sommes maintenant ramenés au cas de trois $k$-résidus $(R_{1},t_{s}^{k},R_{s+1})$. Ainsi la construction précédente nous donne des racines qui satisfont aux hypothèses du lemme~\ref{lem:constravecneg}.  Cela implique la surjectivité de l'application $k$-résiduelle de ces strates.
\par
Dans le cas $k=3$, la construction est similaire sauf dans le cas où les $3$-résidus sont de la forme $((1^{s_{1}}),(-1^{s_{2}}),R_{s+1})$ avec $1 \leq |R_{s+1}| \leq 3\sqrt{3}$. Dans les cas où $s+1=4$ pôles, si $s_{1}=3$ on concatène $2$ fois $1$ puis $\exp(i\pi/3)$ puis la racine de $R_{4}$ avec argument $5\pi/6$. Si $s_{1}=2$ et $s_{2}=1$, alors on concatène $2$ fois $1$ puis la racine de $R_{4}$ avec argument $5\pi/6$ puis $-1$. Dans les deux cas on utilise alors la construction~(C1). Dans le cas où il y a plus de $s+1\geq5$ pôles, on concatène les vecteurs $1$ puis les vecteurs $\exp(i\pi/3)$ puis les vecteurs $-1$ tel que la partie réelle de la somme de ces vecteurs est égale à~$-1/2$. Puis on concatène la racine de $R_{4}$ dont l'argument est $3\pi/2$ et on conclut avec la construction~(C1).
\par
Dans le cas $k=4$, pour tous les $4$-résidus qui ne sont pas de la forme $(1,\dots,1,-4)$ la construction du paragraphe précédent  se généralise sans problèmes. Supposons maintenant que $R=(1,\dots,1,-4)$. S'il y a un nombre impair de~$1$, alors il existe une somme de racines (non toutes proportionnelles) de longueur $1$ qui vérifient les hypothèses du lemme~\ref{lem:constravecneg}. Si le nombre $s-1$ de~$1$ est pair, on considère le polygone de type (C1) associé à $E$ donné par $\tfrac{s-3}{2}$ fois $-i$, puis $2$ fois $1$, puis $\tfrac{s-3}{2}$ fois $i$ et enfin $\sqrt{2}\exp(\frac{3i\pi}{4})$. Ce polygone est non dégénéré et la construction (C1) donne la $4$-différentielle souhaitée.
\end{proof}

Nous prouvons maintenant le théorème~\ref{thm:geq0kspe} dans les cas où les deux zéros sont  positifs. Ce théorème énonce la surjectivité de l'application résiduelle, sauf dans des cas sporadiques.
La preuve  repose sur le lemme~\ref{lem:constravecnegbis}. Dans cette preuve, nous utilisons les notations de ce lemme, en particulier $a_{i}=kl_{i}+\bar{a_{i}}$ avec $-k<\bar{a_{1}}<\bar{a_{2}}<0$.

\begin{proof}[Preuve du théorème~\ref{thm:geq0kspe}, les deux zéros sont positifs.] 
On considère  une strate de la forme $\komoduli[0](a_{1},a_{2};(-k^{s}))$ et  un $s$-uplet $R:=(R_{1},\cdots,R_{s})$ dans $(\CC^{\ast})^{s}$.  
Nous débutons par les cas où $k\geq5$. Considérons les $l_{2}+1\geq2$ $k$-résidus de normes maximales, que l'on supposera être $(R_{l_{1}+1},\dots,R_{s})$. Nous normalisons les $k$-résidus de telle sorte que $\min_{i=l_{1}+1,\dots,s}\left\{|R_{i}|\right\}=1$. On peut choisir les racines $r_{i}$ des~$R_{i}$, telles que la somme des $l_{2}+1$ dernières $r_{i}$ soit de longueur supérieure à $|1+\exp(\tfrac{i\pi}{5})|$ et celle des  $l_{1}$ premières soit non nulle et  inférieure ou égale à~$1$. Cela se montre facilement par récurrence sur $l_{1}$ et $l_{2}$ sauf si $k=5$ et $(R_{1},R_{2})=(1,-1)$. Dans ce dernier cas, il suffit de prendre $(r_{1},r_{2})=(1,\exp(3i\pi/5))$. Comme  $|1+\exp(\tfrac{i\pi}{5})|>\sqrt{2}$ (et $|1+\exp(\tfrac{i\pi}{5})| / |1+\exp(\tfrac{3i\pi}{5})| >\sqrt{2}$ dans le dernier cas), le lemme~\ref{lem:constravecnegbis} implique l'existence d'une $k$-différentielle avec les invariants souhaités. 
\smallskip
\par
Nous considérons maintenant le cas $k=4$. Nous traitons d'abord le cas des strates $\Omega^{4}\moduli[0](a_{1},a_{2};(-4^{s}))$ où $a_{2}=4l_{2}-1$ avec $l_{2}\geq2$. Supposons que les $l_{2}+1$ derniers résidus sont de norme maximale. Nous normaliserons les $4$-résidus de telle sorte que  $\min_{i=l_{1}+1,\dots,l_{s}}\left\{|R_{i}|\right\}=1$. On peut choisir des racines des $4$-résidus~$R_{i}$ telles que la somme des $l_{2}+1$ dernières est strictement supérieure à $2$ et celle des~$l_{1}$ premières est inférieure à~$\sqrt{2}$. Le lemme~\ref{lem:constravecnegbis} permet alors de conclure.
\par
On considère maintenant les strates de la forme  $\Omega^{4}\moduli[0](4(s-3)+1,3;(-4^{s}))$. Supposons tout d'abord que les $4$-résidus ne sont pas de la forme $(1,\dots,1)$. On supposera que les deux derniers $4$-résidus sont de normes maximales et que  $|R_{s-1}|=1\leq |R_{s}|$. Si $|R_{s}|^{1/4}\geq\sqrt{2}$, alors il existe des racines $r_{s-1}$ et $r_{s}$ dont la somme est strictement supérieure à~$2$. On peut choisir les racines des autres $4$-résidus non toutes proportionnelles entre elles telles que la norme de leur somme est inférieure à~$\sqrt{2}$. On applique alors le lemme~\ref{lem:constravecnegbis} afin d'obtenir une $4$-différentielle avec les invariants souhaités. Si $|R_{s}|^{1/4}<\sqrt{2}$, il y a deux cas à traiter selon qu'il existe une somme des $s-2$ premières racines des $4$-résidus strictement inférieure à~$1$ ou non. Le premier cas ne pose pas de problème par le lemme~\ref{lem:constravecnegbis}. Si toutes les sommes sont supérieures ou égales à~$1$, alors on permute le rôle de $R_{s}$ avec $R_{1}$. On choisit alors des racines des $4$-résidus telles que la somme des $s-2$ premières est inférieure à $1$ et celle des deux dernières est supérieure ou égale à~$\sqrt{2}$. La possibilité de ce choix se montre par récurrence sur $s$ et le lemme~\ref{lem:constravecnegbis}  nous permet de conclure ce cas.
\par
Dans le cas $k=4$, il reste à considérer le cas des $4$-résidus égaux à $(1,\dots,1)$. 
Dans le cas des strates où $l_{1}$ est impair et $l_{2}=1$, on utilise la construction (C1) avec les éléments suivants. Le uplet $E_{1}$ constitué de $\tfrac{l_{1}-1}{2}$ racines égales à $-i$, puis une égale à~$1$ et $\tfrac{l_{1}-1}{2}$ racines égales à~$i$. Le uplet $E_{2}$ est constitué de $2$ racines égales à~$-1$. 
Nous traitons maintenant le cas où~$l_{1}$ est pair et $l_{2}=1$.
Si  $l_{1}\geq6$ on peut faire la construction (C2) avec les éléments suivants. Le uplet $E_{1}$ est constitué de $\tfrac{s-4}{2}$ fois $i$, de~$3$ fois $-1$ et de $\tfrac{s-4}{2}$ fois $-i$. Le uplet $E_{2}$ est le uplet~$(1)$. 
%
\smallskip
\par 
Enfin, nous considérons le cas $k=3$. Le cas des $3$-résidus qui ne sont pas de la forme $(1,\dots,1,-1,\dots,-1)$ se montre de manière similaire au cas $k=4$.  Plus précisément, on considère le cas des strates $\Omega^{3}\moduli[0](a_{1},a_{2};(-3^{s}))$ où $a_{i}=3l_{i}+\bar{a}_{i}$. Supposons que les $l_{2}+1$ derniers résidus sont de norme maximale. Nous normaliserons les $3$-résidus de telle sorte que  $\min_{i=l_{1}+1,\dots,l_{s}}\left\{|R_{i}|\right\}=1$.  Dans ce cas, il existe une somme non nulle de  $3$-racines $r_{i}$ pour $i\leq l_{1}$ de norme inférieure ou égale à~$\sqrt{3}$.  Le résultat se déduit immédiatement du lemme~\ref{lem:constravecnegbis} pour $l_{2}\geq3$. Dans le cas $l_{2}=2$, notons qu'il y a un problème seulement si les $l_{1}$ premiers $3$-résidus sont dans $\left\{ \pm 1 \right\}$ et les racines troisième des  $l_{2}+1$ derniers sont de normes strictement inférieures à~$2$. Dans ce cas, on peut échanger l'un des $l_{1}$ premiers $3$-résidus avec un des $l_{2}+1$ dernier qui n'est pas dans $\left\{ \pm 1 \right\}$ afin de pouvoir utiliser le lemme~\ref{lem:constravecnegbis}. 
\par
Dans le cas des strates avec $l_{2}=1$, 
on supposera que les deux derniers $3$-résidus sont de normes maximales. Si $|R_{i}|^{1/3} < 1/2$ pour tout $i\leq l_{1}$, alors on peut appliquer directement le lemme~\ref{lem:constravecnegbis} afin d'obtenir une $3$-différentielle avec les invariants souhaités. Si $|R_{l_{1}}|^{1/3}\geq 1/2$, 
il y a deux cas à traiter selon qu'il existe une somme des $l_{1}$ premières racines des $3$-résidus strictement inférieure à~$\sqrt{3}/2$ ou non. Le premier cas ne pose pas de problème par le lemme~\ref{lem:constravecnegbis}. Si toutes les sommes (non nulles) sont de normes supérieures ou égales à~$\sqrt{3}/2$, alors on permute le rôle de $R_{1}$ avec $R_{l_{1}+1}$. La somme des deux dernières racines est alors de norme strictement supérieure à $1$. On choisit alors des racines des $3$-résidus telles que la somme des $l_{1}$ premières est inférieure à $1/2$. La possibilité de ce choix se montre par récurrence sur $l_{1}$ et le lemme~\ref{lem:constravecnegbis}  nous permet de conclure ce cas.
\smallskip
\par
Nous considérerons maintenant le cas des $s$-uplets de la forme $((1^{t_{1}}),(-1^{t_{2}}))$.
Si $l_{2}\geq3$, alors on peut choisir des racines cubiques des $3$-résidus satisfaisant les conditions suivantes. Notons qu'au moins trois $3$-résidus sont égaux entre eux, que l'on supposera égaux à~$1$. On peut donc choisir des racines telles que la somme des $l_{2}+1$ dernières racines est de longueur supérieure à~$\left|3+ \exp(i\pi/3) \right| > \tfrac{7}{2}$. On choisit les~$l_{1}$ premières racines non toutes proportionnelles et de somme de norme~$1$ ou~$\sqrt{3}$. Le lemme~\ref{lem:constravecnegbis} permet alors de conclure à la surjectivité de l'application $3$-résiduelle dans ces cas.   
\par
Si $l_{2}=2$, i.e. $a_{2}=5$, alors on peut appliquer le lemme~\ref{lem:constravecnegbis} s'il existe une somme de $l_{1}$ racines de norme~$1$.  En particulier, il suffit de considérer le cas  $l_{1}\geq2$. On sait que l'on peut toujours obtenir une somme de longueur $1$ ou $\sqrt{3}$, donc on supposera que l'on a choisit des racines dont la somme est de norme~$\sqrt{3}$. Si les $3$-résidus ne sont pas tous égaux à~$1$, on peut échanger un $3$-résidu de cette somme par son opposé. Avec ces nouveaux $3$-résidus on peut clairement obtenir une somme des racines égale à~$1$. 
 \par
 Considérons maintenant le cas où tous les $3$-résidus sont égaux à~$1$. Si $l_{1}$ n'est pas divisible par $3$, notons $l_{1}=3l_{1}'-t$ avec $t\in\lbrace 1,2\rbrace$. Comme $l_{1}\geq2$, on a $l_{1}'\geq1$ et si $l_{1}'=1$ alors $t=1$. On fait la construction~(C1) avec $E_{2}=(-1,-1,-1)$ et $E_{1}$ constitué de $l_{1}'$ fois $\exp(\tfrac{-i\pi}{3})$, de $l_{1}'$ fois $\exp(\tfrac{i\pi}{3})$ et enfin de $l_{1}'-t$ fois~$-1$. Soit la concaténation de $E_{1}$ est égale à~$1$, soit à~$2$. On peut vérifier que le polygone de type~(C1) ainsi obtenu est non dégénéré (voir la figure~\ref{fig:a2egale5etk3} à gauche).
Dans le cas où $l_{1}$ est divisible par $3$  on utilise la construction~(C2). On traite tout d'abord le cas $l_{1}>6$. On considère le uplet $E_{2}=(-1,-1)$ et le uplet $E_{1}$ constitué de $\lfloor\tfrac{l_{1}+1}{2}\rfloor$ fois $\exp(\tfrac{-i\pi}{3})$, puis  $\lfloor\tfrac{l_{1}+1}{2}\rfloor$ fois $\exp(\tfrac{i\pi}{3})$ puis $-1$ si $l_{1}$ est pair. La construction (C2) est clairement réalisable. Comme le cas $l_{1}=5$ correspond au cas sporadique (7), il suffit de montrer que $(1^{9})$ est réalisable dans $\Omega^{3}\moduli[0](16,5;(-3^{9}))$ où $l_{1}=6$. Cela peut s'obtenir en considérant la concaténation des vecteurs
$$(-1,1,\exp(2i\pi/3),\exp(2i\pi/3),\exp(-2i\pi/3),\exp(-2i\pi/3),\exp(-2i\pi/3),1,1,1,\exp(2i\pi/3) )\,.$$
On colle alors par rotation le premier vecteur $1$ au second vecteur  $\exp(2i\pi/3)$, puis on colle les cylindres aux autres segments. La différentielle cubique ainsi réalisée possède les invariants souhaités.

\begin{figure}[hbt]
\centering
\begin{tikzpicture}
\begin{scope}[xshift=-6cm]
\filldraw[fill=black!10]  (0,0) coordinate (p1)  -- ++(1,0)  coordinate[pos=.5] (q1)  coordinate (p2) -- ++(1,0) coordinate[pos=.5] (q2) coordinate (p3) -- ++(1,0) coordinate[pos=.5] (q3) coordinate (p4)  -- ++(-150:1.155)  coordinate[pos=.6] (q4)  coordinate (p5) -- ++(1,0) coordinate[pos=.5] (q5) coordinate (p6)-- ++(-120:1) coordinate[pos=.5] (q6)coordinate (p7) -- ++(-120:1) coordinate[pos=.5] (q7)coordinate (p8) -- ++(120:1) coordinate[pos=.5] (q8)coordinate (p9)-- ++(120:1) coordinate[pos=.5] (q9)coordinate (p10)-- ++(150:1.155)   coordinate[pos=.4] (q10);

  \foreach \i in {1,...,4}
    \filldraw[fill=white] (p\i)  circle (2pt); 
  \foreach \i in {5,...,10}
    \fill (p\i)  circle (2pt); 

  \node[left] at (q10) {$v_{1}$};
  \node[right] at (q4) {$v_{2}$};
  
  \node[above] at (q1) {$-1$};
\node[above] at (q2) {$-1$};
\node[above] at (q3) {$-1$};
  \node[below] at (q5) {$-1$};
\node[right] at (q6) {$\exp(\tfrac{i\pi}{3})$};
\node[right] at (q7) {$\exp(\tfrac{i\pi}{3})$};
\node[left] at (q8) {$\exp(\tfrac{-i\pi}{3})$};
\node[left] at (q9) {$\exp(\tfrac{-i\pi}{3})$};
\end{scope}

\begin{scope}[xshift=0cm, yshift=-1cm]
\filldraw[fill=black!10]  (0,0) coordinate (p1)  -- ++(1,0)  coordinate[pos=.5] (q1)  coordinate (p2) -- ++(30:.577) coordinate[pos=.5] (q2) coordinate (p3) -- ++(-120:1)  coordinate[pos=.6] (q3)  coordinate (p4) -- ++(-1,0) coordinate[pos=.5] (q4) coordinate (p5)  -- ++(120:1) coordinate[pos=.5] (q5) coordinate (p6)-- ++(-30:.577) coordinate[pos=.5] (q6);

  \foreach \i in {1,2}
    \filldraw[fill=white] (p\i)  circle (2pt); 
  \foreach \i in {3,...,6}
    \fill (p\i)  circle (2pt); 

      \node[above] at (q1) {$-1$};
\node[right] at (q3) {$\exp(\frac{i\pi}{3})$};
\node[below] at (q4) {$1$};
  \node[left] at (q5) {$\exp(\frac{-i\pi}{3})$};
    
 
\end{scope}

\begin{scope}[xshift=5cm, yshift=-1cm]
\filldraw[fill=black!10]  (0,0) coordinate (p1)  -- ++(1,0)  coordinate[pos=.5] (q1)  coordinate (p2) -- ++(30:1.73) coordinate[pos=.5] (q2) coordinate (p3) -- ++(-120:1)  coordinate[pos=.6] (q3)  coordinate (p4) -- ++(-120:1) coordinate[pos=.5] (q4) coordinate (p5)  -- ++(-1,0) coordinate[pos=.5] (q5) coordinate (p6)-- ++(-1,0) coordinate[pos=.5] (q6) coordinate (p7)-- ++(120:1) coordinate[pos=.5] (q7) coordinate (p8)-- ++(120:1) coordinate[pos=.5] (q8) coordinate (p9)  -- ++(-30:1.73) coordinate[pos=.5] (q9) ;

  \foreach \i in {1,2}
    \filldraw[fill=white] (p\i)  circle (2pt); 
  \foreach \i in {3,...,9}
    \fill (p\i)  circle (2pt); 

%
 
\end{scope}
\end{tikzpicture}
\caption{Les polygones dans le cas des strates cubiques avec $a_{2}=5$ et $l_{1}$ n'est pas divisible par $3$ à gauche, avec $a_{2}=2$ et $a_{1}=4$ pour les $3$-résidus $(1,1,1,-1)$ au centre et avec $a_{2}=2$ et $a_{1}=13$ pour les $3$-résidus $((1^{5}),(-1^{2}))$ à droite.}\label{fig:a2egale5etk3}
\end{figure} 
\par
Il reste maintenant à considérer le dernier cas où $l_{2}=1$, i.e. $a_{2}=2$.  Nous commençons par traiter chacun des cas $l_{1}\leq 5$, puis nous traitons le cas $l_{1}>5$. 
\par
 Pour $l_{1}=1$, i.e. dans la strate $\Omega^{3}\moduli[0](1,2;(-3^{3}))$, et les $3$-résidus de la forme $(1,1,-1)$, nous utilisons la construction~(C1) avec $E_{1}=(1)$ et $E_{2}=(-1,-1)$. Le cas $(1,1,1)$ correspond au cas sporadique~(3). 
\par
A présent, supposons que $l_{1}=2$, i.e. la strate $\Omega^{3}\moduli[0](4,2;(-3^{4}))$. Si les $3$-résidus sont $(1^{4})$, alors on part de la $3$-différentielle de $\Omega^{3}\moduli[0](1,2;(-3^{3}))$ dont les $3$-résidus sont $(1,1,-1)$. On remplace alors le résidu~$-1$ par deux résidus~$1$ par la construction de la figure~\ref{fig:transfork3}. Notons qu'il un pôle spécial, dont le domaine est bordé par le zéro d'ordre $2$. Nous ne pourrons pas appliquer cette opération à ce pôle sans changer l'ordre du zéro d'ordre~$2$. Nous supposerons par la suite que le $3$-résidu de ce pôle est~$1$.  Dans le cas où les $3$-résidus sont $((1^{3}),-1)$, on utilise la construction~(C2)  avec $E_{1}=(\exp(\tfrac{-i\pi}{3}),1,\exp(\tfrac{i\pi}{3}))$ et $E_{2}=(-1)$, voir la figure~\ref{fig:a2egale5etk3} centrale.  Enfin le dernier cas $(1,1,-1,-1)$ correspond au cas sporadique~(4).
\par
Les cas avec $l_{1}=3$ et $l_{1}=4$, i.e. des strates $\Omega^{3}\moduli[0](7,2;(-3^{5}))$ et $\Omega^{3}\moduli[0](10,2;(-3^{6}))$, se traitent en remplaçant un $3$-résidu par deux $3$-résidus opposés par la construction de la figure~\ref{fig:transfork3}. On obtient donc que les $3$-résidus de la forme $((1^{t_{1}}),(-1^{t_{2}}))$ qui ne sont pas de la forme $((1^{4}),-1)$,  $(1^{6})$ et  $((1^{3}),(-1^{3}))$ sont dans l'image de l'application $3$-résiduelle de ces strates. Enfin, on a montré que les cas restant ne sont pas réalisables et correspondent aux cas (5) et (6) du théorème~\ref{thm:geq0kspe}. 
\par
Enfin considérons le cas $l_{1}=5$, i.e. la strate $\Omega^{3}\moduli[0](13,2;(-3^{7}))$. On obtient les $3$-résidus distincts de $((1^{5}),-1,-1)$ via la construction de la figure~\ref{fig:transfork3}.  Montrons qu'il existe une différentielle cubique dont les $3$-résidus sont de la forme $((-1^{5}),1,1)$. Cela peut se faire grâce à la construction (C2) avec $E_{1}=(\exp(2i\pi/3),\exp(2i\pi/3),-1,-1,\exp(-2i\pi/3),\exp(-2i\pi/3))$ et $E_{2}=(1)$, voir la figure~\ref{fig:a2egale5etk3} de droite.  
\par
Comme pour $l_{1}=5$ l'application $3$-résiduelle contient les éléments de la forme $((1^{t_{1}}),(-1^{t_{2}}))$  de ces strates. On peut remplacer un $3$-résidu égal à~$1$ (resp.~$-1$) par deux $3$-résidus égaux à~$-1$ (resp.~$1$) via la construction de la figure~\ref{fig:transfork3}. On en déduit que tous les $3$-résidus de la forme $((1^{t_{1}}),(-1^{t_{2}}))$ sont dans l'image de l'application résiduelle pour $l_{1} > 5$.
\end{proof}

Pour terminer, nous montrons qu'il n'existe pas d'obstruction pour les strates de $k$-différentielles de genre zéro avec $k\geq3$, n'ayant que des pôles d'ordre $-k$ et au moins trois zéros.
\begin{prop}
L'application résiduelle de la strate $\komoduli[0](a_{1},\dots,a_{n};(-k^{s}))$ est surjective pour $k\geq3$ et  $n\geq3$.
\end{prop}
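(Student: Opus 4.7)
La stratégie repose sur une récurrence sur $n$ utilisant l'éclatement de zéros (proposition~\ref{prop:eclatZero}). Le cas de base est $n=3$: on se ramène à une strate à deux zéros via l'éclatement, ces strates étant traitées par le théorème~\ref{thm:geq0kspe} (pour $k\geq3$) et par les résultats de la section sur les strates quadratiques. Le pas de récurrence, pour $n\geq4$, consiste à fusionner deux zéros et à appliquer l'hypothèse de récurrence à une strate à $n-1\geq3$ zéros, puis à éclater le zéro ainsi créé.

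Soient donc $R=(R_1,\dots,R_s)\in(\CC^{\ast})^s$ et la strate $\komoduli[0](a_1,\dots,a_n;(-k^s))$. Pour chaque paire d'indices $\{i,j\}$ telle que $a_i+a_j>-k$, on considère la strate à $n-1$ zéros $\komoduli[0](a_i+a_j,a_1,\dots,\widehat{a_i},\dots,\widehat{a_j},\dots,a_n;(-k^s))$. Comme $\sum_\ell a_\ell=(s-2)k$ et $n\geq3$, il existe toujours au moins une paire satisfaisant cette condition. Si une $k$-différentielle dans la strate intermédiaire réalise les $k$-résidus $R$, alors la proposition~\ref{prop:eclatZero} permet de l'éclater pour obtenir la $k$-différentielle recherchée dans la strate cible.

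Il reste à gérer deux difficultés. La première concerne la primitivité après éclatement. Par la multiplicativité des $k$-résidus (équation~\eqref{eq:multiplires}), on peut toujours choisir de partir d'une strate intermédiaire dont les éléments ne sont pas la puissance $k$-ième d'une différentielle abélienne, ce qui équivaut à fusionner deux ordres $a_i,a_j$ tels que $a_i+a_j$ (ou un autre $a_\ell$) ne soit pas divisible par $k$. Comme la strate cible est elle-même non vide, le lemme~\ref{lem:puissk} assure que $\pgcd(\mu,k)=1$ et donc qu'un tel choix existe. Alternativement, la condition $\pgcd(a_i,k)=1$ suffit à garantir la primitivité via la dernière assertion de la proposition~\ref{prop:eclatZero}.

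La difficulté principale est la deuxième: éviter les cas exceptionnels du théorème~\ref{thm:geq0kspe} pour $n=3$. Pour chaque configuration exceptionnelle de $k$-résidus (une liste finie de $11$ familles sporadiques), il faut démontrer qu'il existe toujours un autre choix de paire $\{i,j\}$ produisant une strate à deux zéros non-exceptionnelle pour ces mêmes résidus, ou bien construire directement la $k$-différentielle. L'argument clé est que les cas exceptionnels correspondent à des configurations très rigides (résidus proportionnels à $(1,\dots,1)$ ou à des racines de l'unité particulières pour $k\in\{3,4,6\}$), et que la liberté combinatoire offerte par trois paires de fusion $\{a_i,a_j\}$ permet systématiquement d'en éviter au moins une. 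Dans les rares cas où toutes les paires mènent à une strate exceptionnelle, on invoque plutôt les strates avec un pôle d'ordre $-k\ell$ traitées par le lemme~\ref{lem:g=0gen2} en fusionnant deux pôles d'ordre $-k$ de résidus opposés en un pôle d'ordre supérieur, puis en éclatant pour retrouver la configuration voulue.
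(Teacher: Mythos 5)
Your proposal follows essentially the same route as the paper's proof: induction on $n$ via l'éclatement de zéros (proposition~\ref{prop:eclatZero}), with the base case $n=3$ obtained by merging two zeros so as to land in a two-zero stratum whose residual map is known, and with the multiplicativity of the $k$-residues (équation~\eqref{eq:multiplires}) together with the primitivity criterion of proposition~\ref{prop:eclatZero} used to handle the possibly non-primitive intermediate stratum. The paper is exactly as terse as you are on the one substantive verification, namely that for every triple $(a_1,a_2,a_3)$ and every target $R$ some choice of pair avoids the exceptional two-zero strata of théorème~\ref{thm:geq0kspe}, so on that point you are on equal footing. One caveat: your fallback of fusing two poles of order $-k$ into a pole of higher order and then \emph{recovering} the original configuration is not a legitimate move — the éclatement applies to zeros, not poles, and there is no local surgery turning a pole of order $-k\ell$ back into several poles of order $-k$ with prescribed nonzero $k$-residus — so if a configuration existed in which every pair of zeros led to an exceptional stratum, you would need a direct flat-geometric construction rather than this detour; fortunately the paper's claim (and yours) is that such a configuration never occurs.
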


\begin{proof}
La preuve se fait par induction sur le nombre de zéros. On commence par vérifier que, partant d'une strate avec trois zéros, on peut presque toujours additionner l'ordre de deux zéros et obtenir une strate dont l'application $k$-résiduelle est surjective. Les quelques exceptions seront traités à la main. Le cas général est alors obtenu par éclatement de zéros des $3$-différentielles avec $3$ zéros. 

Remarquons que la strate que nous obtenons en additionnant deux des trois zéros peut ne pas être primitive. Toutefois, il existe une strate qui n'est pas constituée par des $k$-différentielles qui sont la puissance $k$-ième de différentielles abélienne. De plus, si $k$ est pair, comme exactement deux zéros sont d'ordres impaires, les strates obtenues ne sont pas les puissances $\tfrac{k}{2}$-ième d'une strate quadratique.  L'équation~\eqref{eq:multiplires} établit alors la surjectivité de l'application $k$-résiduelle des strate avec $3$ zéros sauf dans le cas où les strates obtenues sont sporadiques.

Il reste donc à considérer les cas $k=3,4,6$. Dans le cas $k=6$, la seule strate sporadique est la strate $\Omega^{6}\moduli[0](-1,7;(-6^{3}))$. Dans ce cas, les ordres des zéros problématiques sont de la forme $(-1,a,7-a)$ ou $(-1-b,b,7)$. Dans le premier cas, si $a$ est pair, il suffit de considérer les strates avec les zéros $(-1+a,7-a)$ et les zéros $(a,7-a-1)$ dans le cas où $a$ est impaire. La même considération permet de traiter les strates de la seconde forme.

Dans le cas où $k=4$, il y a quatre strates sporadiques. Ces cas se traitent de manière similaire à précédemment sauf dans les deux cas suivants. Les strates de $4$-différentielles $\Omega^{4}\moduli[0](-1,3,6;(-4^{4}))$ et $\Omega^{4}\moduli[0](-1,4,5;(-4^{4}))$ sont telles que les strates que l'on obtient  en rassemblant les zéros  sont  $\Omega^{4}\moduli[0](3,5;(-4^{4}))$ et  $\Omega^{4}\moduli[0](-1,9;(-4^{4}))$ et une strate qui paramètre la puissance d'une différentielle abélienne ou quadratique. Donc dans ces deux cas, il reste à montrer que les $4$-résidus $(1,1,1,1)$ sont réalisables. Des $4$-différentielles satisfaisant à ces conditions sont données dans la figure~\ref{fig:k4troiszeros}.
\begin{figure}[hbt]
\centering
\begin{tikzpicture}[scale=2]
\begin{scope}[xshift=-6cm]
\filldraw[fill=black!10]  (0,0) coordinate (p1)  -- ++(1,0)  coordinate[pos=.5] (q1)  coordinate (p2) -- ++(0,1) coordinate[pos=.5] (q2) coordinate (p3) -- ++(-.25,-.25) coordinate[pos=.5] (q3) coordinate (p4)   -- ++(-.5,0)  coordinate[pos=.6] (q4)  coordinate (p5) -- ++(-.25,.25) coordinate[pos=.5] (q7)coordinate (p8) -- cycle;

 \foreach \i in {1,2,3,8}
     \fill (p\i)  circle (1pt); 
  \foreach \i in {4,5}
    \filldraw[fill=white] (p\i)  circle (1pt); 
    
    \node[above,rotate=45] at (q3) {$1$};
   \node[below,rotate=135] at (q7) {$1$};
   \node[above] at (q4) {$3$};

\fill[fill=black!10] (0.25,1.5) coordinate (p10) -- ++(.5,0)  coordinate[pos=.6] (q10)  coordinate (p11)   -- ++(-.75,.25)  coordinate[pos=.6] (q11)  coordinate (p12) -- ++(.25,.75) coordinate[pos=.5] (q12) coordinate (p13) -- ++ (-1.1,0) coordinate[pos=.75] (q13) coordinate (p14)  -- ++(0,-1)  -- (p10) coordinate[pos=.25] (q14)coordinate (p15);
\draw (q14) -- (p11) -- (p12) -- (p13) -- (q13);
\draw[dotted] (q14) --++(-.2,0);
\draw[dotted] (q13) --++(-.2,0);

  \foreach \i in {10,11,13}
    \filldraw[fill=white] (p\i)  circle (1pt); 
  \foreach \i in {12}
    \filldraw[fill=red] (p\i)  circle (1pt); 
    
\node[above,rotate=-20] at (q11) {$2$};
\node[below,rotate=70] at (q12) {$2$};
\node[below] at (q10) {$3$};
    
\fill[fill=black!10] (-0.25,0) coordinate (p10) -- ++(0,1)  coordinate[pos=.6] (q10)  coordinate (p11)    -- ++ (-.9,0) coordinate[pos=.75] (q13) coordinate (p14)  -- ++(0,-1)  -- (p10) coordinate[pos=.25] (q14)coordinate (p15);
\draw (q14) -- (p10) -- (p11) -- (q13);
\draw[dotted] (q14) --++(-.2,0);
\draw[dotted] (q13) --++(-.2,0);

 \foreach \i in {10,11}
     \fill (p\i)  circle (1pt); 

\fill[fill=black!10] (1.25,0) coordinate (p10) -- ++(0,1)  coordinate[pos=.6] (q10)  coordinate (p11)    -- ++ (.9,0) coordinate[pos=.75] (q13) coordinate (p14)  -- ++(0,-1)  -- (p10) coordinate[pos=.25] (q14)coordinate (p15);
\draw (q14) -- (p10) -- (p11) -- (q13);
\draw[dotted] (q14) --++(.2,0);
\draw[dotted] (q13) --++(.2,0);
 \foreach \i in {10,11}
     \fill (p\i)  circle (1pt); 

\fill[fill=black!10] (0,-.25) coordinate (p10) -- ++(1,0)  coordinate[pos=.6] (q10)  coordinate (p11)    -- ++ (0,-.9) coordinate[pos=.75] (q13) coordinate (p14)  -- ++(-1,0)  -- (p10) coordinate[pos=.25] (q14)coordinate (p15);
\draw (q14) -- (p10) -- (p11) -- (q13);
\draw[dotted] (q14) --++(0,-.2);
\draw[dotted] (q13) --++(0,-.2);
 \foreach \i in {10,11}
     \fill (p\i)  circle (1pt); 
\end{scope}

\begin{scope}[xshift=-2cm]
\filldraw[fill=black!10]  (0,0) coordinate (p1)  -- ++(1,0)  coordinate[pos=.5] (q1)  coordinate (p2) -- ++(0,1) coordinate[pos=.5] (q2) coordinate (p3) -- ++(-.25,0) coordinate[pos=.5] (q3) coordinate (p4)   -- ++(-.5,0)  coordinate[pos=.6] (q4)  coordinate (p5) -- ++(-.25,0) coordinate[pos=.5] (q7)coordinate (p8) -- cycle;

 \foreach \i in {1,2,3,8}
     \fill (p\i)  circle (1pt); 
  \foreach \i in {4,5}
    \filldraw[fill=white] (p\i)  circle (1pt); 
    
    \node[above] at (q3) {$1$};
   \node[below,rotate=180] at (q7) {$1$};
   \node[above] at (q4) {$3$};

\fill[fill=black!10] (0.25,1.5) coordinate (p10) -- ++(.5,0)  coordinate[pos=.6] (q10)  coordinate (p11)   -- ++(-.75,.25)  coordinate[pos=.6] (q11)  coordinate (p12) -- ++(.25,.75) coordinate[pos=.5] (q12) coordinate (p13) -- ++ (-1.1,0) coordinate[pos=.75] (q13) coordinate (p14)  -- ++(0,-1)  -- (p10) coordinate[pos=.25] (q14)coordinate (p15);
\draw (q14) -- (p11) -- (p12) -- (p13) -- (q13);
\draw[dotted] (q14) --++(-.2,0);
\draw[dotted] (q13) --++(-.2,0);

  \foreach \i in {10,11,13}
    \filldraw[fill=white] (p\i)  circle (1pt); 
  \foreach \i in {12}
    \filldraw[fill=red] (p\i)  circle (1pt); 
    
\node[above,rotate=-20] at (q11) {$2$};
\node[below,rotate=70] at (q12) {$2$};
\node[below] at (q10) {$3$};
    
\fill[fill=black!10] (-0.25,0) coordinate (p10) -- ++(0,1)  coordinate[pos=.6] (q10)  coordinate (p11)    -- ++ (-.9,0) coordinate[pos=.75] (q13) coordinate (p14)  -- ++(0,-1)  -- (p10) coordinate[pos=.25] (q14)coordinate (p15);
\draw (q14) -- (p10) -- (p11) -- (q13);
\draw[dotted] (q14) --++(-.2,0);
\draw[dotted] (q13) --++(-.2,0);

 \foreach \i in {10,11}
     \fill (p\i)  circle (1pt); 

\fill[fill=black!10] (1.25,0) coordinate (p10) -- ++(0,1)  coordinate[pos=.6] (q10)  coordinate (p11)    -- ++ (.9,0) coordinate[pos=.75] (q13) coordinate (p14)  -- ++(0,-1)  -- (p10) coordinate[pos=.25] (q14)coordinate (p15);
\draw (q14) -- (p10) -- (p11) -- (q13);
\draw[dotted] (q14) --++(.2,0);
\draw[dotted] (q13) --++(.2,0);
 \foreach \i in {10,11}
     \fill (p\i)  circle (1pt); 

\fill[fill=black!10] (0,-.25) coordinate (p10) -- ++(1,0)  coordinate[pos=.6] (q10)  coordinate (p11)    -- ++ (0,-.9) coordinate[pos=.75] (q13) coordinate (p14)  -- ++(-1,0)  -- (p10) coordinate[pos=.25] (q14)coordinate (p15);
\draw (q14) -- (p10) -- (p11) -- (q13);
\draw[dotted] (q14) --++(0,-.2);
\draw[dotted] (q13) --++(0,-.2);
 \foreach \i in {10,11}
     \fill (p\i)  circle (1pt); 
\end{scope}

\end{tikzpicture}
\caption{Une $4$-différentielle de $\Omega^{4}\moduli[0](-1,3,6;(-4^{4}))$ à gauche et $\Omega^{4}\moduli[0](-1,4,5;(-4^{4}))$ à droite dont les $4$-résidus sont $(1,1,1,1)$.}\label{fig:k4troiszeros}
\end{figure}

Dans le cas où $k=3$, on vérifie comme précédemment que les strates qui posent problème sont les strates $\Omega^{3}\moduli[0](-1,1,3;(-3^{3}))$ et $\Omega^{3}\moduli[0](-1,2,2;(-3^{3}))$ avec trois pôles, et $\Omega^{3}\moduli[0](2,3,7;(-3^{6}))$ et   $\Omega^{3}\moduli[0](2,5,5;(-3^{6}))$ avec six pôles. Dans ces cas, les $3$-résidus qui posent problèmes sont $(1^{3})$ et $(1^{6})$ respectivement. Des $3$-différentielles dans les strates $\Omega^{3}\moduli[0](-1,1,3;(-3^{3}))$ et $\Omega^{3}\moduli[0](-1,2,2;(-3^{3}))$ avec $3$-résidus égaux à $(1,1,1)$ sont représentés dans la figure~\ref{fig:k3troiszeros}.
\begin{figure}[hbt]
\centering
\begin{tikzpicture}[scale=2]
\begin{scope}[xshift=-6cm]
\fill[fill=black!10]  (0,0) coordinate (p1)  -- ++(1,0)  coordinate[pos=.5] (q1)  coordinate (p2) -- ++(1,0) coordinate[pos=.5] (q2) coordinate (p3) -- ++(120:1) coordinate[pos=.5] (q3) coordinate (p4)   -- ++(150:0.577)  coordinate[pos=.6] (q4)  coordinate (p5) -- ++(90:.577) coordinate[pos=.5] (q5)coordinate (p6) -- (60:1) coordinate[pos=.5] (q6)coordinate (p7) --  (p1) coordinate[pos=.5] (q7);
\fill[fill=black!10] (p1) -- ++(-.1,-.3) -- ++ (1,0) -- (p2) -- cycle; 
\fill[fill=black!10] (p2) -- ++(.1,-.3) -- ++ (1,0) -- (p3) -- cycle; 
\fill[fill=black!10] (p6)  -- ++(-.3,.1) -- ++ (-120:1) -- (p7) -- cycle; 
\draw (p3) --(p4) -- (p5) -- (p6);
\draw (p7) -- (p1);
\draw (p1) -- ++(-.1,-.3);
\draw (p2) -- ++(-.1,-.3);
\draw (p2) -- ++(.1,-.3);
\draw (p3) -- ++(.1,-.3);
\draw (p6) -- ++(-.3,.1);
\draw (p7) -- ++(-.3,.1);

 \foreach \i in {1,2,3}
     \fill (p\i)  circle (1pt); 
  \foreach \i in {4,6,7}
    \filldraw[fill=white] (p\i)  circle (1pt); 
    \foreach \i in {5}
    \filldraw[fill=red] (p\i)  circle (1pt); 
    
    \node[above,rotate=-60] at (q3) {$1$};
   \node[below,rotate=-120] at (q7) {$1$};
   \node[above,rotate=-20] at (q4) {$2$};
     \node[below,rotate=100] at (q5) {$2$};
\end{scope}


\begin{scope}[xshift=-2cm,yshift=.5cm]
\fill[fill=black!10]  (0,0) coordinate (p1)  -- ++(1,0)  coordinate[pos=.5] (q1)  coordinate (p2) --  ++(-60:0.5) coordinate[pos=.5] (q2) coordinate (p3) -- ++(120:1) coordinate[pos=.5] (q3) coordinate (p4)   -- ++(-150:0.288)  coordinate[pos=.6] (q4)  coordinate (p5) -- ++(150:.288) coordinate[pos=.5] (q5)coordinate (p6) -- ++(-120:0.5) coordinate[pos=.5] (q6)coordinate (p7) -- ++(60:0.5) coordinate[pos=.5] (q7) coordinate (p7) -- ++(-120:1) coordinate[pos=.5] (q7) coordinate (p8)  --  (p1)  coordinate[pos=.5] (q8);

 \fill[fill=black!10] (p1) -- ++(0,-.5) -- ++ (1,0) -- (p2) -- cycle; 
 \fill[fill=black!10] (p3) -- ++(.5,.1) -- ++ (120:1) -- (p4) -- cycle; 
 \fill[fill=black!10] (p7)  -- ++(-.5,.1) -- ++ (-120:1) -- (p8) -- cycle; 
 \draw (p2) --(p3);
 \draw (p4) -- (p5) -- (p6);
 \draw (p8) -- (p1);
 \draw (p1) -- ++(0,-.5);
 \draw (p2) -- ++(0,-.5);
 \draw (p3) -- ++(.5,.1);
 \draw (p4) -- ++(.5,.1);
 \draw (p7) -- ++(-.5,.1);
 \draw (p8) -- ++(-.5,.1);

 \foreach \i in {1,2}
     \fill (p\i)  circle (1pt); 
  \foreach \i in {3,4,6,7,8}
    \filldraw[fill=white] (p\i)  circle (1pt); 
    \foreach \i in {5}
    \filldraw[fill=red] (p\i)  circle (1pt); 
    
    \node[above,rotate=-60] at (q2) {$1$};
   \node[below,rotate=-120] at (q8) {$1$};
   \node[right,rotate=-60] at (q4) {$2$};
     \node[left,rotate=60] at (q5) {$2$};
\end{scope}

\end{tikzpicture}
\caption{Les $3$-différentielles dans les strates $\Omega^{3}\moduli[0](-1,1,3;(-3^{3}))$ et $\Omega^{3}\moduli[0](-1,2,2;(-3^{3}))$ dont les $3$-résidus sont égaux à $(1,1,1)$.}\label{fig:k3troiszeros}
\end{figure} 
\par
Pour obtenir, les autres $3$-différentielles on fait les constructions suivantes. Pour la $3$-différentielle $\Omega^{3}\moduli[0](2,3,7;(-3^{6}))$ on utilise la construction suivante. On part du polygone donné en concaténant  $2$ fois le vecteur $1$, puis le vecteur $\exp(2i\pi/3)$, puis le vecteur $v_{1} = \exp(i\pi/3)$, une fois le vecteur $\exp(2i\pi/3)$, le vecteur $w_{1} = \tfrac{-2i\sqrt{3}}{3} $, le vecteur $\exp(2i\pi/3)$, le vecteur $w_{2}= \tfrac{-2\sqrt{3}}{3} \exp(-5i\pi/6)$,  le vecteur $\exp(-2i\pi/3)$ et enfin le vecteur $v_{2}= \exp(-i\pi/3)$. On identifie $v_{1}$ avec $v_{1}$ et $w_{1}$ avec $w_{2}$ par rotation et translation. Puis on ajoute des demi-cylindres infinis aux autres segments.
Enfin dans le cas de la strate $\Omega^{3}\moduli[0](2,5,5;(-3^{6}))$, on part de la $3$-différentielle de $\Omega^{3}\moduli[0](-1,2,2;(-3^{3}))$ de la figure~\ref{fig:k3troiszeros}. On ajoute un cylindre de circonférence $1$ à la singularité d'ordre $2$ en noir et un  cylindre de circonférence $\exp(i\pi3)$ à celle en blanc  et enfin  on ajoute un cylindre de circonférence $\exp(2i\pi3)$ à la singularité d'ordre~$-1$ (en rouge).
\end{proof}

\section{Pluridifférentielles en genre supérieur ou égal à un.}
\label{sec:ggeq1}

Dans cette section nous montrons que, à l’exception de quatre familles exceptionnelles, pour chaque strate de genre $g\geq1$, l'application $k$-résiduelle est surjective. 
Plus précisément, nous prouvons les théorèmes~\ref{thm:ggeq2}, \ref{thm:geq1} et \ref{thm:strateshol} pour les strates de $k$-différentielles avec $k\geq2$. 
Nous considérons dans la section~\ref{sec:pluri1} les strates ayant au moins un pôle d'ordre strictement inférieur à~$-k$. Dans la section~\ref{sec:pluri2} nous traitons les cas où tous les pôles sont d'ordre~$-k$. Enfin, nous traitons le cas des strates sans pôles dans la section~\ref{sec:plurifini}.

\subsection{Pluridifférentielles avec un pôle d'ordre strictement inférieur à~$-k$}
\label{sec:pluri1}

Dans ce paragraphe, nous considérons les strates avec au moins un pôle d'ordre strictement inférieur à~$-k$. Nous montrons tout d'abord que l'application $k$-résiduelle est surjective pour les strates de genre un possédant un unique zéro distinctes de $\Omega^{2}\moduli[1](4a;(-4^{a}))$. Dans ce cas, l'application $2$-résiduelle contient $\CC^{a}\setminus\left\{0\right\}$ (lemmes~\ref{lem:geq1} et \ref{lem:geq1bis}). Puis nous montrons la surjectivité des applications $2$-résiduelles de certaines strates quadratiques (lemme~\ref{lem:geq1ter}). Nous en déduisons la surjectivité de l'application $k$-résiduelle dans le cas général par éclatement de zéro et couture d'anse. Enfin nous montrons que $0\in\CC^{a}$ n'appartient pas à l'image de l'application $2$-résiduelle des strates $\Omega^{2}\moduli[1](4a;(-4^{a}))$ et $\Omega^{2}\moduli[1](2a-1,2a+1;(-4^{a}))$ pour tout $a\geq1$ (lemmes~\ref{lem:geq1cin} et \ref{lem:geq1six}).

\begin{lem}\label{lem:geq1}
L'application $k$-résiduelle $\appresk[1](m;-m)$ est surjective pour les couples $(k,m)$ tels que $m>k\geq2$ distincts de  $(2,4)$ et l'image de $\appresk[1][2](4;-4)$ contient $\CC^{\ast}$.
\end{lem}

\begin{proof}
Si $k\nmid m$, la surjectivité de  $\appresk[1](m;-m)$ est équivalent au fait que la strate $\komoduli[1](m;-m)$ soit non vide. Ceci est une conséquence élémentaire du théorème d'Abel. \`A partir de maintenant, nous supposons que $k\mid m$. 

Une $k$-différentielle dans $\komoduli[1](\ell k,-\ell k)$ avec $\ell\geq2$ et au moins un résidu non nul est donnée par le recollement de la $k$-partie polaire non triviale de type $k\ell$ associée aux vecteurs $(\emptyset;v_{1},\dots,v_{4})$  représentés sur la figure~\ref{fig:a-a}.

 \begin{figure}[htb]
 \centering
\begin{tikzpicture}[scale=1]
\fill[fill=black!10] (0,0) --(4,2) -- (8,0)  -- ++(0,2.4) -- ++(-8,0) -- cycle;

      \draw (0,0) coordinate (a1) -- node [below,sloped] {$1$} (2,1) coordinate (a2) -- node [below,sloped] {$2$} (4,2) coordinate (a3) -- node [above,sloped,rotate=180] {$1$} (6,1) coordinate (a4) -- node [above,sloped,rotate=180] {$2$} (8,0)
coordinate (a5);
  \foreach \i in {1,2,...,5}
  \fill (a\i) circle (2pt);

  \draw (a1)-- ++(0,2.5)coordinate[pos=.6](b);
    \draw (a5)-- ++(0,2.5)coordinate[pos=.6](c);
    \node[left] at (b) {$3$};
     \node[right] at (c) {$3$};
  \draw[->] (3.6,1.8) arc  (200:340:.4); \node at (4,1.3) {$\frac{2\pi}{k}$};
\end{tikzpicture}
\caption{Une $k$-différentielle de $\Omega^{k}\moduli[1](2k;-2k)$ (en blanc)  et  $\Omega^{k}\moduli[1](k;-k)$ (en gris).}
\label{fig:a-a}
\end{figure}

Nous traitons maintenant le cas où le $k$-résidu au pôle est nul.
Si $k\geq3$, alors la $k$-différentielle à gauche de la figure~\ref{fig:a-a,pasderes} appartient à la strate $\komoduli[1](2k;-2k)$ et possède un $k$-résidu nul. Plus généralement, une $k$-différentielle de $\komoduli[1](\ell k,-\ell k)$ dont le $k$-résidu est nul est donnée par le recollement d'une $k$-partie polaire triviale de type $k\ell$ associée à $(v_{1},v_{2};v_{3},v_{4})$ où les $v_{i}$ sont représentés à gauche de la figure~\ref{fig:a-a,pasderes}.
\begin{figure}[htb]
 \centering
\begin{tikzpicture}[scale=1,decoration={
    markings,
    mark=at position 0.5 with {\arrow[very thick]{>}}}]

    \fill[fill=black!10] (2,.2) ellipse (2.3cm and 1.5cm);
      \draw (0,0) coordinate (a1) -- node [below,sloped] {$1$} node [above,sloped] {$2$}  (2,1) coordinate (a2) --  node [below,sloped] {$2$}node [above,sloped] {$1$} (4,0)
coordinate (a3);
  \foreach \i in {1,2,...,3}
  \fill (a\i) circle (2pt);

  \draw[->] (1.6,.8) arc  (200:340:.4); \node at (2,.3) {$\frac{(k-2)\pi}{k}$};
  

  \begin{scope}[xshift=9cm]

\begin{scope}[yshift=1.5cm]
      \fill[fill=black!10] (0,0) ellipse (2cm and .9cm);
   \draw (-1,0) coordinate (a) -- node [below] {$2$} node [above] {$1$}  (0,0) coordinate (b);
 \draw (0,0) -- (1.5,0) coordinate[pos=.5] (c);
  \draw[dotted] (1.5,0) -- (2,0);
 \fill (a)  circle (2pt);
\fill[] (b) circle (2pt);
\node[above] at (c) {$4$};
\node[below] at (c) {$3$};

    \end{scope}

\begin{scope}[yshift=-.5cm]
      \fill[fill=black!10] (0,0) ellipse (2cm and .9cm);
   \draw (-1,0) coordinate (a) -- node [above,rotate=180] {$2$} node [below,rotate=180] {$1$}  (0,0) coordinate (b);
 \draw (0,0) -- (1.5,0) coordinate[pos=.5] (c);
  \draw[dotted] (1.5,0) -- (2,0);
 \fill (a)  circle (2pt);
\fill[] (b) circle (2pt);
\node[above] at (c) {$3$};
\node[below] at (c) {$4$};

    \end{scope}
\end{scope}
\end{tikzpicture}
\caption{Une $k$-différentielle sans $k$-résidu dans $\Omega^{k}\moduli[1](2k;-2k)$ avec $k\neq2$ à gauche et dans $\Omega^{2}\moduli[1](6;-6)$ à droite.}
\label{fig:a-a,pasderes}
\end{figure}
\par
Nous traitons maintenant le cas des strates quadratiques $\Omega^{2}\moduli[1](2\ell ;-2\ell)$ avec $\ell\geq3$. Une $2$-différentielle avec ces invariants locaux est obtenue à partir de $\ell-1$ domaines basiques positifs $D^{+}_{i}$ et $\ell-1$ négatifs $D^{-}_{j}$. Les domaines $D_{1}^{\pm}$ et $D_{2}^{\pm}$ sont donnés par les domaines basiques positifs (resp. négatifs) associés au vecteur~$v=1$. Les $2(\ell-2)$ autres domaines~$D_{i}^{\pm}$ sont les domaines polaires associés à l'ensemble vide. Pour tout~$i$, on colle la demi-droite~$\RR^{-}$ du domaine $D_{i}^{+}$ à celle de $D_{i}^{-}$ et la demi-droite $\RR_{+}$ de $D_{i}^{+}$ à celle de $D_{i+1}^{-}$ modulo~$\ell-1$. Les  segments au bord des domaines positifs (resp. négatifs) $D_{1}^{+}$ et $D_{2}^{+}$ (resp. $D_{1}^{-}$ et $D_{2}^{-}$) sont collés entre eux par une rotation d'angle~$\pi$. Cette construction est illustrée à droite de la figure~\ref{fig:a-a,pasderes}. On vérifiera facilement que la différentielle quadratique ainsi construite possède les invariants locaux souhaités.
\end{proof}

\begin{lem}\label{lem:geq1bis}
Soient $k\geq2$ et $\mu=(a;-b_{1},\dots,-b_{p};-c_{1},\dots,-c_{r};(-k^{s}))$ une partition de~$0$ telle que $(p,r)\neq(0,0)$. Si $\mu\neq(4p;(-4^{p}))$ ou $k\neq2$, alors l'application résiduelle $\appresk[1](\mu)$ est surjective. De plus, l'image de $\appresk[1][2](4p,(-4^{p}))$ contient $\CC^{p}\setminus\lbrace 0\rbrace$.
\end{lem}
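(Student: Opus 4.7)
The plan is to treat the non-zero residues first by reducing to genus zero and then sewing a handle, then handle the zero-residue case directly, and finally deal with the exceptional family separately.

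First, I would establish surjectivity for any non-zero residue tuple $R=(R_1,\dots,R_{p+r+s})$. The hypothesis $(p,r)\neq(0,0)$ forces $a\geq k+1$ (since the positive order must cancel at least one pole of order $\leq-(k+1)$), so the order $a-2k\geq-k+1$ is admissible. I would apply the theorems of Section~\ref{sec:k-diff} (specifically Theorems~\ref{thm:g=0gen1}, \ref{thm:r=0sneq0}, and \ref{thm:r=0s=0}) to produce a $k$-differential $(X_0,\xi_0)$ in the genus-zero stratum $\komoduli[0](a-2k;-b_1,\dots,-b_p;-c_1,\dots,-c_r;(-k^s))$ with residues $R$. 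Provided the construction yields a primitive differential (not the $k$-th power of an abelian one), Proposition~\ref{prop:attachanse} gives a handle-sewing at the zero of order $a-2k$, producing a primitive $k$-differential in $\komoduli[1](\mu)$ with unchanged residues $R$. In the ambiguous cases where the genus-zero construction might be non-primitive, one can adjust the choice of racines to break the symmetry (using the flexibility of non-zero $R$), exactly as in the corresponding passages of Section~\ref{sec:k-diff}.

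Second, for the case $R=(0,\dots,0)$ outside the exceptional family $\Omega^2\moduli[1](4p;(-4)^p)$, I would give a direct flat construction imitating the abelian construction in Section~\ref{sec:casgen} (see Figure~\ref{fig:casgeneralgenreun}, right). Take a trivial polar part of order $b_i$, $c_i$, or $k$ associated to $(r;r)$ for all poles but one, then take a special polar part for the remaining pole associated to the vectors $(\zeta r,r;r,\zeta r)$, where $\zeta$ is a primitive $k$-th root of unity (so the gluing involves a genuine rotation by $2\pi/k$). Gluing the two $\zeta r$-sides by that rotation produces a genus-one surface with a unique conical singularity and all residues zero. The rotation guarantees primitivity, and the condition $\mu\neq(4p;(-4)^p)$ (or $k\neq 2$) ensures that one can always find a polar part on which this rotation is compatible with the prescribed pole orders.

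Third, for $\Omega^2\moduli[1](4p;(-4)^p)$, I would show $(0,\dots,0)$ is not in the image. By the normal forms of equation~\eqref{eq:standard_coordinates}, vanishing of the $2$-residue at a pole of order $-4$ means the quadratic differential locally reads $(1/z^2)^2(dz)^2$, i.e.\ locally the square of an abelian differential with a residueless double pole. A global $\xi$ with all $2$-residues zero is therefore the square of a meromorphic abelian differential $\omega$ on the same torus, with $\omega\in\omoduli[1](2p;(-2)^p)$. But then $\xi=\omega^2$ is not primitive, contradicting membership in $\komoduli[1]$.

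The main obstacle will be the second paragraph: producing a primitive $k$-differential in genus one with a single zero and all residues zero, for arbitrary $k\geq2$ and arbitrary pole configurations (divisible and non-divisible by $k$). One must carefully check that the $\zeta$-rotation gluing can be performed while respecting all prescribed pole orders and cone angle, and confirm that the family $\Omega^2\moduli[1](4p;(-4)^p)$ is genuinely the only obstruction — every other stratum admits enough flexibility to build this special gluing.
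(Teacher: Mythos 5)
Your skeleton (reduce to genus zero and sew a handle for most residue tuples, then a direct flat construction for the remaining ones) matches the paper's strategy in outline, but the two places where the lemma is actually delicate are exactly where your argument breaks down. First, the genus-zero reduction cannot reach the exceptional family: for $k=2$ and $\mu=(4p;(-4)^{p})$ the target stratum is $\Omega^{2}\moduli[0](4p-4;(-4)^{p})$, and since $\pgcd(4p-4,4,2)=2=k$, Lemma~\ref{lem:puissk} says \emph{every} quadratic differential of that type is a square — the primitive stratum is empty, so no choice of \emph{racines} can "break the symmetry". The same failure occurs for $\komoduli[1](2k;-2k)$, where $a-2k=0$ and the would-be genus-zero profile is that of $(dz)^{k}$. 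Yet these are precisely the strata for which the claim that the image contains $\CC^{p}\setminus\{0\}$ needs proof. The paper handles all the $r=0$ strata with a non-zero residue by a direct polar-part construction (the special polar part associated to $(v_{1},v_{1},v_{2},v_{2};r_{2},\dots,r_{p'})$, with the pairs $v_{1},v_{2}$ identified by a rotation of $2\pi/k$), reserving handle sewing for the cases $r\geq1$.

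Second, your zero-residue construction is geometrically inconsistent. If the special polar part is associated to $(\zeta r,r;r,\zeta r)$, the two segments you glue have the same holonomy $\zeta r$, so the identification is a translation and primitivity is exactly what remains to be proved; if instead you force a genuine rotation by $2\pi/k$, the two glued segments must have holonomies differing by the factor $\zeta\neq1$, whence $\sum v_{i}-\sum w_{j}\neq0$ and the $k$-residue at that pole is non-zero. You cannot have both a residueless special pole and a rotational top-to-bottom gluing on it. The paper avoids this by taking the residueless differential of $\komoduli[1](k\ell_{1};-k\ell_{1})$ from Lemma~\ref{lem:geq1} (which requires $\ell_{1}\geq3$ when $k=2$ — this is where the exceptional family genuinely enters), cutting it along a saddle connection and inserting trivial polar parts cyclically; for $r=1$ it does the same with a piece of $\komoduli[1](c;-c)$. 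Finally, your last paragraph rests on a false principle: near any even-order singularity a quadratic differential is \emph{locally} a square, so local squareness never implies global squareness (otherwise no primitive quadratic differential with even orders would exist). The actual non-existence statement is Lemma~\ref{lem:geq1cin}, proved by a degenerate-core argument — but note that the lemma you are proving only asserts containment of $\CC^{p}\setminus\{0\}$, so that paragraph is not needed in any case.
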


\begin{proof}
Nous commençons par le cas où $r\geq2$. L'application résiduelle de la strate $\omoduli[0](a-2k;-b_{1},\dots,-b_{p};-c_{1},\dots,-c_{r};(-k^{s}))$ est surjective (c.f. le lemme~\ref{lem:g=0gen1}). On obtient donc la surjectivité  de  $\appresk[1](\mu)$ par couture d'anse.
\par
Supposons maintenant qu'il existe un unique pôle d'ordre non divisible par $k$. Le lemme~\ref{lem:g=0gen1} dans le cas des strates $\omoduli[0](a-2k;-b_{1},\dots,-b_{p};-c;(-k^{s}))$  implique que $\CC^{p}\setminus\lbrace 0\rbrace$ est contenu dans  l'image de $\appresk[1](\mu)$  par couture d'anse. Il suffit donc de prouver que l'origine est dans l'image de l'application résiduelle. En particulier, ces strates ne possèdent pas de pôles d'ordre $-k$.  Nous associons aux pôles d'ordre $-k\ell_{i}$ les $k$-parties polaires triviales d'ordres $k\ell_{i}$ associées à $(1;1)$. Pour le pôle d'ordre $-c$, nous prenons une $k$-différentielle de $\komoduli[1](c;-c)$ avec   un lien selle de période $1$. Nous obtenons une surface plate à bord en la coupant le long de ce lien selle.
La pluridifférentielle désirée est obtenue en collant les bords des fentes de manière cyclique. 
\par
Il nous reste à traiter le cas où tous les pôles sont d'ordres divisibles par $k$. Nous commençons par construire une $k$-différentielle dans $\komoduli[1](a;-k\ell_{1},\dots,-k\ell_{p})$ dont tous les $k$-résidus sont nuls (sauf dans le cas où $k=2$ et $\mu=(4p;(-4^{p}))$). Partons de la $k$-différentielle de $\komoduli[1](k\ell_{1};-k\ell_{1})$ sans résidu donnée par le lemme~\ref{lem:geq1} (on suppose que $\ell_{1}\geq3$ si $k=2$). On peut alors couper cette surface le long d'un lien selle dont la période sera notée $v$. Par exemple, on coupe les $k$-différentielles de la figure~\ref{fig:a-a,pasderes} le long du lien selle dénoté par~$1$. Pour tous les autres pôles on prend une $k$-partie polaire triviale d'ordre $k\ell_{i}$ associée à $(v;v)$. La surface obtenue en recollant les segments de manière cyclique (voir la figure~\ref{fig:nonresg1bis}) a les propriétés désirées.

\begin{figure}[htb]
\begin{tikzpicture}
\begin{scope}[xshift=-2cm,yshift=-.4cm]
      
    \fill[fill=black!10] (0,-.2) ellipse (1.8cm and .9cm);

   \draw (-1,-.5) coordinate (a) -- node [below] {$1$} node [above] {$2$}  (0,0) coordinate (b);
 \draw (0,0) -- (1.5,0) coordinate[pos=.5] (c);
 \draw[dotted] (1.5,0) -- (2,0);
 \fill (a)  circle (2pt);
\fill[] (b) circle (2pt);
\node[above] at (c) {$b$};
\node[below] at (c) {$a$};

    \end{scope}

\begin{scope}[xshift=1.5cm,yshift=-.4cm]
        \fill[fill=black!10] (.8,0) ellipse (1.2cm and .8cm);  
 \draw (0,0) -- (1.5,0) coordinate[pos=.5] (c);
 \fill (0,0)  circle (2pt);
 \draw[dotted] (1.5,0) -- (2,0);
\node[above] at (c) {$a$};
\node[below] at (c) {$b$};
\end{scope}


\begin{scope}[yshift=-2.5cm,xshift=-.5cm]
    \fill[fill=black!10] (1,.2) ellipse (1.8cm and 1.1cm);
   \draw (0,0) coordinate (a1) -- node [below] {$2$} node [above] {$3$}  (1,.5) coordinate (a2) --  node [below] {$3$}node [above] {$1$} (2,0)
coordinate (a3);
  \foreach \i in {1,2,...,3}
  \fill (a\i) circle (2pt);

  \draw[->] (.8,.4) arc  (200:340:.2); \node at (1,.9) {$\frac{(k-2)\pi}{k}$};

\end{scope}

\begin{scope}[xshift=7cm]

\begin{scope}[]
      
    \fill[fill=black!10] (0,0) ellipse (2cm and .7cm);

   \draw (-1,0) coordinate (a) -- node [below] {$2$} node [above] {$1$}  (0,0) coordinate (b);
 \draw (0,0) -- (1.5,0) coordinate[pos=.5] (c);
  \draw[dotted] (1.5,0) -- (2,0);
 \fill (a)  circle (2pt);
\fill[] (b) circle (2pt);
\node[above] at (c) {$b$};
\node[below] at (c) {$a$};

    \end{scope}

\begin{scope}[yshift=-1.6cm]
    \fill[fill=black!10] (0,0) ellipse (2cm and .7cm);
   \draw (-1,0) coordinate (a) -- node [above,rotate=180] {$2$} node [above] {$3$}  (0,0) coordinate (b);
 \draw (0,0) -- (1.5,0) coordinate[pos=.5] (c);
  \draw[dotted] (1.5,0) -- (2,0);
 \fill (a)  circle (2pt);
\fill[] (b) circle (2pt);
\node[above] at (c) {$a$};
\node[below] at (c) {$b$};

\end{scope}


\begin{scope}[yshift=-3cm,xshift=0cm]
    \fill[fill=black!10] (-.5,0) ellipse (1cm and .5cm);
   \draw (0,0) coordinate (a1) -- node [below] {$1$} node [above,xscale=-1] {$3$}  (-1,0) coordinate (a2);
  \foreach \i in {1,2}
  \fill (a\i) circle (2pt);

\end{scope}
\end{scope}
\end{tikzpicture}
\caption{$k$-différentielle de $\Omega^{k}\moduli[1](5k;-2k,-3k)$ avec un résidu nul pour $k\geq3$ à gauche et $k=2$ à droite} 
\label{fig:nonresg1bis}
\end{figure}
\par
Nous traitons maintenant les strates $\Omega^{k}\moduli[1](a;-k\ell_{1},\dots,-k\ell_{p};(-k^{s}))$, telles que $s\neq0$ ou $s=0$ et avec au moins un $k$-résidu non nul. Nous supposons que s'il existe un pôle avec un $k$-résidu nul, alors le $k$-résidu de $P_{1}$ est nul.
Considérons les pôles $P_{i}$ avec $2\leq i\leq p'$ qui possèdent un $k$-résidu $R_{i}$ non nul. Nous associons à $P_{i}$ la $k$-partie polaire non triviale d'ordre $k\ell_{i}$ associée à $(r_{i};\emptyset)$ où $r_{i}$ est une racine $k$-ième de $R_{i}$ de partie réelle positive (ou de partie imaginaire positive si la partie réelle est nulle). Considérons maintenant les pôles $P_{j}$ avec $j > p'$ ayant un $k$-résidu nul. Nous associons la $k$-partie polaire triviale d'ordre $k\ell_{j}$ associée à $(r_{i_{j}};r_{i_{j}})$ pour un $k$-résidu $R_{i_{j}}\neq0$. Puis nous collons le segment~$r_{i}$ du domaine positif de~$P_{i}$ au segment~$r_{i_{j}}$ du domaine basique négatif de~$P_{j}$. 
\par
Enfin, pour le pôle $P_{1}$ nous procédons à la construction suivante. Notons que la somme des~$r_{i}$ est non nulle.  Nous supposerons que les $r_{i}$ sont ordonnés par argument croissant. Nous prenons pour $P_{1}$ la $k$-partie polaire de type $k\ell_{1}$ associée à $(v_{1},v_{1},v_{2},v_{2};r_{2},\dots,r_{p'})$ où les $v_{i}$ sont donnés comme suit. Les $v_{i}$ sont de même longueur, vérifient l'égalité $r_{1}=2v_{1}+2v_{2}-\sum_{i\geq2} r_{i}$ et l'angle (au dessus) du point d'incidence de~$v_{1}$ et $v_{2}$ est $\frac{2(k-1)\pi}{k}$. 
\par
La $k$-différentielle est obtenue en identifiant par translation les bords $r_{i}$ des domaines polaires positifs aux segments $r_{i}$ de la $k$-partie polaire négative de $P_{1}$. Enfin, nous identifions par rotation d'angle $\frac{2\pi}{k}$ et translation, le premier $v_{1}$ au premier~$v_{2}$ et le second $v_{1}$ au second~$v_{2}$. Cela donne une pluridifférentielle primitive avec les invariants souhaités. Un exemple est donné dans la figure~\ref{fig:nonresg1ter}.
    \begin{figure}[htb]
\begin{tikzpicture}

\begin{scope}[xshift=-3cm]
\fill[fill=black!10] (0.5,0)coordinate (Q)  circle (1.2cm);
    \coordinate (a) at (0,0);
    \coordinate (b) at (1,0);

     \fill (a)  circle (2pt);
\fill[] (b) circle (2pt);
    \fill[white] (a) -- (b) -- ++(0,-1.2) --++(-1,0) -- cycle;
 \draw  (a) -- (b);
 \draw (a) -- ++(0,-1.1);
 \draw (b) -- ++(0,-1.1);

\node[above] at (Q) {$1$};
    \end{scope}

\begin{scope}[xshift=3cm]
\fill[fill=black!10] (0.5,0)coordinate (Q)  circle (1.2cm);
    \coordinate (a) at (0,0);
    \coordinate (b) at (1,0);

     \fill (a)  circle (2pt);
\fill[] (b) circle (2pt);
    \fill[white] (a) -- (b) -- ++(0,-1.2) --++(-1,0) -- cycle;
 \draw  (a) -- (b);
 \draw (a) -- ++(0,-1.1);
 \draw (b) -- ++(0,-1.1);

\node[above] at (Q) {$2$};
    \end{scope}


\begin{scope}[xshift=.5cm,yshift=-.3cm]
\fill[fill=black!10] (0,0)coordinate (Q)  circle (1.5cm);
    \coordinate (a) at (-1,-.5);
    \coordinate (b) at (0,-.5);
    \coordinate (c) at (1,-.5);
    \coordinate (d) at (0,1);
    \coordinate (e) at (-.5,.25);
    \coordinate (f) at (.5,.25);

\fill (a)  circle (2pt);
\fill[] (b) circle (2pt);
\fill (c)  circle (2pt);
\fill[] (d) circle (2pt);
\fill (e)  circle (2pt);
\fill[] (f) circle (2pt);

\fill[white] (a) -- (c) -- (d) -- cycle;
\draw (a) -- (b)coordinate[pos=.5] (g1) --(c)coordinate[pos=.5] (g2) -- (f)coordinate[pos=.5] (g3) -- (d)coordinate[pos=.5] (g4) -- (e)coordinate[pos=.5] (g5)-- (a)coordinate[pos=.5] (g6);

\node[below] at (g1) {$1$};
\node[below] at (g2) {$2$};
\node[above,rotate=-50] at (g3) {$3$};
\node[above,rotate=-50] at (g4) {$4$};
\node[below,rotate=-130] at (g5) {$3$};
\node[below,rotate=-130] at (g6) {$4$};

  \draw[->] (d)++(.1,-.1) arc  (-60:240:.2); \node at (0,1.6) {$\frac{2(k-1)\pi}{k}$};

\end{scope}

\end{tikzpicture}
\caption{Une  $k$-différentielle dans $\Omega^{k}\moduli[1](6k;(-2k^{3}))$ dont les $k$-résidus sont $(0,1,1)$.} \label{fig:nonresg1ter}
\end{figure}
\end{proof}

\begin{lem}\label{lem:geq1ter}
Pour tout $p\geq1$, l'image de l'application $2$-résiduelle $\appresk[1][2](a_{1},a_{2};(-4^{p}))$ avec $(a_{1},a_{2})\neq(2p+1,2p-1)$ et de $\appresk[2][2](4(p+1);(-4^{p}))$ contient l'origine.
\end{lem}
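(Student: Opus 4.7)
The statement has two parts: the genus~$1$ strata $\Omega^{2}\moduli[1](a_{1},a_{2};(-4^{p}))$ with $(a_{1},a_{2})\neq (2p+1,2p-1)$, and the genus~$2$ strata $\Omega^{2}\moduli[2](4(p+1);(-4^{p}))$. I will construct explicit flat representatives in each case.

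\smallskip

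For the genus~$1$ strata, note first that primitivity of the stratum combined with $a_{1}+a_{2}=4p$ forces both $a_{i}$ to be odd. The construction starts from a flat torus cut along a closed geodesic of holonomy $v$, producing a flat cylinder with two boundary circles of length $\lvert v\rvert$. Into this cylinder I insert $p$ polar parts of order $4$; each is taken to be either a trivial polar part associated with $(v;v)$, or a polar part associated with $(v_{1},v_{2};v_{1},v_{2})$ with $v_{1}+v_{2}=v$. Both types have vanishing quadratic residue by Lemma~\ref{lm:kresidu}, and the second type has the advantage of redistributing cone angles asymmetrically between the top and bottom endpoints. Gluing the polar parts cyclically and choosing their types gives enough flexibility to realize every non-exceptional pair of odd cone angles $(a_{1},a_{2})$. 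The pair $(2p+1,2p-1)$ corresponds to the unique fully balanced configuration, and is blocked by a parity obstruction matching the forthcoming Lemma~\ref{lem:geq1six}.

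\smallskip

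For the genus~$2$ stratum, I reduce to the genus~$1$ case above. Fix any non-exceptional pair $(a_{1},a_{2})$ with $a_{1}+a_{2}=4p$, for instance $(1,4p-1)$, and let $\xi\in \Omega^{2}\moduli[1](a_{1},a_{2};(-4^{p}))$ be a differential with all residues zero produced in Step~$1$. Apply Proposition~\ref{prop:attachanse} at the zero of order $a_{1}$ to attach a handle; the resulting differential $\xi'$ lies in $\Omega^{2}\moduli[2](a_{1}+4,a_{2};(-4^{p}))$ with unchanged residues, hence still all zero. Shrinking a saddle connection joining the two zeros of $\xi'$ to length zero merges them into a single cone point of order $a_{1}+a_{2}+4=4(p+1)$, so the limiting differential lies in $\Omega^{2}\moduli[2](4(p+1);(-4^{p}))$ with all residues still zero. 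The main difficulty is the combinatorial realizability in Step~$1$: one must verify that polar-part arrangements of the two permitted types actually realize every non-exceptional distribution of odd cone angles between $z_{1}$ and $z_{2}$, and that the exceptional pair $(2p+1,2p-1)$ is the only one escaping this construction.
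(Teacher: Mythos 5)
There are genuine gaps in both halves of your argument. For the genus-one strata, your opening claim that primitivity forces both $a_{i}$ to be odd is false: in genus one a quadratic differential with all orders even can perfectly well be primitive (it need only fail to be a global square), and indeed $\Omega^{2}\moduli[1](2,2;-4)$ is one of the two base cases the paper exhibits explicitly, alongside $\Omega^{2}\moduli[1](5,-1;-4)$; your parity restriction would make you miss all the even pairs as well as $(4p+1,-1)$. Beyond that, the decisive step — that inserting polar parts of your two permitted types realizes every non-exceptional pair $(a_{1},a_{2})$ — is asserted rather than proved, and you flag it yourself as "the main difficulty"; you also never address primitivity, which is a real issue since a surface assembled from polar parts glued only by translation is the square of an abelian differential and so does not lie in the stratum at all. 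The paper closes these gaps by an induction on $p$: starting from a residueless differential in $\Omega^{2}\moduli[1](a_{1}-2,a_{2}-2;(-4^{p-1}))$ (or in $\Omega^{2}\moduli[1](4p-3,-1;(-4^{p-1}))$ when $a_{2}=-1$) that carries, by the induction hypothesis, a saddle connection between the two zeros (resp.\ a closed one at the big zero), it cuts along that saddle connection and inserts a single trivial polar part of order $4$, checking at each step that the required saddle connections persist.

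For the genus-two strata your route is genuinely different from the paper's, and the last step does not work as stated. Sewing a handle via Proposition~\ref{prop:attachanse} is fine, but "shrinking a saddle connection joining the two zeros to length zero" is a degeneration to the boundary of $\Omega^{2}\moduli[2](a_{1}+4,a_{2};(-4^{p}))$: to conclude you would need to show that such a path exists inside the locus where all $p$ quadratic residues vanish, that the limit is carried by a smooth genus-two curve, and that the limit is still primitive. The last point is the serious one: $4(p+1)$ and all the pole orders are even, so the limit could a priori be the square of an abelian differential (primitivity is an open condition, not a closed one), and the paper's own Proposition~\ref{prop:eclatintro} shows that precisely this kind of collision/splitting of zeros with $k\mid a_{0}$ and $k\nmid\pgcd(\alpha_{i})$ is the delicate case for $k$-differentials. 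The paper sidesteps all of this: it takes a primitive differential in $\Omega^{2}\moduli[1](4(p+1);(-4^{p});-2,-2)$ whose residues at the order-$4$ poles vanish and whose residues at the two poles of order $-2$ are equal (Lemma~\ref{lem:geq1bis}), glues those two poles together to form a twisted differential, and smooths the node by Lemma~\ref{lem:lisspolessimples}; the smoothing raises the genus to $2$ and leaves the residues at the order-$4$ poles untouched.
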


\begin{proof}
Pour commencer la figure~\ref{fig:exceptquadra} exhibe une différentielle quadratique primitive ayant un $2$-résidu nul dans chacune des strates $\Omega^{2}\moduli[1](5,-1;-4)$ et $\Omega^{2}\moduli[1](2,2;-4)$ (en gris).

 \begin{figure}[htb]
\begin{tikzpicture}
\begin{scope}[xshift=-4cm]

     \foreach \i in {1,2,...,4}
  \coordinate (a\i) at (0,2*\i/3); 
   \fill[black!10] (a1) -- (a4)-- ++(0,.6) arc (90:270:1.6) -- cycle;
 
       \foreach \i in {1,2,4}
   \fill (a\i)  circle (2pt);

        \foreach \i in {1,2,...,4}
  \coordinate (b\i) at (1.5,2*\i/3); 
     \fill[black!10] (b1) -- (b4)-- ++(0,.6) arc (90:-90:1.6) -- cycle;
       \foreach \i in {1,2,...,4}
   \fill (b\i)  circle (2pt);

\draw (a1)--+(0,-.6);
\draw (b1)--+(0,-.6);
\draw (a4)--+(0,.6);
\draw (b4)--+(0,.6);

\draw (a1)-- (a2) coordinate[pos=.5] (c1) -- (a3) coordinate[pos=.5] (c2) -- (a4) coordinate[pos=.5] (c3);
\draw (b1)-- (b2) coordinate[pos=.5] (d1) -- (b3) coordinate[pos=.5] (d2) -- (b4) coordinate[pos=.5] (d3);

      \fill[white] (a3)  circle (2pt);
      \draw (a3)  circle (2pt);
      
\node[left] at (c1) {$1$};
\node[left] at (c2) {$2$};
\node[right,rotate=180] at (c3) {$2$};
\node[right] at (d1) {$3$};
\node[right] at (d2) {$1$};
\node[left,rotate=180] at (d3) {$3$};

\end{scope}

\begin{scope}[xshift=4cm]
     \foreach \i in {1,2,...,4}
  \coordinate (a\i) at (0,2*\i/3); 
     \fill[black!10] (a1) -- (a4)-- ++(0,.6) arc (90:270:1.6) -- cycle;

       \foreach \i in {2,3}
   \fill (a\i)  circle (2pt);

        \foreach \i in {1,2,...,4}
  \coordinate (b\i) at (1.5,2*\i/3); 
       \fill[black!10] (b1) -- (b4)-- ++(0,.6) arc (90:-90:1.6) -- cycle;

       \foreach \i in {2,3}
   \fill (b\i)  circle (2pt);

\draw (a1)--+(0,-.6);
\draw (b1)--+(0,-.6);
\draw (a4)--+(0,.6);
\draw (b4)--+(0,.6);

\draw (a1)-- (a2) coordinate[pos=.5] (c1) -- (a3) coordinate[pos=.5] (c2) -- (a4) coordinate[pos=.5] (c3);
\draw (b1)-- (b2) coordinate[pos=.5] (d1) -- (b3) coordinate[pos=.5] (d2) -- (b4) coordinate[pos=.5] (d3);

       \foreach \i in {1,4}
      \fill[white] (a\i)  circle (2pt);
             \foreach \i in {1,4}
      \draw (a\i)  circle (2pt);
             \foreach \i in {1,4}
      \fill[white] (b\i)  circle (2pt);
             \foreach \i in {1,4}
      \draw (b\i)  circle (2pt);
      
\node[left] at (c2) {$1$};
\node[left] at (c1) {$2$};
\node[right,rotate=180] at (c3) {$2$};
\node[right] at (d1) {$3$};
\node[right] at (d2) {$1$};
\node[left,rotate=180] at (d3) {$3$};
\end{scope}
\end{tikzpicture}
\caption{Différentielles quadratiques dans les strates $\Omega^{2}\moduli[1](5,-1;-4)$ et $\Omega^{2}\moduli[1](2,2;-4)$ avec $2$-résidus nuls en gris. Différentielles quadratiques dans $\Omega^{2}\moduli[1](5,-1;(-2^{2}))$ et $\Omega^{2}\moduli[1](2,2;(-2^{2}))$ avec $2$-résidus $(1,1)$ en blanc.}
\label{fig:exceptquadra}
\end{figure}
\par
Nous prouvons maintenant le résultat pour les strates $\Omega^{2}\moduli[1](a_{1},a_{2};(-4^{p}))$ satisfaisant $(a_{1},a_{2})\neq(2p+1,2p-1)$ par récurrence sur le nombre $p$ de pôles.  L'hypothèse de récurrence est la suivante. Il existe une $2$-différentielle primitive dont les $2$-résidus sont nuls dans toutes les strates de cette forme avec $p-1$ pôles telles que si $a_{2}=-1$, alors il existe un lien selle fermé reliant le zéro d'ordre $a_{1}$ à lui-même et dans tous les cas il existe un lien selle entre les deux zéros. L'hypothèse de récurrence est satisfaite pour $p=1$ par les $2$-différentielles représentées dans la figure~\ref{fig:exceptquadra}. Nous allons construire des $2$-différentielles satisfaisant à l'hypothèse de récurrence avec $p$ pôles.
\par
Si $(a_{1},a_{2})=(4p+1,-1)$, coupons la différentielle quadratique de $\Omega^{2}\moduli[1](4p-3,-1;(-4^{p-1}))$ donnée par l'hypothèse le long du lien selle entre le zéro d'ordre~$4p-3$. Prenons une $2$-partie polaire d'ordre~$4$ associée à $(v;v)$ où $v$ est la période de ce lien selle. La surface formée en collant les bords de ces surfaces par translation est dans $\Omega^{2}\moduli[1](4p+1,-1;(-4^{p}))$, ces $2$-résidus sont nuls et possède un lien selle fermé reliant le zéro d'ordre $4p+1$ à lui même et entre les deux zéros.
\par
Si $(a_{1},a_{2})\neq(4p+1,-1),(2p+1,2p-1)$, la construction du paragraphe précédent en partant d'une $2$-différentielle de  $\Omega^{2}\moduli[1](a_{1}-2,a_{2}-2;(-4^{p-1}))$ et du lien selle entre les deux zéros donne une différentielle quadratique ayant les propriétés souhaités.
\par
Une différentielle quadratique dans la strate $\Omega^{2}\moduli[2](4(p+1);(-4^{p}))$ sans résidus aux pôles est donnée de la façon suivante. Le  lemme~\ref{lem:geq1bis} fourni une différentielle primitive de la strate $\Omega^{2}\moduli[1](4(p+1);(-4^{p});-2,-2)$ telle que les résidus aux pôles d'ordre $-4$ sont nuls et les résidus quadratiques aux pôles doubles sont égaux entre eux. Nous formons une $2$-différentielle entrelacée en collant les deux pôles d'ordre $-2$ ensemble. Cette $2$-différentielle entrelacée peut être lissée sans changer les $2$-résidus aux pôles d'ordre $-4$ (voir proposition~\ref{lem:lisspolessimples}) pour donner une $2$-différentielle ayant les invariants souhaités. 
\end{proof}

\begin{lem}\label{lem:geq1qua}
L'application $k$-résiduelle des strates $\komoduli(\mu)$ avec $g\geq1$, $\mu\neq(4p;(-4^{p}))$ et $\mu\neq(2p-1,2p+1;(-4^{p}))$  est surjective.
\end{lem}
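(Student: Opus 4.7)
L'idée centrale est de propager les résultats déjà obtenus pour les strates minimales en genre $1$ (lemmes~\ref{lem:geq1}, \ref{lem:geq1bis}, \ref{lem:geq1ter}) vers toutes les strates de genre supérieur ou égal à $1$ en combinant les deux opérations fondamentales de la section~\ref{sec:pluridiffentre}: l'éclatement d'un zéro (proposition~\ref{prop:eclatZero}), qui conserve les $k$-résidus et fait passer d'une strate à $n$ zéros à une strate à $n+1$ zéros, et la couture d'anse (proposition~\ref{prop:attachanse}), qui conserve les $k$-résidus ainsi que l'ensemble des pôles tout en augmentant le genre de $1$ et l'ordre d'un zéro de $2k$. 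Rappelons que dans les deux cas, la primitivité des $k$-différentielles garantit que la condition (ii) du lemme~\ref{lem:lissdeuxcomp} est vérifiée, donc le lissage s'effectue sans modifier les $k$-résidus.

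Soit $\mu=(a_{1},\dots,a_{n};\nu)$ une partition de $k(2g-2)$ vérifiant les hypothèses du lemme, où $\nu$ désigne la partie polaire contenant au moins un pôle d'ordre strictement inférieur à $-k$. Premièrement, grâce à la proposition~\ref{prop:eclatZero}, il suffit de montrer la surjectivité pour la strate minimale associée $\komoduli(a;\nu)$ avec $a=\sum_{i}a_{i}=k(2g-2)+|\nu|$ (où $|\nu|$ désigne la somme des ordres de pôles en valeur absolue). Comme l'éclatement de zéro ne modifie pas les $k$-résidus, toute configuration réalisée dans la strate minimale par une $k$-différentielle non puissance d'une différentielle abélienne se transporte vers $\komoduli(\mu)$; il suffit donc de vérifier au passage que l'on peut toujours scinder $a$ en les $a_{i}$ en partant d'une strate dont les éléments ne sont pas ces puissances, ce qui résulte de l'hypothèse de primitivité de $\komoduli(\mu)$.

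Deuxièmement, pour $g\geq 2$, j'appliquerais $(g-1)$ cousutures d'anse successives à partir de la strate minimale de genre $1$ correspondante $\komoduli[1](|\nu|;\nu)$. Puisque la couture d'anse préserve les pôles et leurs $k$-résidus, et augmente l'ordre du zéro d'exactement $2k$ à chaque étape, on obtient ainsi la strate $\komoduli[g](a;\nu)$ avec toutes les configurations de $k$-résidus déjà réalisées en genre $1$. Pour $g=1$, la strate est traitée directement: s'il n'y a qu'un unique pôle, par le lemme~\ref{lem:geq1}; s'il y en a plusieurs et $(p,r)\neq(0,0)$, par le lemme~\ref{lem:geq1bis}; dans les cas quadratiques $\Omega^{2}\moduli[1](a_{1},a_{2};(-4^{p}))$ avec $(a_{1},a_{2})\neq(2p-1,2p+1)$, par le lemme~\ref{lem:geq1ter}.

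Le principal obstacle est précisément le cas où la réduction en genre~$1$ conduirait à la strate exceptionnelle $\Omega^{2}\moduli[1](4p;(-4^{p}))$, dont l'application résiduelle ne contient pas l'origine (lemme~\ref{lem:geq1bis}). Cela se produit lorsque $k=2$, tous les pôles sont d'ordre $-4$, et $\mu\neq(4p;(-4^{p}))$ ni $(2p-1,2p+1;(-4^{p}))$ (donc soit $g\geq 2$, soit $g=1$ avec $n\geq 3$ zéros). Dans ces situations, la réduction standard donne tous les $k$-résidus sauf l'origine. Pour atteindre l'origine, j'utiliserais la seconde partie du lemme~\ref{lem:geq1ter}: il existe une $2$-différentielle dans $\Omega^{2}\moduli[2](4(p+1);(-4^{p}))$ avec tous les résidus quadratiques nuls. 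Pour $g=2$ on conclut directement en combinant cette différentielle avec celles obtenues par couture d'anse depuis le genre $1$. Pour $g\geq 3$, il suffit alors de cousturer $g-2$ anses supplémentaires puis éventuellement d'éclater le zéro pour obtenir la strate $\komoduli[g](\mu)$ souhaitée, les $k$-résidus restant nuls à chaque étape. Ceci complète la preuve de la surjectivité dans tous les cas non exceptionnels.
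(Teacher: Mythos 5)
Your overall strategy --- reduce to the minimal stratum by zero splitting, climb in genus by handle sewing, and patch the all-poles-of-order-$-4$ quadratic case using Lemma~\ref{lem:geq1ter} --- is the same as the paper's. There is, however, a gap in your treatment of that exceptional quadratic case. When $k=2$, every pole has order $-4$ and the target residue tuple is the origin, the minimal-stratum reduction fails because the image of the residual map of $\Omega^{2}\moduli[1](4p;(-4^{p}))$ misses $(0,\dots,0)$. Your proposed repair goes through $\Omega^{2}\moduli[2](4(p+1);(-4^{p}))$ followed by handle sewing and zero splitting; both operations preserve or increase the genus, so this only produces differentials of genus at least $2$. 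It therefore cannot reach the strata $\Omega^{2}\moduli[1](a_{1},\dots,a_{n};(-4^{p}))$ with $n\geq 3$, which do satisfy the hypotheses of the lemma and which your argument leaves unaddressed. (Your parenthetical claim that the problematic genus-one situation forces $n\geq 3$ is also inaccurate: for $n=2$ with $(a_{1},a_{2})\neq(2p-1,2p+1)$ the minimal-stratum reduction breaks down just as badly; you do cover that case by invoking Lemma~\ref{lem:geq1ter} directly, but it should be flagged as part of the same obstruction.)

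The missing case is closed in the paper by a different use of an ingredient you already have: the first part of Lemma~\ref{lem:geq1ter} provides, for every $(a_{1},a_{2})\neq(2p-1,2p+1)$, a genus-one quadratic differential in $\Omega^{2}\moduli[1](a_{1},a_{2};(-4^{p}))$ with all residues zero; splitting one of its two zeros stays in genus one, preserves the vanishing residues, and realizes every stratum with $n\geq 3$ zeros. With that one additional step your argument is complete; everything else matches the paper's proof.
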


\begin{proof}
Si $\mu$ possède un unique zéro on obtient les $k$-différentielles dans les strates souhaitées par couture d'anses à partir des $k$-différentielles de genre un ayant un unique zéro (voir la proposition~\ref{prop:attachanse}). La surjectivité de l'application $k$-résiduelle est une conséquence de la surjectivité des applications $k$-résiduelles de ces strates (voir lemme~\ref{lem:geq1} et \ref{lem:geq1bis}), à l’exception des strates de la forme $\Omega^{2}\moduli[g](a;(-4^{p}))$. Dans ces cas le $k$-résidu $(0,\dots,0)$ est obtenu en partant des strates $\Omega^{2}\moduli[2](4(p+1);(-4^{p}))$ et en utilisant le lemme~\ref{lem:geq1ter}.
\par
Considérons les strates $\komoduli(a_{1},\dots,a_{n};-c_{i};-b_{j};(-k^{s}))$ avec $n\geq2$ zéros. La surjectivité de l'application $k$-résiduelle est obtenue en éclatant l'unique zéro des $k$-différentielles de la strate $\komoduli(\sum a_{i};-c_{i};-b_{j};(-k^{s}))$ (voir proposition~\ref{prop:eclatZero}), sauf pour $\Omega^{2}\moduli[1](a_{1},\dots,a_{n};(-4^{p}))$ avec $n\geq2$. Dans ces cas la surjectivité a été démontrée dans le lemme~\ref{lem:geq1ter}  pour $n=2$. Pour $n\geq3$, il suffit d'éclater l'un des deux zéros de ces $2$-différentielles pour obtenir la surjectivité. 
\end{proof}

\begin{lem}\label{lem:geq1cin}
Il n'existe pas de différentielle quadratique primitive dans $\Omega^{2}\moduli[1](4p;(-4^{p}))$ pour $p\geq1$ dont tous les résidus quadratiques sont nuls.
\end{lem}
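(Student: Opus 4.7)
Ma stratégie consisterait à passer au revêtement canonique $\pi\colon\widehat{X}\to X$ et à exploiter une propriété spécifique au genre~$1$. Supposons par l'absurde l'existence d'une telle différentielle quadratique primitive $\xi\in\Omega^{2}\moduli[1](4p;(-4^{p}))$ dont tous les $2$-résidus sont nuls. Toutes les singularités de $\xi$ étant d'ordre pair, le revêtement canonique est non ramifié partout. La primitivité de $\xi$ assure que $\widehat{X}$ est connexe, et Riemann-Hurwitz donne $g(\widehat{X})=1$. L'involution de Galois $\sigma$ est donc une translation libre sur le tore $\widehat{X}$ par un point de $2$-torsion.

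Je considérerais alors la racine carrée $\omega$ de $\pi^{\ast}\xi$ sur $\widehat{X}$. C'est une différentielle abélienne méromorphe anti-invariante ($\sigma^{\ast}\omega=-\omega$) possédant deux zéros d'ordre $2p$ (les deux relevés du zéro de $\xi$), $2p$ pôles d'ordre $-2$ (les relevés des $p$ pôles de $\xi$), et dont tous les résidus s'annulent puisque les $2$-résidus de $\xi$ sont nuls.

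L'observation clé est que $\sigma$, étant une translation sur le tore, est isotope à l'identité et agit donc trivialement sur $H_{1}(\widehat{X},\mathbb{Z})$. Pour tout cycle $\gamma$, on a alors $\int_{\gamma}\omega=\int_{\sigma(\gamma)}\omega$ par invariance homologique, mais aussi $\int_{\sigma(\gamma)}\omega=\int_{\gamma}\sigma^{\ast}\omega=-\int_{\gamma}\omega$. Par conséquent, toutes les périodes de $\omega$ s'annulent. Combinée à l'annulation des résidus, cette propriété entraîne que $\omega$ est une différentielle exacte : $\omega=dF$ pour une fonction méromorphe $F$ sur $\widehat{X}$. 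Chacun des $2p$ pôles doubles à résidu nul de $\omega$ correspondant à un pôle simple de $F$, la fonction $F\colon\widehat{X}\to\PP^{1}$ est de degré $2p$.

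La contradiction s'obtiendrait alors immédiatement par un argument de ramification. Au voisinage d'un zéro $Q$ de $\omega$ d'ordre $2p$, on a $F(z)=F(Q)+c(z-Q)^{2p+1}+\cdots$ avec $c\neq0$, de sorte que l'indice de ramification de $F$ en $Q$ vaut $2p+1$. Ceci force $\deg(F)\geq 2p+1$, contredisant $\deg(F)=2p$. Le point central à justifier soigneusement serait le passage à l'exactitude de $\omega$, dont la pierre angulaire est la trivialité de l'action de $\sigma$ sur $H_{1}(\widehat{X})$, propriété spécifique aux revêtements libres du tore par translation ; une fois cette étape établie, le reste découle d'un simple calcul de degré.
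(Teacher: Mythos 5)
Votre preuve est correcte et elle emprunte une route réellement différente de celle du texte. Le texte traite d'abord $p=1$ en attachant au pôle le carré d'une différentielle holomorphe sur un tore et en lissant la différentielle entrelacée obtenue vers $\Omega^{2}\moduli[2](4)$, qui est vide par \cite{masm}; pour $p\geq2$ il mène ensuite une analyse plate et combinatoire (cœur dégénéré, graphes domaniaux, suppression des domaines polaires de valence $2$) pour se ramener à ce cas. Vous passez au contraire au revêtement canonique: tous les ordres étant pairs, le revêtement est non ramifié, $\whX$ est un tore connexe par primitivité, et l'involution de Galois, sans point fixe sur un tore, est une translation par un point de $2$-torsion, donc isotope à l'identité et triviale sur $H_{1}(\whX,\ZZ)$. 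Comme les résidus de $\omega=\sqrt{\pi^{\ast}\xi}$ sont nuls, ses périodes se factorisent par $H_{1}(\whX)$, l'anti-invariance force leur annulation, $\omega=dF$ est exacte, et la fonction $F$ de degré $2p$ (un pôle simple au-dessus de chaque pôle double de $\omega$) aurait un indice de ramification $2p+1$ en chaque zéro de $\omega$: contradiction. Votre argument est uniforme en $p$, plus court, et évite à la fois le lissage des différentielles entrelacées et le résultat de vacuité de $\Omega^{2}\moduli[2](4)$; la méthode combinatoire du texte a pour elle de s'étendre à la strate voisine $\Omega^{2}\moduli[1](2p-1,2p+1;(-4^{p}))$ (lemme~\ref{lem:geq1six}), où les ordres des zéros sont impairs, le revêtement est ramifié et votre argument homologique ne s'applique plus.

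Un seul point mérite d'être explicité dans la rédaction: l'égalité $\int_{\gamma}\omega=\int_{\sigma(\gamma)}\omega$ utilise déjà l'annulation des résidus, car l'isotopie de $\sigma$ à l'identité ne préserve pas l'ensemble $P$ des pôles, de sorte que $\gamma$ et $\sigma(\gamma)$ ne coïncident a priori que dans $H_{1}(\whX)$ et non dans $H_{1}(\whX\setminus P)$. Les résidus étant nuls par hypothèse, c'est une question de présentation et non une lacune.
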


\begin{proof}
Considérons tout d'abord le cas de la strate $\Omega^{2}\moduli[1](4;-4)$. Si elle contenait une différentielle quadratique (primitive) dont le $2$-résidu est nul, alors on pourrait former une $2$-différentielle entrelacée lissable en collant au pôle le carré d'une différentielle abélienne de la strate  $\omoduli[1](\emptyset)$. D'après le lemme~\ref{lem:lissdeuxcomp}, le lissage produirait une différentielle quadratique primitive dans  $\Omega^{2}\moduli[2](4)$, ce qui n'existe pas (voir~\cite{masm}).  
\par
Supposons par l'absurde qu'il existe une différentielle quadratique (primitive) $\xi$ dans $\Omega^{2}\moduli[1](4p;(-4^{p}))$ pour $p\geq2$ dont tous les $2$-résidus soient nuls. Dans la suite, nous faisons référence aux notions de la section~\ref{sec:coeur}. On peut supposer que le cœur de cette surface est dégénéré et que tous les liens selles sont horizontaux. Cette surface possède alors $p+1$ liens selles et chaque domaine polaire est bordé par au moins deux liens de selles (sinon son résidu serait non nul).
On en déduit que le graphe d'incidence associé peut être de l'une des deux formes suivantes.
\begin{enumerate}[i)]
 \item Il y a un sommet de valence $4$ et les autres sont de valence $2$.
 \item Il y a deux sommets de valence $3$, et les autres sont de valence $2$.
\end{enumerate}
\par
Nous pouvons simplifier $\xi$ de la manière suivante. Prenons un domaine polaire bordé par deux liens selles. Coupons $\xi$ le long de ces liens selles, enlevons ce pôle et recollons le deux segments au bord que nous venons de créer. La différentielle que nous venons de créer est encore primitive, sans résidus et dans la strate $\Omega^{2}\moduli[1](4(p-1);(-4^{p-1}))$.
\par
Considérons une différentielle $\xi$ dont le graphe d'incidence possède un pôle de valence~$4$. En répétant l'opération du paragraphe précédent jusqu'à avoir éliminer tous les pôles de valence~$2$, on obtient une différentielle quadratique primitive sans résidu dans $\Omega^{2}\moduli[1](4;-4)$. Cela montre que $\xi$ ne peut pas avoir de graphe d'incidence avec un sommet de valence~$4$.
\par
Supposons que le graphe d'incidence de~$\xi$ possède deux pôles de valence~$3$. En répétant l'opération du paragraphe précédent nous pouvons obtenir deux graphes. Dans le premier cas, deux sommets sont reliés entre eux par trois arêtes. Dans le second cas, les deux sommets sont reliés entre eux par une unique arête et ont chacun une boucle. Remarquons que dans les deux cas, le fait que le résidu quadratique est nul et les liens selles sont horizontaux implique que l'un des liens selles est strictement plus long que les deux autres. Les deux liens selles les plus courts se trouvent du même côté du segment formé par le bord d'un domaine polaire. Ces deux segments ne peuvent pas être identifiés ensemble car la surface obtenue aurait un zéro d'ordre~$-1$. Cela exclut le graphe avec des boucles. Dans l'autre cas, la surface est obtenu par identification par des translations uniquement (pas de rotations) et n'est donc pas primitive.
\end{proof}

\begin{lem}\label{lem:geq1six}
L'application résiduelle $\appresk[1][2](2p-1,2p+1;(-4^{p}))$ ne contient pas l'origine. 
\end{lem}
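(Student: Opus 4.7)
The plan is to separate the analysis into a base case $p=1$ handled by an attaching argument and a general case $p \geq 2$ handled by a degenerate-core analysis, mirroring the structure of the proof of Lemma~\ref{lem:geq1cin}.

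For the base case $p=1$, I will suppose a primitive $\xi \in \Omega^{2}\moduli[1](1,3;-4)$ has zero residue at its unique pole. Attaching to that pole an elliptic curve carrying the square of a non-vanishing holomorphic abelian differential produces a quadratic intertwined differential that, by the vanishing of the residue, can be smoothed via Lemma~\ref{lem:lissdeuxcomp} into a primitive quadratic differential in $\Omega^{2}\moduli[2](1,3)$. Since Theorem~\ref{thm:strateshol}(iii) shows this stratum is empty, the assumed $\xi$ cannot exist.

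For $p\geq 2$, I will again argue by contradiction, supposing $\xi\in\Omega^{2}\moduli[1](2p-1,2p+1;(-4)^p)$ has all residues zero. By Proposition~\ref{prop:coeurdege}, one may assume the core of $\xi$ is degenerate with horizontal saddle connections. A direct count gives $p+2$ saddle connections, and since each polar domain needs at least two boundary saddle connections (else its residue would be non-zero), the total valence of the $p$ polar domains satisfies $\sum_i v_i = 2p+4$, leaving an excess of $4$. The five possible valence sequences are: one valence-$6$ domain; one valence-$5$ and one valence-$3$; two valence-$4$; one valence-$4$ with two valence-$3$; or four valence-$3$ domains, with all other polar domains of valence $2$. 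A valence-$2$ polar domain is necessarily of type $(v;v)$, and removing it (by cutting along its two boundary saddle connections of equal holonomy $v$ and regluing) yields a primitive quadratic differential with one fewer pole. When each of the two boundary segments runs between $z_1$ and $z_2$, the zero orders each drop by $2$ and the resulting differential stays in the exceptional family, allowing induction down to the base case.

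The remaining task will be to rule out the configurations where every valence-$2$ removal drops both boundary segments onto a single zero, as well as the five minimal (valence-$\geq 3$) configurations $(A)$--$(E)$. For each such configuration, I plan to use the vanishing residues to equate holonomies of the saddle connections forming each polar domain boundary, and then to use the prescribed cone angles at $z_1$ (equal to $(2p+1)\pi$) and at $z_2$ (equal to $(2p+3)\pi$) to constrain how these saddle connections are distributed around the two zeros. In each case the goal is to conclude that any such gluing either is a pure translation surface (contradicting primitivity) or yields zero orders incompatible with $(2p-1,2p+1)$. The hard part will be configurations $(D)$ and $(E)$ with three or four high-valence polar domains: unlike in Lemma~\ref{lem:geq1cin}, the two zeros of distinct odd orders introduce extra combinatorial freedom that requires a careful length-and-angle bookkeeping, and the primitivity constraint has to be checked via the rotation gluings used at the non-trivial polar domains.
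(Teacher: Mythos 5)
Your plan reproduces the paper's proof almost step for step: the same base case, in which a residueless differential in $\Omega^{2}\moduli[1](3,1;-4)$ would smooth (after attaching the square of a holomorphic abelian differential on an elliptic curve, via Lemma~\ref{lem:lissdeuxcomp}) into the empty stratum $\Omega^{2}\moduli[2](3,1)$; the same degenerate-core setup with $p+2$ horizontal saddle connections; the same valence count yielding the patterns $(6)$, $(5,3)$, $(4,4)$, $(4,3,3)$, $(3,3,3,3)$; and the same excision of valence-$2$ polar domains to run an induction. What you have not supplied is the case analysis itself, which is where essentially all of the work in the paper lies, so as written the argument is a correct skeleton rather than a proof. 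Two ingredients from the paper that you will need to close it: first, the reduction also permits removing a valence-$2$ (or loop-free valence-$3$) domain whose boundary saddle connections join the \emph{maximal} zero to itself, which drops its order by $4$ and swaps which zero is maximal; this ``ping-pong'' is what forces every surviving low-valence domain of a reduced surface to have its slit ends at the minimal zero, and it is precisely this normalisation that makes the patterns $(3,3,3,3)$ and $(4,3,3)$ die by a pure angle comparison (the minimal zero would collect at least $8\pi$ of angle while the maximal one gets at most $7\pi$). Second, the remaining patterns require the auxiliary observation that two adjacent saddle connections on a polar boundary cannot be the two ends of a chain of valence-$2$ poles forming a loop (otherwise one manufactures a zero of order $-1$ or a regular point), together with the explicit holonomy bookkeeping showing that the surviving gluings are either non-primitive or have zeros of even order. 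Without these two facts the configurations you flag as hard do not terminate in a contradiction.
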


\begin{proof}
Considérons tout d'abord le cas de la strate  $\Omega^{2}\moduli[1](3,1;-4)$. S'il y avait une différentielle quadratique dans cette strate dont le $2$-résidu est nul,  alors on pourrait former une $2$-différentielle entrelacée lissable en collant au pôle le carré d'une différentielle abélienne de~$\omoduli[1](\emptyset)$. Par le lemme~\ref{lem:lissdeuxcomp}, le lissage donnerait une différentielle quadratique dans $\Omega^{2}\moduli[2](3,1)$, qui est vide ( voir~\cite{masm}).   
\par
Nous supposons par l'absurde qu'il existe une différentielle quadratique $\xi$ dans la strate $\Omega^{2}\moduli[1](2p-1,2p+1;(-4^{p}))$ dont tous les résidus sont nuls. Nous supposerons (voir section~\ref{sec:coeur}) que le cœur de~$\xi$ est dégénéré et que tous les  $p+2$ liens selles sont horizontaux.
\par
Rappelons que le graphe d'incidence et le graphe d'incidence simplifié de $\xi$ ont été introduit dans la section~\ref{sec:coeur}. Les sommets du graphe d'incidence de valence supérieure ou égale à trois sont dits {\em spéciaux}. Remarquons qu'un domaine polaire ne peut pas être bordé par un unique lien selle, sinon le résidu de ce pôle serait non nul (ie, le graphe d'incidence n'a pas de sommet de valence~$1$). 
Les graphes d'incidence simplifiés peuvent être de l'une des formes suivantes. Soit le graphe possède un sommet de valence~$(6)$, soit deux sommets de valences respectives $(5,3)$ ou $(4,4)$, soit trois sommets de valences respectives $(4,3,3)$, ou quatre sommets de valences $(3,3,3,3)$. 
\par
Le reste de la preuve se présente en deux étapes. Tout d'abord nous simplifions la surface~$\xi$. Nous obtiendrons une {\em surface réduite}, c'est à dire une différentielle quadratique de  $\Omega^{2}\moduli[1](2p-1,2p+1;(-4^{p}))$ dont tous les résidus sont nuls, le cœur est dégénéré et les deux extrémités du bord des domaines polaires de valence~$2$ et~$3$ correspondent au zéro d'ordre~$2p-1$. Ensuite nous montrons que les surfaces réduites n'existent pas en considérant les graphes d'incidence possibles.
\par
Nous décrivons maintenant la procédure qui associe à~$\xi$ une surface réduite. Prenons un domaine polaire de valence~$2$  bordé par deux liens selles joignant le zéro d'ordre maximal ou les deux zéros. Coupons~$\xi$ le long de ces deux liens selles, supprimons ce pôle et collons les deux segments que nous avions créés. Cette opération fait diminuer l'ordre du zéro d'ordre maximal de $4$ ou ceux des deux zéros de $2$ suivant le cas. Dans les deux cas nous obtenons un élément de la strate $\Omega^{2}\moduli[1](2p-1,2p-3;(-4^{p-1}))$. Remarquons que l'opération qui fait diminuer l'ordre du zéro maximal par $4$ rend ce zéro d'ordre strictement inférieur à l'autre.
\par 
Prenons un sommet de valence $3$. Il n'y a pas d'arête reliant ce sommet à lui-même, sans quoi l'une des extrémité du segment en question est une singularité conique de degré $-1$ (si les deux bords identifiés sont du même côté de la fente) ou de degré $0$ (s'ils sont de part et d'autre de la fente). De plus, on suppose que les extrémités de la fente du domaine polaire correspondent aux deux zéros ou au zéro d'ordre maximal. Nous coupons $\xi$ le long de ces trois liens selles et supprimons ce pole. La surface que nous obtenons a un bord constitué de trois segments $v_{1}$, $v_{2}$ et $v_{3}$ avec $v_{1}=v_{2}+v_{3}$. Nous collons $v_{2}$ et $v_{3}$ sur $v_{1}$ en préservant les extrémités de la fente (cela va créer une singularité sur $v_{1}$). Cette opération diminue l'ordre des deux zéros de~$2$ ou le zéro d'ordre maximal par~$4$ (et ce zéro devient le zéro d'ordre minimal).
\par 
Nous appliquons les opérations décrites dans les deux paragraphes précédent sur~$\xi$ jusqu'à ce que aucune ne soit possible. La surface que nous obtenons est soit réduite, soit possède un unique pôle. Dans ce second cas, on obtiendrait une différentielle quadratique sans résidu dans $\Omega^{2}\moduli[1](3,1;-4)$. Ceci étant absurde, on supposera que la surface est réduite avec un nombre de pôles supérieur ou égal à deux.
\par
Nous aurons besoin du résultat suivant sur les liens selles contiguës dans les surfaces réduites. 
Deux liens selles adjacents ne peuvent pas être les extrémités d'une boucle formée de pôles de valence $2$ du graphe d'incidence.
En effet, supposons que les liens selles adjacents sont du même côté de la fente. Dans ce cas, le zéro correspondant à leur point d'intersection est différent du zéro correspondant aux autres extrémités de ces segments car le recollement des segments identifie ce sommet avec exactement une des extrémités dans la cicatrice dans chacun des domaines polaires correspondant aux sommets de valence $2$ dans la boucle et c'est tout. Comme la surface est réduite, la boucle ne contient pas de sommets de valence deux. Ainsi le zéro au point d'intersection est d'ordre~$-1$. Mais aucune des strates que nous considérons n'a un zéro d'ordre~$-1$. Supposons maintenant que les deux liens selles se rencontrent de part et d'autre de l'extrémité de la cicatrice. Le fait que la surface soit réduite implique que la boucle ne contient pas de sommets de valence $2$. Cela implique que le bout de la cicatrice est un point régulier. Cela conclut la preuve de ce résultat élémentaire.
\smallskip
\par
Nous traitons maintenant tous les cas possibles en fonction des valences des sommets spéciaux des graphes d'incidence des surfaces réduites. Remarquons que ceux-ci sont analogues aux graphes simplifiés avec la différence qu'il peut rester quelques sommets de valence deux qui ne contribuent qu'au zéro d'ordre minimal.
\par
Nous commençons par le cas où tous les sommets spéciaux sont de valence~$3$.  Comme la surface est réduite, le zéro correspondant au bout des fentes des domaines polaires est d'ordre minimal. Toutefois, ce zéro est clairement d'ordre supérieur ou égal à l'autre zéro, ce qui est absurde.
\par
Dans le cas où un pôle est de valence $4$ et deux autres sont de valence $3$, chaque sommet de valence $3$ contribue à $2\pi+2\pi=4\pi$ pour l'angle du zéro minimal, qui possède donc un angle minimal de $8\pi$. D'un autre côté, l'angle du zéro maximal est d'angle au plus $
7\pi$. En effet, chaque pôle de valence trois contribue d'un angle $\pi$ et celui de valence $4$ d'au plus $4\pi$. Cela donne une contradiction.

Nous traitons maintenant le cas des graphes d'incidence avec deux pôles spéciaux de valence $4$. Le premier graphe possède une boucle à chaque pôle de valence $4$ et deux arêtes les joignant. Le second graphe possède simplement quatre arêtes entre ces deux sommets. Remarquons qu'a priori, il peut y avoir des sommets de valence $2$ sur ces arêtes.

Considérons le graphe avec deux boucles. Aux sommets spéciaux, soit il y a deux segments de part et d'autre de la fente, soit un côté contient trois liens selles et l'autre un. Dans un sommet spécial où il y a deux liens selles de part et d'autre du segment, les liens selles correspondant aux boucles ne peuvent pas être du même côté. On en déduit facilement que la seule identification possible donne une surface à holonomie triviale. De plus, le fait que les résidus quadratiques soient nuls implique que les deux sommets spéciaux sont simultanément de la même forme. Supposons maintenant qu'il y ait trois liens selles d'un côté de chaque fente. Comme les liens selles des boucles ne peuvent pas être adjacents, les extrémités des fentes correspondent au même zéro. Ce pôle est donc d'ordre pair (quel que soit le nombre de pôles de valence deux sur les arêtes), ce qui est absurde.

Nous considérons enfin le graphe où les deux sommets spéciaux sont reliés par quatre arêtes contenant éventuellement des sommets de valence $2$. Dans ce cas,  les deux sommets spéciaux sont simultanément de la même forme. S'il y a trois segments d'un côté et un de l'autre, alors la surface est non primitive. Ainsi nous considérons le cas avec deux liens selles de part et d'autre de la fente.
Remarquons qu'il n'y a pas de sommets de valence $2$ sur les arêtes. Sinon il existerait des liens selles fermés attachés au zéro d'ordre minimal. Cela implique que $6$ des $12\pi$ des domaines spéciaux contribuent à ce zéro. Cela est clairement impossible. Donc on obtient une différentielle quadratique dans la strate $\Omega^{2}\moduli[1](5,3;(-4^{2}))$. Le zéro d'ordre maximal est d'angle $7\pi$ et le minimal de~$5\pi$. Ceci implique que chaque zéro occupe précisément deux angles dans chaque domaine spécial. Si ces deux angles sont aux extrémités d'un même lien selle, les deux zéros sont d'angle $6\pi$. S'ils sont opposés dans les deux domaines polaires, alors leur angle est un multiple pair de $\pi$. On a obtenu la contradiction souhaitée.

Nous regardons maintenant les graphes avec un sommet de valence $5$ et un de valence~$3$. Remarquons qu'il n'y a pas de boucle au sommet de valence $3$. Sinon les liens selles formant les extrémités de cette boucle seraient adjacents. Donc le graphe est formé de trois arêtes reliant les deux sommets spéciaux et d'une boucle au sommet de valence $5$ (et éventuellement d'autres sommets de valence $2$). Dans le domaine de valence $5$, il n'est pas possible qu'un unique segment se trouve d'une part de la fente. Sinon ce lien selle ne pourrait border ni la boucle ni l'une des trois arêtes connectant au pôle de valence $3$. En effet, ce lien selle ne peut être homologue à aucun autre segment et à aucune somme de deux des quatre autres segments. 

Donc il y a trois liens selles d'un côté et deux de l'autre. De plus, les liens selles bordant la boucle se trouvent du même côté de la fente (pour des raisons de primitivité). Elle est donc formée en identifiant les deux segments extérieurs du côté de la fente qui contient trois segments. Mais il est alors aisé de vérifier que toutes les différentielles quadratiques dont les $2$-résidus sont nuls que nous pouvons obtenir possèdent un unique zéro.

Pour terminer, nous considérons le cas d'un graphe d'incidence avec un unique pôle de valence $6$. Le bord de ce domaine polaire est composé de six segments rangés en trois paires de longueurs identiques. Comme deux segments de la même paire ne peuvent pas être adjacents, il y a trois segments de part et d'autre de la fente.  Si il y avait un segment de chaque paire de part et d'autre de la fente, alors la surface serait non primitive. Donc les liens selles doivent être de la forme $A_{1}B_{1}A_{2}$ d'un côté et $C_{1}B_{2}C_{2}$ de l'autre. Dans ce cas, il est facile de constater que les zéros sont d'ordres pairs. Cette dernière contradiction conclut la preuve.
\end{proof}

\subsection{Pluridifférentielles dont tous les pôles sont d'ordre~$-k$}
\label{sec:pluri2}
Dans ce paragraphe, nous considérons les strates de $k$-différentielles dans $\komoduli(a_{1},\dots,a_{n};(-k^{s}))$. Nous commençons par traiter le cas des strates de genre un avec un unique zéro. Nous montrons dans le lemme~\ref{lem:g=1residugen} que l'application $k$-résiduelle contient tous les $k$-résidus sauf éventuellement dans le cas quadratique si  tous les résidus sont colinéaires. Puis nous montrons dans le lemme~\ref{lem:g=1quadgen} que la seule exception en genre un peut être les résidus quadratiques proportionnels à $(1,\dots,1)$  dans $\Omega^{2}\moduli[1](2s;(-2^{s}))$ avec $s$ pair. Puis nous montrons dans le lemme~\ref{lem:g=1quadspe} que $(1,\dots,1)$ est dans l'image de l'application résiduelle des strates $\Omega^{2}\moduli[1](a_{1},a_{2};(-2^{s}))$ avec $(a_{1},a_{2})\neq(s-1,s+1)$ et $s$ pair. Les strates générales sont traitées dans le lemme~\ref{lem:ggentousres}. Enfin nous prouvons dans les lemmes~\ref{lem:nonsurjunzero} et \ref{lem:nonsurjdeuxzero} que les résidus quadratiques $(1,\dots,1)$ ne sont pas dans l'image de l'application résiduelle des strates $\Omega^{2}\moduli[1](2s;(-2^{s}))$ et $\Omega^{2}\moduli[1](s-1,s+1;(-2^{s}))$ avec $s$ pair.

\begin{lem}\label{lem:g=1residugen}
L'application $k$-résiduelle de $\komoduli[1](ks;(-k^{s}))$  est surjective pour $k\geq3$. L'application $2$-résiduelle de $\Omega^{2}\moduli[1](2s;(-2^{s}))$ est surjective pour $s=1$ et  contient les $2$-résidus $(R_{1},\dots,R_{s})$  tels que il existe $R_{i}$ et $R_{j}$ avec $\frac{R_{i}}{R_{j}}\notin\RR_{+}$ pour $s\geq2$. 
\end{lem}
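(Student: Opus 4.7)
La preuve consiste en une construction explicite de représentations plates. Pour $s=1$, le résultat découle directement du lemme~\ref{lem:geq1}, qui donne la surjectivité de $\appresk(k;-k)$ pour tout $k\geq 2$ (ce qui traite également le cas quadratique $s=1$). Supposons désormais $s\geq 2$.

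Étant donné un $s$-uplet $(R_{1},\dots,R_{s})\in(\CC^{\ast})^{s}$ satisfaisant aux hypothèses, on sélectionne des racines $k$ièmes $r_{i}$ de $R_{i}$ en exploitant la liberté dans le choix de ces racines. Pour $k\geq 3$, comme les racines $k$ièmes de l'unité engendrent $\CC$ comme $\RR$-espace vectoriel, on peut toujours arranger les $r_{i}$ en une configuration géométriquement non dégénérée, par exemple de sorte que la concaténation par argument croissant forme un polygone convexe non dégénéré dans $\CC$. Pour $k=2$, l'hypothèse que les $R_{i}$ ne sont pas tous sur un même rayon issu de l'origine fournit deux résidus $R_{i_{0}}, R_{j_{0}}$ avec $R_{i_{0}}/R_{j_{0}}\notin \RR_{+}$, donnant des racines carrées linéairement indépendantes sur $\RR$, et donc la même conclusion.

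La $k$-différentielle désirée est obtenue en assemblant, d'une part, $s$ parties polaires d'ordre~$k$ (chacune un demi-cylindre infini de circonférence $r_{i}$, contribuant un pôle d'ordre $-k$ de $k$-résidu $R_{i}$ par le lemme~\ref{lm:kresidu}), et d'autre part, une région centrale de genre~$1$ dont le bord est constitué de $s$ segments d'holonomies $r_{i}$ et qui possède une unique singularité conique d'angle $2\pi(s+1)$, correspondant au zéro d'ordre $ks$. Cette région centrale est construite à la manière du lemme~\ref{lem:geq1}, comme un polygone plan (complété si nécessaire par des segments auxiliaires identifiés par paires) muni d'identifications qui créent la structure de tore, fusionnent les sommets du polygone en un unique point conique, et comportent au moins une rotation par une racine $k$ième primitive de l'unité, ce qui assure la primitivité de la $k$-différentielle obtenue.

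La difficulté principale réside dans la construction de la région centrale lorsque $\sum r_{i}\neq 0$, le polygone formé par concaténation des $r_{i}$ n'étant alors pas fermé : il faut alors compléter la figure par des segments auxiliaires, identifiés par translations et rotations d'angle $2\pi/k$, pour obtenir une surface fermée de genre~$1$ tout en préservant à la fois l'unicité du zéro et la primitivité. Le cas quadratique avec résidus colinéaires échappe nécessairement à cette méthode, et les lemmes~\ref{lem:nonsurjunzero} et~\ref{lem:nonsurjdeuxzero} montreront qu'il s'agit alors d'une obstruction géométrique authentique.
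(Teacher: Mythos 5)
Your overall strategy coincides with the paper's: both produce the differential by a flat construction, gluing $s$ infinite half-cylinders of circumferences $r_{i}$ (chosen roots of the $R_{i}$) to a central genus-one piece carrying the unique cone point, and both reduce the hypothesis on the $R_{i}$ to the existence of roots $r_{i}$ spanning $\CC$ over $\RR$. However, the step you explicitly defer --- building the central region when $\sum r_{i}\neq 0$ --- is the entire mathematical content of the proof, and your proposal only asserts that it can be done ``par des segments auxiliaires, identifiés par translations et rotations d'angle $2\pi/k$''. The paper makes this precise: it concatenates the $-r_{i}$ by increasing argument (after normalising their arguments to $\left]-\pi/2,\pi/2\right]$, so the broken line stays in a half-plane), and joins the endpoints $D$ and $E$ by \emph{four} segments $v_{i}^{j}$, $i,j\in\{1,2\}$, with angles $-2\pi/k$, $\pi$ and $2\pi/k$ at the three interior vertices, then identifies $v_{i}^{1}$ with $v_{i}^{2}$ by translation composed with a rotation of angle $2\pi/k$. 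It is exactly this choice of angles and identifications that simultaneously closes the polygon, produces genus $1$, fuses all vertices into a single cone point of order $ks$, and forces primitivity (the rotation by a primitive $k$-th root of unity). None of these verifications can be carried out from your sketch, so this is a genuine gap rather than a routine omission; in particular your appeal to ``la manière du lemme~\ref{lem:geq1}'' does not transfer, since that lemma builds a single higher-order polar part rather than $s$ poles of order $-k$.

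Two smaller points. For $s=1$, Lemma~\ref{lem:geq1} as stated requires $m>k$, so it does not literally cover $\komoduli[1](k;-k)$; the paper instead invokes the figure in its proof, which exhibits a differential in $\Omega^{k}\moduli[1](k;-k)$ directly. You should also say explicitly that non-emptiness gives surjectivity onto $\CC^{\ast}$ by rescaling the differential, which multiplies the $k$-residue by an arbitrary non-zero constant.
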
 

\begin{proof}
Pour tout $k\geq2$, la figure~\ref{fig:a-a} montre que les strates $\komoduli[1](k;-k)$ sont non vides. Nous supposerons donc que les strates ont $s\geq2$ pôles d'ordre~$-k$. Par hypothèse, il existe des racines $k$-ième $r_{i}$ des $R_{i}$ qui génèrent $\CC$ comme $\RR$-espace vectoriel. Sans perte de généralité, on peut supposer que l'argument de chaque $r_{i}$ est dans $\left]-\pi,0\right]$.
Nous concaténons alors les $r_{i}$ par argument croissant. L'intérieur de ce segment brisé se trouve dans le demi-plan supérieur ouvert déterminé par la droite $(DE)$ où $D$ et $E$ sont respectivement les points initial et final de la concaténation.
Nous joignons $D$ à $E$ par quatre segments $v_{i}$ d'égale longueur de la façon suivante.    Le segment brisé est formé en concaténant les~$v_{i}$ par $i$ croissant avec $v_{1}=v_{2}$, $v_{3}=v_{4}$ et l'angle en au dessus de l'intersection entre $v_{2}$ et $v_{3}$ est égal à~$\tfrac{2\pi}{k}$.  Notons que ce segment brisé  se trouve dans le demi plan inférieur à la droite~$(DE)$. 

Nous collons maintenant des demi-cylindres infinis aux segments $r_{i}$ de ce polygone. La surface plate est obtenue en collant $v_{1}$ à $v_{3}$ et $v_{2}$ à $v_{4}$ par translation et rotation. On vérifie facilement que cette $k$-différentielle possède un unique zéro. De plus, la différentielle associée est primitive  de genre $1$ car on la déconnecte en coupant les courbes correspondant aux $v_{i}$ et une autre courbe fermée quelconque.
\end{proof}

\begin{lem}\label{lem:g=1quadgen}
L'application résiduelle de  $\Omega^{2}\moduli[1](2s;(-2^{s}))$  est surjective si  $s$ est impair et contient $\espresk[1][2](2s;(-2^{s}))\setminus \CC^{\ast}\cdot (1,\dots,1)$ si $s$ est pair.  
\end{lem}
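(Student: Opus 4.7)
\smallskip

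\noindent\textbf{Plan de la preuve.} D'après le Lemme~\ref{lem:g=1residugen}, l'image de l'application résiduelle de $\Omega^{2}\moduli[1](2s;(-2^{s}))$ contient déjà tous les $s$-uplets $(R_{1},\dots,R_{s})$ dont les coordonnées ne sont pas toutes sur un même rayon issu de l'origine. Il reste donc à traiter le cas où les $R_{i}$ sont tous alignés sur un même rayon; quitte à renormaliser, on suppose $R_{i}\in\RR_{>0}$ et on pose $r_{i}:=\sqrt{R_{i}}$. La preuve se sépare alors en deux cas selon la parité de $s$.

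\smallskip

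\noindent\emph{Cas $s$ impair.} On construit explicitement la différentielle quadratique primitive cherchée par le procédé en ``zig-zag'' suggéré par le panneau de droite de la Figure~\ref{fig:-2impaires}. On aligne $s$ segments horizontaux de longueurs $r_{1},\dots,r_{s}$ et on colle à chacun un demi-cylindre infini, en alternant ``vers le haut'' et ``vers le bas''. Comme $s$ est impair, les deux morceaux de bord qui subsistent à l'extrême gauche et à l'extrême droite ont des parités opposées et ne peuvent donc se recoller que par une identification faisant intervenir une rotation d'angle $\pi$, et non par une simple translation. Une vérification directe montre que la surface plate obtenue est de genre $1$, qu'elle possède une unique singularité conique d'angle $(2s+2)\pi$ (c'est-à-dire un zéro d'ordre $2s$ pour la différentielle quadratique) et $s$ pôles doubles de $2$-résidus $R_{i}$. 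La présence d'une rotation dans l'identification finale garantit que la différentielle quadratique est primitive.

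\smallskip

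\noindent\emph{Cas $s$ pair, $(R_{1},\dots,R_{s})\notin\CC^{\ast}\cdot(1,\dots,1)$.} Quitte à réordonner, on peut supposer $r_{1}\neq r_{s}$. On part d'une différentielle quadratique primitive $\xi_{0}$ dans $\Omega^{2}\moduli[1](2(s-1);(-2^{s-1}))$ de $2$-résidus $R_{1},\dots,R_{s-1}$, fournie par le cas impair puisque $s-1$ est impair. La flexibilité du procédé alterné permet de choisir $\xi_{0}$ de sorte qu'elle contienne une connexion de selle fermée $\gamma$ partant et aboutissant à son unique zéro, d'holonomie $r_{s}$ (l'hypothèse $r_{1}\neq r_{s}$ assure que l'on peut ajuster la ligne brisée horizontale pour qu'une telle connexion existe). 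On découpe alors la surface le long de $\gamma$ et on insère dans la fente ainsi créée une partie polaire triviale d'ordre $2$ associée à $(r_{s};r_{s})$, exactement comme dans la preuve du Lemme~\ref{lem:geq1bis}. La différentielle quadratique obtenue appartient à $\Omega^{2}\moduli[1](2s;(-2^{s}))$, possède encore un unique zéro d'ordre $2s$, reste primitive (la primitivité est locale et héritée de $\xi_{0}$), et réalise les $2$-résidus prescrits $(R_{1},\dots,R_{s})$.

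\smallskip

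\noindent\textbf{Principale difficulté.} Le point le plus délicat réside dans le cas $s$ pair, où il faut justifier que la connexion de selle fermée d'holonomie exactement $r_{s}$ peut toujours être trouvée dans la surface construite au cas impair dès que $r_{1}\neq r_{s}$. Cela repose sur une analyse fine des connexions de selles du procédé alterné, qui est riche en choix combinatoires, mais devient rigide lorsque tous les $r_{i}$ sont égaux --- ce qui explique précisément pourquoi la configuration $(1,\dots,1)$ est exclue, et pourquoi elle sera justement traitée comme exception dans les lemmes~\ref{lem:nonsurjunzero} et \ref{lem:nonsurjdeuxzero}.
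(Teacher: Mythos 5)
Your reduction to positive real residues via Lemma~\ref{lem:g=1residugen} is the same first step as the paper's, but both of your constructions have genuine gaps. In the odd case, the ``zig-zag'' does not close up for arbitrary lengths: once a half-infinite cylinder is attached above each odd-indexed segment and below each even-indexed one, the free boundary consists of the undersides of $r_{1},r_{3},\dots$ and the upper sides of $r_{2},r_{4},\dots$, and for generic positive $r_{i}$ these pieces have pairwise incompatible lengths, so no identification by translations and rotations is available; it is not true that only two end pieces remain. The figure you invoke only treats the all-equal case, where every piece has length $1$. The paper's construction resolves exactly this matching problem by subdividing each $r_{i}$ into two subsegments of lengths $l_{i}$ and $l_{i+1}$ with $l_{i}+l_{i+1}=r_{i}$, gluing the two adjacent copies of each $l_{i}$ to one another by rotation, and closing up the extremal pieces $l_{1}$ and $l_{s+1}$ (arranged so that $l_{s+1}>l_{1}$) by a further subdivision which creates the handle and forces primitivity. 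That construction works for every parity of $s$ as soon as two residues differ, so no induction on $s$ is needed; only the case ``$s$ odd and all residues equal'' requires the separate symmetric construction of Figure~\ref{fig:-2impaires}.

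The even case is where your argument really breaks. A double pole of a quadratic differential (a pole of order $-k$ with $k=2$) is modelled by a half-infinite cylinder: its polar domain has a single boundary circle and its $2$-residue is automatically nonzero. There is no ``partie polaire triviale d'ordre $2$ associée à $(r_{s};r_{s})$'': the trivial polar parts used in Lemma~\ref{lem:geq1bis} are the slit half-plane constructions attached to poles of order at least $2k$, and by Lemma~\ref{lm:kresidu} such a part carries $k$-residue $\left(r_{s}-r_{s}\right)^{k}=0$, which is impossible at a pole of order $-2$. You therefore cannot add the last double pole by slitting along a closed saddle connection and inserting such a part, and the bookkeeping of the cone angle at the unique zero would not give the required order $2s$ either. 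Finally, the existence in your odd-case surface of a closed saddle connection of holonomy exactly $r_{s}$ is asserted, not proved; since the whole even case rests on it, this is not a detail one can wave at.
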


\begin{proof}
D'après le lemme~\ref{lem:g=1residugen}, il suffit de considérer le cas où les $2$-résidus $(R_{1},\dots,R_{s})$ sont des nombres réels strictement positifs. On dénote par $r_{i}$ la racine positive de~$R_{i}$.

Dans le cas où $s$ est impair, nous classons les $r_{i}$ par ordre croissant. Coupons le segment de longueur $r_{1}$ en deux segments de longueur $\epsilon$ et $r_{1}-\epsilon$. Ensuite, nous coupons le segment de longueur $r_{2}$ en deux segments de longueur $r_{1}-\epsilon$ et  $r_{2}-r_{1}+\epsilon$ en identifiant le premier avec le second du découpage précédent. Nous procédons ainsi pour les $s-1$ premiers cylindres et obtenons en bout de chaîne un segment de longueur $(r_{s-1}-r_{s-2})+\dots+(r_{2}-r_{1})+\epsilon$. Le dernier cylindre est découpé en quatre segments. Le premier d'entre eux est identifié à celui que nous venons d'obtenir. Les trois autres segments se répartissent une longueur $(r_{s}-r_{s-1})+\dots+(r_{3}-r_{2})+r_{1}-\epsilon$ qui est strictement positive. Comme $\epsilon$ peut être choisi arbitrairement petit, nous pouvons répartir cette longueur en trois segments $l+\epsilon+l$ et identifier les deux paires de segments de longueur égale de façon à fermer la surface.

Dans le cas où $s$ est pair, nous découpons le segment de longueur $r_{1}$ en deux segments de longueur $\epsilon$ et $r_{1}-\epsilon$. Ensuite, nous coupons le segment de longueur $r_{2}$ en deux segments de longueur $\alpha$ et $r_{2}-\alpha$. Nous procédons ainsi avec tous les cylindres de largeur $r_{2}$ à $r_{s-1}$ de façon à obtenir une chaîne de $s-2$ cylindres dont les deux segments de bord sont de longueur $\alpha$ et $(r_{s-1}-r_{s-2})+\dots+(r_{3}-r_{2})+\alpha$. Enfin, nous découpons dans le dernier cylindre de largeur $r_{s}$ quatre segments correspondant aux bords restants. La seule condition que nous devons vérifier est que $(r_{s-1}-r_{s-2})+\dots+(r_{3}-r_{2})+r_{1}<r_{s}$, de façon à ce qu'il puisse exister un segment de longueur $\alpha$ strictement positive. L'hypothèse selon laquelle les résidus ne sont pas tous égaux est suffisante.
\end{proof}

\begin{lem}\label{lem:g=1quadspe}
  L'application $2$-résiduelle $\appresk[1][2](a_{1},a_{2};(-2^{s}))$  avec $s$ pair et $(a_{1},a_{2})\neq(s-1,s+1)$ est surjective. 
\end{lem}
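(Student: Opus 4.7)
The plan is the following. By Lemma~\ref{lem:g=1quadgen} combined with éclatement of the unique zero (Proposition~\ref{prop:eclatZero}) of $\Omega^{2}\moduli[1](2s;(-2^s))$, every residue tuple in $\espresk[2](\mu)\setminus\CC^{\ast}\cdot(1,\dots,1)$ already lies in the image of $\appresk[2]$ on $\Omega^{2}\moduli[1](a_1,a_2;(-2^s))$. Moreover, when $s$ is odd the surjectivity of $\appresk[2]$ on $\Omega^{2}\moduli[1](2s;(-2^s))$ afforded by Lemma~\ref{lem:g=1quadgen} supplies a primitive differential with residues $(1,\dots,1)$, whose unique zero can be éclaté into any two-zero profile $(a_1,a_2)$ without changing the residues. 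So I may restrict to the case where $s$ is even and $(a_1,a_2)\neq(s-1,s+1)$, and what remains is to exhibit a primitive quadratic differential in $\Omega^{2}\moduli[1](a_1,a_2;(-2^s))$ whose $s$ quadratic residues are all equal to $1$.

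I would argue by induction on the even integer $s$. The base case $s=2$ is supplied by the white differentials in Figure~\ref{fig:exceptquadra}, which live in $\Omega^{2}\moduli[1](2,2;(-2^2))$ and $\Omega^{2}\moduli[1](5,-1;(-2^2))$ with residues $(1,1)$; the remaining non-excluded pairs $(a_1,a_2)\neq(1,3)$ with $a_1+a_2=4$ are obtained from these by éclatement of a zero. For the inductive step $s\geq 4$, assume without loss of generality $a_2\geq a_1$, so that $a_2\geq s\geq 4$ and in particular $a_2-2\geq 0$. Since $s-1$ is odd, Lemma~\ref{lem:g=1quadgen} produces a primitive differential in $\Omega^{2}\moduli[1](2(s-1);(-2^{s-1}))$ with residues $(1,\dots,1)$; it is genuinely primitive because any square of an abelian differential with residue squares $(1,\dots,1)$ would need abelian residues $\pm 1$ summing to zero over an odd number $s-1$ of poles, which is impossible. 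Proposition~\ref{prop:eclatZero} then lets me éclater its unique zero into the profile $(a_1,a_2-2)$, producing a primitive differential $(X_0,\xi_0)\in\Omega^{2}\moduli[1](a_1,a_2-2;(-2^{s-1}))$ still with residues $(1,\dots,1)$.

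The induction is closed by a surgery on $(X_0,\xi_0)$ that adds one new pole of order $-2$ with quadratic residue~$1$ while raising the zero of order $a_2-2$ to order $a_2$. Following the template of the proof of Lemma~\ref{lem:geq1ter}, I would slice $(X_0,\xi_0)$ along a suitable closed saddle connection $\gamma$ at the zero of order $a_2-2$ and glue in a non-trivial polar part of order~$2$ associated to vectors $(v_1,v_2;w_1)$ with $v_1+v_2-w_1=1$ and with boundary combinatorics matching the two sides of $\gamma$; the new pole then has quadratic residue $(v_1+v_2-w_1)^2=1$ by Lemma~\ref{lm:kresidu}, the angle contributed by the polar part raises the order of the zero by $2$, and performing the gluing with a rotation by $\pi$ on one pair of segments preserves primitivity.

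The delicate part will be to make the surgery explicit and to verify that it goes through for every non-excluded pair $(a_1,a_2)$: I must choose $\gamma$ and the vectors $v_1,v_2,w_1$ so that the polar part fits along $\gamma$, so that the gluing closes up into a smooth flat torus, and so that the result is primitive with exactly the prescribed residues. The construction should break down precisely when $(a_1,a_2)=(s-1,s+1)$, consistently with the non-surjectivity later asserted by Lemma~\ref{lem:nonsurjdeuxzero}, and tracking that obstruction is a useful check. If the surgery turns out to be cumbersome for some parity combinations of $(a_1,a_2)$, I would fall back on a direct polygonal construction in the spirit of the right-hand picture of Figure~\ref{fig:-2impaires}, but distributing the total cone angle $(a_1+2)\pi+(a_2+2)\pi$ between the two zeros; the extra freedom provided by the second zero should be enough to handle all non-excluded configurations.
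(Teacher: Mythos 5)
Your reduction to realizing the residues $(1,\dots,1)$ and your base case $s=2$ agree with the paper, but the heart of the argument --- the step that actually produces the differential with residues $(1,\dots,1)$ for $s\geq 4$ --- is not carried out, and as sketched it cannot work. Cutting along a closed saddle connection $\gamma$ at a zero produces \emph{two} boundary circles, each of holonomy equal to that of $\gamma$; a single polar part of a double pole is a half-infinite cylinder with a \emph{single} boundary circle, so it cannot close up both components, and any attempt to subdivide its boundary so that one arc matches each copy of $\gamma$ forces the new quadratic residue to be $(v_1+v_2-w_1)^2$ with $v_1$ and $w_1$ both equal to the holonomy of $\gamma$ --- a quantity you do not control, since the surface is already rigidly scaled by the $s-1$ existing residues equal to $1$. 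This is precisely why the paper's surgery inserts a \emph{parallelogram} with two free sides of holonomy $1$ and glues two new half-cylinders to them: the new residues are decoupled from the holonomy of the cut saddle connection, at the cost of adding two poles at a time and therefore of carrying through the induction the explicit existence of a saddle connection between the two zeros and of a closed one at the maximal zero (hypotheses your construction, which restarts from an abstract smoothing at each stage, cannot guarantee).

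The decisive symptom is the one you half-notice yourself: your surgery makes no use of the hypothesis $(a_{1},a_{2})\neq(s-1,s+1)$, yet Lemma~\ref{lem:nonsurjdeuxzero} shows the conclusion is false for that profile. A construction that would apply verbatim to the excluded stratum must contain a hidden obstruction, and until you locate it you have not proved anything for the non-excluded strata either. In the paper the exclusion enters structurally: the two admissible moves (add $2$ to both zero orders, or add $4$ to the maximal one) reach every non-excluded profile with $s$ poles starting from a non-excluded profile with $s-2$ poles, while $(s-1,s+1)$ is reachable only from the excluded profiles $(s-3,s-1)$. To repair your proof you would either need to make your one-pole insertion explicit (specifying $\gamma$, the gluing rotations, and why the residue equals $1$, and exhibiting where it breaks for $(s-1,s+1)$), or simply adopt the paper's two-pole parallelogram surgery together with its saddle-connection bookkeeping.
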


\begin{proof}
Par éclatement de zéro, le lemme~\ref{lem:g=1quadgen} implique qu'il suffit de prouver que $(1,\dots,1)$ est dans l'image de l'application résiduelle de ces strates. Nous prouvons ce résultat par récurrence sur le nombre (pair) de pôles d'ordre~$-2$. L'hypothèse de récurrence est que dans chaque strate avec  $s$ pair et $(a_{1},a_{2})\neq(s-1,s+1)$ il existe une $2$-différentielle dont les $2$-résidus sont $(1,\dots,1)$ qui possède un lien selle entre les deux zéros et un lien selle fermé reliant le zéro d'ordre maximal à lui même. Ces liens selles sont choisis non-horizontaux. 
\par
Pour $s=2$, il y les strates $\Omega^{2}\moduli[1](5,-1;-2,-2)$ et $\Omega^{2}\moduli[1](2,2;-2,-2)$ à considérer. Des différentielles quadratiques dans ces strates avec $2$-résidus $(1,1)$ sont représentées dans la figure~\ref{fig:exceptquadra} en blanc.  L'existence des liens selles comme ci-dessus est claire.
\par
Supposons par récurrence qu'il existe une $2$-différentielle de  $\Omega^{2}\moduli[1](a_{1},a_{2};(-2^{s}))$ avec $2$-résidus $(1,\dots,1)$  et $(a_{1},a_{2})\neq(s-1,s+1)$. Si $a_{2}\geq a_{1}$, nous construisons des différentielles quadratiques dans les strates $\Omega^{2}\moduli[1](a_{1}+2,a_{2}+2;(-2^{s+2}))$ et $\Omega^{2}\moduli[1](a_{1},a_{2}+4;(-2^{s+2}))$ satisfaisant à l'hypothèse de récurrence.
\par
Pour ajouter $4$ à l'ordre du zéro d'ordre maximal, nous coupons la différentielle le long du lien selle fermé $\gamma$ connectant ce zéro à lui même. Nous prenons un parallélogramme dont l'un des segment est donné par la période de $\gamma$ et les autres arêtes sont les vecteurs~$1$. Puis nous collons des demi-cylindres infinis de circonférence $1$ aux segments de longueur~$1$. Enfin nous collons les autres segments du parallélogramme aux bord de la surface.  La différentielle obtenue à partir de la différentielles à gauche de la figure~\ref{fig:exceptquadra} est représentée à gauche de  la figure~\ref{fig:exceptquadrabis}. Pour ajouter~$2$ à l'ordre des deux zéros, il suffit de faire la même construction en coupant un lien selle entre les deux zéros. Cette construction est représentée à droite de la figure~\ref{fig:exceptquadrabis}. 

 \begin{figure}[htb]
\begin{tikzpicture}
\begin{scope}[xshift=-3cm]

     \foreach \i in {1,2,...,4}
  \coordinate (a\i) at (0,2*\i/3); 
        \foreach \i in {1,2,...,4}
  \coordinate (b\i) at (1,2*\i/3); 
  \coordinate (e1) at (2,8/3);
\coordinate (e2) at (2,2);
   
    \fill[fill=black!10] (a1)   -- (a4) -- ++(0,.6) --++(1,0) -- (b4) -- ++(80:.6)-- ++(1,0) -- (e1) -- (e2)  -- ++(-80:1.5) -- ++(-1,0) -- (b3) -- (b1)-- ++(0,-.6) --++(-1,0) --cycle;

       \foreach \i in {1,2,4}
   \fill (a\i)  circle (2pt);
       \foreach \i in {1,2,...,4}
   \fill (b\i)  circle (2pt);

\draw (a1)--+(0,-.6);
\draw (b1)--+(0,-.6);
\draw (a4)--+(0,.6);
\draw (b4)--+(0,.6);

   \fill (e1)  circle (2pt);
      \fill (e2)  circle (2pt);

\draw (a1)-- (a2) coordinate[pos=.5] (c1) -- (a3) coordinate[pos=.5] (c2) -- (a4) coordinate[pos=.5] (c3);
\draw (b1)-- (b2) coordinate[pos=.5] (d1) -- (b3) coordinate[pos=.5] (d2);
\draw (e2) -- (e1)  coordinate[pos=.5] (d3);

 \fill[white] (a3)  circle (2pt);
\draw (a3)  circle (2pt);

\draw (b3)--+(-80:1.5);
\draw (e2)--+(-80:1.5);
\draw (e1)--+(80:.6);
\draw (b4)--+(80:.6);
      
\node[left] at (c1) {$1$};
\node[left] at (c2) {$2$};
\node[right,rotate=180] at (c3) {$2$};
\node[left] at (d1) {$3$};
\node[left] at (d2) {$1$};
\node[left,rotate=180] at (d3) {$3$};

\end{scope}

\begin{scope}[xshift=3cm]

     \foreach \i in {1,2,...,4}
  \coordinate (a\i) at (0,2*\i/3); 
        \foreach \i in {1,2,...,4}
  \coordinate (b\i) at (1,2*\i/3); 
   \coordinate (e1) at (2,8/3);
\coordinate (e2) at (2,2);
   
       \fill[fill=black!10] (a1)   -- (a4) -- ++(0,.6) --++(1,0) -- (b4) -- ++(80:.6)-- ++(1,0) -- (e1) -- (e2)  -- ++(-80:1.5) -- ++(-1,0) -- (b3) -- (b1)-- ++(0,-.6) --++(-1,0) --cycle;
       
       \foreach \i in {2,3}
   \fill (a\i)  circle (2pt);
       \foreach \i in {2,3}
   \fill (b\i)  circle (2pt);

\draw (a1)--+(0,-.6);
\draw (b1)--+(0,-.6);
\draw (a4)--+(0,.6);
\draw (b4)--+(0,.6);

      \fill (e2)  circle (2pt);

\draw (a1)-- (a2) coordinate[pos=.5] (c1) -- (a3) coordinate[pos=.5] (c2) -- (a4) coordinate[pos=.5] (c3);
\draw (b1)-- (b2) coordinate[pos=.5] (d1) -- (b3) coordinate[pos=.5] (d2);
\draw (e2) -- (e1)  coordinate[pos=.5] (d3);

\draw (b3)--+(-80:1.5);
\draw (e2)--+(-80:1.5);
\draw (e1)--+(80:.6);
\draw (b4)--+(80:.6);

       \foreach \i in {1,4}
      \fill[white] (a\i)  circle (2pt);
             \foreach \i in {1,4}
      \draw (a\i)  circle (2pt);
             \foreach \i in {1,4}
      \fill[white] (b\i)  circle (2pt);
             \foreach \i in {1,4}
      \draw (b\i)  circle (2pt);

   \fill[white] (e1)  circle (2pt);
   \draw (e1) circle (2pt);
      
\node[left] at (c2) {$1$};
\node[left] at (c1) {$2$};
\node[right,rotate=180] at (c3) {$2$};
\node[left] at (d1) {$3$};
\node[left] at (d2) {$1$};
\node[left,rotate=180] at (d3) {$3$};

\end{scope}
\end{tikzpicture}
\caption{Différentielle quadratique dans la strate $\Omega^{2}\moduli[1](9,-1;(-2^{4}))$ (à gauche) et $\Omega^{2}\moduli[1](4,4;(-2^{4}))$ (à droite) dont les $2$-résidus sont $(1,1,1,1)$.}
\label{fig:exceptquadrabis}
\end{figure}
\par
Pour conclure, il suffit de remarquer que toutes les strates  $\Omega^{2}\moduli[1](a_{1},a_{2};(-2^{s}))$ satisfaisant $(a_{1},a_{2})\neq(s-1,s+1)$ peuvent s'obtenir en ajoutant $2$ à l'ordre des deux zéros ou $4$ à l'ordre du zéro d'ordre maximal. 
\end{proof}

\begin{lem}\label{lem:ggentousres}
L'application $k$-résiduelle des strates $\komoduli(a_{1},\dots,a_{n};(-k^{s}))$ distinctes des strates $\Omega^{2}\moduli[1](2s;(-2^{s}))$ et $\Omega^{2}\moduli[1](s+1,s-1;(-2^{s}))$ avec $s$ pair  est surjective.
\end{lem}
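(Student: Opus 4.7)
Ma démarche serait une double récurrence sur le genre $g$ et le nombre de zéros $n$, en combinant l'éclatement de zéros (proposition~\ref{prop:eclatZero}) et la couture d'anses (proposition~\ref{prop:attachanse}) pour me ramener aux cas de base déjà traités par les lemmes~\ref{lem:g=1residugen}, \ref{lem:g=1quadgen} et \ref{lem:g=1quadspe}. Les cas $g=1$ avec $n\leq 2$ sont couverts par ces lemmes, l'unique motif manquant étant $(1,\dots,1)$ pour $\Omega^{2}\moduli[1](2s;(-2^{s}))$ avec $s$ pair, et le motif $(s-1,s+1)$ étant lui-même exceptionnel uniquement en $g=1$, $n=2$.

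Je traiterais d'abord $n\geq 3$ en éclatant un zéro d'une différentielle primitive provenant d'une strate à deux zéros avec les résidus prescrits~: la primitivité de la source assurant que la proposition~\ref{prop:eclatZero} s'applique sans obstruction et préserve les résidus. Pour $n=2$ et $g\geq 2$, j'appliquerais la couture d'anses à une différentielle de $\komoduli[g-1](a_{1}-2k,a_{2};(-k^{s}))$ ou de $\komoduli[g-1](a_{1},a_{2}-2k;(-k^{s}))$~; comme il est impossible d'avoir simultanément ces deux paires égales à $(s-1,s+1)$, au moins l'une de ces deux strates ne sera pas exceptionnelle. Dans les cas où la soustraction fait tomber un ordre hors du domaine admissible, on se ramènerait par éclatement à la strate minimale $\komoduli[g](a_{1}+a_{2};(-k^{s}))$ du même genre. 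Pour $n=1$ avec $k\geq 3$, ou avec $k=2$ et $s$ impair, la couture d'anses itérée à partir du cas $g=1$ (surjectif par les lemmes~\ref{lem:g=1residugen} et \ref{lem:g=1quadgen}) fournira la surjectivité à tous les genres.

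Le point le plus délicat sera la réalisation du motif résiduel $(1,\dots,1)$ dans la strate minimale $\Omega^{2}\moduli[g](4g-4+2s;(-2^{s}))$ pour $g\geq 2$ et $s$ pair, puisque la couture d'anses depuis le genre un y passe nécessairement par la strate exceptionnelle $\Omega^{2}\moduli[1](2s;(-2^{s}))$ qui ne réalise pas ce motif. Pour le contourner, je construirais explicitement une différentielle quadratique entrelacée sur une courbe nodale $X=X_{0}\cup X_{1}$ de genre arithmétique deux, formée d'un pont rationnel $X_{0}\cong\PP^{1}$ attaché par deux nœuds aux deux zéros d'une courbe elliptique lisse $X_{1}$. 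La composante $X_{1}$ porterait une différentielle primitive de $\Omega^{2}\moduli[1](s-3,s+3;(-2^{s}))$ avec résidus $(1,\dots,1)$ (dans le cas limite $s=2$, on utiliserait $\Omega^{2}\moduli[1](-1,5;(-2^{s}))$), motif réalisable par le lemme~\ref{lem:g=1quadspe} puisque $(s-3,s+3)\neq(s-1,s+1)$. La composante $X_{0}$ porterait une différentielle de $\Omega^{2}\moduli[0](2s+4;-s-1,-s-7)$, strate non vide car $\pgcd(2s+4,s+1,s+7,2)=1$. Les ordres aux deux nœuds vérifient l'assortissement $-4=-2k$, et les $2$-résidus de $X_{0}$ aux nœuds sont automatiquement nuls puisque ces pôles sont d'ordres impairs~; la lissabilité découlerait alors de la généralisation du lemme~\ref{lem:lissdeuxcomp} à plusieurs composantes issue de \cite{BCGGM3}. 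Comme les zéros de $\xi_{X_{1}}$ aux nœuds sont d'ordres impairs, cette différentielle ne serait pas le carré d'une abélienne méromorphe, assurant la conservation des résidus aux pôles doubles lors du lissage et la primitivité de la différentielle obtenue. On aboutirait ainsi à une différentielle primitive de $\Omega^{2}\moduli[2](2s+4;(-2^{s}))$ avec résidus $(1,\dots,1)$, le cas $g>2$ s'en déduisant par couture d'anses itérée au zéro unique.
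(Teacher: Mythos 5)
Your proposal is correct, and its overall reduction scheme (éclatement de zéros plus couture d'anses ramenant aux cas de base de genre un des lemmes~\ref{lem:g=1residugen}, \ref{lem:g=1quadgen} et \ref{lem:g=1quadspe}) coïncide avec celui du texte. La divergence réelle porte sur l'unique étape inaccessible par ces deux opérations~: réaliser $(1,\dots,1)$ dans $\Omega^{2}\moduli[2](2s+4;(-2^{s}))$ avec $s$ pair. Le texte part d'une différentielle de $\Omega^{2}\moduli[1](2s+4;(-2^{s+2}))$ de résidus $(1,\dots,1,-1,-1)$ (fournie par le lemme~\ref{lem:g=1residugen} puisque $1$ et $-1$ ne sont pas sur un même rayon) et identifie les deux pôles de résidu $-1$ en un nœud non séparant~; les deux branches du nœud étant d'ordre $-k$, le lissage élémentaire du lemme~\ref{lem:lisspolessimples} suffit. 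Vous insérez au contraire un pont rationnel entre les deux zéros d'une différentielle de $\Omega^{2}\moduli[1](s-3,s+3;(-2^{s}))$~: vos vérifications numériques (ordres $-s-1$ et $-s-7$ aux nœuds, $\pgcd=1$, $2$-résidus nuls aux pôles d'ordre impair) sont justes, mais les nœuds ont alors des ordres $>-k$ d'un côté et $<-k$ de l'autre et le graphe dual contient un cycle, si bien que votre appel à la \og généralisation du lemme~\ref{lem:lissdeuxcomp} à plusieurs composantes\fg{} mobilise en fait le théorème de lissage complet de \cite{BCGGM3} avec sa condition résiduelle globale~; celle-ci est bien satisfaite ici (la composante elliptique porte tous les pôles marqués et les résidus côté $\PP^{1}$ sont nuls), mais ce point mérite d'être explicité. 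Les deux routes aboutissent~: celle du texte est plus économe en machinerie et atteint directement la strate minimale, la vôtre évite d'introduire des pôles auxiliaires au prix d'un lissage plus délicat, et votre discussion supplémentaire du cas $n=2$, $g\ge 2$ est correcte mais devient superflue si l'on réduit d'abord tout aux strates minimales.
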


\begin{proof}
Si $k\geq3$,  l'éclatement des zéros et la couture d'anse (cf propositions~\ref{prop:eclatZero} et~\ref{prop:attachanse}) à partir des $k$-différentielles données par le lemme~\ref{lem:g=1residugen} donne la surjection pour toutes les strates avec $g\geq1$.
\par
Considérons le cas $k=2$. Si $g=1$, alors nous avons montré le résultat pour au plus deux zéros dans les lemmes précédent. Pour les strates avec $n\geq3$ zéros, nous utilisons l'éclatement des zéros à partir des strates ayant deux zéros. Si $g\geq2$, il suffit de montrer que l'image de l'application résiduelle des strates de la forme $\Omega^{2}\moduli[2](2s+4;(-2^{s}))$ avec $s$ pair contient $(1,\dots,1)$. En effet, si c'est le cas l'éclatement des zéros et la couture d'anse impliqueront le résultat pour toutes les strates.
Pour cela, prenons une différentielle quadratique de la strate $\Omega^{2}\moduli[1](2s+4;(-2^{s+2}))$ dont les résidus quadratiques sont $(1,\dots,1,-1,-1)$ (cette différentielle existe en vertu du lemme~\ref{lem:g=1residugen}). Nous formons une différentielle quadratique entrelacée en collant les deux pôles avec les résidus quadratiques~$-1$. D'après la proposition~\ref{lem:lisspolessimples}, cette $2$-différentielle entrelacée est lissable et la différentielle obtenue par lissage possède les propriétés souhaitées.  
\end{proof}

\begin{lem}\label{lem:nonsurjunzero}
L'image de l'application résiduelle de $\Omega^{2}\moduli[1](2s;(-2^{s}))$, avec $s$ pair, est égale à $(\CC^{\ast})^{s}\setminus \CC^{\ast}\cdot(1,\dots,1)$.
\end{lem}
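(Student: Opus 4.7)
The plan is to reduce to Lemma~\ref{lem:nonsurjdeuxzero} (the two-zero analogue) by splitting the unique zero of $\xi$ into two zeros via Proposition~\ref{prop:eclatZero}. Combined with Lemma~\ref{lem:g=1quadgen}, which already supplies quadratic differentials realizing every residue tuple not proportional to $(1,\dots,1)$, this will establish both inclusions in the statement.

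More precisely, I suppose for contradiction that some primitive $\xi\in\Omega^{2}\moduli[1](2s;(-2^{s}))$ has all its quadratic residues equal to $1$; by the $GL^{+}(2,\RR)$-action it suffices to rule out this normalized tuple. Since $\xi$ is primitive, it is not the square of an abelian differential, so hypothesis (i) of Proposition~\ref{prop:eclatZero} is satisfied. The unique zero, of order $2s$, decomposes as $(s-1)+(s+1)$, with both summands strictly greater than $-k=-2$ because $s\geq 2$. Applying the splitting operation of Proposition~\ref{prop:eclatZero} with this partition produces a primitive quadratic differential $\xi'\in\Omega^{2}\moduli[1](s-1,s+1;(-2^{s}))$ whose quadratic residues coincide with those of $\xi$, namely $(1,\dots,1)$.

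This directly contradicts Lemma~\ref{lem:nonsurjdeuxzero}, which asserts that $(1,\dots,1)$ does not lie in the image of the residue map of $\Omega^{2}\moduli[1](s-1,s+1;(-2^{s}))$ for even $s$. The proof of the present lemma is therefore essentially formal once zero-splitting is in hand; the genuine difficulty has been transferred to Lemma~\ref{lem:nonsurjdeuxzero}, which will be the real obstacle. That lemma will most likely require the degenerate-core analysis used in Lemmas~\ref{lem:geq1cin} and~\ref{lem:geq1six}: one starts from a putative $\xi'$, applies Proposition~\ref{prop:coeurdege} to assume its core is degenerate and its saddle connections horizontal, observes that each polar domain is then a half-cylinder of circumference $1$, and rules out every combinatorial type of the resulting domanial graph that is compatible simultaneously with the equal residues, the primitivity of $\xi'$, and the prescribed zero orders $s-1$ and $s+1$.
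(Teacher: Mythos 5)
Your reduction is correct, but it is genuinely different from the paper's argument. The paper proves this lemma \emph{directly}: it first rules out $(1,1)$ for $\Omega^{2}\moduli[1](4;(-2^{2}))$ by gluing the two double poles and invoking the emptiness of $\Omega^{2}\moduli[2](4)$, and then, for general even $s$, runs the degenerate-core analysis on the one-zero surface itself (the domanial graph has either one valence-$4$ vertex or two valence-$3$ vertices; adjacent pairs of valence-$2$ poles are removed two at a time using the alternating lengths $\theta$, $1-\theta$; the remaining reduced surfaces are excluded case by case). You instead split the zero $2s=(s-1)+(s+1)$ via Proposition~\ref{prop:eclatZero} — condition (i) holds since $\xi$ is primitive, and the resulting $\xi'$ is automatically primitive because $s\pm1$ are odd — and transfer the whole burden to Lemma~\ref{lem:nonsurjdeuxzero}. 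This is legitimate: the paper's proof of Lemma~\ref{lem:nonsurjdeuxzero} cites the present lemma only for a construction described in its proof (the removal of a pair of neighbouring valence-$2$ poles), not for its statement, so no circularity arises; one would simply have to reverse the order of the two lemmas and relocate that description. What your route buys is economy — the one-zero combinatorial analysis becomes redundant. What the paper's route buys is that the one-zero case serves as a gentler template whose operations and conventions are then reused verbatim in the substantially harder two-zero proof, and the two statements remain logically independent. Since you correctly flag that the real difficulty now sits entirely in Lemma~\ref{lem:nonsurjdeuxzero} (which indeed proceeds by the degenerate-core and domanial-graph analysis you sketch, reducing via fusion to Lemma~\ref{lem:geq1six} and to the emptiness of $\Omega^{2}\moduli[2](3,1)$), your proposal stands as a valid alternative proof of the present statement.
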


Rappelons que nous ne considérons que les différentielles quadratiques primitives. Sinon on pourrait simplement prendre le carré d'une différentielle abélienne dans $\omoduli[1](s,(-1^{s}))$ qui a pour résidus $(1,\dots,1,-1,\dots,-1)$.
\begin{proof}
D'après le lemme~\ref{lem:g=1quadgen}, il suffit de vérifier que les $2$-résidus $(1,\dots,1)$ ne sont pas dans l'image de l'application résiduelle de ces strates. 
\par
L'image de $\appresk[2][1](4;(-2^{2}))$ ne contient pas $(1,1)$ car sinon  on obtiendrait une différentielle quadratique entrelacée lissable en collant ses deux pôles (voir le lemme~\ref{lem:lisspolessimples}). La $2$-différentielle obtenue par lissage serait primitive dans la strate $\Omega^{2}\moduli[2](4)$, ce qui est absurde (voir \cite{masm}).
\par
Supposons qu'il existe une $2$-différentielle $\xi$ dans  $\Omega^{2}\moduli[1](2s;(-2^{s}))$ avec $s\geq2$ dont tous les résidus quadratiques sont $(1,\dots,1)$. Nous notons $S$ la surface plate associée à $\xi$. Sans perte de généralité, on peut supposer que le cœur de $S$ est dégénéré et contient $s+1$ liens selles distincts (voir la section~\ref{sec:coeur}). En coupant $S$ le long de ces liens selles, nous obtenons l'union disjointe de $s$ parties polaires quadratiques d'ordre $2$ bordées par $2s+2$ segments.

Remarquons qu'aucun domaine polaire ne possède un bord formé d'un unique segment. Sinon pour que  $S$ ne soit pas singulière il faudrait que l'autre pôle bordé par ce lien selle ait un résidu quadratique strictement supérieur à $1$. Donc les graphes d'incidence de $S$ (voir section~\ref{sec:coeur}) sont de l'une des deux formes suivante.
\begin{enumerate}[i)]
 \item Il existe un sommet de valence $4$ et les autres sont de valence $2$.
 \item Il existe deux sommets de valence $3$ et les autres sont de valence $2$.
\end{enumerate}
\par
Dans le graphe d'incidence de $S$, il y a des chaînes de sommets de valence~$2$. Pour tous ces domaines polaires, la somme des longueurs des liens selles est égale à~$1$. Donc dans ces chaînes, la période des liens selles est alternativement $\theta\in\left]0,1\right[$ et $1-\theta$. Nous pouvons enlever deux domaines polaires consécutifs de cette surface de la façon suivante. Nous coupons la surface $S$ le long des deux liens selles de période~$\theta$. Puis nous oublions les deux $2$-parties polaires et recollons les bords de la surface ainsi obtenue. La surface ainsi obtenue est encore primitive. Nous enlevons de $S$ toutes les paires adjacentes avec deux liens selles jusqu'à obtenir une surface dite {\em réduite}.
\par
Il suffit maintenant de montrer qu'il n'existe pas de surfaces réduites. Considérons tout d'abord le cas où l'un des sommets du graphe d'incidence est de valence~$4$. Les seuls graphes d'incidence possibles comportent un sommet d'où part deux boucles qui contiennent~$0$ ou~$1$ sommet de valence~$2$. Comme $s$ est pair, le seul graphe d'incidence possible correspond à une différentielle quadratique de $\Omega^{2}\moduli[1](4,(-2^{2}))$ avec résidus $(1,1)$. Cela montre qu'aucune surface réduite ne possède ce graphe d'incidence.
\par
Maintenant, nous considérons les surfaces réduites dont le graphe d'incidence possède deux sommets de valence~$3$. Dans ce cas deux types de graphes d'incidence sont possibles. L'un avec trois arêtes entre deux sommets et sur chaque arête  $0$ ou $1$ sommet de valence~$2$. L'autre avec une arête entre deux sommets et une boucle à chaque sommet. 
La seule possibilité est que ces graphes aient~$4$ sommets, sinon le nombre de pôles serait impaire ou égal à~$2$. Donc il y a un sommet de valence~$2$ sur deux arêtes. 
\par
Considérons le premier graphe. Du fait de la présence des deux sommets de valence $2$, les longueurs des liens selles au bord des domaines polaires de valence $3$ sont $a,b,c$ et $a,1-b,1-c$ respectivement avec $a,b,c \in \left] 0;1 \right[ $. De plus, par hypothèse on a les égalités $a+b+c=2+a-b-c =1$, ce qui implique que $a=0$. Ce donne la contradiction.
\par
Dans le second cas, il existe au moins une boucle qui contient un sommet. Comme la somme des longueurs des liens selles au bord de ce domaine polaire est~$1$, le troisième lien selle de ce domaine polaire est nul. Cela donne la dernière contradiction de cette preuve.
\end{proof}

\begin{lem}\label{lem:nonsurjdeuxzero}
 L'image de $\appresk[2][1](s+1,s-1;(-2^{s}))$, avec $s$ pair, est égale à $(\CC^{\ast})^{s}\setminus \CC^{\ast}\cdot(1,\dots,1) $.
\end{lem}

\begin{proof}
Nous commençons par remarquer que l'image de l'application résiduelle de la strate $\Omega^{2}\moduli[1](3,1;-2,-2)$ ne contient pas $(1,1)$. Sinon on pourrait former une différentielle entrelacée lissable en recollant les deux pôles doubles. La différentielle quadratique obtenue en la lissant serait dans $\Omega^{2}\moduli[2](3,1)$, qui est vide par \cite{masm}.

Nous traitons le cas $s\geq4$ par l'absurde. Nous supposons qu'il existe une différentielle quadratique dans la strate $\Omega^{2}(s-1,s+1;(-2^{s}))$ avec pour résidus quadratiques $(1,\dots,1)$. Dans un premier temps nous simplifions la différentielle quadratique puis nous montrons que les différentielles quadratiques simplifiées n'existent pas.

Nous supposerons que le cœur de la différentielle est dégénéré et qu'il y a donc $s+2$ liens selles horizontaux (voir section~\ref{sec:coeur}). En coupant le long des liens selles la surface nous obtenons~$s$ parties polaires quadratiques bordées par au moins~$2$ segments. Les domaines polaires dont le bord est composé par strictement plus de deux segments sont appelés {\em spéciaux}. Les valences des sommets correspondants dans le graphe d'incidence peuvent être de la forme $(6)$, $(5,3)$, $(4,4)$, $(4,3,3)$ ou $(3,3,3,3)$.

Nous considérerons tout d'abord le graphe d'incidence simplifié défini dans la section~\ref{sec:coeur}. On utilisera le vocabulaire des graphes et des surfaces de manière interchangeable. Par exemple, la {\em valence} d'un pôle est la valence du sommet correspondant dans le graphe d'incidence. Les graphes d'incidence simplifiés possibles sont représentés dans la figure~\ref{fig:graphesg=1} (d'autres graphes à priori possibles seront écartés par des considérations générales).

\begin{figure}[hbt]
\begin{tikzpicture}

\begin{scope}[xshift=-3.5cm]
\coordinate (z) at (0,0);\fill (z) circle (2pt);

\draw (z) .. controls ++(-30:1) and ++(30:1) .. (z);
\draw (z) .. controls ++(90:1) and ++(150:1) .. (z);
\draw (z) .. controls ++(-90:1) and ++(-150:1) .. (z);
\end{scope}

\begin{scope}[xshift=-1.3cm,yshift=-1.5]
\coordinate (z1) at (0,0);\fill (z1) circle (2pt);
\coordinate (z2) at (1,0);\fill (z2) circle (2pt);

\draw (z1) -- (z2);
\draw (z2) .. controls ++(-30:1) and ++(30:1) .. (z2);
\draw (z1) .. controls ++(90:1) and ++(150:1) .. (z1);
\draw (z1) .. controls ++(-90:1) and ++(-150:1) .. (z1);
\end{scope}

\begin{scope}[xshift=2cm]
\coordinate (z1) at (0,0);\fill (z1) circle (2pt);
\coordinate (z2) at (1,0);\fill (z2) circle (2pt);

\draw (z1) -- (z2);
\draw (z1) .. controls ++(30:.5) and ++(150:.5) .. (z2);
\draw (z1) .. controls ++(-30:.5) and ++(-150:.5) .. (z2);
\draw (z1) .. controls ++(-210:1) and ++(-150:1) .. (z1);
\end{scope}

\begin{scope}[xshift=4cm]
\coordinate (z1) at (0,0);\fill (z1) circle (2pt);
\coordinate (z2) at (1,0);\fill (z2) circle (2pt);

\draw (z1) .. controls ++(20:.5) and ++(160:.5) .. (z2);
\draw (z1) .. controls ++(-20:.5) and ++(-160:.5) .. (z2);
\draw (z1) .. controls ++(60:.5) and ++(120:.5) .. (z2);
\draw (z1) .. controls ++(-60:.5) and ++(-120:.5) .. (z2);

\end{scope}

\begin{scope}[xshift=6.3cm]
\coordinate (z1) at (0,0);\fill (z1) circle (2pt);
\coordinate (z2) at (1,0);\fill (z2) circle (2pt);

\draw (z1) .. controls ++(30:.5) and ++(150:.5) .. (z2);
\draw (z1) .. controls ++(-30:.5) and ++(-150:.5) .. (z2);
\draw (z1) .. controls ++(-210:1) and ++(-150:1) .. (z1);
\draw (z2) .. controls ++(30:1) and ++(-30:1) .. (z2);

\end{scope}

\begin{scope}[yshift=-1.5cm,xshift=-2cm]
\coordinate (z1) at (0,0);\fill (z1) circle (2pt);
\coordinate (z2) at (1,-.5);\fill (z2) circle (2pt);
\coordinate (z3) at (1,.5);\fill (z3) circle (2pt);

\draw (z1) .. controls ++(-210:1) and ++(-150:1) .. (z1);
\draw (z1) --(z2);
\draw (z1) --(z3);
\draw (z2) .. controls ++(60:.5) and ++(-60:.5) .. (z3);
\draw (z2) .. controls ++(120:.5) and ++(-120:.5) .. (z3);

\end{scope}

\begin{scope}[yshift=-2cm,xshift=1cm,rotate=90]
\coordinate (z1) at (0,0);\fill (z1) circle (2pt);
\coordinate (z2) at (1,-.5);\fill (z2) circle (2pt);
\coordinate (z3) at (1,.5);\fill (z3) circle (2pt);

\draw (z1) .. controls ++(0:.5) and ++(130:.5) .. (z2);
\draw (z1) .. controls ++(0:.5) and ++(-130:.5) .. (z3);
\draw (z2) --(z3);
\draw (z1) .. controls ++(-60:.5) and ++(180:.5) .. (z2);
\draw (z1) .. controls ++(60:.5) and ++(180:.5) .. (z3);

\end{scope}

\begin{scope}[yshift=-1.5cm,xshift=3.5cm,scale=.7]
\coordinate (z0) at (0,0);\fill (z0) circle (2pt);
\coordinate (z1) at (1,0);\fill (z1) circle (2pt);
\coordinate (z2) at (-.5,1.71/2);\fill (z2) circle (2pt);
\coordinate (z3) at (-.5,-1.71/2);\fill (z3) circle (2pt);

\draw (z0) circle (1cm);
\draw (z0) -- (z1);
\draw (z0) -- (z2);
\draw (z0) -- (z3);
\end{scope}

\begin{scope}[yshift=-1.5cm,xshift=6cm,scale=.7]
\coordinate (z0) at (1,0);\fill (z0) circle (2pt);
\coordinate (z1) at (0,1);\fill (z1) circle (2pt);
\coordinate (z2) at (-1,0);\fill (z2) circle (2pt);
\coordinate (z3) at (0,-1);\fill (z3) circle (2pt);

\draw (0,0) circle (1cm);
\draw (z0) .. controls ++(180:.5) and ++(-90:.5) .. (z1);
\draw (z2) .. controls ++(-10:.5) and ++(100:.5) .. (z3);
\end{scope}
\end{tikzpicture}
\caption{Tous les graphes d'incidence simplifiés des surfaces réduites} \label{fig:graphesg=1}
\end{figure}
\smallskip
\par
Commençons par donner une propriété importante des graphes d'incidence. Ces graphes ne peuvent pas contenir de boucle formée d'un nombre impair de sommets de valence~$2$. Sinon la somme des longueurs des liens selles initial et final serait $1$. Cela est impossible car au moins un autre lien selle borde ce domaine polaire spécial.

Cela nous permet d'exclure la possibilité d'un unique pôle spécial de valence $6$. En effet, le seule graphe d'incidence simplifié possible est la fleur à trois pétales (voir la figure~\ref{fig:graphesg=1}). Mais comme le nombre de pôles de la différentielle est par hypothèse pair, il y a au moins un pétale qui contient un nombre impair de sommets de valence $2$.
\smallskip
\par
Une première façon de simplifier la surface est d'{\em enlever} une paire de pôles voisins de valence $2$. Cette opération est décrite dans la preuve du lemme~\ref{lem:nonsurjunzero}. Remarquons qu'elle diminue l'ordre des deux zéros de $2$ ou seulement l'un des deux de $4$. Afin de rester dans une strate du type $\Omega^{2}\moduli[1](s+1,s-1;(-2^{s}))$, nous autorisons cette opération si et seulement si les liens selles connectent les deux zéros entre eux ou le zéro d'ordre maximal à lui même. Remarquons qu'après avoir enlevé une paire avec des liens selles connectant au zéro d'ordre maximal, celui-ci devient le zéro d'ordre minimal. Ainsi en jouant au ping-pong, on pourra enlever de nombreuses paires de pôles. En particulier, après avoir enlevé toutes les paires de sommets possibles, les paires restantes sont bordées par des liens selles entre le zéro d'ordre minimal. \`A partir de maintenant, nous considérons uniquement des surfaces où il n'est pas possible d'enlever une paire de pôles.
\smallskip
\par
Nous montrons maintenant qu'il n'est pas possible d'avoir une boucle dans le graphe d'incidence simplifié à un sommet de valence $3$. Comme nous l'avons remarqué, le nombre de sommets de valence deux sur cette boucle doit être pair. Si ce nombre est zéro, alors nous avons un zéro d'ordre $-1$ sur cette surface. Mais ce cas ne se présente dans aucune des strates considérées.
Si le nombre de sommets sur cette boucle était strictement positif, alors le pôle de valence $3$ serait bordé uniquement par les liens selles entre le zéro d'ordre minimum et lui même. Le lien selle se situe entre ce domaine polaire et un domaine polaire distinct. Cela implique qu'au moins deux autres segments au bord d'un autre domaine polaire spécial possèdent ce zéro à une extrémité. Nous faisons maintenant une analyse au cas par cas.

Dans le cas $(5,3)$ cela implique que la contribution angulaire des pôles spéciaux au zéro d'ordre inférieur est d'au moins $5\pi$. Ainsi leurs contributions angulaires au zéro d'ordre maximal est d'au plus $3\pi$. Comme l’extrémité de trois des cinq segments du bord du domaine de valence~$5$ contiennent le zéro d'ordre minimal, au plus une boucle au sommet de valence~$5$ peut être bordée par des liens selles connectant le zéro maximal à lui même. Comme il existe au maximum un pôle de valence deux sur cette boucle, l'ordre du zéro minimal est supérieur ou égal à celui du zéro d'ordre maximal. Cela donne la contradiction souhaitée. Dans les deux autres cas $(4,3,3)$ et $(3,3,3,3)$ une analyse similaire  aboutit à la même contradiction.
\smallskip
\par
Nous poursuivons en montrant qu'il existe au plus un pôle de valence $2$ sur une arête du graphe simplifié. Supposons qu'il existe une arête avec plus de $2$ sommets. Les liens selles correspondants relient le zéro d'ordre minimal à lui même. Soit cette arête connecte un pôle de valence $3$ ou $4$ à un autre sommet spécial. Soit c'est une boucle sur un sommet de valence $5$ ou $4$. On vérifie facilement que dans les deux cas cela entraîne que le zéro d'ordre minimal est l'extrémité de liens selles de deux sommets du graphe d'incidence simplifié. Cela entraîne, de manière analogue au paragraphe précédent, que la contribution angulaire au zéro d'ordre minimal est plus grande que celle au zéro d'ordre maximal. On en déduit la même contradiction. A partir de maintenant, nous considérerons des graphes d'incidence avec $0$ ou $1$ sommet de valence $2$ sur les arêtes du graphe simplifié.
\smallskip
\par
Considérons le cas où les pôles spéciaux sont de valences  $(5,3)$.  Dans ce cas, deux graphes d'incidence simplifiés sont a priori possibles (voir la figure~\ref{fig:graphesg=1}). Le premier a une boucle à un pôle de valence $3$ et est donc impossible. Le second graphe a trois arêtes entre les deux sommets spéciaux et une boucle au sommet de valence $5$. Comme nous considérons des strates avec un nombre pair supérieur ou égal à $4$ pôles, il y a  $2$ ou $4$ sommets de valence~$2$ sur les arêtes. Si il y en avait $4$, alors il y aurait un sommet sur la boucle, ce qui est impossible. Donc il y a forcément deux sommets sur les arêtes joignant les deux sommets spéciaux. Dénotons par $0<a,b,c<1$ les longueurs des liens selles bordant le pôle de valence $3$. Les longueurs des liens selles au pôle de valence $5$ seront donc $a,1-b,1-c$ et~$2d$ avec $d>0$. Comme par hypothèse $a+b+c=1$ on a $a+2-b-c+2d=1+2a+2d>1$. Ce qui contredit le fait que le résidu quadratique à ce pôle est $1$. 
\smallskip
\par
Dans le cas $(4,4)$, il y a deux graphes simplifiés possibles. Considérons tout d'abord le graphe avec~$4$ arêtes entre les deux sommets spéciaux. Il y a~$2$ ou~$4$ sommets de valence~$2$ sur ces arêtes. Les longueurs des liens selles à un pôle spécial  sont $0<a,b,c,d<1$ avec $a+b+c+d=1$. \`A l'autre pôle spécial, il y a au moins deux longueurs de la forme $1-a,1-b$. Donc le résidu à ce pôle est strictement supérieur à~$1$.

Le second graphe simplifié possède deux boucles et deux arêtes connectant les pôles spéciaux (voir la figure~\ref{fig:graphesg=1}). La seule possibilité est qu'il y ai deux sommets non spéciaux sur les arêtes connectant les pôles spéciaux. On obtient comme précédemment une contradiction en considérant les longueurs des liens selles.
\smallskip
\par
Nous définissons maintenant une nouvelle opération sur des pôles voisins appelée {\em fusion}. Prenons deux pôles d'ordres $-2$ connectés par au moins un lien selle. Nous pouvons alors remplacer ces deux pôles par un pôle d'ordre $4$, voir la figure~\ref{fig:fusion}. Remarquons que cette opération ne modifie ni l'ordre, ni le résidu des autres singularités. De plus, la différentielle ainsi modifiée reste primitive. Enfin, le résidu du pôle d'ordre $-4$ ainsi créé est nul.
\begin{figure}[htb]
\begin{tikzpicture}[scale=1]

\coordinate (a) at (-1,0);
\coordinate (b) at (0,0);
\coordinate (c) at (1,0);

    \fill[fill=black!10] (a)  -- ++(0,1.2)coordinate[pos=.5](f) -- ++(2,0) --++(0,-2.4) --++(-2,0) -- cycle;
    \fill (a)  circle (2pt);
\fill[] (c) circle (2pt);
 \draw  (a) -- (b) coordinate[pos=.5](h);
   \fill[white] (b)  circle (2pt);
      \draw[] (b)  circle (2pt);
 \draw (a) -- ++(0,1.1) coordinate (d);
 \draw (c) -- ++(0,1.1) coordinate (e);
  \draw (a) -- ++(0,-1.1) coordinate (f);
 \draw (c) -- ++(0,-1.1) coordinate (g);
 \draw[dotted] (d) -- ++(0,.2);
 \draw[dotted] (e) -- ++(0,.2);
  \draw[dotted] (f) -- ++(0,-.2);
 \draw[dotted] (g) -- ++(0,-.2);
\node[below] at (h) {$1$};
\node[above] at (h) {$2$};

 \draw[->] (1.8,0) --  ++(1.8,0) coordinate[pos=.5](t);
\node[above] at (t) {fusion};

\begin{scope}[xshift=4cm]
\fill[fill=black!10] (1.5,0)  circle (1.3cm);

    \draw (1,0) -- (2,0) coordinate[pos=.5] (c);

  \fill[white] (2,0)  circle (2pt);
      \draw[] (2,0)  circle (2pt);
         \fill (1,0)  circle (2pt);

\node[below] at (c) {$1$};
\node[above] at (c) {$2$};

\end{scope}
\end{tikzpicture}
\caption{La fusion de deux pôles d'ordre $-2$ en un pôle d'ordre $-4$.}
\label{fig:fusion}
\end{figure}
\smallskip
\par
Maintenant, nous traitons le cas où les pôles spéciaux sont de valences $(4,3,3)$. Il n'y a que deux graphes d'incidence simplifiés a priori possibles (voir la figure~\ref{fig:graphesg=1}).  Pour le graphe qui à une boucle au pôle de valence $4$, il peut y avoir $1$, $3$ ou $5$ pôles supplémentaires de valence $2$. Ce dernier cas est impossible car il y aurait un sommet de valence $2$ sur la boucle.
Si on avait un sommet supplémentaire, alors on pourrait fusionner les pôles par paires afin d'obtenir une différentielle dans $\Omega^{2}\moduli[1](5,3;-4,-4)$ dont les résidus soient nuls. Nous avons montré qu'une telle différentielle n'existe pas dans le lemme~\ref{lem:geq1six}. Donc la différentielle originale ne peut pas exister. Les cas avec trois pôles supplémentaires se traite de manière similaire par fusion des pôles.

Dans le cas du graphe d'incidence simplifié qui ne possède pas de boucle, nous pouvons avoir $1$, $3$ ou $5$ pôles de valence $2$ supplémentaires. Le cas avec $5$ pôles est impossible à cause des longueurs des liens selles. En effet, les longueurs des liens selles entre le pôle de valence $4$ et l'un des pôles de valence $3$ sont $a$ et $b$ avec $a+b<1$ à un pôle, et $1-a$ et $1-b$ avec $2-a-b>1$ à l'autre. Cela est clairement impossible. Enfin les cas avec $1$ et $3$ sommets de valence $2$ peuvent se traiter avec la fusion en se ramenant au lemme~\ref{lem:geq1six}.

\smallskip
\par
Pour conclure, nous considérons le cas des graphes dont les sommets spéciaux sont de valence $(3,3,3,3)$. Les graphes possibles sont représentés dans la figure~\ref{fig:graphesg=1}. Nous considérons tout d'abord le graphe complet $K_{4}$. Pour $0$, $2$ or $4$ pôles de valence deux sur les arêtes, l'impossibilité peut se montrer en se ramenant au lemme~\ref{lem:geq1six} par fusion. S'il y a six pôles de valence $2$, nous regardons les longueurs des liens selles aux pôles spéciaux. Les longueurs de celles bordant le sommet central sont de longueur $a$, $b$ et $1-a-b$. Les longueurs des liens selles correspondants  bordant les autres sommets spéciaux sont $1-a$, $1-b$ et $a+b$. Considérons le pôle avec le lien selle de longueur $a+b$. Il a un autre lien de selle de longueur~$c$. La longueur du lien selle correspondant sur l'autre pôle spécial (disons celui avec le lien selle de longueur $1-a$) est $1-c$.  Le troisième lien selle de ce pôle est de longueur   $a+c-1$. En continuant notre voyage autour du cercle, nous trouvons que la longueur des liens selles du pôle d'où nous sommes parti sont  $a+b$, $c$ et $3-a-b-c$ (c'est-à-dire $1+2a+2c$). Comme la somme est $3$ cela donne la contradiction souhaitée.
 
Enfin nous considérons le graphe en forme de balle de tennis. Comme précédemment, les cas où il y a $0$, $2$ ou $4$ pôles de valence $2$ supplémentaires peut être traité par fusion. Pour $6$ pôles supplémentaires, une analyse de la longueur des liens selles similaire au cas du graphe $K_{4}$, permet d'obtenir la dernière contradiction de cette preuve.
\end{proof}

\subsection{Pluridifférentielles d'aires finies}
\label{sec:plurifini}

Dans ce dernier paragraphe nous prouvons le théorème~\ref{thm:strateshol}. Ce résultat dit que les strates primitives $\komoduli(a_{1},\dots,a_{n})$ en genre $g\geq1$ sont vides si et seulement si $\mu=\emptyset$ ou $\mu=(1,-1)$ en genre $1$ ou $\mu=(4)$ et $\mu=(3,1)$ pour $k=2$.  
Toutes les $k$-différentielles de type $(\emptyset)$ sont la puissance $k$-ième d'une différentielle de $\omoduli[1](\emptyset)$. De plus, le théorème d'Abel implique que les strates  $\komoduli[1](1,-1)$ sont vides. Pour les deux strates quadratiques vides de genre $2$, nous renvoyons à \cite{masm}. Notre but est donc de montrer que toutes les autres strates sont non vides. 

En genre un, il existe des $k$-différentielles de type $\mu$ pour tout $\mu\neq(1,-1)$. Supposons que $\mu:=(a_{1},\dots,a_{n})\neq\emptyset$, il reste \`a montrer que certaines d'entre elles sont primitives. Étant donn\'e un tore $X$, il existe un diviseur $D:=\sum a_{i}z_{i}$ sur $X$ lin\'eairement équivalent \`a z\'ero tel que pour tout diviseur $d$ des $a_{i}$, le diviseur $\sum\frac{a_{i}}{d}z_{i}$ n'est pas lin\'eairement équivalent \`a z\'ero (voir par exemple \cite{Bo}). La $k$-différentielle ayant pour diviseur $D$ est clairement primitive.

Soit $\komoduli[g](a_{1},\dots,a_{n})$ une strate de genre $g\geq 2$ avec $a_{i}>-k$. Si $(a_{1},\dots,a_{n})$ est différent de $(4g-4)$ et $(2g-1,2g-3)$ dans le cas quadratique, alors, par le théorème~\ref{thm:geq1}, la strate  $\komoduli[1](a_{1},\dots a_{n};((-2k)^{g-1}))$ contient une différentielle primitive $(X_{0},\omega_{0})$ dont tous les $k$-résidus sont nuls. On obtient une $k$-différentielle entrelacée  en attachant la puissance $k$i\`eme d'une différentielle holomorphe sur un tore aux pôles de $X_{0}$.
Cette pluridifférentielle entrelacée est lissable par le lemme~\ref{lem:lissdeuxcomp}. Les lissages sont clairement des $k$-différentielles primitives. Donc ces strates sont non vides.

Enfin il reste le cas des strates $\Omega^{2}\moduli[g](4g-4)$ et $\Omega^{2}\moduli[g](2g-1,2g-3)$. En utilisant l'éclatement de zéros, il suffit de montrer que la strate $\Omega^{2}\moduli[g](4g-4)$ est non vide pour tout $g\geq3$. Nous savons que $\Omega^{2}\moduli[2](4(g-1);(-4^{g-2}))$ contient une différentielle primitive sans résidus quadratiques aux pôles (voir Théorème 1.1). On obtient une différentielle quadratique entrelacée en attachant à chaque pôle le carré d'une différentielle de $\omoduli[1](\emptyset)$. Les différentielles quadratiques obtenues par lissage sont dans $\Omega^{2}\moduli[g](4g-4)$ comme souhaité.

\section{Applications.}
\label{sec:appli}

Nous donnons quelques applications élémentaires de nos résultats. Une connaissance des notions introduites dans \cite{BCGGM} et \cite{BCGGM3} est recommandée.

\smallskip
\par
\subsection{Limites des points de Weierstra\ss: preuve de la proposition~\ref{prop:limWei}}

L'étude des limites des points de \Weierstrass~dans la compactification de Deligne-Mumford a pris une grande ampleur grâce aux travaux d'Eisenbud et Harris sur les séries linéaires limites (voir e.g. \cite{eiha}). L'une des limites de leur méthode est de se restreindre aux courbes de type compact. Esteves et Medeiros l'ont étendue aux courbes ayant deux composantes dans \cite{esme}. Nos résultats et la description de la compactification de la variété d'incidence de \cite{BCGGM} permettent une description complète (en théorie) des limites de points de \Weierstrass~dans~$\barmoduli[g,1]$.  

Nous montrons tout d'abord que l'adhérence du lieu de \Weierstrass~ne rencontre pas celui des courbes stables $X$ où $g$ courbes elliptiques sont attachées à un $\PP^{1}$ contenant le point marqué (voir le théorème 3.1 de \cite{eiha}). Ces courbes sont figurées à gauche de la~figure~\ref{fig:courbesWei}.

 \begin{figure}[htb]
\begin{tikzpicture}[scale=1]
\begin{scope}[xshift=-2.5cm]
\draw (-4,0.1) coordinate (x1) .. controls (-3.8,-.3) and (-4.2,-.6) .. (-4,-1);
\draw (-3.4,0.1) coordinate (x4) .. controls (-3.2,-.3) and (-3.6,-.6) .. (-3.4,-1);
\draw (-2,0.1)  coordinate (x2).. controls (-1.8,-.3) and (-2.2,-.6) .. (-2,-1);

\draw (-4.2,-.8) .. controls (-3.3,-1) and (-2.6,-.6) .. (-1.8,-.8) coordinate (x3)
coordinate [pos=.6] (z);
\fill (z) circle (2pt);\node [below] at (z) {$z$};
\node [above] at (x1) {$X_{1}$};\node [above] at (x2) {$X_{g}$};\node [right] at (x3) {$\PP^{1}$};\node [above] at (x4) {$X_{2}$};
\node at (-2.7,-.3) {$\dots$};
\end{scope}

\begin{scope}[xshift=2.5cm]
\draw (-4.8,-1) .. controls (-4.6,.5) and (-4.2,.5) .. (-4,-1) coordinate[pos=.5] (x1);
\draw (-3.4,0.1) coordinate (x4) .. controls (-3.2,-.3) and (-3.6,-.6) .. (-3.4,-1);
\draw (-2,0.1)  coordinate (x2).. controls (-1.8,-.3) and (-2.2,-.6) .. (-2,-1);

\draw (-5,-.8) .. controls (-3.7,-1) and (-2.9,-.6) .. (-1.8,-.8) coordinate (x3)
coordinate [pos=.7] (z);
\fill (z) circle (2pt);\node [below] at (z) {$z$};
\node [above] at (x1) {$X_{0}$};\node [above] at (x2) {$X_{g-2}$};\node [right] at (x3) {$\PP^{1}$};\node [above] at (x4) {$X_{1}$};
\node at (-2.7,-.3) {$\dots$};
\end{scope}
\end{tikzpicture}
\caption{Les courbes pointées considérées dans la  proposition~\ref{prop:limWei}.}
\label{fig:courbesWei}
\end{figure}

L'adhérence du lieu de \Weierstrass~dans $\barmoduli[g,1]$ coïncide avec la projection de la compactification de la variété d'incidence de $\omoduli[g](g,1,\dots,1)$ (voir \cite[Section~3.6]{BCGGM}). D'après le théorème~1.3 de l'article loc. cit., il suffit de montrer qu'il n'existe pas de différentielle entrelacée lissable sur une courbe semi-stablement équivalente à~$X$.

Soit~$\xi$ une différentielle entrelacée sur (une courbe semi-stablement équivalente à) ~$X$. La restriction~$\xi_{0}$ de $\xi$ à $\PP^{1}$ possède un zéro d'ordre supérieur ou égal à $g$. De plus,  $\xi_{0}$ possède une singularité d'ordre supérieure ou égale à $-2$ (et distinct de $-1$) aux nœuds entre $\PP^{1}$ et une courbe elliptique. On vérifie facilement que l'inégalité~\eqref{eq:genrezeroresiduzerofr} est satisfaite par~$\xi_{0}$. Le théorème~\ref{thm:geq0keq1} implique qu'au moins deux pôles possèdent des résidus non nuls. La {\em condition résiduelle globale} de la définition 1.2 de \cite{BCGGM} n'est donc pas satisfaite et~$\xi$ n'est pas lissable.

Maintenant, nous montrons que l'adhérence du lieu de \Weierstrass~coupe le lieu de $\barmoduli[g,1]$ donné de la façon suivante. Ces courbes sont formées d'un $\PP^{1}$ contenant le point marqué attaché à $g-2$ courbes elliptiques $X_{1},\dots,X_{g-2}$ par un unique point et à une courbe elliptique $X_{0}$ par deux points. Elles sont représentées à droite de la figure~\ref{fig:courbesWei}.

Il suffit construire une différentielle entrelacée lissable de type $(g,(1^{g-2}))$ sur une de ces courbes. Sur toutes les courbes elliptiques, on prend la différentielle holomorphe. Sur la courbe projective, on prend une différentielle dans $\omoduli[0](g,(1^{g-2});(-2^{g}))$ avec $g-2$ résidus nuls. Une telle différentielle existe par le théorème~\ref{thm:geq0keq1}. On colle alors la courbe elliptique~$X_{0}$ aux deux pôles dont les résidus ne sont pas nuls. Les autres courbes elliptiques sont collées aux pôles dont les résidus sont nuls. On vérifie que cette différentielle entrelacée est lissable grâce au théorème 1.3 de \cite{BCGGM}.

\smallskip
\par
\subsection{\'Eclatement de zéros: preuve de la proposition~\ref{prop:eclatintro}}

L'idée de scinder les singularités coniques d'une métrique plate est déjà connue. Dans le cas des métriques induites par une différentielle abélienne, une belle construction à été proposée par Eskin, Kontsevich, Masur et Zorich (voir  \cite[\S~8.1]{EMZ} et \cite{kozo}). En revanche, pour les différentielles quadratiques Boissy, Lanneau, Masur et Zorich ont remarqué qu'une telle construction peut ne pas être réalisable de manière locale (voir \cite{MZ}).
La compréhension de la variété d'incidence des strates de pluridifférentielles permet de décrire ce phénomène dans le cadre des pluridifférentielles (voir \cite[Example~7.1]{BCGGM3}). Nous pouvons maintenant caractériser les cas où l'éclatement d'une singularité conique n'est pas possible localement.

L'éclatement d'un zéro d'ordre $a>-k$ d'une $k$-différentielle~$\xi$ en $n$ zéros d'ordres $(a_{1},\dots,a_{n})$ correspond au lissage d'une $k$-différentielle entrelacée. Cette $k$-différentielle entrelacée est constituée d'une $k$-différentielle $\xi_{0}$ sur $\PP^{1}$ avec  des zéros d'ordres $(a_{1},\dots,a_{n})$ et un pôle d'ordre $-a-2k$ attachée au zéro d'ordre $a$ de~$\xi$ (voir la proposition~\ref{prop:eclatZero}). Cette $k$-différentielle entrelacée est lissable localement si et seulement tous les $k$-résidus de $\xi_{0}$ sont nuls. L'éclatement est possible localement si et seulement s'il existe une $k$-différentielle de genre zéro de type $(a_{1},\dots,a_{n};-2k-a)$ dont tous les $k$-résidus sont nuls. La proposition~\ref{prop:eclatintro} est alors une conséquence directe du théorème~\ref{thm:r=0s=0} et de l'équation~\eqref{eq:multiplires}.
\smallskip
\par
\subsection{Cylindres dans une surface plate: preuve de la proposition~\ref{prop:cylindres}}

Naveh (\cite{Na}) nous apprend que le nombre maximal de cylindres disjoints dans une surface de $S:=\omoduli(m_{1},\dots,m_{n})$ est $g+n-1$.
Nous décrivons les périodes possibles des circonférences de ces cylindres. On fixe $(\lambda_{1},\dots,\lambda_{t})\in (\mathbb{C}^{\ast})^{t}$ pour le reste de cette section.
\par
Supposons qu'il existe une différentielle $\omega$ de $S$ qui possède une famille de~$t$ cylindres disjoints de circonférences respectives $\lambda_{1},\dots,\lambda_{t}$. Nous montrons l'existence d'une différentielle stable dont les zéros sont d'ordres $(a_{1},\dots,a_{n})$ avec des pôles simples aux nœuds dont les résidus sont $\pm \lambda_{i}$.  Coupons $\omega$ le long d'une d'une géodésique périodique dans chaque cylindre. Nous pouvons alors remplacer les deux demi-cylindres obtenus par deux demi-cylindres infinis de même circonférence. On obtient donc une différentielle entrelacée avec des pôles simples aux nœuds. De plus, les résidus de ces pôles sont égaux à plus ou moins la circonférence des cylindres. On en déduit le sens direct de la proposition~\ref{prop:cylindres}.
\par
La direction réciproque est une application directe du lissage des nœuds des différentielles stables avec pôles simples aux nœuds (voir le lemme~\ref{lem:lisspolessimples}).
\par
Dans le cas des strates minimales, ces différentielles entrelacées sont irréductibles. On en déduit le résultat suivant.
\begin{cor}
Le $t$-uplet $(\lambda_{1},\dots,\lambda_{t})\in (\mathbb{C}^{\ast})^{t}$ est un vecteur constitué des périodes des circonférences de cylindres disjoints d'une différentielle de $\omoduli(2g-2)$ si et seulement si le $2t$-uplet $(\lambda_{1},\dots,\lambda_{t},-\lambda_{1},\dots,-\lambda_{t})$ est dans l'image de  $\appres[g-t](2g-2;(-1^{2t}))$.
\end{cor}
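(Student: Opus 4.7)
Le plan consiste à appliquer la proposition~\ref{prop:cylindres} pour ramener la question à l'existence d'une différentielle stable avec des nœuds prescrits, puis à exploiter le caractère minimal de la strate $\omoduli(2g-2)$ pour montrer que cette différentielle stable est nécessairement irréductible.

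Pour le sens direct, je partirai d'une différentielle $\omega\in\omoduli(2g-2)$ contenant $t$ cylindres disjoints de circonférences $\lambda_i$. La proposition~\ref{prop:cylindres} fournira une différentielle stable $(X,\omega)$ avec un pôle simple à chacun des $t$ nœuds, de résidus $\pm\lambda_i$, et un unique zéro d'ordre $2g-2$ sur une composante $X_0$. L'étape cruciale sera de montrer que $X=X_0$. En effet, toute autre composante $X_i$ de genre $g_i$ ne portera que des pôles simples aux nœuds (l'unique zéro étant déjà placé sur $X_0$); l'égalité des degrés donnera un nombre de nœuds égal à $2-2g_i$, ce qui imposera $g_i=0$ et exactement deux nœuds sur $X_i$. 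Mais une telle $\PP^1$ n'ayant que deux points spéciaux est instable, d'où la contradiction. Ainsi $X$ sera une courbe nodale irréductible à $t$ auto-nœuds, dont la normalisation $\tilde{X}$ sera lisse de genre $g-t$ et portera $2t$ points appariés par les nœuds. Le relevé de $\omega$ à $\tilde{X}$ appartiendra à $\omoduli[g-t](2g-2;(-1^{2t}))$, avec résidus $(\lambda_1,\dots,\lambda_t,-\lambda_1,\dots,-\lambda_t)$ où chaque paire opposée proviendra d'un même nœud.

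Pour la réciproque, partant d'une différentielle dans $\omoduli[g-t](2g-2;(-1^{2t}))$ ayant les résidus indiqués, j'apparierai les pôles à résidus opposés pour former $t$ nœuds. J'obtiendrai ainsi une courbe nodale irréductible de genre arithmétique $g$ munie d'une différentielle stable vérifiant automatiquement la condition sur les résidus du lemme~\ref{lem:lisspolessimples}. Le lissage de ces $t$ nœuds remplacera chaque paire de demi-cylindres infinis par un cylindre fini de circonférence $\lambda_i$, ces cylindres restant disjoints car les nœuds l'étaient, et fournira bien la différentielle désirée dans $\omoduli(2g-2)$.

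L'obstacle principal ne sera pas technique mais consistera précisément dans l'argument d'irréductibilité du sens direct, qui combine la stabilité de la courbe nodale sous-jacente avec la contrainte élémentaire qu'une différentielle méromorphe sur $\PP^{1}$ n'ayant que des pôles simples en possède exactement deux. Le reste se déduira directement de la proposition~\ref{prop:cylindres} et du lemme~\ref{lem:lisspolessimples}, sans calcul supplémentaire.
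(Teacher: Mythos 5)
Votre preuve est correcte et suit essentiellement la même démarche que l'article : celui-ci déduit le corollaire de la proposition~\ref{prop:cylindres} en observant simplement que, pour la strate minimale, la différentielle stable est irréductible. Vous explicitez utilement cet argument d'irréductibilité (toute composante sans zéro serait un $\PP^{1}$ à deux nœuds, donc instable), que l'article laisse implicite, mais l'idée et les ingrédients (proposition~\ref{prop:cylindres} et lemme~\ref{lem:lisspolessimples}) sont identiques.
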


Nous énonçons maintenant la proposition~\ref{prop:cylindres} dans le langage des {\em représentations en graphe} développé dans \cite{tahar}. L'idée est de considérer le graphe dual de la différentielle entrelacée de cette proposition. Le théorème 2.3  de \cite{tahar} permet de caractériser de façon combinatoire les graphes qui apparaissent pour une strate donnée. Nous allons les enrichir de façon à prendre également en compte les circonférences des cylindres.

Soit $\omoduli(a_{1},\dots,a_{n})$ une strate de différentielles abéliennes. Une \textit{représentation en graphe enrichie de niveau $u$} $(G,f_0,\dots,f_u,g_0,\dots,g_u)$ est définie comme suit. Le graphe $G$ est connexe avec $u+1$ sommets de valences $v_0,\dots,v_u$ et $t$ arêtes (il peut exister plusieurs arêtes entre deux sommets). Les entiers naturels $g_0,\dots,g_u$ sont des poids associés à chaque sommet. Les nombres $a_1,\dots,a_n$ sont répartis en $u+1$ familles $f_i$ telles que :
 \begin{itemize}
 \item[i)]  il y a au moins un nombre parmi $(a_1,\dots,a_n)$ dans chaque famille $f_i$; 
 \item[ii)] chaque somme $\sigma(i)$ des éléments de  $f_i$ vérifie $\sigma(i) +v_{i}=2g_i-2$;
 \item[iii)] aucune arête ne déconnecte $G$.
 \end{itemize}
En déroulant les définitions on montre que la proposition~\ref{prop:cylindres} s'énonce de la façon suivante.
\begin{prop}\label{prop:cylindresgraphe}
Soient  $S:=\omoduli(a_{1},\dots,a_{n})$ une strate de différentielles abéliennes et $\lambda:=(\lambda_{1},\dots,\lambda_{t})\in (\mathbb{C}^{\ast})^{t}$.
Il existe  une différentielle dans $S$ qui possède une famille de $t$ cylindres disjoints dont les circonférences sont $\lambda$ si et seulement s'il existe une représentation en graphe $G$ avec $t$ arêtes et un marquage de chaque demi-arête par les éléments de $(\lambda,-\lambda)$ de telle façon que les propriétés suivantes sont vérifiées.
\begin{itemize}
\item[i)] Les demi-arêtes constituant une arêtes sont marqués par $\lambda_{i}$ et $-\lambda_{i}$;
\item[ii)] les poids $\lambda_{j_{i}}$ des demi-arêtes adjacentes au sommet $i$ doivent être dans l'image de  l'application $\appres[g_{i}](f_{i},(-1^{v_{i}}))$.%
\end{itemize}
\end{prop}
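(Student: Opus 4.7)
Le plan est de reformuler la Proposition~\ref{prop:cylindres} dans le langage des graphes duaux de différentielles stables, en utilisant le lemme~\ref{lem:lisspolessimples} pour passer du lissage aux différentielles entrelacées \`a pôles simples aux nœuds.

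D'abord, la Proposition~\ref{prop:cylindres} identifie l'existence de $t$ cylindres disjoints de circonférences $\lambda$ dans une différentielle de $S$ à l'existence d'une différentielle stable $(X,\omega_0)$ de type $(a_1,\dots,a_n;(-1^{2t}))$ dont les $t$ nœuds portent des résidus opposés $\pm\lambda_i$. Il suffit donc d'établir une correspondance biunivoque entre ces différentielles stables et les représentations en graphe enrichies munies d'une étiquette de demi-arête satisfaisant (i) et (ii).

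Dans le sens direct, je partirais de $(X,\omega_0)$ et formerais son graphe dual $G$: les sommets sont les composantes irréductibles $X_0,\dots,X_u$ pondérées par $g_i:=g(X_i)$, les arêtes sont les nœuds de $X$, et la famille $f_i$ regroupe les ordres $a_j$ tels que $z_j\in X_i$. Chaque demi-arête incidente à un sommet $X_i$ reçoit comme étiquette le résidu de $\omega_0|_{X_i}$ au pôle simple correspondant. La condition (i) du graphe enrichi (chaque famille non vide) correspond au fait que sur chaque $X_i$, $\omega_0|_{X_i}$ possède au moins un zéro (sinon, par (ii) appliqué à cette composante, on aurait $v_i=2g_i-2$, ce qui pour $g_i=0$ force $v_i=-2$, impossible, et pour $g_i\geq1$, la composante serait entièrement faite de pôles simples, contredisant la stabilité de la différentielle); la condition (ii) est la formule de degré pour $\omega_0|_{X_i}$; la condition (iii) (bridgeless) résulte du théorème des résidus: si une arête disconnectait $G$ en $G_1\cup G_2$, alors en sommant les résidus de $\omega_0$ sur toutes les composantes de $G_1$ (tous les résidus internes à $G_1$ s'annulent par la condition d'assortiment des résidus aux nœuds), on trouverait que le résidu $\lambda_i$ à cette unique arête externe devrait s'annuler, contredisant $\lambda_i\in\CC^*$. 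L'étiquetage vérifie (i) de la proposition par la condition d'assortiment des résidus des différentielles entrelacées, et (ii) se lit directement sur la définition: $\omega_0|_{X_i}$ est une différentielle de $\omoduli[g_i](f_i,(-1^{v_i}))$ dont les résidus sont les $\pm\lambda_i$ attribués aux demi-arêtes incidentes à $X_i$.

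Dans le sens réciproque, étant donnée une représentation en graphe enrichie avec étiquetage satisfaisant (i) et (ii), j'obtiendrais pour chaque sommet $i$ une différentielle $\omega_i\in\omoduli[g_i](f_i,(-1^{v_i}))$ ayant exactement les résidus prescrits par les étiquettes. Le collage à chaque arête de deux points correspondant à un pôle simple avec résidus $\lambda_i$ et $-\lambda_i$ produit une différentielle entrelacée à pôles d'ordre $-1$ aux nœuds, lissable par le lemme~\ref{lem:lisspolessimples}. Le lissage fournit une différentielle dans $S$, et à chaque nœud correspond un cylindre de circonférence $\lambda_i$ par le processus géométrique standard (remplacement d'une paire de demi-cylindres infinis par un cylindre fini). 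Le principal obstacle technique sera la vérification soigneuse de la condition (i) sur les graphes (chaque composante porte au moins un zéro) et son équivalence avec la stabilité imposée implicitement par l'appartenance à la strate $\omoduli(a_1,\dots,a_n)$; le reste est une traduction combinatoire de la Proposition~\ref{prop:cylindres}.
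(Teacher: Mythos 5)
Votre démonstration est correcte et suit essentiellement la même voie que l'article : celui-ci présente la proposition comme une simple reformulation de la proposition~\ref{prop:cylindres} via le graphe dual de la différentielle stable, et votre texte ne fait qu'expliciter cette traduction (vérification des conditions de la représentation en graphe par la formule de degré, le théorème des résidus pour l'absence de pont, et le lemme~\ref{lem:lisspolessimples} pour le sens réciproque). Vous détaillez même davantage que l'article, qui laisse cette vérification implicite.
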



\printbibliography

\end{document}